\newcommand{\mytextstyle}{\textstyle} 
\newcommand{\Exp}{\mathbf{E}}
\newcommand{\R}{\mathbb{R}}
\newcommand{\eqdef}{\stackrel{\text{def}}{=}}
\def\<#1,#2>{\left\langle #1,#2\right\rangle}
\newtheorem{lemma}{Lemma}[section]
\newtheorem{theorem}{Theorem}[section]
\newtheorem{definition}{Definition}[section]
\newtheorem{assumption}{Assumption}[section]
\newtheorem{corollary}{Corollary}[section]
\theoremstyle{plain}
\newcommand{\cC}{{\cal C}}
\newcommand{\cD}{{\cal D}}
\newcommand{\cL}{{\cal L}}
\newcommand{\cM}{{\cal M}}
\newcommand{\cO}{{\cal O}}
\newcommand{\cQ}{{\cal Q}}
\newcommand{\Var}{\mathrm{Var}}
\newcommand{\EE}{\mathbf{E}}
\newcommand{\tx}{\tilde{x}}
\newlength{\dhatheight}
\title{\bf Linearly Converging Error Compensated SGD}
\author{Eduard Gorbunov
\\
  MIPT, Yandex and Sirius, Russia \\ KAUST, Saudi Arabia\\
  \And
   Dmitry Kovalev
   \\
   KAUST, Saudi Arabia \\
   \AND
   Dmitry Makarenko 
   \\
   MIPT, Russia\\
   \And
   Peter Richt\'{a}rik 
   \\
    KAUST, Saudi Arabia\\
}
\begin{document}

\maketitle

\begin{abstract}
  In this paper, we propose a unified analysis of variants of distributed {\tt SGD} with arbitrary compressions and delayed updates. Our framework is general enough to cover different variants of quantized {\tt SGD}, Error-Compensated {\tt SGD} ({\tt EC-SGD}) and {\tt SGD} with delayed updates ({\tt D-SGD}). Via a single theorem, we derive the complexity results for all the methods that fit our framework. For the existing methods, this theorem gives the best-known complexity results. Moreover, using our general scheme, we develop new variants of {\tt SGD} that combine variance reduction or arbitrary sampling with error feedback and quantization and derive the convergence rates for these methods beating the state-of-the-art results. In order to illustrate the strength of our framework, we develop $16$ new methods that fit this. In particular, we propose the first method called {\tt EC-SGD-DIANA} that is based on error-feedback for biased compression operator and quantization of gradient differences and prove the convergence guarantees showing that {\tt EC-SGD-DIANA} converges to the exact optimum asymptotically in expectation with constant learning rate for both convex and strongly convex objectives when workers compute full gradients of their loss functions. Moreover, for the case when the loss function of the worker has the form of finite sum, we modified the method and got a new one called {\tt EC-LSVRG-DIANA} which is the first distributed stochastic method with error feedback and variance reduction that converges to the exact optimum asymptotically in expectation with a constant learning rate.
\end{abstract}

\section{Introduction}\label{sec:intro}
We consider distributed optimization problems of the form
\begin{equation}
\mytextstyle
\min\limits_{x\in\R^d} \left\{ f(x) = \frac{1}{n}\sum\limits_{i=1}^n f_i(x) \right\}, \label{eq:main_problem}
\end{equation}
where  $n$ is the number of  workers/devices/clients/nodes. The  information about function $f_i$ is stored on the $i$-th worker only.  Problems of this form appear in the distributed or federated training of supervised machine learning models \cite{shamir2014communication, konecny2016federated}.  In such applications, $x\in \R^d$ describes the parameters identifying a statistical model we wish to train, and $f_i$ is the (generalization or empirical) loss of model $x$ on the data accessible by worker $i$. If worker $i$ has access to data with distribution $\cD_i$, then $f_i$ is assumed to have the structure
\begin{equation}
	f_i(x) = \EE_{\xi_i\sim \cD_i}\left[f_{\xi_i}(x)\right]. \label{eq:f_i_expectation}
\end{equation}
Dataset $\cD_i$ may or may not be available to worker $i$ in its entirety. Typically, we assume that worker $i$ has  only access to samples from $\cD_i$. If the dataset is fully available, it is typically finite, in which case we can assume that $f_i$ has the finite-sum form\footnote{The implicit assumption that each worker contains exactly $m$ data points is for convenience/simplicity only; all our results direct analogues in the general setting with $m_i$ data points on worker $i$.}:
\begin{equation}
	\mytextstyle f_i(x) = \frac{1}{m}\sum\limits_{j=1}^m f_{ij}(x). \label{eq:f_i_sum}
\end{equation}



\textbf{Communication bottleneck.}
  The key bottleneck in practical distributed \cite{goyal2017accurate} and federated \cite{konecny2016federated,kairouz2019advances} systems comes from the high cost of communication of messages among the clients needed to find a solution of sufficient qualities. Several approaches to addressing this communication bottleneck have been proposed in the literature.
  
In the very rare situation when it is possible to adjust the network architecture connecting the clients, one may consider a fully decentralized setup \cite{bertsekas1989parallel}, and allow each client in each iteration to  communicate to their neighbors only. One can argue that in some circumstances and in a certain sense,  decentralized architecture may be preferable to centralized architectures \cite{lian2017can}.  Another natural way to address the communication bottleneck is to do more meaningful (which typically means more expensive) work on each client before each communication round. This is done in the hope that such extra work will produce more valuable messages to be communicated, which hopefully results in the need for fewer communication rounds. A popular technique of this type which is particularly relevant to Federated Learning is based in applying multiple {\em local updates} instead of a single update only.  This is the main idea behind  {\tt Local-SGD} \cite{Stich18local}; see also \cite{basu2019qsparse, haddadpour2019convergence, karimireddy2019scaffold, khaled2020tighter, koloskova2020unified, stich2019error, woodworth2020local}.  However, in this paper, we contribute to the line work which aims to  resolve the communication bottleneck issue via {\em communication compression}. That is, the  information that is normally exchanged---be it iterates, gradients or some more sophisticated vectors/tensors---is compressed in a lossy manner before communication. By applying compression,  fewer bits are transmitted  in each communication round, and one hopes that the increase in the number of communication rounds necessary to solve the problem, if any, is compensated by the savings, leading to a more efficient method overall.

\textbf{Error-feedback framework.} In order to address these issues, in this paper we study a broad class of distributed stochastic first order methods for solving problem \eqref{eq:main_problem} described by the  iterative framework
\begin{eqnarray}
	x^{k+1} &=& \mytextstyle x^k - \frac{1}{n} \sum \limits_{i=1}^n v_i^k, \label{eq:x^k+1_update}\\
	e_i^{k+1} &=&  e_i^k +  \gamma g_i^k -  v_i^k , \qquad i=1,2,\dots , n.\label{eq:error_update}
\end{eqnarray}
In this scheme,  $x^k$ represents the key  iterate, $v_i^k$ is the contribution of worker $i$ towards the update in iteration $k$, $g_i^k$ is an unbiased estimator of $\nabla f_i(x^k)$ computed by worker $i$, $\gamma>0$ is a fixed stepsize and $e_i^k$ is the error accumulated at node $i$ prior to iteration $k$ (we set to $e_i^0
= 0$ for all $i$). In order to better understand the role of the vectors $v_i^k$ and $e_i^k$, first consider the simple special case with $v_i^k \equiv \gamma g_i^k$. In this case, $e_i^k=0$ for all $i$ and $k$, and method \eqref{eq:x^k+1_update}--\eqref{eq:error_update} reduces to distributed {\tt SGD}:
\begin{equation}\label{eq:SGD-ss}\mytextstyle x^{k+1} = x^k - \frac{ \gamma}{n}\sum \limits_{i=1}^n g_i^k. \end{equation}
However, by allowing to chose the vectors $v_i^k$ in a different manner,  we obtain a more general update rule  than what the {\tt SGD} update \eqref{eq:SGD-ss} can offer. \citet{stich2019error},  who studied  \eqref{eq:x^k+1_update}--\eqref{eq:error_update} in the $n=1$ regime, show that this  flexibility allows to capture several types of methods,  including those employing  i) compressed communication, ii)  delayed gradients, and iii) minibatch gradient updates. While our general results apply to all these special cases and more, in order to not dilute the focus of the paper,  in the main body of this paper we concentrate on the first use case---compressed communication---which we now describe.

\textbf{Error-compensated compressed gradient methods.}
Note that in distributed {\tt SGD} \eqref{eq:SGD-ss}, each worker needs to know the aggregate gradient $g^k = \frac{1}{n}\sum_{i=1}^n g_i^k$ to form $x^{k+1}$, which is needed before the next iteration can start. This can be achieved, for example, by each worker $i$ communicating their gradient $g_i^k$ to all other workers. Alternatively, in a parameter server setup, a dedicated master node  collects the gradients from all workers, and broadcasts their average $g^k$  to all workers.  Instead of communicating the gradient vectors $g_i^k$, which is  expensive in distributed learning in general and in federated learning in particular, and especially if $d$ is large, we wish to communicate other but  closely related vectors which can be represented with fewer bits. To this effect, each worker $i$  sends  the vector 
\begin{equation}v_i^k = \cC(e_i^k +  \gamma g_i^k), \qquad \forall i\in [n]\label{eq:bu98gf}\end{equation} instead, where $\cC:\R^d\to \R^d$ is a (possibly randomized, and in such a case, drawn independently of all else in iteration $k$)  compression operator used  to reduce communication. We assume throughout that there exists $\delta \in (0,1]$ such  that  the following inequality holds for all $x\in\R^d$
	\begin{equation}
		\EE\left[\|\cC(x) - x\|^2\right] \le (1 - \delta)\|x\|^2. \label{eq:compression_def}
	\end{equation}

For any $k \geq 0$, the vector $e_i^{k+1} = \sum_{t=0}^{k} \gamma g_i^t - v_i^t $ captures the {\em error} accumulated by worker $i$ up to iteration $k$. This is the difference between the ideal {\tt SGD} update $\sum_{t=0}^{k} \gamma g_i^t$ and the applied update $ \sum_{t=0}^{k} v_i^t $. As we see in \eqref{eq:bu98gf},  at iteration $k$ the current error $e_i^k$ is added to the gradient update $\gamma g_i^k$---this is referred to as {\em error feedback}---and subsequently compressed, which defines the update vector $v_i^k$. Compression introduces additional error, which is added to $e_i^k$, and the process is repeated.


\textbf{Compression operators.} For a rich collection of specific  operators satisfying \eqref{eq:compression_def}, we refer the reader to \citet{stich2019error} and \citet{beznosikov2020biased}. These include various unbiased or contractive sparsification operators such as RandK and TopK, respectively,  and quantization operators  such as natural compression  and natural dithering \cite{Cnat}. Several additional comments related to compression operators are included in Section~\ref{sec:compressions}.


%

\section{Summary of Contributions}\label{sec:contrib}

We now summarize the key contributions of this paper.

\textbf{$\diamond$ General theoretical framework.} 
In this work we propose a  {\em general theoretical framework} for analyzing a wide class of methods that can be written in the the error-feedback form \eqref{eq:x^k+1_update}-\eqref{eq:error_update}. We perform {\em complexity analysis}  under $\mu$-strong quasi convexity (Assumption~\ref{ass:quasi_strong_convexity})  and $L$-smoothness (Assumption~\ref{ass:L_smoothness})  assumptions on the functions $f$ and $\{f_i\}$, respectively. Our analysis is based on an additional {\em parametric assumption}  (using parameters $A$, $A'$, $B_1$, $B_1'$, $B_2$, $B_2'$, $C_1$, $C_2$, $D_1$, $D_1'$, $D_2$, $D_3$, $\eta$, $\rho_1$, $\rho_2$, $F_1$, $F_2$, $G$) on the relationship between the iterates $x^k$,  stochastic gradients $g^k$, errors $e^k$ and a few other quantities (see Assumption~\ref{ass:key_assumption_new}, and the stronger Assumption~\ref{ass:key_assumption_finite_sums_new}). We prove a single theorem (Theorem~\ref{thm:main_result_new}) from which all our complexity results follow as special cases. That is, for each existing or new specific method, we {\em prove} that one (or both) of our parametric assumptions holds, and specify the parameters for which it holds. These parameters have direct impact on the theoretical rate of the method. A summary of the values of the parameters for all methods developed in this paper is provided in Table~\ref{tbl:special_cases-parameters} in the appendix. We remark that the values of the parameters $A, A', B_1, B_1', B_2, B_2', C_1, C_2$ and $\rho_1, \rho_2$ influence the theoretical stepsize. 

\textbf{$\diamond$ Sharp rates.} For  existing methods covered by our general framework, our main convergence result (Theorem~\ref{thm:main_result_new}) recovers the best known rates for these methods up to constant factors. 

\textbf{$\diamond$ Eight new error-compensated (EC) methods.} We study several specific EC methods for solving problem \eqref{eq:main_problem}. First, we recover  the {\tt EC-SGD} method first analyzed in the $n=1$ case by \citet{stich2019error} and later in the general $n\geq 1$ case by \citet{beznosikov2020biased}. More importantly, we develop {\em eight new methods}: {\tt EC-SGDsr}, {\tt EC-GDstar}, {\tt EC-SGD-DIANA}, {\tt EC-SGDsr-DIANA}, {\tt EC-GD-DIANA}, {\tt EC-LSVRG}, {\tt EC-LSVRGstar} and {\tt EC-LSVRG-DIANA}. 
Some of these methods are designed to work with the expectation  structure of the local functions $f_i$ given in \eqref{eq:f_i_expectation}, and others are specifically designed
 to exploit the  finite-sum structure \eqref{eq:f_i_sum}. All these methods follow the error-feedback framework \eqref{eq:x^k+1_update}--\eqref{eq:error_update}, with $v_i^k$ chosen as in \eqref{eq:bu98gf}. They differ in how the gradient estimator $g_i^k$ is {\em constructed} (see Table~\ref{tbl:EC_methods_summary} for a compact description of all these methods;  formal descriptions can be found in the appendix). As we shall see, the existing {\tt EC-SGD} method uses a rather  naive gradient estimator, which renders it less efficient in theory and practice when compared to the best of our new methods. A key property of our parametric assumption described above is that it allows for the construction and modeling of more elaborate gradient estimators, which leads to new EC methods with superior theoretical and practical properties when compared to prior state of the art.

\begin{table*}[!t]
\caption{Complexity of Error-Compensated SGD methods established in this paper. Symbols: $\varepsilon = $ error tolerance; $\delta = $ contraction factor of compressor $\cC$; $\omega = $ variance parameter of compressor $\cQ$; $\kappa = \nicefrac{L}{\mu}$; $\cL =$ expected smoothness constant; $\sigma_*^2 = $ variance of the stochastic gradients in the solution; $\zeta_*^2 =$ average of $\|\nabla f_i(x^*)\|^2$; $\sigma^2 =$ average of the uniform bounds for the variances of stochastic gradients of workers. {\tt EC-GDstar}, {\tt EC-LSVRGstar} and {\tt EC-LSVRG-DIANA} are the first EC methods with a linear convergence rate without assuming that $\nabla f_i(x^*)=0$ for all $i$. {\tt EC-LSVRGstar} and {\tt EC-LSVRG-DIANA} are the first EC methods with a linear convergence rate which do not require the computation of the full gradient $\nabla f_i(x^k)$ by all workers in each iteration. Out of these three methods, only {\tt EC-LSVRG-DIANA} is practical. $^\dagger${\tt EC-GD-DIANA} is a special case of {\tt EC-SGD-DIANA} where each worker $i$ computes the full gradient  $\nabla f_i(x^k)$.
}
\label{tbl:special_cases2}
\begin{center}
\tiny
\begin{tabular}{|c|l|c|c|c|c|}
\hline
\bf Problem & \bf Method &   \bf Alg \# &  \bf Citation &  \bf  Sec \#  
& \bf Rate (constants ignored)\\
\hline
\eqref{eq:main_problem}+\eqref{eq:f_i_sum} & {\tt EC-SGDsr}  & Alg \ref{alg:ec-SGDsr} & {\color{red}\bf new} & \ref{sec:ec_SGDsr} 
& {\color{red}$\widetilde{\cO}\left(\frac{\cL}{\mu} + \frac{L+\sqrt{\delta L\cL}}{\delta\mu} + \frac{\sigma_*^2}{n\mu\varepsilon} + \frac{\sqrt{L(\sigma_*^2 + \nicefrac{\zeta_*^2}{\delta})}}{\mu\sqrt{\delta\varepsilon}}\right)$}\\
\hline
\eqref{eq:main_problem}+\eqref{eq:f_i_expectation} & {\tt EC-SGD}  & Alg \ref{alg:ec-sgd} & {\cite{stich2019error}}  & \ref{sec:ec_sgd_pure} 
& $\widetilde{\cO}\left(\frac{\kappa}{\delta} + \frac{\sigma_*^2}{n\mu\varepsilon} + \frac{\sqrt{L(\sigma_*^2 + \nicefrac{\zeta_*^2}{\delta})}}{\delta\mu\sqrt{\varepsilon}}\right)$\\
\hline
\eqref{eq:main_problem}+\eqref{eq:f_i_sum} & {\tt EC-GDstar}  & Alg \ref{alg:EC-GDstar} & {\color{red}\bf new}
& \ref{sec:ec_SGDstar} 
& {\color{red} $\cO\left( \frac{\kappa}{\delta}  \log \frac{1}{\varepsilon} \right)$ } \\
\hline
\eqref{eq:main_problem}+\eqref{eq:f_i_expectation} & {\tt EC-SGD-DIANA}  & Alg \ref{alg:EC-SGD-DIANA} & {\color{red}\bf new}
& \ref{sec:ec_diana} 
& \begin{tabular}{c}
	{\color{red}Opt.\ I: $\widetilde{\cO}\left(\omega + \frac{\kappa}{\delta} + \frac{\sigma^2}{n\mu\varepsilon} + \frac{\sqrt{L\sigma^2}}{\delta\mu\sqrt{\varepsilon}}\right)$}\\
	{\color{red}Opt.\ II: $\widetilde{\cO}\left(\frac{1+\omega}{\delta} + \frac{\kappa}{\delta} + \frac{\sigma^2}{n\mu\varepsilon} + \frac{\sqrt{L\sigma^2}}{\mu\sqrt{\delta\varepsilon}}\right)$}
\end{tabular}\\
\hline
\eqref{eq:main_problem}+\eqref{eq:f_i_sum} & {\tt EC-SGDsr-DIANA}  & Alg \ref{alg:EC-SGDsr-DIANA} & {\color{red}\bf new} & \ref{sec:ec_sgdsr_DIANA} 
& \begin{tabular}{c}
	{\color{red} Opt.\ I: $\widetilde{\cO}\left(\omega + \frac{\cL}{\mu} + \frac{\sqrt{L\cL}}{\delta\mu} + \frac{\sigma_*^2}{n\mu\varepsilon} + \frac{\sqrt{L\sigma_*^2}}{\delta\mu\sqrt{\varepsilon}}\right)$}\\
	{\color{red} Opt.\ II: $\widetilde{\cO}\left(\frac{1+\omega}{\delta} + \frac{\cL}{\mu} + \frac{\sqrt{L\cL}}{\delta\mu} + \frac{\sigma_*^2}{n\mu\varepsilon} + \frac{\sqrt{L\sigma_*^2}}{\mu\sqrt{\delta\varepsilon}}\right)$}
\end{tabular}\\
\hline
\eqref{eq:main_problem}+\eqref{eq:f_i_expectation} & {\tt EC-GD-DIANA}$^\dagger$  & Alg \ref{alg:EC-SGD-DIANA} & {\color{red}\bf new}
& \ref{sec:ec_diana} 
& {\color{red}$\cO\left(\left(\omega + \frac{\kappa}{\delta}\right) \log \frac{1}{\varepsilon}\right)$} \\
\hline
\eqref{eq:main_problem}+\eqref{eq:f_i_sum} & {\tt EC-LSVRG}  & Alg \ref{alg:ec-LSVRG} & {\color{red}\bf new}
& \ref{sec:ec_LSVRG} 
& {\color{red}$\widetilde{\cO}\left(m + \frac{\kappa}{\delta} + \frac{\sqrt{L\zeta_*^2}}{\delta\mu\sqrt{\varepsilon}}\right)$}\\
\hline
\eqref{eq:main_problem}+\eqref{eq:f_i_sum} & {\tt EC-LSVRGstar}  & Alg \ref{alg:ec-LSVRGstar} & {\color{red}\bf new}
 & \ref{sec:ec_LSVRGstar} 
 & {\color{red}$\cO\left(\left(m + \frac{\kappa}{\delta}\right)  \log \frac{1}{\varepsilon}\right)$} \\
\hline
\eqref{eq:main_problem}+\eqref{eq:f_i_sum} & {\tt EC-LSVRG-DIANA}  & Alg \ref{alg:ec-LSVRG-diana} & {\color{red}\bf new}
& \ref{sec:ec_LSVRG-diana} 
& {\color{red}$\cO \left(\left( \omega + m + \frac{\kappa }{\delta} \right) \log \frac{1}{\varepsilon}\right)$} \\
\hline
\end{tabular}
\end{center}
\end{table*}

\begin{table}[h]\caption{Error compensated methods developed in this paper. In all cases, $v_i^k = \cC(e_i^k + \gamma g_i^k)$. The full descriptions of the algorithms are included in the appendix.} \label{tbl:EC_methods_summary}
\begin{center}
\footnotesize
\begin{tabular}{|c|c|c|c|c|} 
\hline
Problem & Method & $g_i^k $ & Comment \\
\hline
\eqref{eq:main_problem} + \eqref{eq:f_i_sum}       &   {\tt EC-SGDsr}           & $\frac{1}{m} 
\sum \limits_{j=1}^m \xi_{ij}\nabla f_{ij}(x^k)$ & \begin{tabular}{c}$\Exp\left[ \xi_{ij} \right] = 1 $ \\  $\EE_{\cD_i}\left[\|\nabla f_{\xi_i}(x) - \nabla f_{\xi_i}(x^*)\|^2\right]$ \\
	$~~~~~\le 2\cL D_{f_i}(x,x^*)$
\end{tabular}  \\  
\hline
\eqref{eq:main_problem} + \eqref{eq:f_i_expectation}       &   {\tt EC-SGD}           & $\nabla f_{\xi_i}(x^k)$ &  \\  
\hline

\eqref{eq:main_problem}             & {\tt EC-GDstar}           & $\nabla f_i(x^k) - \nabla f_i(x^*)$ & known $\nabla f_i(x^*) \; \forall i$  \\  
\hline
\eqref{eq:main_problem} + \eqref{eq:f_i_expectation}              & {\tt EC-SGD-DIANA}   & $\hat{g}_i^k - h_i^k + h^k$ & \begin{tabular}{c}$\EE\left[\hat{g}_i^k\right] = \nabla f_i(x^k)$ \\
 			$\Exp_k \left[ \|\hat{g}_i^k - \nabla f_i(x^k)\|^2\right] \leq D_{1,i}$\\
             $h_i^{k+1} = h_i^k + \alpha \cQ(\hat{g}_i^k - h_i^k)$\\
             $h^k = \frac{1}{n} \sum \limits_{i=1}^n h_i^k$ \end{tabular}  \\
\hline
\eqref{eq:main_problem} +  \eqref{eq:f_i_sum}               & {\tt EC-SGDsr-DIANA}   & $\nabla f_{\xi_i^k}(x^k) - h_i^k + h^k$ & \begin{tabular}{c}$\EE\left[\nabla f_{\xi_i^k}(x^k)\right] = \nabla f_i(x^k)$ \\
 			$\EE_{\cD_i}\left[\|\nabla f_{\xi_i}(x) - \nabla f_{\xi_i}(x^*)\|^2\right]$ \\
	$~~~~~\le 2\cL D_{f_i}(x,x^*)$\\
             $h_i^{k+1} = h_i^k + \alpha \cQ(\nabla f_{\xi_i^k}(x^k) - h_i^k)$\\
             $h^k = \frac{1}{n} \sum \limits_{i=1}^n h_i^k$ \end{tabular}  \\
\hline
\eqref{eq:main_problem} +  \eqref{eq:f_i_sum}         &   {\tt EC-LSVRG}           & {$ \begin{aligned} \nabla & f_{il}(x^k) - \nabla f_{il}(w_i^k) \\ &+ \nabla f_i(w_i^k) \end{aligned} $}  & \begin{tabular}{c}$l$ chosen uniformly from $[m]$\\ $w_i^{k+1} = \begin{cases}x^k,& \text{with prob. } p,\\ w_i^k,& \text{with prob. } 1-p\end{cases}$\end{tabular}  \\  
\hline
\eqref{eq:main_problem} +  \eqref{eq:f_i_sum}         &   {\tt EC-LSVRGstar}           & { $ \begin{aligned}\nabla & f_{il}(x^k) - \nabla f_{il}(w_i^k) \\ & + \nabla f_i(w_i^k) - \nabla f_i(x^*) \end{aligned}$ }  & \begin{tabular}{c}$l$ chosen uniformly from $[m]$\\ $w_i^{k+1} = \begin{cases}x^k,& \text{with prob. } p,\\ w_i^k,& \text{with prob. } 1-p\end{cases}$\end{tabular}  \\  
\hline
\eqref{eq:main_problem} +  \eqref{eq:f_i_sum}         &   {\tt EC-LSVRG-DIANA}           & \begin{tabular}{c} $\hat{g}_i^k - h_i^k + h^k$ \\ where \\ { $ \begin{aligned}& \hat{g}_i^k =  \nabla  f_{il}(x^k) \\ &- \nabla f_{il}(w_i^k)  + \nabla f_i(w_i^k)  \end{aligned}$ } \end{tabular} & \begin{tabular}{c}  $h_i^{k+1} = h_i^k + \alpha \cQ(\hat{g}_i^k - h_i^k)$\\
             $h^k = \frac{1}{n} \sum \limits_{i=1}^n h_i^k$ \\ $l$ chosen uniformly from $[m]$\\ $w_i^{k+1} = \begin{cases}x^k,& \text{with prob. } p,\\ w_i^k,& \text{with prob. } 1-p\end{cases}$\end{tabular} \\  
\hline
\end{tabular}
\end{center}
\end{table}

\textbf{$\diamond$ First linearly converging EC methods.}  The key theoretical consequence of our general framework is the development of the {\em first linearly converging} error-compensated {\tt SGD}-type methods for distributed training with biased communication compression. In particular, we design four such methods: two simple  but impractical methods, {\tt EC-GDstar} and {\tt EC-LSVRGstar}, with rates $\cO\left(\frac{\kappa}{\delta}\ln\frac{1}{\varepsilon}\right)$ and $\cO\left(\left(m+\frac{\kappa}{\delta}\right)\ln\frac{1}{\varepsilon}\right)$, respectively,
and two  practical but more elaborate methods, {\tt EC-GD-DIANA}, with rate  $\cO\left(\left(\omega + \frac{\kappa }{\delta}\right) \ln \frac{1}{\varepsilon}\right)$, and  {\tt EC-LSVRG-DIANA}, with rate $\cO\left(\left(\omega + m + \frac{\kappa }{\delta}\right) \ln \frac{1}{\varepsilon}\right).$
In these rates, $\kappa=\nicefrac{L}{\mu}$ is the condition number,  $0<\delta \leq 1$ is the contraction parameter associated with the compressor $\cC$ used in \eqref{eq:bu98gf}, and $\omega$ is the variance parameter associated with a {\em secondary unbiased compressor\footnote{We assume that $\Exp{\cQ(x)}=x$ and $\Exp{\|\cQ(x)-x\|^2} \leq \omega \|x\|^2$ for all $x\in \R^d$.} $\cQ$} which plays a key role in the construction of the gradient estimator $g_i^k$. The complexity of the  first and third methods does not depend on $m$ as they require the computation of the full gradient $\nabla f_i(x^k)$ for each $i$. The remaining two methods only need to compute $\cO(1)$ stochastic gradients $\nabla f_{ij}(x^k)$ on each worker $i$.  

The first two methods, while impractical, provided us with the intuition which enabled us to develop the practical variant. We include them in this paper due to their simplicity, because of the added insights they offer, and to showcase the flexibility of our general theoretical framework, which is able to describe them. {\tt EC-GDstar} and {\tt EC-LSVRGstar} are impractical since they require the knowledge of the gradients $\{\nabla f_i(x^*)\}$, where $x^*$ is an optimal solution of \eqref{eq:main_problem}, which are obviously not known since $x^*$ is not known. 

The only known linear convergence result for an error compensated {\tt SGD} method is due to \citet{beznosikov2020biased}, who require the computation of the full gradient of $f_i$ by each  machine $i$ (i.e., $m$ stochastic gradients), and the additional assumption that $\nabla f_i(x^*) = 0$ for all $i$. We do not need such assumptions, thereby resolving a major theoretical issue with EC methods.

\textbf{$\diamond$ Results in the convex case.} 
Our theoretical analysis goes beyond distributed optimization and recovers the results from \citet{gorbunov2019unified,khaled2020unified} (without regularization) in the  special case when $v_i^k \equiv \gamma g_i^k$. As we have seen, in this case $e_i^k\equiv 0$ for all $i$ and $k$, and  the error-feedback framework \eqref{eq:x^k+1_update}--\eqref{eq:error_update}  reduces to distributed {\tt SGD} \eqref{eq:SGD-ss}. In this regime, the relation \eqref{eq:sum_of_errors_bound_new} in Assumption~\ref{ass:key_assumption_new} becomes void, while relations \eqref{eq:second_moment_bound_new} and \eqref{eq:sigma_k+1_bound_1} with $\sigma_{2,k}^2\equiv 0$ are precisely those used by \citet{gorbunov2019unified} to analyze a wide array of {\tt SGD} methods, including  vanilla {\tt SGD} \cite{robbins1985stochastic}, {\tt SGD} with arbitrary sampling \cite{gower2019sgd}, as well as variance reduced methods such as {\tt SAGA} \cite{SAGA}, {\tt SVRG} \cite{SVRG}, {\tt LSVRG} \cite{hofmann2015variance, kovalev2019don}, {\tt JacSketch} \cite{gower2018stochastic},  {\tt SEGA} \cite{hanzely2018sega} and {\tt DIANA} \cite{mishchenko2019distributed, horvath2019stochastic}. Our theorem recovers the rates of all the methods just listed in both the convex case $\mu = 0$ \citet{khaled2020unified} and the strongly-convex case $\mu > 0$ \citet{gorbunov2019unified} under the more general Assumption~\ref{ass:key_assumption_new}. 


\textbf{$\diamond$ {\tt DIANA} with bi-directional quantization.}
To illustrate how our framework can be used even in the case when $v_i^k \equiv \gamma g_i^k$, $e_i^k \equiv 0$, we develop analyze a new version of {\tt DIANA} called {\tt DIANAsr-DQ} that uses arbitrary sampling on every node and double quantization\footnote{In the concurrent work (which appeared on arXiv after we have submitted our paper to NeurIPS) a similar method was independently proposed under the name of {\tt Artemis} \cite{philippenko2020artemis}. However, our analysis is  more general, see all the details on this method in the appendix. This footnote was added to the paper during the preparation of the camera-ready version of our paper.}, i.e., unbiased compression not only on the workers' side but also on the master's one.

\textbf{$\diamond$ Methods with delayed updates.}
Following \citet{stich2019unified}, we also show that our approach covers {\tt SGD} with delayed updates \cite{agarwal2011distributed, arjevani2018tight, feyzmahdavian2016asynchronous} ({\tt D-SGD}), and our analysis shows the best-known rate for this method. Due to the flexibility of our framework, we are able develop several new variants of {\tt D-SGD} with and without quantization, variance reduction, and arbitrary sampling. Again, due to space limitations, we put these methods together with their convergence analyses in the appendix.

\section{Main Result}\label{sec:main_res}
In this section we present the main theoretical result of our paper.  First,  we introduce our assumption on $f$, which is a relaxation of $\mu$-strong convexity.
\begin{assumption}[$\mu$-strong quasi-convexity]\label{ass:quasi_strong_convexity}
	Assume that function $f$ has a unique minimizer $x^*$. We say that function $f$ is strongly quasi-convex with parameter $\mu\ge 0$ if for all $x\in\R^d$
	\begin{equation}
	\mytextstyle	f(x^*) \ge f(x) + \langle\nabla f(x), x^* - x\rangle + \frac{\mu}{2}\|x - x^*\|^2. \label{eq:str_quasi_cvx}
	\end{equation}
\end{assumption}
We allow $\mu$ to be zero, in which case  $f$ is sometimes called {\em weakly quasi-convex} in the literature (see \cite{stich2019unified} and references therein). Second, we introduce the classical $L$-smoothness assumption.
\begin{assumption}{$L$-smoothness}\label{ass:L_smoothness}
	We say that function $f$ is $L$-smooth if it is differentiable and its gradient is $L$-Lipschitz continuous, i.e., for all $x,y\in\R^d$
	\begin{equation}
		\|\nabla f(x) - \nabla f(y)\| \le L\|x-y\|. \label{eq:L_smoothness}
	\end{equation}
\end{assumption}
It is a well-known fact \cite{nesterov2018lectures} that $L$-smoothness of convex function $f$ implies that 
\begin{equation}
	\|\nabla f(x) - \nabla f(y)\|^2 \le 2L(f(x) - f(y) - \langle\nabla f(y), x-y\rangle) \eqdef 2LD_{f}(x,y). \label{eq:L_smoothness_cor}
\end{equation}

We now introduce our key parametric assumption on  the stochastic gradient $g^k$. This is a generalization of the assumption introduced by \citet{gorbunov2019unified} for the particular class of methods described covered by the EF framework \eqref{eq:x^k+1_update}--\eqref{eq:error_update}.

\begin{assumption}\label{ass:key_assumption_finite_sums_new}
	For all $k\ge 0$, the stochastic gradient $g^k$ is an average of stochastic gradients $g_i^k$ such that
	\begin{equation}
\mytextstyle		g^k = \frac{1}{n}\sum\limits_{i=1}^ng_i^k,\qquad \EE\left[g^k\mid x^k\right] = \nabla f(x^k). \label{eq:unbiasedness_g_i^k_new}
	\end{equation}
	Moreover,  there exist  constants $A,\widetilde{A}, A', B_1, B_2, \widetilde{B}_1, \widetilde{B}_2, B_1', B_2', C_1, C_2, G, D_1, \widetilde{D}_1, D_1', D_2, D_3 \ge 0$, and $\rho_1,\rho_2 \in [0,1]$ and two sequences of (probably random) variables $\{\sigma_{1,k}\}_{k\ge 0}$ and $\{\sigma_{2,k}\}_{k\ge 0}$, such  that the following recursions hold:
	\begin{eqnarray}
	\mytextstyle	\frac{1}{n}\sum\limits_{i=1}^n\left\|\bar{g}_i^k\right\|^2 &\le& 2A(f(x^k) - f(x^*)) + B_1\sigma_{1,k}^2 + B_2\sigma_{2,k}^2 + D_1, \label{eq:second_moment_bound_g_i^k_new}\\
\mytextstyle		\frac{1}{n}\sum\limits_{i=1}^n\EE\left[\left\|g_i^k-\bar{g}_i^k\right\|^2\mid x^k\right] &\le& 2\widetilde{A}(f(x^k) - f(x^*)) + \widetilde{B}_1\sigma_{1,k}^2 + \widetilde{B}_2\sigma_{2,k}^2 + \widetilde{D}_1, \label{eq:variance_bound_g_i^k_new}\\
		\EE\left[\|g^k\|^2\mid x^k\right] &\le& 2A'(f(x^k) - f(x^*)) + B_1'\sigma_{1,k}^2 + B_2'\sigma_{2,k}^2 + D_1', \label{eq:second_moment_bound_new}\\
		\EE\left[\sigma_{1,k+1}^2\mid \sigma_{1,k}^2, \sigma_{2,k}^2\right] &\le& (1-\rho_1)\sigma_{1,k}^2 + 2C_1\left(f(x^k) - f(x^*)\right) + G\rho_1 \sigma_{2,k}^2 + D_2,\label{eq:sigma_k+1_bound_1}\\
		\EE\left[\sigma_{2,k+1}^2\mid \sigma_{2,k}^2\right] &\le& (1-\rho_2)\sigma_{2,k}^2 + 2C_2\left(f(x^k) - f(x^*)\right),\label{eq:sigma_k+1_bound_2}
	\end{eqnarray}
	where $\bar{g}_i^k = \EE\left[g_i^k\mid x^k\right]$.
\end{assumption}

Let us briefly explain the intuition behind the assumption and the meaning of the introduced parameters. First of all, we assume that the stochastic gradient at iteration $k$ is conditionally unbiased estimator of $\nabla f(x^k)$, which is a natural and commonly used assumption on the stochastic gradient in the literature. However, we explicitly do {\em not} require unbiasedness of $g_i^k$, which is very useful in some special cases. Secondly, let us consider the simplest special case when $g^k \equiv \nabla f(x^k)$ and $f_1 = \ldots = f_n = f$, i.e., there is no stochasticity/randomness in the method and the workers have the same functions. Then due to $\nabla f(x^*) = 0$, we have that
\begin{equation*}
	\|\nabla f(x^k)\|^2 \overset{\eqref{eq:L_smoothness_cor}}{\le} 2L(f(x^k) - f(x^*)),
\end{equation*}
which implies that Assumption~\ref{ass:key_assumption_finite_sums_new} holds in this case with $A = A' = L$, $\widetilde{A}=0$ and $B_1 = B_2 = \widetilde{B}_1 = \widetilde{B}_2 = B_1' = B_2' = C_1 = C_2 = D_1 = \widetilde{D}_1 = D_1' = D_2 = 0$, $\rho = 1$, $\sigma_{1,k}^2 \equiv \sigma_{2,k}^2  \equiv 0$.

In general, if $g^k$ satisfies Assumption~\ref{ass:key_assumption_new}, then  parameters $A$, $\widetilde{A}$ and $A'$ are usually connected with the smoothness properties of $f$ and typically they are just multiples of $L$, whereas terms $B_1\sigma_{1,k}^2$, $B_2\sigma_{2,k}^2$, $\widetilde{B}_1\sigma_{1,k}^2$, $\widetilde{B}_2\sigma_{2,k}^2$, $B_1'\sigma_{1,k}^2$, $B_2'\sigma_{2,k}^2$ and $D_1$, $\widetilde{D}_1$, $D_1'$ appear due to the stochastic nature of $g_i^k$. Moreover, $\{\sigma_{1,k}^2\}_{k\ge 0}$ and $\{\sigma_{2,k}^2\}_{k\ge 0}$ are sequences connected with variance reduction processes and for the methods; without any kind of variance reduction these sequences contains only zeros. Parameters $B_1$ and $B_2$ are often $0$ or small positive constants, e.g., $B_1 = B_2 = 2$, and $D_1$ characterizes the remaining variance in the estimator $g^k$ that is not included in the first two terms. 

Inequalities \eqref{eq:sigma_k+1_bound_1} and \eqref{eq:sigma_k+1_bound_2} describe the variance reduction processes: one can interpret $\rho_1$ and $\rho_2$ as the {\em rates} of the variance reduction processes, $2C_1(f(x^k) - f(x^*))$ and $2C_2(f(x^k) - f(x^*))$ are ``optimization'' terms and, similarly to $D_1$, $D_2$ represents the remaining variance that is not included in the first two terms. Typically, $\sigma_{1,k}^2$ controls the variance coming from compression and $\sigma_{2,k}^2$ controls the variance taking its origin in finite-sum type randomization (i.e., subsampling) by each worker. In the case $\rho_1 = 1$ we assume that $B_1 = B_1' = C_1 = G = 0, D_2 = 0$ (for $\rho_2 = 1$ analogously), since inequality \eqref{eq:sigma_k+1_bound_1} becomes superfluous.

However, in our main result we need a slightly different assumption.
\begin{assumption}\label{ass:key_assumption_new}
	For all $k\ge 0$, the stochastic gradient $g^k$ is an unbiased estimator of $\nabla f(x^k)$:
	\begin{equation}
		\EE\left[g^k\mid x^k\right] = \nabla f(x^k). \label{eq:unbiasedness_new}
	\end{equation}
	Moreover, there exist non-negative constants $A',B_1', B_2',C_1, C_2,F_1, F_2, G, D_1',D_2, D_3 \ge 0, \rho_1, \rho_2 \in [0,1]$ and two sequences of (probably random) variables $\{\sigma_{1,k}\}_{k\ge 0}$ and $\{\sigma_{2,k}\}_{k\ge 0}$ such that inequalities \eqref{eq:second_moment_bound_new}, \eqref{eq:sigma_k+1_bound_1} and \eqref{eq:sigma_k+1_bound_2} hold and
	\begin{eqnarray}
	\mytextstyle	3L\sum\limits_{k=0}^K w_k\EE\|e^k\|^2 &\le& \mytextstyle \frac{1}{4}\sum\limits_{k=0}^K w_k\EE\left[f(x^k) - f(x^*)\right] + F_1\sigma_{1,0}^2 + F_2\sigma_{2,0}^2 + \gamma D_3 W_K \label{eq:sum_of_errors_bound_new}
	\end{eqnarray}
for all $k,	K\ge 0$, where $e^k = \frac{1}{n}\sum_{i=1}^n e_i^k$ and $\{W_K\}_{K\ge 0}$ and $\{w_k\}_{k\ge 0}$ are defined as 
	\begin{equation}
	\mytextstyle			W_K = \sum\limits_{k=0}^K w_k,\quad w_k = (1 - \eta)^{-(k+1)},\quad \eta = \min\left\{\frac{\gamma\mu}{2}, \frac{\rho_1}{4}, \frac{\rho_2}{4}\right\}. \label{eq:w_k_definition_new}
	\end{equation}		
\end{assumption}
This assumption is more flexible than Assumption~\ref{ass:key_assumption_finite_sums_new} and helps us to obtain a unified analysis of all methods falling in the error-feedback framework. We emphasize that in this assumption we do not assume that \eqref{eq:second_moment_bound_g_i^k_new} and \eqref{eq:variance_bound_g_i^k_new} hold \textit{explicitly}. Instead of this, we introduce inequality \eqref{eq:sum_of_errors_bound_new}, which is the key tool that helps us to analyze the effect of error-feedback and comes from the analysis from \cite{stich2019error} with needed adaptations connected with the first three inequalities. As we show in the appendix, this inequality can be derived for {\tt SGD} with error compensation and delayed updates under Assumption~\ref{ass:key_assumption_finite_sums_new} and, in particular, using \eqref{eq:second_moment_bound_g_i^k_new} and \eqref{eq:variance_bound_g_i^k_new}. As before, $D_3$ hides a variance that is not handled by variance reduction processes and $F_1$ and $F_2$ are some constants that typically depend on $L, B_1, B_2, \rho_1, \rho_2$ and $\gamma$.

We now proceed to stating our main theorem.

\begin{theorem}\label{thm:main_result_new}
	Let Assumptions~\ref{ass:quasi_strong_convexity},~\ref{ass:L_smoothness} and ~\ref{ass:key_assumption_new} be satisfied and~$\gamma \le \nicefrac{1}{4(A'+C_1M_1+C_2M_2)}$. Then for all $K\ge 0$ we have
	\begin{equation}
	\mytextstyle	\EE\left[f(\bar x^K) - f(x^*)\right] \le \left(1 - \eta\right)^K\frac{4(T^0 + \gamma F_1 \sigma_{1,0}^2+ \gamma F_2 \sigma_{2,0}^2)}{\gamma} + 4\gamma\left(D_1' + M_1D_2 + D_3\right) \label{eq:main_result_new}
	\end{equation}	
	when $\mu > 0$ and
	\begin{equation}
	\mytextstyle	\EE\left[f(\bar x^K) - f(x^*)\right] \le \frac{4(T^0 + \gamma F_1 \sigma_{1,0}^2+ \gamma F_2 \sigma_{2,0}^2)}{\gamma K} + 4\gamma\left(D_1' + M_1D_2 + D_3\right) \label{eq:main_result_new_cvx}
	\end{equation}
	when $\mu = 0$, where $\eta = \min\left\{\nicefrac{\gamma\mu}{2},\nicefrac{\rho_1}{4},\nicefrac{\rho_2}{4}\right\}$, $T^k \eqdef \|\tx^k - x^*\|^2 + M_1\gamma^2 \sigma_{1,k}^2 + M_2\gamma^2 \sigma_{2,k}^2$ and $M_1 = \frac{4B_1'}{3\rho_1}$, $M_2 = \frac{4\left(B_2' + \frac{4}{3}G\right)}{3\rho_2}$.
\end{theorem}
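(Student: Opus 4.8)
The plan is to follow the error-feedback analysis of \citet{stich2019error}, adapted to the abstract gradient model encoded in Assumption~\ref{ass:key_assumption_new}. The crucial first step is to pass from the true iterate $x^k$ to the \emph{virtual iterate} $\tx^k \eqdef x^k - e^k$, where $e^k = \frac1n\sum_{i=1}^n e_i^k$. Averaging \eqref{eq:error_update} gives $\frac1n\sum_i v_i^k = e^k + \gamma g^k - e^{k+1}$, and substituting into \eqref{eq:x^k+1_update} yields $x^{k+1}-e^{k+1} = x^k - e^k - \gamma g^k$, i.e. $\tx^{k+1} = \tx^k - \gamma g^k$. Thus the compressed/delayed method behaves, on the shifted sequence, like plain {\tt SGD}, and the whole effect of compression is pushed into the error $e^k$, to be controlled later by \eqref{eq:sum_of_errors_bound_new}. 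Since $e^0=0$ we have $\tx^0 = x^0$, consistent with $T^0 = \|\tx^0-x^*\|^2 + M_1\gamma^2\sigma_{1,0}^2 + M_2\gamma^2\sigma_{2,0}^2$.

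Next I would derive a one-step descent inequality for $\|\tx^{k+1}-x^*\|^2$. Expanding the square and taking $\EE[\cdot\mid x^k]$, unbiasedness \eqref{eq:unbiasedness_new} turns the cross term into $-2\gamma\langle\nabla f(x^k),\tx^k-x^*\rangle$. Writing $\tx^k-x^* = (x^k-x^*)-e^k$, I apply $\mu$-strong quasi-convexity \eqref{eq:str_quasi_cvx} to the $x^k-x^*$ part, obtaining $-2\gamma(f(x^k)-f(x^*)) - \gamma\mu\|x^k-x^*\|^2$, and bound the remaining $+2\gamma\langle\nabla f(x^k),e^k\rangle$ by Young's inequality combined with \eqref{eq:L_smoothness_cor} (which gives $\|\nabla f(x^k)\|^2\le 2L(f(x^k)-f(x^*))$ since $\nabla f(x^*)=0$). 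To convert the contraction term into the Lyapunov variable I use $\|x^k-x^*\|^2 \ge \tfrac12\|\tx^k-x^*\|^2 - \|e^k\|^2$, producing $-\tfrac{\gamma\mu}{2}\|\tx^k-x^*\|^2$ (matching $\eta\le\gamma\mu/2$) at the cost of an $\|e^k\|^2$ term. Finally $\gamma^2\EE\|g^k\|^2$ is bounded via \eqref{eq:second_moment_bound_new}.

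Then I assemble the Lyapunov function by adding $M_1\gamma^2\sigma_{1,k+1}^2 + M_2\gamma^2\sigma_{2,k+1}^2$ and invoking the recursions \eqref{eq:sigma_k+1_bound_1}--\eqref{eq:sigma_k+1_bound_2}. Here the choices $M_1=\frac{4B_1'}{3\rho_1}$ and $M_2=\frac{4(B_2'+\frac43 G)}{3\rho_2}$ are forced: they make the coefficients of $\sigma_{1,k}^2$ and $\sigma_{2,k}^2$ contract with factor $(1-\eta)$ using $\eta\le\rho_i/4$, where the extra $\frac43 G$ absorbs the cross-coupling term $G\rho_1\sigma_{2,k}^2$ fed from the $\sigma_{1}$-recursion into the $\sigma_2$ budget. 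The stepsize bound $\gamma\le\frac{1}{4(A'+C_1M_1+C_2M_2)}$ ensures the aggregate $(f(x^k)-f(x^*))$ coefficient, namely $-2\gamma + 2\gamma^2(A'+C_1M_1+C_2M_2)$ plus the Young contribution, stays negative, while the Young parameter is tuned so the surviving $\|e^k\|^2$ coefficient equals exactly $3L\gamma$. The outcome is $\EE[T^{k+1}]\le(1-\eta)\EE[T^k] - c\gamma\EE[f(x^k)-f(x^*)] + 3L\gamma\EE\|e^k\|^2 + \gamma^2(D_1'+M_1D_2)$ for some fixed $c>\tfrac12$.

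To finish, multiply this recursion by $w_k=(1-\eta)^{-(k+1)}$ and sum over $k=0,\dots,K$; since $(1-\eta)w_k=w_{k-1}$ the Lyapunov part telescopes, leaving $c\gamma\sum_k w_k\EE[f(x^k)-f(x^*)] \le T^0 + 3L\gamma\sum_k w_k\EE\|e^k\|^2 + \gamma^2(D_1'+M_1D_2)W_K$. Now I invoke the error-feedback bound \eqref{eq:sum_of_errors_bound_new}, multiplied by $\gamma$, to absorb $3L\gamma\sum_k w_k\EE\|e^k\|^2$ into $\tfrac{\gamma}{4}\sum_k w_k\EE[f(x^k)-f(x^*)]$ plus the boundary terms $\gamma F_1\sigma_{1,0}^2 + \gamma F_2\sigma_{2,0}^2 + \gamma^2 D_3 W_K$; the $\tfrac14$ is dominated by $c$, so the net $(f-f^*)$ coefficient is at least $\tfrac14\gamma$, which produces the factor $4$. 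Dividing by $W_K$ and applying Jensen to the weighted average $\bar x^K=\frac{1}{W_K}\sum_k w_k x^k$ gives the claimed bound, with $W_K\ge w_K=(1-\eta)^{-(K+1)}$ yielding the geometric factor $(1-\eta)^K$ in \eqref{eq:main_result_new} when $\mu>0$, and $\eta=0$, $W_K=K+1$ yielding the $1/K$ rate in \eqref{eq:main_result_new_cvx} when $\mu=0$.

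The \textbf{main obstacle} is the simultaneous calibration in the third paragraph: the constants $M_1,M_2$, the Young splitting parameter, and the stepsize must be chosen so that \emph{all} of the following hold at once --- the two variance sequences contract at the common rate $(1-\eta)$, the function-value coefficient stays negative after the error term is folded back, and the $\|e^k\|^2$ coefficient is \emph{exactly} the $3L$ that \eqref{eq:sum_of_errors_bound_new} is able to absorb. The coupling constant $G$ between $\sigma_{1}$ and $\sigma_{2}$ is what makes this balancing tight and dictates the precise form of $M_2$; getting these inequalities to close without losing the negativity of the descent term is the delicate point, whereas the telescoping and the final $\mu>0$ versus $\mu=0$ case split are essentially bookkeeping.
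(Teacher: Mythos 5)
Your proposal is correct and takes essentially the same route as the paper: it reproduces the paper's main lemma (the one-step descent bound for the perturbed iterates $\tx^k = x^k - e^k$ via quasi-strong convexity, Fenchel--Young combined with smoothness, the half-square trick, and the Lyapunov weights $M_1,M_2$ under the stepsize restriction), followed by the same weighted telescoping with $w_k=(1-\eta)^{-(k+1)}$, absorption of $3L\sum_{k}w_k\EE\|e^k\|^2$ through \eqref{eq:sum_of_errors_bound_new}, and the Jensen step with the $\mu>0$ versus $\mu=0$ case split. The minor numerical differences (your constant $c>\tfrac12$ versus the paper's exact $\tfrac12$, and the $\|e^k\|^2$ coefficient arising as $\gamma(2L+\mu)\le 3L\gamma$ rather than being tuned to exactly $3L\gamma$) do not affect the argument or the final constants.
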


All the complexity results summarized in Table~\ref{tbl:special_cases2} follow from this theorem; the detailed proofs are included in the appendix. Furthermore, in the appendix we include similar results but for methods employing {\em delayed} updates. The methods, and all associated theory is included there, too.

\section{Numerical Experiments}\label{sec:numerical_exp}
To justify our theory, we conduct several numerical experimentson logistic regression problem with $\ell_2$-regularization:
\begin{equation}
\mytextstyle	\min\limits_{x\in\R^d} \left\{ f(x) = \frac{1}{N}\sum\limits_{i=1}^N \log\left(1 + \exp\left(-y_i\cdot (Ax)_i\right)\right) + \frac{\mu}{2}\|x\|^2 \right\}, \label{eq:log_loss}
\end{equation}
where $N$ is a number of features, $x\in\R^d$ represents the weights of the model, $A\in\R^{N\times d}$ is a feature matrix, vector $y\in\{-1,1\}^N$ is a vector of labels and $(Ax)_i$ denotes the $i$-th component of vector $Ax$. Clearly, this problem is $L$-smooth and $\mu$-strongly convex with $L = \mu + \nicefrac{\lambda_{\max}(A^\top A)}{4N}$, where $\lambda_{\max}(A^\top A)$ is a largest eigenvalue of $A^\top A$. The datasets were taken from LIBSVM library \cite{chang2011libsvm}, and the code was written in Python 3.7 using standard libraries. Our code is available at  \url{https://github.com/eduardgorbunov/ef_sigma_k}.

We simulate parameter-server architecture using one machine with Intel(R) Core(TM) i7-9750 CPU \@ 2.60 GHz in the following way. First of all, we always use such $N$ that $N = n\cdot m$ and consider  $n = 20$ and $n=100$ workers. The choice of $N$ for each dataset that we consider is stated in Table~\ref{tab:logreg_datasets}.
\begin{table}[h]
    \centering \footnotesize
    \caption{Summary of datasets: $N =$  total \# of data samples; $d =$ \# of features.}
    \label{tab:logreg_datasets}
    \begin{tabular}{|c|c|c|c|c|c|c|}
        \hline
         & {\tt a9a} & {\tt w8a} & {\tt gisette} & {\tt mushrooms} & {\tt madelon} & {\tt phishing} \\
        \hline
        $N$ & $32,000$ & $49,700$ & $6,000$ & $8,000$ & $2,000$ & $11,000$ \\
        \hline
        $d$ & $123$ & $300$ & $5,000$ & $112$ & $500$ & $68$ \\
        \hline
    \end{tabular}
\end{table}
Next, we shuffle the data and split in $n$ groups of size $m$. To emulate the work of workers, we use a single machine and run the methods with the parallel loop in series. Since in these experiments we study sample complexity and number of bits used for communication, this setup is identical to the real parameter-server setup in this sense.

In all experiments we use the stepsize $\gamma = \nicefrac{1}{L}$ and $\ell_2$-regularization parameter $\mu = \nicefrac{10^{-4}\lambda_{\max}(A^\top A)}{4N}$. The starting point $x^0$ for each dataset was chosen so that $f(x^0) - f(x^*) \sim 10$. In experiments with stochastic methods we used batches of size $1$ and uniform sampling for simplicity. For {\tt LSVRG}-type methods we choose $p = \nicefrac{1}{m}$. 

\textbf{Compressing stochastic gradients.}
The results for {\tt a9a}, {\tt madelon} and {\tt phishing} can be found in Figure~\ref{fig:sgd_logreg} (included here) and for {\tt w8a}, {\tt mushrooms} and {\tt gisette} in Figure~\ref{fig:sgd_logreg_extra} (in the Appendix). We choose number of components for TopK operator of the order $\max\{1,\nicefrac{d}{100}\}$. Clearly, in these experiments we see two levels of noise. For some datasets, like {\tt a9a}, {\tt phishing} or {\tt mushrooms}, the noise that comes from the stochasticity of the gradients dominates the noise coming from compression. Therefore,  methods such as {\tt EC-SGD} and {\tt EC-SGD-DIANA} start to oscillate around a larger value of the loss function than other methods we consider. {\tt EC-LSVRG} reduces the largest source of noise and, as a result, finds a better approximation of the solution. However, at some point, it reaches another level of the loss function and starts to oscillate there due to the noise coming from compression. Finally, {\tt EC-LSVRG-DIANA} reduces the variance of both types, and as a result, finds an even better approximation of the solution. In contrast, for  the {\tt madelon} dataset, both noises are of the same order, and therefore, {\tt EC-LSVRG} and {\tt EC-SGD-DIANA} behave similarly to {\tt EC-SGD}. However, {\tt EC-LSVRG-DIANA} again reduces both types of noise effectively and finds a better approximation of the solution after a given number of epochs.   In the experiments with {\tt w8a} and {\tt gisette} datasets, the noise produced by compression is dominated by the noise coming from the stochastic gradients. As a result, we see that the {\tt DIANA}-trick is not needed here.

\begin{figure}[h]
    \centering
    \includegraphics[width=0.32\textwidth]{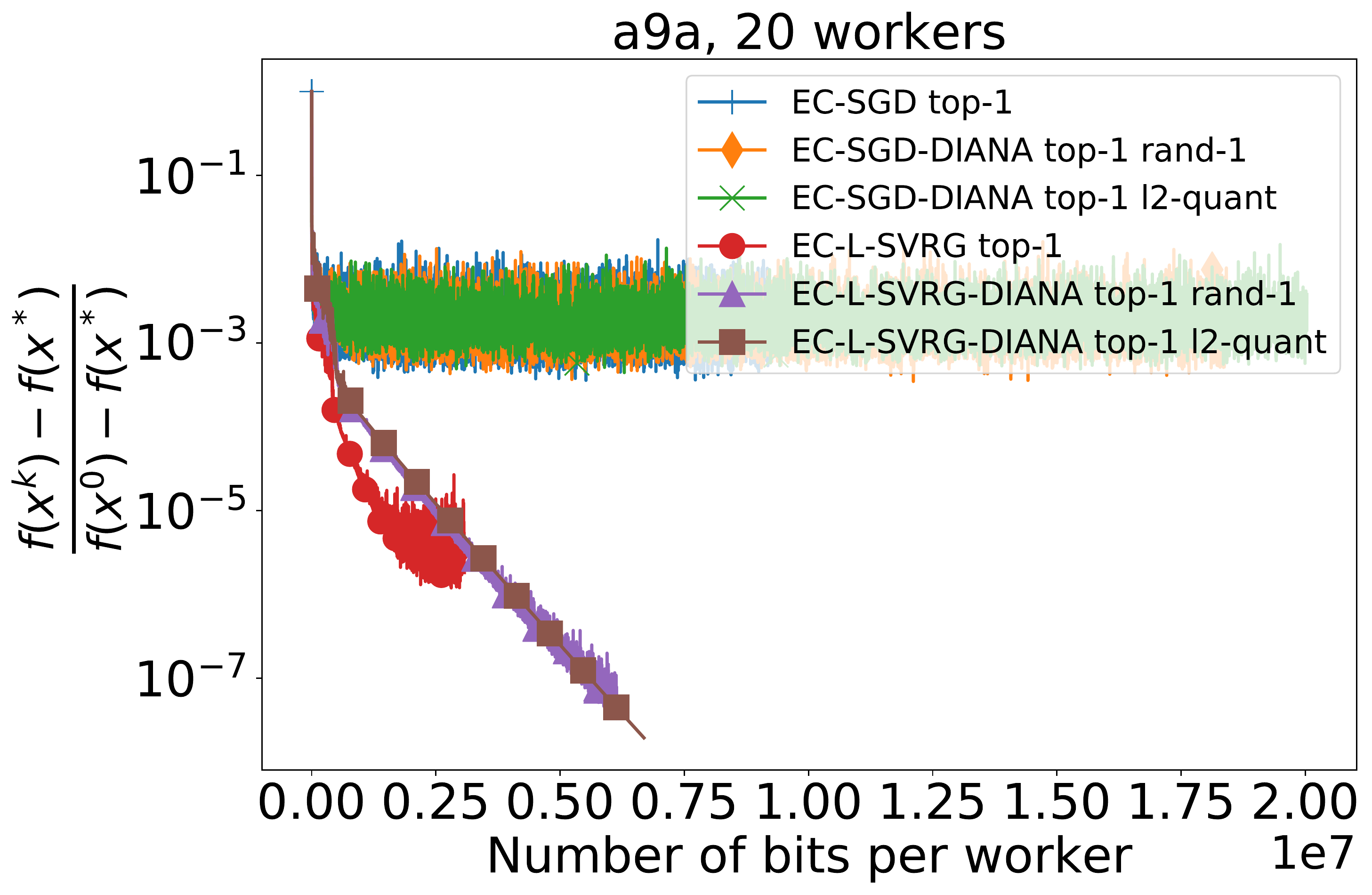}
	\includegraphics[width=0.32\textwidth]{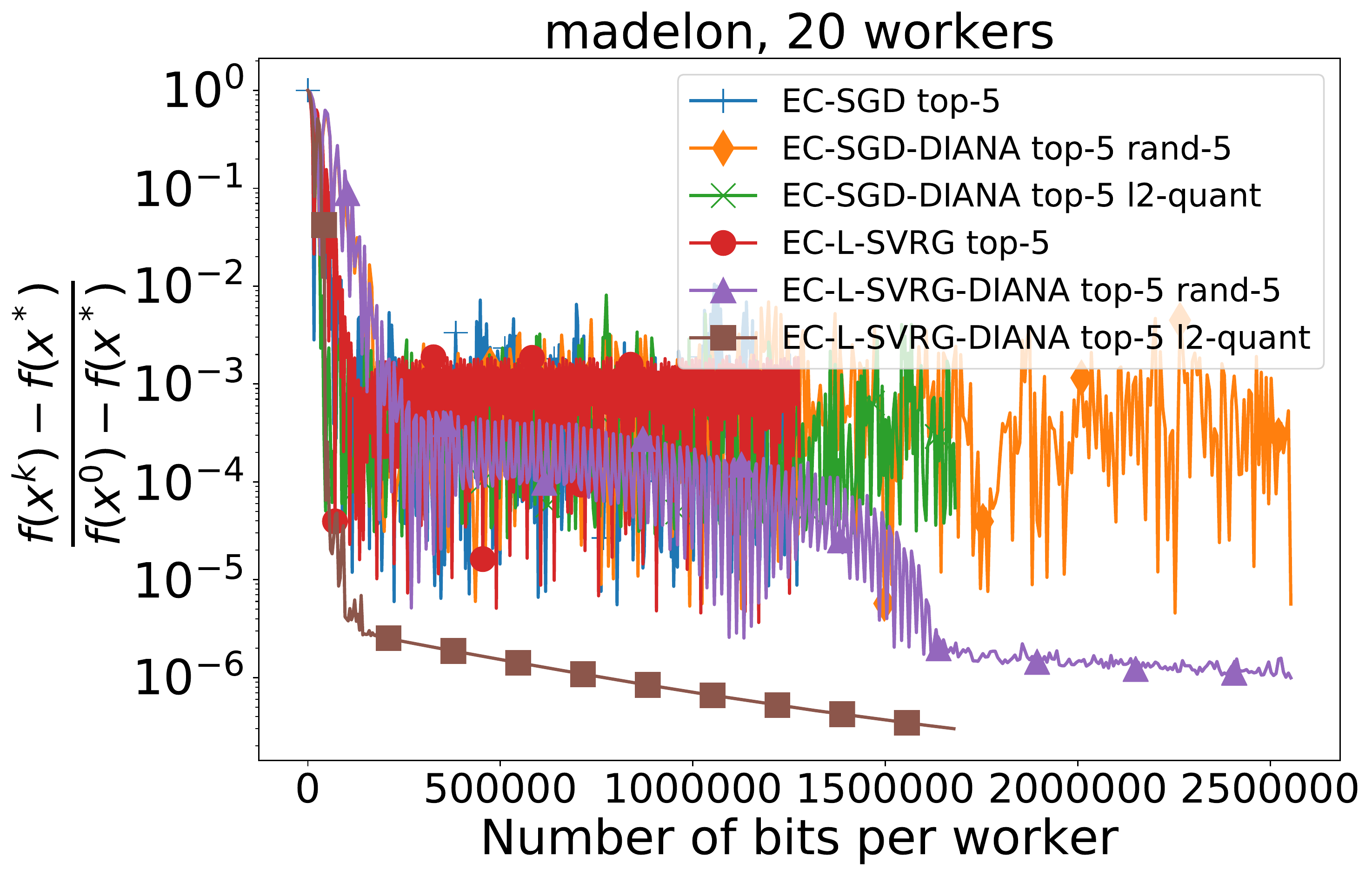}    
	\includegraphics[width=0.32\textwidth]{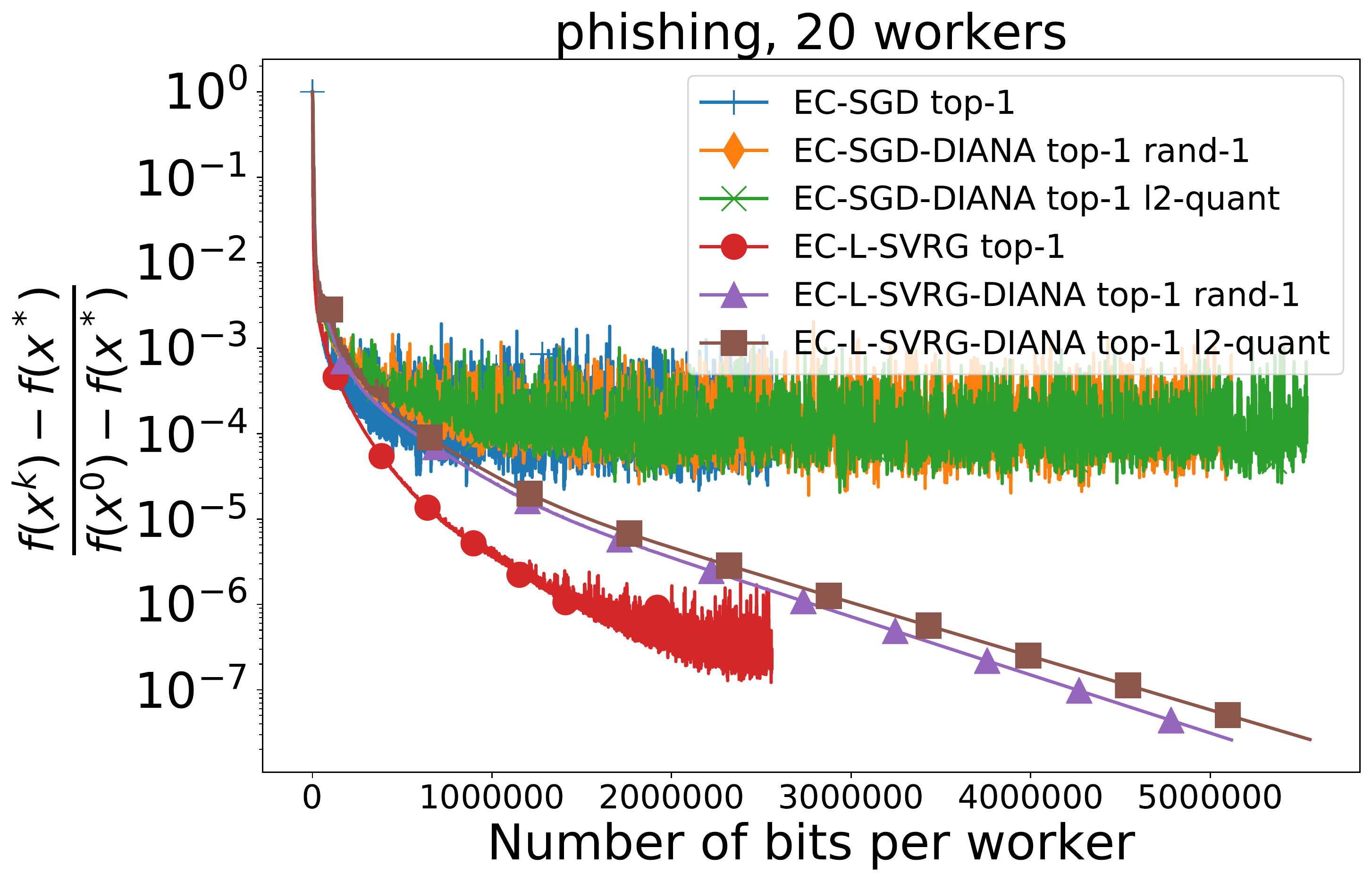}
    \\
    \includegraphics[width=0.32\textwidth]{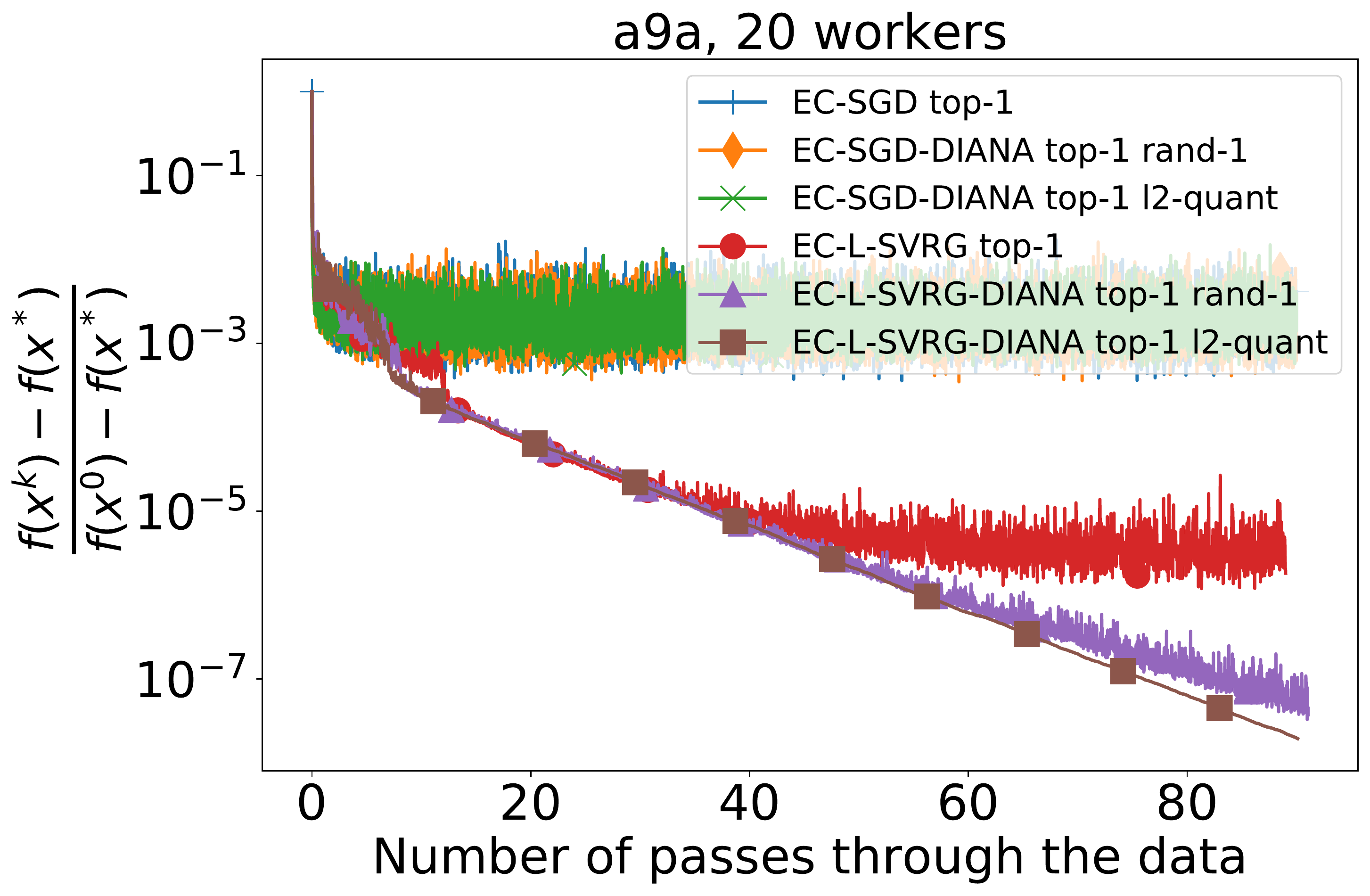}
    \includegraphics[width=0.32\textwidth]{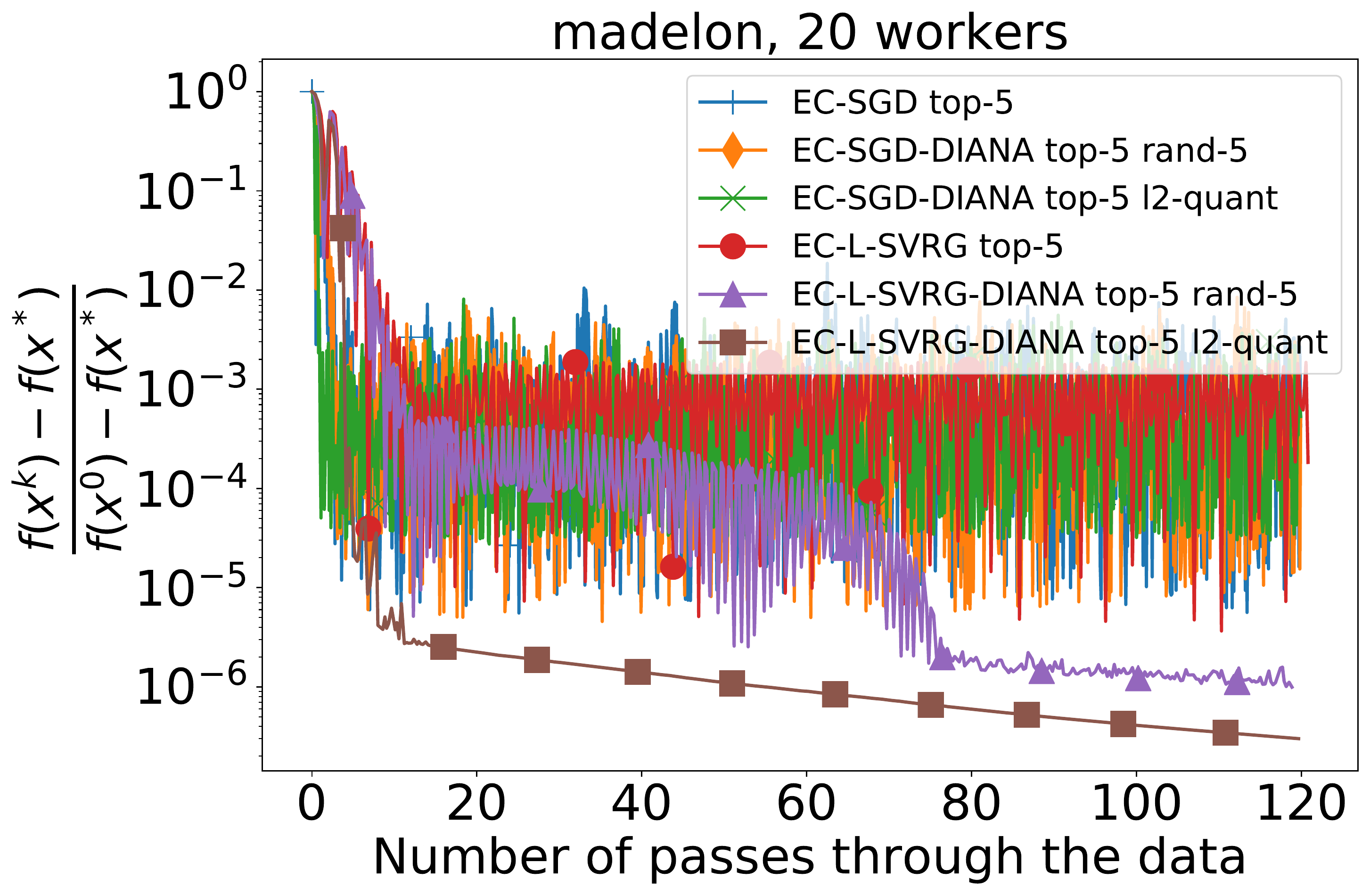}    
	\includegraphics[width=0.32\textwidth]{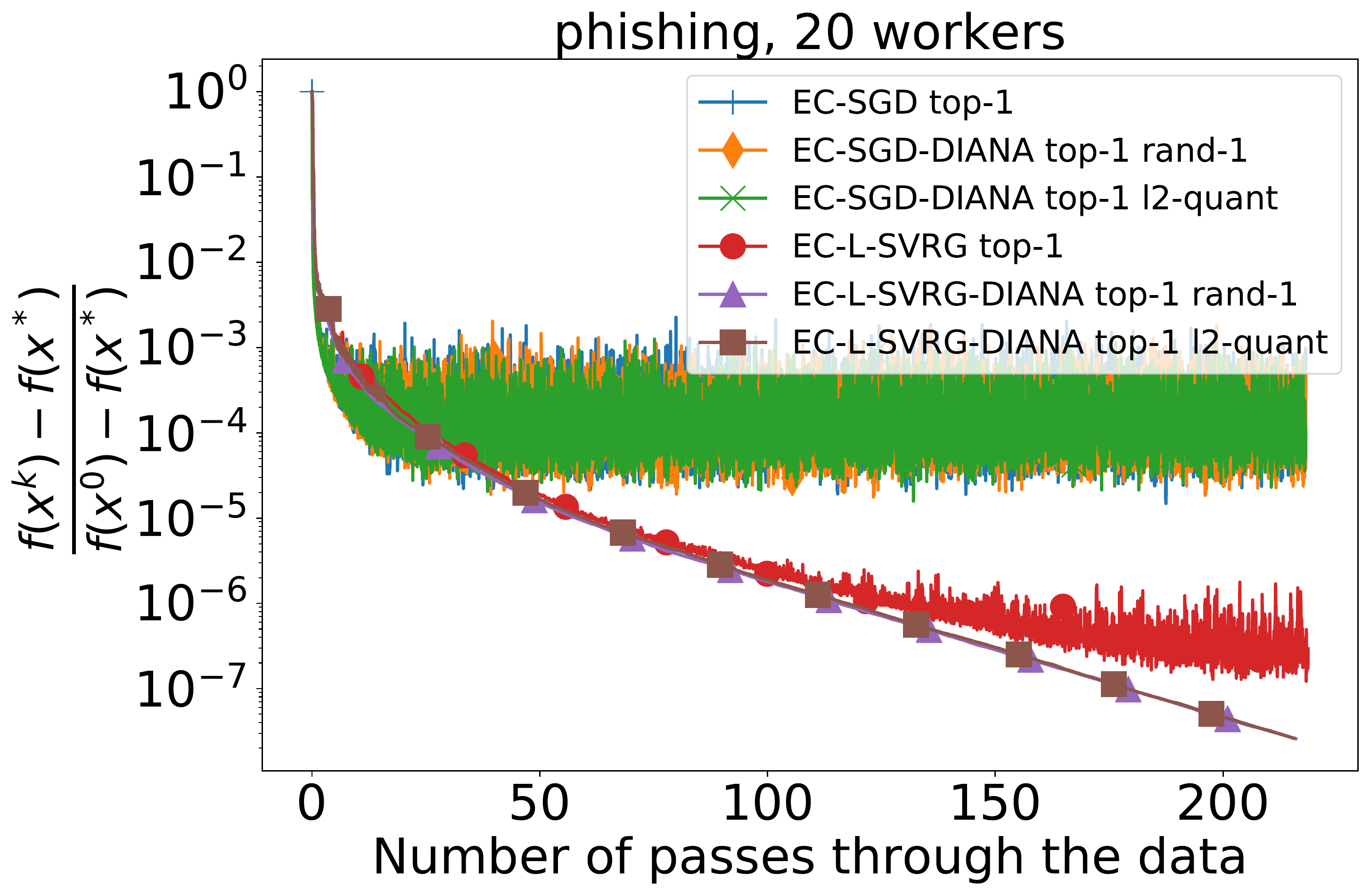} 
     
    \caption{Trajectories of {\tt EC-SGD}, {\tt EC-SGD-DIANA}, {\tt EC-LSVRG} and {\tt EC-LSVRG-DIANA} applied to solve logistic regression problem with $20$ workers.}
    \label{fig:sgd_logreg}
\end{figure}

\textbf{Compressing full gradients.}
\begin{figure}[b]
    \centering
    \includegraphics[width=0.32\textwidth]{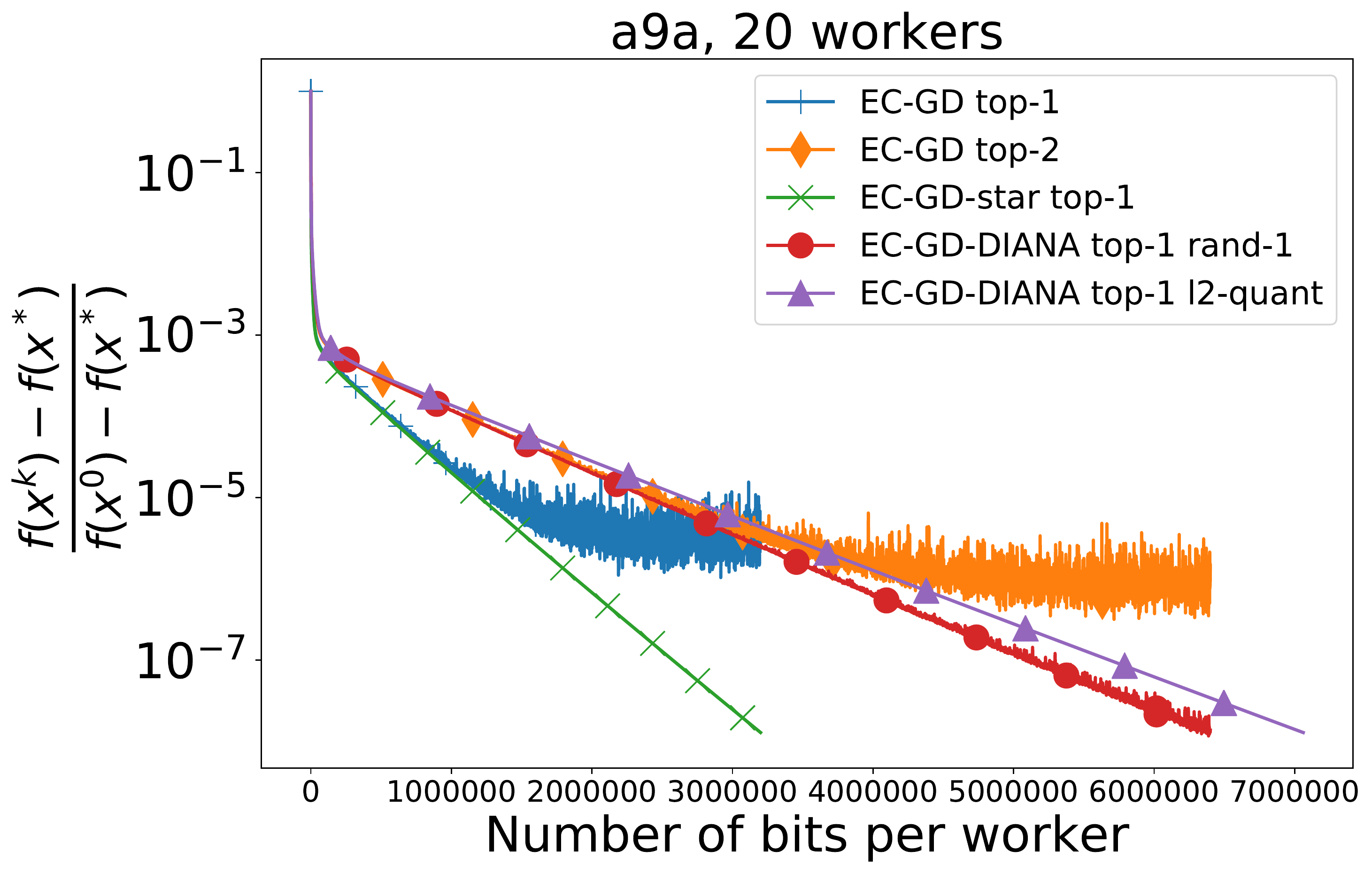}
	\includegraphics[width=0.32\textwidth]{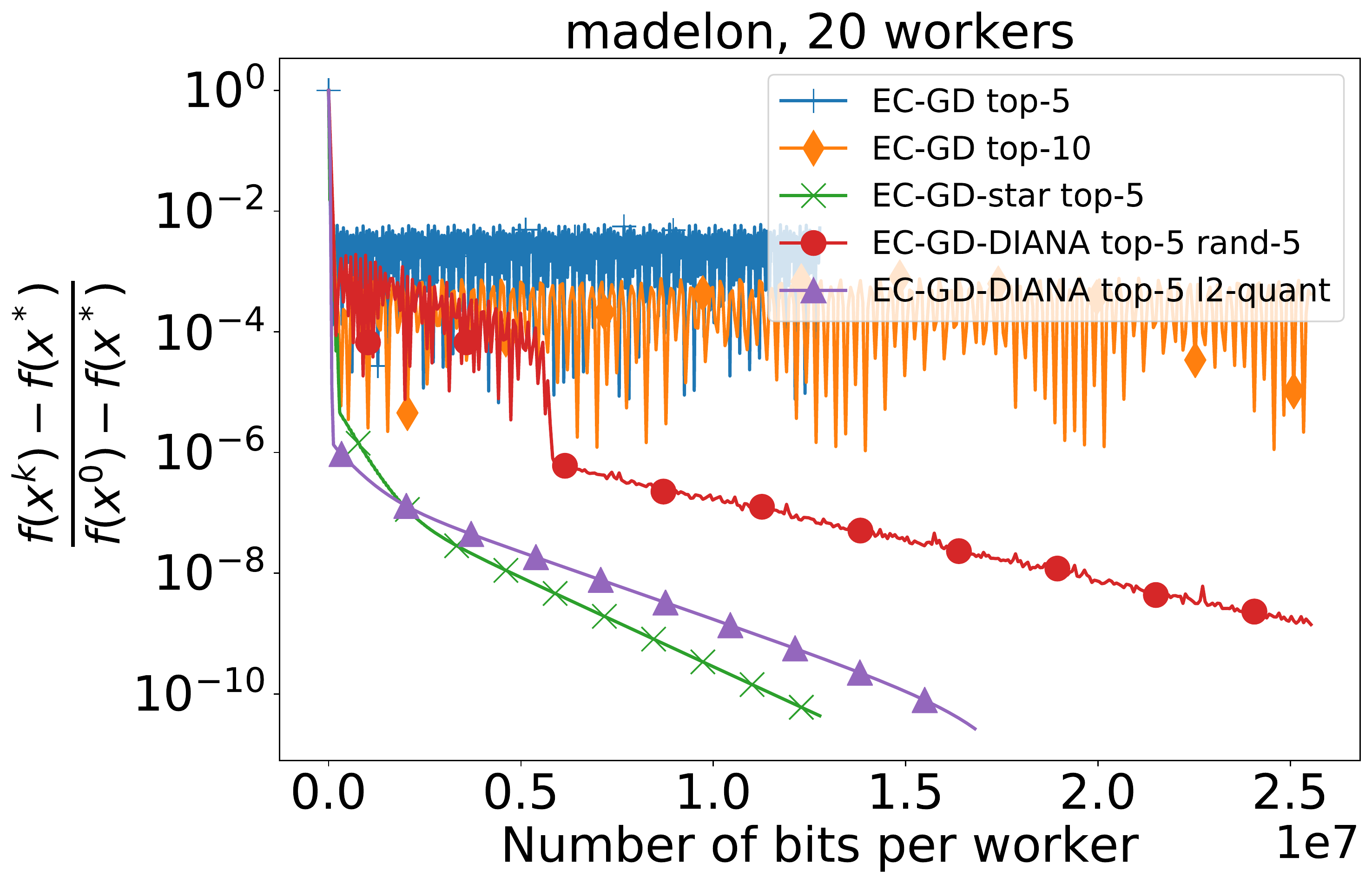}    
	\includegraphics[width=0.32\textwidth]{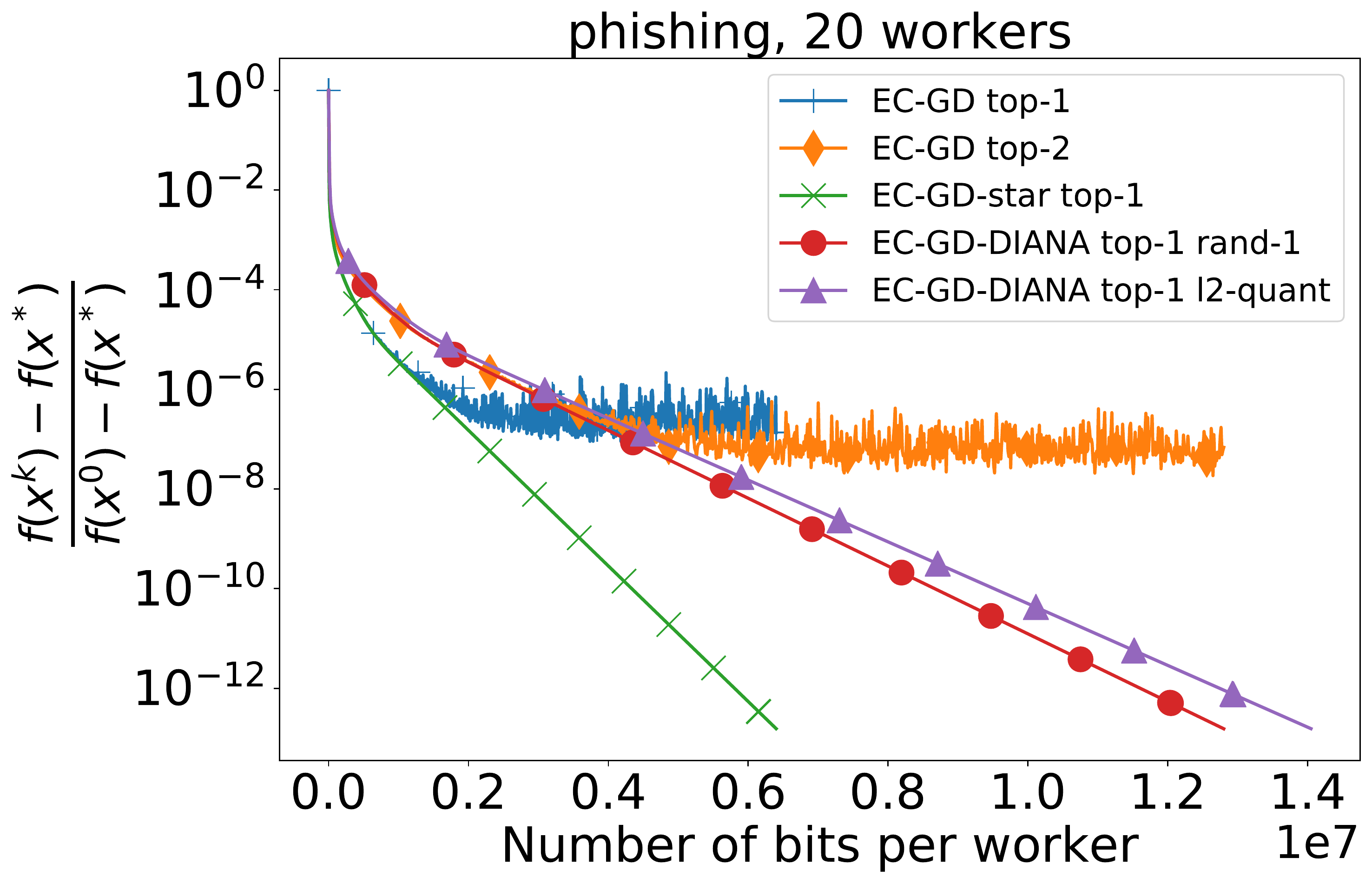}    
    \\
    \includegraphics[width=0.32\textwidth]{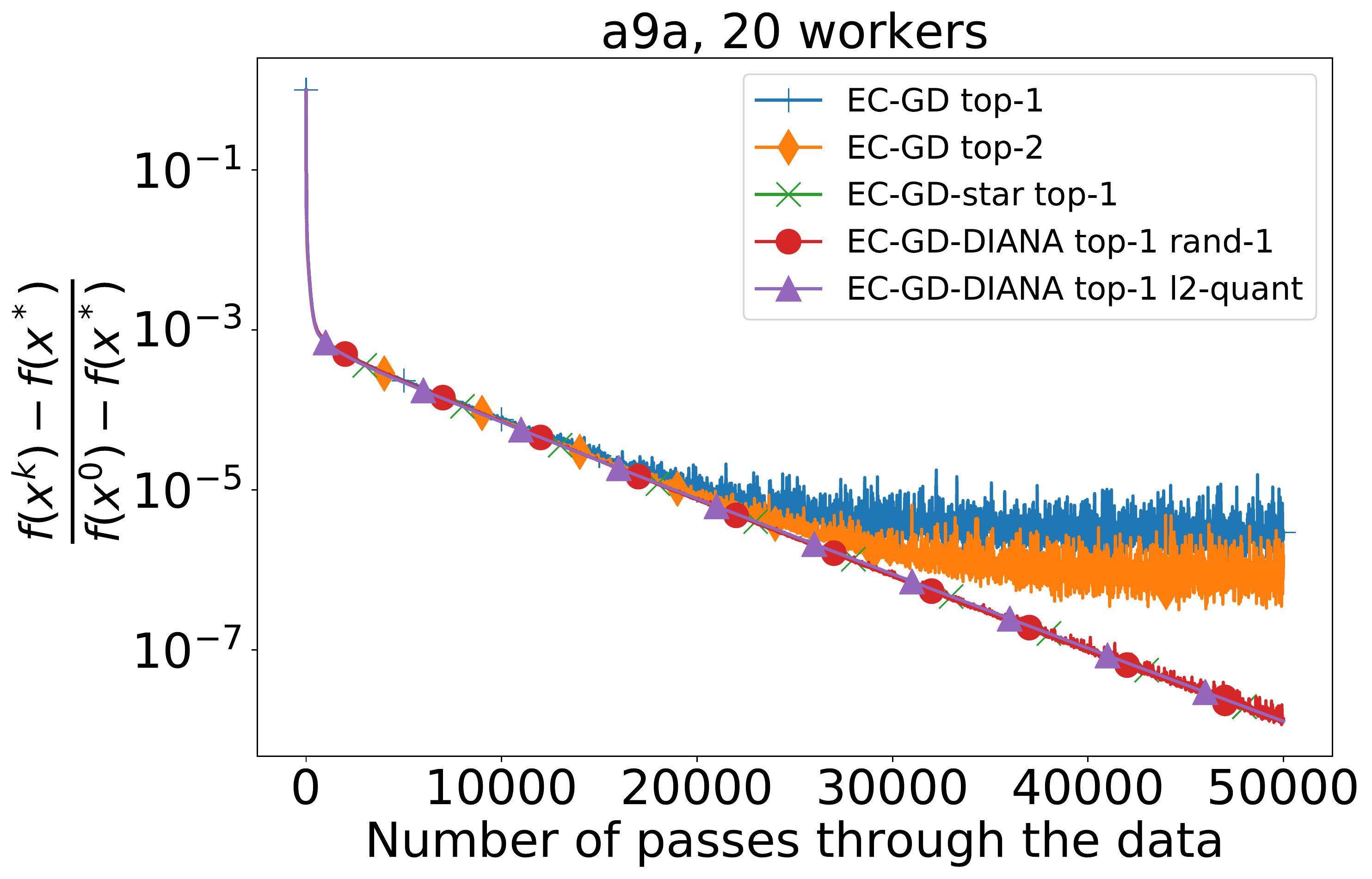}    
	\includegraphics[width=0.32\textwidth]{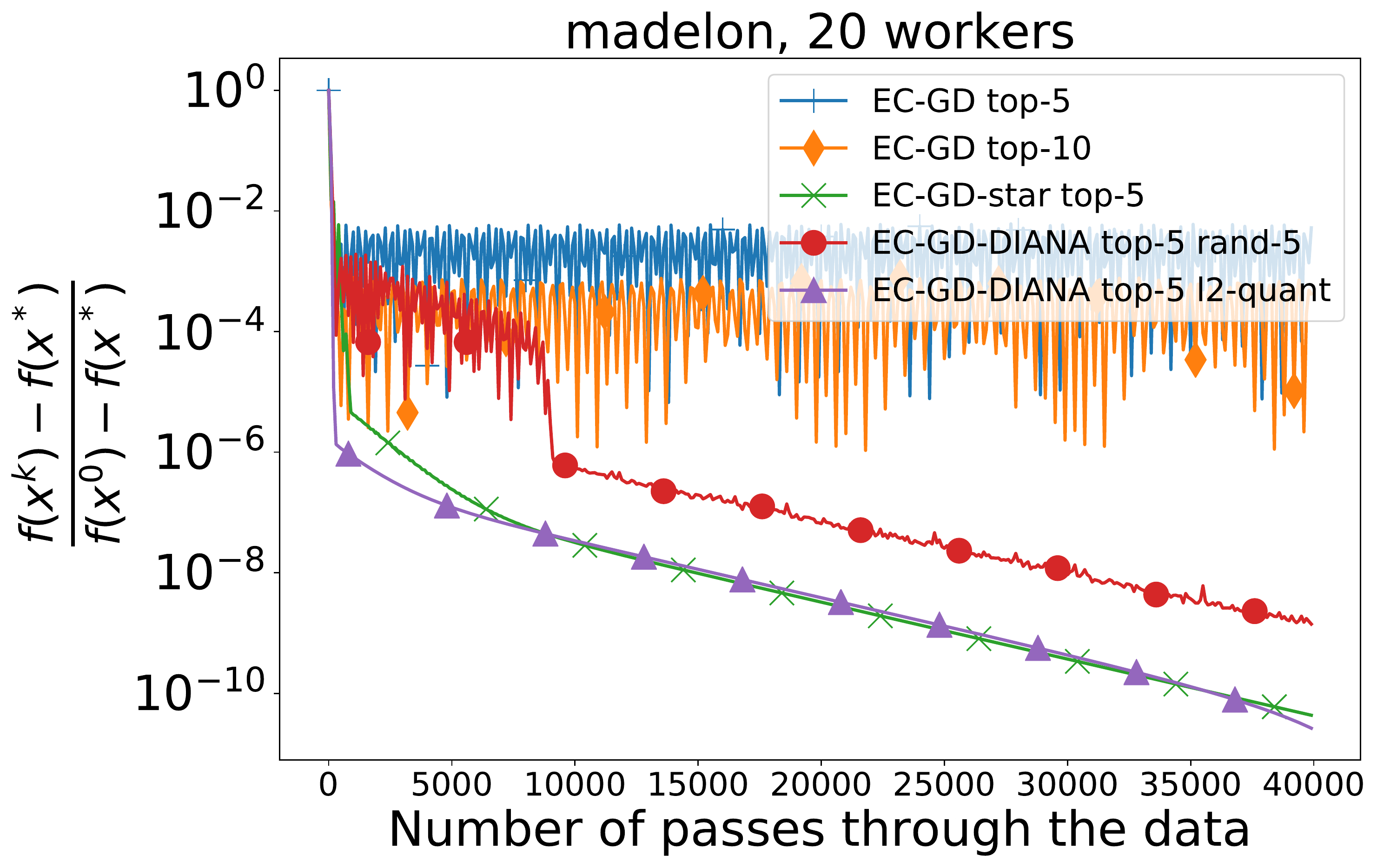}    
	\includegraphics[width=0.32\textwidth]{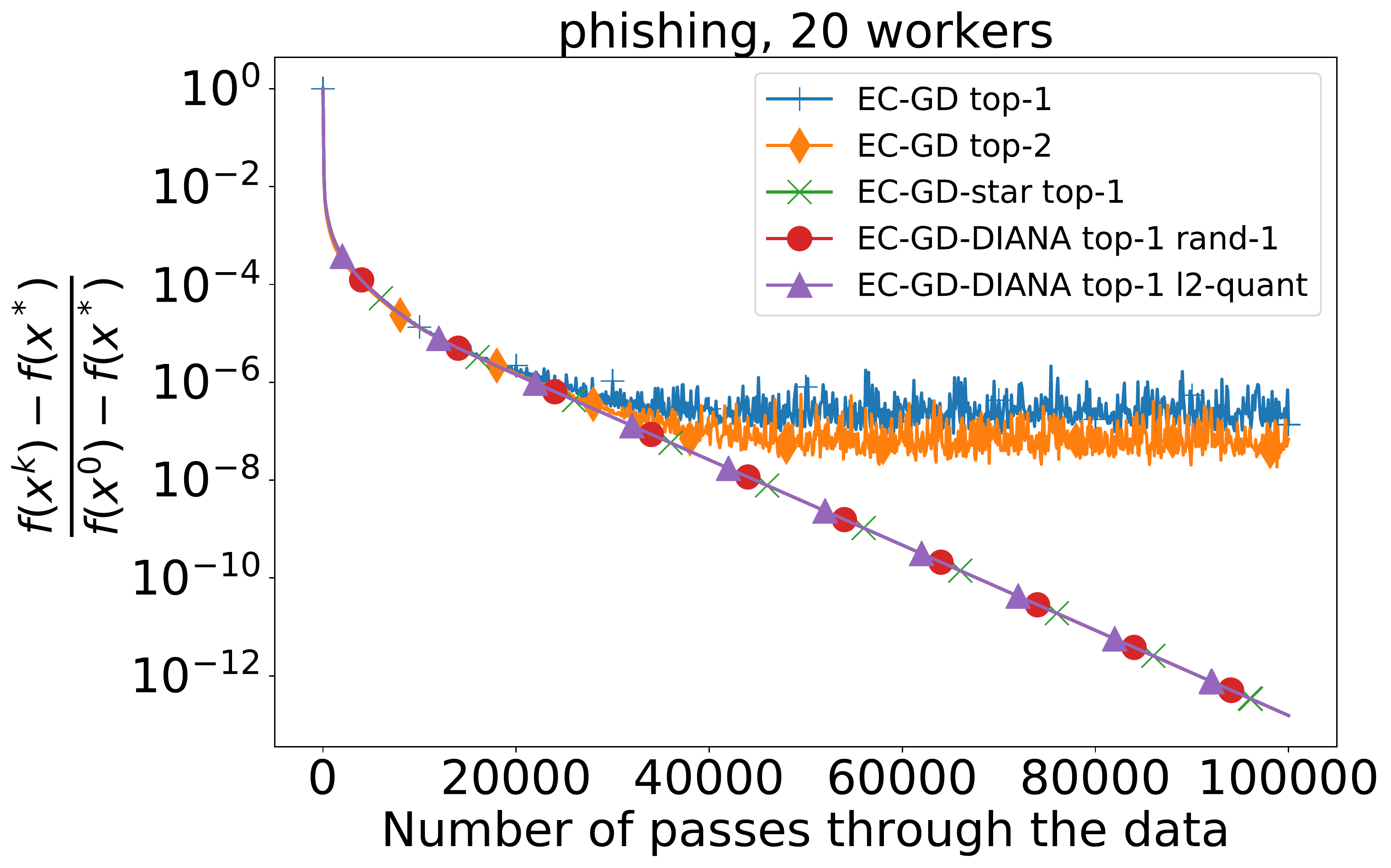}    	
	\caption{Trajectories of {\tt EC-GD}, {\tt EC-GD-star} and {\tt EC-DIANA} applied to solve logistic regression problem with $20$ workers.}
    \label{fig:gd_logreg_20_workers}
\end{figure}
In order to show the effect of {\tt DIANA}-type variance reduction itself, we consider the case when all workers compute the full gradients of their functions, see Figure~\ref{fig:gd_logreg_20_workers} (included here) and Figures~\ref{fig:gd_logreg_20_workers_appendix}--\ref{fig:gd_logreg_100_workers_id} (in the Appendix). Clearly, for all datasets except {\tt mushrooms}, {\tt EC-GD} with constant stepsize converges  to a neighborhood of the solution only, while {\tt EC-GDstar} and {\tt EC-GD-DIANA} converge with linear rate asymptotically to the exact solution. {\tt EC-GDstar} always show the best performance, however, it is impractical: we used a very good approximation of the solution to apply this method. In contrast, {\tt EC-DIANA} converges slightly slower and requires more bits for communication; but it is practical and shows better performance than {\tt EC-GD}. On the {\tt mushrooms} datasets, {\tt EC-GD} does not reach the oscillation region after the given number of epochs, therefore, it is preferable there.

\clearpage

\section*{Broader Impact}
Our contribution is primarily theoretical. Therefore, a broader impact discussion is not applicable.

\begin{ack}
The work of Peter Richt\'{a}rik, Eduard Gorbunov and Dmitry Kovalev was supported by KAUST Baseline Research Fund. Part of this work was done while E.~Gorbunov was a research intern at KAUST. The research of E.~Gorbunov was also partially supported by the Ministry of Science and Higher Education of the Russian Federation (Goszadaniye) 075-00337-20-03 and RFBR, project number 19-31-51001. 
\end{ack}

\bibliography{ef_sigma_k}

\clearpage

\part*{\Huge Appendix: Linearly Converging Error Compensated SGD}
\appendix

{
\tableofcontents
}

\clearpage

\section{Missing Plots} \label{sec:moreexperiments}

\subsection{Compressing stochastic gradients}

\begin{figure}[h]
    \centering
    \includegraphics[width=0.32\textwidth]{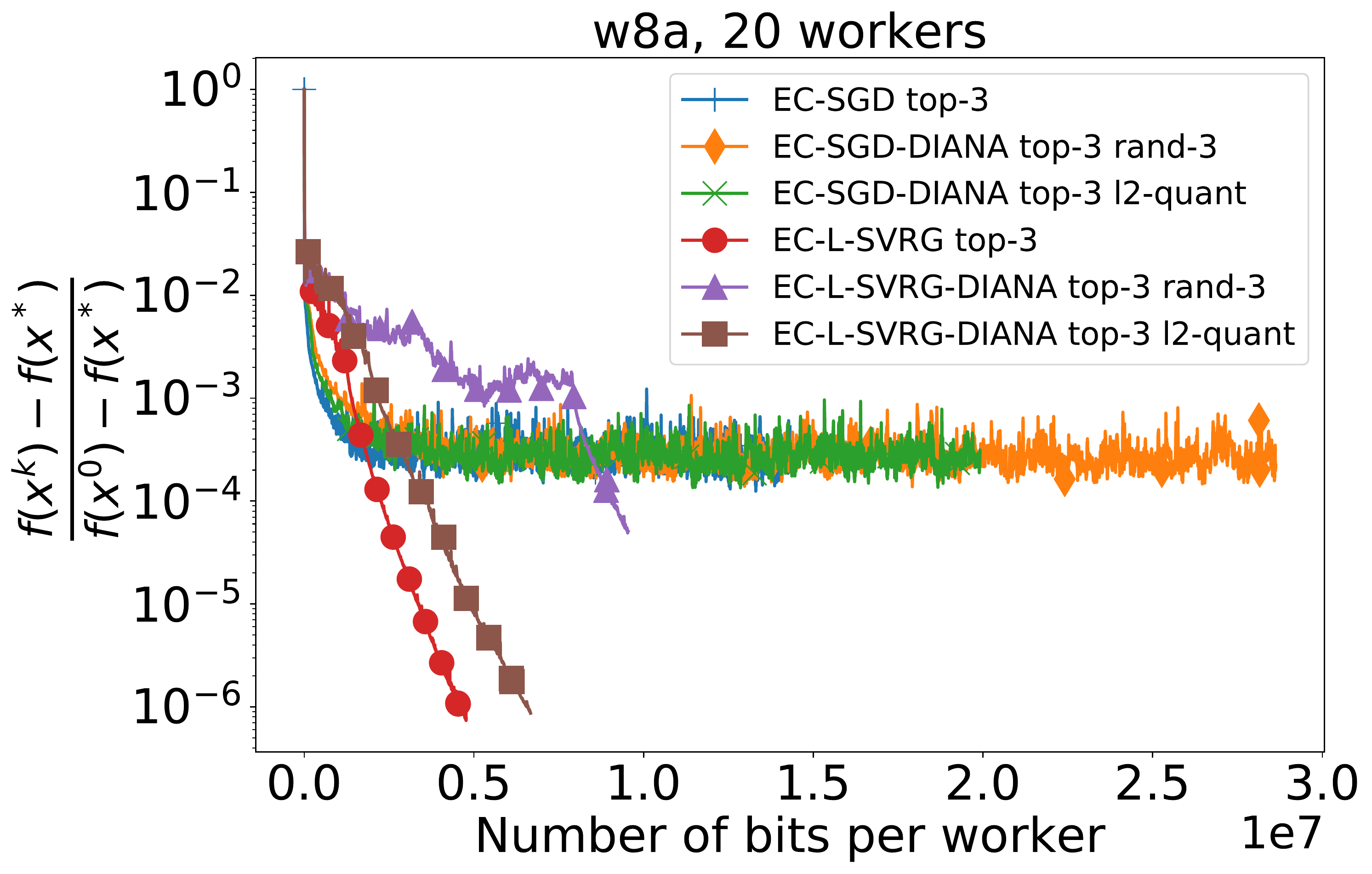}
	\includegraphics[width=0.32\textwidth]{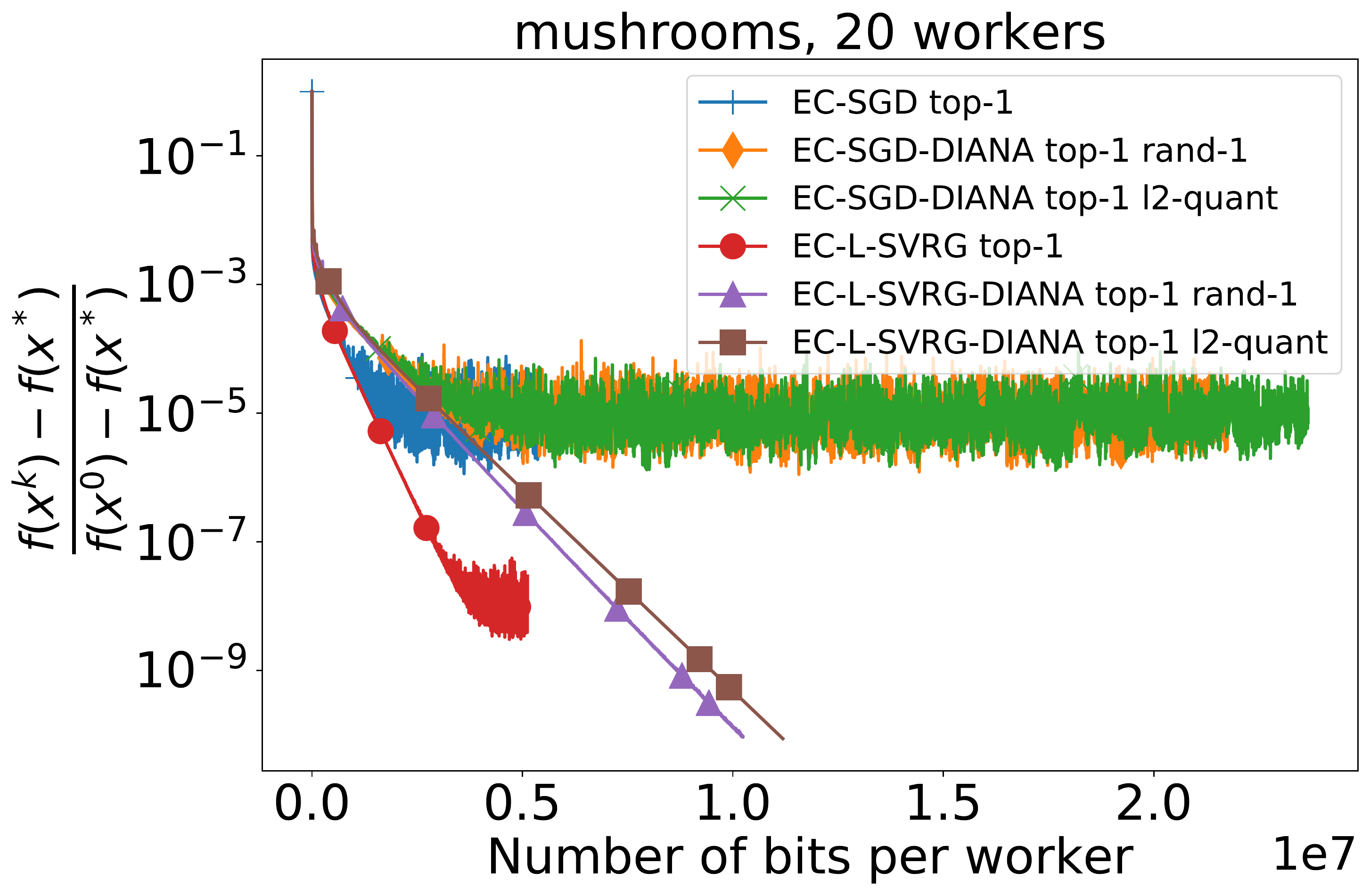}    
	\includegraphics[width=0.32\textwidth]{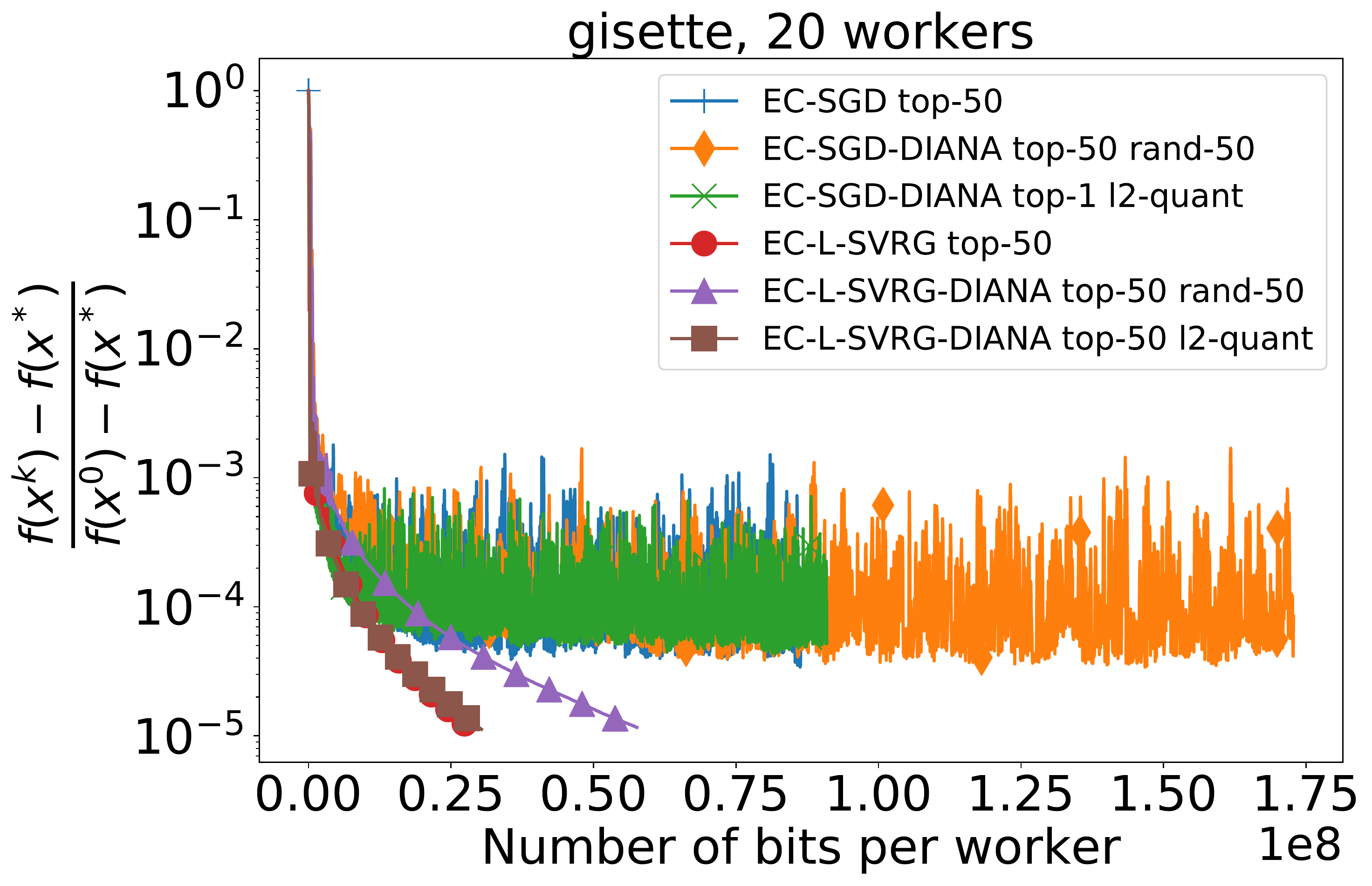}    
    \\
    \includegraphics[width=0.32\textwidth]{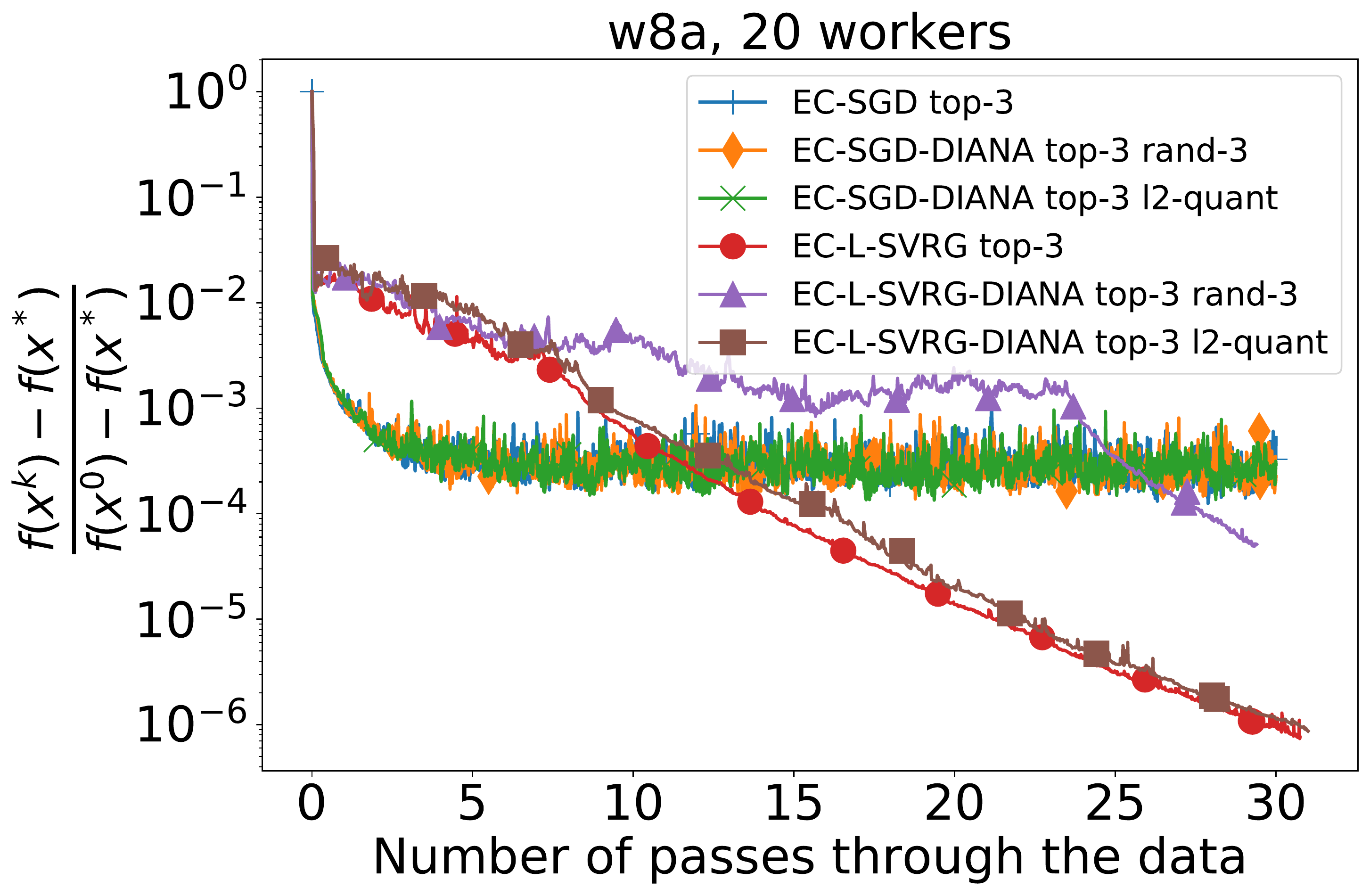}
	\includegraphics[width=0.32\textwidth]{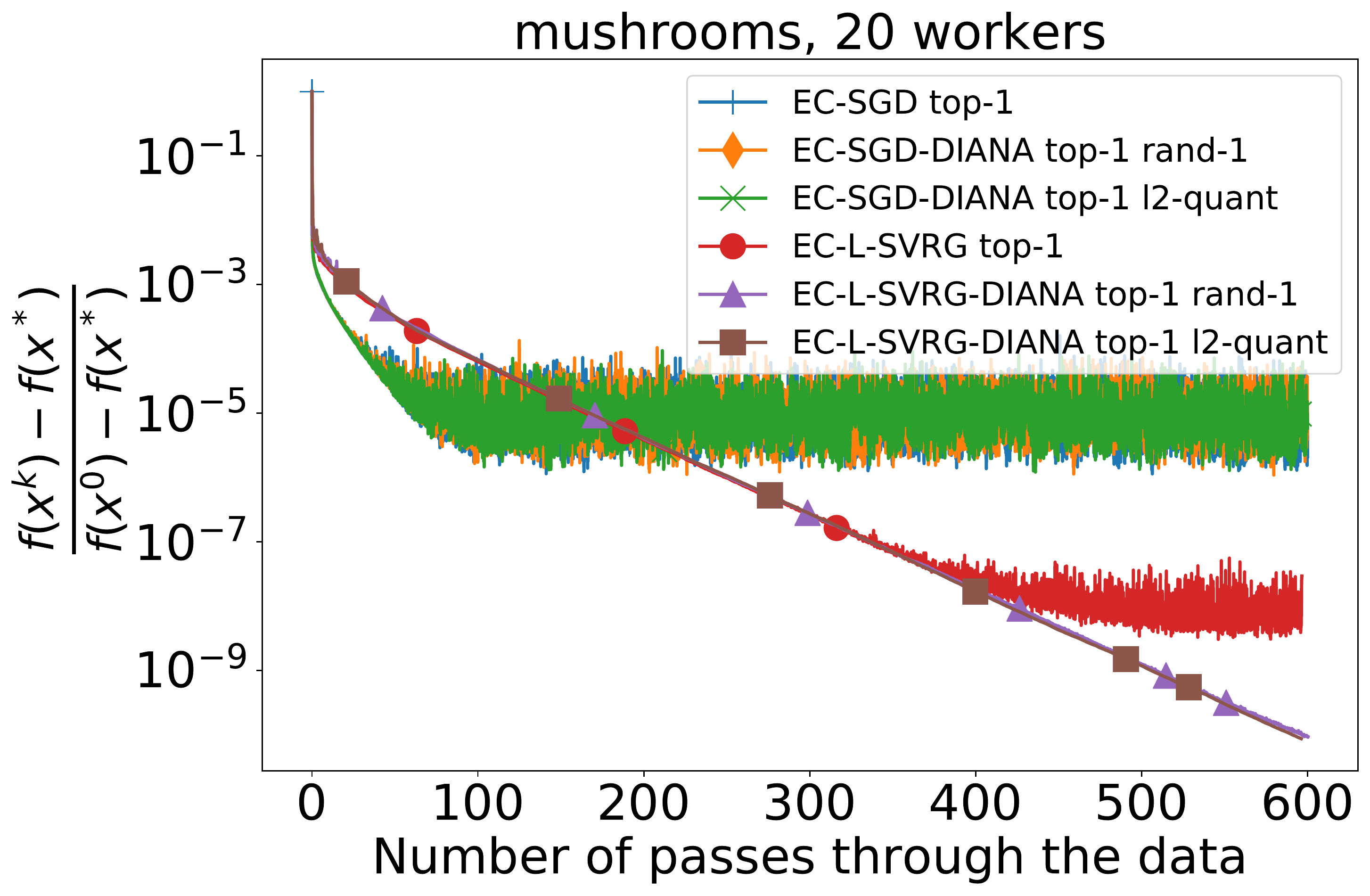}        
	\includegraphics[width=0.32\textwidth]{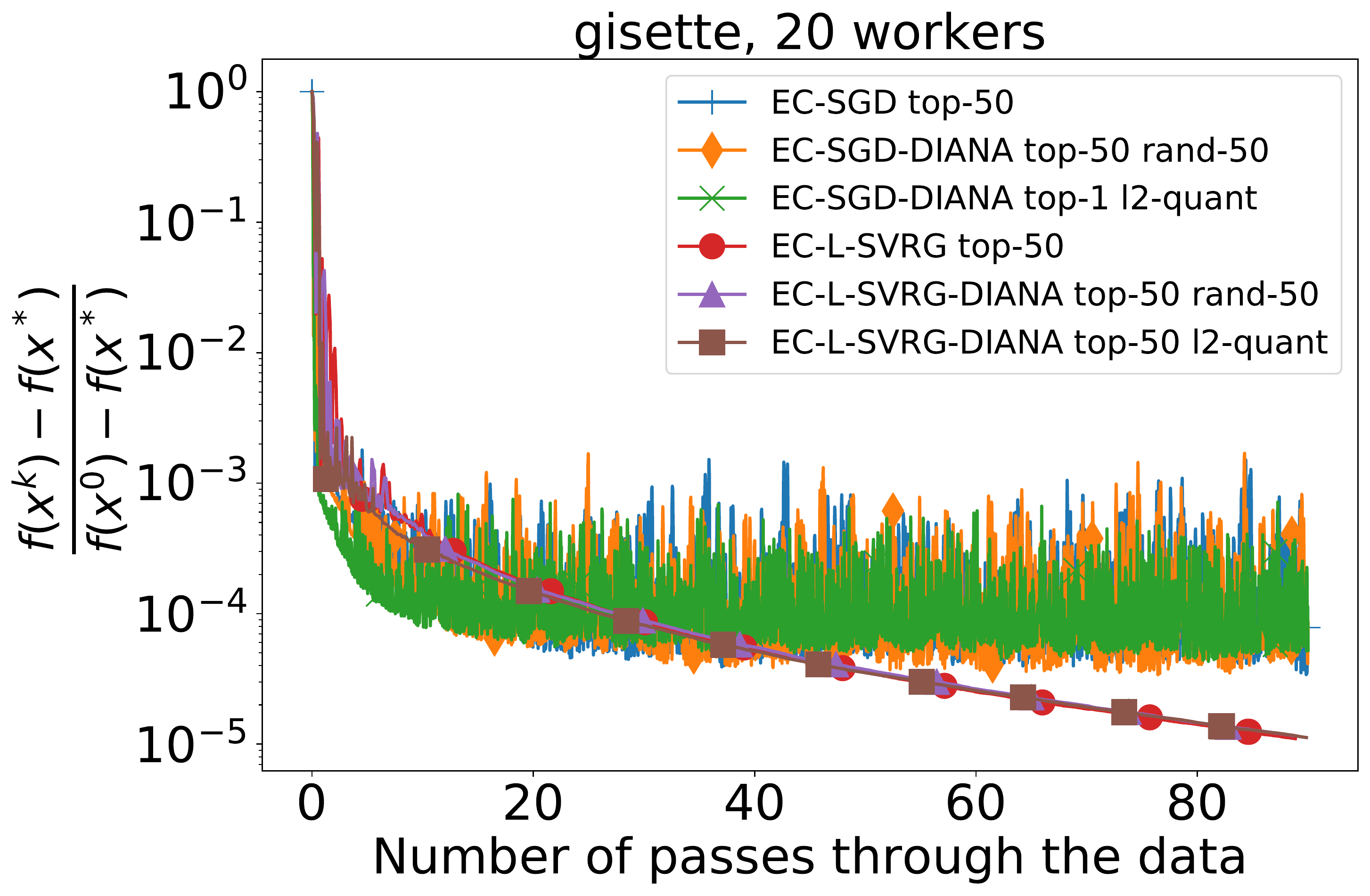}          
    \\
	\includegraphics[width=0.32\textwidth]{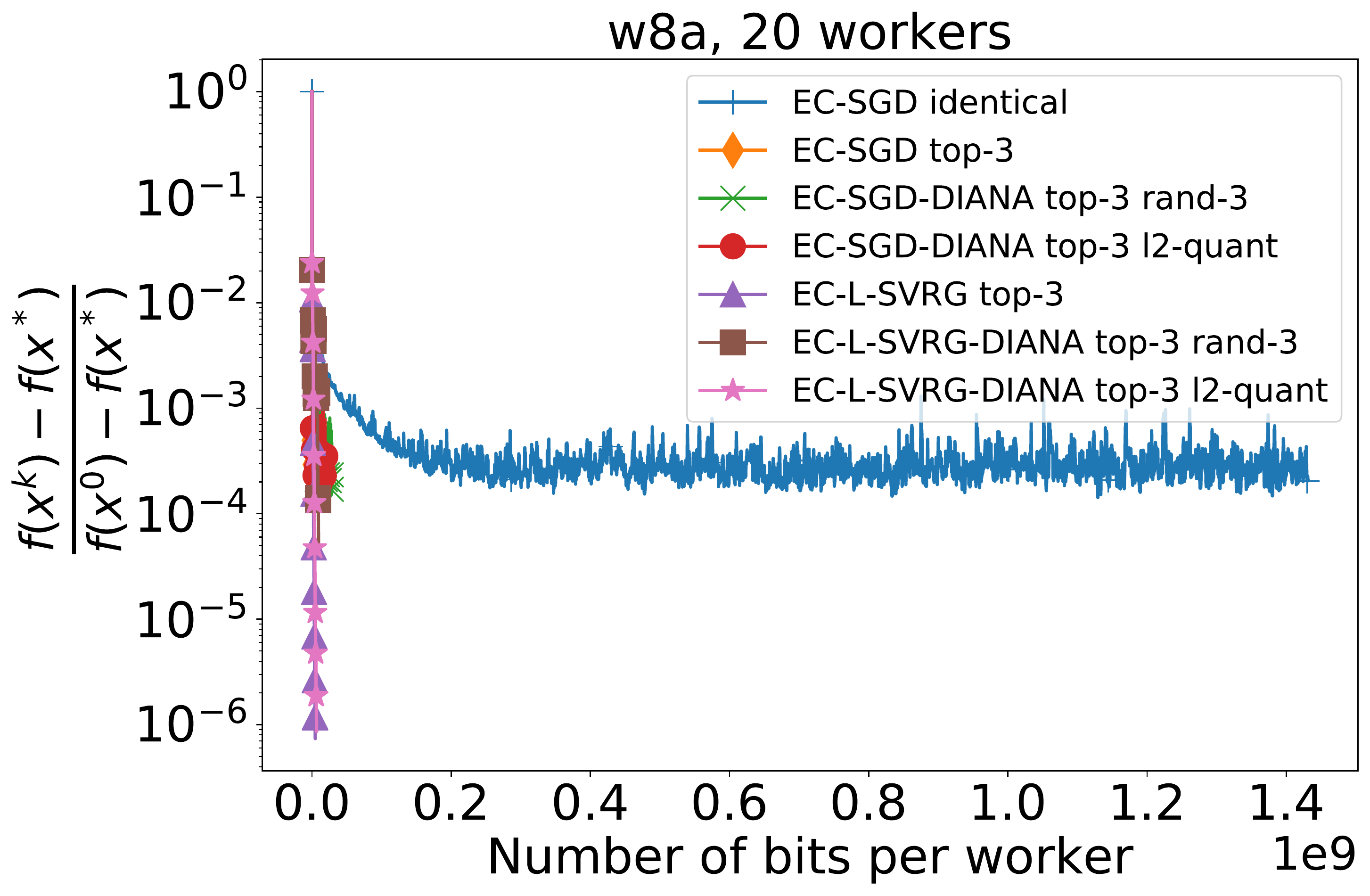}
	\includegraphics[width=0.32\textwidth]{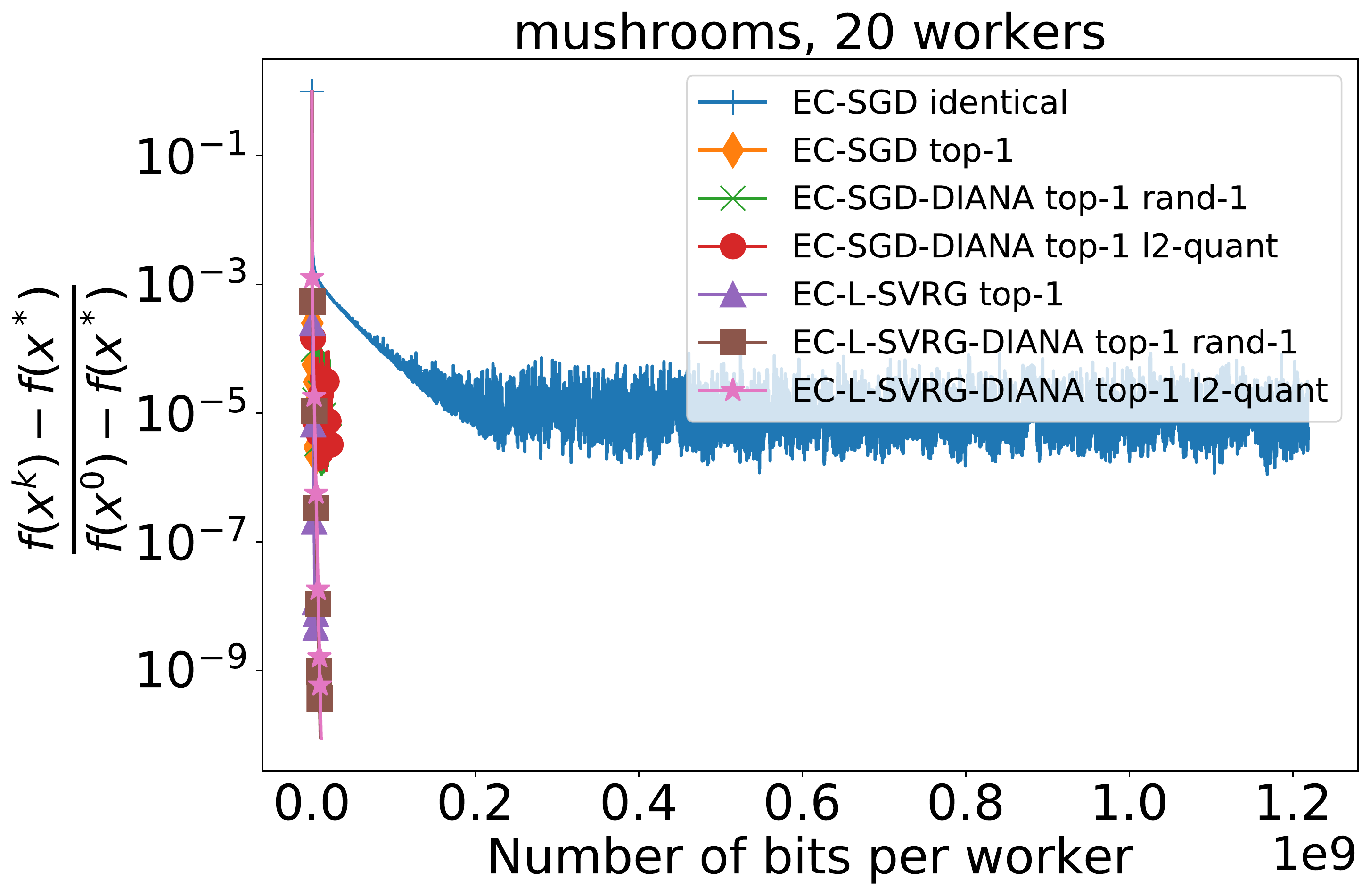}    
	\includegraphics[width=0.32\textwidth]{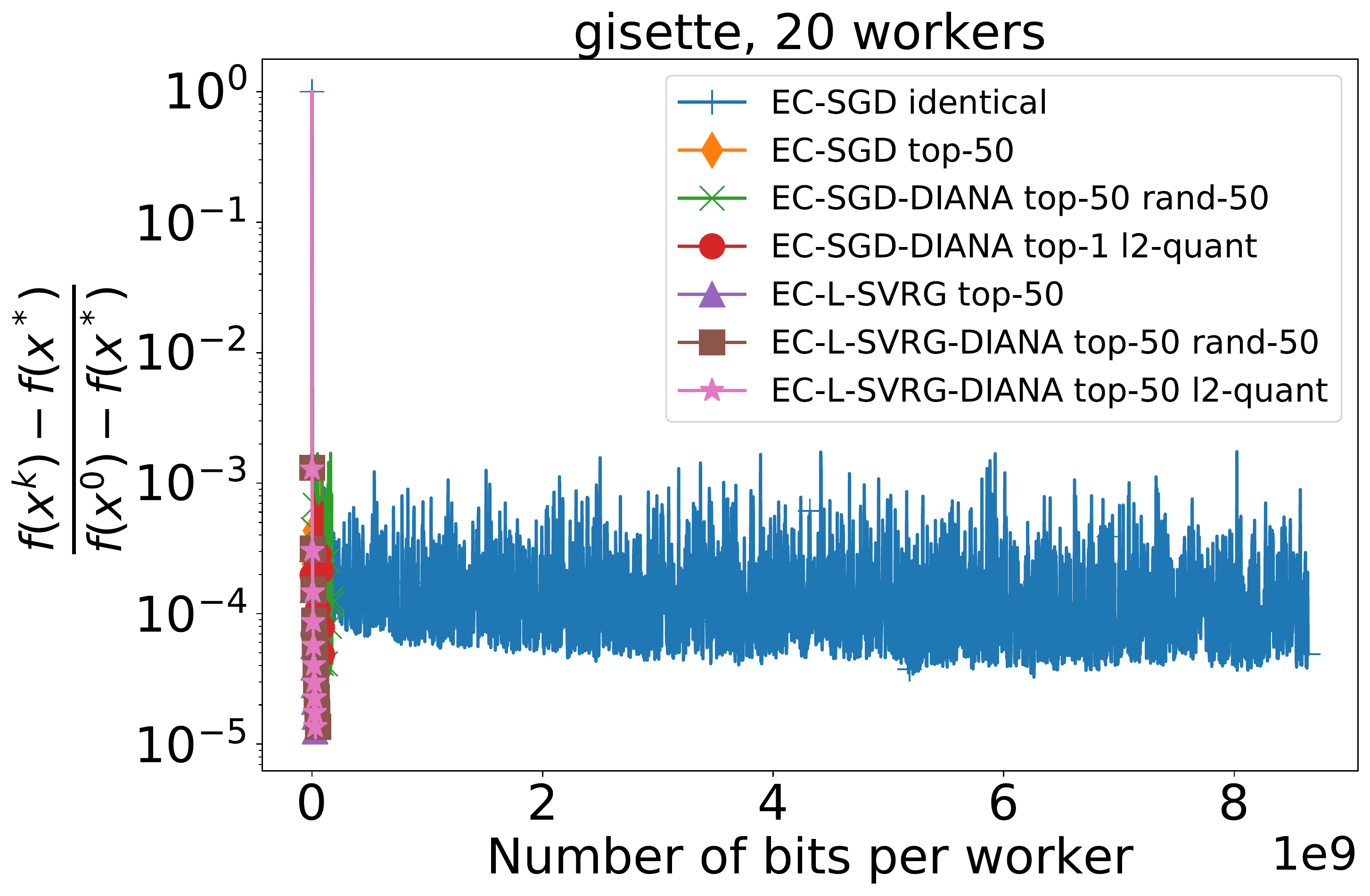}           
    \\
    \includegraphics[width=0.32\textwidth]{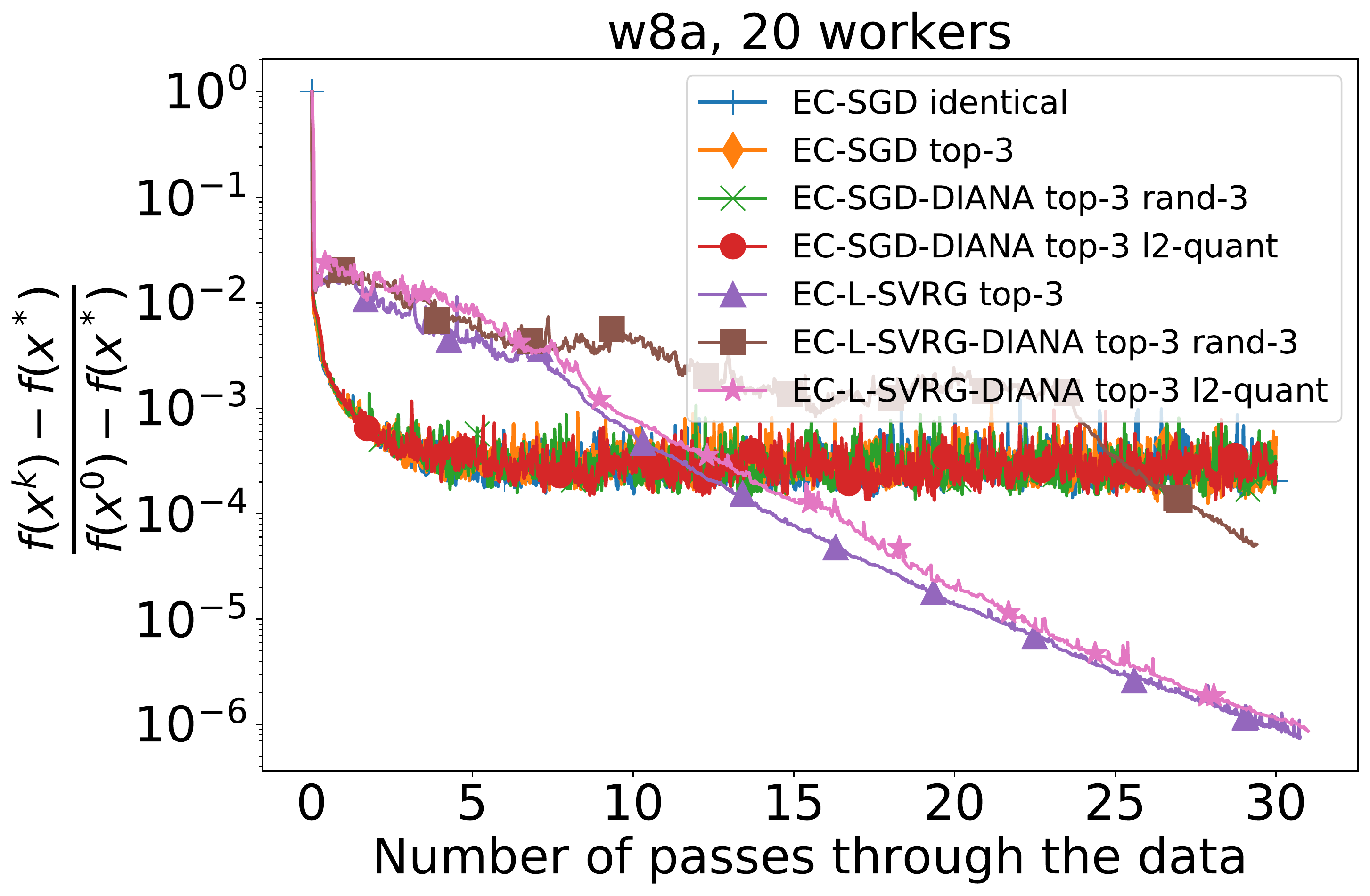}
    \includegraphics[width=0.32\textwidth]{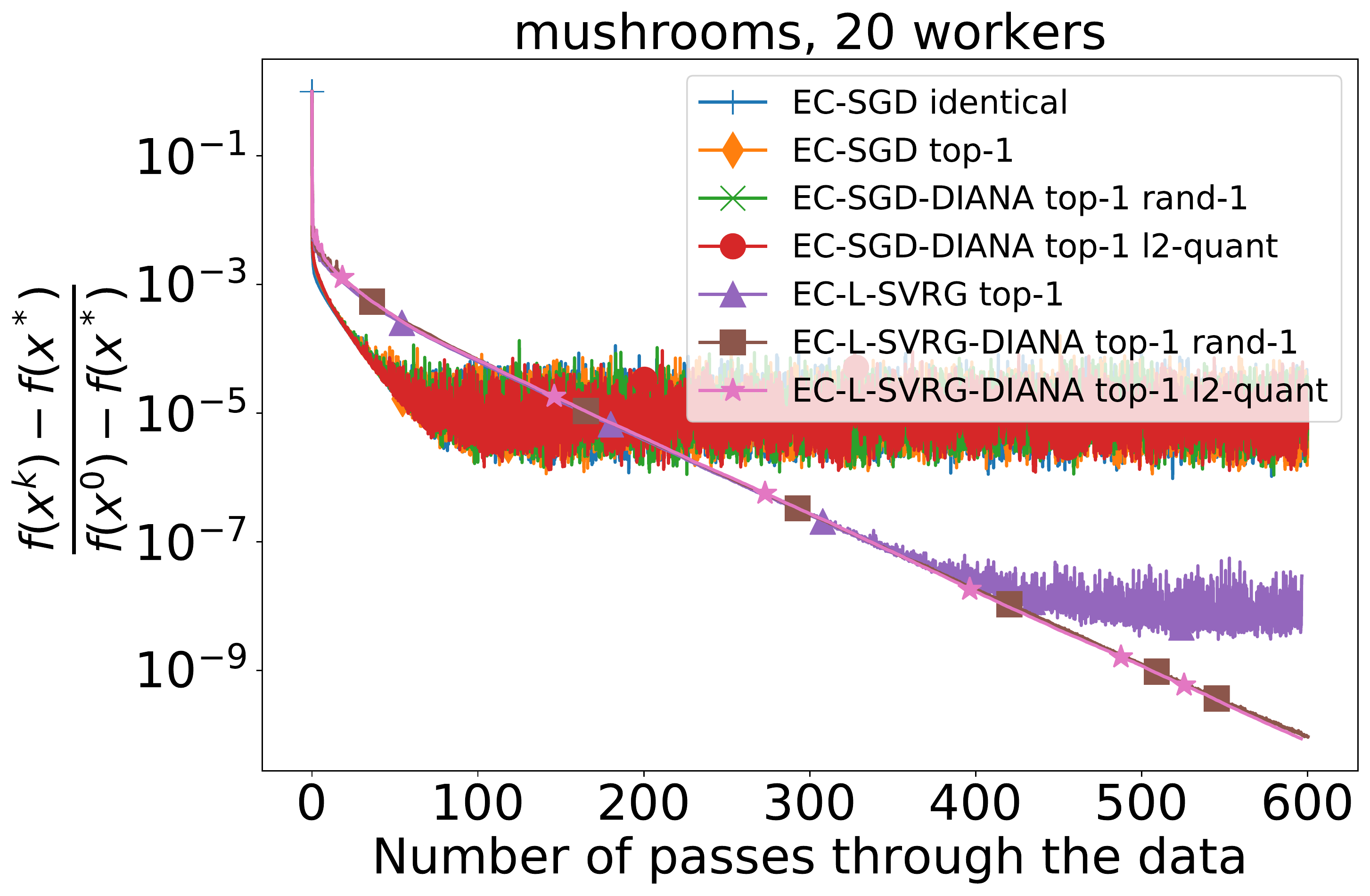}        
	\includegraphics[width=0.32\textwidth]{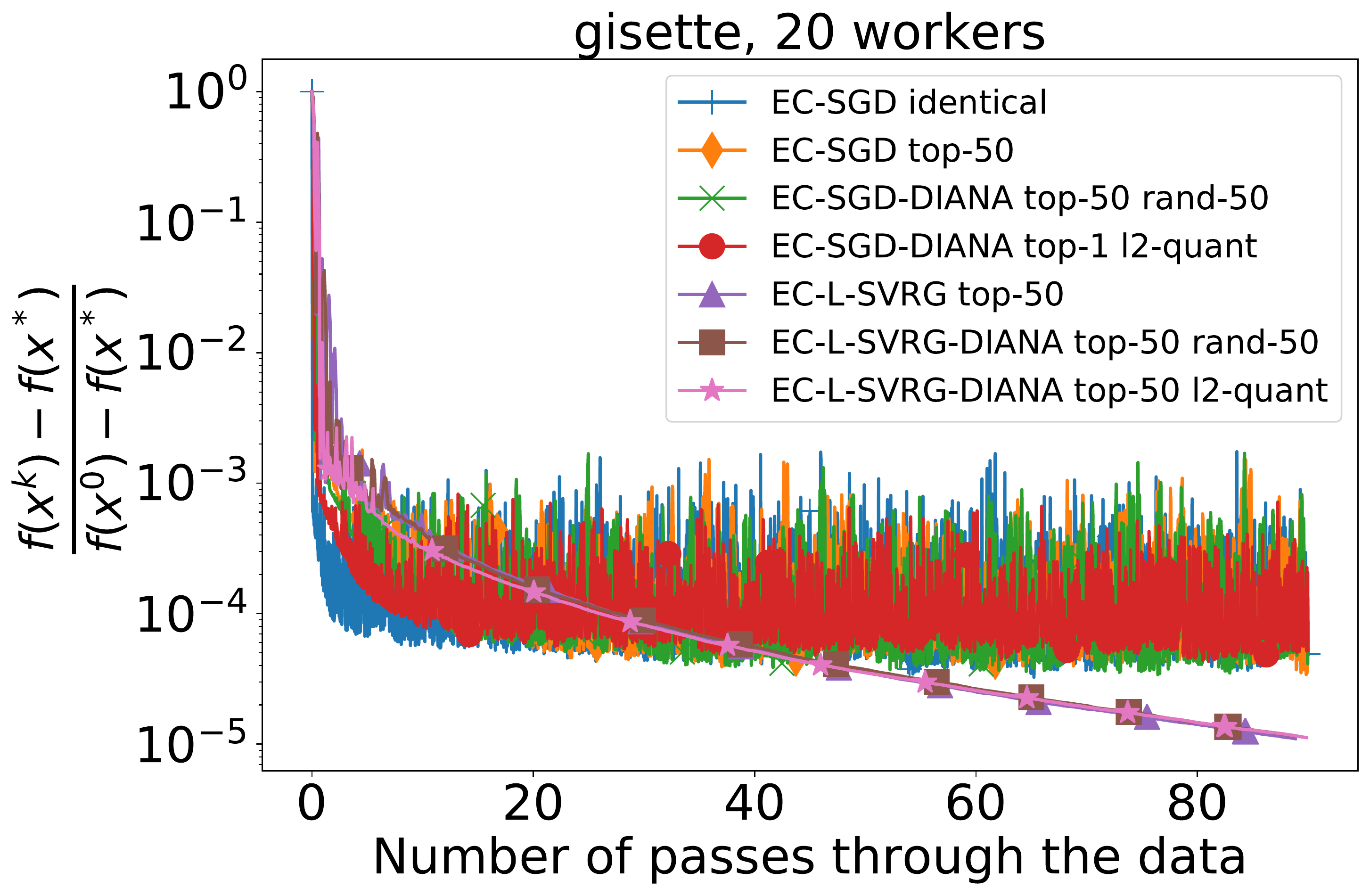}        
    \caption{Trajectories of {\tt EC-SGD}, {\tt EC-SGD-DIANA}, {\tt EC-LSVRG} and {\tt EC-LSVRG-DIANA} applied to solve logistic regression problem with $20$ workers. {\tt EC-SGD identical} corresponds to {\tt SGD} with error compensation with trivial compression operator $\cC(x) = x$, i.e., it is just parallel {\tt SGD}.}
    \label{fig:sgd_logreg_extra}
\end{figure}

\clearpage
\subsection{Compressing full gradients}

\begin{figure}[h]
    \centering
	\includegraphics[width=0.32\textwidth]{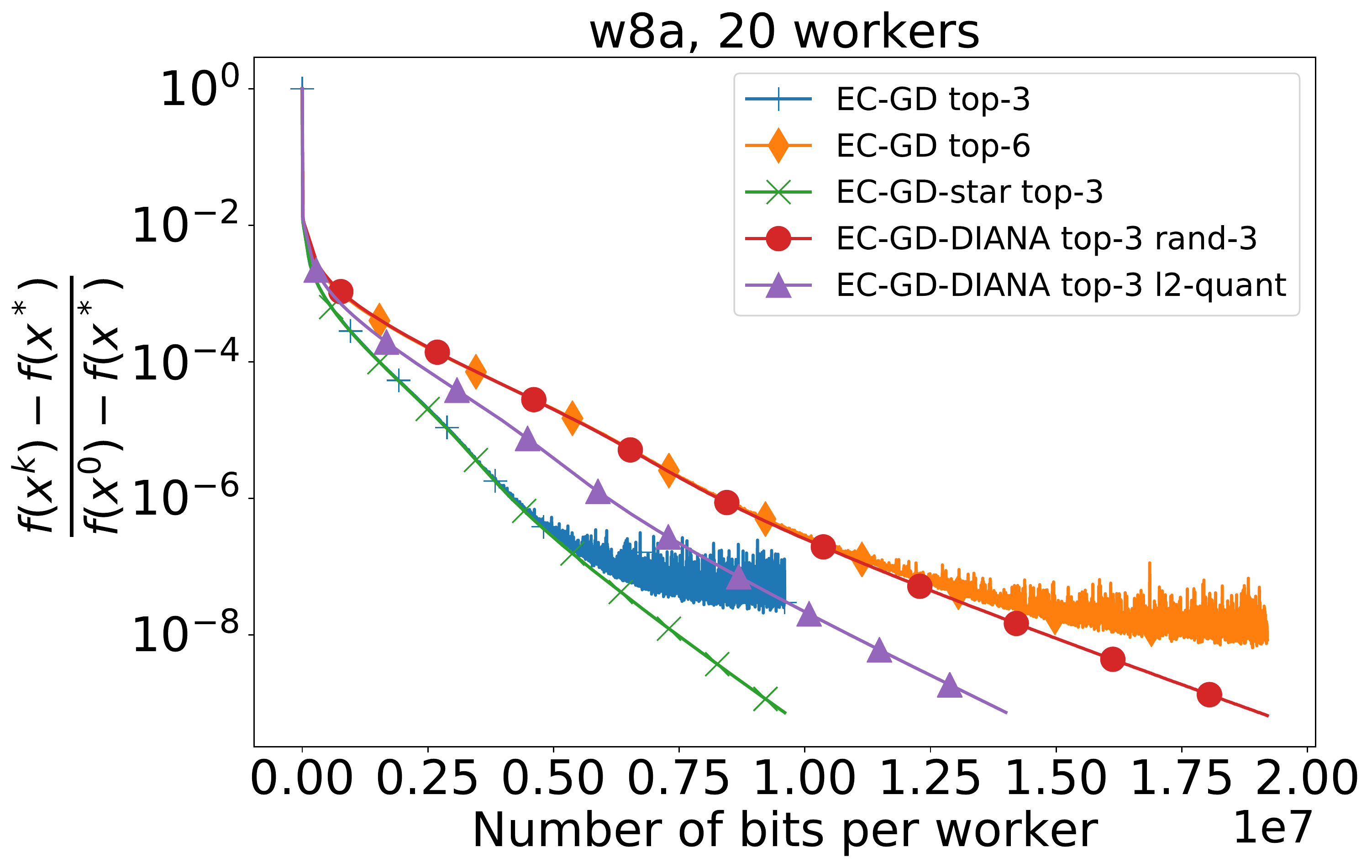}	\includegraphics[width=0.32\textwidth]{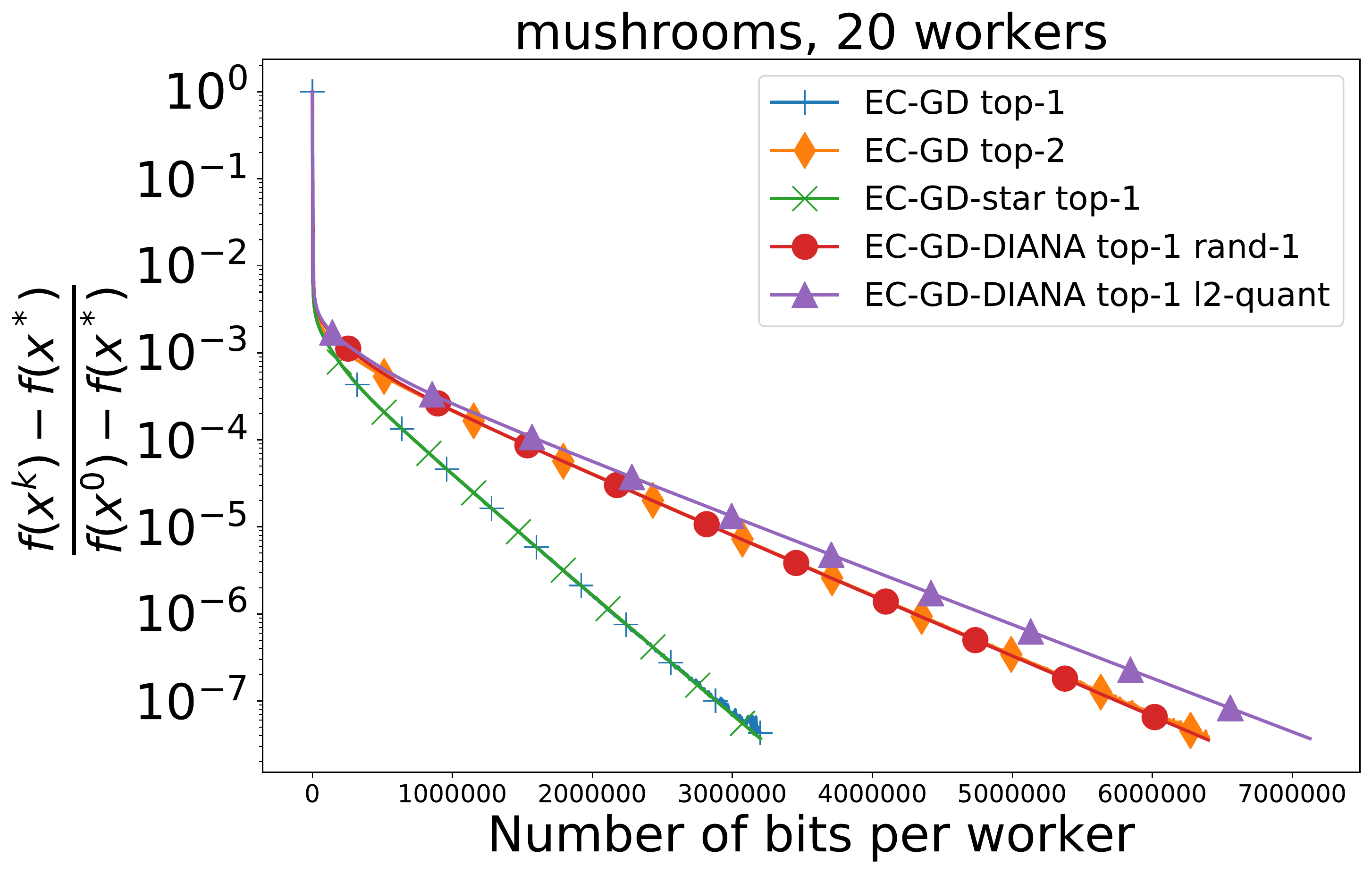}
	\includegraphics[width=0.32\textwidth]{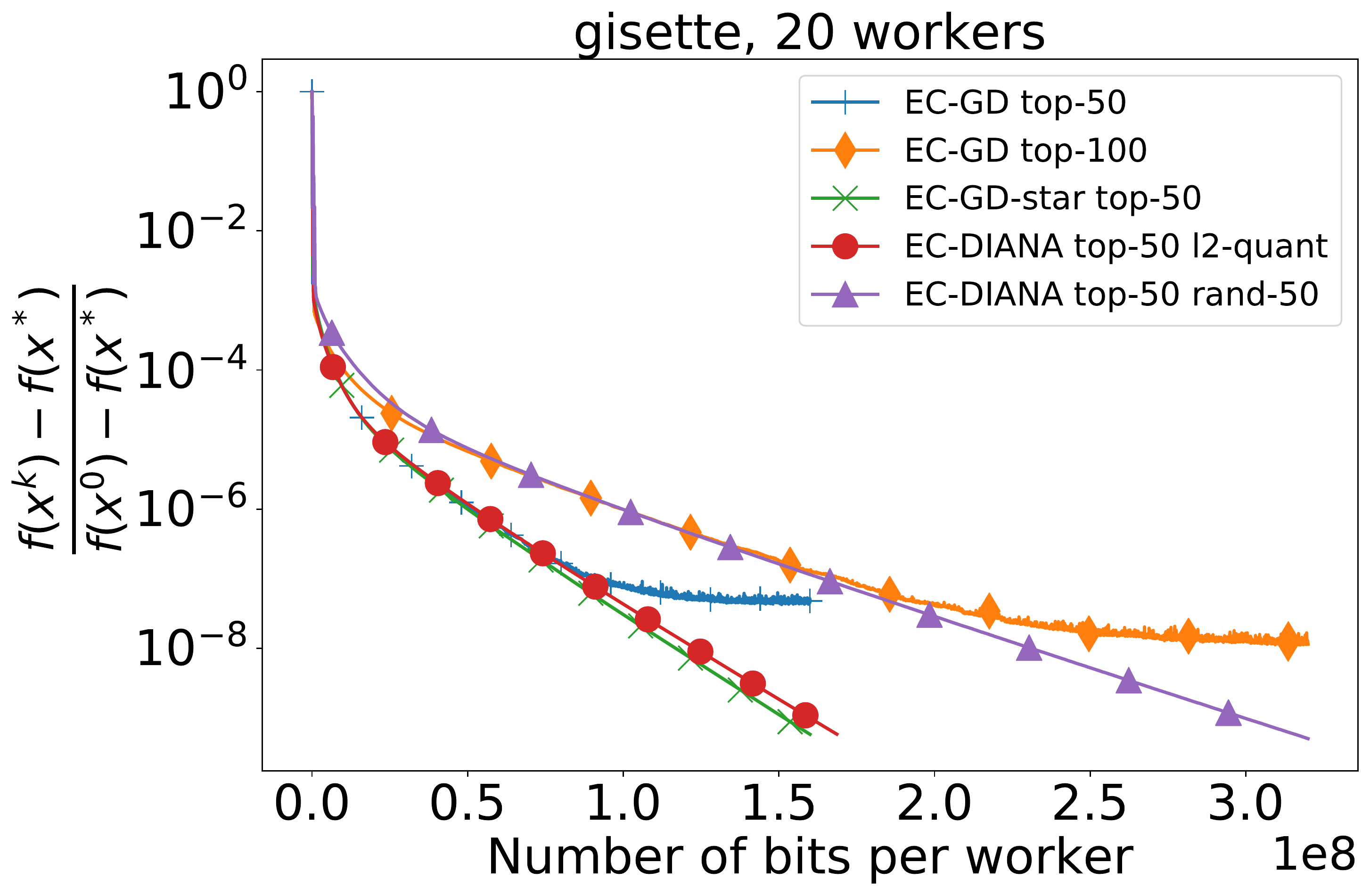}
	\\
	\includegraphics[width=0.32\textwidth]{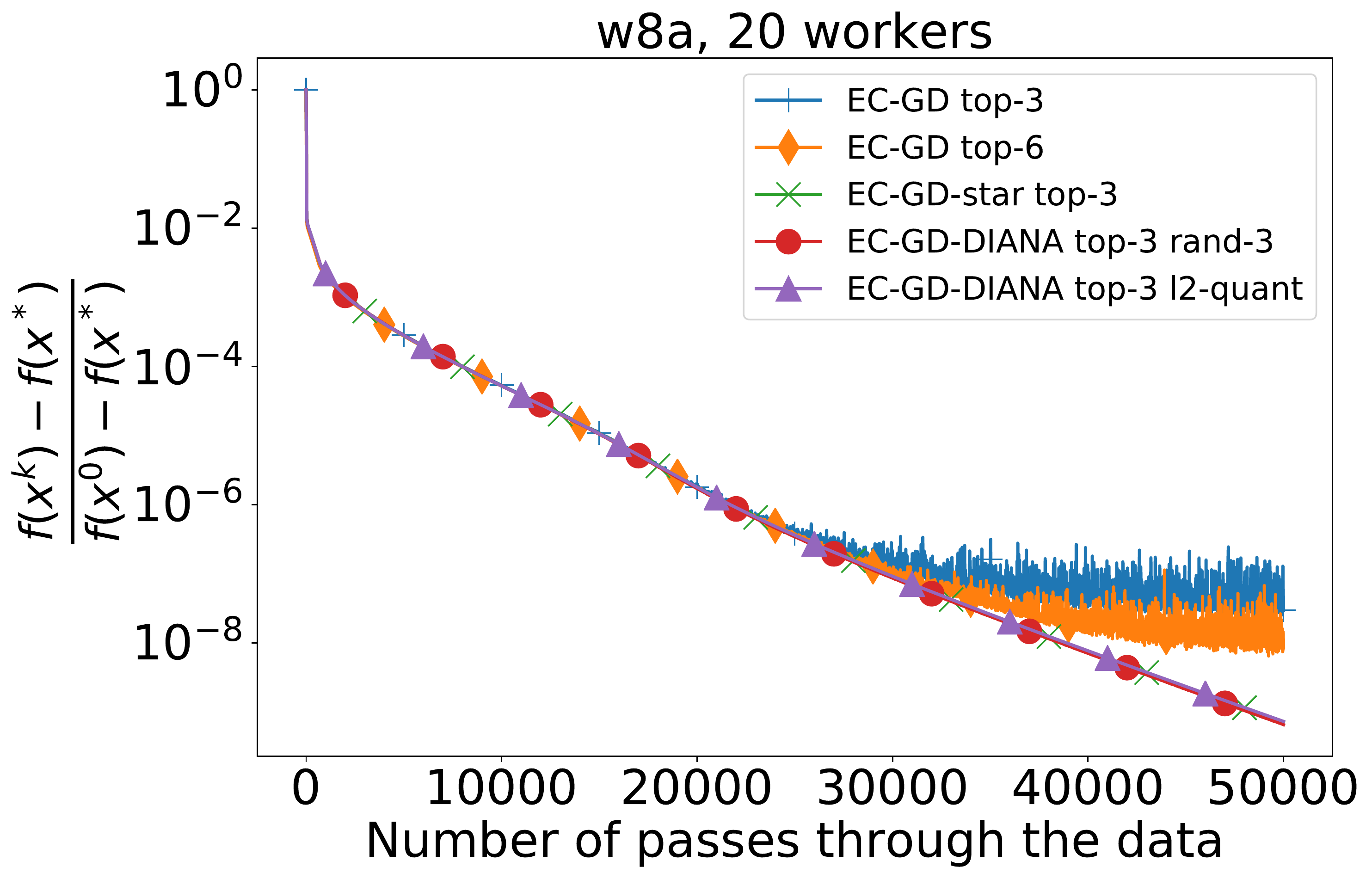}
	\includegraphics[width=0.32\textwidth]{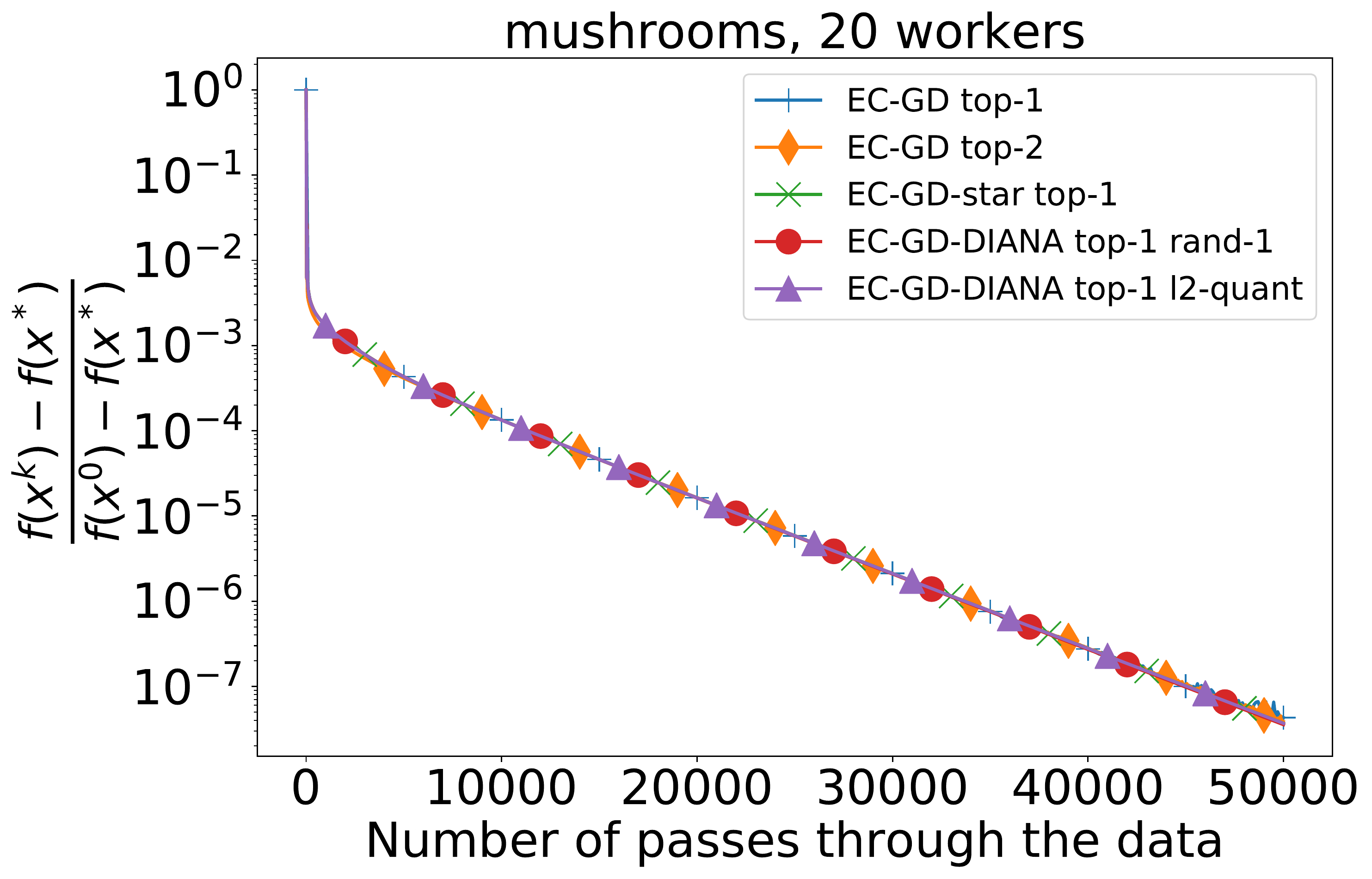}	\includegraphics[width=0.32\textwidth]{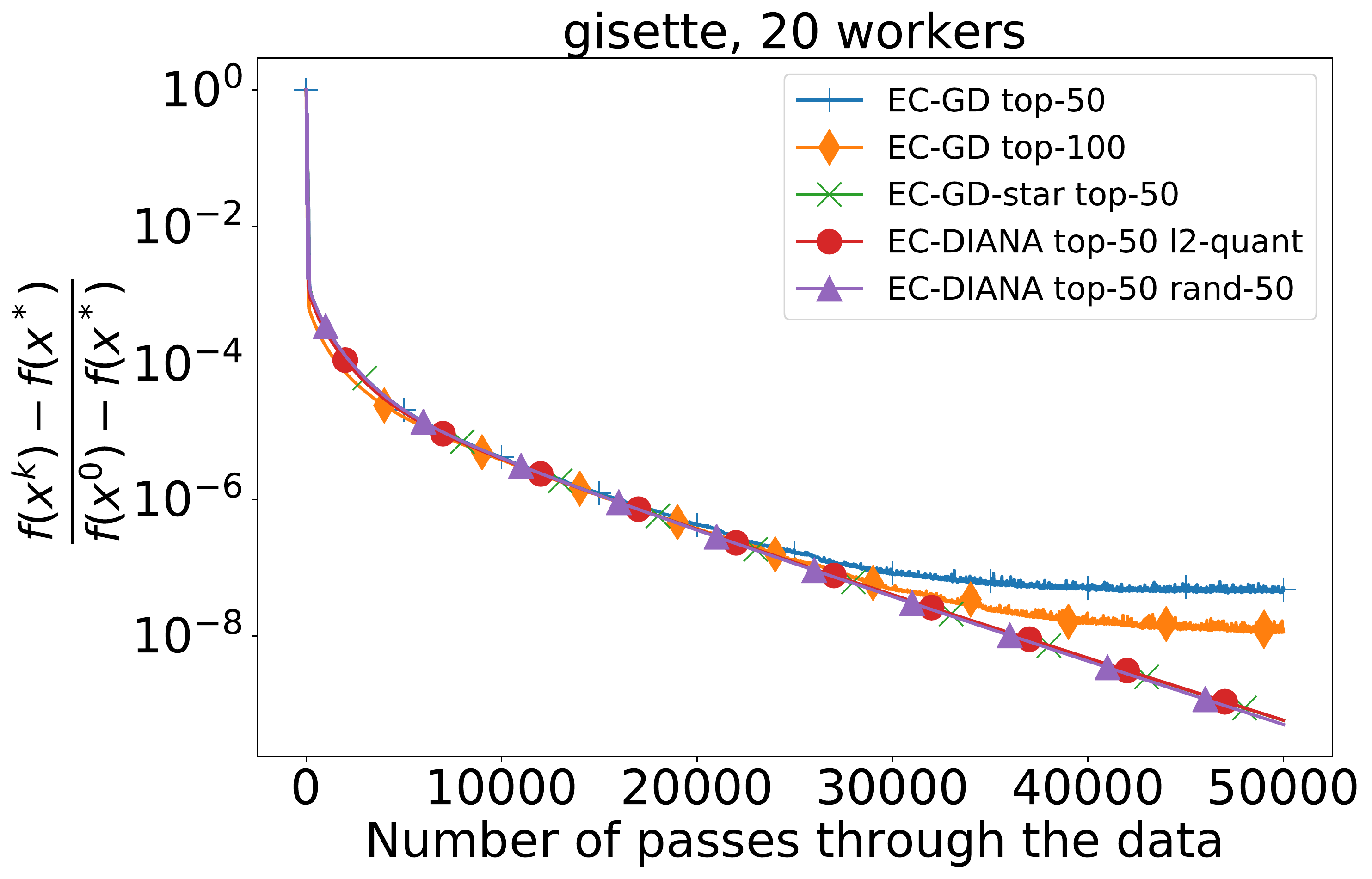}
	\caption{Trajectories of {\tt EC-GD}, {\tt EC-GD-star} and {\tt EC-DIANA} applied to solve logistic regression problem with $20$ workers.}
    \label{fig:gd_logreg_20_workers_appendix}
\end{figure}
\begin{figure}[h]
    \centering
    \includegraphics[width=0.32\textwidth]{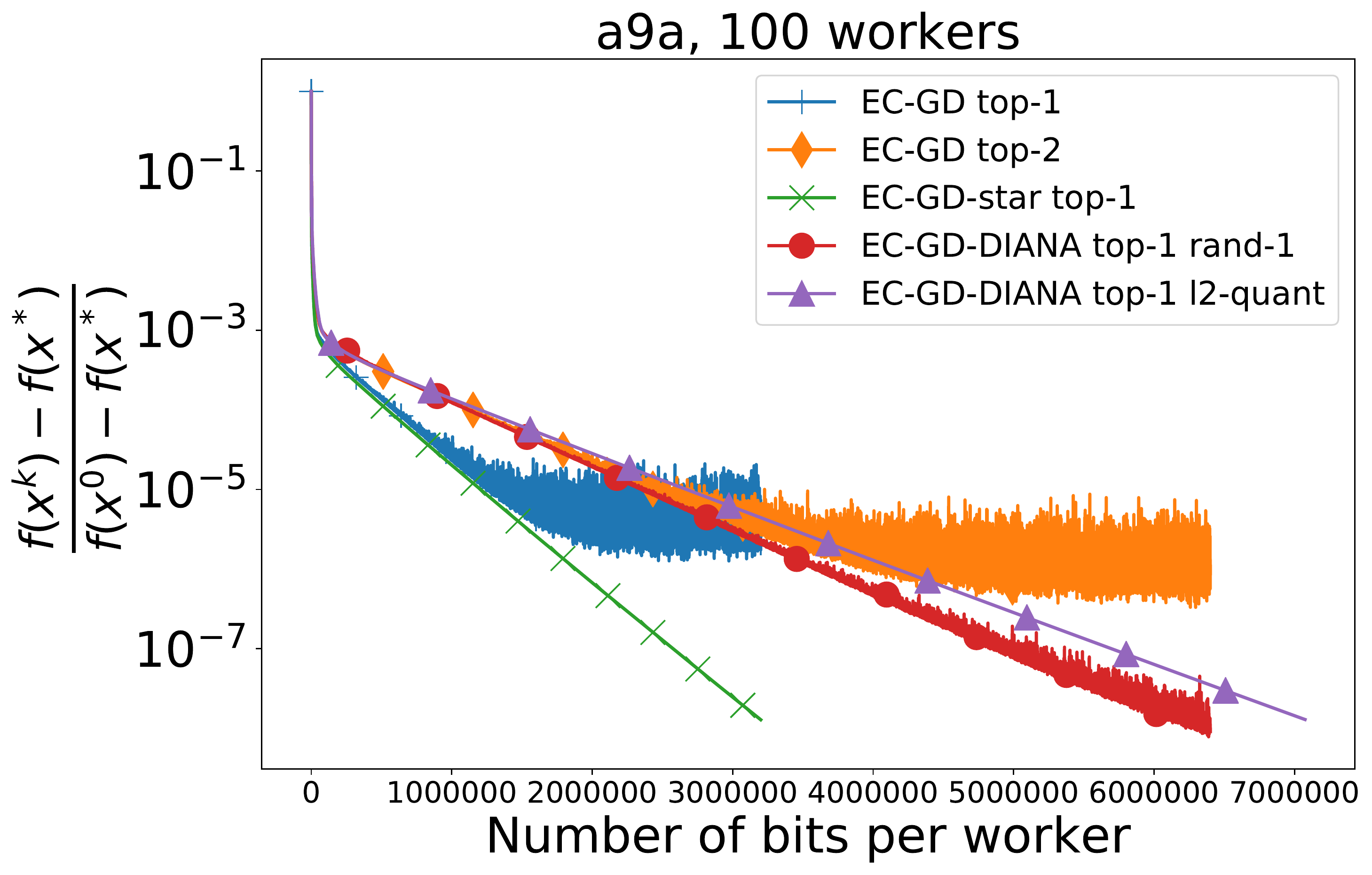}
	\includegraphics[width=0.32\textwidth]{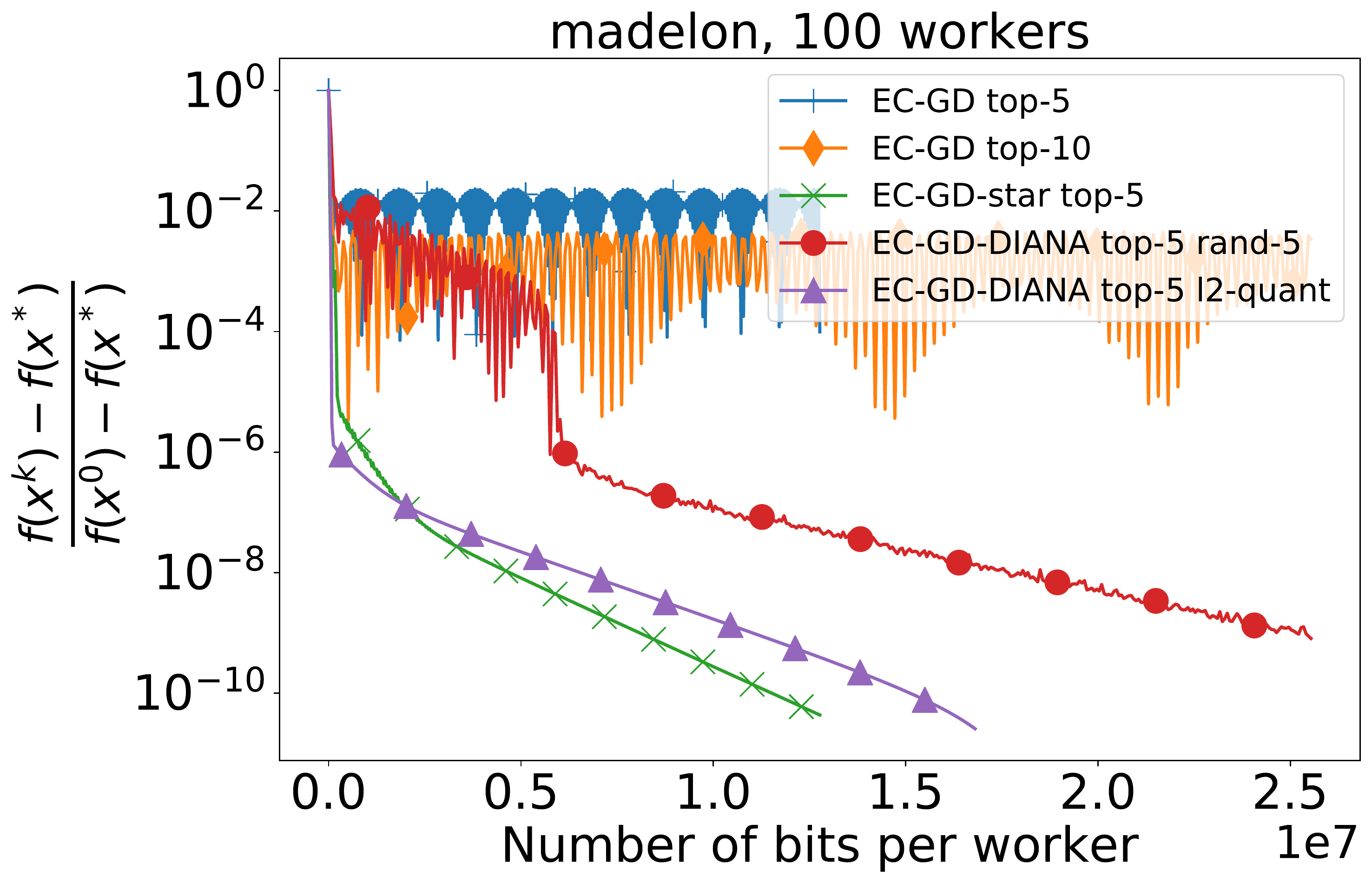}    
	\includegraphics[width=0.32\textwidth]{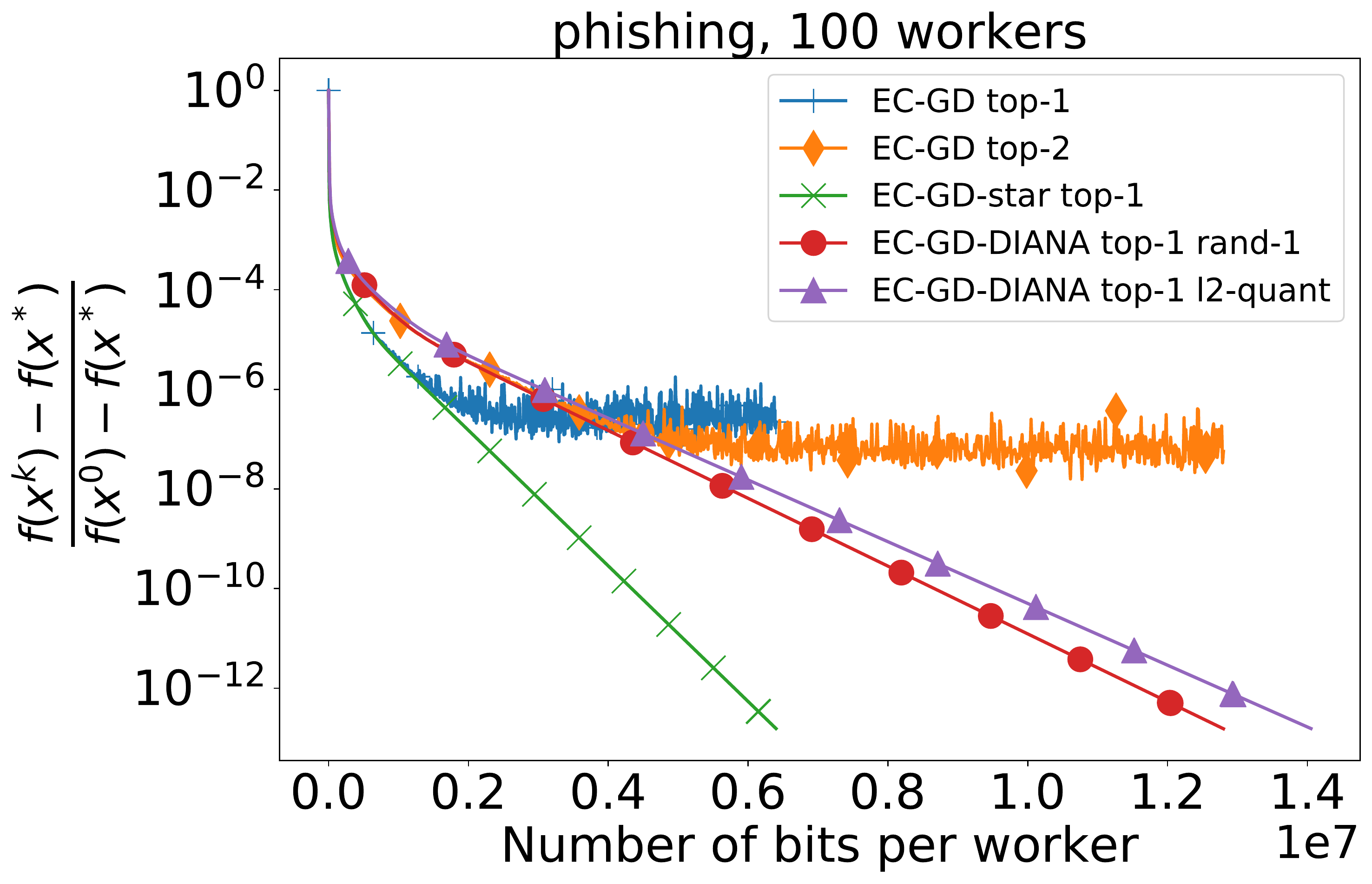}    
    \\
    \includegraphics[width=0.32\textwidth]{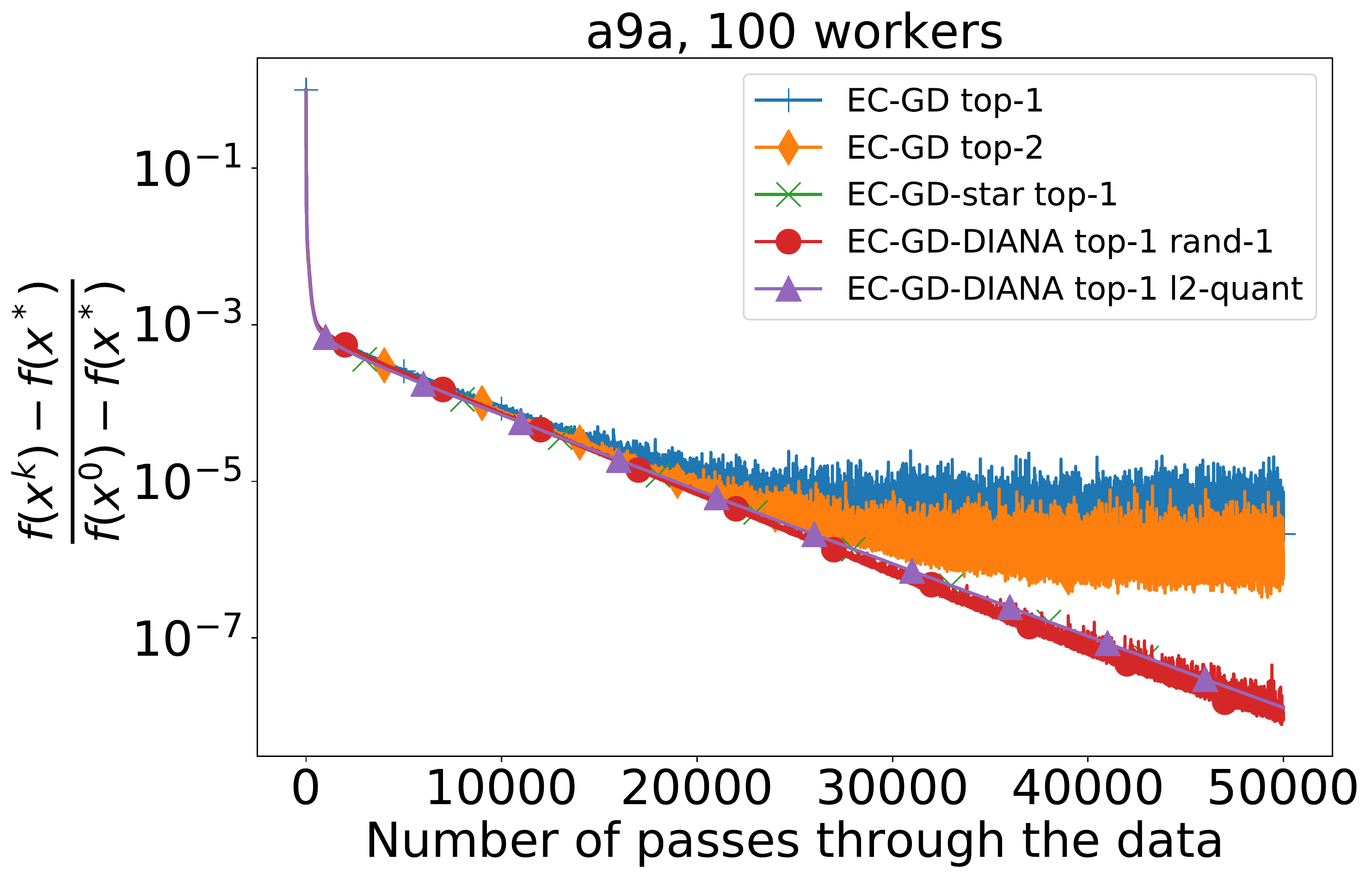}    
	\includegraphics[width=0.32\textwidth]{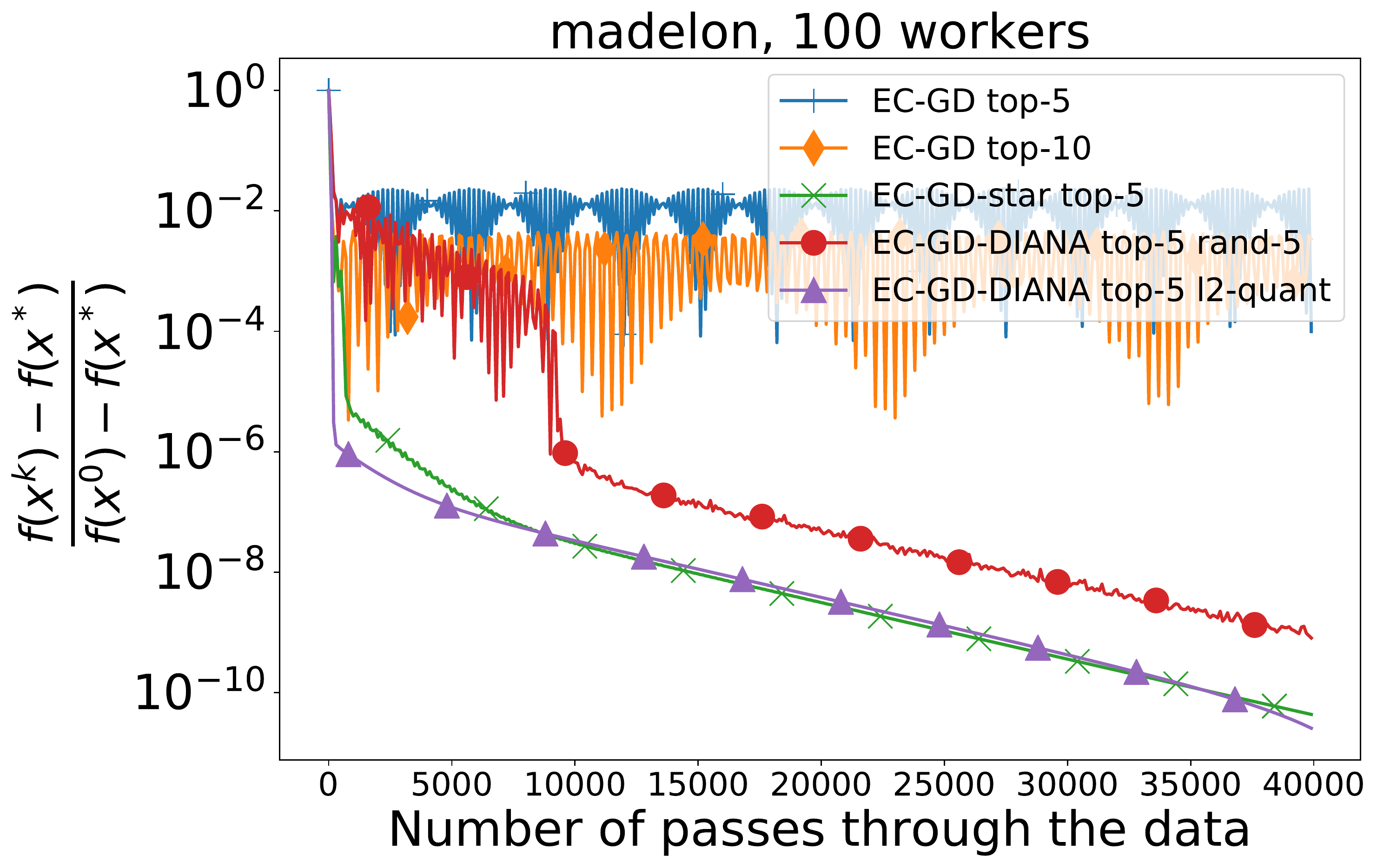}    
	\includegraphics[width=0.32\textwidth]{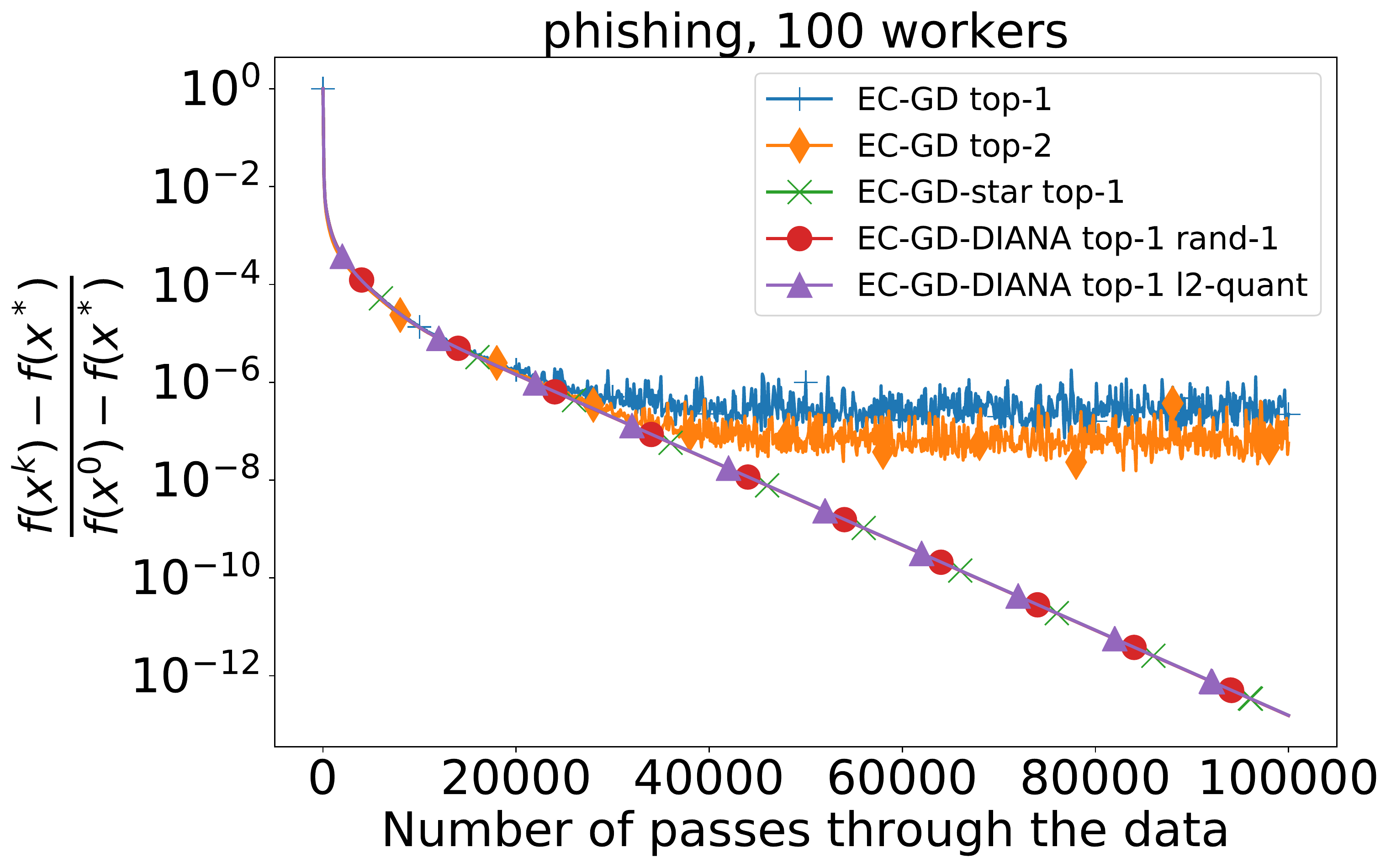}    	
	\\
	\includegraphics[width=0.32\textwidth]{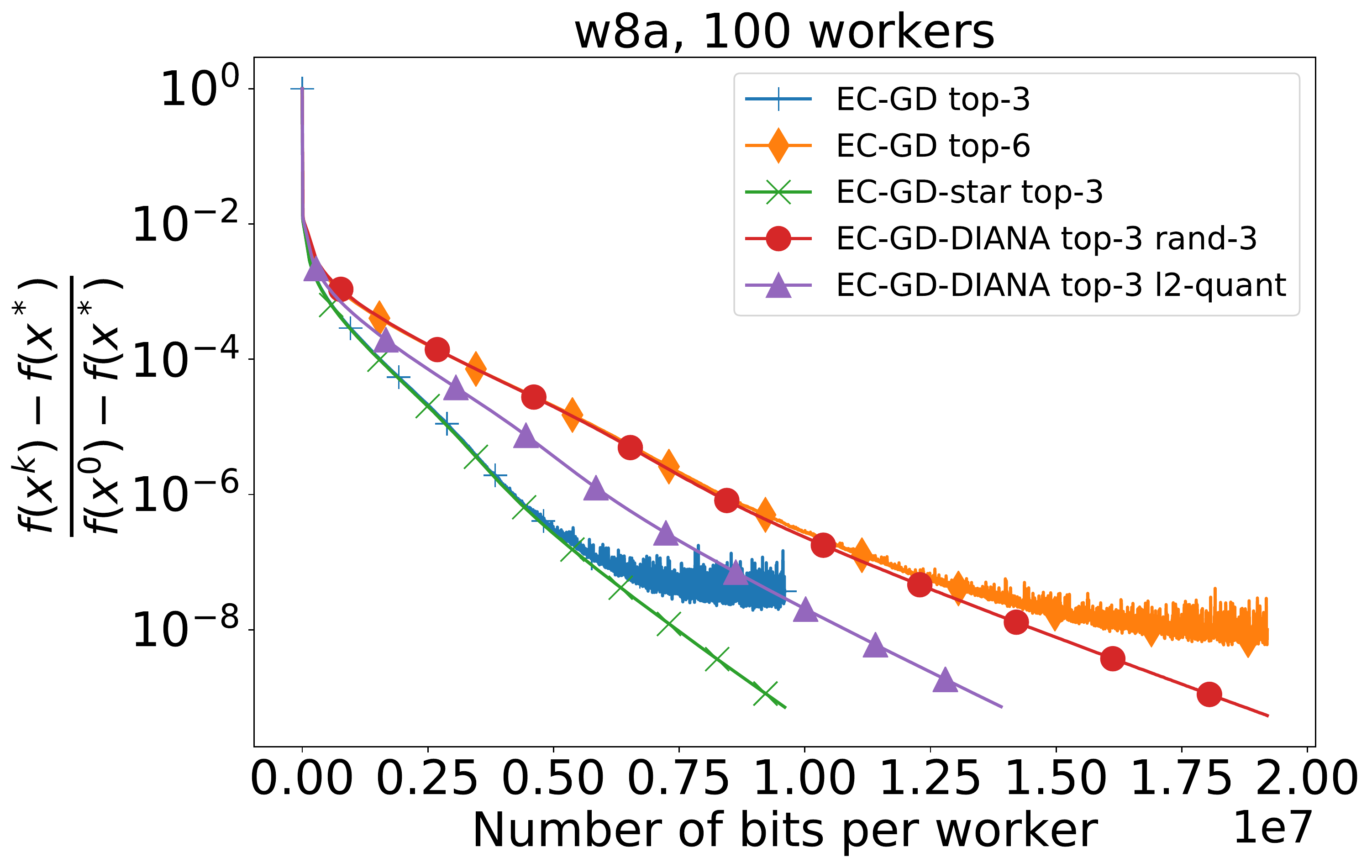}	\includegraphics[width=0.32\textwidth]{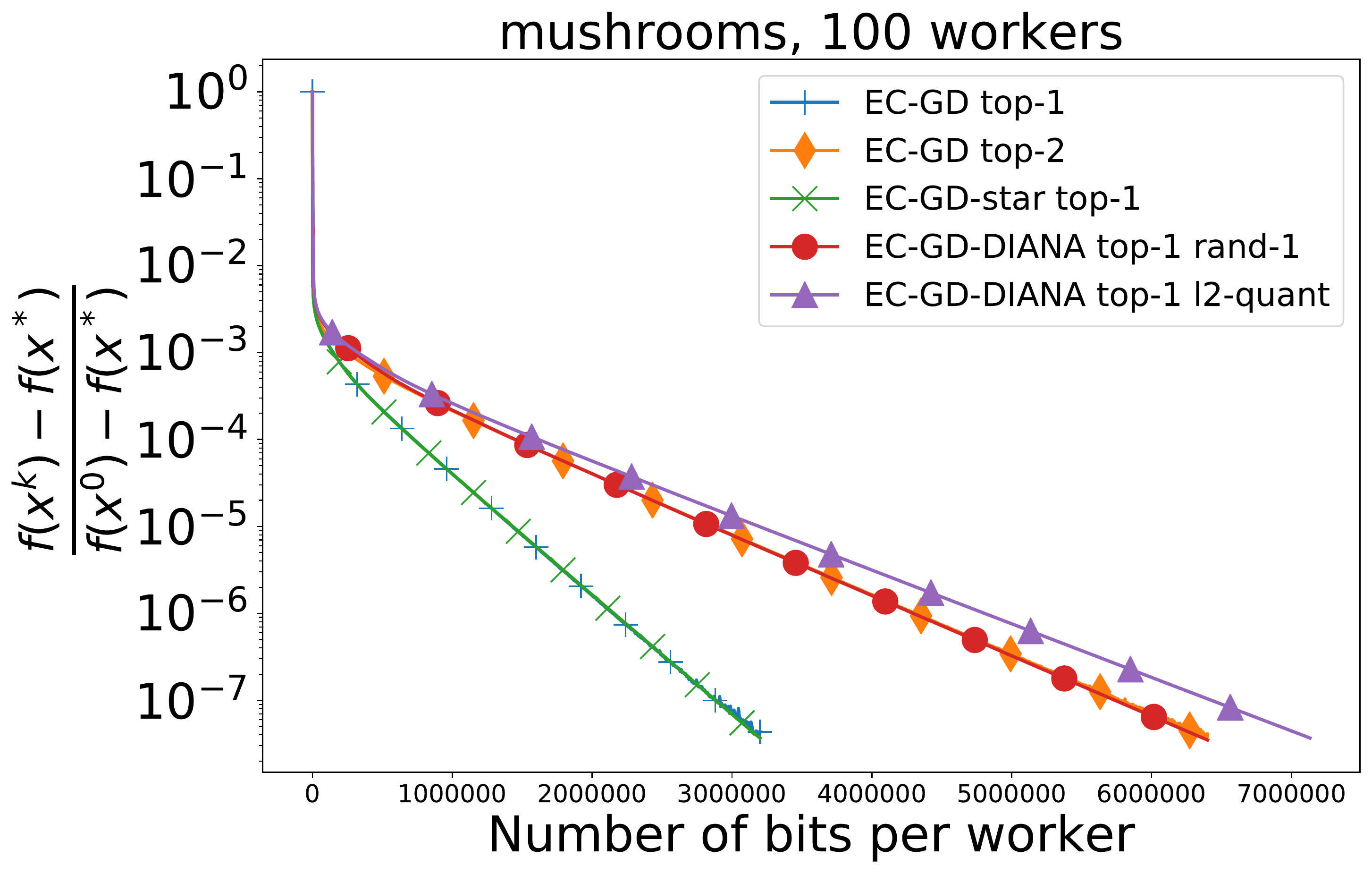}
	\includegraphics[width=0.32\textwidth]{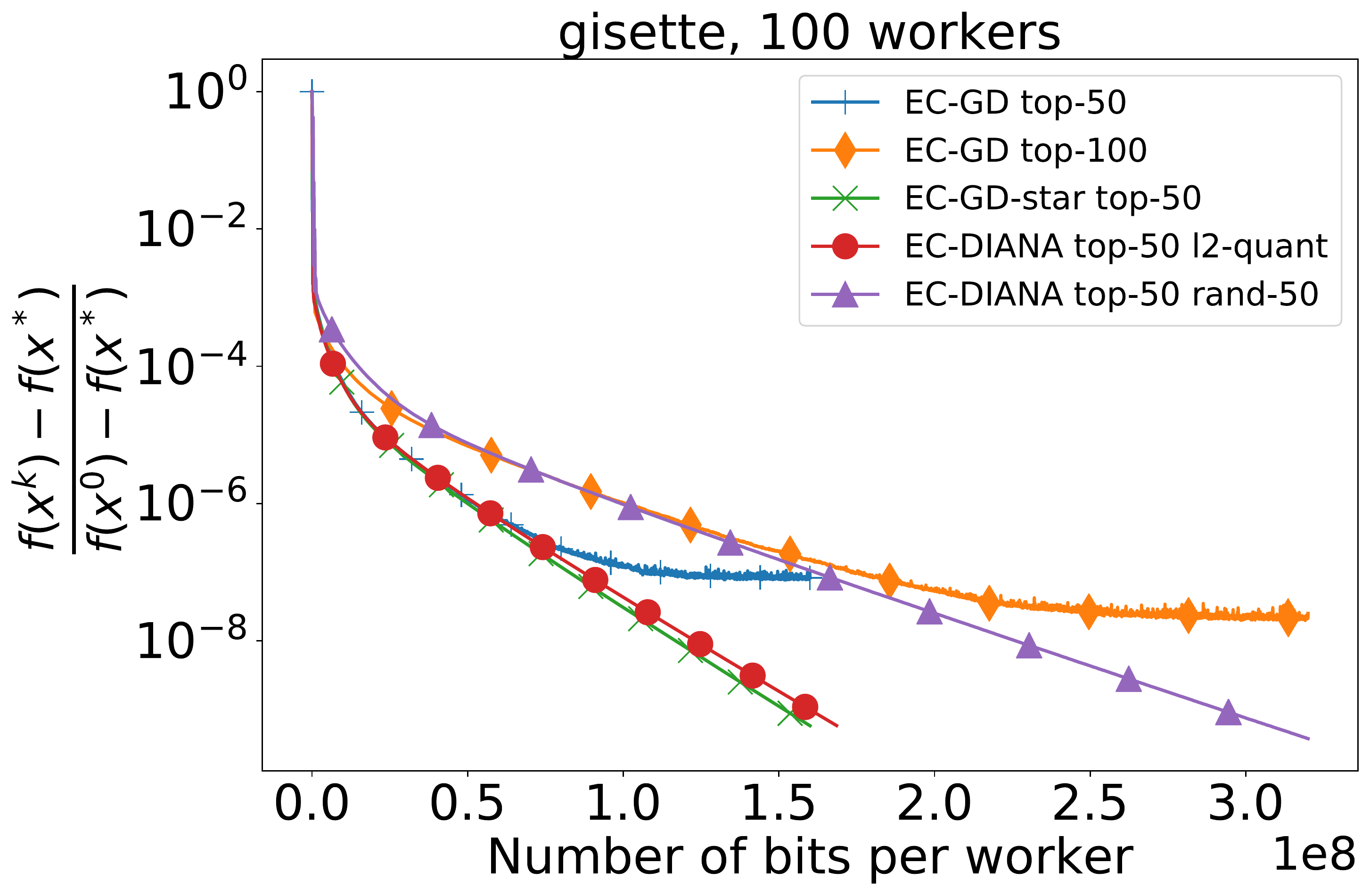}
	\\
	\includegraphics[width=0.32\textwidth]{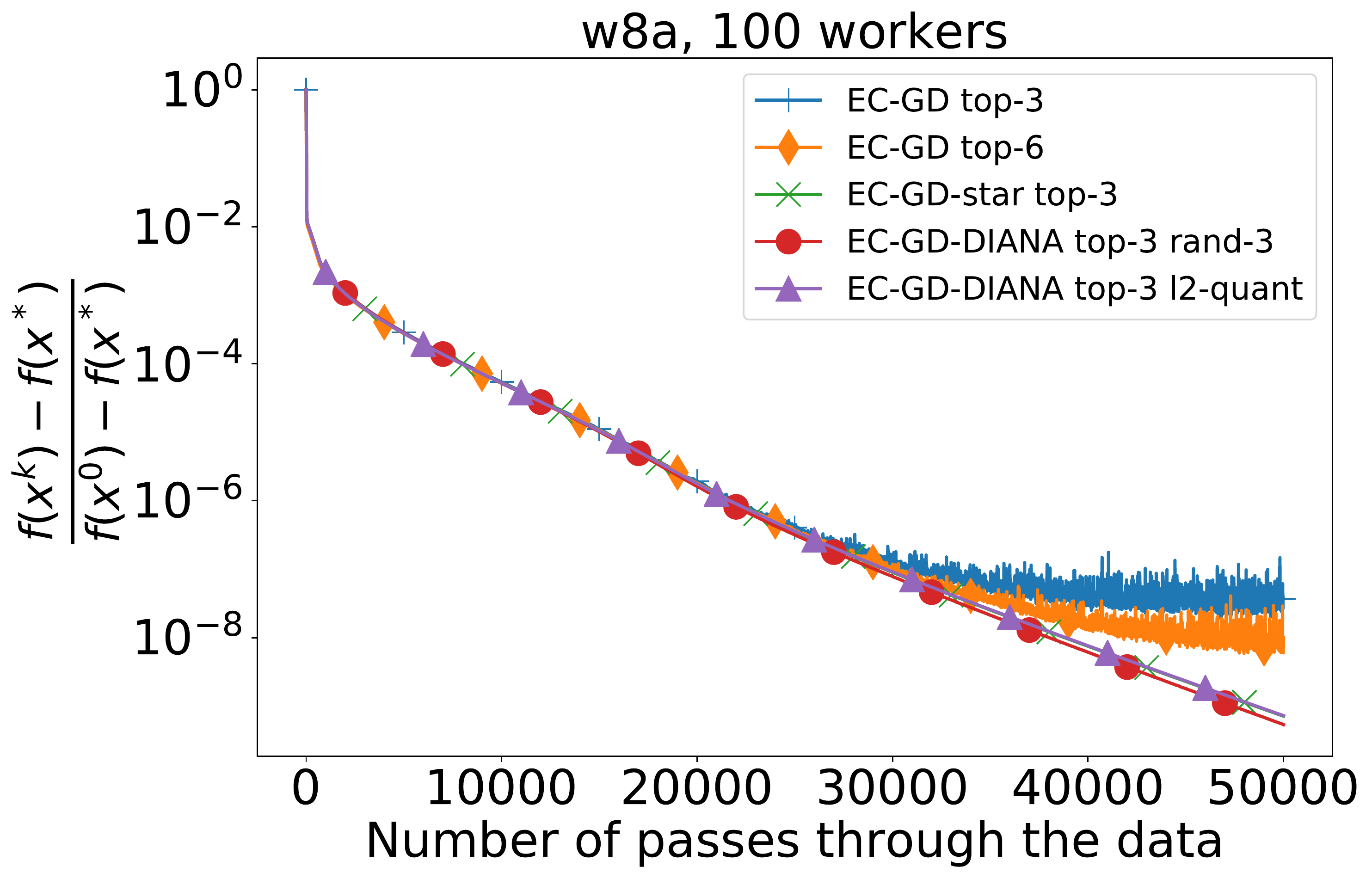}
	\includegraphics[width=0.32\textwidth]{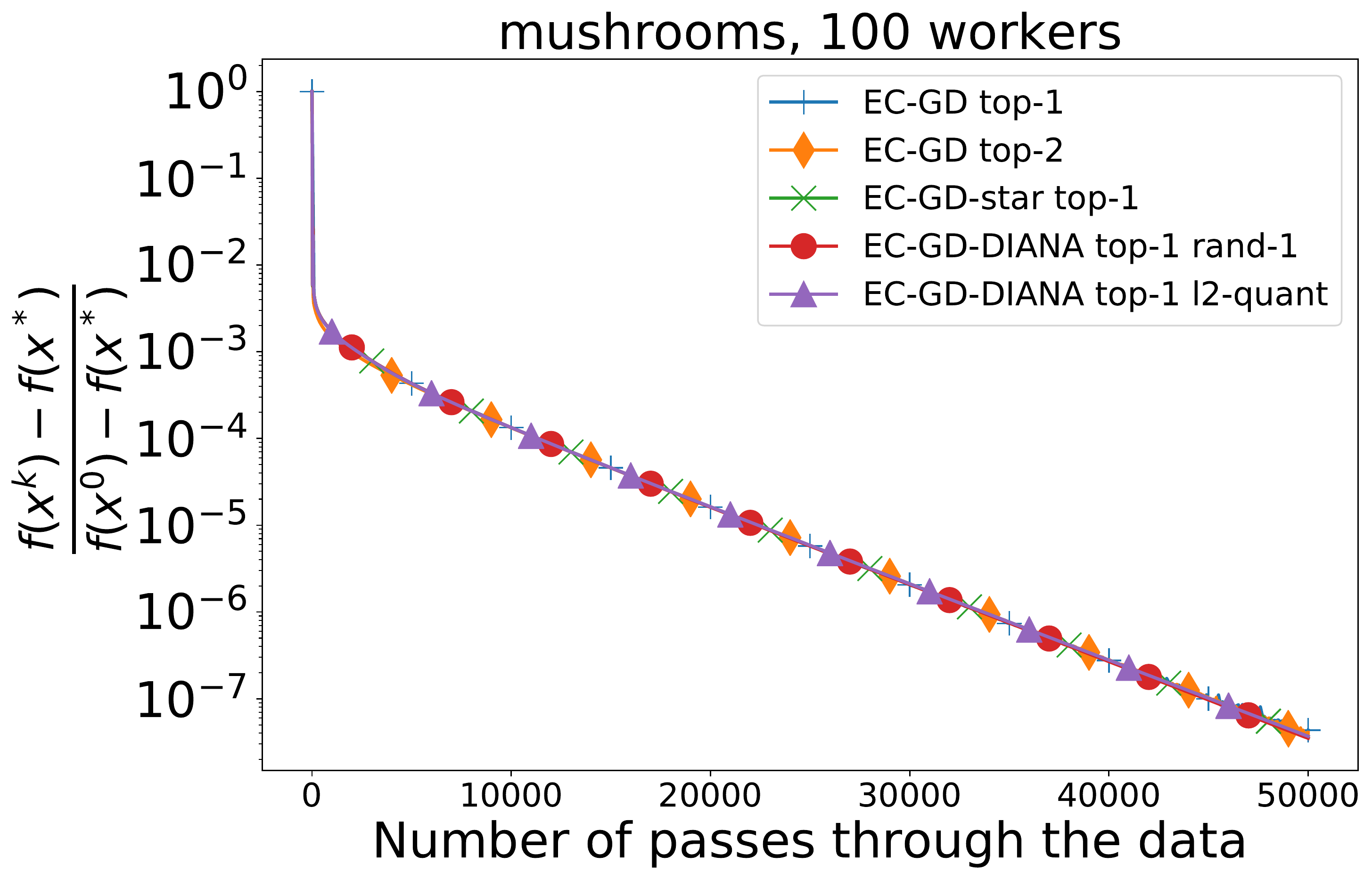}
	\includegraphics[width=0.32\textwidth]{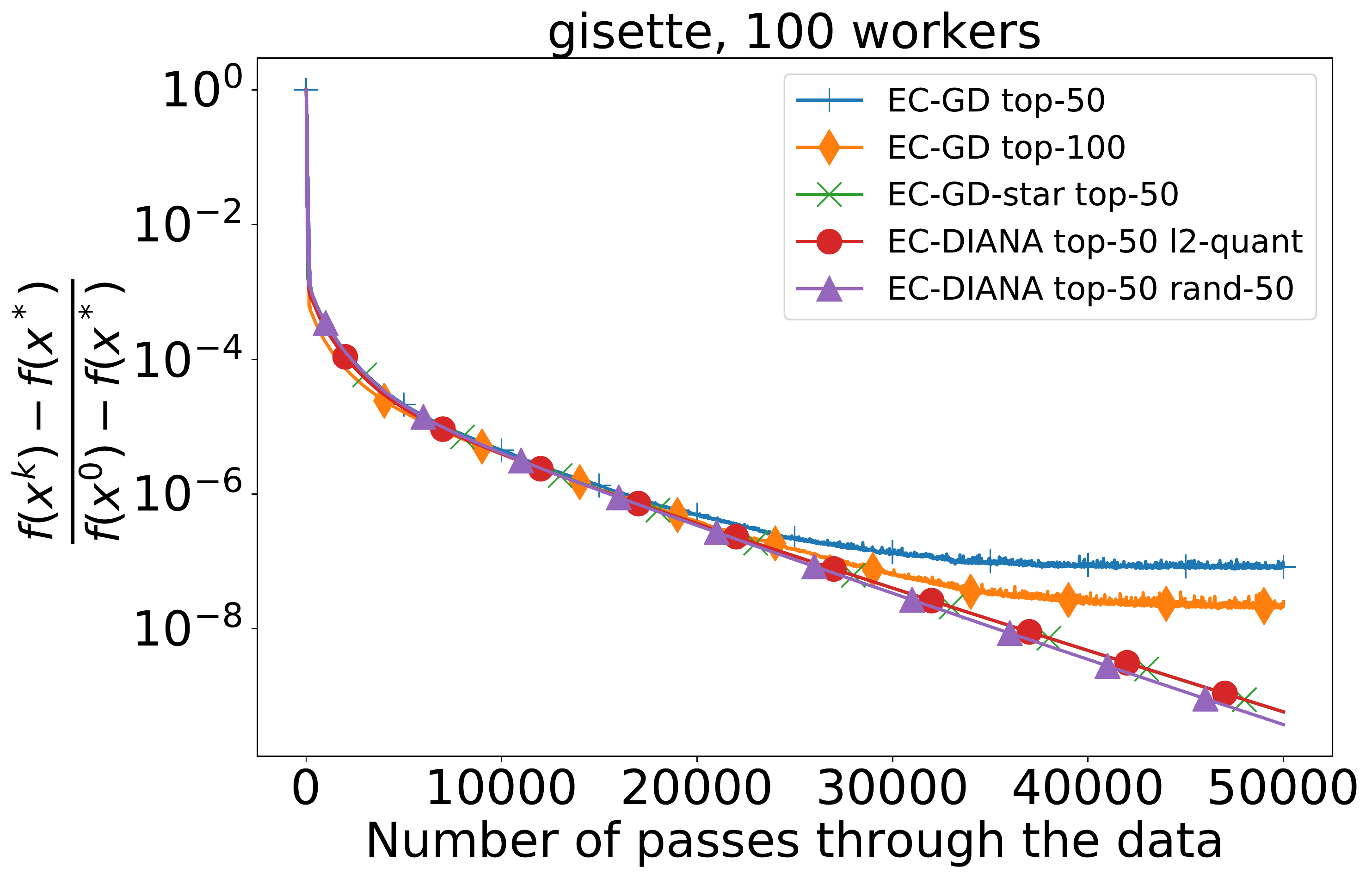}	    
	    \caption{Trajectories of {\tt EC-GD}, {\tt EC-GD-star} and {\tt EC-DIANA} applied to solve logistic regression problem with $100$ workers.}
    \label{fig:gd_logreg_100_workers}
\end{figure}

\begin{figure}[h]
    \centering
    \includegraphics[width=0.32\textwidth]{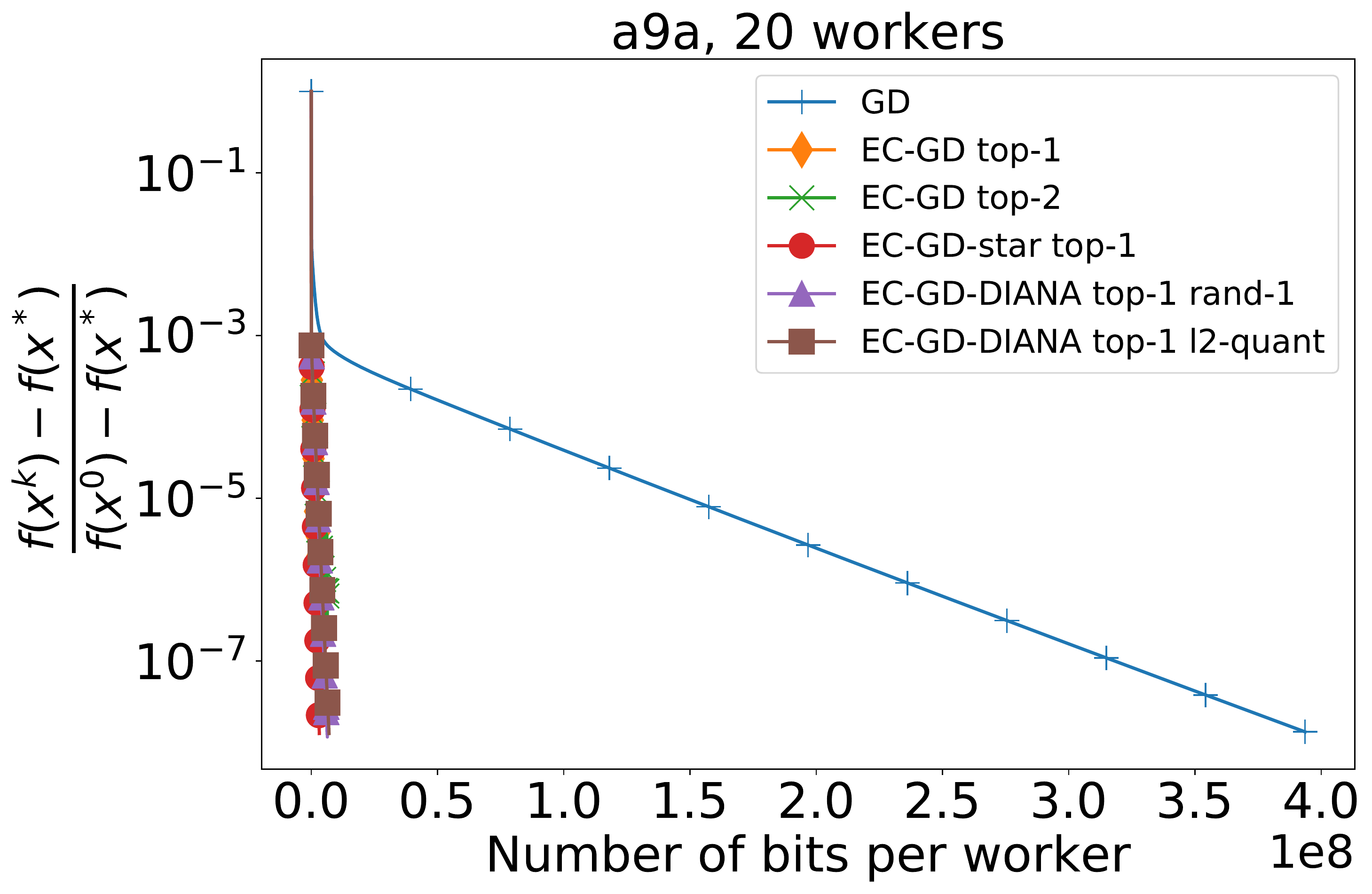}
	\includegraphics[width=0.32\textwidth]{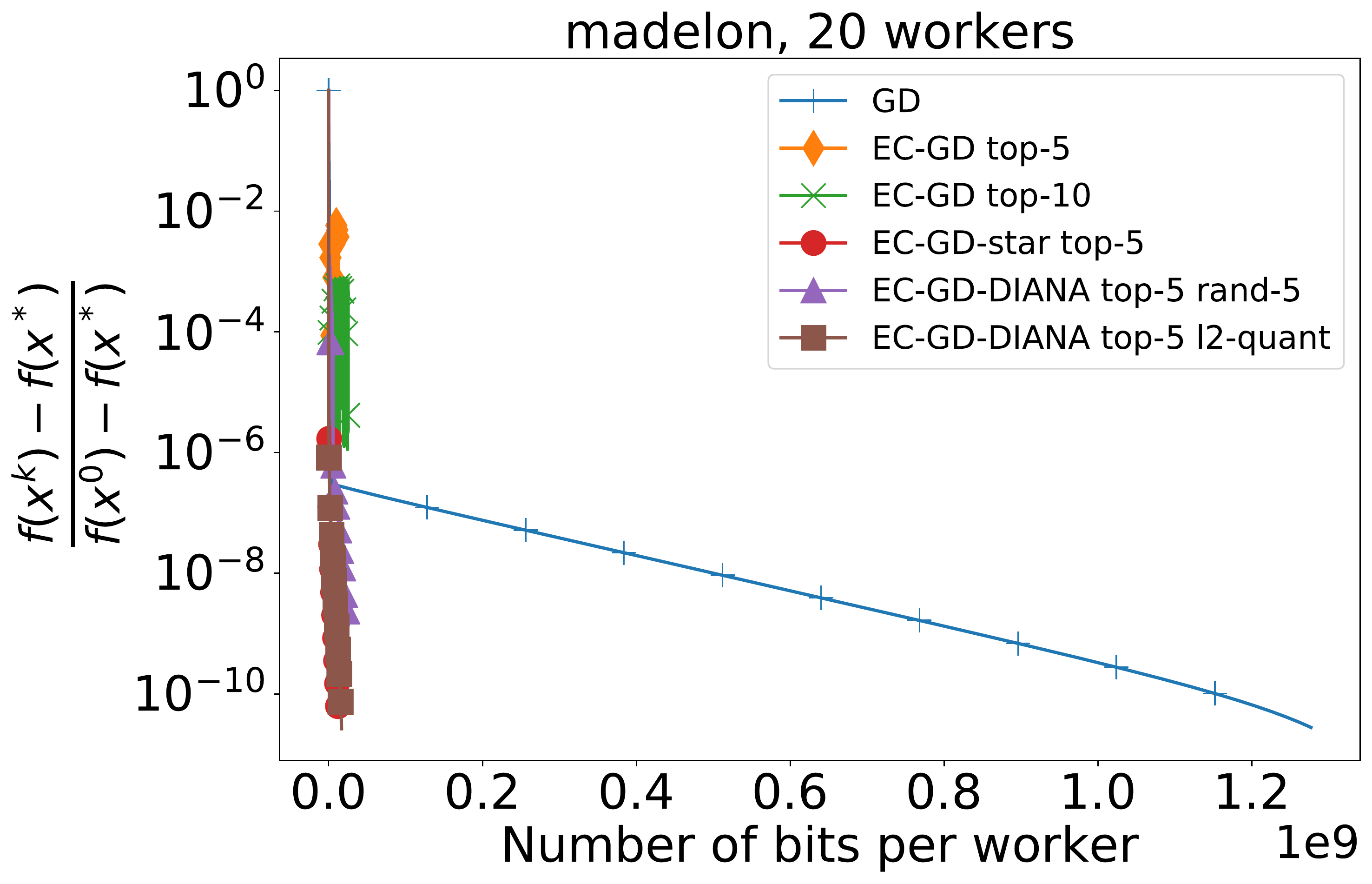}    
	\includegraphics[width=0.32\textwidth]{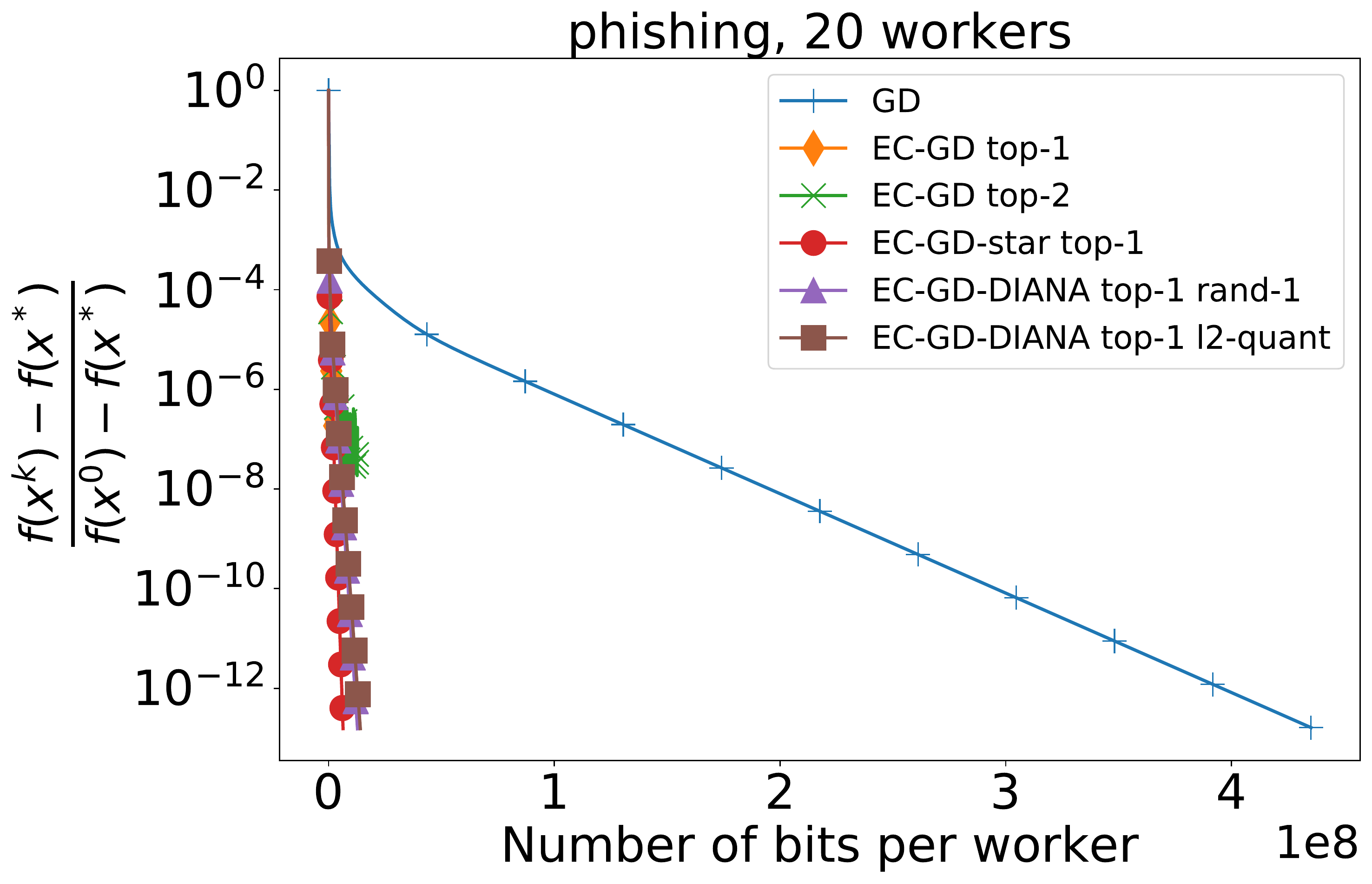}    
    \\
    \includegraphics[width=0.32\textwidth]{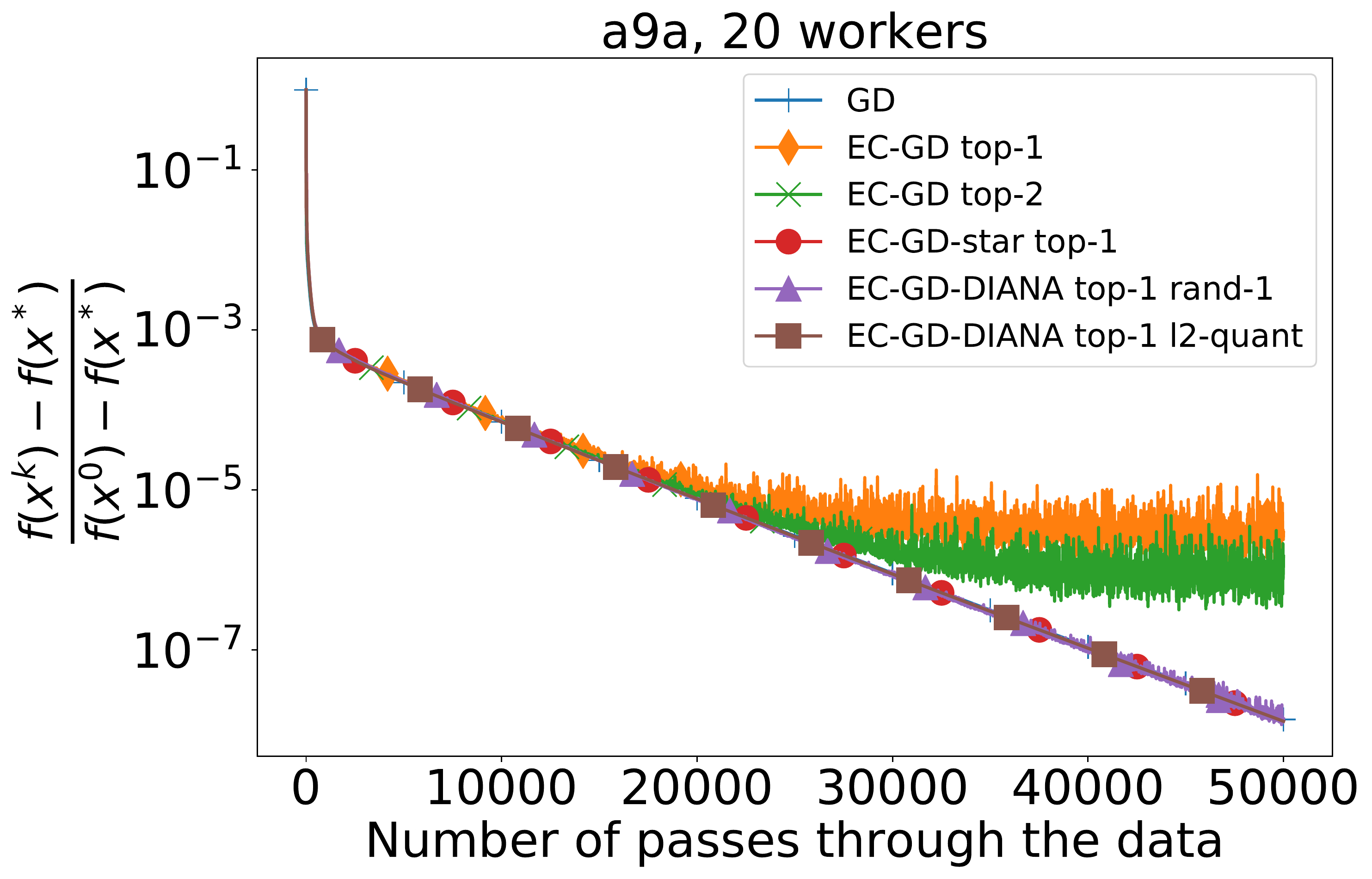}    
	\includegraphics[width=0.32\textwidth]{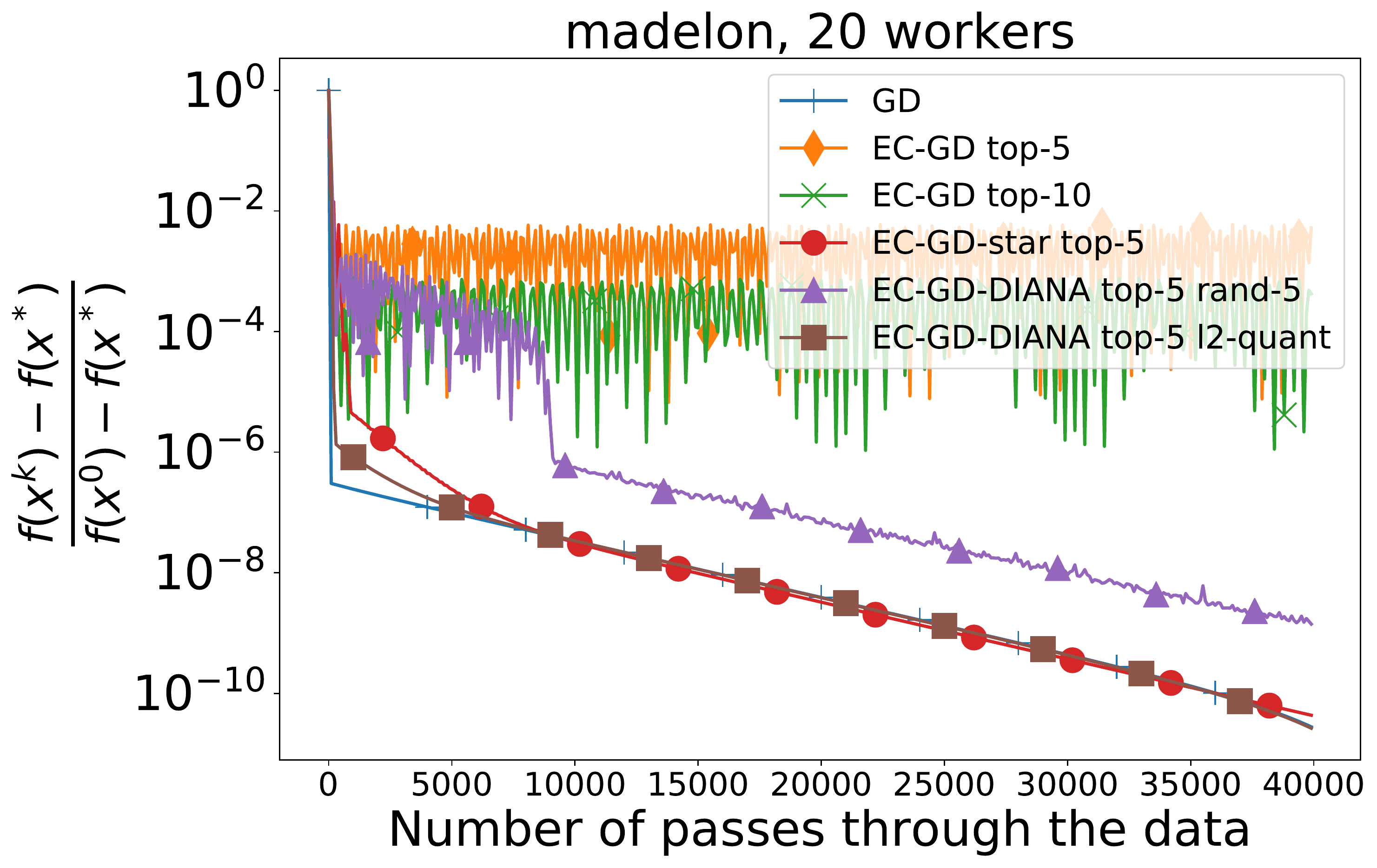}    
	\includegraphics[width=0.32\textwidth]{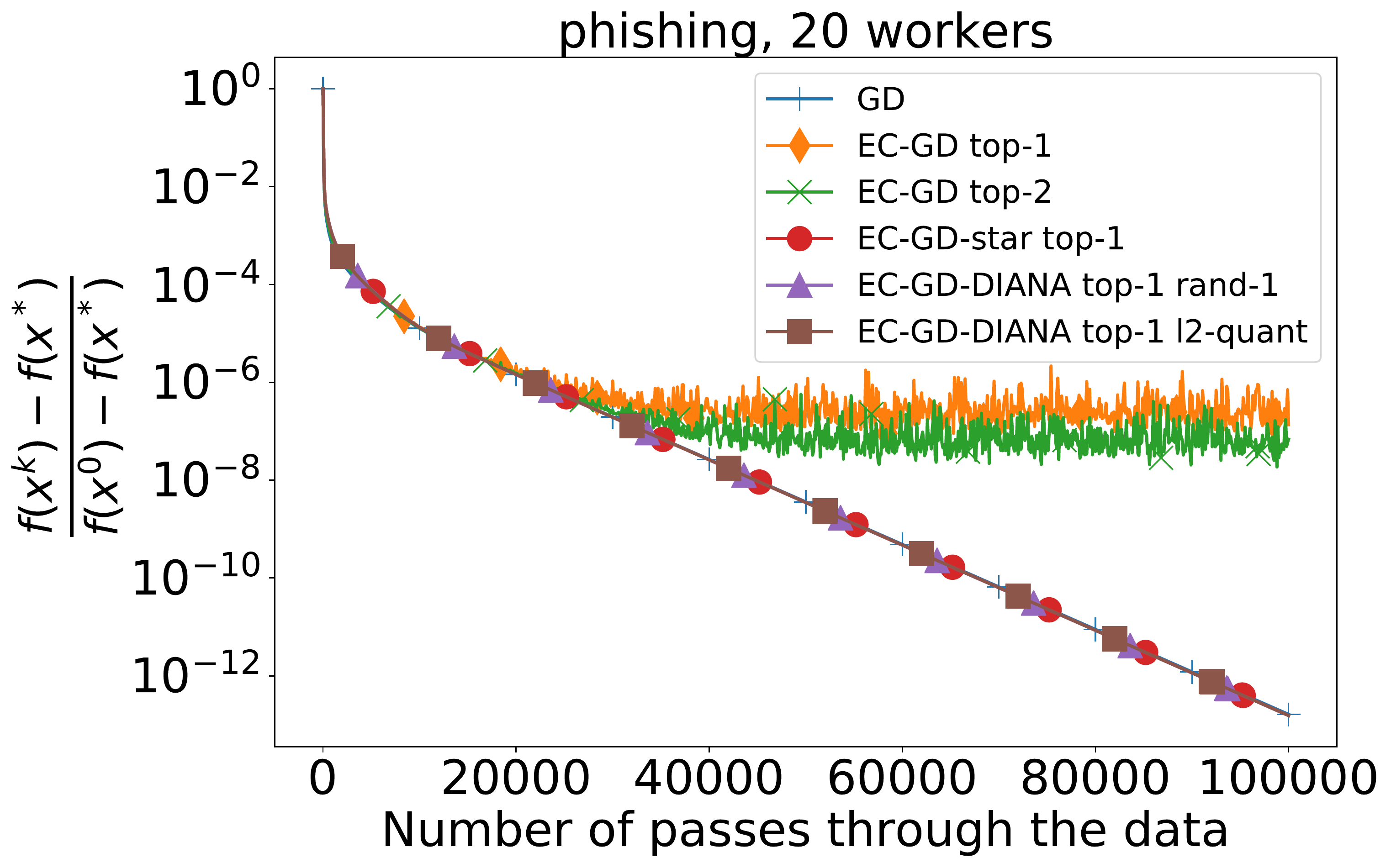}    	
	\\
	\includegraphics[width=0.32\textwidth]{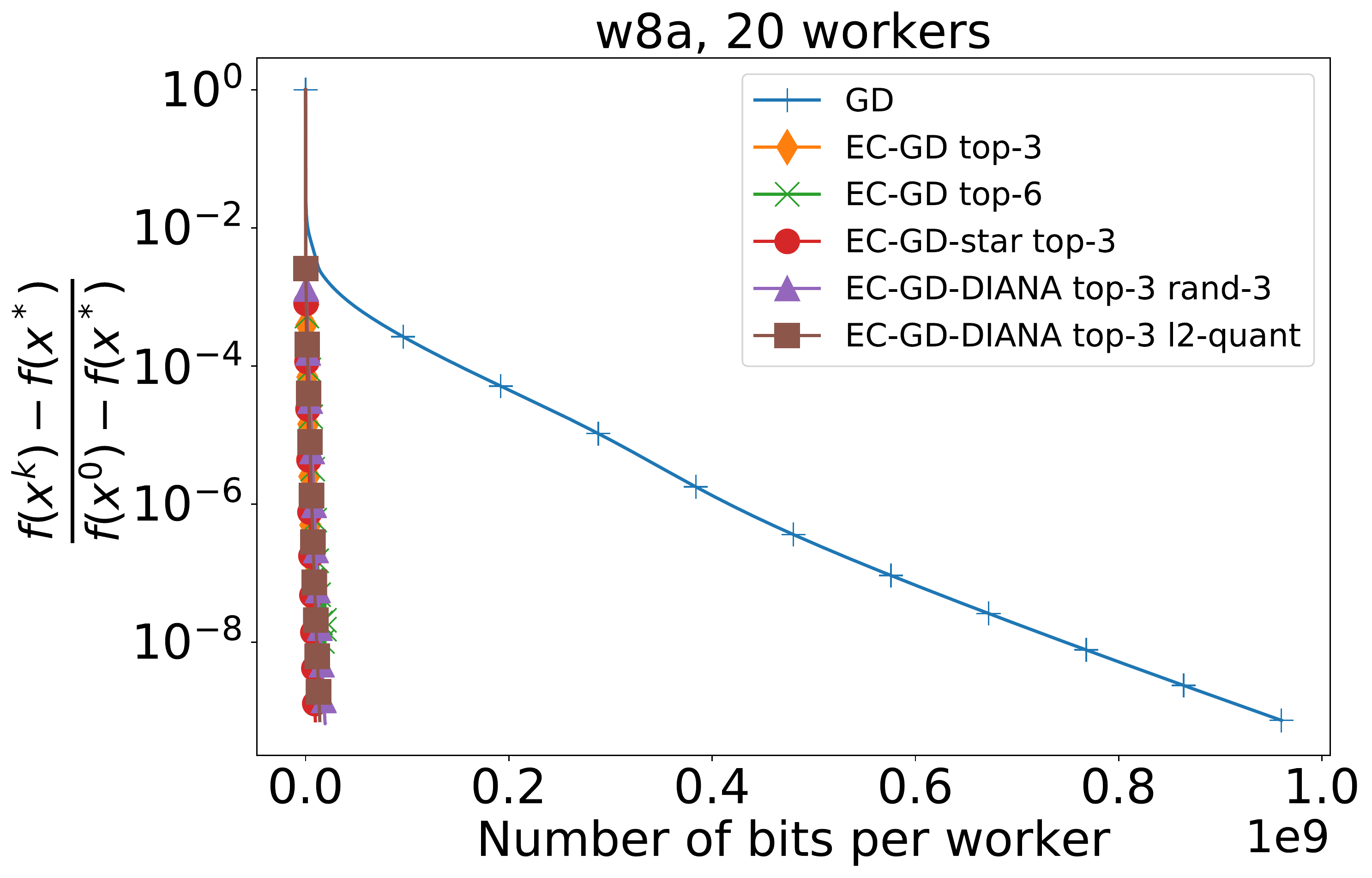}	\includegraphics[width=0.32\textwidth]{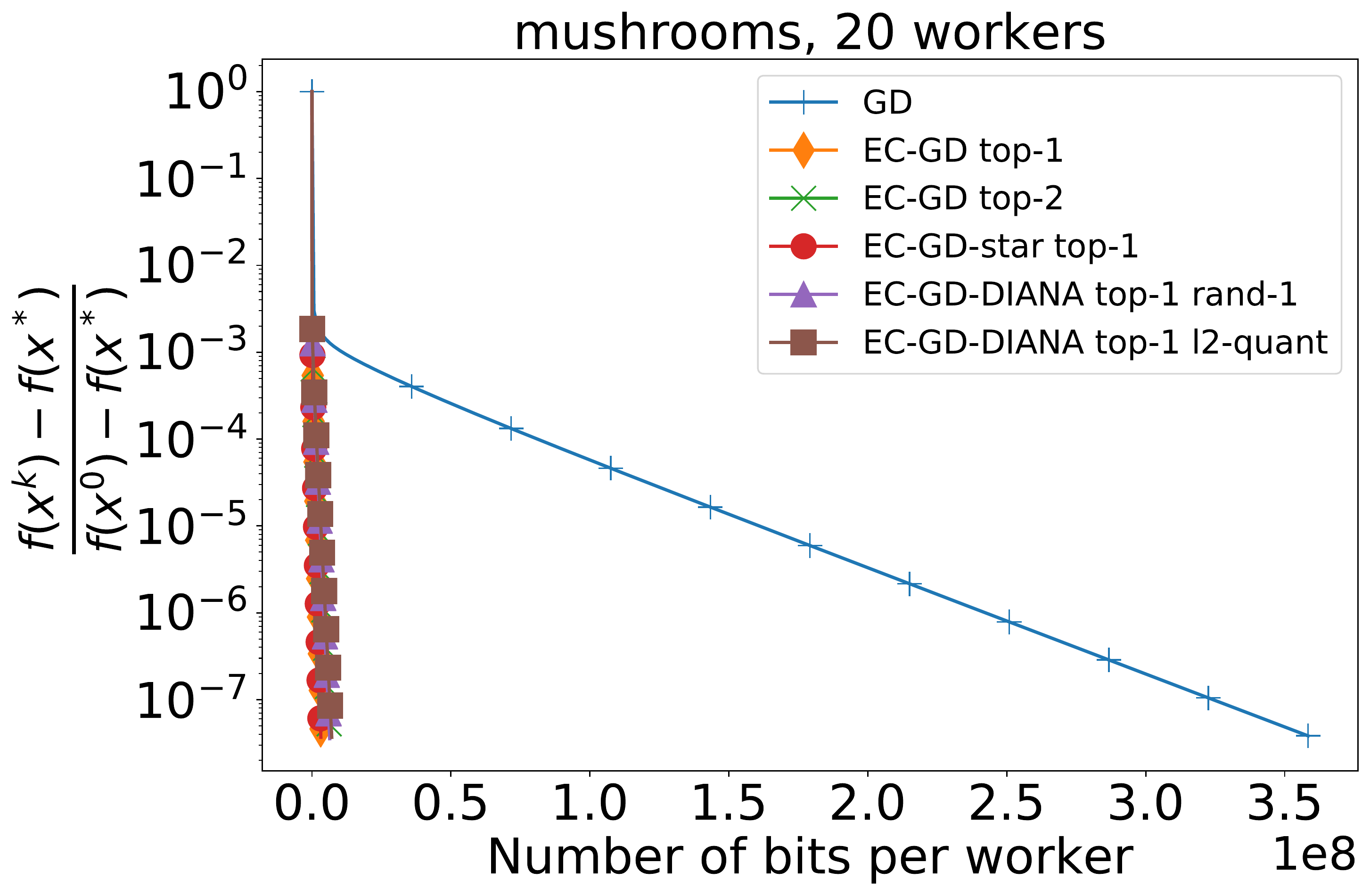}
	\includegraphics[width=0.32\textwidth]{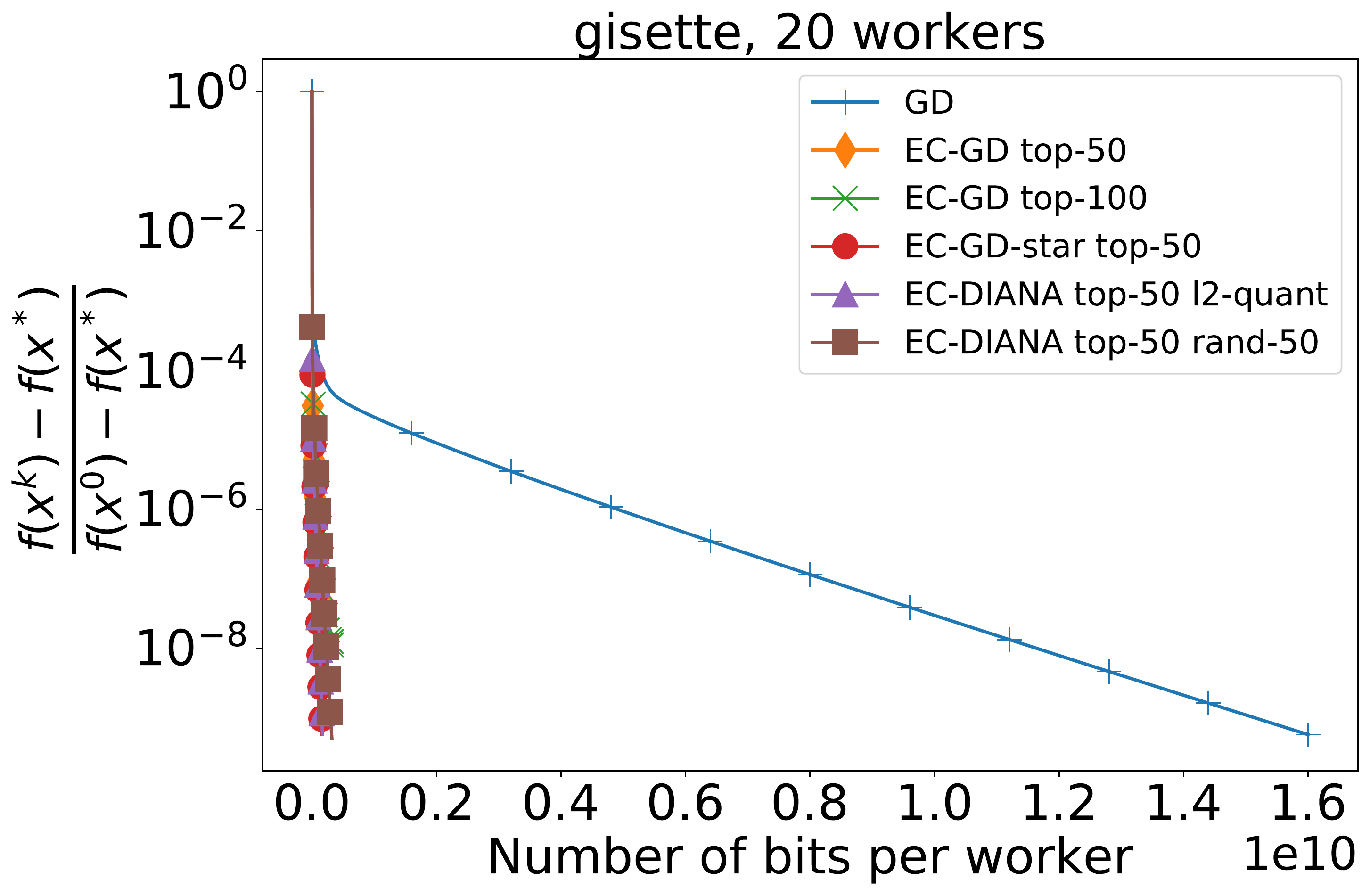}
	\\
	\includegraphics[width=0.32\textwidth]{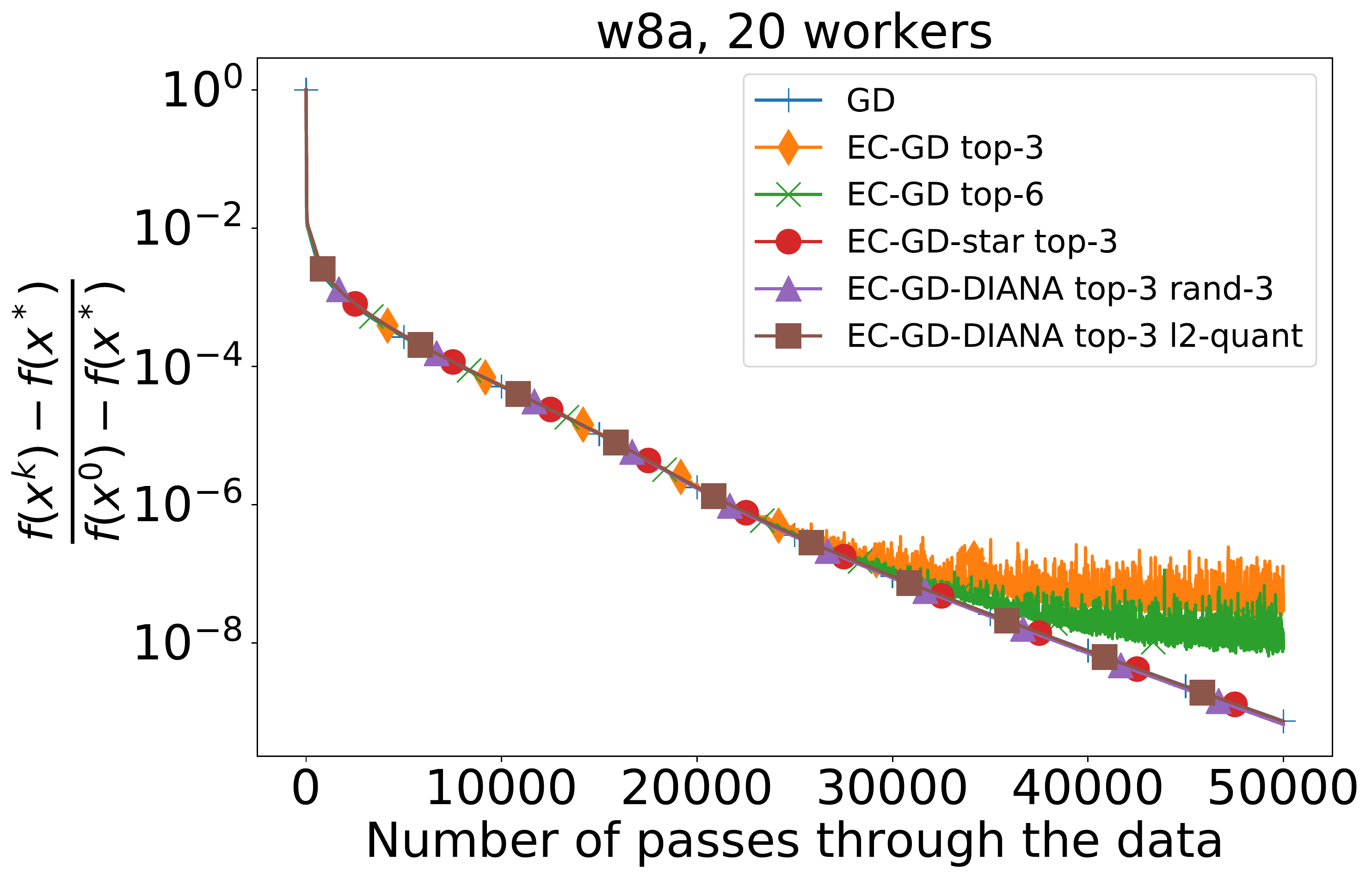}
	\includegraphics[width=0.32\textwidth]{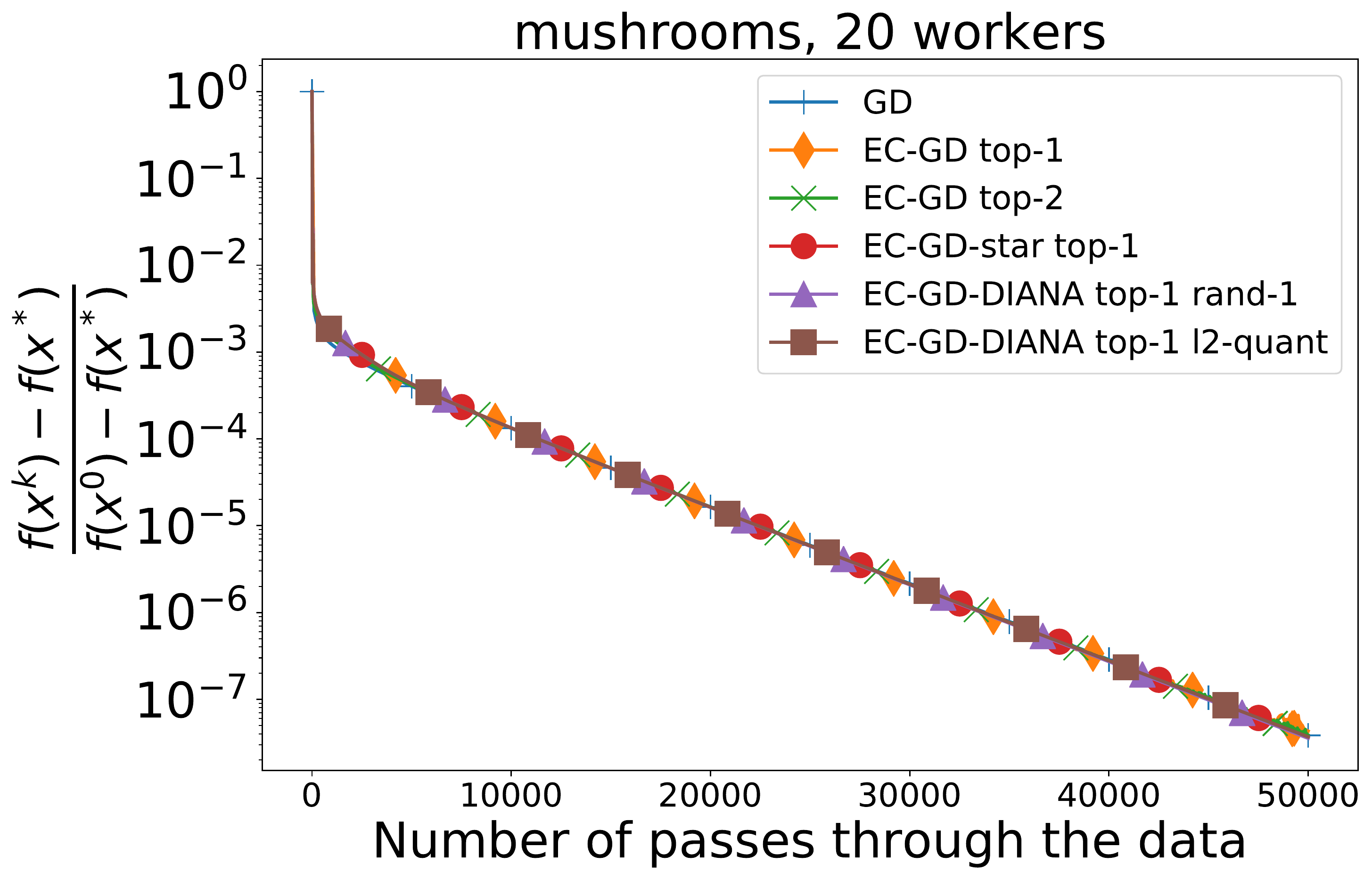}	\includegraphics[width=0.32\textwidth]{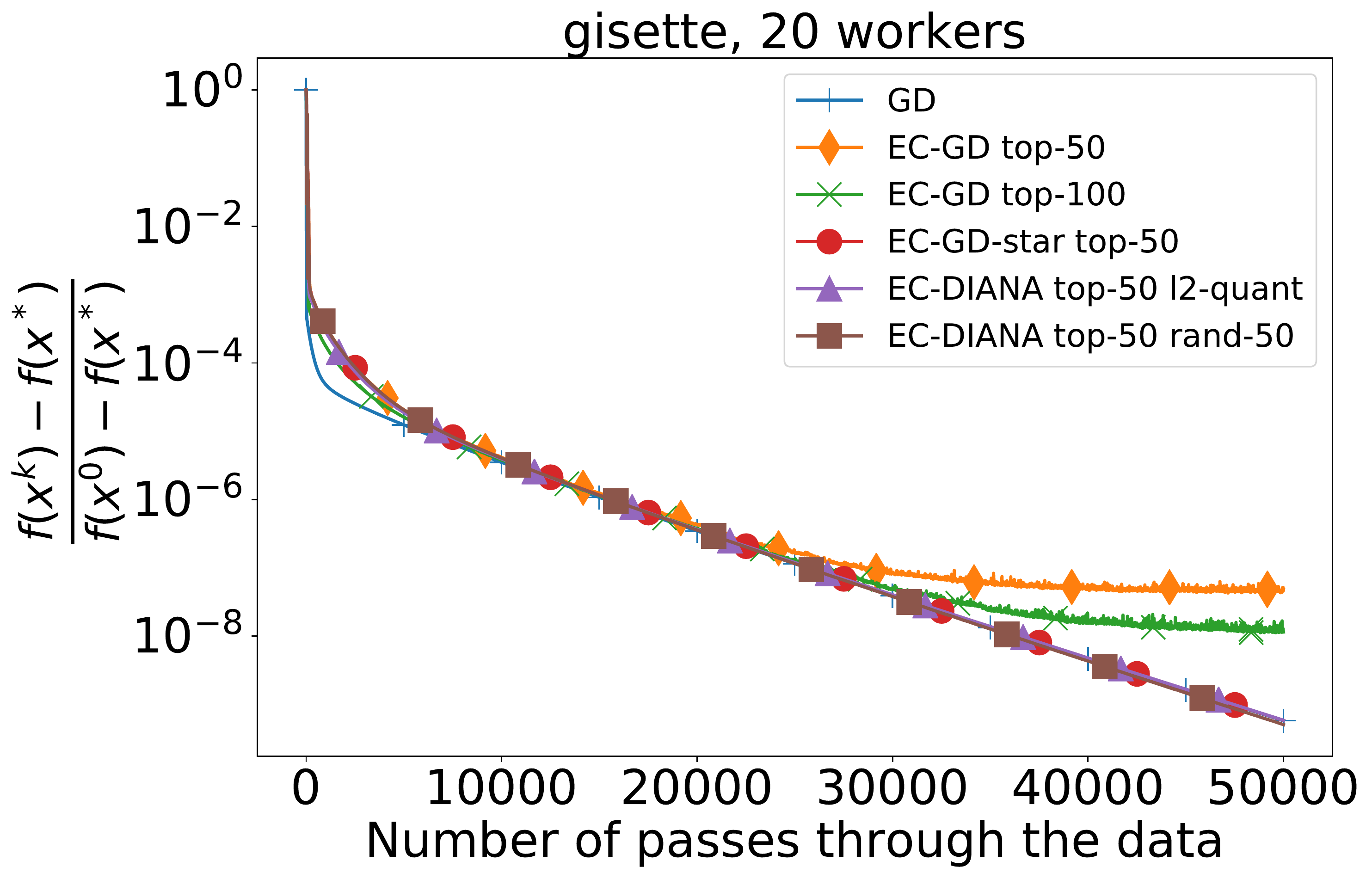}
	\caption{Trajectories of {\tt EC-GD}, {\tt EC-GD-star}, {\tt EC-DIANA} and {\tt GD} applied to solve logistic regression problem with $20$ workers.}
    \label{fig:gd_logreg_20_workers_id}
\end{figure}

\begin{figure}[h]
    \centering
    \includegraphics[width=0.32\textwidth]{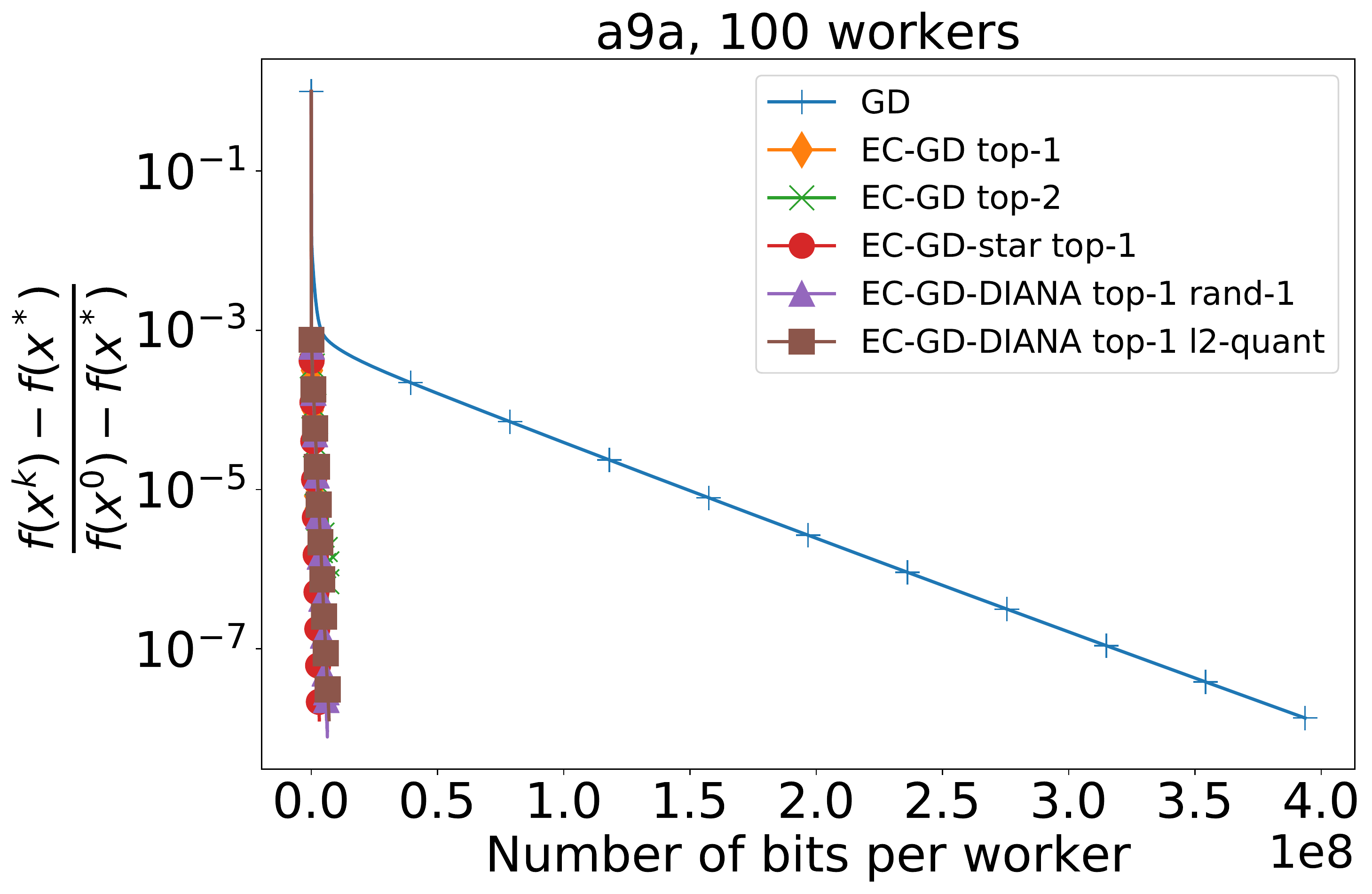}
	\includegraphics[width=0.32\textwidth]{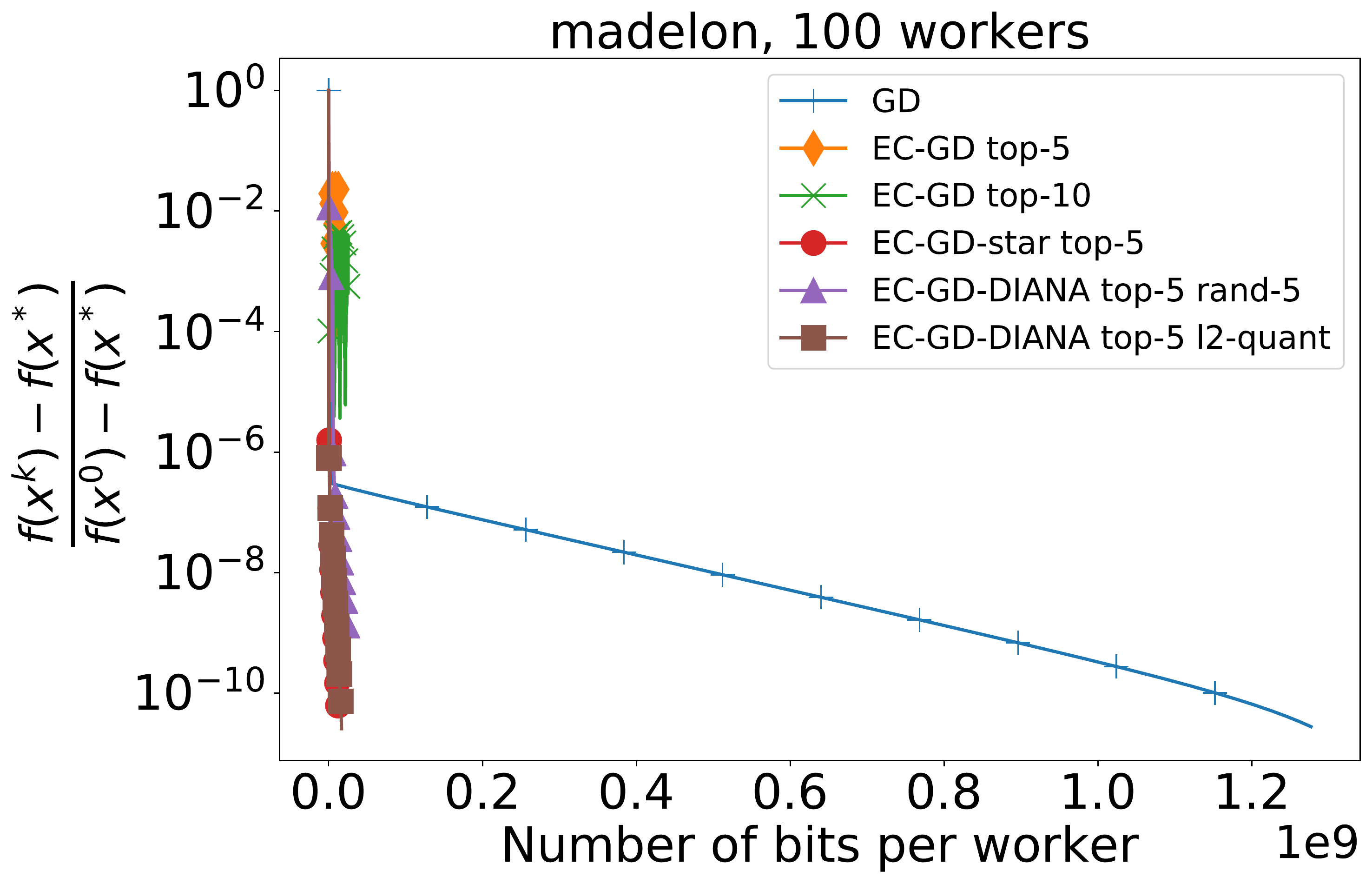}    
	\includegraphics[width=0.32\textwidth]{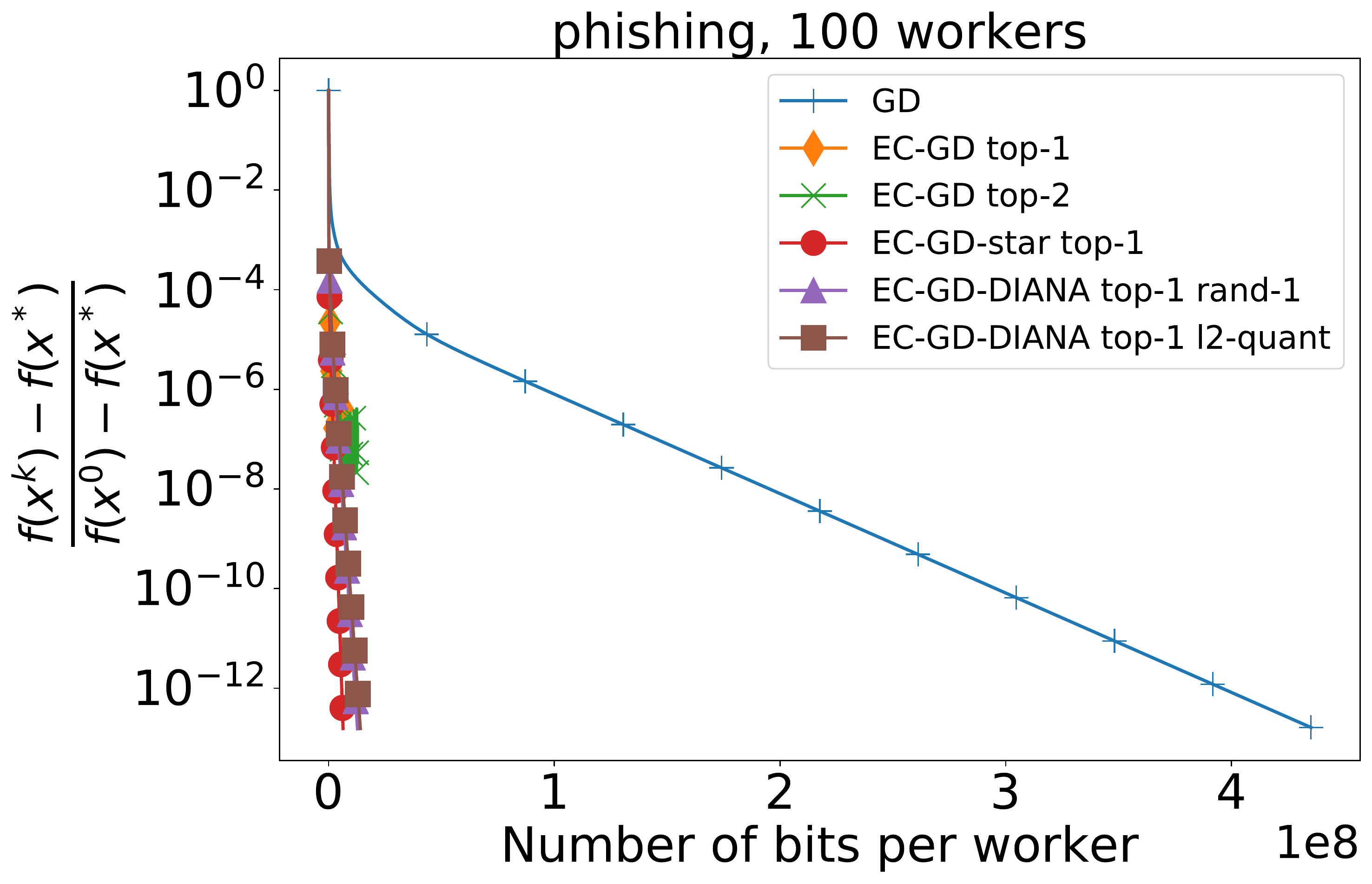}    
    \\
    \includegraphics[width=0.32\textwidth]{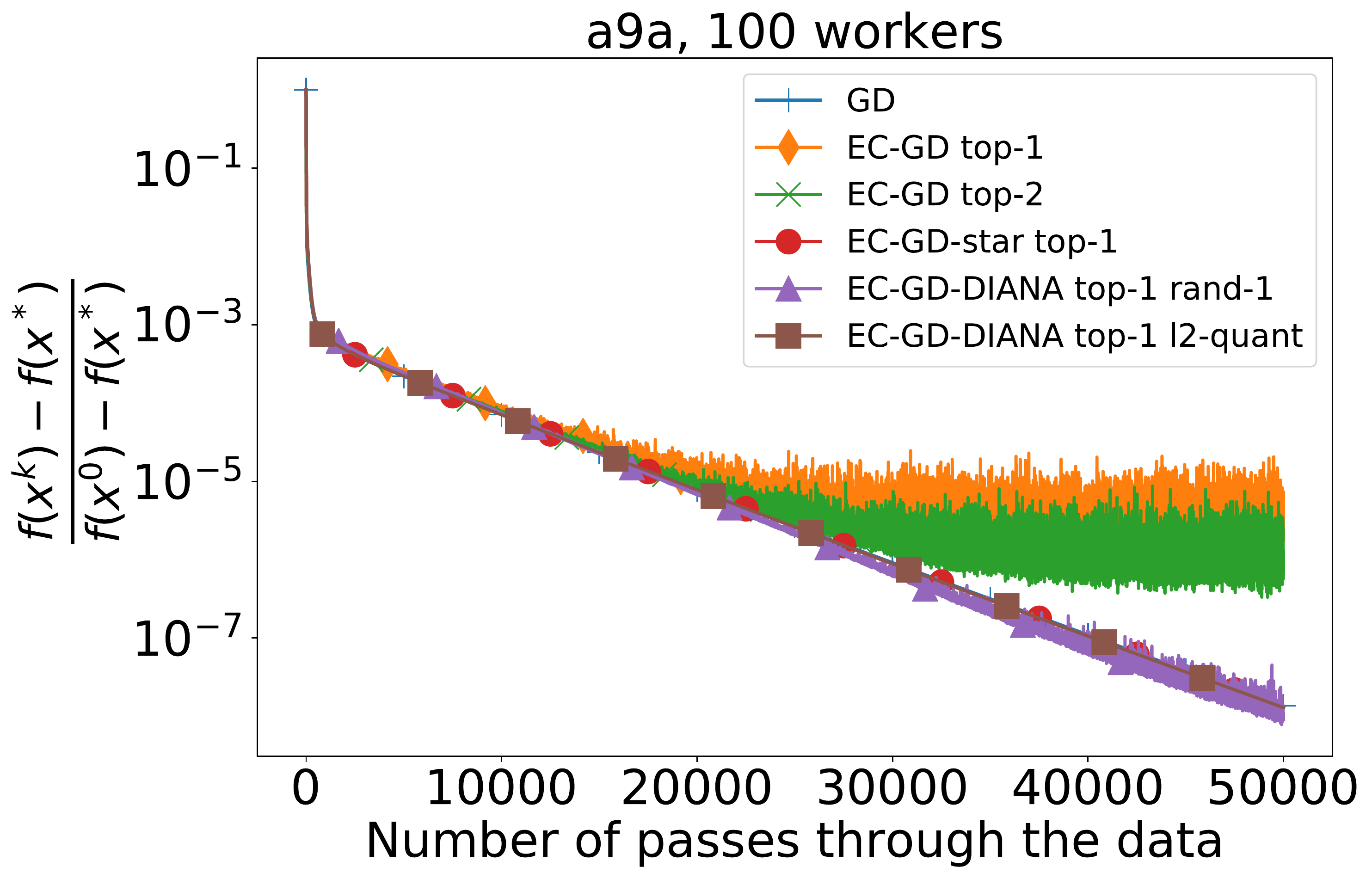}    
	\includegraphics[width=0.32\textwidth]{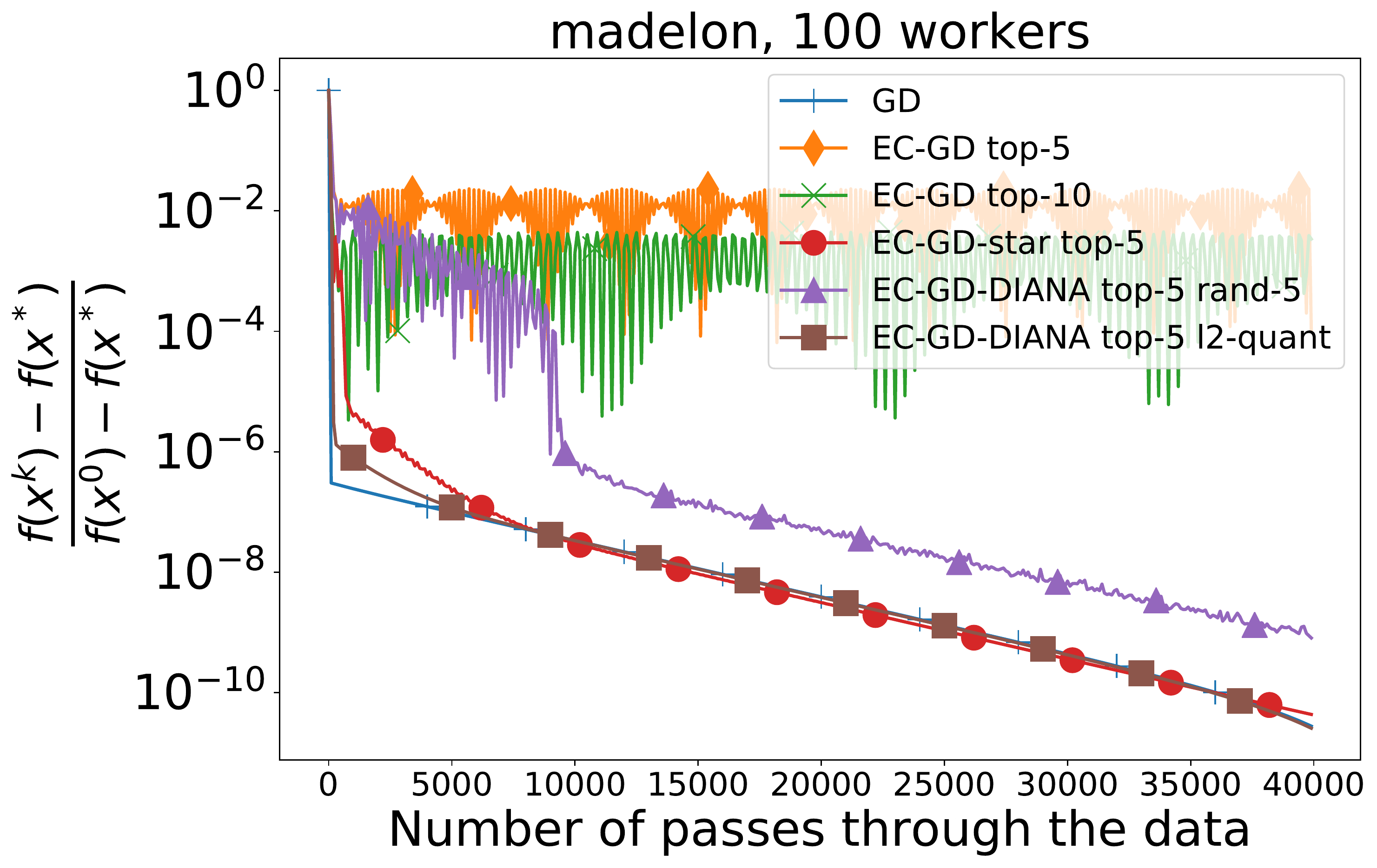}    
	\includegraphics[width=0.32\textwidth]{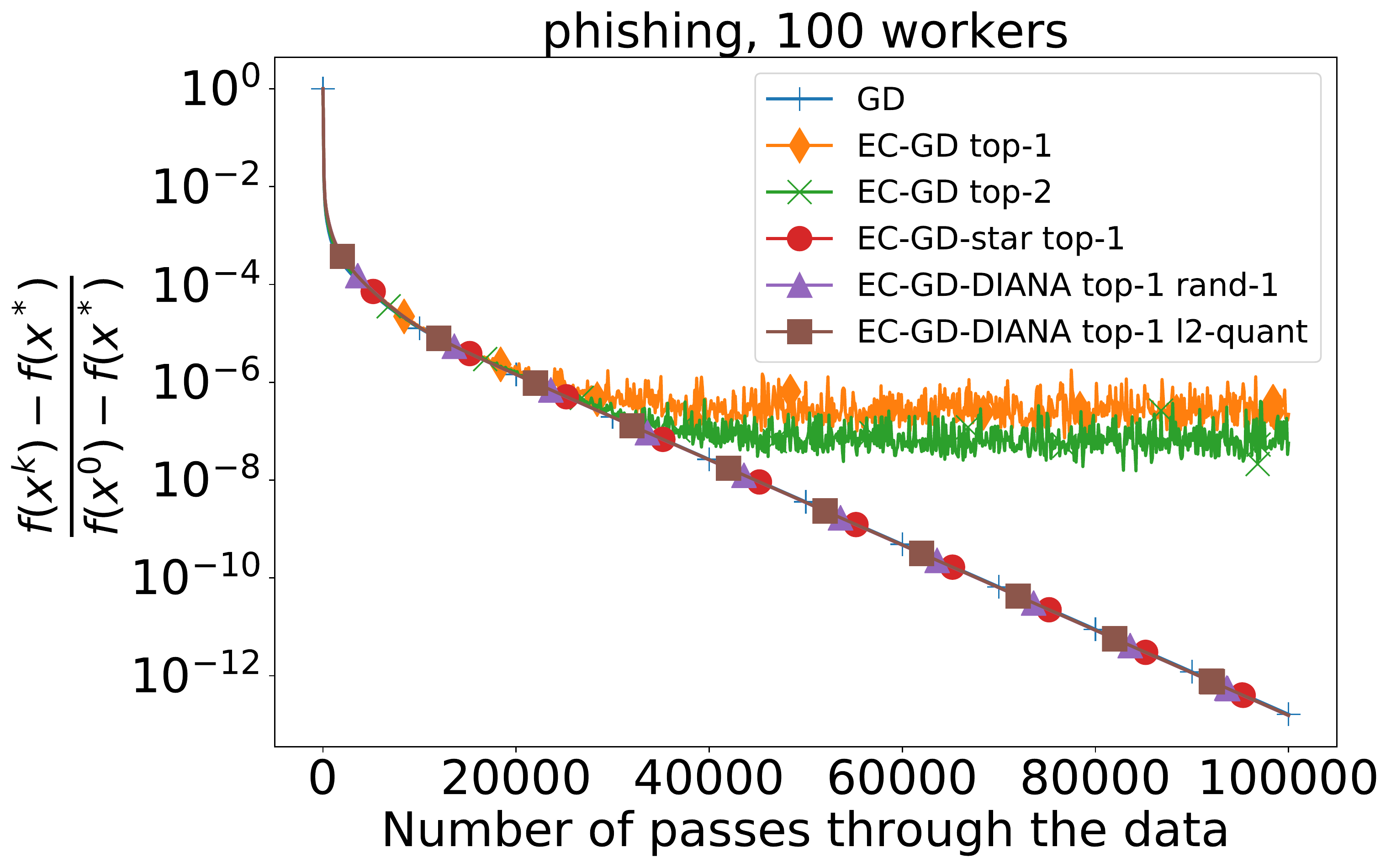}    	
	\\
	\includegraphics[width=0.32\textwidth]{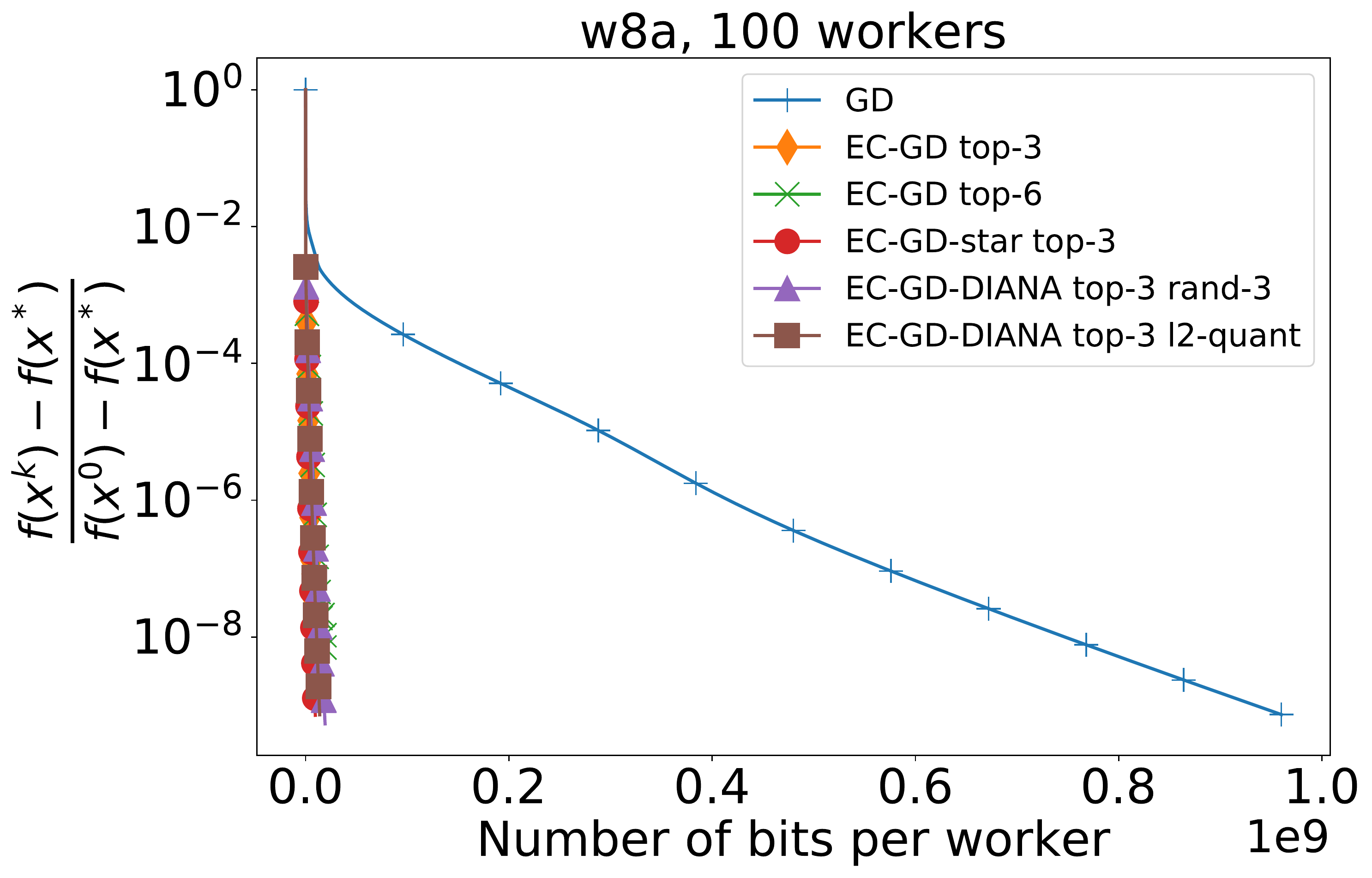}	\includegraphics[width=0.32\textwidth]{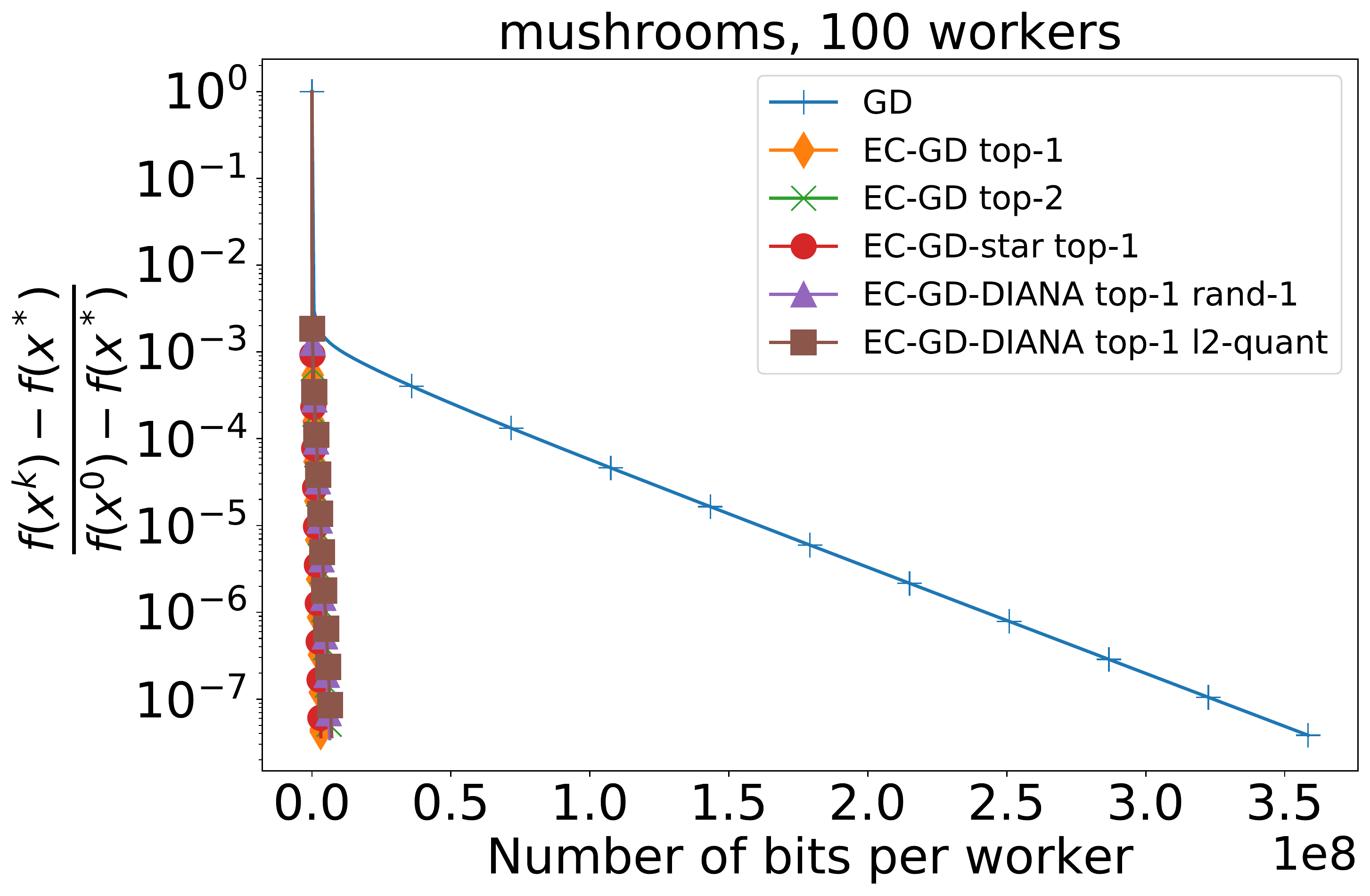}
	\includegraphics[width=0.32\textwidth]{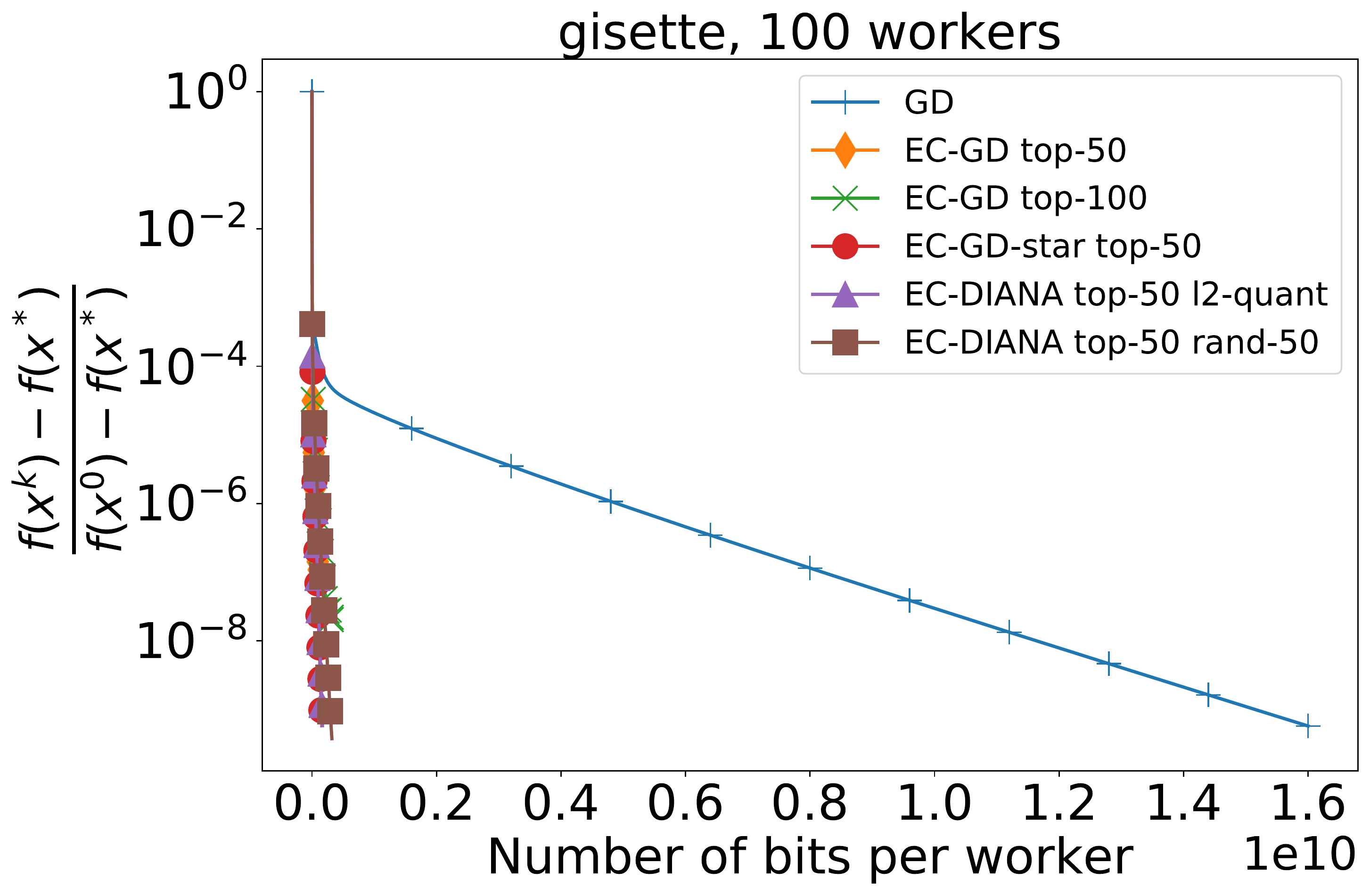}
	\\
	\includegraphics[width=0.32\textwidth]{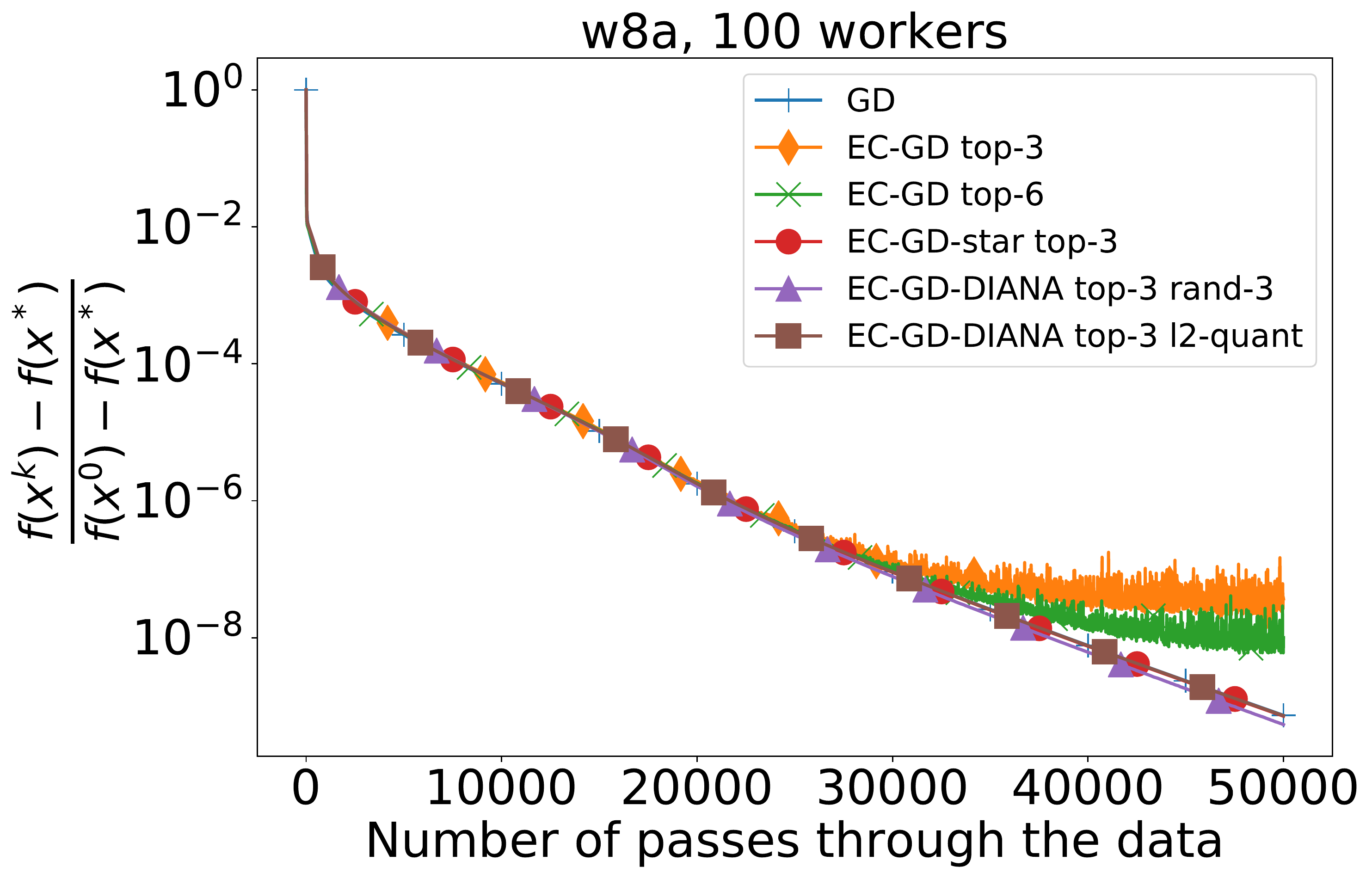}
	\includegraphics[width=0.32\textwidth]{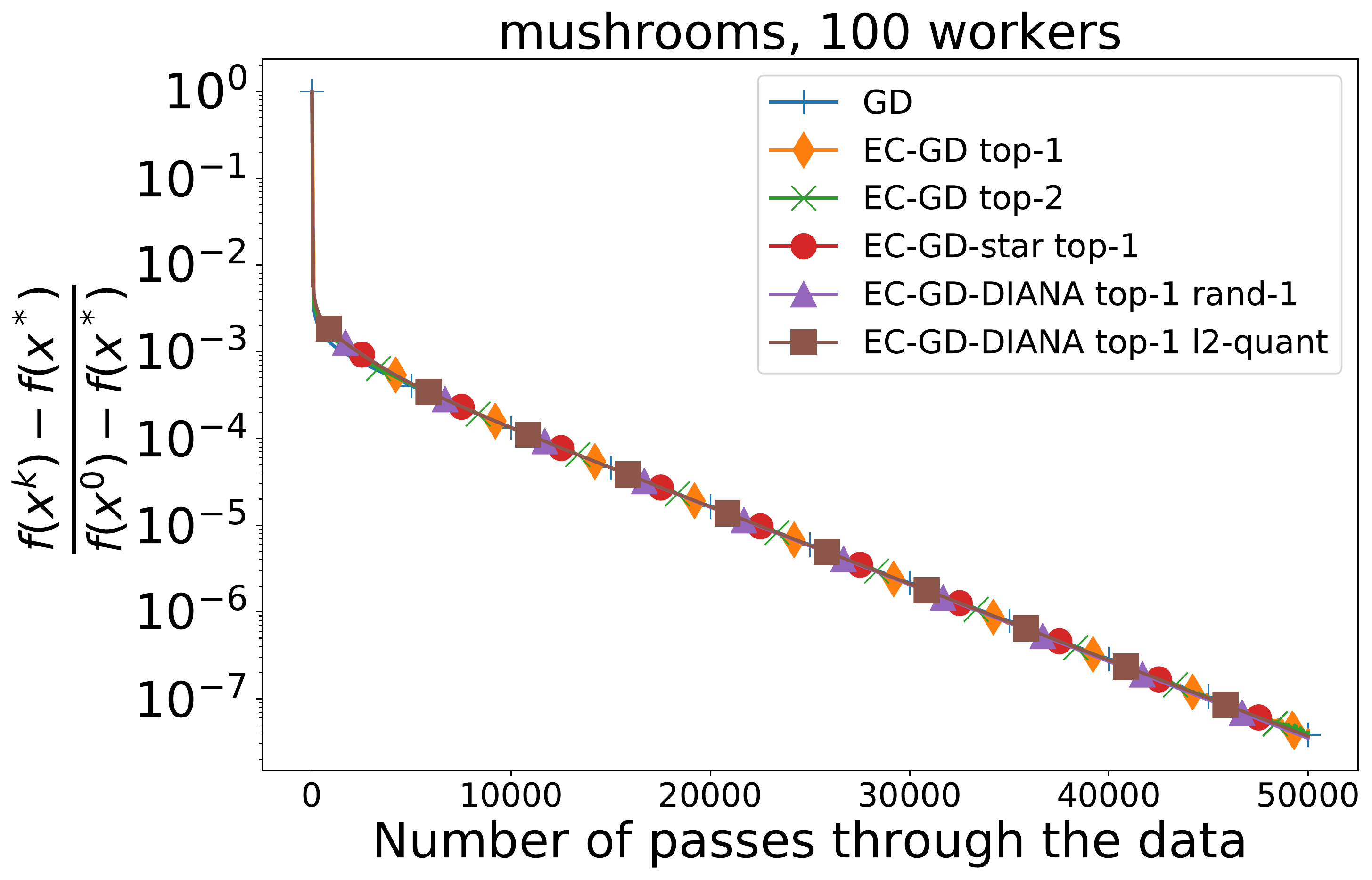}	\includegraphics[width=0.32\textwidth]{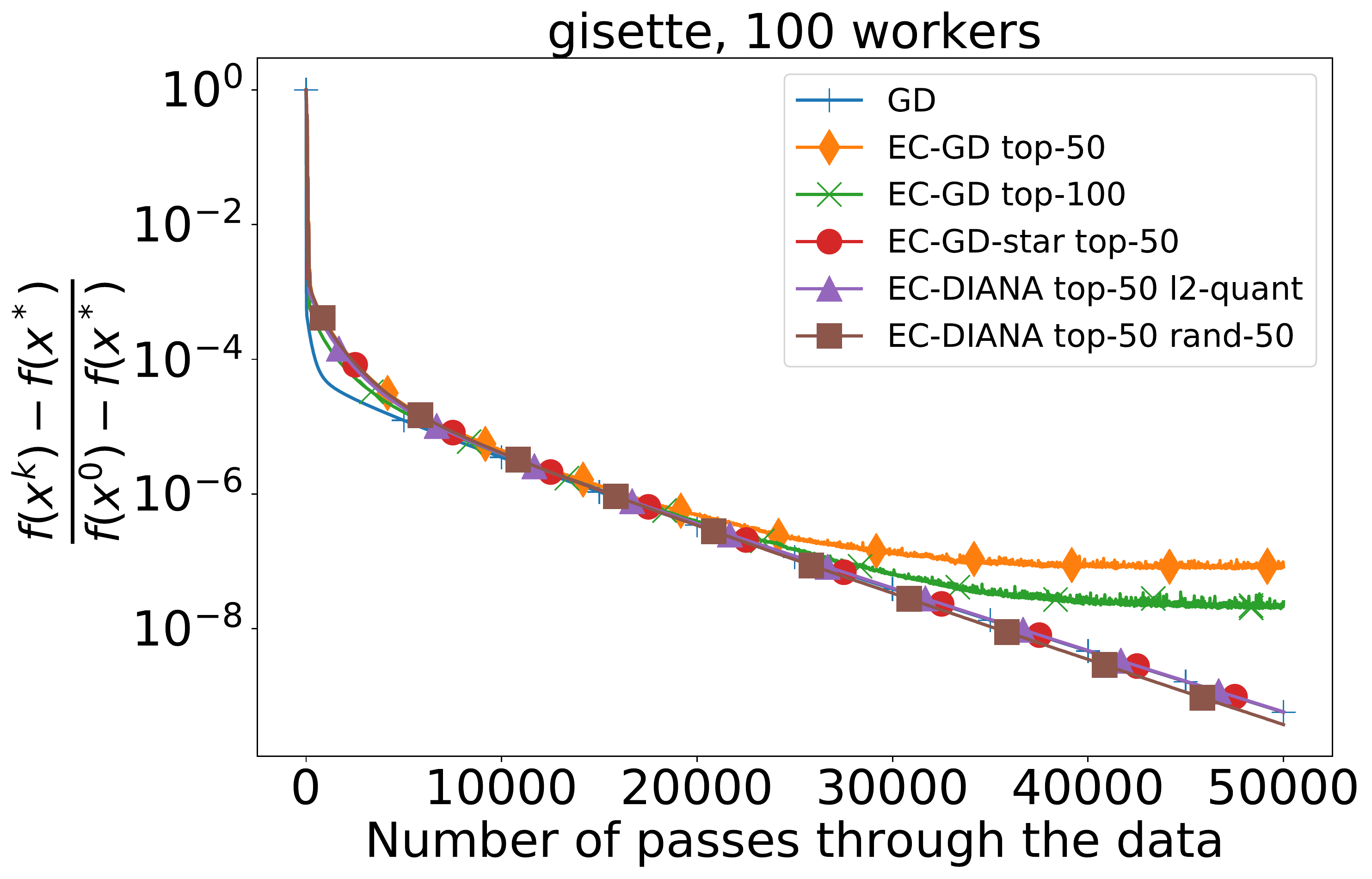}
	\caption{Trajectories of {\tt EC-GD}, {\tt EC-GD-star}, {\tt EC-DIANA} and {\tt GD} applied to solve logistic regression problem with $100$ workers.}
    \label{fig:gd_logreg_100_workers_id}
\end{figure}

\clearpage

\section{Compression Operators: Extra Commentary} \label{sec:compressions}

Communication efficient distributed {\tt SGD} methods based on the idea of communication compression exists in two distinct varieties: i) methods based on unbiased compression operators, and ii) methods based on biased compression operators. The first class of methods is much mire developed than the latter since it is easier to theoretically analyze unbiased operators. The subject of this paper is the study of the latter and dramatically less developed and understood class.

\subsection{Unbiased compressors} 

By unbiased compression operators we mean randomized mappings $\cQ:\R^d\to \R$ satisfying the relations
\[\Exp{\cQ(x)} = x \qquad \text{and} \qquad \Exp \|\cQ(x)-x\|^2 \leq \omega \|x\|^2, \qquad \forall x\in \R^d\]
for some $\omega \geq 0$.
While operators satisfying the above relations are often in the literature called \textit{quantization operators}, this class includes compressors which perform sparsification as well.

 Among the first methods using unbiased compressors developed in this field are {\tt QSGD} \cite{alistarh2017qsgd}, {\tt TernGrad} \cite{wen2017terngrad} and {\tt DQGD} \cite{khirirat2018distributed}. The first analysis of {\tt QSGD} and {\tt TernGrad} without bounded gradients assumptions was proposed in \cite{mishchenko2019distributed}, which contains the best known results for {\tt QSGD} and {\tt TernGrad}. However, existing guarantees in the strongly convex case for {\tt QGSD}, {\tt TernGrad}, and {\tt DQGD} establish linear convergence  to some neighborhood of the solution only, even if the workers quantize the full gradients of their functions. This problem was resolved by \citet{mishchenko2019distributed}, who proposed the first method, called {\tt DIANA}, which uses quantization for communication and enjoys the linear rate of convergence to the exact optimum asymptotically in the strongly convex case when workers compute the full gradients of their functions in each iteration. Unlike all previous approaches, {\tt DIANA} is based on the quantization of gradient differences rather than iterates or gradients. In essence, DIANA is a technique for reducing the variance  introduced by quantization.  \citet{horvath2019stochastic} generalized the {\tt DIANA} method to the case of more general quantization operators. Moreover, the same authors developed a new method called {\tt VR-DIANA} specially designed to solve problems  \eqref{eq:main_problem} with the individual functions having the finite sum structure \eqref{eq:f_i_sum}.

\subsection{Biased compressors}

By biased compressors we mean (possibly) randomized mappings $\cC:\R^d\to \R$ satisfying the average contraction relation
\[		\EE\left[\|\cC(x) - x\|^2\right] \le (1 - \delta)\|x\|^2, \qquad \forall x\in \R^d \]
for some $\delta>0$.
	
Perhaps the most popular biased  compression operator is  TopK, which takes vector $x$ as input and substitutes all coordinates of $x$ by zero except the $k$ components with the largest absolute values. However, such a greedy approach applied to simple distributed {\tt SGD} and even distributed {\tt GD} can break the convergence of the method even when applied to simple functions in small dimensions, and may even lead to exponential divergence \cite{beznosikov2020biased}. The \textit{error-feedback} framework described in \cite{karimireddy2019error,stich2019error,stich2018sparsified} and studies in this paper can fix this problem, and it remains the only known mechanism that does so for all compressors described in \eqref{eq:compression_def}.
This is one of the main motivations for the study of the error-feedback mechanism. For instance, error feedback can fix convergence issues with   methods like {\tt sign-SGD} \citep{Bernstein2019signSGDWM}. The analysis of error feedback by \citet{karimireddy2019error,stich2019error,stich2018sparsified} works either under the assumption that the second moment of the stochastic gradient is uniformly bounded or only for the single-worker case. Recently Beznosikov et al. \cite{beznosikov2020biased} proposed the first analysis of {\tt SGD} with error feedback for the general case of multiple workers without bounded second moment assumption. There is another line of works \cite{koloskova2019decentralized, KoloskovaLSJ19decentralized} where authors apply arbitrary compressions in the decentralized setup. This approach has better potential than a centralized one in terms of reducing the communication cost. However, in this paper, we study only centralized architecture.

\clearpage

\section{Further Notation and Definitions}

In what follows it will be useful to denote
$$\mytextstyle v^k \eqdef \frac{1}{n}\sum_i  v_i^k, \quad  g^k \eqdef \frac{1}{n}\sum_i g_i^k, \quad e^k \eqdef \frac{1}{n}\sum_i e_i^k.$$ By aggregating identities  \eqref{eq:error_update} across all $i$, we get $e^{k+1} = e^k + \gamma g^k - v^k.$ In our proofs we also use the perturbed iterates technique \cite{leblond2018improved,mania2017perturbed} based on the analysis of the following sequence
\begin{equation}
	\tx^k = x^k - e^k. \label{eq:perturbed_uterate}
\end{equation}
This sequence satisfies very useful for the analysis relation:
\begin{equation}
	\tx^{k+1} \overset{\eqref{eq:perturbed_uterate}}{=} x^{k+1} - e^{k+1} \overset{\eqref{eq:x^k+1_update},\eqref{eq:error_update}}{=} x^k - v^k - (e^k + \gamma g^k - v^k) = x^k - e^k - \gamma g^k \overset{\eqref{eq:perturbed_uterate}}{=} \tx^k - \gamma g^k. \label{eq:perturbed_iterates_key_relation}
\end{equation}

\subsection{Quantization  operators}

\begin{definition}\label{def:quantization}
	We say that stochastic mapping $Q(x):\R^d \to \R^d$ is a quantization operator if there exists such $\omega > 0$ that for any $x\in\R^d$ 
	\begin{equation}
		\EE\left[\cQ(x)\right] = x,\quad \EE\left[\|\cQ(x) - x\|^2\right] \le \omega\|x\|^2. \label{eq:quantization_def}
	\end{equation}
\end{definition}

Below we enumerate some classical compression and quantization operators (see more in \cite{beznosikov2020biased}).
\begin{enumerate}
\item \textbf{TopK sparsification.} This compression operator is defined as follows:
\begin{equation*}
	\cC(x) = \sum\limits_{i=1}^K x_{(i)}e_{(i)}
\end{equation*}
where $|x_{(1)}| \ge |x_{(2)}| \ge \ldots \ge |x_{(d)}|$ are components of $x$ sorted in the decreasing order of their absolute values, $e_1,\ldots,e_d$ is the standard basis in $\R^d$ and $K$ is some number from $[d]$. Clearly, TopK is a biased compression operator. One can show that TopK satisfies \eqref{eq:compression_def} with $\delta = \frac{K}{d}$ \cite{beznosikov2020biased}.
\item \textbf{RandK sparsification} operator is defined as 
\begin{equation*}
	\cQ(x) = \frac{d}{K}\sum\limits_{i\in S} x_{i}e_{i}
\end{equation*}
where $S$ is a random subset of $[d]$ sampled from the uniform distribution on the all subset of $[d]$ with cardinality $K$. RandK is an unbiased compression operator satisfying \eqref{eq:quantization_def} with $\omega = \frac{d}{K}$.
\item \textbf{$\ell_p$-quantization.} By $\ell_2$-quantization we mean the following random operator:
\begin{equation*}
	\cQ(x) = \|x\|_p\text{sign}(x)\circ\xi
\end{equation*}
where $\|x\|_p = \left(\sum_{i=1}^d|x_i|^p\right)^{\nicefrac{1}{p}}$ is an $\ell_p$-norm of vector $x$, $\text{sign}(x)$ is a component-wise sign of vector $x$, $a\circ b$ defines a component-wise product of vectors $a$ and $b$ and $\xi = (\xi_1,\ldots,\xi_d)^\top$ is a random vector such that
\begin{equation*}
\xi_i = \begin{cases}1,&\text{with probability } \frac{|x_i|}{\|x\|_p},\\ 0,&\text{with probability } 1-\frac{|x_i|}{\|x\|_p}. \end{cases}
\end{equation*}
One can show that this operator satisfies \eqref{eq:quantization_def}. In particular, if $p = 2$ it satisfies \eqref{eq:quantization_def} with $\omega = \sqrt{d}-1$ and if $p = \infty$, then $\omega = \frac{1+\sqrt{d}}{2}-1$ (see \cite{mishchenko2019distributed}).
\end{enumerate}

We assume that $\cC$ is any operator which enjoys the following contractive property:  there exists a constant $0< \delta \leq 1$ such that \[\EE\left[\|x-\cC(x)\|^2\right] \leq (1-\delta) \|x\|^2, \qquad \forall x\in \R^d .\]

\clearpage
\section{Basic Inequalities, Identities and Technical Lemmas}\label{sec:basic_facts}

\subsection{Basic inequalities}

For all $a,b,x_1,\ldots,x_n\in\R^d$, $\beta > 0$ and $p\in(0,1]$ the following inequalities hold
\begin{equation}\label{eq:fenchel_young}
	\langle a,b\rangle \le \frac{\|a\|^2}{2\beta} + \frac{\beta\|b\|^2}{2},
\end{equation}
\begin{equation}\label{eq:a-b_a+b}
	\langle a-b,a+b\rangle = \|a\|^2 - \|b\|^2,
\end{equation}
\begin{equation}\label{eq:1/2a_minus_b}
    \frac{1}{2}\|a\|^2 - \|b\|^2 \le \|a+b\|^2,
\end{equation}
\begin{equation}\label{eq:a+b_norm_beta}
    \|a+b\|^2 \le (1+\beta)\|a\|^2 + (1+\nicefrac{1}{\beta})\|b\|^2,
\end{equation}
\begin{equation}\label{eq:a_b_norm_squared}
	\left\|\sum\limits_{i=1}^n x_n\right\|^2 \le n\sum\limits_{i=1}^n\|x_i\|^2,
\end{equation}
\begin{equation}
	\left(1 - \frac{p}{2}\right)^{-1} \le 1 + p, \label{eq:1-p/2_inequality}
\end{equation}
\begin{equation}
	\left(1 + \frac{p}{2}\right)(1 - p) \le 1 - \frac{p}{2}. \label{eq:1+p/2_inequality}
\end{equation}

\subsection{Identities and inequalities involving random variables}

\textbf{Variance decomposition.} For a random vector $\xi \in \R^d$ and any deterministic vector $x \in \R^d$ the variance can be decomposed as
\begin{equation}\label{eq:variance_decomposition}
	\EE\left[\left\|\xi - \EE\xi\right\|^2\right] = \EE\left[\|\xi-x\|^2\right] - \left\|\EE\xi - x\right\|^2
\end{equation}

\textbf{Tower property of mathematical expectation.} For random variables $\xi,\eta\in \R^d$ we have
\begin{equation}
	\EE\left[\xi\right] = \EE\left[\EE\left[\xi\mid \eta\right]\right]\label{eq:tower_property}
\end{equation}
under assumption that all expectations in the expression above are well-defined.

\begin{lemma}[Lemma 14 from \cite{stich2019error}]\label{lem:lemma14_stich}
	For any $\tau$ vectors $a_1,\ldots,a_\tau\in\R^d$ and $\xi_1, \ldots, \xi_\tau$ zero-mean random vectors in $\R^d$, each $\xi_t$ conditionally independent of $\{\xi_i\}_{i=1}^{t-1}$ for all $1\le t \le \tau$ the following inequality holds
	\begin{equation}
		\EE\left[\left\|\sum\limits_{t=1}^\tau (a_t + \xi_t)\right\|^2\right] \le \tau\sum\limits_{t=1}^\tau\|a_t\|^2 + \sum\limits_{t=1}^\tau\EE\|\xi_t\|^2. \label{eq:lemma14_stich}
	\end{equation}
\end{lemma}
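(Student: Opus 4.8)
The plan is to split the sum into its deterministic and stochastic parts and treat each separately, exploiting the zero-mean and conditional-independence structure of the $\xi_t$ to annihilate all cross terms in expectation. Writing $S = \sum_{t=1}^\tau a_t$ (deterministic) and $Z = \sum_{t=1}^\tau \xi_t$, I would first expand
$$\EE\left\|S + Z\right\|^2 = \|S\|^2 + 2\langle S, \EE Z\rangle + \EE\|Z\|^2.$$
Since each $\xi_t$ is zero-mean, $\EE Z = \sum_{t=1}^\tau \EE\xi_t = 0$, so the cross term drops out and I am left with $\|S\|^2 + \EE\|Z\|^2$.

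For the deterministic part I would apply inequality \eqref{eq:a_b_norm_squared} directly, giving $\|S\|^2 \le \tau \sum_{t=1}^\tau \|a_t\|^2$, which is exactly the first term on the right-hand side of the claim.

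The heart of the argument is the stochastic part $\EE\|Z\|^2$. Expanding the square yields $\EE\|Z\|^2 = \sum_{t=1}^\tau \EE\|\xi_t\|^2 + 2\sum_{1\le s < t \le \tau}\EE\langle \xi_s, \xi_t\rangle$, and I would show every off-diagonal term vanishes. Fixing $s < t$ and conditioning on $\{\xi_i\}_{i=1}^{t-1}$ via the tower property \eqref{eq:tower_property}, the vector $\xi_s$ is measurable with respect to the conditioning and can be pulled out, while conditional independence of $\xi_t$ from $\{\xi_i\}_{i=1}^{t-1}$ combined with the zero-mean property gives $\EE[\xi_t \mid \{\xi_i\}_{i=1}^{t-1}] = \EE\xi_t = 0$. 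Hence $\EE\langle \xi_s, \xi_t\rangle = \EE\langle \xi_s, \EE[\xi_t \mid \{\xi_i\}_{i=1}^{t-1}]\rangle = 0$, leaving $\EE\|Z\|^2 = \sum_{t=1}^\tau \EE\|\xi_t\|^2$. Combining the three pieces gives the stated bound.

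The main obstacle, though a mild one, is the careful conditioning step for the cross terms: one must verify that for $s<t$ the earlier vector $\xi_s$ is a function of the conditioning variables and can therefore be treated as a constant inside the conditional expectation, and that conditional independence together with zero-mean really forces $\EE[\xi_t\mid\{\xi_i\}_{i=1}^{t-1}]=0$. Everything else is a routine expansion, so no additional difficulty is expected.
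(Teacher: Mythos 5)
Your proof is correct. The paper does not prove this lemma itself---it is quoted verbatim from \cite{stich2019error}---and your argument (dropping the deterministic--stochastic cross term via zero mean, annihilating the off-diagonal terms $\EE\langle\xi_s,\xi_t\rangle$ for $s<t$ through the tower property and the identity $\EE\left[\xi_t\mid\xi_1,\dots,\xi_{t-1}\right]=0$ implied by conditional independence plus zero mean, and applying \eqref{eq:a_b_norm_squared} to the deterministic sum) is exactly the standard proof of that result.
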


\subsection{Technical lemmas}\label{sec:tech_lemmas}

\begin{lemma}[see also Lemma 2 from \cite{stich2019unified}]\label{lem:lemma2_stich}
	Let $\{r_k\}_{k\ge 0}$ satisfy
	\begin{equation}
		r_K \le \frac{a}{\gamma W_K} + c_1\gamma + c_2\gamma^2 \label{eq:lemma2_stich_tech_1}
	\end{equation}
	for all $K\ge 0$ with some constants $a, c_2> 0$, $c_1 \ge 0$ where $\{w_k\}_{k\ge 0}$ and $\{W_K\}_{K\ge 0}$ are defined in \eqref{eq:w_k_definition_new}, $\gamma \le \frac{1}{d}$. Then for all $K$ such that $\frac{\ln\left(\max\{2,\min\{\nicefrac{a\mu^2K^2}{c_1},\nicefrac{a\mu^3K^3}{c_2}\}\}\right)}{K}\le \min\{\rho_1,\rho_2\}$ and
	\begin{equation}
		\gamma = \min\left\{\frac{1}{d}, \frac{\ln\left(\max\{2,\min\{\nicefrac{a\mu^2K^2}{c_1},\nicefrac{a\mu^3K^3}{c_2}\}\}\right)}{\mu K}\right\} \label{eq:lemma2_stich_gamma}
	\end{equation}
	we have that
	\begin{equation}
		r_K = \widetilde\cO\left(da\exp\left(-\min\left\{\frac{\mu}{d}, \rho_1, \rho_2\right\}K\right) + \frac{c_1}{\mu K} + \frac{c_2}{\mu^2 K^2}\right). \label{eq:lemma2_stich}
	\end{equation}
\end{lemma}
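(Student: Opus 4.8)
The plan is to collapse the weighted recursion \eqref{eq:lemma2_stich_tech_1} into a clean ``stepsize-tuning'' inequality and then optimize over $\gamma$ by a case analysis, following the template of Lemma~2 in \cite{stich2019unified}. The whole difficulty is concentrated in the choice and verification of $\gamma$; everything else is geometric-series bookkeeping.

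First I would estimate the normalizing weight $W_K$. Since $w_k=(1-\eta)^{-(k+1)}$ is geometric with ratio $(1-\eta)^{-1}>1$, summing gives the closed form $W_K=\frac{(1-\eta)^{-(K+1)}-1}{\eta}$, and in particular $W_K\ge(1-\eta)^{-(K+1)}$. Using $1-\eta\le e^{-\eta}$ this yields the crude bound $\frac{1}{\gamma W_K}\le\frac1\gamma e^{-\eta K}$ and, via the closed form together with $\eta\le\frac{\gamma\mu}{2}$, the sharper bound $\frac{1}{\gamma W_K}\le\frac{2\eta}{\gamma}\,\frac{e^{-\eta K}}{1-e^{-\eta(K+1)}}\le\mu\,e^{-\eta K}$, valid once $e^{-\eta(K+1)}\le\frac12$ (which is ensured by the hypothesis on $K$). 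Substituting into \eqref{eq:lemma2_stich_tech_1} reduces the claim to controlling $\frac a\gamma e^{-\eta K}+c_1\gamma+c_2\gamma^2$, or the refined $a\mu\,e^{-\eta K}+c_1\gamma+c_2\gamma^2$. Extracting the extra factor $\frac{\eta}{\gamma}\le\frac\mu2$ here is exactly what later keeps the exponential term under control.

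Next, since $\eta=\min\{\frac{\gamma\mu}{2},\frac{\rho_1}4,\frac{\rho_2}4\}$, I would split $e^{-\eta K}\le e^{-\gamma\mu K/2}+e^{-\rho_1 K/4}+e^{-\rho_2 K/4}$ and absorb any polynomial-in-$K$ prefactor multiplying the last two exponentials into a marginally slower rate via $xe^{-x}\le\mathrm{const}$, so the $\rho_1,\rho_2$ contributions collapse into the target term $da\exp(-\min\{\nicefrac{\mu}{d},\rho_1,\rho_2\}K)$ up to the factor-of-$4$ slack in the exponent that $\widetilde{\cO}$ absorbs. It then remains to tune the $\frac{\gamma\mu}{2}$-exponential against the two polynomial terms. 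Writing $\Lambda=\max\{2,\min\{\nicefrac{a\mu^2K^2}{c_1},\nicefrac{a\mu^3K^3}{c_2}\}\}$ so that $\gamma=\min\{\nicefrac1d,\nicefrac{\ln\Lambda}{\mu K}\}$, I do a case split. If the minimum is $\nicefrac1d$, then $\gamma=\nicefrac1d\le\nicefrac{\ln\Lambda}{\mu K}$ gives $c_1\gamma\le\frac{c_1\ln\Lambda}{\mu K}=\widetilde{\cO}(\nicefrac{c_1}{\mu K})$ and $c_2\gamma^2=\widetilde{\cO}(\nicefrac{c_2}{\mu^2K^2})$, while $\frac a\gamma e^{-\gamma\mu K/2}=ad\,e^{-\mu K/(2d)}$ is already of the form $da\exp(-\,\cdot\,K)$. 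If instead $\gamma=\nicefrac{\ln\Lambda}{\mu K}$, the same estimates bound the polynomial terms (now $\ln\Lambda$ and $\ln^2\Lambda$ are the logarithmic factors hidden in $\widetilde{\cO}$), and the definition of $\Lambda$ forces $e^{-\gamma\mu K/2}=\Lambda^{-1/2}$ to be small enough that the exponential term is dominated by $\frac{c_1}{\mu K}+\frac{c_2}{\mu^2K^2}$. Collecting the three regimes yields \eqref{eq:lemma2_stich}.

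The main obstacle is precisely this last verification: showing that for the logarithmic stepsize the residual $\frac{\gamma\mu}{2}$-exponential is genuinely absorbed by the polynomial terms. This is exactly where the shape $\gamma\sim\nicefrac{\ln\Lambda}{\mu K}$ and the definition of $\Lambda$ (a $\min$ of the two balancing thresholds, truncated at $2$) are essential, and where the Step-1 refinement $\frac{\eta}{\gamma}\le\frac\mu2$ is indispensable; with the crude coefficient $\nicefrac a\gamma$ instead of $a\mu$ the exponential term would not be dominated. The remaining work, carefully tracking the $\ln\Lambda$ factors and numerical constants and checking the threshold condition on $K$ that makes $(1-\eta)^{-(K+1)}\ge2$, is routine but accounts for essentially all of the bookkeeping.
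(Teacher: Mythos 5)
Your route is the same as the paper's: lower-bound $\nicefrac{1}{(\gamma W_K)}$ by a decaying exponential, split on which term attains the minimum in \eqref{eq:lemma2_stich_gamma}, and in the logarithmic-stepsize case use the hypothesis on $K$ together with the definition of $\Lambda=\max\{2,\min\{\nicefrac{a\mu^2K^2}{c_1},\nicefrac{a\mu^3K^3}{c_2}\}\}$ to absorb the exponential remainder into the polynomial terms. Your closed-form evaluation of $W_K$, which upgrades the paper's crude prefactor $\nicefrac{a}{\gamma}$ (from $W_K\ge w_K$) to $a\mu$, is a correct sharpening; but two side remarks are off: the condition $e^{-\eta(K+1)}\le\nicefrac{1}{2}$ is not implied by the hypothesis when $\Lambda$ is close to $2$ (patchable), and the sharp prefactor is not ``indispensable''---the paper's proof runs with $\nicefrac{a}{\gamma}$.

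The genuine gap is your final domination claim in the case $\gamma=\nicefrac{\ln\Lambda}{\mu K}$. Since $\eta\le\nicefrac{\gamma\mu}{2}$, your honest exponential factor is exactly $e^{-\gamma\mu K/2}=\Lambda^{-1/2}$ (and the $\rho_1,\rho_2$ pieces only give $\Lambda^{-1/4}$), so the remainder is at least $a\mu\Lambda^{-1/2}$. But $\Lambda$ is calibrated for the power $-1$, not $-\nicefrac{1}{2}$: the true inequality is $a\mu K\cdot\Lambda^{-1}\le\max\{\nicefrac{c_1}{\mu K},\nicefrac{c_2}{\mu^2K^2}\}$, whereas $a\mu\Lambda^{-1/2}$ exceeds $\nicefrac{c_1}{\mu K}$ by a factor of order $\nicefrac{\sqrt{\Lambda}}{K}$. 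That factor is not polylogarithmic, because $\Lambda$ can be exponentially large in $K$: take $a=\mu=1$, $\rho_1=\rho_2=1$, $c_1=K^2e^{-K^{1/3}}$, $c_2$ negligible; then $\Lambda=e^{K^{1/3}}$, the hypothesis on $K$ holds, yet $a\mu\Lambda^{-1/2}=e^{-K^{1/3}/2}$ while $\nicefrac{c_1}{\mu K}+\nicefrac{c_2}{\mu^2K^2}\approx Ke^{-K^{1/3}}$, a discrepancy of $\nicefrac{e^{K^{1/3}/2}}{K}$ that $\widetilde{\cO}$ cannot hide (and your prefactor refinement does not help). The paper's proof invokes precisely the $\Lambda^{-1}$ balance; it obtains that exponent only by silently discarding the factors $\nicefrac{1}{2}$ and $\nicefrac{1}{4}$ in $\eta$ when rewriting $\exp(-\eta(K+1))$ as $\exp\bigl(-\min\{\rho_1,\rho_2,\nicefrac{\ln\Lambda}{K}\}K\bigr)$---a move $\widetilde{\cO}$ does not license, but one that makes its subsequent balancing inequality exact. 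Your write-up keeps the honest $\Lambda^{-1/2}$ and then asserts the $\Lambda^{-1}$-calibrated conclusion, which is false; as written the argument breaks at this step, and repairing it requires either the paper's constant-dropping or a stepsize with $2\ln\Lambda$ (resp.\ $4\ln\Lambda$) in the numerator, which \eqref{eq:lemma2_stich_gamma} does not provide.
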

\begin{proof}
	Since $W_K \ge w_K = (1-\eta)^{-(K+1)}$ we have
	\begin{eqnarray}
		r_K &\le& (1-\eta)^{K+1}\frac{a}{\gamma} + c_1\gamma + c_2\gamma^2 \le \frac{a}{\gamma}\exp\left(-\eta(K+1)\right) + c_1\gamma + c_2\gamma^2.\label{eq:lemma2_stich_tech_2}
	\end{eqnarray}
	Next we consider two possible situations.
	\begin{enumerate}
		\item If $\frac{1}{d} \ge \frac{\ln\left(\max\{2,\min\{\nicefrac{a\mu^2K^2}{c_1},\nicefrac{a\mu^3K^3}{c_2}\}\}\right)}{\mu K}$ then we choose $\gamma = \frac{\ln\left(\max\{2,\min\{\nicefrac{a\mu^2K^2}{c_1},\nicefrac{a\mu^3K^3}{c_2}\}\}\right)}{\mu K}$ and get that
		\begin{eqnarray*}
			r_K &\overset{\eqref{eq:lemma2_stich_tech_2}}{\le}& \frac{a}{\gamma}\exp\left(-\eta(K+1)\right) + c_1\gamma + c_2\gamma^2 \\
			&=& \widetilde\cO\left(a\mu K\exp\left(-\min\left\{\rho_1,\rho_2, \frac{\ln\left(\max\{2,\min\{\nicefrac{a\mu^2K^2}{c_1},\nicefrac{a\mu^3K^3}{c_2}\}\}\right)}{K}\right\}K\right)\right) \\
			&&\quad + \widetilde\cO\left(\frac{c_1}{\mu K} + \frac{c_2}{\mu^2 K^2}\right).
		\end{eqnarray*}
		Since $\frac{\ln\left(\max\{2,\min\{\nicefrac{a\mu^2K^2}{c_1},\nicefrac{a\mu^3K^3}{c_2}\}\}\right)}{K}\le \min\{\rho_1,\rho_2\}$ we have
		\begin{eqnarray*}
			r_K &=& \widetilde\cO\left(a\mu K\exp\left(-\ln\left(\max\left\{2,\min\left\{\frac{a\mu^2K^2}{c_1},\frac{a\mu^3K^3}{c_2}\right\}\right\}\right)\right)\right)\\
			&&\quad + \widetilde\cO\left(\frac{c_1}{\mu K} + \frac{c_2}{\mu^2 K^2}\right)\\
			&=& \widetilde\cO\left(\frac{c_1}{\mu K} + \frac{c_2}{\mu^2 K^2}\right).
		\end{eqnarray*}
		\item If $\frac{1}{d} \le \frac{\ln\left(\max\{2,\min\{\nicefrac{a\mu^2K^2}{c_1},\nicefrac{a\mu^3K^3}{c_2}\}\}\right)}{\mu K}$ then we choose $\gamma = \frac{1}{d}$ which implies that
		\begin{eqnarray*}
			r_K &\overset{\eqref{eq:lemma2_stich_tech_2}}{\le}& da\exp\left(-\min\left\{\frac{\mu}{d},\frac{\rho_1}{4},\frac{\rho_2}{4}\right\}(K+1)\right) + \frac{c_1}{d} + \frac{c_2}{d^2} \\
			&=& \widetilde\cO\left(da\exp\left(-\min\left\{\frac{\mu}{d}, \rho_1, \rho_2\right\}K\right) + \frac{c_1}{\mu K} + \frac{c_2}{\mu^2K^2}\right). 
		\end{eqnarray*}
	\end{enumerate}
	Combining the obtained bounds we get the result. 
\end{proof}

\begin{lemma}\label{lem:lemma_technical_cvx}
	Let $\{r_k\}_{k\ge 0}$ satisfy
	\begin{equation}
		r_K \le \frac{a}{\gamma K} + \frac{b_1\gamma}{K} + \frac{b_2\gamma^2}{K} + c_1\gamma + c_2\gamma^2 \label{eq:lemma_technical_cvx_1}
	\end{equation}
	for all $K\ge 0$ with some constants $a> 0$, $b_1, b_2, c_1, c_2 \ge 0$ where $\gamma \le \gamma_0$. Then for all $K$ and
	\begin{equation*}
		\gamma = \min\left\{\gamma_0, \sqrt{\frac{a}{b_1}}, \sqrt[3]{\frac{a}{b_2}}, \sqrt{\frac{a}{c_1 K}}, \sqrt[3]{\frac{a}{c_2 K}}\right\}
	\end{equation*}
	we have that
	\begin{equation}
		r_K = \cO\left(\frac{a}{\gamma_0 K} + \frac{\sqrt{ab_1}}{K} + \frac{\sqrt[3]{a^2b_2}}{K} + \sqrt{\frac{ac_1}{K}} + \frac{\sqrt[3]{a^2c_2}}{K^{\nicefrac{2}{3}}} \right). \label{eq:lemma_technical_cvx_2}
	\end{equation}
\end{lemma}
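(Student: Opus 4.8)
The plan is to exploit the structure of the chosen stepsize: $\gamma$ is the minimum of five candidate values, each of which is designed to balance the single \emph{decreasing} term $\frac{a}{\gamma K}$ against exactly one of the four \emph{increasing} terms $\frac{b_1\gamma}{K}$, $\frac{b_2\gamma^2}{K}$, $c_1\gamma$, $c_2\gamma^2$ (the fifth candidate, $\gamma_0$, is just the feasibility ceiling). First I would split the right-hand side of \eqref{eq:lemma_technical_cvx_1} into these two groups and bound each group separately, since only the first term decreases in $\gamma$ while the remaining four increase.

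For the four increasing terms, I would use that $\gamma$ being a minimum forces $\gamma \le \sqrt{a/b_1}$, $\gamma \le \sqrt[3]{a/b_2}$, $\gamma \le \sqrt{a/(c_1 K)}$ and $\gamma \le \sqrt[3]{a/(c_2 K)}$ (whenever the relevant constant is positive). Substituting each of these bounds into its matching term collapses it onto the corresponding target term, e.g. $\frac{b_1\gamma}{K} \le \frac{b_1}{K}\sqrt{a/b_1} = \frac{\sqrt{ab_1}}{K}$, and analogously $\frac{b_2\gamma^2}{K} \le \frac{\sqrt[3]{a^2b_2}}{K}$, $c_1\gamma \le \sqrt{ac_1/K}$ and $c_2\gamma^2 \le \frac{\sqrt[3]{a^2c_2}}{K^{2/3}}$. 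Thus each increasing term is already dominated by its intended contribution to \eqref{eq:lemma_technical_cvx_2}.

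For the decreasing term the key observation is that $1/\gamma = \max\{1/\gamma_0,\, \sqrt{b_1/a},\, \sqrt[3]{b_2/a},\, \sqrt{c_1K/a},\, \sqrt[3]{c_2K/a}\}$, and bounding this maximum by the sum of its arguments yields
\[
\frac{a}{\gamma K} \le \frac{a}{\gamma_0 K} + \frac{\sqrt{ab_1}}{K} + \frac{\sqrt[3]{a^2b_2}}{K} + \sqrt{\frac{ac_1}{K}} + \frac{\sqrt[3]{a^2c_2}}{K^{2/3}},
\]
which is already, up to the constant $1$, the full right-hand side of the claimed estimate. Adding the four increasing-term bounds only increases each target term by a bounded multiple, so summing everything gives $r_K = \cO(\cdot)$ with the stated expression, completing the argument.

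The only point requiring care — more a bookkeeping convention than a genuine obstacle — is the treatment of vanishing constants. If, say, $b_1 = 0$, the candidate $\sqrt{a/b_1}$ must be read as $+\infty$, so that it silently drops out of both the minimum defining $\gamma$ and the maximum defining $1/\gamma$, consistently with the fact that the term $\frac{b_1\gamma}{K}$ then vanishes identically; the same applies to $b_2,c_1,c_2$. With this convention (and noting $a>0$ guarantees the decreasing term is always present) every inequality above holds verbatim, and no target term is either lost or spuriously introduced.
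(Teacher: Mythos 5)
Your proof is correct and follows essentially the same route as the paper's own argument: bound each increasing term by substituting the corresponding component of the minimum defining $\gamma$, and handle the term $\frac{a}{\gamma K}$ by writing $\frac{1}{\gamma}$ as a maximum and bounding it by the sum of its arguments (a step the paper leaves implicit inside its final $\cO(\cdot)$ equality). Your explicit remark about reading $\sqrt{a/b_1}$ as $+\infty$ when $b_1=0$ is a sensible bookkeeping convention that the paper glosses over, but it does not change the substance of the argument.
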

\begin{proof}
	We have
	\begin{eqnarray*}
		r_K &\le& \frac{a}{\gamma K} + \frac{b_1\gamma}{K} + \frac{b_2\gamma^2}{K} + c_1\gamma + c_2\gamma^2\\
		&\le& \frac{a}{\min\left\{\gamma_0, \sqrt{\frac{a}{b_1}}, \sqrt[3]{\frac{a}{b_2}}, \sqrt{\frac{a}{c_1 K}}, \sqrt[3]{\frac{a}{c_2 K}}\right\}K} + \frac{b_1}{K}\cdot\sqrt{\frac{a}{b_1}} + \frac{b_2}{K}\cdot\sqrt[3]{\frac{a}{b_2}}\\
		&&\quad + c_1\cdot\sqrt{\frac{a}{c_1 K}} + c_2 \left(\sqrt[3]{\frac{a}{c_2 K}}\right)^2\\
		 &=& \cO\left(\frac{a}{\gamma_0 K} + \frac{\sqrt{ab_1}}{K} + \frac{\sqrt[3]{a^2b_2}}{K} + \sqrt{\frac{ac_1}{K}} + \frac{\sqrt[3]{a^2c_2}}{K^{\nicefrac{2}{3}}} \right).
	\end{eqnarray*}
\end{proof}

\clearpage
\section{Proofs for Section~\ref{sec:main_res}}

\subsection{A lemma}
\begin{lemma}[See also Lemma~8 from \cite{stich2019error}]\label{lem:main_lemma_new}
	Let Assumptions~\ref{ass:quasi_strong_convexity},~\ref{ass:key_assumption_new}~and~\ref{ass:L_smoothness} be satisfied and $\gamma \le \nicefrac{1}{4(A'+C_1M_1+C_2M_2)}$. Then for all $k\ge 0$ we have
	\begin{equation}
		\frac{\gamma}{2}\EE\left[f(x^k) - f(x^*)\right] \le (1-\eta)\EE T^{k} - \EE T^{k+1} + \gamma^2(D_1' + M_1D_2) + 3L\gamma \EE\|e^k\|^2, \label{eq:main_lemma_new}
	\end{equation}	
	where $T^k \eqdef \|\tx^k - x^*\|^2 + M_1\gamma^2 \sigma_{1,k}^2 + M_2\gamma^2 \sigma_{2,k}^2$ and $M_1 = \frac{4B_1'}{3\rho_1}$, $M_2 = \frac{4\left(B_2' + \frac{4}{3}G\right)}{3\rho_2}$. 
\end{lemma}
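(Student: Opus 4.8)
The plan is to run the perturbed-iterate argument of Stich--Karimireddy on the shifted sequence $\tx^k = x^k - e^k$ from \eqref{eq:perturbed_uterate}, augmented with a Lyapunov function $T^k$ that additionally tracks the two variance-reduction sequences $\sigma_{1,k}^2,\sigma_{2,k}^2$. First I would expand the one-step relation \eqref{eq:perturbed_iterates_key_relation} as
\[
\|\tx^{k+1}-x^*\|^2 = \|\tx^k - x^*\|^2 - 2\gamma\langle g^k,\tx^k-x^*\rangle + \gamma^2\|g^k\|^2,
\]
take conditional expectation, and use unbiasedness \eqref{eq:unbiasedness_new} so that $g^k$ is replaced by $\nabla f(x^k)$ in the inner-product term.

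Then I would split $\tx^k - x^* = (x^k-x^*) - e^k$. On the genuine term $-2\gamma\langle\nabla f(x^k),x^k-x^*\rangle$ I apply $\mu$-strong quasi-convexity \eqref{eq:str_quasi_cvx} to extract $-2\gamma(f(x^k)-f(x^*)) - \gamma\mu\|x^k-x^*\|^2$. On the error cross term $+2\gamma\langle\nabla f(x^k),e^k\rangle$ I use Young's inequality \eqref{eq:fenchel_young} with parameter $\beta = 2L$ together with the smoothness consequence \eqref{eq:L_smoothness_cor} (recalling $\nabla f(x^*)=0$), turning it into $\gamma(f(x^k)-f(x^*)) + 2L\gamma\|e^k\|^2$. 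To recover a contraction in $\|\tx^k - x^*\|^2$ rather than in $\|x^k-x^*\|^2$, I invoke \eqref{eq:1/2a_minus_b} with $a=\tx^k-x^*$, $b=e^k$, giving $-\gamma\mu\|x^k-x^*\|^2 \le -\tfrac{\gamma\mu}{2}\|\tx^k-x^*\|^2 + \gamma\mu\|e^k\|^2$; since $\eta\le\gamma\mu/2$ by \eqref{eq:w_k_definition_new} and $\mu\le L$, this both yields the factor $(1-\eta)$ in front of $\|\tx^k-x^*\|^2$ and lets me collect all error contributions as $2L\gamma\|e^k\|^2+\gamma\mu\|e^k\|^2 \le 3L\gamma\|e^k\|^2$.

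Next I would bound the second-moment term $\gamma^2\EE[\|g^k\|^2\mid x^k]$ via \eqref{eq:second_moment_bound_new}, and then build the full Lyapunov function by adding $M_1\gamma^2\sigma_{1,k+1}^2 + M_2\gamma^2\sigma_{2,k+1}^2$ and bounding their conditional expectations through \eqref{eq:sigma_k+1_bound_1}--\eqref{eq:sigma_k+1_bound_2}. The crux is to choose $M_1,M_2$ so that the aggregated coefficients of $\sigma_{1,k}^2$ and $\sigma_{2,k}^2$ contract at rate $(1-\eta)$: the $\sigma_{1,k}^2$ coefficient is $B_1' + M_1(1-\rho_1)$, which must not exceed $(1-\eta)M_1$; this requires $M_1\ge B_1'/(\rho_1-\eta)$, and since $\eta\le\rho_1/4$ the stated value $M_1=\tfrac{4B_1'}{3\rho_1}$ suffices. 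Analogously, the $\sigma_{2,k}^2$ coefficient $B_2' + M_1 G\rho_1 + M_2(1-\rho_2)$ must not exceed $(1-\eta)M_2$, which (using $\eta\le\rho_2/4$ and the value of $M_1$) pins down $M_2$ to the stated constant and absorbs the cross-feed $G\rho_1\sigma_{2,k}^2$ coming from the $\sigma_1$-recursion. The residual constants are exactly $\gamma^2 D_1'$ from \eqref{eq:second_moment_bound_new} and $M_1\gamma^2 D_2$ from \eqref{eq:sigma_k+1_bound_1}, giving the term $\gamma^2(D_1'+M_1D_2)$.

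Finally I would gather the coefficient of $f(x^k)-f(x^*)$, which after including the optimization terms $2C_1,2C_2$ from the two recursions totals $-\gamma + 2\gamma^2(A' + C_1M_1 + C_2M_2)$; the stepsize restriction $\gamma\le \tfrac{1}{4(A'+C_1M_1+C_2M_2)}$ makes this at most $-\gamma/2$. Rearranging the resulting one-step estimate $\EE[T^{k+1}] \le (1-\eta)\EE T^k - \tfrac{\gamma}{2}\EE[f(x^k)-f(x^*)] + \gamma^2(D_1'+M_1D_2) + 3L\gamma\EE\|e^k\|^2$ and taking total expectation yields \eqref{eq:main_lemma_new}. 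I expect the main obstacle to be the simultaneous bookkeeping of the two variance-reduction sequences: verifying that the single factor $(1-\eta)$, defined as the minimum over $\gamma\mu/2,\rho_1/4,\rho_2/4$, is compatible with contraction of all three Lyapunov components at once, and choosing $M_1,M_2$ so that the coupling term $G\rho_1\sigma_{2,k}^2$ is correctly charged to the $\sigma_2$-budget.
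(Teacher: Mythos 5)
Your proposal is correct and follows essentially the same route as the paper's own proof: the same perturbed-iterate expansion of $\|\tx^{k+1}-x^*\|^2$, the same split of the inner product into an $x^k-x^*$ part (handled by strong quasi-convexity) and an $e^k$ part (handled by Young's inequality with parameter $2L$ together with \eqref{eq:L_smoothness_cor}), the same use of \eqref{eq:1/2a_minus_b} to trade $\|x^k-x^*\|^2$ for $\|\tx^k-x^*\|^2$ plus $\|e^k\|^2$, and the same Lyapunov bookkeeping for $\sigma_{1,k}^2,\sigma_{2,k}^2$ with identical $M_1$, $M_2$ and stepsize condition. Even the final point you flag as the main obstacle — charging the cross-feed $M_1G\rho_1\sigma_{2,k}^2$ to the $\sigma_2$-budget — is resolved in the paper exactly as you describe, via the requirement $B_2'+M_1G\rho_1+M_2(1-\rho_2)\le\left(1-\nicefrac{\rho_2}{4}\right)M_2$ and the bound $\gamma(2L+\mu)\|e^k\|^2\le 3L\gamma\|e^k\|^2$.
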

\begin{proof}
	We start with the upper bound for $\EE\|\tx^{k+1} - x^*\|^2$. First of all, by definition of $\tx^k$ we have
	\begin{eqnarray}
		\|\tx^{k+1} - x^*\|^2 &\overset{\eqref{eq:perturbed_iterates_key_relation}}{=}& \|\tx^k - x^* - \gamma g^k\|^2\notag\\
		&=& \|\tx^k - x^*\|^2 -2\gamma\langle \tx^k - x^*, g^k\rangle + \gamma^2\|g^k\|^2\notag\\
		&=& \|\tx^k - x^*\|^2 -2\gamma\langle x^k - x^*, g^k\rangle + \gamma^2\|g^k\|^2 + 2\gamma\langle x^k - \tx^k, g^k\rangle.\notag
	\end{eqnarray}
	Taking conditional expectation $\EE\left[\cdot\mid x^k\right]$ from the both sides of the previous inequality we get
	\begin{eqnarray}
		\EE\left[\|\tx^{k+1} - x^*\|^2\mid x^k\right] &\overset{\eqref{eq:unbiasedness_new},\eqref{eq:second_moment_bound_new}}{\le}& \|\tx^k - x^*\|^2 -2\gamma\langle x^k - x^*, \nabla f(x^k)\rangle \notag\\
		&&\quad+ \gamma^2\left(2A'(f(x^k) - f(x^*)) + B_1'\sigma_{1,k}^2 + B_2'\sigma_{2,k}^2 + D_1'\right)\notag\\
		&&\quad + 2\gamma\langle x^k - \tx^k, \nabla f(x^k)\rangle\notag\\
		&\overset{\eqref{eq:str_quasi_cvx}}{\le}& \|\tx^k - x^*\|^2 - \gamma\mu\|x^k - x^*\|^2 - \gamma (2 - 2A'\gamma)(f(x^k) - f(x^*)) \notag\\
		&&\quad + \gamma^2 B_1'\sigma_{1,k}^2 + \gamma^2 B_2'\sigma_{2,k}^2 + \gamma^2 D_1' \notag\\
		&&\quad + 2\gamma\langle x^k - \tx^k, \nabla f(x^k)\rangle. \label{eq:main_lemma_technical_1_new}
	\end{eqnarray}
	Next,
	\begin{equation}
		-\|x^k - x^*\|^2 = -\| \tx^k - x^* + x^k - \tx^k\|^2 \overset{\eqref{eq:1/2a_minus_b}}{\le} -\frac{1}{2}\|\tx^k - x^*\|^2 + \|x^k - \tx^k\|^2. \label{eq:main_lemma_technical_2_new}
	\end{equation}
	Using Fenchel-Young inequality we derive an upper bound for the inner product from \eqref{eq:main_lemma_technical_1_new}:
	\begin{equation}
		\langle x^k - \tx^k, \nabla f(x^k)\rangle \overset{\eqref{eq:fenchel_young}}{\le} L\|x^k - \tx^k\|^2 + \frac{1}{4L}\|\nabla f(x^k)\|^2 \overset{\eqref{eq:L_smoothness_cor}}{\le} L\|x^k - \tx^k\|^2 + \frac{1}{2}(f(x^k) - f(x^*)).\label{eq:main_lemma_technical_3_new}
	\end{equation}
	Combining previous three inequalities we get
	\begin{eqnarray}
		\EE\left[\|\tx^{k+1} - x^*\|^2\mid x^k\right] &\overset{\eqref{eq:main_lemma_technical_1_new}-\eqref{eq:main_lemma_technical_3_new}}{\le}& \left(1 - \frac{\gamma\mu}{2}\right)\|\tx^k - x^*\|^2 - \gamma\left(1 - 2A'\gamma\right)(f(x^k) - f(x^*))\notag\\
		&&\quad + \gamma^2 B_1'\sigma_{1,k}^2 + \gamma^2 B_2'\sigma_{2,k}^2 + \gamma^2 D_1'\notag\\
		&&\quad + \gamma (2L + \mu)\|x^k - \tx^k\|^2. \label{eq:main_lemma_norm_squared_ub_new}
	\end{eqnarray}
	Taking into account that $T^k = \|\tx^k - x^*\|^2 + M_1\gamma^2 \sigma_{1,k}^2 + M_2\gamma^2 \sigma_{2,k}^2$ with $M_1 = \frac{4B_1'}{3\rho_1}$ and $M_2 = \frac{4\left(B_2' + \frac{4}{3}G\right)}{3\rho_2}$, using the tower property \eqref{eq:tower_property} of mathematical expectation together with $\gamma \le \frac{1}{4(A'+C_1M_1 + C_2M_2)}$, we conclude
	\begin{eqnarray*}
		\EE\left[T^{k+1}\right] &\overset{\eqref{eq:main_lemma_norm_squared_ub_new}}{\le}& \left(1 - \frac{\gamma\mu}{2}\right)\EE\|\tx^k - x^*\|^2 - \gamma\left(1 - 2A'\gamma\right)\EE\left[f(x^k) - f(x^*)\right] + M_1\gamma^2\EE\left[\sigma_{1,k+1}^2\right]\\
		&&\quad M_2\gamma^2\EE\left[\sigma_{2,k+1}^2\right] + \gamma^2 B_1'\sigma_{1,k}^2 + \gamma^2 B_2'\sigma_{2,k}^2 + \gamma^2 D_1' + \gamma (2L + \mu)\EE\|x^k - \tx^k\|^2\\
		&\overset{\eqref{eq:sigma_k+1_bound_1},\eqref{eq:sigma_k+1_bound_2}}{\le}& \left(1 - \frac{\gamma\mu}{2}\right)\EE\|\tx^k - x^*\|^2 + \left(1 + \frac{B_1'}{M_1} - \rho_1\right)M_1\gamma^2\EE\left[\sigma_{1,k}^2\right] \\
		&&\quad + \left(1 + \frac{B_2'+M_1G\rho_1}{M_2} - \rho_2\right)M_2\gamma^2\EE\left[\sigma_{2,k}^2\right]+ \gamma^2(D_1' + M_1D_2)\notag\\
		&&\quad - \gamma\left(1 - 2(A'+C_1M_1+C_2M_2)\gamma\right)\EE\left[f(x^k) - f(x^*)\right] + \gamma(2L+\mu)\EE\|x^k - \tx^k\|^2\\
		&\le& \left(1 - \frac{\gamma\mu}{2}\right)\EE\|\tx^k - x^*\|^2 + \left(1 - \frac{\rho_1}{4}\right)M_1\gamma^2\EE\left[\sigma_{1,k}^2\right] + \left(1 - \frac{\rho_2}{4}\right)M_2\gamma^2\EE\left[\sigma_{2,k}^2\right] \notag\\
		&&\quad - \frac{\gamma}{2}\EE\left[f(x^k) - f(x^*)\right] + \gamma(2L+\mu)\EE\|x^k - \tx^k\|^2 + \gamma^2(D_1' + M_1D_2).
	\end{eqnarray*}
	Since $L\ge \mu$, $\tx^k = x^k - e^k$ and $\eta \eqdef \min\{\frac{\gamma\mu}{2},\frac{\rho_1}{4},\frac{\rho_2}{4}\}$ the last inequality implies
	\begin{equation*}
		\frac{\gamma}{2}\EE\left[f(x^k) - f(x^*)\right] \le (1-\eta)\EE T^{k} - \EE T^{k+1} + \gamma^2(D_1' + M_1D_2) + 3L\gamma \EE\|e^k\|^2,
	\end{equation*}
	which concludes the proof.
\end{proof}

\subsection{Proof of Theorem~\ref{thm:main_result_new}}
\begin{proof}
	Form Lemma~\ref{lem:main_lemma_new} we have
	\begin{equation*}
		\frac{\gamma}{2}\EE\left[f(x^k) - f(x^*)\right] \le (1-\eta)\EE T^{k} - \EE T^{k+1} + \gamma^2(D_1' + M_1D_2) + 3L\gamma \EE\|e^k\|^2.
	\end{equation*}
	Summing up these inequalities for $k=0,\ldots,K$ with weights $w_k = (1-\eta)^{-(k+1)}$ we get
	\begin{eqnarray*}
		\frac{1}{2}\sum\limits_{k=0}^K w_k\EE\left[f(x^k) - f(x^*)\right] &\le& \sum\limits_{k=0}^K\left(\frac{w_k(1-\eta)}{\gamma}\EE T^k - \frac{w_k}{\gamma}\EE T^{k+1}\right) + \gamma(D_1' + M_1D_2)\sum\limits_{k=0}^K w_k\\
		&&\quad +3L\sum\limits_{k=0}^Kw_k\EE\|e^k\|^2 \\
		&\overset{\eqref{eq:sum_of_errors_bound_new},\eqref{eq:w_k_definition_new}}{\le}& \sum\limits_{k=0}^K\left(\frac{w_{k-1}}{\gamma}\EE T^k - \frac{w_k}{\gamma}\EE T^{k+1}\right) + F_1\sigma_{1,0}^2 + F_2\sigma_{2,0}^2  \\
		&&\quad+ \gamma^2(D_1' + M_1D_2 + D_3)W_K + \frac{1}{4}\sum\limits_{k=0}^K w_k\EE\left[f(x^k) - f(x^*)\right].
	\end{eqnarray*}
	Rearranging the terms and using $\bar{x}^K = \frac{1}{W_K}\sum_{k=0}^K w_k x^k$ together with Jensen's inequality we obtain
	\begin{eqnarray*}
		\EE\left[f(\bar x^K) - f(x^*)\right] &\le& \frac{4(T^0 + \gamma F_1 \sigma_{1,0}^2+ \gamma F_2 \sigma_{2,0}^2)}{\gamma W_K} + 4\gamma\left(D_1' + M_1D_2 + D_3\right).
	\end{eqnarray*}
	Finally, using the definition of the sequences $\{W_K\}_{K\ge 0}$ and $\{w_k\}_{k\ge 0}$ we derive that if $\mu > $, then $W_K \ge w_K \ge (1-\eta)^{-K}$ and we get \eqref{eq:main_result_new}. In the case when $\mu = 0$ we have $w_k = 1$ and $W_K = K$ which implies \eqref{eq:main_result_new_cvx}.
\end{proof}

\clearpage

\section{{\tt SGD} as a Special Case}\label{sec:sgd}
In this section we want to show that our approach is general enough to cover many existing methods of {\tt SGD} type. Consider the following situation:
\begin{equation}
	v^k = \gamma g^k,\quad e^0 = 0.\label{eq:vk_ek_plain_sgd}
\end{equation}
It implies that $e^k = 0$ for all $k\ge 0$ and the updates rules \eqref{eq:x^k+1_update}-\eqref{eq:error_update} gives us a simple {\tt SGD}:
\begin{equation}
	x^{k+1} = x^k - \gamma g^k.\label{eq:plain_sgd}
\end{equation}
The following lemma formally shows that {\tt SGD} under general enough assumptions satisfies Assumption~\ref{ass:key_assumption_new}.
\begin{lemma}\label{lem:sgd_as_a_special_case}
	Let Assumptions~\ref{ass:quasi_strong_convexity}~and~\ref{ass:L_smoothness} be satisfies and inequalities \eqref{eq:unbiasedness_new}, \eqref{eq:second_moment_bound_new}, \eqref{eq:sigma_k+1_bound_1} and \eqref{eq:sigma_k+1_bound_2} hold. Then for the method \eqref{eq:plain_sgd} inequality \eqref{eq:sum_of_errors_bound_new} holds with $F_1 = F_2 = 0$ and $D_3 = 0$ for all $k\ge 0$.
\end{lemma}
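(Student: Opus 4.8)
The plan is to observe that for the plain {\tt SGD} iteration the accumulated error vanishes identically, which trivializes inequality \eqref{eq:sum_of_errors_bound_new}. First I would verify that $e^k = 0$ for all $k\ge 0$: substituting the choice $v^k = \gamma g^k$ from \eqref{eq:vk_ek_plain_sgd} into the aggregated error recursion $e^{k+1} = e^k + \gamma g^k - v^k$ (obtained by averaging \eqref{eq:error_update} over $i$) yields $e^{k+1} = e^k$, and since $e^0 = 0$ this propagates to $e^k = 0$ for every $k$.

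Consequently the left-hand side of \eqref{eq:sum_of_errors_bound_new}, namely $3L\sum_{k=0}^K w_k\EE\|e^k\|^2$, equals zero. With the claimed parameters $F_1 = F_2 = 0$ and $D_3 = 0$, the right-hand side reduces to $\tfrac{1}{4}\sum_{k=0}^K w_k\EE\left[f(x^k) - f(x^*)\right]$. Since $x^*$ is the minimizer of $f$ (Assumption~\ref{ass:quasi_strong_convexity}), we have $f(x^k) \ge f(x^*)$ pointwise, hence $\EE\left[f(x^k) - f(x^*)\right] \ge 0$; combined with $w_k = (1-\eta)^{-(k+1)} > 0$ from \eqref{eq:w_k_definition_new}, this makes the right-hand side non-negative. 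Thus $0 \le \tfrac{1}{4}\sum_{k=0}^K w_k\EE\left[f(x^k) - f(x^*)\right]$ holds for all $K\ge 0$, which is exactly \eqref{eq:sum_of_errors_bound_new} with the stated parameters.

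There is no genuine obstacle here: the entire content is the identity $e^k \equiv 0$, after which the inequality collapses to the assertion that a non-negative quantity is bounded below by zero. The only points requiring (immediate) care are confirming the positivity of the weights $w_k$ and the optimality gap $f(x^k) - f(x^*)$, both of which follow directly from the definition in \eqref{eq:w_k_definition_new} and from Assumption~\ref{ass:quasi_strong_convexity}, respectively. I would expect the write-up to be only a few lines long.
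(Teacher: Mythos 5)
Your proposal is correct and matches the paper's own proof: the paper likewise notes that $e^k = 0$ for all $k\ge 0$ under the choice $v^k = \gamma g^k$, so the left-hand side of \eqref{eq:sum_of_errors_bound_new} vanishes and the inequality reduces to the non-negativity of $\frac{1}{4}\sum_{k=0}^K w_k\EE\left[f(x^k) - f(x^*)\right]$, which holds since $x^*$ is the minimizer. Your write-up only adds the (valid) explicit derivation of $e^k\equiv 0$ from the error recursion, which the paper states just before the lemma.
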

\begin{proof}
	Since $e^k = 0$ and $f(x^k)\ge f(x^*)$ for all $k\ge 0$ we get
	\begin{equation*}
		3L\sum\limits_{k=0}^K w_k\EE\|e^k\|^2 = 0 \le \frac{1}{4}\sum\limits_{k=0}^K w_k\EE\left[f(x^k) - f(x^*)\right]
	\end{equation*}
	which concludes the proof.	 
\end{proof}

It implies that all methods considered in \cite{gorbunov2019unified} fit our framework. Moreover, using Theorem~\ref{thm:main_result_new} we derive the following result.
\begin{theorem}\label{thm:main_result_sgd}
	Let Assumptions~\ref{ass:quasi_strong_convexity}~and~\ref{ass:L_smoothness} be satisfied, inequalities \eqref{eq:unbiasedness_new}, \eqref{eq:second_moment_bound_new}, \eqref{eq:sigma_k+1_bound_1}, \eqref{eq:sigma_k+1_bound_2} hold and $\gamma \le \nicefrac{1}{4(A'+C_1M_1 + C_2M_2)}$. Then for the method \eqref{eq:plain_sgd} for all $K\ge 0$ we have
	\begin{equation*}
		\EE\left[f(\bar x^K) - f(x^*)\right] \le \left(1 - \min\left\{\frac{\gamma\mu}{2},\frac{\rho_1}{4},\frac{\rho_2}{4}\right\}\right)^K\frac{4T^0}{\gamma} + 4\gamma\left(D_1' + M_1D_2\right),
	\end{equation*}	
	when $\mu > 0$ and
	\begin{equation}
		\EE\left[f(\bar x^K) - f(x^*)\right] \le \frac{4T^0}{\gamma K} + 4\gamma\left(D_1' + M_1D_2\right) \notag
	\end{equation}
	when $\mu = 0$, where $T^k \eqdef \|x^k - x^*\|^2 + M_1\gamma^2 \sigma_{1,k}^2 + M_2\gamma^2 \sigma_{2,k}^2$ and $M_1 = \frac{4B_1'}{3\rho_1}$, $M_2 = \frac{4\left(B_2' + \frac{4}{3}G\right)}{3\rho_2}$.
\end{theorem}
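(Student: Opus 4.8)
The plan is to obtain this theorem as a direct specialization of the master Theorem~\ref{thm:main_result_new}. The only gap between the hypotheses listed here and the full Assumption~\ref{ass:key_assumption_new} is the error-feedback bound \eqref{eq:sum_of_errors_bound_new}; everything else --- the unbiasedness \eqref{eq:unbiasedness_new}, the second-moment bound \eqref{eq:second_moment_bound_new}, and the two variance recursions \eqref{eq:sigma_k+1_bound_1} and \eqref{eq:sigma_k+1_bound_2} --- is assumed outright. So the entire task reduces to supplying \eqref{eq:sum_of_errors_bound_new} and then reading off the conclusion with the appropriate parameters zeroed out.

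First I would record the structural simplification that drives everything. Under the choice \eqref{eq:vk_ek_plain_sgd}, the error recursion \eqref{eq:error_update} collapses to $e^{k+1} = e^k$, and since $e^0 = 0$ we get $e^k = 0$ for all $k\ge 0$. Combined with $\tx^k = x^k - e^k$ from \eqref{eq:perturbed_uterate}, this yields $\tx^k = x^k$, so the Lyapunov quantity $T^k$ appearing in the master theorem coincides with the $T^k$ stated here. This vanishing of the error is exactly the content of Lemma~\ref{lem:sgd_as_a_special_case}: the left-hand side $3L\sum_{k=0}^K w_k\EE\|e^k\|^2$ of \eqref{eq:sum_of_errors_bound_new} is identically zero, so the inequality holds with $F_1 = F_2 = 0$ and $D_3 = 0$.

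With \eqref{eq:sum_of_errors_bound_new} thus established, all of Assumption~\ref{ass:key_assumption_new} is in force, and the stepsize restriction $\gamma \le \nicefrac{1}{4(A'+C_1M_1+C_2M_2)}$ matches that of Theorem~\ref{thm:main_result_new} verbatim. I would then invoke that theorem and substitute $F_1 = F_2 = 0$ and $D_3 = 0$ into \eqref{eq:main_result_new} and \eqref{eq:main_result_new_cvx}. The terms $\gamma F_1\sigma_{1,0}^2$ and $\gamma F_2\sigma_{2,0}^2$ in the numerator vanish, leaving $4T^0/\gamma$ (multiplied by $(1-\eta)^K$ when $\mu>0$, or divided by $K$ when $\mu=0$), while the additive floor becomes $4\gamma(D_1'+M_1D_2)$ since $D_3=0$. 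Using $\tx^k = x^k$ to rewrite $T^0 = \|x^0 - x^*\|^2 + M_1\gamma^2\sigma_{1,0}^2 + M_2\gamma^2\sigma_{2,0}^2$ gives precisely the two displayed bounds.

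There is essentially no hard step here: the argument is a bookkeeping specialization of the general machinery. The only point worth double-checking is that the vanishing of the error genuinely zeroes the $F_1, F_2$ contributions and the $D_3$ contribution simultaneously --- which it does, since Lemma~\ref{lem:sgd_as_a_special_case} sets all three to zero at once --- and that the identification $\tx^k = x^k$ makes the Lyapunov function reported in the conclusion the correct one. If anything is delicate, it is merely confirming that the stepsize bound and the definitions of $M_1, M_2, \eta$ are inherited unchanged from Theorem~\ref{thm:main_result_new}, which they are.
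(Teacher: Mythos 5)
Your proposal is correct and matches the paper's own argument exactly: the paper proves Lemma~\ref{lem:sgd_as_a_special_case} by noting $e^k=0$ makes the left side of \eqref{eq:sum_of_errors_bound_new} vanish (with $F_1=F_2=D_3=0$, the right side being nonnegative since $f(x^k)\ge f(x^*)$), and then obtains Theorem~\ref{thm:main_result_sgd} by plugging these values into Theorem~\ref{thm:main_result_new} with $\tx^k=x^k$. Nothing is missing from your specialization.
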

In particular, if $\sigma_{2,k}^2 \equiv 0$, then our assumption coincides with the key assumption from \cite{gorbunov2019unified} and our theorem recovers the same rates as in \cite{gorbunov2019unified} when $\mu > 0$. The case when $\mu = 0$ was not considered in \cite{gorbunov2019unified}, while in our analysis we get it for free.

\clearpage
\section{{Distributed \tt SGD} with Compression and Error Compensation}\label{sec:ec_sgd}
In this section we consider the scenario when compression and error-feedback is applied in order to reduce the communication cost of the method, i.e., we consider {\tt SGD} with error compensation and compression ({\tt EC-SGD}) which has updates of the form \eqref{eq:x^k+1_update}-\eqref{eq:error_update} with
\begin{eqnarray}
	g^k &=& \frac{1}{n}\sum\limits_{i=1}^ng_i^k\notag\\
	v^k &=& \frac{1}{n}\sum\limits_{i=1}^n v_i^k,\quad v_i^k = C(e_i^k + \gamma g_i^k)\label{eq:v^k_def_ec_sgd}\\
	e^k &=& \frac{1}{n}\sum\limits_{i=1}^n e_i^k,\quad e_i^{k+1} = e_i^k + \gamma g_i^k - v_i^k = e_i^k + \gamma g_i^k - C(e_i^k + \gamma g_i^k).\label{eq:e^k_def_ec_sgd}
\end{eqnarray}
Moreover, we assume that $e_i^0 = 0$ for $i = 1,\ldots,n$.

\begin{lemma}\label{lem:ec_sgd_key_lemma_new}
	Let Assumptions~\ref{ass:quasi_strong_convexity}~and~\ref{ass:L_smoothness} be satisfied,  Assumption~\ref{ass:key_assumption_finite_sums_new} holds and\footnote{When $\rho_1 = 1$ and $\rho_2=1$ one can always set the parameters in such a way that $B_1 = \widetilde{B}_1 = B_2 = \widetilde{B}_2 = C_1 = C_2 = 0$, $D_2 = 0$. In this case we assume that $\frac{2}{1-\rho_1}\left(\frac{C_1}{\rho_1}+\frac{2GC_2}{\rho_2(1-\rho_2)}\right)\left(\frac{2B_1}{\delta}+\widetilde{B}_1\right) + \frac{2C_2\left(\frac{2B_2}{\delta}+\widetilde{B}_2\right)}{\rho_2(1-\rho_2)} = 0$.}
	\begin{equation}
		\gamma \le \min\left\{\frac{\delta}{4\mu}, \sqrt{\frac{\delta}{96L\left(\frac{2A}{\delta} + \widetilde{A} + \frac{2}{1-\rho_1}\left(\frac{C_1}{\rho_1}+\frac{2GC_2}{\rho_2(1-\rho_2)}\right)\left(\frac{2B_1}{\delta}+\widetilde{B}_1\right) + \frac{2C_2\left(\frac{2B_2}{\delta}+\widetilde{B}_2\right)}{\rho_2(1-\rho_2)}\right)}}\right\},\label{eq:gamma_condition_ec_sgd_new}
	\end{equation}
	where $ M_1 = \frac{4B_1'}{3\rho_1}$ and $M_2 = \frac{4\left(B_2' + \frac{4}{3}G\right)}{3\rho_2}$. Then {\tt EC-SGD} satisfies Assumption~\ref{ass:key_assumption_finite_sums_new}, i.e., inequality \eqref{eq:sum_of_errors_bound_new} holds with the following parameters:
	\begin{equation}
		F_1 = \frac{24L\gamma^2}{\delta\rho_1(1-\eta)}\left(\frac{2B_1}{\delta}+\widetilde{B}_1\right),\quad F_2 = \frac{24L\gamma^2}{\delta\rho_2(1-\eta)}\left(\frac{2G}{1-\rho_1}\left(\frac{2B_1}{\delta}+\tilde{B}_1\right)+\frac{2B_2}{\delta} + \widetilde{B}_2\right), \label{eq:ec_sgd_parameters_new}
	\end{equation}
	\begin{equation}
		D_3 = \frac{6L\gamma}{\delta}\left(\frac{D_2}{\rho_1}\left(\frac{2B_1}{\delta}+\widetilde{B}_1\right) + \frac{2D_1}{\delta} + \widetilde{D}_1\right).\label{eq:ec_sgd_parameters_new_2}
	\end{equation}
\end{lemma}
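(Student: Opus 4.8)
The plan is to establish \eqref{eq:sum_of_errors_bound_new} by controlling the per-node errors and then summing them against the geometric weights $w_k$. Since $e^k = \frac1n\sum_i e_i^k$, convexity of $\|\cdot\|^2$ gives $\|e^k\|^2 \le \frac1n\sum_i\|e_i^k\|^2$, so it suffices to bound $\mathcal{E}^k \eqdef \frac1n\sum_i\EE\|e_i^k\|^2$. I would first derive a one-step contraction recursion for $\mathcal{E}^k$, then unroll it (using $e_i^0 = 0$), and finally feed the resulting weighted sums of $\sigma_{1,k}^2$ and $\sigma_{2,k}^2$ into the recursions \eqref{eq:sigma_k+1_bound_1}--\eqref{eq:sigma_k+1_bound_2}.

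For the one-step bound, start from \eqref{eq:e^k_def_ec_sgd}, $e_i^{k+1} = (e_i^k + \gamma g_i^k) - \cC(e_i^k + \gamma g_i^k)$, apply the contraction property \eqref{eq:compression_def} over the compression randomness, and then the variance decomposition \eqref{eq:variance_decomposition} over the gradient noise to split off the conditional mean $\bar g_i^k$:
\[ \EE\left[\|e_i^{k+1}\|^2\mid x^k\right] \le (1-\delta)\|e_i^k + \gamma\bar g_i^k\|^2 + (1-\delta)\gamma^2\,\EE\left[\|g_i^k - \bar g_i^k\|^2\mid x^k\right]. \]
Applying Young's inequality \eqref{eq:a+b_norm_beta} with $\beta = \nicefrac{\delta}{2}$ to the first term, together with $(1-\delta)(1+\nicefrac{\delta}{2})\le 1-\nicefrac{\delta}{2}$ and $(1-\delta)(1+\nicefrac{2}{\delta})\le \nicefrac{2}{\delta}$, yields the crucial asymmetry
\[ \EE\left[\|e_i^{k+1}\|^2\mid x^k\right] \le \left(1-\tfrac{\delta}{2}\right)\|e_i^k\|^2 + \tfrac{2\gamma^2}{\delta}\|\bar g_i^k\|^2 + \gamma^2\,\EE\left[\|g_i^k - \bar g_i^k\|^2\mid x^k\right], \]
where the mean part carries the factor $\nicefrac{2}{\delta}$ while the variance part carries only $1$. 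Averaging over $i$, taking full expectation, and invoking \eqref{eq:second_moment_bound_g_i^k_new}--\eqref{eq:variance_bound_g_i^k_new} gives $\mathcal{E}^{k+1} \le (1-\nicefrac{\delta}{2})\mathcal{E}^k + \gamma^2 P^k$, where $P^k$ is a combination of $\EE[f(x^k)-f(x^*)]$, $\EE\sigma_{1,k}^2$, $\EE\sigma_{2,k}^2$ and a constant, with coefficients exactly $2(\nicefrac{2A}{\delta}+\widetilde A)$, $(\nicefrac{2B_1}{\delta}+\widetilde B_1)$, $(\nicefrac{2B_2}{\delta}+\widetilde B_2)$ and $(\nicefrac{2D_1}{\delta}+\widetilde D_1)$ — precisely the groupings appearing in \eqref{eq:ec_sgd_parameters_new}--\eqref{eq:ec_sgd_parameters_new_2}.

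Next I would unroll $\mathcal{E}^{k}\le \gamma^2\sum_{t=0}^{k-1}(1-\nicefrac{\delta}{2})^{k-1-t}P^t$, multiply by $w_k = (1-\eta)^{-(k+1)}$, sum over $k$, and swap the order of summation. The inner geometric sum $\sum_{k>t}w_k(1-\nicefrac{\delta}{2})^{k-1-t}$ is controlled by $\tfrac{w_t}{\nicefrac{\delta}{2}-\eta}$; here the stepsize restriction $\gamma\le\nicefrac{\delta}{4\mu}$ is used to guarantee $\eta \le \nicefrac{\gamma\mu}{2}\le\nicefrac{\delta}{8}<\nicefrac{\delta}{2}$, so that $\tfrac{1-\delta/2}{1-\eta}<1$ and the series converges. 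This produces a bound $3L\sum_k w_k\EE\|e^k\|^2 \lesssim \tfrac{L\gamma^2}{\delta}\sum_t w_t P^t$, namely a weighted sum of $f$-gaps, of $\EE\sigma_{1,t}^2$, of $\EE\sigma_{2,t}^2$, and a $W_K$ term.

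Finally, the weighted sums $\sum_t w_t\EE\sigma_{1,t}^2$ and $\sum_t w_t\EE\sigma_{2,t}^2$ must be converted into $f$-gaps, the initial values $\sigma_{1,0}^2,\sigma_{2,0}^2$, and $W_K$. I would prove a small auxiliary weighted-summation lemma: if $a_{k+1}\le(1-\rho)a_k + b_k$ and $\eta\le\nicefrac{\rho}{4}$, then $\sum_k w_k a_k \le \tfrac{1}{\rho-\eta}\left(a_0 + \sum_t w_t b_t\right)$, and apply it first to \eqref{eq:sigma_k+1_bound_2} (which is self-contained) and then to \eqref{eq:sigma_k+1_bound_1}, whose drift contains the already-bounded $\sum_t w_t\EE\sigma_{2,t}^2$ through the $G\rho_1\sigma_{2,t}^2$ term. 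Collecting everything, the coefficients of $\sigma_{1,0}^2$ and $\sigma_{2,0}^2$ assemble into $F_1,F_2$ of \eqref{eq:ec_sgd_parameters_new} and the $W_K$-coefficient into $\gamma D_3$ of \eqref{eq:ec_sgd_parameters_new_2}. The main obstacle is this last bookkeeping stage: the $f(x^k)-f(x^*)$ contribution arises from three sources (the compression error directly, plus the two $\sigma$-recursion conversions), and one must verify their total coefficient is at most $\tfrac14$ — which is exactly what the quantitative stepsize bound \eqref{eq:gamma_condition_ec_sgd_new}, with its constant $96$ and its $\tfrac{1}{1-\rho_1},\tfrac{1}{1-\rho_2}$ denominator terms, is engineered to guarantee. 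The interdependence of the two $\sigma$-sequences, forcing $\sigma_2$ to be handled before $\sigma_1$, is the only genuinely delicate point; everything else is geometric-series accounting.
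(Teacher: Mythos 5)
Your proposal is correct and follows essentially the same route as the paper's proof: the same per-node error contraction obtained from the compression property, variance decomposition and Young's inequality with the $\nicefrac{2}{\delta}$-versus-$1$ asymmetry, the same coefficient groupings $\nicefrac{2A}{\delta}+\widetilde{A}$, $\nicefrac{2B_j}{\delta}+\widetilde{B}_j$, $\nicefrac{2D_1}{\delta}+\widetilde{D}_1$, the same unrolling against the weights $w_k$ with geometric-series control enabled by $\gamma\le\nicefrac{\delta}{4\mu}$, the same ordering of the $\sigma$-recursions ($\sigma_2$ handled self-contained, then fed into $\sigma_1$ through the $G\rho_1\sigma_{2,k}^2$ coupling), and the same final $\nicefrac{1}{4}$-absorption check guaranteed by \eqref{eq:gamma_condition_ec_sgd_new}. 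The only deviations are cosmetic: your ratio-based summation $\sum_j\bigl(\nicefrac{(1-\nicefrac{\delta}{2})}{(1-\eta)}\bigr)^j$ and packaged weighted-summation lemma replace the paper's inline bounds $w_k\le w_{k-j}(1+\nicefrac{\delta}{4})^j$ and triple-sum manipulations, yielding constants that are no larger than the stated $F_1$, $F_2$, $D_3$, so \eqref{eq:sum_of_errors_bound_new} follows as claimed.
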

\begin{proof}
	First of all, we derive an upper bound for the second moment of $e_i^{k+1}$:
	\begin{eqnarray}
		\EE\|e_i^{k+1}\|^2 &\overset{\eqref{eq:e^k_def_ec_sgd},\eqref{eq:tower_property}}{=}& \EE\left[\EE\left[\|e_i^k + \gamma g_i^k - C(e_i^k + \gamma g_i^k)\|^2\mid e_i^k,g_i^k\right]\right] \notag\\
		&\overset{\eqref{eq:compression_def}}{\le}& (1-\delta)\EE\|e_i^k + \gamma g_i^k\|^2 \notag\\
		&\overset{\eqref{eq:tower_property},\eqref{eq:variance_decomposition}}{=}& (1-\delta)\EE\|e_i^k + \gamma \bar{g}_i^k\|^2 + (1-\delta)\gamma^2\EE\|g_i^k-\bar{g}_i^k\|^2 \notag\\
		&\overset{\eqref{eq:a+b_norm_beta}}{\le}& (1-\delta)(1+\beta)\EE\|e_i^k\|^2 + (1-\delta)\left(1+\frac{1}{\beta}\right)\gamma^2\EE\|\bar{g}_i^k\|^2\notag\\
		&&\quad + (1-\delta)\gamma^2\EE\|g_i^k-\bar{g}_i^k\|^2. \notag
	\end{eqnarray}
	Summing up these inequalities for $i=1,\ldots, n$ we get
	\begin{eqnarray}
		\frac{1}{n}\sum\limits_{i=1}^n\EE\|e_i^{k+1}\|^2 &\le& (1-\delta)(1+\beta)\frac{1}{n}\sum\limits_{i=1}^n\EE\|e_i^{k}\|^2\notag\\
		&&\quad + (1-\delta)\left(1+\frac{1}{\beta}\right)\gamma^2\frac{1}{n}\sum\limits_{i=1}^n\EE\|\bar{g}_i^k\|^2 + (1-\delta)\gamma^2\frac{1}{n}\sum\limits_{i=1}^n\EE\|g_i^k-\bar{g}_i^k\|^2.\label{eq:ec_sgd_technical_1_new}
	\end{eqnarray}
	Consider $\beta = \frac{\delta}{2(1-\delta)}$. For this choice of $\beta$ we have
	\begin{eqnarray*}
		(1-\delta)(1+\beta) &=& (1-\delta)\left(1 + \frac{\delta}{2(1-\delta)}\right) = 1 - \frac{\delta}{2}\\
		(1-\delta)\left(1+\frac{1}{\beta}\right) &=& (1-\delta)\left(1 + \frac{2(1-\delta)}{\delta}\right) = \frac{(1-\delta)(2-\delta)}{\delta} \le \frac{2(1-\delta)}{\delta}.
	\end{eqnarray*}
	Using this we continue our derivations:
	\begin{eqnarray}
		\frac{1}{n}\sum\limits_{i=1}^n\EE\|e_i^{k+1}\|^2 &\le& \left(1 - \frac{\delta}{2}\right)\frac{1}{n}\sum\limits_{i=1}^n\EE\|e_i^{k}\|^2 + \frac{2\gamma^2(1-\delta)}{\delta}\frac{1}{n}\sum\limits_{i=1}^n\EE\|\bar{g}_i^k\|^2\notag\\
		&&\quad + (1-\delta)\gamma^2\frac{1}{n}\sum\limits_{i=1}^n\EE\|g_i^k-\bar{g}_i^k\|^2\notag\\
		&\overset{\eqref{eq:second_moment_bound_g_i^k_new},\eqref{eq:variance_bound_g_i^k_new}}{\le}& \left(1 - \frac{\delta}{2}\right)\frac{1}{n}\sum\limits_{i=1}^n\EE\|e_i^{k}\|^2 + 2\gamma^2(1-\delta)\left(\frac{2A}{\delta}+\widetilde{A}\right)\EE\left[f(x^k) - f(x^*)\right]\notag\\
		&&\quad + \gamma^2(1-\delta)\left(\frac{2B_1}{\delta}+\widetilde{B}_1\right)\EE\sigma_{1,k}^2 + \gamma^2(1-\delta)\left(\frac{2B_2}{\delta}+\widetilde{B}_2\right)\EE\sigma_{2,k}^2\notag\\
		&&\quad + \gamma^2(1-\delta)\left(\frac{2D_1}{\delta}+\widetilde{D}_1\right). \label{eq:ec_sgd_technical_2_new} 
	\end{eqnarray}
	Unrolling the recurrence above we get
	\begin{eqnarray}
		\frac{1}{n}\sum\limits_{i=1}^n\EE\|e_i^{k+1}\|^2 &\overset{\eqref{eq:ec_sgd_technical_2_new}}{\le}& 2\gamma^2(1-\delta)\left(\frac{2A}{\delta}+\widetilde{A}\right)\sum\limits_{l=0}^k\left(1 - \frac{\delta}{2}\right)^{k-l}\EE\left[f(x^l) - f(x^*)\right]\notag\\
		&&\quad + \gamma^2(1-\delta)\left(\frac{2B_1}{\delta}+\widetilde{B}_1\right)\sum\limits_{l=0}^k\left(1 - \frac{\delta}{2}\right)^{k-l}\EE\sigma_{1,l}^2 \notag\\
		&&\quad + \gamma^2(1-\delta)\left(\frac{2B_2}{\delta}+\widetilde{B}_2\right)\sum\limits_{l=0}^k\left(1 - \frac{\delta}{2}\right)^{k-l}\EE\sigma_{2,l}^2 \notag\\
		&&\quad + \gamma^2(1-\delta)\left(\frac{2D_1}{\delta}+\widetilde{D}_1\right)\sum\limits_{l=0}^k\left(1 - \frac{\delta}{2}\right)^{k-l}\label{eq:ec_sgd_technical_3_new}
	\end{eqnarray}
	which implies
	\begin{eqnarray}
		3L\sum\limits_{k=0}^Kw_k\EE\|e^k\|^2 &\overset{\eqref{eq:e^k_def_ec_sgd}}{=}& 3L\sum\limits_{k=0}^Kw_k\EE\left\|\frac{1}{n}\sum\limits_{i=1}^n e_i^k\right\|^2 \overset{\eqref{eq:a_b_norm_squared}}{\le} 3L\sum\limits_{k=0}^K w_k\frac{1}{n}\sum\limits_{i=1}^n\EE\left\|e_i^k\right\|^2\notag\\
		&\overset{\eqref{eq:ec_sgd_technical_3_new}}{\le}& \frac{6L\gamma^2(1-\delta)}{1-\frac{\delta}{2}}\left(\frac{2A}{\delta}+\widetilde{A}\right)\sum\limits_{k=0}^K\sum\limits_{l=0}^k w_k\left(1 - \frac{\delta}{2}\right)^{k-l}\EE\left[f(x^l) - f(x^*)\right]\notag\\
		&&\quad + \frac{3L\gamma^2(1-\delta)}{1-\frac{\delta}{2}}\left(\frac{2B_1}{\delta}+\widetilde{B}_1\right)\sum\limits_{k=0}^K\sum\limits_{l=0}^k w_k\left(1 - \frac{\delta}{2}\right)^{k-l}\EE\sigma_{1,l}^2 \notag\\
		&&\quad + \frac{3L\gamma^2(1-\delta)}{1-\frac{\delta}{2}}\left(\frac{2B_2}{\delta}+\widetilde{B}_2\right)\sum\limits_{k=0}^K\sum\limits_{l=0}^k w_k\left(1 - \frac{\delta}{2}\right)^{k-l}\EE\sigma_{2,l}^2 \notag\\
		&&\quad +\frac{3L\gamma^2(1-\delta)}{1-\frac{\delta}{2}}\left(\frac{2D_1}{\delta}+\widetilde{D}_1\right)\sum\limits_{k=0}^K\sum\limits_{l=0}^k w_k\left(1 - \frac{\delta}{2}\right)^{k-l}.\label{eq:ec_sgd_technical_4_new}
	\end{eqnarray}
	In the remaining part of the proof we derive upper bounds for three terms in the right-hand side of the previous inequality. First of all, recall that $w_k = (1 - \eta)^{-(k+1)}$ and $\eta = \min\left\{\frac{\gamma\mu}{2}, \frac{\rho_1}{4}, \frac{\rho_2}{4}\right\}$. It implies that for all $0 \le i < k$ we have
	\begin{eqnarray}
		w_k &=& (1 - \eta)^{-(k-j+1)}\left(1 - \eta\right)^{-j} \overset{\eqref{eq:1-p/2_inequality}}{\le} w_{k-j}\left(1 + 2\eta\right)^{j} \notag\\
		&\le& w_{k-j}\left(1 + \gamma\mu\right)^{j} \overset{\eqref{eq:gamma_condition_ec_sgd_new}}{\le} w_{k-j}\left(1 + \frac{\delta}{4}\right)^j, \label{eq:ec_sgd_technical_5_new}\\
		w_k &=& \left(1 - \eta\right)^{-(k-j+1)}\left(1 - \eta\right)^{-j} \overset{\eqref{eq:1-p/2_inequality}}{\le} w_{k-j}\left(1 + 2\eta\right)^j \notag\\
		&\le& w_{k-j}\left(1 + \frac{\min\{\rho_1,\rho_2\}}{2}\right)^j. \label{eq:ec_sgd_technical_6_new}
	\end{eqnarray}
	For simplicity, we introduce new notation: $r_k \eqdef \EE\left[f(x^k) - f(x^*)\right]$. Using this we get
	\begin{eqnarray}
		\sum\limits_{k=0}^K\sum\limits_{l=0}^k w_k\left(1 - \frac{\delta}{2}\right)^{k-l}r_l &\overset{\eqref{eq:ec_sgd_technical_5_new}}{\le}& \sum\limits_{k=0}^K\sum\limits_{l=0}^k w_l r_l\left(1 + \frac{\delta}{4}\right)^{k-l}\left(1 - \frac{\delta}{2}\right)^{k-l}\notag\\
		&\overset{\eqref{eq:1+p/2_inequality}}{\le}& \sum\limits_{k=0}^K\sum\limits_{l=0}^k w_l r_l\left(1 - \frac{\delta}{4}\right)^{k-l}\notag\\
		&\le& \left(\sum\limits_{k=0}^K w_k r_k\right)\left(\sum\limits_{k=0}^\infty \left(1 - \frac{\delta}{4}\right)^{k}\right) = \frac{4}{\delta}\sum\limits_{k=0}^K w_k r_k. \label{eq:ec_sgd_technical_7_new}
	\end{eqnarray}
	Next, we apply our assumption on $\sigma_{2,k}^2$ and derive that
	\begin{eqnarray}
		\EE\sigma_{2,k+1}^2 &\overset{\eqref{eq:sigma_k+1_bound_2}}{\le}& (1 - \rho_2)\EE\sigma_{2,k}^2 + 2C_2 \underbrace{\EE\left[f(x^k) - f(x^*)\right]}_{r_k}\notag\\
		&\le& (1-\rho_2)^{k+1}\sigma_{2,0}^2 + 2C_2\sum\limits_{l=0}^{k}(1-\rho_2)^{k-l}r_l,\label{eq:sigma_2_k_useful_recurrence_new}
	\end{eqnarray}
	hence
	\begin{eqnarray}
		\sum\limits_{k=0}^K\sum\limits_{l=0}^k w_k\left(1 - \frac{\delta}{2}\right)^{k-l}\EE\sigma_{2,l}^2 &\le& \sum\limits_{k=0}^K\sum\limits_{l=0}^k w_k\left(1 - \frac{\delta}{2}\right)^{k-l}(1-\rho_2)^l\sigma_{2,0}^2\notag\\
		&&\quad + \frac{2C_2}{1-\rho_2}\sum\limits_{k=0}^K\sum\limits_{l=0}^k\sum\limits_{t=0}^lw_k\left(1 - \frac{\delta}{2}\right)^{k-l}(1-\rho_2)^{l-t}r_t\notag.
	\end{eqnarray}
	Using this and 
	\begin{eqnarray*}
		w_k\left(1 - \frac{\delta}{2}\right)^{k-l}(1-\rho_2)^{l-t} &\overset{\eqref{eq:ec_sgd_technical_5_new}}{\le}& w_{l}\left(1+\frac{\delta}{4}\right)^{k-l}\left(1 - \frac{\delta}{2}\right)^{k-l}(1-\rho_2)^{l-t}\\
		&\overset{\eqref{eq:1+p/2_inequality},\eqref{eq:ec_sgd_technical_6_new}}{\le}& \left(1-\frac{\delta}{4}\right)^{k-l}\left(1+\frac{\rho_2}{2}\right)^{l-t}(1-\rho_2)^{l-t}w_t\\
		&\overset{\eqref{eq:1+p/2_inequality}}{\le}& \left(1-\frac{\delta}{4}\right)^{k-l}\left(1-\frac{\rho_2}{2}\right)^{l-t}w_t
	\end{eqnarray*}
	we derive
	\begin{eqnarray}
		\sum\limits_{k=0}^K\sum\limits_{l=0}^k w_k\left(1 - \frac{\delta}{2}\right)^{k-l}\EE\sigma_{2,l}^2 &\le& \sum\limits_{k=0}^K\sum\limits_{l=0}^k w_k\left(1 - \frac{\delta}{4}\right)^{k-l}\left(1-\frac{\rho_2}{2}\right)^lw_0\sigma_{2,0}^2\notag\\
		&&\quad + \frac{2C_2}{1-\rho_2}\sum\limits_{k=0}^K\sum\limits_{l=0}^k\sum\limits_{t=0}^l\left(1 - \frac{\delta}{4}\right)^{k-l}\left(1-\frac{\rho_2}{2}\right)^{l-t}w_tr_t\notag\\
		&\le& w_0\sigma_{2,0}^2 \left(\sum\limits_{k=0}^{\infty}\left(1-\frac{\delta}{4}\right)^k\right)\left(\sum\limits_{k=0}^{\infty}\left(1-\frac{\rho_2}{2}\right)^k\right)\notag\\
		&&\quad \frac{2C_2}{1-\rho_2}\left(\sum\limits_{k=0}^K w_kr_k\right)\left(\sum\limits_{k=0}^{\infty}\left(1-\frac{\delta}{4}\right)^k\right)\left(\sum\limits_{k=0}^{\infty}\left(1-\frac{\rho_2}{2}\right)^k\right)\notag\\
		&=& \frac{8\sigma_{2,0}^2}{\delta\rho_2(1-\eta)} + \frac{16C_2}{\delta\rho_2(1-\rho_2)}\sum\limits_{k=0}^K w_kr_k.\label{eq:ec_sgd_technical_8_new}
	\end{eqnarray}
	Similarly, we estimate $\sigma_{1,k}^2$:
	\begin{eqnarray}
		\EE\sigma_{1,k+1}^2 &\overset{\eqref{eq:sigma_k+1_bound_1}}{\le}& (1 - \rho_1)\EE\sigma_{1,k}^2 + 2C_1 \underbrace{\EE\left[f(x^k) - f(x^*)\right]}_{r_k} + G\rho_1\EE\sigma_{2,k}^2 + D_2\notag\\
		&\le& (1-\rho_1)^{k+1}\sigma_{1,0}^2 + 2C_1\sum\limits_{l=0}^{k}(1-\rho_1)^{k-l}r_l + G\rho_1\sum\limits_{l=0}^k(1-\rho_1)^{k-l}\EE\sigma_{2,k}^2 \notag\\
		&&+ D_2\sum\limits_{l=0}^{k}(1-\rho_1)^l\notag\\
		&\le& (1-\rho_1)^{k+1}\sigma_{1,0}^2 + 2C_1\sum\limits_{l=0}^{k}(1-\rho_1)^{k-l}r_l + G\rho_1\sum\limits_{l=0}^k(1-\rho_1)^{k-l}\EE\sigma_{2,k}^2 \notag\\
		&&\quad + D_2\sum\limits_{l=0}^{\infty}(1-\rho_1)^l\notag\\
		&=& (1-\rho_1)^{k+1}\sigma_{1,0}^2 + 2C_1\sum\limits_{l=0}^{k}(1-\rho_1)^{k-l}r_l + G\rho_1\sum\limits_{l=0}^k(1-\rho_1)^{k-l}\EE\sigma_{2,k}^2 \notag\\
		&&\quad + \frac{D_2}{\rho_1}.\label{eq:sigma_1_k_useful_recurrence_new}
	\end{eqnarray}
	Using this we get
	\begin{eqnarray}
		\sum\limits_{k=0}^K\sum\limits_{l=0}^k w_k\left(1 - \frac{\delta}{2}\right)^{k-l}\EE\sigma_{1,l}^2 &\le& \sigma_{1,0}^2\sum\limits_{k=0}^K\sum\limits_{l=0}^kw_k\left(1-\frac{\delta}{2}\right)^{k-l}(1-\rho_1)^{l}\notag\\
		&&\quad + \frac{2C_1}{1-\rho_1}\sum\limits_{k=0}^K\sum\limits_{l=0}^k\sum\limits_{t=0}^{l} w_k\left(1 - \frac{\delta}{2}\right)^{k-l}(1-\rho_1)^{l-t}r_t\notag\\
		&&\quad + \frac{G\rho_1}{1-\rho_1}\sum\limits_{k=0}^K\sum\limits_{l=0}^k\sum\limits_{t=0}^{l} w_k\left(1 - \frac{\delta}{2}\right)^{k-l}(1-\rho_1)^{l-t}\EE\sigma_{2,t}^2\notag\\
		&&\quad + \frac{D_2}{\rho_1}\sum\limits_{k=0}^K\sum\limits_{l=0}^k\sum\limits_{t=0}^{l} w_k\left(1 - \frac{\delta}{2}\right)^{k-l}(1-\rho_1)^{l-t}. \label{eq:ec_sgd_technical_9_new}
	\end{eqnarray}
	Moreover,
	\begin{eqnarray*}
		w_k\left(1 - \frac{\delta}{2}\right)^{k-l}(1-\rho_1)^{l-t} &\overset{\eqref{eq:ec_sgd_technical_5_new}}{\le}& w_{l}\left(1+\frac{\delta}{4}\right)^{k-l}\left(1 - \frac{\delta}{2}\right)^{k-l}(1-\rho_1)^{l-t}\\
		&\overset{\eqref{eq:1+p/2_inequality},\eqref{eq:ec_sgd_technical_6_new}}{\le}& \left(1-\frac{\delta}{4}\right)^{k-l}\left(1+\frac{\rho_1}{2}\right)^{l-t}(1-\rho_1)^{l-t}w_t\\
		&\overset{\eqref{eq:1+p/2_inequality}}{\le}& \left(1-\frac{\delta}{4}\right)^{k-l}\left(1-\frac{\rho_1}{2}\right)^{l-t}w_t,
	\end{eqnarray*}
	hence
	\begin{eqnarray}
		\sum\limits_{k=0}^K\sum\limits_{l=0}^k w_k\left(1 - \frac{\delta}{2}\right)^{k-l}\EE\sigma_{1,l}^2 &\overset{\eqref{eq:ec_sgd_technical_9_new}}{\le}& w_0\sigma_{1,0}^2\sum\limits_{k=0}^K\sum\limits_{l=0}^k\left(1-\frac{\delta}{4}\right)^{k-l}\left(1-\frac{\rho_1}{2}\right)^{l}\notag\\
		&&\quad + \frac{2C_1}{1-\rho_1}\sum\limits_{k=0}^K\sum\limits_{l=0}^k\sum\limits_{t=0}^{l} \left(1 - \frac{\delta}{4}\right)^{k-l}\left(1-\frac{\rho_1}{2}\right)^{l-t}w_tr_t\notag\\
		&&\quad + \frac{G\rho_1}{1-\rho_1}\sum\limits_{k=0}^K\sum\limits_{l=0}^k\sum\limits_{t=0}^{l} \left(1 - \frac{\delta}{4}\right)^{k-l}\left(1-\frac{\rho_1}{2}\right)^{l-t}w_t\EE\sigma_{2,t}^2\notag\\
		&&\quad + \frac{D_2}{\rho_1}\left(\sum\limits_{k=0}^Kw_k\right)\left(\sum\limits_{k=0}^\infty\left(1 - \frac{\delta}{2}\right)^{k}\right)\left(\sum\limits_{k=0}^\infty(1-\rho_1)^{k}\right)\notag\\
		&\le& w_0\sigma_{1,0}^2\left(\sum\limits_{k=0}^\infty\left(1 - \frac{\delta}{4}\right)^{k}\right)\left(\sum\limits_{k=0}^\infty\left(1-\frac{\rho_1}{2}\right)^{k}\right)\notag\\
		&&\quad + \frac{2C_1}{1-\rho_1}\left(\sum\limits_{k=0}^Kw_kr_k\right)\left(\sum\limits_{k=0}^\infty\left(1 - \frac{\delta}{4}\right)^{k}\right)\left(\sum\limits_{k=0}^\infty\left(1-\frac{\rho_1}{2}\right)^{k}\right)\notag\\
		&&\quad+ \frac{G\rho_1}{1-\rho_1}\left(\sum\limits_{k=0}^Kw_k\EE\sigma_{2,k}^2\right)\left(\sum\limits_{k=0}^\infty\left(1 - \frac{\delta}{4}\right)^{k}\right)\left(\sum\limits_{k=0}^\infty\left(1-\frac{\rho_1}{2}\right)^{k}\right)\notag\\
		&&\quad + \frac{2D_2}{\delta\rho_1}W_K\notag\\
		&=& \frac{8\sigma_{1,0}^2}{\delta\rho_1(1-\eta)} + \frac{16C_1}{\delta\rho_1(1-\rho_1)}\sum\limits_{k=0}^K w_kr_k + \frac{8G}{\delta(1-\rho_1)}\sum\limits_{k=0}^Kw_k\EE\sigma_{2,k}^2\notag\\
		&&\quad + \frac{2D_2}{\delta\rho_1}W_K.\label{eq:ec_sgd_technical_10_new}
	\end{eqnarray}
	For the third term in the right-hand side of previous inequality we have
	\begin{eqnarray}
		\frac{8G}{\delta(1-\rho_1)}\sum\limits_{k=0}^Kw_k\EE\sigma_{2,k}^2 &\overset{\eqref{eq:sigma_2_k_useful_recurrence_new}}{\le}& \frac{8G\sigma_{2,0}^2}{\delta(1-\rho_1)}\sum\limits_{k=0}^Kw_k(1-\rho_2)^k\notag\\
		&&\quad + \frac{16GC_2}{\delta(1-\rho_1)(1-\rho_2)}\sum\limits_{k=0}^K\sum\limits_{l=0}^kw_k(1-\rho_2)^{k-l}r_l\notag\\
		&\overset{\eqref{eq:ec_sgd_technical_6_new}}{\le}&\frac{8G\sigma_{2,0}^2w_0}{\delta(1-\rho_1)}\sum\limits_{k=0}^K\left(1+\frac{\rho_2}{2}\right)^k(1-\rho_2)^k\notag\\
		&&\quad + \frac{16GC_2}{\delta(1-\rho_1)(1-\rho_2)}\sum\limits_{k=0}^K\sum\limits_{l=0}^k\left(1+\frac{\rho_2}{2}\right)^{k-l}(1-\rho_2)^{k-l}w_lr_l\notag\\
		&\overset{\eqref{eq:1+p/2_inequality}}{\le}&\frac{8G\sigma_{2,0}^2w_0}{\delta(1-\rho_1)}\sum\limits_{k=0}^\infty\left(1-\frac{\rho_2}{2}\right)^k\notag\\
		&&\quad + \frac{16GC_2}{\delta(1-\rho_1)(1-\rho_2)}\sum\limits_{k=0}^K\sum\limits_{l=0}^k\left(1-\frac{\rho_2}{2}\right)^{k-l}w_lr_l\notag\\
		&\le& \frac{16G\sigma_{2,0}^2w_0}{\delta\rho_2(1-\rho_1)}+ \frac{16GC_2}{\delta(1-\rho_1)(1-\rho_2)}\left(\sum\limits_{k=0}^Kw_kr_k\right)\left(\sum\limits_{k=0}^\infty\left(1-\frac{\rho_2}{2}\right)^k\right)\notag\\
		&=&\frac{16G\sigma_{2,0}^2}{\delta\rho_2(1-\rho_1)(1-\eta)}+ \frac{32GC_2}{\delta\rho_2(1-\rho_1)(1-\rho_2)}\sum\limits_{k=0}^Kw_kr_k\label{eq:ec_sgd_technical_11_new}
	\end{eqnarray}
	Combining inequalities \eqref{eq:ec_sgd_technical_10_new} and \eqref{eq:ec_sgd_technical_11_new} we get
	\begin{eqnarray}
		\sum\limits_{k=0}^K\sum\limits_{l=0}^k w_k\left(1 - \frac{\delta}{2}\right)^{k-l}\EE\sigma_{1,l}^2 &\le& \frac{8\sigma_{1,0}^2}{\delta\rho_1(1-\eta)} + \frac{16}{\delta(1-\rho_1)}\left(\frac{C_1}{\rho_1} + \frac{2GC_2}{\rho_2(1-\rho_2)}\right) \sum\limits_{k=0}^K w_kr_k \notag\\
		&&\quad + \frac{16G\sigma_{2,0}^2}{\delta\rho_2(1-\rho_1)(1-\eta)}+ \frac{2D_2}{\delta\rho_1}W_K\label{eq:ec_sgd_technical_12_new}
	\end{eqnarray}
	Finally, we estimate the last term in the right-hand side of \eqref{eq:ec_sgd_technical_4_new}:
	\begin{eqnarray}
		\sum\limits_{k=0}^K\sum\limits_{l=0}^k w_k\left(1 - \frac{\delta}{2}\right)^{k-l} &\le& \left(\sum\limits_{k=0}^K w_k\right)\left(\sum\limits_{k=0}^\infty \left(1 - \frac{\delta}{2}\right)^k\right) = \frac{2}{\delta}W_K. \label{eq:ec_sgd_technical_13_new}  
	\end{eqnarray}
	Plugging inequalities \eqref{eq:ec_sgd_technical_7_new}, \eqref{eq:ec_sgd_technical_8_new}, \eqref{eq:ec_sgd_technical_12_new}, \eqref{eq:ec_sgd_technical_13_new} and $\frac{1-\delta}{1-\frac{\delta}{2}} \le 1$ in \eqref{eq:ec_sgd_technical_4_new} we obtain
	\begin{eqnarray}
		3L\sum\limits_{k=0}^Kw_k\EE\|e^k\|^2 &\le& \frac{24L\left(\frac{2A}{\delta} + \widetilde{A} + \frac{2}{1-\rho_1}\left(\frac{C_1}{\rho_1}+\frac{2GC_2}{\rho_2(1-\rho_2)}\right)\left(\frac{2B_1}{\delta}+\widetilde{B}_1\right) + \frac{2C_2\left(\frac{2B_2}{\delta}+\widetilde{B}_2\right)}{\rho_2(1-\rho_2)}\right)\gamma^2}{\delta}\sum\limits_{k=0}^K w_k r_k\notag\\
		&&\quad + \frac{24L\gamma^2}{\delta\rho_1(1-\eta)}\left(\frac{2B_1}{\delta}+\widetilde{B}_1\right)\sigma_{1,0}^2\notag\\
		&&\quad + \frac{24L\gamma^2}{\delta\rho_2(1-\eta)}\left(\frac{2G}{1-\rho_1}\left(\frac{2B_1}{\delta}+\tilde{B}_1\right)+\frac{2B_2}{\delta} + \widetilde{B}_2\right)\sigma_{2,0}^2\notag\\
		&&\quad + \frac{6L\gamma^2}{\delta}\left(\frac{D_2}{\rho_1}\left(\frac{2B_1}{\delta}+\widetilde{B}_1\right) + \frac{2D_1}{\delta} + \widetilde{D}_1\right)W_K. \notag
	\end{eqnarray}
	Taking into account that $\gamma \le \sqrt{\frac{\delta}{96L\left(\frac{2A}{\delta} + \widetilde{A} + \frac{2}{1-\rho_1}\left(\frac{C_1}{\rho_1}+\frac{2GC_2}{\rho_2(1-\rho_2)}\right)\left(\frac{2B_1}{\delta}+\widetilde{B}_1\right) + \frac{2C_2\left(\frac{2B_2}{\delta}+\widetilde{B}_2\right)}{\rho_2(1-\rho_2)}\right)}}$, $F_1 = \frac{24L\gamma^2}{\delta\rho_1(1-\eta)}\left(\frac{2B_1}{\delta}+\widetilde{B}_1\right)$, $F_2 = \frac{24L\gamma^2}{\delta\rho_2(1-\eta)}\left(\frac{2G}{1-\rho_1}\left(\frac{2B_1}{\delta}+\tilde{B}_1\right)+\frac{2B_2}{\delta} + \widetilde{B}_2\right)$ and $D_3 = \frac{6L\gamma}{\delta}\left(\frac{D_2}{\rho_1}\left(\frac{2B_1}{\delta}+\widetilde{B}_1\right) + \frac{2D_1}{\delta} + \widetilde{D}_1\right)$ we get
	\begin{eqnarray*}
		3L\sum\limits_{k=0}^Kw_k\EE\|e^k\|^2 &\le& \frac{1}{4}\sum\limits_{k=0}^K w_k r_k + F_1\sigma_{1,0}^2 + F_2\sigma_{2,0}^2 + \gamma D_3.
	\end{eqnarray*}
\end{proof}

As a direct application of Lemma~\ref{lem:ec_sgd_key_lemma_new} and Theorem~\ref{thm:main_result_new} we get the following result.
\begin{theorem}\label{thm:ec_sgd_main_result_new}
	Let Assumptions~\ref{ass:quasi_strong_convexity}~and~\ref{ass:L_smoothness} be satisfied, Assumption~\ref{ass:key_assumption_finite_sums_new} holds and
	\begin{eqnarray*}
		\gamma &\le& \frac{1}{4(A'+C_1M_1+C_2M_2)},\\
		\gamma &\le& \min\left\{\frac{\delta}{4\mu}, \sqrt{\frac{\delta}{96L\left(\frac{2A}{\delta} + \widetilde{A} + \frac{2}{1-\rho_1}\left(\frac{C_1}{\rho_1}+\frac{2GC_2}{\rho_2(1-\rho_2)}\right)\left(\frac{2B_1}{\delta}+\widetilde{B}_1\right) + \frac{2C_2\left(\frac{2B_2}{\delta}+\widetilde{B}_2\right)}{\rho_2(1-\rho_2)}\right)}}\right\},
	\end{eqnarray*}
	where $ M_1 = \frac{4B_1'}{3\rho_1}$ and $M_2 = \frac{4\left(B_2' + \frac{4}{3}G\right)}{3\rho_2}$. Then for all $K\ge 0$ we have
	\begin{equation*}
		\EE\left[f(\bar x^K) - f(x^*)\right] \le \left(1 - \eta\right)^K\frac{4(T^0 + \gamma F_1 \sigma_{1,0}^2+ \gamma F_2 \sigma_{2,0}^2)}{\gamma} + 4\gamma\left(D_1' + M_1D_2 + D_3\right), 
	\end{equation*}
	when $\mu > 0$ and
	\begin{equation*}
		\EE\left[f(\bar x^K) - f(x^*)\right] \le \frac{4(T^0 + \gamma F_1 \sigma_{1,0}^2+ \gamma F_2 \sigma_{2,0}^2)}{\gamma K} + 4\gamma\left(D_1' + M_1D_2 + D_3\right) 
	\end{equation*} 
	when $\mu = 0$, where $\eta = \min\left\{\nicefrac{\gamma\mu}{2},\nicefrac{\rho_1}{4},\nicefrac{\rho_2}{4}\right\}$, $T^k \eqdef \|\tx^k - x^*\|^2 + M_1\gamma^2 \sigma_{1,k}^2 + M_2\gamma^2 \sigma_{2,k}^2$ and 
	\begin{equation*}
		F_1 = \frac{24L\gamma^2}{\delta\rho_1(1-\eta)}\left(\frac{2B_1}{\delta}+\widetilde{B}_1\right),\quad F_2 = \frac{24L\gamma^2}{\delta\rho_2(1-\eta)}\left(\frac{2G}{1-\rho_1}\left(\frac{2B_1}{\delta}+\tilde{B}_1\right)+\frac{2B_2}{\delta} + \widetilde{B}_2\right),
	\end{equation*}
	\begin{equation*}
		D_3 = \frac{6L\gamma}{\delta}\left(\frac{D_2}{\rho_1}\left(\frac{2B_1}{\delta}+\widetilde{B}_1\right) + \frac{2D_1}{\delta} + \widetilde{D}_1\right).
	\end{equation*}
\end{theorem}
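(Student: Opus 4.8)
The plan is to obtain this theorem purely by composing the two preceding results, Lemma~\ref{lem:ec_sgd_key_lemma_new} and the master Theorem~\ref{thm:main_result_new}; no new estimate is needed. The key structural observation driving the whole argument is that Assumption~\ref{ass:key_assumption_new} and Assumption~\ref{ass:key_assumption_finite_sums_new} share almost all of their content. Both demand unbiasedness \eqref{eq:unbiasedness_new}, the second-moment bound \eqref{eq:second_moment_bound_new}, and the two variance-reduction recursions \eqref{eq:sigma_k+1_bound_1}--\eqref{eq:sigma_k+1_bound_2}, and these are postulated \emph{directly} in Assumption~\ref{ass:key_assumption_finite_sums_new} (the unbiasedness being part of \eqref{eq:unbiasedness_g_i^k_new}). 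Hence the only ingredient missing before one can invoke Theorem~\ref{thm:main_result_new} for {\tt EC-SGD} is the summed-error inequality \eqref{eq:sum_of_errors_bound_new}, and supplying exactly this is the role of Lemma~\ref{lem:ec_sgd_key_lemma_new}.

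Concretely, I would first check that the second stepsize restriction stated in the hypothesis coincides verbatim with condition \eqref{eq:gamma_condition_ec_sgd_new} of Lemma~\ref{lem:ec_sgd_key_lemma_new}; under it the lemma certifies that {\tt EC-SGD} (with $v_i^k$, $e_i^k$ defined as in \eqref{eq:v^k_def_ec_sgd}--\eqref{eq:e^k_def_ec_sgd}) satisfies \eqref{eq:sum_of_errors_bound_new} with the explicit constants $F_1$, $F_2$, $D_3$ recorded in \eqref{eq:ec_sgd_parameters_new}--\eqref{eq:ec_sgd_parameters_new_2}. These are precisely the values reproduced in the theorem statement. At that point {\tt EC-SGD} meets every requirement of Assumption~\ref{ass:key_assumption_new}, with $A'$, $B_1'$, $B_2'$, $C_1$, $C_2$, $G$, $D_1'$, $D_2$ carried over unchanged from Assumption~\ref{ass:key_assumption_finite_sums_new} and $M_1=\nicefrac{4B_1'}{3\rho_1}$, $M_2$ defined identically in both statements. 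I would then note that the remaining stepsize restriction $\gamma \le \nicefrac{1}{4(A'+C_1M_1+C_2M_2)}$ is exactly the hypothesis of Theorem~\ref{thm:main_result_new}, so that theorem applies and yields \eqref{eq:main_result_new} when $\mu>0$ and \eqref{eq:main_result_new_cvx} when $\mu=0$, which are the two displayed bounds to be proved.

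I do not expect any real obstacle at the level of this theorem: once both earlier results are granted, the argument is a bookkeeping step that verifies the two stepsize conditions hold simultaneously and matches constants. The genuine analytical difficulty lives entirely inside Lemma~\ref{lem:ec_sgd_key_lemma_new}, where one must control the accumulated compression error $3L\sum_k w_k\EE\|e^k\|^2$. There the contraction property \eqref{eq:compression_def} combined with the Young-type splitting of $\|e_i^k+\gamma g_i^k\|^2$ (choosing $\beta=\nicefrac{\delta}{2(1-\delta)}$) produces a geometric recursion with factor $1-\nicefrac{\delta}{2}$, which after unrolling and reweighting against the sequences $\{w_k\}$ and $\{\sigma_{1,k}^2\},\{\sigma_{2,k}^2\}$ absorbs the functional-gap terms into $\tfrac14\sum_k w_k\EE[f(x^k)-f(x^*)]$; the second stepsize bound is exactly what makes this absorption possible. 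The single point to watch in the present proof is therefore only that both stepsize upper bounds are imposed at once, guaranteeing the hypotheses of the lemma and of the master theorem are met together.
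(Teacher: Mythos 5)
Your proposal is correct and matches the paper's own proof exactly: the paper obtains Theorem~\ref{thm:ec_sgd_main_result_new} "as a direct application of Lemma~\ref{lem:ec_sgd_key_lemma_new} and Theorem~\ref{thm:main_result_new}", i.e., the lemma supplies \eqref{eq:sum_of_errors_bound_new} with the constants \eqref{eq:ec_sgd_parameters_new}--\eqref{eq:ec_sgd_parameters_new_2} under the second stepsize restriction, after which Assumption~\ref{ass:key_assumption_new} holds and the master theorem applies under the first. Your identification of where the real analytical work lives (inside the lemma, not in this composition step) is also accurate.
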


\clearpage
\section{{\tt SGD} with Delayed Updates}\label{sec:d_sgd}
In this section we consider the {\tt SGD} with delayed updates ({\tt D-SGD}) \cite{agarwal2011distributed,lian2015asynchronous,feyzmahdavian2016asynchronous,arjevani2018tight,stich2019error}. This method has updates of the form \eqref{eq:x^k+1_update}-\eqref{eq:error_update} with
\begin{eqnarray}
	g^k &=& \frac{1}{n}\sum\limits_{i=1}^ng_i^k\notag\\
	v^k &=& \frac{1}{n}\sum\limits_{i=1}^n v_i^k,\quad v_i^k = \begin{cases}\gamma g_i^{k-\tau},&\text{if } t\ge\tau,\\ 0,&\text{if } t < \tau \end{cases}\label{eq:v^k_def_d_sgd}\\
	e^k &=& \frac{1}{n}\sum\limits_{i=1}^n e_i^k,\quad e_i^{k+1} = e_i^k + \gamma g_i^k - v_i^k = \gamma\sum\limits_{t=1}^\tau g_i^{k+1-t},\label{eq:e^k_def_d_sgd}
\end{eqnarray}
where the summation is performed only for non-negative indices. Moreover, we assume that $e_i^0 = 0$ for $i = 1,\ldots,n$.

For convenience we also introduce new constant:
\begin{equation}
	\hat A = A' + L\tau. \label{eq:d_sgd_hat_A}
\end{equation}
\begin{lemma}\label{lem:d_sgd_key_lemma_new}
	Let Assumptions~\ref{ass:quasi_strong_convexity}~and~\ref{ass:L_smoothness} be satisfied, inequalities \eqref{eq:second_moment_bound_new}, \eqref{eq:sigma_k+1_bound_1} and \eqref{eq:sigma_k+1_bound_2} hold and\footnote{When $\rho_1 = 1$ and $\rho_2=1$ one can always set the parameters in such a way that $B_1 = B_1' = B_2 = B_2' = C_1 = C_2 = 0$, $D_2 = 0$. In this case we assume that $\frac{2B_1'C_1}{\rho_1(1-\rho_1)} = \frac{2B_2'C_2}{\rho_2(1-\rho_2)} = 0$.}
	\begin{equation}
		\gamma \le \min\left\{\frac{1}{2\tau\mu}, \frac{1}{8\sqrt{L\tau\left(\hat A + \frac{2 B_1'C_1}{\rho_1(1-\rho_1)} + \frac{2 B_2'C_2}{\rho_2(1-\rho_2)} + \frac{4B_1'GC_2}{\rho_2(1-\rho_1)(1-\rho_2)}\right)}}\right\}, \label{eq:gamma_condition_d_sgd_new}
	\end{equation}
	where $M_1 = \frac{4B_1'}{3\rho_1}$ and  $M_2 = \frac{4\left(B_2' + \frac{4}{3}G\right)}{3\rho_2}$. Then {\tt D-SGD} satisfies Assumption~\ref{ass:key_assumption_new}, i.e., inequality \eqref{eq:sum_of_errors_bound_new} holds with the following parameters:
	\begin{equation}
		F_1 = \frac{6\gamma^2L B_1' \tau(2+\rho_1)}{\rho_1},\quad F_2=\frac{6\gamma^2\tau L(2+\rho_2)}{\rho_2}\left(\frac{2B_1' G}{1-\rho_1} + B_2'\right), \label{eq:d_sgd_parameters_new_1}
	\end{equation}
	\begin{equation}
		D_3 = 3\gamma\tau L\left( D_1' + \frac{2 B_1' D_2}{\rho_1}\right). \label{eq:d_sgd_parameters_new_2}
	\end{equation}
\end{lemma}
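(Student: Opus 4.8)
The plan is to follow the template of the \texttt{EC-SGD} analysis in Lemma~\ref{lem:ec_sgd_key_lemma_new}, but to exploit the fact that for \texttt{D-SGD} the aggregated error has the explicit closed form $e^k = \gamma\sum_{t=1}^\tau g^{k-t}$ from \eqref{eq:e^k_def_d_sgd} (with the convention that terms of negative index vanish). This makes $e^k$ a finite sum of the last $\tau$ averaged stochastic gradients, so instead of unrolling a contraction recurrence as in the \texttt{EC-SGD} case I can bound $\EE\|e^k\|^2$ in one shot via Lemma~\ref{lem:lemma14_stich}.

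First I would decompose $g^{k-t} = \nabla f(x^{k-t}) + (g^{k-t} - \nabla f(x^{k-t}))$, where the second piece is a martingale-difference noise by unbiasedness \eqref{eq:unbiasedness_new}. Applying Lemma~\ref{lem:lemma14_stich} (ordering the $\tau$ terms from oldest to newest so that the conditional-independence hypothesis holds for the noise part, with the adapted vectors $\nabla f(x^{k-t})$ playing the role of the $a_t$, and taking full expectations) gives
\begin{equation*}
\EE\|e^k\|^2 \le \gamma^2\left(\tau\sum_{t=1}^\tau\EE\|\nabla f(x^{k-t})\|^2 + \sum_{t=1}^\tau\EE\|g^{k-t}-\nabla f(x^{k-t})\|^2\right).
\end{equation*}
Bounding the mean terms by \eqref{eq:L_smoothness_cor} (using $\nabla f(x^*)=0$) and the variance terms by \eqref{eq:second_moment_bound_new} (since $\EE\|g^{k-t}-\nabla f(x^{k-t})\|^2\le\EE\|g^{k-t}\|^2$ by \eqref{eq:variance_decomposition}), and collecting the coefficient $2L\tau+2A'=2\hat A$ with $\hat A$ as in \eqref{eq:d_sgd_hat_A}, I obtain
\begin{equation*}
\EE\|e^k\|^2 \le \gamma^2\sum_{t=1}^\tau\left(2\hat A\,r_{k-t} + B_1'\EE\sigma_{1,k-t}^2 + B_2'\EE\sigma_{2,k-t}^2 + D_1'\right),\qquad r_k\eqdef\EE[f(x^k)-f(x^*)].
\end{equation*}

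Next I would multiply by $3Lw_k$, sum over $k=0,\dots,K$, and swap the order of summation so that the outer sum runs over $t\in\{1,\dots,\tau\}$. For each fixed $t$ I shift the index by $t$ and use the weight comparison $w_k=(1-\eta)^{-t}w_{k-t}\le(1-\eta)^{-\tau}w_{k-t}$; since $\eta\le\gamma\mu/2\le\tfrac{1}{4\tau}$ by the first bound in \eqref{eq:gamma_condition_d_sgd_new}, the factor $(1-\eta)^{-\tau}$ is bounded by an absolute constant, so each shifted sum is controlled by $\sum_l w_l(\cdot)_l$ up to a constant, with an extra factor $\tau$ from summing over $t$. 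The $r$-terms then produce a multiple of $\sum_k w_k r_k$, the constant $D_1'$-terms produce a multiple of $W_K$, and the $\sigma$-terms produce $\sum_k w_k\EE\sigma_{1,k}^2$ and $\sum_k w_k\EE\sigma_{2,k}^2$, which I bound exactly as in the \texttt{EC-SGD} proof by unrolling the recurrences \eqref{eq:sigma_k+1_bound_1}--\eqref{eq:sigma_k+1_bound_2} (cf.\ \eqref{eq:sigma_2_k_useful_recurrence_new} and \eqref{eq:sigma_1_k_useful_recurrence_new}); this re-expresses them through $\sigma_{1,0}^2$, $\sigma_{2,0}^2$, $\sum_k w_k r_k$, and $W_K$.

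Finally I would collect like terms. The $\sigma_{1,0}^2$ and $\sigma_{2,0}^2$ coefficients yield $F_1$ and $F_2$ in \eqref{eq:d_sgd_parameters_new_1}, and the $W_K$ coefficient (gathering the $D_1'$ contribution together with the $D_2/\rho_1$ contribution coming from the $\sigma_1$ recurrence) yields $\gamma D_3$ with $D_3$ as in \eqref{eq:d_sgd_parameters_new_2}; the total coefficient in front of $\sum_k w_k r_k$ is driven below $\tfrac14$ precisely by the second bound on $\gamma$ in \eqref{eq:gamma_condition_d_sgd_new}, giving \eqref{eq:sum_of_errors_bound_new}. The hard part will be the constant bookkeeping in these last two steps, in particular tracking the cross-term chain: the $B_1'\sigma_1^2$ contribution passes through the $\sigma_1$ recurrence, whose $G\rho_1\sigma_{2,k}^2$ term feeds back into $\sigma_{2,0}^2$ via the $\sigma_2$ recurrence, and this is exactly what produces the $\frac{2B_1'G}{1-\rho_1}$ summand in $F_2$ and the $\frac{4B_1'GC_2}{\rho_2(1-\rho_1)(1-\rho_2)}$ summand inside the $\gamma$ constraint.
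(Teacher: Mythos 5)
Your proposal follows the paper's own proof essentially step for step: the paper likewise exploits the closed form $e^k=\gamma\sum_{t=1}^{\tau}g^{k-t}$, applies Lemma~\ref{lem:lemma14_stich} to the decomposition $g^{k-t}=\nabla f(x^{k-t})+(g^{k-t}-\nabla f(x^{k-t}))$, bounds the noise via \eqref{eq:variance_decomposition} and \eqref{eq:second_moment_bound_new} to obtain the coefficient $2\hat A=2(A'+L\tau)$, shifts the weights using $w_k\le 2w_{k-t}$ (your bounded $(1-\eta)^{-\tau}$ factor is the same device), and then unrolls the recurrences \eqref{eq:sigma_1_k_useful_recurrence_new}--\eqref{eq:sigma_2_k_useful_recurrence_new} exactly as in the \texttt{EC-SGD} case to collect the coefficients $F_1$, $F_2$, $D_3$ and absorb the $\sum_k w_k r_k$ term using the stepsize condition. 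The bookkeeping you flag at the end, including the $B_1'\to G\to C_2$ cross-term chain, is precisely what appears in the paper's derivation, so the approach is correct and not materially different.
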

\begin{proof}
	First of all, we derive an upper bound for the second moment of $e_i^{k}$:
	\begin{eqnarray}
		\EE\|e^{k}\|^2 &\overset{\eqref{eq:e^k_def_d_sgd}}{=}& \gamma^2\EE\left[\left\|\sum\limits_{t=1}^\tau g^{k-t}\right\|^2\right] \notag\\
		&\overset{\eqref{eq:lemma14_stich}}{\le}& \gamma^2\tau\sum\limits_{t=1}^\tau\EE\left[\left\|\nabla f(x^{k-t})\right\|^2\right] + \gamma^2\sum\limits_{t=1}^\tau\EE\left[\left\|g^{k-t} - \nabla f(x^{k-t})\right\|^2\right]\notag\\
		&\overset{\eqref{eq:variance_decomposition}}{\le}& \gamma^2\tau\sum\limits_{t=1}^\tau\EE\left[\left\|\nabla f(x^{k-t})\right\|^2\right] + \gamma^2\sum\limits_{t=1}^\tau\EE\left[\left\|g^{k-t}\right\|^2\right]\notag\\
		&\overset{\eqref{eq:second_moment_bound_new},\eqref{eq:L_smoothness_cor}}{\le}& 2\gamma^2\underbrace{(A'+L\tau)}_{\hat A}\sum\limits_{t=1}^\tau\EE\left[f(x^{k-t}) - f(x^*)\right] + \gamma^2 B_1'\sum\limits_{t=1}^\tau \EE\sigma_{1,k-t}^2\notag\\
		&&\quad + \gamma^2 B_2'\sum\limits_{t=1}^\tau \EE\sigma_{2,k-t}^2 + \gamma^2\tau D_1' \label{eq:d_sgd_technical_1_new}
	\end{eqnarray}
	which implies
	\begin{eqnarray}
		3L\sum\limits_{k=0}^Kw_k\EE\|e^k\|^2 &\overset{\eqref{eq:d_sgd_technical_1_new}}{\le}& 6\gamma^2L\hat A\sum\limits_{k=0}^K\sum\limits_{t=1}^\tau w_k\EE\left[f(x^{k-t}) - f(x^*)\right]\notag\\
		&&\quad + 3\gamma^2L B_1'\sum\limits_{k=0}^K\sum\limits_{t=1}^\tau w_k\EE\sigma_{1,k-t}^2\notag\\
		&&\quad + 3\gamma^2L B_2'\sum\limits_{k=0}^K\sum\limits_{t=1}^\tau w_k\EE\sigma_{2,k-t}^2 + 3\gamma^2\tau L D_1' W_K \label{eq:d_sgd_technical_2_new}
	\end{eqnarray}
	In the remaining part of the proof we derive upper bounds for four terms in the right-hand side of the previous inequality. First of all, recall that $w_k = (1 - \eta)^{-(k+1)}$ and $\eta = \min\left\{\frac{\gamma\mu}{2}, \frac{\rho_1}{4}, \frac{\rho_2}{4}\right\}$. It implies that for all $0 \le i < k$ and $0\le t \le \tau$ we have
	\begin{eqnarray}
		w_k &=& (1 - \eta)^{-(k-t+1)}\left(1 - \eta\right)^{-t} \overset{\eqref{eq:1-p/2_inequality}}{\le} w_{k-t}\left(1 + 2\eta\right)^{t} \notag\\
		&\le& w_{k-t}\left(1 + \gamma\mu\right)^{t} \overset{\eqref{eq:gamma_condition_d_sgd_new}}{\le} w_{k-t}\left(1 + \frac{1}{2\tau}\right)^t \le w_{k-t}\exp\left(\frac{t}{2\tau}\right) \le 2w_{k-t}, \label{eq:d_sgd_technical_3_new}\\
		w_k &=& \left(1 - \eta\right)^{-(k-j+1)}\left(1 - \eta\right)^{-j} \overset{\eqref{eq:1-p/2_inequality}}{\le} w_{k-j}\left(1 + 2\eta\right)^j \le w_{k-j}\left(1 + \frac{\min\{\rho_1,\rho_2\}}{2}\right)^j. \label{eq:d_sgd_technical_4_new}
	\end{eqnarray}
	For simplicity, we introduce new notation: $r_k \eqdef \EE\left[f(x^k) - f(x^*)\right]$. Using this we get
	\begin{eqnarray}
		\sum\limits_{k=0}^K\sum\limits_{t=1}^\tau w_kr_{k-t} &\overset{\eqref{eq:d_sgd_technical_3_new}}{\le}& \sum\limits_{k=0}^K\sum\limits_{t=1}^\tau 2w_{k-t} r_{k-t} \le 2\tau\sum\limits_{k=0}^K w_kr_k \label{eq:d_sgd_technical_6_new}
	\end{eqnarray}
	Similarly, we estimate the second term in the right-hand side of \eqref{eq:d_sgd_technical_4_new}:
	\begin{eqnarray}
		\sum\limits_{k=0}^K\sum\limits_{t=1}^\tau w_k\EE\sigma_{1,k-t}^2 &\le& \sum\limits_{k=0}^K\sum\limits_{t=1}^\tau 2w_{k-t}\EE\sigma_{1,k-t}^2 \le 2\tau\sum\limits_{k=0}^K w_k \EE\sigma_{1,k}^2\notag\\
		&\overset{\eqref{eq:sigma_1_k_useful_recurrence_new}}{\le}& 2\tau\sigma_{1,0}^2\sum\limits_{k=0}^K w_k(1-\rho_1)^k + \frac{4C_1\tau}{1-\rho_1}\sum\limits_{k=0}^K\sum\limits_{l=0}^k w_k (1-\rho_1)^{k-l}r_l \notag\\
		&&\quad + \frac{2G\rho_1\tau}{1-\rho_1}\sum\limits_{k=0}^K\sum\limits_{l=0}^k w_k (1-\rho_1)^{k-l}\EE\sigma_{2,l}^2 + \frac{2\tau D_2}{\rho}W_K. \label{eq:d_sgd_technical_7_new}
	\end{eqnarray}
	For the first term in the right-hand side of previous inequality we have
	\begin{eqnarray}
		2\tau\sigma_{1,0}^2\sum\limits_{k=0}^K w_k(1-\rho_1)^{k} &\overset{\eqref{eq:d_sgd_technical_4_new}}{\le}& 2\tau\sigma_{1,0}^2\sum\limits_{k=0}^K \left(1 + \frac{\rho_1}{2}\right)^{k+1}(1-\rho_1)^k \notag\\
		&\overset{\eqref{eq:1+p/2_inequality}}{\le}& 2\tau\left(1+\frac{\rho_1}{2}\right)\sigma_{1,0}^2\sum\limits_{k=0}^K\left(1 - \frac{\rho_1}{2}\right)^k \notag\\
		&\le& \tau\left(2+\rho_1\right)\sigma_{1,0}^2\sum\limits_{k=0}^\infty\left(1 - \frac{\rho_1}{2}\right)^k \le \frac{2\tau\left(2 + \rho_1\right)\sigma_{1,0}^2}{\rho_1}.\label{eq:d_sgd_technical_8_new}
	\end{eqnarray}
	The second term in the right-hand side of \eqref{eq:d_sgd_technical_7_new} can be upper bounded in the following way:
	\begin{eqnarray}
		\frac{4C_1\tau}{1-\rho_1}\sum\limits_{k=0}^K\sum\limits_{l=0}^k w_k (1-\rho_1)^{k-l}r_l &\overset{\eqref{eq:d_sgd_technical_4_new}}{\le}& \frac{4C_1\tau}{1-\rho_1}\sum\limits_{k=0}^K\sum\limits_{l=0}^k w_l r_l \left(1 + \frac{\rho_1}{2}\right)^{k-l}(1-\rho_1)^{k-l}\notag\\
		&\overset{\eqref{eq:1+p/2_inequality}}{\le}& \frac{4C_1\tau}{1-\rho_1}\sum\limits_{k=0}^K\sum\limits_{l=0}^k w_l r_l \left(1 - \frac{\rho_1}{2}\right)^{k-l}\notag\\
		&\le& \frac{4C_1\tau}{1-\rho_1}\left(\sum\limits_{k=0}^K w_k r_k\right)\left(\sum\limits_{k=0}^\infty\left(1 - \frac{\rho_1}{2}\right)^k\right)\notag\\
		&\le& \frac{8C_1\tau}{\rho_1(1-\rho_1)}\sum\limits_{k=0}^K w_k r_k.\label{eq:d_sgd_technical_9_new}
	\end{eqnarray}
	Repeating similar steps we estimate the third term in the right-hand side of \eqref{eq:d_sgd_technical_7_new}:
	\begin{eqnarray}
		\frac{2G\rho_1\tau}{1-\rho_1}\sum\limits_{k=0}^K\sum\limits_{l=0}^k w_k (1-\rho_1)^{k-l}\EE\sigma_{2,l}^2 &\le& \frac{4G\tau}{1-\rho_1}\sum\limits_{k=0}^Kw_k\EE\sigma_{2,k}^2\notag\\
		&\overset{\eqref{eq:sigma_2_k_useful_recurrence_new}}{\le}& \frac{4G\tau\sigma_{2,0}^2}{1-\rho_1}\sum\limits_{k=0}^Kw_k(1-\rho_2)^k\notag\\
		&& + \frac{8GC_2}{(1-\rho_1)(1-\rho_2)}\sum\limits_{k=0}^K\sum\limits_{l=0}^kw_k(1-\rho_2)^{k-l}r_l\notag\\
		&\overset{\eqref{eq:d_sgd_technical_4_new}}{\le}& \frac{4G\tau\sigma_{2,0}^2}{1-\rho_1}\sum\limits_{k=0}^K\left(1+\frac{\rho_2}{2}\right)^{k+1}(1-\rho_2)^k\notag\\
		&&\hspace{-2cm}+ \frac{8GC_2\tau}{(1-\rho_1)(1-\rho_2)}\sum\limits_{k=0}^K\sum\limits_{l=0}^k\left(1+\frac{\rho_2}{2}\right)^{k-l}(1-\rho_2)^{k-l}w_lr_l\notag\\
		&\overset{\eqref{eq:1+p/2_inequality}}{\le}& \frac{2G\tau(2+\rho_2)\sigma_{2,0}^2}{1-\rho_1}\sum\limits_{k=0}^\infty\left(1-\frac{\rho_2}{2}\right)^{k}\notag\\
		&&+ \frac{8GC_2\tau}{(1-\rho_1)(1-\rho_2)}\sum\limits_{k=0}^K\sum\limits_{l=0}^k\left(1-\frac{\rho_2}{2}\right)^{k-l}w_lr_l\notag\\
		&\le& \frac{4G\tau(2+\rho_2)\sigma_{2,0}^2}{\rho_2(1-\rho_1)}\notag\\
		&&+\frac{8GC_2\tau}{(1-\rho_1)(1-\rho_2)}\left(\sum\limits_{k=0}^Kw_kr_k\right)\left(\sum\limits_{k=0}^\infty\left(1-\frac{\rho_2}{2}\right)^k\right)\notag\\
		&=& \frac{4G\tau(2+\rho_2)\sigma_{2,0}^2}{\rho_2(1-\rho_1)} \notag\\
		&&\quad+ \frac{16GC_2\tau}{\rho_2(1-\rho_1)(1-\rho_2)}\sum\limits_{k=0}^Kw_kr_k\label{eq:d_sgd_technical_9_1_new}
	\end{eqnarray}
	Combining inequalities \eqref{eq:d_sgd_technical_7_new}, \eqref{eq:d_sgd_technical_8_new}, \eqref{eq:d_sgd_technical_9_new} and \eqref{eq:d_sgd_technical_9_1_new} we get
	\begin{eqnarray}
		\sum\limits_{k=0}^K\sum\limits_{t=1}^\tau w_k\EE\sigma_{1,k-t}^2 &\le& \frac{2\tau\left(2 + \rho_1\right)\sigma_{1,0}^2}{\rho_1} + \frac{8\tau}{1-\rho_1}\left(\frac{C_1}{\rho_1}+\frac{2GC_2}{\rho_2(1-\rho_2)}\right)\sum\limits_{k=0}^K w_k r_k \notag\\
		&&\quad + \frac{4G\tau(2+\rho_2)\sigma_{2,0}^2}{\rho_2(1-\rho_1)} + \frac{2\tau D_2}{\rho} W_K.\label{eq:d_sgd_technical_10_new}
	\end{eqnarray}
	Next, we derive
	\begin{eqnarray}
		\sum\limits_{k=0}^K\sum\limits_{t=1}^\tau w_k\EE\sigma_{2,k-t}^2 &\le& \sum\limits_{k=0}^K\sum\limits_{t=1}^\tau 2w_{k-t}\EE\sigma_{2,k-t}^2 \le 2\tau\sum\limits_{k=0}^K w_k \EE\sigma_{2,k}^2\notag\\
		&\overset{\eqref{eq:sigma_2_k_useful_recurrence_new}}{\le}& 2\tau\sigma_{2,0}^2\sum\limits_{k=0}^K w_k(1-\rho_1)^k \notag\\
		&&\quad+ \frac{4C_2\tau}{1-\rho_2}\sum\limits_{k=0}^K\sum\limits_{l=0}^k w_k (1-\rho_2)^{k-l}r_l.\label{eq:d_sgd_technical_11_new}
	\end{eqnarray}
	For the first term in the right-hand side of previous inequality we have
	\begin{eqnarray}
		2\tau\sigma_{2,0}^2\sum\limits_{k=0}^K w_k(1-\rho_2)^{k} &\overset{\eqref{eq:d_sgd_technical_4_new}}{\le}& 2\tau\sigma_{2,0}^2\sum\limits_{k=0}^K \left(1 + \frac{\rho_2}{2}\right)^{k+1}(1-\rho_2)^k \notag\\
		&\overset{\eqref{eq:1+p/2_inequality}}{\le}& 2\tau\left(1+\frac{\rho_2}{2}\right)\sigma_{2,0}^2\sum\limits_{k=0}^K\left(1 - \frac{\rho_2}{2}\right)^k \notag\\
		&\le& \tau\left(2+\rho_2\right)\sigma_{2,0}^2\sum\limits_{k=0}^\infty\left(1 - \frac{\rho_2}{2}\right)^k \le \frac{2\tau\left(2 + \rho_2\right)\sigma_{2,0}^2}{\rho_2}.\notag
	\end{eqnarray}
	The second term in the right-hand side of \eqref{eq:d_sgd_technical_11_new} can be upper bounded in the following way:
	\begin{eqnarray}
		\frac{4C_2\tau}{1-\rho_2}\sum\limits_{k=0}^K\sum\limits_{l=0}^k w_k (1-\rho_2)^{k-l}r_l &\overset{\eqref{eq:d_sgd_technical_4_new}}{\le}& \frac{4C_2\tau}{1-\rho_2}\sum\limits_{k=0}^K\sum\limits_{l=0}^k w_l r_l \left(1 + \frac{\rho_2}{2}\right)^{k-l}(1-\rho_2)^{k-l}\notag\\
		&\overset{\eqref{eq:1+p/2_inequality}}{\le}& \frac{4C_2\tau}{1-\rho_2}\sum\limits_{k=0}^K\sum\limits_{l=0}^k w_l r_l \left(1 - \frac{\rho_2}{2}\right)^{k-l}\notag\\
		&\le& \frac{4C_2\tau}{1-\rho_2}\left(\sum\limits_{k=0}^K w_k r_k\right)\left(\sum\limits_{k=0}^\infty\left(1 - \frac{\rho_2}{2}\right)^k\right)\notag\\
		&\le& \frac{8C_2\tau}{\rho_2(1-\rho_2)}\sum\limits_{k=0}^K w_k r_k,\notag
	\end{eqnarray}	
	hence
	\begin{eqnarray}
		\sum\limits_{k=0}^K\sum\limits_{t=1}^\tau w_k\EE\sigma_{2,k-t}^2 &\overset{\eqref{eq:d_sgd_technical_11_new}}{\le}& \frac{2\tau\left(2 + \rho_2\right)\sigma_{2,0}^2}{\rho_2} + \frac{8C_2\tau}{\rho_2(1-\rho_2)}\sum\limits_{k=0}^K w_k r_k. \label{eq:d_sgd_technical_12_new}
	\end{eqnarray}
	Plugging inequalities \eqref{eq:d_sgd_technical_6_new}, \eqref{eq:d_sgd_technical_10_new} and \eqref{eq:d_sgd_technical_12_new} in \eqref{eq:d_sgd_technical_2_new} we obtain
	\begin{eqnarray}
		3L\sum\limits_{k=0}^Kw_k\EE\|e^k\|^2 &\le& 12\gamma^2L\tau\left(\hat A + \frac{2 B_1'C_1}{\rho_1(1-\rho_1)} + \frac{2 B_2'C_2}{\rho_2(1-\rho_2)} + \frac{4B_1'GC_2}{\rho_2(1-\rho_1)(1-\rho_2)}\right)\sum\limits_{k=0}^K w_k r_k\notag\\
		&&\quad + \frac{6\gamma^2L B_1' \tau(2+\rho_1)}{\rho_1}\sigma_0^2 + \frac{6\gamma^2\tau L(2+\rho_2)}{\rho_2}\left(\frac{2B_1' G}{1-\rho_1} + B_2'\right)\sigma_{2,0}^2\notag\\
		&&\quad + 3\gamma^2\tau L\left(D_1' + \frac{2 B_1' D_2}{\rho}\right)W_K. \notag
	\end{eqnarray}
	Taking into account that $\gamma \le \frac{1}{4\sqrt{4L\tau\left(\hat A + \frac{2 B_1'C_1}{\rho_1(1-\rho_1)} + \frac{2 B_2'C_2}{\rho_2(1-\rho_2)} + \frac{4B_1'GC_2}{\rho_2(1-\rho_1)(1-\rho_2)}\right)}}$, $F_1 = \frac{6\gamma^2L B_1' \tau(2+\rho_1)}{\rho_1}$, $F_2=\frac{6\gamma^2\tau L}{\rho_2}\left(\frac{2B_1' G(2+\rho_2)}{1-\rho_1} + B_2'\right)$ and $D_3 = 3\gamma\tau L\left( D_1' + \frac{2 B_1' D_2}{\rho}\right)$ we get
	\begin{eqnarray*}
		3L\sum\limits_{k=0}^Kw_k\EE\|e^k\|^2 &\le& \frac{1}{4}\sum\limits_{k=0}^K w_k r_k + F_1\sigma_{1,0}^2 + F_2\sigma_{2,0}^2 + \gamma D_3.
	\end{eqnarray*}
\end{proof}

As a direct application of Lemma~\ref{lem:d_sgd_key_lemma_new} and Theorem~\ref{thm:main_result_new} we get the following result.
\begin{theorem}\label{thm:d_sgd_main_result_new}
	Let Assumptions~\ref{ass:quasi_strong_convexity}~and~\ref{ass:L_smoothness} be satisfied, inequalities \eqref{eq:second_moment_bound_new}, \eqref{eq:sigma_k+1_bound_1} and \eqref{eq:sigma_k+1_bound_2} hold and
	\begin{equation*}
		\gamma \le \min\left\{\frac{1}{4(A'+C_1M_1 + C_2M_2)},\frac{1}{2\tau\mu}, \frac{1}{8\sqrt{L\tau\left(\hat A + \frac{2 B_1'C_1}{\rho_1(1-\rho_1)} + \frac{2 B_2'C_2}{\rho_2(1-\rho_2)} + \frac{4B_1'GC_2}{\rho_2(1-\rho_1)(1-\rho_2)}\right)}}\right\},
	\end{equation*}
	where $M_1 = \frac{4B_1'}{3\rho_1}$ and  $M_2 = \frac{4\left(B_2' + \frac{4}{3}G\right)}{3\rho_2}$. Then for all $K\ge 0$ we have
	\begin{equation*}
		\EE\left[f(\bar x^K) - f(x^*)\right] \le \left(1 - \eta\right)^K\frac{4(T^0 + \gamma F_1 \sigma_{1,0}^2+ \gamma F_2 \sigma_{2,0}^2)}{\gamma} + 4\gamma\left(D_1' + MD_2 + D_3\right) 
	\end{equation*}
	when $\mu > 0$ and
	\begin{equation*}
		\EE\left[f(\bar x^K) - f(x^*)\right] \le \frac{4(T^0 + \gamma F_1 \sigma_{1,0}^2+ \gamma F_2 \sigma_{2,0}^2)}{\gamma K} + 4\gamma\left(D_1' + MD_2 + D_3\right) 
	\end{equation*}
	when $\mu = 0$, where $\eta = \min\left\{\nicefrac{\gamma\mu}{2},\nicefrac{\rho_1}{4},\nicefrac{\rho_2}{4}\right\}$, $T^k \eqdef \|\tx^k - x^*\|^2 + M_1\gamma^2 \sigma_{1,k}^2 + M_2\gamma^2 \sigma_{2,k}^2$ and 
	\begin{equation*}
		F_1 = \frac{6\gamma^2L B_1' \tau(2+\rho_1)}{\rho_1},\quad F_2=\frac{6\gamma^2\tau L(2+\rho_2)}{\rho_2}\left(\frac{2B_1' G}{1-\rho_1} + B_2'\right),
	\end{equation*}
	\begin{equation*}
		D_3 = 3\gamma\tau L\left( D_1' + \frac{2 B_1' D_2}{\rho_1}\right).
	\end{equation*}
\end{theorem}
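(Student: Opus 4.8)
The plan is to treat Theorem~\ref{thm:d_sgd_main_result_new} as a mechanical corollary obtained by feeding the output of Lemma~\ref{lem:d_sgd_key_lemma_new} into the master result Theorem~\ref{thm:main_result_new}. First I would observe that the hypotheses needed to invoke the master theorem split into two groups. The unbiasedness \eqref{eq:unbiasedness_new}, the second-moment bound \eqref{eq:second_moment_bound_new}, and the two variance-reduction recursions \eqref{eq:sigma_k+1_bound_1}--\eqref{eq:sigma_k+1_bound_2} are assumed outright in the statement; for {\tt D-SGD} the unbiasedness \eqref{eq:unbiasedness_new} is immediate since $g^k=\frac{1}{n}\sum_i g_i^k$ is an average of unbiased per-worker estimators. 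The only remaining piece of Assumption~\ref{ass:key_assumption_new} is the error-feedback inequality \eqref{eq:sum_of_errors_bound_new}, which must control the accumulated delay error $e_i^{k+1}=\gamma\sum_{t=1}^\tau g_i^{k+1-t}$ from \eqref{eq:e^k_def_d_sgd}.

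Next I would supply precisely this inequality via Lemma~\ref{lem:d_sgd_key_lemma_new}. Under the delay-dependent stepsize bound \eqref{eq:gamma_condition_d_sgd_new}, that lemma certifies that \eqref{eq:sum_of_errors_bound_new} holds for {\tt D-SGD} with the explicit constants $F_1,F_2$ given in \eqref{eq:d_sgd_parameters_new_1} and $D_3$ given in \eqref{eq:d_sgd_parameters_new_2}. Combined with the group of assumed inequalities, this establishes every requirement of Assumption~\ref{ass:key_assumption_new}, so {\tt D-SGD} now lies squarely in the regime covered by the master result.

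The remaining step is bookkeeping on the stepsize. Theorem~\ref{thm:main_result_new} requires $\gamma\le\nicefrac{1}{4(A'+C_1M_1+C_2M_2)}$ (with $M_1=\frac{4B_1'}{3\rho_1}$ and $M_2=\frac{4(B_2'+\frac{4}{3}G)}{3\rho_2}$), whereas Lemma~\ref{lem:d_sgd_key_lemma_new} imposes \eqref{eq:gamma_condition_d_sgd_new}. I would simply enforce their intersection, which is exactly the $\min$ displayed in the theorem's stepsize hypothesis. With Assumption~\ref{ass:key_assumption_new} verified and $\gamma$ respecting the master bound, I would then invoke \eqref{eq:main_result_new} for the $\mu>0$ case and \eqref{eq:main_result_new_cvx} for the $\mu=0$ case, and substitute the lemma's values of $F_1,F_2,D_3$ to read off the two displayed rates verbatim.

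Since all the genuine analytical work is already carried out inside Lemma~\ref{lem:d_sgd_key_lemma_new}---namely converting the bound \eqref{eq:d_sgd_technical_1_new} on $\EE\|e^k\|^2$, itself obtained by applying Stich's Lemma~\ref{lem:lemma14_stich} to the telescoped delayed gradients, into the weighted-sum form \eqref{eq:sum_of_errors_bound_new} through geometric-series manipulations---the theorem itself poses no real obstacle and the hard part is behind us. The only points demanding care are notational: ensuring the two independent stepsize caps are both honored through the single $\min$, and substituting $M_1 D_2$ (to match the $4\gamma(D_1'+M_1D_2+D_3)$ floor of the master theorem) rather than a bare $M D_2$ into the constant-error term.
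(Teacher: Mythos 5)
Your proposal is correct and follows exactly the paper's own route: the paper obtains this theorem as a direct application of Lemma~\ref{lem:d_sgd_key_lemma_new} (which certifies \eqref{eq:sum_of_errors_bound_new} for {\tt D-SGD} with the stated $F_1,F_2,D_3$) combined with the master Theorem~\ref{thm:main_result_new}, under the intersection of the lemma's stepsize condition \eqref{eq:gamma_condition_d_sgd_new} and the master bound $\gamma\le\nicefrac{1}{4(A'+C_1M_1+C_2M_2)}$. Your closing remark that the constant-error term should be read as $M_1D_2$ (matching the $4\gamma(D_1'+M_1D_2+D_3)$ floor of the master theorem) rather than the bare $MD_2$ appearing in the statement is also the right reading.
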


\clearpage

\section{Special Cases: {\tt SGD}}\label{sec:special_cases_sgd}
To illustrate the generality of our approach, we develop and analyse a new special case of {\tt SGD} without error-feedback and show that in some cases, our framework recovers tighter rates than the framework from \cite{gorbunov2019unified}.

\subsection{{\tt DIANA} with Arbitrary Sampling and Double Quantization}\label{sec:diana_arbitrary_sampling}
In this section we consider problem \eqref{eq:main_problem} with $f(x)$ being $\mu$-quasi strongly convex and $f_i(x)$ satisfying \eqref{eq:f_i_sum} where functions $f_{ij}(x)$ are differentiable, but not necessary convex. Following \cite{gower2019sgd} we construct a stochastic reformulation of this problem:
\begin{equation}
	f(x) = \EE_{\cD}\left[f_\xi(x)\right],\quad f_\xi(x) = \frac{1}{n}\sum\limits_{i=1}^n f_{\xi_i}(x),\quad f_{\xi_i}(x) = \frac{1}{m}\sum\limits_{j=1}^m \xi_{ij}f_{ij}(x), \label{eq:sr_def}
\end{equation}
where $\xi = (\xi_1^\top,\ldots, \xi_n^\top), \xi_i = (\xi_{i1},\ldots, \xi_{im})^\top$ is a random vector with distribution $\cD_i$ such that $\EE_{\cD_i}[\xi_{ij}] = 1$ for all $i\in[n], j\in[m]$ and the following assumption holds.
\begin{assumption}[Expected smoothness]\label{ass:exp_smoothness}
	We assume that functions $f_1,\ldots, f_n$ are $\cL$-smooth in expectation w.r.t.\ distributions $\cD_1,\ldots,\cD_n$, i.e., there exists constant $\cL = \cL(f,\cD_1,\ldots,\cD_n)$ such that
	\begin{equation}
		\EE_{\cD_i}\left[\|\nabla f_{\xi_i}(x) - \nabla f_{\xi_i}(x^*)\|^2\right] \le 2\cL D_{f_i}(x,x^*)\label{eq:exp_smoothness}
	\end{equation}
	for all $i\in [n]$ and $x\in\R^d$.
\end{assumption}

To solve this problem, we consider {\tt DIANA} \cite{mishchenko2019distributed, horvath2019stochastic}~--- a distributed stochastic method using unbiased compressions or \textit{quantizations} for communication between workers and master. We start with the formal definition of quantization. In \cite{mishchenko2019distributed, horvath2019stochastic} {\tt DIANA} was analyzed under the assumption that stochastic gradients $g_i^k$ have uniformly bounded variances which is not very practical.

Therefore, we consider a slightly different method called {\tt DIANAsr-DQ} which works with the stochastic reformulation \eqref{eq:sr_def} of problem \eqref{eq:main_problem}+\eqref{eq:f_i_sum}, see Algorithm~\ref{alg:DIANAsr-DQ}.
\begin{algorithm}[t]
   \caption{{\tt DIANAsr} with Double Compression ({\tt DIANAsr-DQ})}\label{alg:DIANAsr-DQ}
\begin{algorithmic}[1]
   \Require learning rates $\gamma>0$, $\alpha \in (0,1]$, initial vectors $x^0, h_1^0,\ldots, h_n^0 \in \R^d$
   \State Set $h^0 = \frac{1}{n}\sum_{i=1}^n h_i^0$   
   \For{$k=0,1,\dotsc$}
       \State Broadcast $g^{k-1}$ to all workers \Comment{If $k=0$, then broadcast $x^0$}
        \For{$i=1,\dotsc,n$ in parallel}
        	\State $x^{k} = x^{k-1} - \gamma g^{k-1}$ \Comment{Ignore this line if $k=0$}
			\State Sample $g_i^{k,1} = \nabla f_{\xi_i^k}(x^k)$ satisfying Assumption~\ref{ass:exp_smoothness} independtently from other workers
            \State $\hat\Delta_i^k = g_i^{k,1} - h_i^k$
            \State Sample $\Delta_i^k \sim Q_1(\hat\Delta_i^k)$ indepently from other workers
            \State $g_i^{k,2} = h_i^k + \Delta_i^k$
            \State $h_i^{k+1} = h_i^k + \alpha \Delta_i^k$
        \EndFor
        \State $g^{k,2} = \frac{1}{n}\sum_{i=1}^ng_i^{k,2} = h^k + \frac{1}{n}\sum_{i=1}^n\Delta_i^k$
        \State $h^{k+1} = \frac{1}{n}\sum\limits_{i=1}^n h_i^{k+1} = h^k + \alpha\frac{1}{n}\sum\limits_{i=1}^n\Delta_i^k$
       \State Sample $g^k \sim Q_2(g^{k,2})$
       \State $x^{k+1} = x^{k} - \gamma g^{k-1}$
   \EndFor
\end{algorithmic}
\end{algorithm}
Moreover, to illustrate the flexibility of our approach, we consider compression not only on the workers' side but also on the master side. To perform an update of {\tt DIANAsr-DQ} master needs to gather quantized gradient differences $\Delta_i^k$ and the to broadcast quantized stochastic gradient $g^k$ to all workers. Clearly, in this case, only compressed vectors participate in communication.  

In the concurrent work \cite{philippenko2020artemis} the same method was independently proposed under the name of {\tt Artemis}. However, our analysis is slightly more general: it is based on Assumption~\ref{ass:exp_smoothness} while in \cite{philippenko2020artemis} authors assume $L$-cocoercivity of stochastic gradients almost surely. Next, a very similar approach was considered in \cite{tang2019doublesqueeze}, where authors present a method with error compensation on master and worker sides. Moreover, recently another method called {\tt DORE} was developed in \cite{liu2019double}, which uses {\tt DIANA}-trick on the worker side and error compensation on the master side. However, in these methods, compression operators are the same on both sides, despite the fact that gathering the information often costs much more than broadcasting. Therefore, the natural idea is in using different quantization for gathering and broadcasting, and it is what {\tt DIANAsr-DQ} does. Moreover, we do not assume uniform boundedness of the second moment of the stochastic gradient like in \cite{tang2019doublesqueeze}, and we also do not assume uniform boundedness of the variance of the stochastic gradient like in \cite{liu2019double}. Assumption~\ref{ass:exp_smoothness} is more natural and always holds for the problems \eqref{eq:main_problem}+\eqref{eq:f_i_sum} when $f_{ij}$ are convex and $L$-smooth for each $i\in[n]$, $j\in[m]$. In contrast, in the same setup, there exist such problems that the variance of the stochastic gradients is not uniformly upper bounded by any finite constant.

We assume that $Q_1$ and $Q_2$ satisfy \eqref{eq:quantization_def} with parameters $\omega_1$ and $\omega_2$ respectively.
\begin{lemma}\label{lem:diana_second_moment_bound}
	Let Assumption~\ref{ass:exp_smoothness} be satisfied. Then, for all $k\ge 0$ we have
	\begin{eqnarray}
		\EE\left[g^k\mid x^k\right] &=& \nabla f(x^k), \label{eq:diana_unbiasedness}\\
		\EE\left[\|g^k\|^2\mid x^k\right] &\le& 2\cL(1+\omega_2)\left(2+\frac{3\omega_1}{n}\right)\left(f(x^k) - f(x^*)\right) + \frac{3\omega_1(1+\omega_2)}{n}\sigma_k^2 + D_1', \label{eq:diana_second_moment_bound}
	\end{eqnarray}
	where $\sigma_k^2 = \frac{1}{n}\sum_{i=1}^n\|h_i^k - \nabla f(x^*)\|^2$ and $D_1' = \frac{(2+3\omega_1)(1+\omega_2)}{n^2}\sum\limits_{i=1}^n\EE_{\cD_i}\left[\|\nabla f_{\xi_i}(x^*) - \nabla f_i(x^*)\|^2\right]$.
\end{lemma}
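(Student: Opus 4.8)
The plan is to peel the randomness in $g^k$ layer by layer --- from the master quantizer $Q_2$, through the worker quantizers $Q_1$, down to the samples $\xi_i^k$ --- applying the tower property \eqref{eq:tower_property} at each stage; throughout I treat $x^k$ and the shift vectors $h_i^k$ as fixed under the conditioning $\EE[\,\cdot\mid x^k]$, since they are measurable with respect to the history up to step $k$. The unbiasedness claim \eqref{eq:diana_unbiasedness} then follows from three nested applications of unbiasedness: $\EE[g^k\mid g^{k,2}]=g^{k,2}$ by \eqref{eq:quantization_def} for $Q_2$; $\EE[g_i^{k,2}\mid g_i^{k,1}]=h_i^k+(g_i^{k,1}-h_i^k)=g_i^{k,1}$ by \eqref{eq:quantization_def} for $Q_1$; and $\EE[g_i^{k,1}\mid x^k]=\nabla f_i(x^k)$ because $\EE_{\cD_i}[\xi_{ij}]=1$ forces $\EE_{\cD_i}[\nabla f_{\xi_i}(x)]=\nabla f_i(x)$. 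Averaging over $i$ and composing gives $\EE[g^k\mid x^k]=\nabla f(x^k)$.

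For the second moment \eqref{eq:diana_second_moment_bound} I first peel $Q_2$: since $\EE[g^k\mid g^{k,2}]=g^{k,2}$ and $\EE[\|g^k-g^{k,2}\|^2\mid g^{k,2}]\le\omega_2\|g^{k,2}\|^2$, the variance decomposition \eqref{eq:variance_decomposition} yields $\EE[\|g^k\|^2\mid g^{k,2}]\le(1+\omega_2)\|g^{k,2}\|^2$, so it remains to bound $(1+\omega_2)\EE[\|g^{k,2}\|^2\mid x^k]$. Next I peel $Q_1$ at the level of the average $g^{k,2}=\frac1n\sum_i g_i^{k,2}$: since the $Q_1$'s are independent across workers and $\EE_{Q_1}[g_i^{k,2}]=g_i^{k,1}$, the cross terms vanish and $\EE_{Q_1}\|g^{k,2}\|^2=\|\frac1n\sum_i g_i^{k,1}\|^2+\frac1{n^2}\sum_i\EE_{Q_1}\|g_i^{k,2}-g_i^{k,1}\|^2\le\|\frac1n\sum_i g_i^{k,1}\|^2+\frac{\omega_1}{n^2}\sum_i\|g_i^{k,1}-h_i^k\|^2$, using $\|Q_1(\hat\Delta_i^k)-\hat\Delta_i^k\|^2\le\omega_1\|\hat\Delta_i^k\|^2$ and $\hat\Delta_i^k=g_i^{k,1}-h_i^k$. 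Taking $\EE[\,\cdot\mid x^k]$ leaves a gradient term and a quantization-variance term to control.

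For the gradient term I split $g_i^{k,1}=(\nabla f_{\xi_i^k}(x^k)-\nabla f_{\xi_i^k}(x^*))+\nabla f_{\xi_i^k}(x^*)$ and use $\|u+v\|^2\le2\|u\|^2+2\|v\|^2$ (the $n=2$ case of \eqref{eq:a_b_norm_squared}): the first piece is bounded by Jensen and expected smoothness \eqref{eq:exp_smoothness}, giving $\frac1n\sum_i 2\cL D_{f_i}(x^k,x^*)=2\cL(f(x^k)-f(x^*))$ via the identity $\frac1n\sum_i D_{f_i}(x^k,x^*)=f(x^k)-f(x^*)$ (which uses $\nabla f(x^*)=0$), while the second piece, by \eqref{eq:variance_decomposition} and cross-worker independence, equals $\frac1{n^2}\sum_i\EE_{\cD_i}\|\nabla f_{\xi_i}(x^*)-\nabla f_i(x^*)\|^2$. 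For the quantization term I decompose $g_i^{k,1}-h_i^k=(\nabla f_{\xi_i^k}(x^k)-\nabla f_{\xi_i^k}(x^*))+(\nabla f_{\xi_i^k}(x^*)-\nabla f_i(x^*))+(\nabla f_i(x^*)-h_i^k)$ and use $\|u+v+w\|^2\le3(\|u\|^2+\|v\|^2+\|w\|^2)$ (the $n=3$ case of \eqref{eq:a_b_norm_squared}): \eqref{eq:exp_smoothness} controls the first piece, the second contributes the $3\omega_1$ part of $D_1'$, and the deterministic third gives $\frac{3\omega_1}{n^2}\sum_i\|h_i^k-\nabla f_i(x^*)\|^2=\frac{3\omega_1}{n}\sigma_k^2$. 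Collecting everything produces the bound on $\EE[\|g^{k,2}\|^2\mid x^k]$ with leading coefficient $2\cL(2+\frac{3\omega_1}{n})$; multiplying by the factor $(1+\omega_2)$ from the first step yields \eqref{eq:diana_second_moment_bound} with $D_1'$ as stated.

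The main obstacle is the careful bookkeeping of the three-level variance structure: at each peeling step the cross terms must vanish, which relies on the mutual independence of the $Q_1$'s (and of the samples $\xi_i^k$) across workers together with the conditional unbiasedness of each compressor, so that \eqref{eq:variance_decomposition} can be applied at the level of the average and the $1/n^2$ factors --- hence the $1/n$ savings in the final constants --- are preserved. The only genuinely delicate modelling choices are the reference points subtracted in each split: subtracting $\nabla f_{\xi_i}(x^*)$ is exactly what makes expected smoothness \eqref{eq:exp_smoothness} applicable, while the residuals $\nabla f_{\xi_i}(x^*)-\nabla f_i(x^*)$ and $h_i^k-\nabla f_i(x^*)$ crystallize precisely into $D_1'$ and $\sigma_k^2$.
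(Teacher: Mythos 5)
Your proposal is correct and follows essentially the same route as the paper's proof: peel $Q_2$ via variance decomposition to extract the factor $(1+\omega_2)$, peel the independent $Q_1$'s at the level of the average to get the $\frac{\omega_1}{n^2}\sum_i\|\hat\Delta_i^k\|^2$ term, then bound the gradient part with a two-term split and the quantization part with a three-term split anchored at $\nabla f_{\xi_i}(x^*)$ and $\nabla f_i(x^*)$, invoking expected smoothness and cross-worker independence exactly as the paper does. The only cosmetic difference is that you recover the zero of the mean term $\bigl\|\frac{1}{n}\sum_i\nabla f_{\xi_i}(x^*)\bigr\|^2$ via variance decomposition and $\nabla f(x^*)=0$, whereas the paper centers each summand by $\nabla f_i(x^*)$ from the outset; the resulting constants match the lemma exactly.
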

\begin{proof}
	First of all, we show inbiasedness of $g^k$:
	\begin{eqnarray*}
		\EE\left[g^k\mid x^k\right] &\overset{\eqref{eq:tower_property},\eqref{eq:quantization_def}}{=}& \EE\left[g^{k,2}\mid x^k\right] = h^k + \frac{1}{n}\sum\limits_{i=1}^n\EE\left[\Delta_i^k\mid x^k\right]\\
		&\overset{\eqref{eq:tower_property},\eqref{eq:quantization_def}}{=}& h^k + \frac{1}{n}\sum\limits_{i=1}^n\EE\left[\hat \Delta_i^k\mid x^k\right]\\
		&=& h^k + \frac{1}{n}\sum\limits_{i=1}^n\left(\nabla f_i(x^k) - h_i^k\right) = \nabla f(x^k).
	\end{eqnarray*}
	Next, to denote mathematical expectation w.r.t.\ the randomness coming from quantizations $Q_1$ and $Q_2$ at iteration $k$ we use $\EE_{Q_1^k}[\cdot]$ and $\EE_{Q_2^k}[\cdot]$ respectively. Using these notations and the definition of quantization we derive
	\begin{eqnarray*}
		\EE_{Q_2^k}[\|g^k\|^2] &\overset{\eqref{eq:variance_decomposition},\eqref{eq:quantization_def}}{=}& \|g^{k,1}\|^2 + \EE_{Q_2^k}\left[\|g^{k,2}-g^{k,1}\|^2\right]\\
		&\overset{\eqref{eq:quantization_def}}{\le}& (1+\omega_2)\|g^{k,1}\|^2. 
	\end{eqnarray*}
	Taking the conditopnal mathematical expectation $\EE_{Q_1^k}[\cdot]$ from the both sides of previous inequality and using the  independence of $\Delta_i^1,\ldots,\Delta_i^n$ we get
	\begin{eqnarray*}
		\EE_{Q_1^k,Q_2^k}\left[\|g^k\|^2\right] &\overset{\eqref{eq:tower_property}}{=}& (1+\omega_2)\EE_{Q_1^k}\left[\|g^{k,1}\|^2\right]  = (1+\omega_2)\EE_{Q_1^k}\left[\left\|\frac{1}{n}\sum\limits_{i=1}^n (h_i^k + \Delta_i^k)\right\|^2\right]\\
		&\overset{\eqref{eq:variance_decomposition}}{=}& (1+\omega_2)\left\|\frac{1}{n}\sum\limits_{i=1}^n\left(h_i^k + \hat \Delta_i^k\right)\right\|^2 + (1+\omega_2)\EE_{Q_1^k}\left[\left\|\frac{1}{n}\sum\limits_{i=1}^n(\Delta_i^k - \hat\Delta_i^k)\right\|^2\right]\\
		&=& (1+\omega_2)\left\|\frac{1}{n}\sum\limits_{i=1}^n\left(\nabla f_{\xi_i^k}(x^k) - \nabla f_{\xi_i^k}(x^*) + \nabla f_{\xi_i^k}(x^*) - \nabla f_i(x^*)\right)\right\|^2\\
		&&\quad + \frac{(1+\omega_2)}{n^2}\sum\limits_{i=1}^n\EE_{Q_1^k}\left[\|\Delta_i^k - \hat\Delta_i^k\|^2\right]\\
		&\overset{\eqref{eq:a_b_norm_squared},\eqref{eq:quantization_def}}{\le}& \frac{2(1+\omega_2)}{n}\sum\limits_{i=1}^n\|\nabla f_{\xi_i^k}(x^k) - \nabla f_{\xi_i^k}(x^*)\|^2\\
		&&\quad + 2(1+\omega_2)\left\|\frac{1}{n}\sum\limits_{i=1}^n\left(\nabla f_{\xi_i^k}(x^*) - \nabla f_i(x^*)\right)\right\|^2\\
		&&\quad + \frac{\omega_1(1+\omega_2)}{n^2}\sum\limits_{i=1}^n\|\nabla f_{\xi_i^k}(x^k) - h_i^k\|^2\\
		&\overset{\eqref{eq:a_b_norm_squared}}{\le}& \frac{2(1+\omega_2)}{n}\sum\limits_{i=1}^n\|\nabla f_{\xi_i^k}(x^k) - \nabla f_{\xi_i^k}(x^*)\|^2\\
		&&\quad  + 2(1+\omega_2)\left\|\frac{1}{n}\sum\limits_{i=1}^n\left(\nabla f_{\xi_i^k}(x^*) - \nabla f_i(x^*)\right)\right\|^2\\
		&&\quad + \frac{3\omega_1(1+\omega_2)}{n^2}\sum\limits_{i=1}^n\|\nabla f_{\xi_i^k}(x^k) - \nabla f_{\xi_i^k}(x^*)\|^2\\
		&&\quad + \frac{3\omega_1(1+\omega_2)}{n^2}\sum\limits_{i=1}^n\|\nabla f_{\xi_i^k}(x^*) - \nabla f_i(x^*)\|^2\\
		&&\quad + \frac{3\omega_1(1+\omega_2)}{n^2}\sum\limits_{i=1}^n\|h_i^k - \nabla f_i(x^*)\|^2.
	\end{eqnarray*}
	Finally, we take conditional mathematical expectation $\EE[\cdot\mid x^k]$ from the both sides of the inequality above and use the independece of $\xi_1^k,\ldots,\xi_n^k$:
	\begin{eqnarray*}
		\EE\left[\|g^k\|^2\mid x^k\right] &\overset{\eqref{eq:exp_smoothness}}{\le}& 2\cL(1+\omega_2)\left(2+\frac{3\omega_1}{n}\right)(f(x^k) - f(x^*))+\frac{3\omega_1(1+\omega_2)}{n}\sigma_{k}^2\\
		&&\quad + 2(1+\omega_2)\EE\left[\left\|\frac{1}{n}\sum\limits_{i=1}^n\left(\nabla f_{\xi_i^k}(x^*) - \nabla f_i(x^*)\right)\right\|^2\mid x^k\right]\\
		&&\quad + \frac{3\omega_1(1+\omega_2)}{n^2}\sum\limits_{i=1}^n\EE_{\cD_i}\left[\|\nabla f_{\xi_i}(x^*) - \nabla f_i(x^*)\|^2\right]\\
		&=& 2\cL(1+\omega_2)\left(2+\frac{3\omega_1}{n}\right)(f(x^k) - f(x^*))+\frac{3\omega_1(1+\omega_2)}{n}\sigma_{k}^2\\
		&&\quad + \frac{(1+\omega_2)(2+3\omega_1)}{n^2}\sum\limits_{i=1}^n\EE_{\cD_i}\left[\|\nabla f_{\xi_i}(x^*) - \nabla f_i(x^*)\|^2\right].
	\end{eqnarray*}
\end{proof}

\begin{lemma}\label{lem:diana_sigma_k+1_bound}
	Let $f_i$ be convex and $L$-smooth, Assumption~\ref{ass:exp_smoothness} holds and $\alpha \le \nicefrac{1}{(\omega_1+1)}$. Then, for all $k\ge 0$ we have
	\begin{equation}
		\EE\left[\sigma_{k+1}^2\mid x^k\right] \le (1 - \alpha)\sigma_k^2 + 2\alpha(3\cL+4L)(f(x^k) - f(x^*)) + D_2, \label{eq:diana_sigma_k+1_bound}
	\end{equation}
	where $\sigma_k^2 = \frac{1}{n}\sum_{i=1}^n\|h_i^k - \nabla f_i(x^*)\|^2$ and $D_2 = \frac{3\alpha}{n}\sum_{i=1}^n \EE_{\cD_i}\left[\|\nabla f_{\xi_i}(x^*) - \nabla f_i(x^*)\|^2\right]$.
\end{lemma}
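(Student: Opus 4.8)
The plan is to track the Lyapunov quantity $\sigma_{k+1}^2 = \frac{1}{n}\sum_{i=1}^n\|h_i^{k+1} - \nabla f_i(x^*)\|^2$ node by node, exploiting the {\tt DIANA}-style update $h_i^{k+1} = h_i^k + \alpha\Delta_i^k$ with $\Delta_i^k\sim Q_1(\hat\Delta_i^k)$ and $\hat\Delta_i^k = \nabla f_{\xi_i^k}(x^k) - h_i^k$. First I would take the expectation over the quantization $Q_1$ only, conditioned on $x^k$, $h_i^k$ and $\xi_i^k$. Writing $h_i^{k+1} - \nabla f_i(x^*) = (h_i^k - \nabla f_i(x^*)) + \alpha\Delta_i^k$ and using the variance decomposition \eqref{eq:variance_decomposition} together with the unbiasedness and variance bound \eqref{eq:quantization_def} of $Q_1$ (parameter $\omega_1$), I obtain
\begin{equation*}
\EE_{Q_1}\|h_i^{k+1} - \nabla f_i(x^*)\|^2 \le \|h_i^k - \nabla f_i(x^*) + \alpha\hat\Delta_i^k\|^2 + \alpha^2\omega_1\|\hat\Delta_i^k\|^2.
\end{equation*}

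The crux is the next, purely algebraic step. Setting $u_i = h_i^k - \nabla f_i(x^*)$ and $v_i = \nabla f_{\xi_i^k}(x^k) - \nabla f_i(x^*)$, so that $\hat\Delta_i^k = v_i - u_i$, I would expand and collect terms using the identity
\begin{equation*}
\|u_i + \alpha(v_i - u_i)\|^2 + \alpha^2\omega_1\|v_i - u_i\|^2 = (1-\alpha)\|u_i\|^2 + \alpha\|v_i\|^2 - \alpha\bigl(1 - \alpha(1+\omega_1)\bigr)\|u_i - v_i\|^2 .
\end{equation*}
Precisely because $\alpha \le \nicefrac{1}{(\omega_1+1)}$ forces $1 - \alpha(1+\omega_1)\ge 0$, the last term is nonpositive and can be dropped, yielding the clean contraction $\EE_{Q_1}\|h_i^{k+1} - \nabla f_i(x^*)\|^2 \le (1-\alpha)\|u_i\|^2 + \alpha\|v_i\|^2$. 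This is where the restriction on $\alpha$ enters, and it is the main obstacle: without the cancellation granted by $\alpha(1+\omega_1)\le 1$, the coefficient of $\|u_i\|^2$ would exceed $1-\alpha$ and the recursion would fail to contract, so a naive triangle-inequality bound on $\|\hat\Delta_i^k\|^2$ will not reproduce the stated $(1-\alpha)$ factor.

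Next I would take the expectation over the sampling $\xi_i^k\sim\cD_i$ and bound $\EE_{\cD_i}\|v_i\|^2 = \EE_{\cD_i}\|\nabla f_{\xi_i}(x^k) - \nabla f_i(x^*)\|^2$. Peeling off the deterministic gradient difference via \eqref{eq:variance_decomposition} and/or a split (\eqref{eq:a_b_norm_squared} / Young's inequality) against $\nabla f_{\xi_i}(x^*)$, the term $\|\nabla f_i(x^k) - \nabla f_i(x^*)\|^2$ is controlled by $L$-smoothness and convexity through \eqref{eq:L_smoothness_cor}, contributing a multiple of $L\,D_{f_i}(x^k,x^*)$; the genuinely stochastic part is controlled by expected smoothness \eqref{eq:exp_smoothness}, contributing a multiple of $\cL\,D_{f_i}(x^k,x^*)$; and the leftover $\nabla f_{\xi_i}(x^*) - \nabla f_i(x^*)$ produces exactly the residual defining $D_2$. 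A particular such split yields the coefficient $3\cL + 4L$ and the factor $3$ in front of the variance-at-the-optimum residual stated in \eqref{eq:diana_sigma_k+1_bound} (tighter splits are also available, so the constants are not sharp).

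Finally, averaging the per-node bound over $i=1,\dots,n$ turns $\frac1n\sum_i\|u_i\|^2$ into $(1-\alpha)\sigma_k^2$, and since $x^*$ minimizes $f$ so that $\nabla f(x^*) = \frac1n\sum_i\nabla f_i(x^*) = 0$, the averaged Bregman divergences collapse via $\frac1n\sum_i D_{f_i}(x^k,x^*) = f(x^k) - f(x^*)$. Collecting the optimization term and the residual then gives \eqref{eq:diana_sigma_k+1_bound}. Everything after the contraction step is routine bookkeeping with the basic inequalities of Section~\ref{sec:basic_facts}; the only delicate point is the algebraic cancellation that uses the stepsize bound on $\alpha$.
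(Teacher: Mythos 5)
Your proof is correct and takes essentially the same route as the paper's: your exact algebraic identity followed by dropping the nonpositive term via $\alpha(1+\omega_1)\le 1$ is precisely the paper's combination of the bound $\alpha^2(\omega_1+1)\le\alpha$ with the identity $\langle a-b,a+b\rangle=\|a\|^2-\|b\|^2$, so both arguments reach the same intermediate contraction $(1-\alpha)\|h_i^k-\nabla f_i(x^*)\|^2+\alpha\,\EE_{\cD_i}\|\nabla f_{\xi_i}(x^k)-\nabla f_i(x^*)\|^2$. The remaining bookkeeping you describe (variance decomposition, the three-term split against $\nabla f_{\xi_i}(x^*)$, expected smoothness, $L$-smoothness, and $\nabla f(x^*)=0$) is exactly the paper's and recovers the stated constants $2\alpha(3\cL+4L)$ and $3\alpha$.
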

\begin{proof}
	For simplicity, we introduce new notation: $h_i^* \eqdef \nabla f_i(x^*)$. Using this we derive an upper bound for the second moment of $h_i^{k+1} - h_i^*$:
	\begin{eqnarray*}
		\EE\left[\|h_i^{k+1} - h_i^*\|^2\mid x^k\right] &=& \EE\left[\left\|h_i^k - h_i^* + \alpha \Delta_i^k \right\|^2\mid x^k\right]\\
		&\overset{\eqref{eq:quantization_def}}{=}& \|h_i^k - h_i^*\|^2 +2\alpha\langle h_i^k - h_i^*, \nabla f_i(x^k) - h_i^k \rangle + \alpha^2\EE\left[\|\Delta_i^k\|^2\mid x^k\right]\\
		&\overset{\eqref{eq:quantization_def},\eqref{eq:tower_property}}{\le}& \|h_i^k - h_i^*\|^2 +2\alpha\langle h_i^k - h_i^*, \nabla f_i(x^k) - h_i^k \rangle\\
		&&\quad + \alpha^2(\omega_1+1)\EE\left[\|\nabla f_{\xi_i^k}(x^k) - h_i^k\|^2\mid x^k\right].
	\end{eqnarray*}
	Using variance decomposition \eqref{eq:variance_decomposition} and $\alpha \le \nicefrac{1}{(\omega_1+1)}$ we get
	\begin{eqnarray*}
		\alpha^2(\omega_1+1)\EE_{\cD_i}\left[\|\nabla f_{\xi_i^k}(x^k) - h_i^k\|^2\right] &\overset{\eqref{eq:variance_decomposition}}{=}& \alpha^2(\omega_1+1)\EE_{\cD_i}\left[\|\nabla f_{\xi_i^k}(x^k) - \nabla f_i(x^k)\|^2\right]\\
		&&\quad + \alpha^2(\omega_1+1)\|\nabla f_i(x^k) - h_i^k\|^2\\
		&\overset{\eqref{eq:a_b_norm_squared}}{\le}& 3\alpha\EE_{\cD_i}\left[\|\nabla f_{\xi_i^k}(x^k) - \nabla f_{\xi_i^k}(x^*)\|^2\right]\\
		&&\quad +3\alpha\EE_{\cD_i}\left[\|\nabla f_{\xi_i^k}(x^*) - \nabla f_i(x^*)\|^2\right]\\
		&&\quad +3\alpha\|\nabla f_i(x^k) - \nabla f_i(x^*)\|^2\\
		&&\quad + \alpha\|\nabla f_i(x^k) - h_i^k\|^2\\
		&\overset{\eqref{eq:L_smoothness_cor},\eqref{eq:exp_smoothness}}{\le}& 6\alpha(\cL + L)D_{f_i}(x^k,x^*) + \alpha\|\nabla f_i(x^k) - h_i^k\|^2\\
		&&\quad +3\alpha\EE_{\cD_i}\left[\|\nabla f_{\xi_i^k}(x^*) - \nabla f_i(x^*)\|^2\right]
	\end{eqnarray*}
	Putting all together we obtain
	\begin{eqnarray*}
		\EE\left[\|h_i^{k+1} - h_i^*\|^2\mid x^k\right] &\le& \|h_i^k - h_i^*\|^2 + \alpha\left\langle \nabla f_i(x^k) - h_i^k, f_i(x^k) + h_i^k - 2h_i^* \right\rangle\\
		&&\quad + 6\alpha(\cL + L)D_{f_i}(x^k,x^*) +3\alpha\EE_{\cD_i}\left[\|\nabla f_{\xi_i^k}(x^*) - \nabla f_i(x^*)\|^2\right]\\
		&\overset{\eqref{eq:a-b_a+b}}{=}& \|h_i^k - h_i^*\|^2 + \alpha\|\nabla f_i(x^k) - h_i^*\|^2 - \alpha\|h_i^k - h_i^*\|^2\\
		&&\quad + 6\alpha(\cL + L)D_{f_i}(x^k,x^*) +3\alpha\EE_{\cD_i}\left[\|\nabla f_{\xi_i^k}(x^*) - \nabla f_i(x^*)\|^2\right]\\
		&\overset{\eqref{eq:L_smoothness_cor}}{\le}& (1-\alpha)\|h_i^k - h_i^*\|^2 + \alpha(6\cL + 8L)D_{f_i}(x^k,x^*)\\
		&&\quad +3\alpha\EE_{\cD_i}\left[\|\nabla f_{\xi_i^k}(x^*) - \nabla f_i(x^*)\|^2\right].
	\end{eqnarray*}
	Summing up the above inequality for $i=1,\ldots, n$ we derive
	\begin{eqnarray*}
		\frac{1}{n}\sum\limits_{i=1}^n\EE\left[\|h_i^{k+1} - h_i^*\|^2\mid x^k\right] &\le& \frac{1-\alpha}{n}\sum\limits_{i=1}^n\|h_i^k - h_i^*\|^2 + \alpha(6\cL + 8L)(f(x^k) - f(x^*))\\
		&&\quad + \frac{3\alpha}{n}\sum\limits_{i=1}^n \EE_{\cD_i}\left[\|\nabla f_{\xi_i^k}(x^*) - \nabla f_i(x^*)\|^2\right].
	\end{eqnarray*}
\end{proof}

\begin{theorem}\label{thm:diana}
	Assume that $f_i(x)$ is convex and $L$-smooth for all $i=1,\ldots, n$, $f(x)$ is $\mu$-quasi strongly convex and Assumption~\ref{ass:exp_smoothness} holds. Then {\tt DIANAsr-DQ} satisfies Assumption~\ref{ass:key_assumption_new} with
	\begin{gather*}
		A' = \cL(1+\omega_2)\left(2+\frac{3\omega_1}{n}\right),\quad B_1' = \frac{3\omega_1(1+\omega_2)}{n},\\
		D_1' = \frac{(2+3\omega_1)(1+\omega_2)}{n^2}\sum\limits_{i=1}^n\EE_{\cD_i}\left[\|\nabla f_{\xi_i}(x^*) - \nabla f_i(x^*)\|^2\right],\\
		\sigma_{1,k}^2 = \sigma_k^2 = \frac{1}{n}\sum\limits_{i=1}^n\|h_i^k - \nabla f_i(x^*)\|^2,\quad B_2' = 0,\quad \sigma_{2,k}^2\equiv 0,\quad \rho_1 = \alpha,\quad \rho_2 = 1,\\
		C_1 = \alpha(3\cL+4L),\quad C_2 = 0,\quad D_2 = \frac{3\alpha}{n}\sum_{i=1}^n \EE_{\cD_i}\left[\|\nabla f_{\xi_i}(x^*) - \nabla f_i(x^*)\|^2\right],\\
		 G = 0,\quad F_1 = F_2 = 0,\quad D_3 = 0,
	\end{gather*}
	with $\gamma$ and $\alpha$ satisfying
	\begin{equation*}
		\gamma \le \frac{1}{4(1+\omega_2)\left(\cL\left(2+\frac{15\omega_1}{n}\right)+\frac{16L\omega_1}{n}\right)},\quad \alpha \le \frac{1}{\omega+1},\quad M_1 = \frac{4\omega_1(1+\omega_2)}{n\alpha},\quad M_2 = 0
	\end{equation*}
	and for all $K \ge 0$
	\begin{equation*}
		\EE\left[f(\bar x^K) - f(x^*)\right] \le \left(1 - \min\left\{\frac{\gamma\mu}{2},\frac{\alpha}{4}\right\}\right)^K\frac{4T^0}{\gamma} + 4\gamma\left(D_1' + M_1D_2\right),
	\end{equation*}	
	when $\mu > 0$ and
	\begin{equation*}
		\EE\left[f(\bar{x}^K) - f(x^*)\right] \le \frac{4T^0}{\gamma K} + 4\gamma\left(D_1' + M_1D_2\right)
	\end{equation*}
	when $\mu=0$, where $T^k \eqdef \|x^k - x^*\|^2 + M_1\gamma^2 \sigma_{1,k}^2$.
\end{theorem}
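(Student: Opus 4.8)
The plan is to treat Theorem~\ref{thm:diana} as a direct instantiation of the general framework: I will verify that {\tt DIANAsr-DQ} satisfies Assumption~\ref{ass:key_assumption_new} with the claimed parameter values and then invoke Theorem~\ref{thm:main_result_new}. The substantive analytic work has already been isolated into Lemma~\ref{lem:diana_second_moment_bound} and Lemma~\ref{lem:diana_sigma_k+1_bound}, so what remains is essentially careful bookkeeping together with one nontrivial stepsize verification.

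First I would read off the parameters. Lemma~\ref{lem:diana_second_moment_bound} supplies both the unbiasedness \eqref{eq:unbiasedness_new} and the second-moment bound \eqref{eq:second_moment_bound_new}; matching coefficients yields $A' = \cL(1+\omega_2)(2+\nicefrac{3\omega_1}{n})$, $B_1' = \nicefrac{3\omega_1(1+\omega_2)}{n}$ with $\sigma_{1,k}^2 = \sigma_k^2$, and the stated $D_1'$. Since the method tracks no finite-sum variance-reduction sequence, I would set $\sigma_{2,k}^2 \equiv 0$, $B_2' = 0$, $C_2 = 0$, $G = 0$, and $\rho_2 = 1$, which makes \eqref{eq:sigma_k+1_bound_2} hold trivially. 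Lemma~\ref{lem:diana_sigma_k+1_bound} then provides \eqref{eq:sigma_k+1_bound_1} with $\rho_1 = \alpha$, $C_1 = \alpha(3\cL+4L)$ and the stated $D_2$, using the hypothesis $\alpha \le \nicefrac{1}{(\omega_1+1)}$ that the lemma requires.

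Next I would dispatch the error-feedback inequality \eqref{eq:sum_of_errors_bound_new}. Because {\tt DIANAsr-DQ} applies the full aggregated update $x^{k+1} = x^k - \gamma g^k$ with no compression of the iterate, it falls into the regime $v^k = \gamma g^k$, $e^k \equiv 0$ of Section~\ref{sec:sgd}; hence Lemma~\ref{lem:sgd_as_a_special_case} gives \eqref{eq:sum_of_errors_bound_new} with $F_1 = F_2 = 0$ and $D_3 = 0$. With every parameter pinned down, I would compute $M_1 = \nicefrac{4B_1'}{3\rho_1} = \nicefrac{4\omega_1(1+\omega_2)}{n\alpha}$ and $M_2 = \nicefrac{4(B_2'+\frac{4}{3}G)}{3\rho_2} = 0$.

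The only step demanding genuine (if elementary) computation, and the place I expect the sole friction, is reconciling the abstract stepsize restriction $\gamma \le \nicefrac{1}{4(A'+C_1M_1+C_2M_2)}$ from Theorem~\ref{thm:main_result_new} with the explicit bound stated in the theorem. I would substitute the identified values and expand $C_1 M_1 = \nicefrac{4(3\cL+4L)\omega_1(1+\omega_2)}{n}$, so that $A' + C_1 M_1 = (1+\omega_2)(\cL(2+\nicefrac{15\omega_1}{n}) + \nicefrac{16L\omega_1}{n})$, which is precisely the denominator in the claimed stepsize; the key check is that the $\alpha$ factors in $C_1$ and $M_1$ cancel exactly. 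Finally, since $F_1 = F_2 = D_3 = 0$, $\sigma_{2,0}^2 = 0$ and $M_2 = 0$, the general bounds \eqref{eq:main_result_new} and \eqref{eq:main_result_new_cvx} collapse to the two stated rates with $\eta = \min\{\nicefrac{\gamma\mu}{2}, \nicefrac{\alpha}{4}\}$, the term $\nicefrac{\rho_2}{4} = \nicefrac{1}{4}$ being inactive because $\alpha \le 1$, which completes the argument.
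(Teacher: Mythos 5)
Your proposal is correct and mirrors the paper's own route exactly: the parameters are read off from Lemma~\ref{lem:diana_second_moment_bound} and Lemma~\ref{lem:diana_sigma_k+1_bound}, the error-feedback inequality \eqref{eq:sum_of_errors_bound_new} is discharged via the $v^k=\gamma g^k$, $e^k\equiv 0$ regime of Section~\ref{sec:sgd} (Lemma~\ref{lem:sgd_as_a_special_case}), and Theorem~\ref{thm:main_result_new} is then invoked. Your stepsize verification, including the cancellation of $\alpha$ in $C_1M_1 = \nicefrac{4(3\cL+4L)\omega_1(1+\omega_2)}{n}$ so that $A'+C_1M_1 = (1+\omega_2)\left(\cL\left(2+\nicefrac{15\omega_1}{n}\right)+\nicefrac{16L\omega_1}{n}\right)$, and the observation that $\nicefrac{\rho_2}{4}=\nicefrac14$ is inactive since $\alpha\le 1$, are exactly the bookkeeping the paper leaves implicit.
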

In other words, if 
\begin{equation*}
		\gamma = \frac{1}{4(1+\omega_2)\left(\cL\left(2+\frac{15\omega_1}{n}\right)+\frac{16L\omega_1}{n}\right)},\quad \alpha = \frac{1}{\omega+1}
\end{equation*}
and $D_1 = 0$, i.e., $\nabla f_{\xi_i^k}(x^k) = \nabla f_i(x^k)$ almost surely, {\tt DIANAsr-DQ} converges with the linear rate
\begin{equation*}
	\cO\left(\left(\omega_1 + \frac{\cL}{\mu}(1+\omega_2)\left(1+\frac{\omega_1}{n}\right)\right)\ln\frac{1}{\varepsilon}\right)
\end{equation*}
to the exact solution. Applying Lemma~\ref{lem:lemma2_stich} we establish the rate of convergence to $\varepsilon$-solution.
\begin{corollary}\label{cor:diana_str_cvx_cor}
	Let the assumptions of Theorem~\ref{thm:diana} hold and $\mu > 0$. Then after $K$ iterations of {\tt DIANAsq-DQ} with the stepsize
	\begin{eqnarray*}
		\gamma_0 &=& \frac{1}{4(1+\omega_2)\left(\cL\left(2+\frac{15\omega_1}{n}\right)+\frac{16L\omega_1}{n}\right)}\\
		\gamma &=& \min\left\{\gamma_0, \frac{\ln\left(\max\left\{2,\frac{\mu^2K^2(\|x^0-x^*\|^2+M_1\gamma_0^2\sigma_{1,0}^2)}{D_1'+M_1D_2}\right\}\right)}{\mu K}\right\},\quad M_1 = \frac{4\omega_1(1+\omega_2)}{n\alpha}
	\end{eqnarray*}		
	and $\alpha = \frac{1}{\omega+1}$ we have
	\begin{equation*}
		\EE\left[f(\bar{x}^K) - f(x^*)\right] = \widetilde\cO\left(A'\|x^0 - x^*\|^2\exp\left(-\min\left\{\frac{\mu}{A'},\frac{1}{\omega_1}\right\}K\right) + \frac{D_1'+M_1D_2}{\mu K}\right).
	\end{equation*}
	That is, to achive $\EE\left[f(\bar{x}^K) - f(x^*)\right] \le \varepsilon$ {\tt DIANAsq-DQ} requires
	\begin{equation*}
		\widetilde{\cO}\left(\omega_1 + \frac{\cL\left(1+\frac{\omega_1}{n}\right)(1+\omega_2)}{\mu} + \frac{(1+\omega_1)(1+\omega_2)}{n^2\mu\varepsilon}\sum\limits_{i=1}^n\EE_{\cD_i}\|\nabla f_{\xi_i}(x^*)-\nabla f_i(x^*)\|^2\right) \text{ iterations.}
	\end{equation*}
\end{corollary}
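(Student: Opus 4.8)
The plan is to combine Theorem~\ref{thm:diana}, which certifies that {\tt DIANAsr-DQ} satisfies Assumption~\ref{ass:key_assumption_new} with the explicit parameters listed there, with the generic stepsize-tuning Lemma~\ref{lem:lemma2_stich}. Rather than starting from the already-unrolled bound \eqref{eq:main_result_new}, I would start from the intermediate estimate produced inside the proof of Theorem~\ref{thm:main_result_new}, namely
\begin{equation*}
	\EE\left[f(\bar x^K) - f(x^*)\right] \le \frac{4(T^0 + \gamma F_1\sigma_{1,0}^2 + \gamma F_2\sigma_{2,0}^2)}{\gamma W_K} + 4\gamma\left(D_1' + M_1 D_2 + D_3\right),
\end{equation*}
which already has the $\nicefrac{a}{(\gamma W_K)}$ shape demanded by Lemma~\ref{lem:lemma2_stich}. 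Since Theorem~\ref{thm:diana} gives $F_1 = F_2 = D_3 = 0$ and $\sigma_{2,0}^2 \equiv 0$, and since $T^0 = \|x^0-x^*\|^2 + M_1\gamma^2\sigma_{1,0}^2 \le \|x^0-x^*\|^2 + M_1\gamma_0^2\sigma_{1,0}^2$ whenever $\gamma \le \gamma_0$, the bound collapses to $r_K \le \nicefrac{a}{(\gamma W_K)} + c_1\gamma$ with $a = 4(\|x^0-x^*\|^2 + M_1\gamma_0^2\sigma_{1,0}^2)$, $c_1 = 4(D_1'+M_1D_2)$, $c_2 = 0$, and $d = \nicefrac{1}{\gamma_0}$. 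With these identifications the corollary's stepsize is exactly \eqref{eq:lemma2_stich_gamma} (the $\nicefrac{a\mu^3K^3}{c_2}$ branch drops out when $c_2=0$, so the $\min$ reduces to $\nicefrac{a\mu^2K^2}{c_1}$, and the constant $4$ cancels in the ratio).

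Next I would invoke Lemma~\ref{lem:lemma2_stich} (for $K$ large enough that $\nicefrac{\ln(\cdots)}{K} \le \min\{\rho_1,\rho_2\}$), which by \eqref{eq:lemma2_stich} yields
\begin{equation*}
	r_K = \widetilde\cO\left( d\,a\, \exp\left(-\min\left\{\tfrac{\mu}{d}, \rho_1, \rho_2\right\}K\right) + \frac{c_1}{\mu K}\right),
\end{equation*}
and it remains to read off the constants. Here $\rho_1 = \alpha = \nicefrac{1}{(\omega_1+1)} = \Theta(\nicefrac{1}{\omega_1})$ and $\rho_2 = 1$, while $d = \nicefrac{1}{\gamma_0} = \Theta\!\big((1+\omega_2)(\cL(1+\nicefrac{\omega_1}{n}) + \nicefrac{L\omega_1}{n})\big) = \Theta(A')$ after absorbing the subdominant $\nicefrac{L\omega_1}{n}$ term (using $L \le \cL$ in this setting). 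Hence $\min\{\nicefrac{\mu}{d}, \rho_1, \rho_2\}$ collapses to $\min\{\nicefrac{\mu}{A'}, \nicefrac{1}{\omega_1}\}$ and $d\,a = \Theta(A'\|x^0-x^*\|^2)$ once the lower-order $M_1\gamma_0^2\sigma_{1,0}^2$ contribution is discarded into the $\widetilde\cO$, reproducing the first displayed rate of the corollary.

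To convert the rate into an iteration count I would force each summand below $\varepsilon$. Driving the exponential term below $\varepsilon$ costs $K = \widetilde\cO\!\big(\max\{\nicefrac{A'}{\mu}, \omega_1\}\big)$, and substituting $A' = \cL(1+\omega_2)(2 + \nicefrac{3\omega_1}{n})$ produces the first two terms $\omega_1 + \frac{\cL(1+\nicefrac{\omega_1}{n})(1+\omega_2)}{\mu}$. Driving the statistical term $\nicefrac{c_1}{(\mu K)}$ below $\varepsilon$ costs $K = \widetilde\cO\!\big(\frac{D_1'+M_1D_2}{\mu\varepsilon}\big)$, where I would plug in the explicit values $M_1 = \frac{4\omega_1(1+\omega_2)}{n\alpha}$ with $\alpha = \nicefrac{1}{(\omega_1+1)}$ and $D_2 = \frac{3\alpha}{n}\sum_i\EE_{\cD_i}\|\nabla f_{\xi_i}(x^*)-\nabla f_i(x^*)\|^2$; the factors $\omega_1+1$ and $\alpha$ cancel, giving $M_1 D_2 = \frac{12\omega_1(1+\omega_2)}{n^2}\sum_i\EE_{\cD_i}\|\cdots\|^2$ and
\begin{equation*}
	D_1' + M_1 D_2 = \frac{(1+\omega_2)(2+15\omega_1)}{n^2}\sum_{i=1}^n \EE_{\cD_i}\left[\|\nabla f_{\xi_i}(x^*) - \nabla f_i(x^*)\|^2\right] = \widetilde\cO\!\left(\frac{(1+\omega_1)(1+\omega_2)}{n^2}\sum_{i=1}^n \EE_{\cD_i}\|\nabla f_{\xi_i}(x^*)-\nabla f_i(x^*)\|^2\right),
\end{equation*}
which yields the last term. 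Adding the two counts gives the advertised complexity.

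The main obstacle is bookkeeping rather than conceptual. The two places needing care are: (i) justifying the replacement of $T^0$ by its $\gamma_0$-capped value, so that the stepsize-dependent piece $M_1\gamma^2\sigma_{1,0}^2$ can legitimately be folded into the constant $a$ demanded by Lemma~\ref{lem:lemma2_stich}; and (ii) verifying that both the exponent $\min\{\nicefrac{\mu}{d},\rho_1,\rho_2\}$ and the prefactor $d\,a$ simplify to the stated expressions once the subdominant $\nicefrac{L\omega_1}{n}$ piece of $\nicefrac{1}{\gamma_0}$ and the subdominant $M_1\gamma_0^2\sigma_{1,0}^2$ piece of $a$ are absorbed. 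The only genuinely arithmetic step is the cancellation in $M_1 D_2$, after which $D_1' + M_1 D_2$ collapses cleanly; everything else is a direct substitution of the parameters supplied by Theorem~\ref{thm:diana}.
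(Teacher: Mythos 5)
Your proposal is correct and matches the paper's intended argument: the paper proves this corollary precisely by instantiating Lemma~\ref{lem:lemma2_stich} with the parameters supplied by Theorem~\ref{thm:diana} ($F_1=F_2=D_3=0$, $\sigma_{2,k}^2\equiv 0$, $c_2=0$, $d=\nicefrac{1}{\gamma_0}$), exactly as you do, including the capping $T^0 \le \|x^0-x^*\|^2 + M_1\gamma_0^2\sigma_{1,0}^2$ and the cancellation giving $M_1D_2 = \frac{12\omega_1(1+\omega_2)}{n^2}\sum_i\EE_{\cD_i}\|\nabla f_{\xi_i}(x^*)-\nabla f_i(x^*)\|^2$. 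The simplifications you flag (absorbing $\nicefrac{L\omega_1}{n}$ via $L=\cO(\cL)$, replacing $\nicefrac{1}{(\omega_1+1)}$ by $\nicefrac{1}{\omega_1}$, and dropping the $\sigma_{1,0}^2$ contribution from the exponential prefactor) are the same ones the paper itself makes in stating the corollary, so they introduce no gap.
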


Applying Lemma~\ref{lem:lemma_technical_cvx} we get the complexity result in the case when $\mu = 0$.
\begin{corollary}\label{cor:diana_cvx_cor}
	Let the assumptions of Theorem~\ref{thm:diana} hold and $\mu = 0$. Then after $K$ iterations of {\tt DIANAsq-DQ} with the stepsize
	\begin{eqnarray*}
		\gamma_0 &=& \frac{1}{4(1+\omega_2)\left(\cL\left(2+\frac{15\omega_1}{n}\right)+\frac{16L\omega_1}{n}\right)}\\
		\gamma &=& \min\left\{\gamma_0, \sqrt{\frac{\|x^0 - x^*\|^2}{M_1\sigma_{1,0}^2}}, \sqrt{\frac{\|x^0 - x^*\|^2}{(D_1'+M_1D_2) K}}\right\},\quad M_1 = \frac{4\omega_1(1+\omega_2)}{n\alpha}
	\end{eqnarray*}		
	and $\alpha = \frac{1}{\omega+1}$ we have $\EE\left[f(\bar{x}^K) - f(x^*)\right]$ of order
	\begin{equation*}
		\cO\left(\frac{\cL R_0^2(1+\omega_2)\left(1+\frac{\omega_1}{n}\right)}{K} + \frac{R_0\sigma_{1,0}(1+\omega_1)\sqrt{1+\omega_2}}{\sqrt{n}K} +  \frac{R_0\sqrt{(1+\omega_1)(1+\omega_2)D_{\text{opt}}}}{\sqrt{nK}}\right)
	\end{equation*}
	where $R_0 = \|x^0 - x^*\|^2, D_{\text{opt}} = \frac{1}{n}\sum\limits_{i=1}^n\EE_{\cD_i}\|\nabla f_{\xi_i}(x^*)-\nabla f_i(x^*)\|^2$. That is, to achive $\EE\left[f(\bar{x}^K) - f(x^*)\right] \le \varepsilon$ {\tt DIANAsq-DQ} requires
	\begin{equation*}
		\cO\left(\frac{\cL R_0^2(1+\omega_2)\left(1+\frac{\omega_1}{n}\right)}{\varepsilon} + \frac{R_0\sigma_{1,0}(1+\omega_1)\sqrt{1+\omega_2}}{\sqrt{n}\varepsilon} +  \frac{R_0^2(1+\omega_1)(1+\omega_2)D_{\text{opt}}}{n\varepsilon^2}\right)
	\end{equation*}
	iterations.
\end{corollary}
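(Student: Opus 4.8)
The plan is to obtain this corollary as an immediate consequence of the $\mu=0$ branch of Theorem~\ref{thm:diana} by feeding its guarantee into the elementary stepsize-tuning Lemma~\ref{lem:lemma_technical_cvx}; there is no further analysis of the algorithm, only a choice of $\gamma$ and some bookkeeping. First I would record that, by Theorem~\ref{thm:diana}, {\tt DIANAsr-DQ} satisfies Assumption~\ref{ass:key_assumption_new} with $B_2'=0$, $\sigma_{2,k}^2\equiv 0$, $G=0$, $F_1=F_2=D_3=0$ and $M_2=0$, so its $\mu=0$ guarantee reads $\EE[f(\bar x^K) - f(x^*)] \le \frac{4T^0}{\gamma K} + 4\gamma(D_1'+M_1 D_2)$ with $T^0 = \|x^0-x^*\|^2 + M_1\gamma^2\sigma_{1,0}^2$. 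Expanding $T^0$ and noting that $\frac{4M_1\gamma^2\sigma_{1,0}^2}{\gamma K} = \frac{4M_1\sigma_{1,0}^2\,\gamma}{K}$ puts this bound into exactly the template of Lemma~\ref{lem:lemma_technical_cvx}, namely $r_K \le \frac{a}{\gamma K} + \frac{b_1\gamma}{K} + c_1\gamma$ with the higher-order coefficients vanishing, $b_2=c_2=0$, and
\[
a = 4\|x^0-x^*\|^2,\qquad b_1 = 4M_1\sigma_{1,0}^2,\qquad c_1 = 4(D_1' + M_1 D_2).
\]

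Next I would apply Lemma~\ref{lem:lemma_technical_cvx} with $\gamma_0$ as stated and $\gamma=\min\{\gamma_0,\sqrt{a/b_1},\sqrt{a/(c_1K)}\}$; substituting the three coefficients above, one checks this equals the displayed stepsize, as the factors of $4$ cancel under the square roots. With $b_2=c_2=0$ the lemma returns $r_K = \cO\bigl(\frac{a}{\gamma_0 K} + \frac{\sqrt{ab_1}}{K} + \sqrt{ac_1/K}\bigr)$. It then remains to insert the explicit constants from Theorem~\ref{thm:diana}: $\gamma_0 = \frac{1}{4(1+\omega_2)(\cL(2+15\omega_1/n)+16L\omega_1/n)}$, $M_1 = \frac{4\omega_1(1+\omega_2)}{n\alpha}$, $\alpha = \frac{1}{\omega_1+1}$, together with $D_1'$ and $D_2$. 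Writing $D_{\text{opt}} = \frac{1}{n}\sum_{i=1}^n\EE_{\cD_i}\|\nabla f_{\xi_i}(x^*) - \nabla f_i(x^*)\|^2$, one finds $D_1' + M_1 D_2 = \frac{(1+\omega_2)(2+15\omega_1)}{n}D_{\text{opt}}$, and the three terms collapse to the claimed orders $\frac{\cL R_0^2(1+\omega_2)(1+\omega_1/n)}{K}$, $\frac{R_0\sigma_{1,0}(1+\omega_1)\sqrt{1+\omega_2}}{\sqrt n K}$ and $\frac{R_0\sqrt{(1+\omega_1)(1+\omega_2)D_{\text{opt}}}}{\sqrt{nK}}$.

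I expect the only real work, and hence the main obstacle, to be this last collapse of the $\omega_1,\omega_2,n,\alpha$ factors into clean leading order. Two elementary estimates drive it: using $L\le\cL$ to absorb $16L\omega_1/n$ into $\cL(1+\omega_1/n)$ inside $\frac{a}{\gamma_0 K}$, and using $1/\alpha=\omega_1+1$ together with $\sqrt{\omega_1(\omega_1+1)}\le 1+\omega_1$ to simplify $\sqrt{ab_1}$. Finally, requiring each of the three terms to be at most $\varepsilon$ and solving for $K$ --- linearly in $1/\varepsilon$ for the first two terms and quadratically for the stochastic-floor term $\sqrt{ac_1/K}$ --- yields the stated iteration complexity.
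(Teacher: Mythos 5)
Your proposal is correct and follows exactly the paper's route: the paper also obtains this corollary by plugging the $\mu=0$ bound of Theorem~\ref{thm:diana} (with $a=4\|x^0-x^*\|^2$, $b_1=4M_1\sigma_{1,0}^2$, $c_1=4(D_1'+M_1D_2)$, $b_2=c_2=0$) into Lemma~\ref{lem:lemma_technical_cvx} and simplifying the constants. Your bookkeeping, including the identity $D_1'+M_1D_2=\frac{(1+\omega_2)(2+15\omega_1)}{n}D_{\text{opt}}$ and the absorption of $\nicefrac{16L\omega_1}{n}$ via $L\le\cL$, matches the computation implicit in the paper's statement.
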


\subsection{Recovering Tight Complexity Bounds for {\tt VR-DIANA}}
In this section we consider the same problem \eqref{eq:main_problem}+\eqref{eq:f_i_sum} and variance reduced version of {\tt DIANA} called {\tt VR-DIANA} \cite{horvath2019stochastic}, see Algorithm~\ref{alg:vr-diana}.
\begin{algorithm}[t]
   \caption{{\tt VR-DIANA} based on {\tt LSVRG} (Variant 1), {\tt SAGA} (Variant 2), \cite{horvath2019stochastic}}
   \label{alg:vr-diana}
\begin{algorithmic}[1]
        \Require{learning rates $\alpha > 0$ and $\gamma > 0$, initial vectors $x^0, h_{1}^0, \dots, h_{n}^0$, $h^0 = \frac{1}{n}\sum_{i=1}^n h_i^0$}
        \For{$k = 0,1,\ldots$}
        \State Sample random 
            $
                u^k = \begin{cases}
                    1,& \text{with probability } \frac{1}{m}\\
                    0,& \text{with probability } 1 - \frac{1}{m}\\
                \end{cases}
            $ \Comment{only for Variant 1}
        \State Broadcast $x^k$, $u^k$ to all workers\;
            \For{$i = 1, \ldots, n$ in parallel} \Comment{Worker side}
            \State Pick $j_i^k$ uniformly at random from $[m]$\;
            \State $\mu_i^k = \frac{1}{m} \sum\limits_{j=1}^{m} \nabla f_{ij}(w_{ij}^k)$\label{ln:mu} \;
            \State $g_i^k = \nabla f_{ij_i^k}(x^k) - \nabla f_{ij_i^k}(w_{ij_i^k}^k) + \mu_i^k$\;
            \State $\hat{\Delta}_i^k = Q(g_i^k - h_i^k)$\;
            \State $h_i^{k+1} = h_i^k + \alpha \hat{\Delta}_i^k$\;
                \For{$j = 1, \ldots, m$}
                    \State
                    $
                    w_{ij}^{k+1} =
                    \begin{cases}
                        x^k, & \text{if } u^k = 1 \\
                        w_{ij}^k, &\text{if } u^k = 0\\
                    \end{cases}
                    $ \Comment{Variant 1 (L-SVRG): update epoch gradient if $u^k = 1$}
                    \State
                    $
                    w_{ij}^{k+1} =
                    \begin{cases}
                    x^k, & j = j_i^k\\
                    w_{ij}^k, & j \neq j_i^k\\
                    \end{cases}
                    $ \Comment{Variant 2 (SAGA): update gradient table}
                \EndFor
            \EndFor
            \State $h^{k+1} \! = \! h^k \!+\! \frac{\alpha}{n} \displaystyle \sum_{i=1}^n \hat{\Delta}_i^k$ \Comment{Gather quantized updates} 
            \State $g^k = \frac{1}{n}\sum\limits_{i=1}^{n} (\hat{\Delta}_i^k + h_i^k)$\;
            \State $x^{k+1} = x^k - \gamma g^k$\;
        \EndFor
\end{algorithmic}  
\end{algorithm}
For simplicity we assume that each $f_{ij}$ is convex and $L$-smooth and $f_i$ is additionally $\mu$-strongly convex.
\begin{lemma}[Lemmas 3, 5, 6 and 7 from \cite{horvath2019stochastic}]\label{lemmas_vr_diana}
    Let $\alpha \le \frac{1}{\omega+1}$. Then for all iterates $k\ge 0$ of Algorithm~\ref{alg:vr-diana} the following inequalities hold:
    \begin{eqnarray}
        \EE\left[g^k\mid x^k\right] &=& \nabla f(x^k),\label{eq:unbiased_g_k_vr_diana}\\
        \EE\left[H^{k+1}\mid x^k\right] &\le& \left(1-\alpha\right)H^k + \frac{2\alpha}{m}D^k + 8\alpha Ln\left(f(x^k) - f(x^*)\right),\label{eq:H_k+1_bound_vr_diana}\\
        \EE\left[D^{k+1}\mid x^k\right] &\le& \left(1 - \frac{1}{m}\right)D^k + 2Ln\left(f(x^k) - f(x^*)\right),\label{eq:D_k+1_bound_vr_diana}\\
        \EE\left[\|g^k\|^2\mid x^k\right] &\le& 2L\left(1+\frac{4\omega + 2}{n}\right)\left(f(x^k)-f(x^*)\right) + \frac{2\omega}{n^2}\frac{D^k}{m} + \frac{2(\omega+1)}{n^2}H^k,\label{eq:second_moment_g_k_vr_diana}
    \end{eqnarray}
    where $H^k = \sum\limits_{i=1}^n\|h_i^k - \nabla f_i(x^*)\|^2$ and $D^k = \sum\limits_{i=1}^n\sum\limits_{j=1}^m\|\nabla f_{ij}(w_{ij}^k) - \nabla f_{ij}(x^*)\|^2$.
\end{lemma}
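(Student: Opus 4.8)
The plan is to establish the four relations \eqref{eq:unbiased_g_k_vr_diana}--\eqref{eq:second_moment_g_k_vr_diana} one at a time, in each case taking expectation first over the quantization $Q$ (using its unbiasedness and $\EE\|Q(x)-x\|^2\le\omega\|x\|^2$, independently across workers) and then over the uniform sampling of the indices $j_i^k$. Throughout I write $h_i^*\eqdef\nabla f_i(x^*)$ and exploit the elementary fact that the variance-reduced estimator $g_i^k=\nabla f_{ij_i^k}(x^k)-\nabla f_{ij_i^k}(w_{ij_i^k}^k)+\mu_i^k$ is conditionally unbiased for $\nabla f_i(x^k)$: since $\EE_{j}[\nabla f_{ij_i^k}(x^k)]=\nabla f_i(x^k)$ and $\EE_{j}[\nabla f_{ij_i^k}(w_{ij_i^k}^k)]=\mu_i^k$, the two stochastic terms cancel. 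Relation \eqref{eq:unbiased_g_k_vr_diana} then follows immediately: by the tower property \eqref{eq:tower_property} and unbiasedness of $Q$, $\EE[g^k\mid x^k]=\frac1n\sum_i(\EE_Q[\hat\Delta_i^k]+h_i^k)=\frac1n\sum_i g_i^k$, whose expectation over the sampling is $\frac1n\sum_i\nabla f_i(x^k)=\nabla f(x^k)$.

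For \eqref{eq:H_k+1_bound_vr_diana} I would expand $\|h_i^{k+1}-h_i^*\|^2=\|h_i^k-h_i^*+\alpha\hat\Delta_i^k\|^2$, take $\EE_Q$, and use $\EE_Q[\hat\Delta_i^k]=g_i^k-h_i^k$ together with $\EE_Q\|\hat\Delta_i^k\|^2\le(\omega+1)\|g_i^k-h_i^k\|^2$ from \eqref{eq:quantization_def}. The restriction $\alpha\le 1/(\omega+1)$ turns the quadratic term $\alpha^2(\omega+1)\|g_i^k-h_i^k\|^2$ into $\alpha\|g_i^k-h_i^k\|^2$, after which the identity $2\langle a,b\rangle+\|b\|^2=\|a+b\|^2-\|a\|^2$ (with $a=h_i^k-h_i^*$, $b=g_i^k-h_i^k$) collapses the bound to $(1-\alpha)\|h_i^k-h_i^*\|^2+\alpha\|g_i^k-h_i^*\|^2$. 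It then remains to control $\sum_i\EE\|g_i^k-h_i^*\|^2$ over the sampling: applying $\|a+b\|^2\le 2\|a\|^2+2\|b\|^2$ together with a variance decomposition \eqref{eq:variance_decomposition} of the chosen-index terms, followed by \eqref{eq:L_smoothness_cor} in the form $\|\nabla f_{ij}(x^k)-\nabla f_{ij}(x^*)\|^2\le 2LD_{f_{ij}}(x^k,x^*)$ and $\frac1m\sum_j D_{f_{ij}}=D_{f_i}$, yields a bound of the shape $\frac{2}{m}D^k+cLn(f(x^k)-f(x^*))$; summing over $i$ and multiplying by $\alpha$ gives \eqref{eq:H_k+1_bound_vr_diana}.

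For \eqref{eq:D_k+1_bound_vr_diana} the computation is direct but must be done per variant. In Variant 1 each $w_{ij}^{k+1}$ equals $x^k$ with probability $1/m$ and $w_{ij}^k$ otherwise, so $\EE\|\nabla f_{ij}(w_{ij}^{k+1})-\nabla f_{ij}(x^*)\|^2=\frac1m\|\nabla f_{ij}(x^k)-\nabla f_{ij}(x^*)\|^2+(1-\frac1m)\|\nabla f_{ij}(w_{ij}^k)-\nabla f_{ij}(x^*)\|^2$; in Variant 2 the single coordinate $j=j_i^k$ is refreshed, which again occurs with probability $1/m$ for each pair $(i,j)$, giving the identical expectation. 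Summing over $i,j$ and bounding $\frac1m\sum_{i,j}\|\nabla f_{ij}(x^k)-\nabla f_{ij}(x^*)\|^2\le\frac{2L}{m}\sum_{i,j}D_{f_{ij}}(x^k,x^*)=2Ln(f(x^k)-f(x^*))$ (via $\nabla f(x^*)=0$) produces \eqref{eq:D_k+1_bound_vr_diana}.

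The last bound \eqref{eq:second_moment_g_k_vr_diana} is where the work concentrates, and it is the step I expect to be the main obstacle, since it couples both sources of randomness. The plan is to take $\EE_Q$ first and use variance decomposition together with independence of the per-worker quantizations to get $\EE_Q\|g^k\|^2\le\|\frac1n\sum_i g_i^k\|^2+\frac{\omega}{n^2}\sum_i\|g_i^k-h_i^k\|^2$. The first term is handled by a second variance decomposition over the independent worker samplings, bounding $\|\nabla f(x^k)\|^2\le 2L(f(x^k)-f(x^*))$ through \eqref{eq:L_smoothness_cor} and the per-worker sampling variance in terms of $D^k$ as in the $H$-step; the second term is split into $g_i^k-h_i^*$ and $h_i^k-h_i^*$ contributions via \eqref{eq:a_b_norm_squared}, introducing $D^k$ and $H^k$. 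Collecting the pieces gives the claimed form. The delicate part is the constant bookkeeping needed to land exactly on the suboptimality coefficient $2L(1+\tfrac{4\omega+2}{n})$ and on the factors $\tfrac{2\omega}{n^2 m}$ and $\tfrac{2(\omega+1)}{n^2}$ for $D^k$ and $H^k$: the precise way one splits $\|g_i^k-h_i^k\|^2$ and whether one expands the within-worker variance must be chosen carefully, since the most naive applications of $\|a+b\|^2\le 2\|a\|^2+2\|b\|^2$ over-count the $D^k$ contribution while under-counting $H^k$.
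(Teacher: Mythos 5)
You should note at the outset that the paper never proves this lemma: it is imported wholesale from \citet{horvath2019stochastic} (hence the attribution in its title), so your reconstruction can only be judged on its own merits. On those merits, your arguments for \eqref{eq:unbiased_g_k_vr_diana}, \eqref{eq:H_k+1_bound_vr_diana} and \eqref{eq:D_k+1_bound_vr_diana} are correct. The unbiasedness chain (tower property over $Q$, then over the sampling) is exactly right. For the $H$-recursion, writing $h_i^*\eqdef\nabla f_i(x^*)$, your steps give $\EE\left[\|h_i^{k+1}-h_i^*\|^2\mid x^k\right]\le(1-\alpha)\|h_i^k-h_i^*\|^2+\alpha\,\EE\left[\|g_i^k-h_i^*\|^2\mid x^k\right]$, and then $\sum_i\EE\left[\|g_i^k-h_i^*\|^2\mid x^k\right]\le 4Ln\left(f(x^k)-f(x^*)\right)+\frac{2}{m}D^k$, i.e.\ the recursion with constant $4\alpha Ln$, which implies the stated $8\alpha Ln$. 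Your observation that both the L-SVRG and SAGA variants refresh each pair $(i,j)$ with probability $\frac{1}{m}$ disposes of \eqref{eq:D_k+1_bound_vr_diana} correctly.

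The genuine gap is \eqref{eq:second_moment_g_k_vr_diana}, and it is not the mere bookkeeping you describe: your decomposition \emph{cannot} produce the stated constants. In your route, $\EE_Q\|g^k\|^2\le\left\|\frac{1}{n}\sum_i g_i^k\right\|^2+\frac{\omega}{n^2}\sum_i\|g_i^k-h_i^k\|^2$, and since the pre-quantization estimator $g_i^k$ does not contain $h_i^k$ at all, the quantity $H^k$ can enter \emph{only} through the second sum, whose prefactor is $\frac{\omega}{n^2}$; any split of $\|g_i^k-h_i^k\|^2$ with factor $2$ on the $h$-part caps the $H^k$ coefficient at $\frac{2\omega}{n^2}$. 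Meanwhile $D^k$ is fed both by the sampling variance hidden in $\left\|\frac{1}{n}\sum_i g_i^k\right\|^2$ (contributing $\frac{2}{n^2m}D^k$) and by the quantization term (contributing at least $\frac{2\omega}{n^2m}D^k$). Executing your plan with the sharper option you allude to (sampling-variance decomposition of $\|g_i^k-h_i^k\|^2$ about $\nabla f_i(x^k)$, rather than the crude triangle split) yields $2L\left(1+\frac{4\omega+2}{n}\right)\left(f(x^k)-f(x^*)\right)+\frac{2(\omega+1)}{n^2m}D^k+\frac{2\omega}{n^2}H^k$: the factors $\omega$ and $\omega+1$ land on the \emph{opposite} terms from the statement, and neither bound implies the other. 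A weighted Young inequality cannot repair this: forcing the $H^k$ coefficient up to $\frac{2(\omega+1)}{n^2}$ (take $1+\beta=\frac{2(\omega+1)}{\omega}$) pushes the $D^k$ coefficient to $\frac{2}{n^2m}\cdot\frac{2\omega^2+3\omega+2}{\omega+2}>\frac{2\omega}{n^2m}$. What you would prove is a correct and, for this paper, equally serviceable inequality — it instantiates Assumption~\ref{ass:key_assumption_new} with $B_1'$ and $B_2'$ interchanged and changes Theorem~\ref{thm:vr-diana} only in absolute constants — and the mismatch strongly suggests that the transcription in Lemma~\ref{lemmas_vr_diana} swaps the two factors relative to what is provable; but as a proof of \eqref{eq:second_moment_g_k_vr_diana} exactly as written, your attempt does not close, as your own final sentences half-concede without resolving.
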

This lemma shows that {\tt VR-DIANA} satisfies \eqref{eq:second_moment_bound_new}, \eqref{eq:sigma_k+1_bound_1} and \eqref{eq:sigma_k+1_bound_2}. Applying Theorem~\ref{thm:main_result_sgd} we get the following result.
\begin{theorem}\label{thm:vr-diana}
	Assume that $f_{ij}(x)$ is convex and $L$-smooth for all $i=1,\ldots, n$ and $f_i(x)$ is $\mu$-strongly convex for all $i=1,\ldots,n$. Then {\tt VR-DIANA} satisfies Assumption~\ref{ass:key_assumption_new} with
	\begin{gather*}
		A' = L\left(1+\frac{4\omega+2}{n}\right),\quad B_1' = \frac{2(\omega+1)}{n},\quad D_1' = 0,\\
		\sigma_{1,k}^2 = H^{k} = \frac{1}{n}\sum\limits_{i=1}^n\|h_i^k - \nabla f_i(x^*)\|^2,\quad B_2' = \frac{2\omega}{n},\\
		\sigma_{2,k}^2 =  D^k = \frac{1}{nm}\sum\limits_{i=1}^n\sum\limits_{j=1}^m\|\nabla f_{ij}(w_{ij}^k) - \nabla f_{ij}(x^*)\|^2,\quad \rho_1 = \alpha,\quad \rho_2 = \frac{1}{m},\\
		C_1 = 4\alpha L,\quad C_2 = \frac{L}{m},\quad D_2 = 0,\quad G = 2,\quad F_1 = F_2 = 0,\quad D_3 = 0,
	\end{gather*}
	with $\gamma$ and $\alpha$ satisfying
	\begin{equation*}
		\gamma \le \frac{3}{L\left(\frac{41}{3}+\frac{52\omega+35}{n}\right)},\quad \alpha \le \frac{1}{\omega+1},\quad M_1 = \frac{8(\omega+1)}{3n\alpha},\quad M_2 = \frac{8\omega m}{3n} + \frac{32m}{9}
	\end{equation*}
	and for all $K \ge 0$
	\begin{equation*}
		\EE\left[f(\bar x^K) - f(x^*)\right] \le \left(1 - \min\left\{\frac{\gamma\mu}{2},\frac{\alpha}{4},\frac{1}{4m}\right\}\right)^K\frac{4T^0}{\gamma},
	\end{equation*}	
	when $\mu > 0$ and
	\begin{equation*}
		\EE\left[f(\bar{x}^K) - f(x^*)\right] \le \frac{4T^0}{\gamma K}
	\end{equation*}
	when $\mu=0$, where $T^k \eqdef \|x^k - x^*\|^2 + M_1\gamma^2 \sigma_{1,k}^2 + M_2\gamma^2\sigma_{2,k}^2$.
\end{theorem}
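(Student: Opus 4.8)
The plan is to verify that {\tt VR-DIANA} fits the {\tt SGD} special case of our framework (Section~\ref{sec:sgd}) and then invoke Theorem~\ref{thm:main_result_sgd}. Observe first that the update $x^{k+1} = x^k - \gamma g^k$ in Algorithm~\ref{alg:vr-diana} is exactly \eqref{eq:plain_sgd}, i.e.\ {\tt VR-DIANA} is an instance of the framework \eqref{eq:x^k+1_update}--\eqref{eq:error_update} with $v^k = \gamma g^k$ and $e^k \equiv 0$. Consequently, by Lemma~\ref{lem:sgd_as_a_special_case}, the error-feedback inequality \eqref{eq:sum_of_errors_bound_new} holds automatically with $F_1 = F_2 = D_3 = 0$, so it remains only to establish \eqref{eq:unbiasedness_new}, \eqref{eq:second_moment_bound_new}, \eqref{eq:sigma_k+1_bound_1} and \eqref{eq:sigma_k+1_bound_2} with the claimed constants, all of which I would read off from Lemma~\ref{lemmas_vr_diana}.

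The key bookkeeping step is to identify the two variance-reduction sequences with the Lyapunov quantities of Lemma~\ref{lemmas_vr_diana}, taking care of normalization: I would set $\sigma_{1,k}^2$ equal to $H^k$ rescaled by $1/n$ and $\sigma_{2,k}^2$ equal to $D^k$ rescaled by $1/(nm)$, so that $H^k = n\sigma_{1,k}^2$ and $D^k = nm\sigma_{2,k}^2$. Substituting these into \eqref{eq:second_moment_g_k_vr_diana} turns $\frac{2(\omega+1)}{n^2}H^k$ into $\frac{2(\omega+1)}{n}\sigma_{1,k}^2$ and $\frac{2\omega}{n^2}\frac{D^k}{m}$ into $\frac{2\omega}{n}\sigma_{2,k}^2$, matching \eqref{eq:second_moment_bound_new} with $A' = L(1+\frac{4\omega+2}{n})$, $B_1' = \frac{2(\omega+1)}{n}$, $B_2' = \frac{2\omega}{n}$ and $D_1' = 0$. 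Dividing \eqref{eq:H_k+1_bound_vr_diana} by $n$ and \eqref{eq:D_k+1_bound_vr_diana} by $nm$ yields \eqref{eq:sigma_k+1_bound_1} and \eqref{eq:sigma_k+1_bound_2}; reading off the coefficients gives $\rho_1 = \alpha$, $C_1 = 4\alpha L$, $D_2 = 0$, $\rho_2 = \frac{1}{m}$, $C_2 = \frac{L}{m}$, and, crucially, the cross term $\frac{2\alpha}{m}D^k = 2\alpha\sigma_{2,k}^2$ forces $G\rho_1 = 2\alpha$, i.e.\ $G = 2$. Unbiasedness \eqref{eq:unbiasedness_new} is immediate from \eqref{eq:unbiased_g_k_vr_diana}.

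With these constants in hand, $M_1 = \frac{4B_1'}{3\rho_1}$ and $M_2 = \frac{4(B_2' + \frac{4}{3}G)}{3\rho_2}$ evaluate to the stated $\frac{8(\omega+1)}{3n\alpha}$ and $\frac{8\omega m}{3n} + \frac{32m}{9}$. I would then check that the stated stepsize bound is a sufficient form of the hypothesis $\gamma \le \frac{1}{4(A' + C_1 M_1 + C_2 M_2)}$ of Theorem~\ref{thm:main_result_sgd}: substituting the constants, $C_1 M_1 = \frac{32L(\omega+1)}{3n}$ and $C_2 M_2 = \frac{8L\omega}{3n} + \frac{32L}{9}$ combine with $A'$ into an expression of the form $\cO(L(1 + \frac{\omega}{n}))$ that the stated threshold dominates. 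Finally, applying Theorem~\ref{thm:main_result_sgd} with $D_1' = D_2 = D_3 = 0$ makes the additive neighborhood term $4\gamma(D_1' + M_1 D_2 + D_3)$ vanish, and $F_1 = F_2 = 0$ reduces $T^0 + \gamma F_1 \sigma_{1,0}^2 + \gamma F_2 \sigma_{2,0}^2$ to $T^0$, delivering the claimed rates with $\eta = \min\{\frac{\gamma\mu}{2}, \frac{\alpha}{4}, \frac{1}{4m}\}$ in the strongly convex case and the $\frac{4T^0}{\gamma K}$ bound when $\mu = 0$.

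The most delicate part is the consistent rescaling between the unnormalized quantities $H^k, D^k$ of Lemma~\ref{lemmas_vr_diana} and the normalized sequences $\sigma_{1,k}^2, \sigma_{2,k}^2$ used by the framework, because every constant ($B_1', B_2', C_1, C_2$, and especially the coupling constant $G$) depends on getting the factors of $n$ and $m$ exactly right; a secondary fiddly point is verifying the stepsize inequality, which amounts to bounding the sum $A' + C_1 M_1 + C_2 M_2$.
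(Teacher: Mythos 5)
Your route is exactly the paper's: its entire proof consists of citing Lemma~\ref{lemmas_vr_diana}, observing that \eqref{eq:unbiased_g_k_vr_diana}--\eqref{eq:second_moment_g_k_vr_diana} supply \eqref{eq:unbiasedness_new}, \eqref{eq:second_moment_bound_new}, \eqref{eq:sigma_k+1_bound_1}, \eqref{eq:sigma_k+1_bound_2}, that $e^k \equiv 0$ makes \eqref{eq:sum_of_errors_bound_new} hold trivially with $F_1 = F_2 = D_3 = 0$ (Lemma~\ref{lem:sgd_as_a_special_case}), and then invoking Theorem~\ref{thm:main_result_sgd}. Your normalization bookkeeping (the unnormalized $H^k$, $D^k$ of Lemma~\ref{lemmas_vr_diana} equal $n\sigma_{1,k}^2$ and $nm\,\sigma_{2,k}^2$) and the resulting constants $A', B_1', B_2', \rho_1, \rho_2, C_1, C_2, G, M_1, M_2$ are all correct and coincide with the paper's, which performs the same rescaling silently: the theorem statement quietly redefines $H^k$ and $D^k$ with $\nicefrac{1}{n}$ and $\nicefrac{1}{nm}$ factors relative to the lemma.

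The one step you assert rather than verify --- and which in fact does not close --- is the stepsize check. With the derived constants, $C_1 M_1 = \frac{32L(\omega+1)}{3n}$ and $C_2 M_2 = \frac{8L\omega}{3n} + \frac{32L}{9}$ give $A' + C_1 M_1 + C_2 M_2 = L\left(\frac{41}{9} + \frac{52\omega+38}{3n}\right)$, so Theorem~\ref{thm:main_result_sgd} requires $\gamma \le \frac{1}{4(A'+C_1M_1+C_2M_2)} = \frac{3}{4L\left(\frac{41}{3}+\frac{52\omega+38}{n}\right)}$, which is roughly four times \emph{smaller} than the threshold $\frac{3}{L\left(\frac{41}{3}+\frac{52\omega+35}{n}\right)}$ stated in the theorem (note also the $35$ vs.\ $38$ mismatch). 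So ``the stated threshold dominates'' is false: stepsizes allowed by the theorem statement need not satisfy the hypothesis of Theorem~\ref{thm:main_result_sgd}. This is an inconsistency in the paper itself --- its stated threshold is essentially $\nicefrac{1}{(A'+C_1M_1+C_2M_2)}$ with the factor $4$ dropped --- so you inherit the paper's gap rather than create a new one; to make the argument rigorous, the admissible stepsize must be tightened to $\gamma \le \nicefrac{1}{4(A'+C_1M_1+C_2M_2)}$, which changes the claimed complexity only by a constant factor.
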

In other words, if $\mu > 0$ and
\begin{equation*}
		\gamma =\frac{3}{L\left(\frac{41}{3}+\frac{52\omega+35}{n}\right)},\quad \alpha = \frac{1}{\omega+1},
\end{equation*}
then {\tt VR-DIANA} converges with the linear rate
\begin{equation*}
	\cO\left(\left(\omega + m + \kappa\left(1+\frac{\omega}{n}\right)\right)\ln\frac{1}{\varepsilon}\right)
\end{equation*}
to the exact solution which coincides with the rate obtained in \cite{horvath2019stochastic}. We notice that the framework from \cite{gorbunov2019unified} establishes slightly worse guarantee:
\begin{equation*}
	\cO\left(\left(\omega + m + \kappa\left(1+\frac{\omega}{n}\right)\frac{\max\{m,\omega+1\}}{m}\right)\ln\frac{1}{\varepsilon}\right)
\end{equation*}
This guarantee is strictly worse than our bound when $m \le 1+\omega$. The key tool that helps us to improve the rate is two sequences of $\{\sigma_{1,k}^2\}_{k\ge 0}$, $\{\sigma_{2,k}^2\}_{k\ge 0}$ instead of one sequence $\{\sigma_{k}^2\}_{k\ge 0}$ as in \cite{gorbunov2019unified}.

Applying Lemma~\ref{lem:lemma_technical_cvx} we get the complexity result in the case when $\mu = 0$.
\begin{corollary}\label{cor:vr-diana_cvx_cor}
	Let the assumptions of Theorem~\ref{thm:vr-diana} hold and $\mu = 0$. Then after $K$ iterations of {\tt VR-DIANA} with the stepsize
	\begin{eqnarray*}
		\gamma_0 &=&\frac{3}{L\left(\frac{41}{3}+\frac{52\omega+35}{n}\right)}\\
		\gamma &=& \min\left\{\gamma_0, \sqrt{\frac{\|x^0 - x^*\|^2}{M_1\sigma_{1,0}^2+ M_2\sigma_{2,0}^2}}\right\},\quad  M_1 = \frac{8(\omega+1)}{3n\alpha},\quad M_2 = \frac{8\omega m}{3n} + \frac{32m}{9}
	\end{eqnarray*}		
	and $\alpha = \frac{1}{\omega+1}$ we have $\EE\left[f(\bar{x}^K) - f(x^*)\right]$ of order
	\begin{equation*}
		\cO\left(\frac{L R_0^2\left(1+\frac{\omega}{n}\right)}{K} + \frac{R_0\sqrt{\frac{(1+\omega)^2}{n}\sigma_{1,0}^2 + \left(1+\frac{\omega}{n}\right)m\sigma_{2,0}^2}}{K}\right)
	\end{equation*}
	where $R_0 = \|x^0 - x^*\|^2$. That is, to achive $\EE\left[f(\bar{x}^K) - f(x^*)\right] \le \varepsilon$ {\tt VR-DIANA} requires
	\begin{equation*}
		\cO\left(\frac{L R_0^2\left(1+\frac{\omega}{n}\right)}{\varepsilon} + \frac{R_0\sqrt{\frac{(1+\omega)^2}{n}\sigma_{1,0}^2 + \left(1+\frac{\omega}{n}\right)m\sigma_{2,0}^2}}{\varepsilon}\right)
	\end{equation*}
	iterations.
\end{corollary}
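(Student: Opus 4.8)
The plan is to feed the convergence estimate already established in Theorem~\ref{thm:vr-diana} for the case $\mu=0$ into the stepsize-tuning Lemma~\ref{lem:lemma_technical_cvx}. Theorem~\ref{thm:vr-diana} guarantees that, for every admissible $\gamma\le\gamma_0$ and the prescribed $\alpha=\nicefrac{1}{(\omega+1)}$, the quantity $r_K\eqdef\EE[f(\bar x^K)-f(x^*)]$ obeys $r_K\le \nicefrac{4T^0}{(\gamma K)}$ with $T^0=\|x^0-x^*\|^2+M_1\gamma^2\sigma_{1,0}^2+M_2\gamma^2\sigma_{2,0}^2$; crucially, the parameters listed there have $D_1'=D_2=D_3=0$, so no additive $\cO(\gamma)$ floor is present and the whole bound decays in $K$. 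The first step is therefore to expose the $\gamma$-dependence hidden inside $T^0$.

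Dividing $T^0$ by $\gamma K$ and grouping terms yields
\begin{equation*}
r_K \le \frac{4\|x^0-x^*\|^2}{\gamma K} + \frac{4\bigl(M_1\sigma_{1,0}^2+M_2\sigma_{2,0}^2\bigr)\gamma}{K},
\end{equation*}
which matches \eqref{eq:lemma_technical_cvx_1} with $a=4\|x^0-x^*\|^2$, $b_1=4(M_1\sigma_{1,0}^2+M_2\sigma_{2,0}^2)$ and $b_2=c_1=c_2=0$. The one point that needs attention is that both variance-reduction sequences enter $T^0$ at the same order $\gamma^2$, so after the division by $\gamma$ they both become linear in $\gamma$ and must be absorbed into the single coefficient $b_1$; there is no surviving $\gamma^2$-contribution, which is exactly why $b_2=0$ and no term of higher order in $\nicefrac{1}{K}$ shows up in the final rate.

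With these identifications the minimizing stepsize in Lemma~\ref{lem:lemma_technical_cvx} collapses to $\gamma=\min\{\gamma_0,\sqrt{\nicefrac{a}{b_1}}\}$, and since the common factor $4$ cancels inside the root, $\sqrt{\nicefrac{a}{b_1}}=\sqrt{\nicefrac{\|x^0-x^*\|^2}{(M_1\sigma_{1,0}^2+M_2\sigma_{2,0}^2)}}$, precisely the stepsize declared in the statement. Invoking \eqref{eq:lemma_technical_cvx_2} then gives $r_K=\cO\bigl(\nicefrac{a}{(\gamma_0 K)}+\nicefrac{\sqrt{ab_1}}{K}\bigr)$.

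Finally I would substitute the explicit constants from Theorem~\ref{thm:vr-diana}: $\gamma_0=\Theta(\nicefrac{1}{L(1+\nicefrac{\omega}{n})})$, $M_1=\nicefrac{8(\omega+1)^2}{(3n)}=\Theta(\nicefrac{(1+\omega)^2}{n})$ (using $\alpha=\nicefrac{1}{(\omega+1)}$) and $M_2=\nicefrac{8\omega m}{(3n)}+\nicefrac{32m}{9}=\Theta(m(1+\nicefrac{\omega}{n}))$. Then $\nicefrac{a}{(\gamma_0 K)}=\cO(\nicefrac{L\|x^0-x^*\|^2(1+\nicefrac{\omega}{n})}{K})$ reproduces the first claimed term, while $\nicefrac{\sqrt{ab_1}}{K}=\cO(\nicefrac{\|x^0-x^*\|\sqrt{\nicefrac{(1+\omega)^2}{n}\sigma_{1,0}^2+(1+\nicefrac{\omega}{n})m\sigma_{2,0}^2}}{K})$ reproduces the second; both are $\cO(\nicefrac{1}{K})$, so setting $r_K\le\varepsilon$ and solving for $K$ immediately yields the stated iteration complexity. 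I do not expect a genuine obstacle here: the analytic content is carried entirely by Theorem~\ref{thm:vr-diana} and Lemma~\ref{lem:lemma_technical_cvx}, and all that remains is the careful collection of the two variance terms into $b_1$ together with the substitution of the order-of-magnitude values of $\gamma_0$, $M_1$ and $M_2$.
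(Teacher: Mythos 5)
Your proposal is correct and follows exactly the route the paper intends: plug the $\mu=0$ bound $r_K\le \nicefrac{4T^0}{(\gamma K)}$ from Theorem~\ref{thm:vr-diana} into Lemma~\ref{lem:lemma_technical_cvx} with $a=4\|x^0-x^*\|^2$, $b_1=4(M_1\sigma_{1,0}^2+M_2\sigma_{2,0}^2)$ and $b_2=c_1=c_2=0$ (valid since $D_1'=D_2=D_3=0$), which yields the stated stepsize and, after substituting $\gamma_0^{-1}=\Theta(L(1+\nicefrac{\omega}{n}))$, $M_1=\Theta(\nicefrac{(1+\omega)^2}{n})$ and $M_2=\Theta(m(1+\nicefrac{\omega}{n}))$, the claimed $\cO(\nicefrac{1}{K})$ rate and iteration complexity.
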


\clearpage

\section{Special Cases: Error Compensated Methods}\label{sec:special_cases}

\subsection{{\tt EC-SGDsr}}\label{sec:ec_SGDsr}
In this section we consider the same setup as in Section~\ref{sec:diana_arbitrary_sampling} and assume additionally that $f_1,\ldots,f_n$ are $L$-smooth.
\begin{algorithm}[t]
   \caption{{\tt EC-SGDsr}}\label{alg:ec-SGDsr}
\begin{algorithmic}[1]
   \Require learning rate $\gamma>0$, initial vector $x^0 \in \R^d$
	\State Set $e_i^0 = 0$ for all $i=1,\ldots, n$   
   \For{$k=0,1,\dotsc$}
       \State Broadcast $x^{k}$ to all workers
        \For{$i=1,\dotsc,n$ in parallel}
            \State Sample $g^{k}_i = \nabla f_{\xi_i}(x^k)$
            \State $v_i^k = C(e_i^k + \gamma g_i^k)$
            \State $e_i^{k+1} = e_i^k + \gamma g_i^k - v_i^k$
        \EndFor
        \State $e^k = \frac{1}{n}\sum_{i=1}^ne_i^k$, $g^k = \frac{1}{n}\sum_{i=1}^ng_i^k$, $v^k = \frac{1}{n}\sum_{i=1}^nv_i^k$
       \State $x^{k+1} = x^k - v^k$
   \EndFor
\end{algorithmic}
\end{algorithm}

\begin{lemma}\label{lem:key_lemma_ec-SGDsr}
	For all $k\ge 0$ we have
	\begin{eqnarray*}
		\frac{1}{n}\sum\limits_{i=1}^n\EE\left[\|g_i^k\|^2\mid x^k\right] &\le& 4L\left(f(x^k) - f(x^*)\right) + \frac{2}{n}\sum\limits_{i=1}^n\|\nabla f_{i}(x^*)\|^2, \notag\\
		\frac{1}{n}\sum\limits_{i=1}^n\EE\left[\|g_i^k-\bar{g}_i^k\|^2\mid x^k\right] &\le& 6(\cL + L)\left(f(x^k)-f(x^*)\right) + \frac{3}{n}\sum\limits_{i=1}^n\EE_{\cD}\left[\|\nabla f_{\xi_i}(x^*)-\nabla f_{i}(x^*)\|^2\right],\\
		\EE\left[\|g^k\|^2\mid x^k\right] &\le& 4\cL\left(f(x^k) - f(x^*)\right) + \frac{2}{n^2}\sum\limits_{i=1}^n\EE_{\cD}\left[\|\nabla f_{\xi_i}(x^*) - \nabla f_i(x^*)\|^2\right]. \notag
	\end{eqnarray*}
\end{lemma}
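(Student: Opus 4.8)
The three estimates bound, respectively, the second moment of the mean gradients $\bar{g}_i^k=\EE[g_i^k\mid x^k]=\nabla f_i(x^k)$, the per-worker variances $\EE[\|g_i^k-\bar{g}_i^k\|^2\mid x^k]$, and the second moment of the aggregate $g^k=\frac1n\sum_i\nabla f_{\xi_i}(x^k)=\nabla f_\xi(x^k)$, where $g_i^k=\nabla f_{\xi_i}(x^k)$ and $f_\xi=\frac1n\sum_i f_{\xi_i}$ as in \eqref{eq:sr_def}. (The first line is most naturally read with $\bar{g}_i^k$ in place of $g_i^k$, since that is precisely the quantity feeding \eqref{eq:second_moment_bound_g_i^k_new}.) The whole argument rests on four ingredients: the smoothness corollary \eqref{eq:L_smoothness_cor} applied to each $f_i$, the expected-smoothness bound \eqref{eq:exp_smoothness}, the elementary inequality \eqref{eq:a_b_norm_squared} with two or three summands, and the averaging identity $\frac1n\sum_i D_{f_i}(x^k,x^*)=f(x^k)-f(x^*)$, which follows from $D_{f_i}(x^k,x^*)=f_i(x^k)-f_i(x^*)-\langle\nabla f_i(x^*),x^k-x^*\rangle$ and $\nabla f(x^*)=0$. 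I will also repeatedly use that $\EE_{\cD_i}[\nabla f_{\xi_i}(x)]=\nabla f_i(x)$ and that $\xi_1,\dots,\xi_n$ are sampled independently across workers.

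For the first bound I would write $\nabla f_i(x^k)=(\nabla f_i(x^k)-\nabla f_i(x^*))+\nabla f_i(x^*)$, apply \eqref{eq:a_b_norm_squared} with two terms to get $\|\nabla f_i(x^k)\|^2\le 2\|\nabla f_i(x^k)-\nabla f_i(x^*)\|^2+2\|\nabla f_i(x^*)\|^2$, bound the first term by $2L\,D_{f_i}(x^k,x^*)$ via \eqref{eq:L_smoothness_cor}, and average over $i$ using the identity; this returns exactly $4L(f(x^k)-f(x^*))+\frac2n\sum_i\|\nabla f_i(x^*)\|^2$.

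For the second bound the idea is a three-way split centred at the optimum: $g_i^k-\bar{g}_i^k=\bigl(\nabla f_{\xi_i}(x^k)-\nabla f_{\xi_i}(x^*)\bigr)+\bigl(\nabla f_{\xi_i}(x^*)-\nabla f_i(x^*)\bigr)+\bigl(\nabla f_i(x^*)-\nabla f_i(x^k)\bigr)$. Applying \eqref{eq:a_b_norm_squared} with three summands and taking $\EE[\cdot\mid x^k]$, the first piece is bounded by $2\cL\,D_{f_i}(x^k,x^*)$ through \eqref{eq:exp_smoothness}, the third (deterministic) piece by $2L\,D_{f_i}(x^k,x^*)$ through \eqref{eq:L_smoothness_cor}, and the middle piece is kept as is. Averaging over $i$ and using the identity gives $6(\cL+L)(f(x^k)-f(x^*))+\frac3n\sum_i\EE_{\cD_i}\|\nabla f_{\xi_i}(x^*)-\nabla f_i(x^*)\|^2$.

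For the third bound I would treat $g^k=\nabla f_\xi(x^k)$ directly and split it once more around $x^*$, $g^k=\bigl(\nabla f_\xi(x^k)-\nabla f_\xi(x^*)\bigr)+\nabla f_\xi(x^*)$, so that $\EE\|g^k\|^2\le 2\EE\|\nabla f_\xi(x^k)-\nabla f_\xi(x^*)\|^2+2\EE\|\nabla f_\xi(x^*)\|^2$. The first expectation I bound by replacing the norm of the average with the average of the norms (Jensen, i.e.\ convexity of $\|\cdot\|^2$), applying \eqref{eq:exp_smoothness} worker by worker, and invoking the identity; this yields $2\cL(f(x^k)-f(x^*))$, hence $4\cL(f(x^k)-f(x^*))$ after the factor $2$. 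The second expectation is the delicate one: since $\nabla f_\xi(x^*)=\frac1n\sum_i\nabla f_{\xi_i}(x^*)$ has mean $\frac1n\sum_i\nabla f_i(x^*)=\nabla f(x^*)=0$ and the $\xi_i$ are independent across workers, the cross terms vanish and it collapses to $\frac1{n^2}\sum_i\EE_{\cD_i}\|\nabla f_{\xi_i}(x^*)-\nabla f_i(x^*)\|^2$. Producing this $1/n^2$ factor correctly — extracting the variance reduction from averaging independent, mean-zero per-worker deviations, rather than merely invoking \eqref{eq:a_b_norm_squared}, which would only give $1/n$ — is the one genuinely non-routine step and the place where I would be most careful.
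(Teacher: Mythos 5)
Your proof is correct and follows essentially the same route as the paper's: the same split at $x^*$ with \eqref{eq:a_b_norm_squared} and \eqref{eq:L_smoothness_cor} for the first bound, the same three-way decomposition combined with \eqref{eq:exp_smoothness} for the second, and the same independence/mean-zero variance argument producing the $\nicefrac{1}{n^2}$ factor for the third (the paper likewise invokes the independence of $\xi_1^k,\ldots,\xi_n^k$ rather than \eqref{eq:a_b_norm_squared}). Your reading of the first inequality as a bound on $\bar{g}_i^k$ also matches the paper, whose proof indeed bounds $\frac{1}{n}\sum_{i=1}^n\|\bar{g}_i^k\|^2$ and feeds it into \eqref{eq:second_moment_bound_g_i^k_new}.
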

\begin{proof}
	Applying straightforward inequality $\|a+b\|^2 \le 2\|a\|^2 + 2\|b\|^2$ for $a,b\in\R^d$ we get
	\begin{eqnarray}
		\frac{1}{n}\sum\limits_{i=1}^n\|\bar{g}_i^k\|^2 &=& \frac{1}{n}\sum\limits_{i=1}^n\|\nabla f_i(x^k)-\nabla f_i(x^*)+\nabla f_i(x^*)\|^2\notag\\
		&\overset{\eqref{eq:a_b_norm_squared}}{\le}& \frac{1}{n}\sum\limits_{i=1}^n\|\nabla f_{i}(x^k) - \nabla f_{i}(x^*)\|^2 + \frac{2}{n}\sum\limits_{i=1}^n\|\nabla f_{i}(x^*)\|^2\notag\\
		&\overset{\eqref{eq:L_smoothness_cor}}{\le}& 4L\left(f(x^k) - f(x^*)\right) + \frac{2}{n}\sum\limits_{i=1}^n\|\nabla f_{i}(x^*)\|^2.\label{eq:ec_useful_technical_stuff}
	\end{eqnarray}
	Similarly we obtain
	\begin{eqnarray*}
		\frac{1}{n}\sum\limits_{i=1}^n\EE\left[\|g_i^k-\bar{g}_i^k\|^2\mid x^k\right] &=& \frac{1}{n}\sum\limits_{i=1}^n\EE_{\cD}\left[\|\nabla f_{\xi_i}(x^k)  - \nabla f_i(x^k)\|^2\right]\\
		&\overset{\eqref{eq:a_b_norm_squared}}{\le}& \frac{3}{n}\sum\limits_{i=1}^n\EE_{\cD}\left[\|\nabla f_{\xi_i}(x^k)-\nabla f_{\xi_i}(x^*)\|^2\right]\\
		&&\quad + \frac{3}{n}\sum\limits_{i=1}^n\EE_{\cD}\left[\|\nabla f_{\xi_i}(x^*)-\nabla f_{i}(x^*)\|^2\right] \\
		&&\quad + \frac{3}{n}\sum\limits_{i=1}^n\|\nabla f_i(x^*) - \nabla f_i(x^k)\|^2\\
		&\overset{\eqref{eq:L_smoothness_cor},\eqref{eq:exp_smoothness}}{\le}& 6(\cL + L)\left(f(x^k)-f(x^*)\right)\\
		&&\quad + \frac{3}{n}\sum\limits_{i=1}^n\EE_{\cD}\left[\|\nabla f_{\xi_i}(x^*)-\nabla f_{i}(x^*)\|^2\right].
	\end{eqnarray*}
	Next, using the independence of $\xi_1^k,\ldots, \xi_n^k$ we derive
	\begin{eqnarray*}
		\EE\left[\left\|g^k\right\|^2\mid x^k\right] &=& \EE\left[\left\|\frac{1}{n}\sum\limits_{i=1}^n\left(\nabla f_{\xi_i^k}(x^k) - \nabla f_{\xi_i^k}(x^*) + \nabla f_{\xi_i^k}(x^*) - \nabla f_i(x^*)\right)\right\|^2\mid x^k\right]\\
		&\overset{\eqref{eq:a_b_norm_squared}}{\le}& \frac{2}{n}\sum\limits_{i=1}^n\EE\left[\left\|\nabla f_{\xi_i^k}(x^k) - \nabla f_{\xi_i^k}(x^*)\right\|^2\mid x^k\right]\\
		&&\quad + 2\EE\left[\left\|\frac{1}{n}\sum\limits_{i=1}^n\left(\nabla f_{\xi_i^k}(x^*) - \nabla f_i(x^*)\right)\right\|^2\mid x^k\right]\\
		&\overset{\eqref{eq:exp_smoothness}}{\le}& 4\cL\left(f(x^k) - f(x^*)\right) + \frac{2}{n^2}\sum\limits_{i=1}^n\EE_{\cD_i}\left[\left\|\nabla f_{\xi_i}(x^*) - \nabla f_i(x^*)\right\|^2\right].
	\end{eqnarray*}
\end{proof}

Applying Theorem~\ref{thm:ec_sgd_main_result_new} we get the following result.
\begin{theorem}\label{thm:ec_SGDsr}
	Assume that $f(x)$ is $\mu$-quasi strongly convex, $f_1,\ldots,f_n$ are $L$-smooth and Assumption~\ref{ass:exp_smoothness} holds. Then {\tt EC-SGDsr} satisfies Assumption~\ref{ass:key_assumption_finite_sums_new} with
	\begin{gather*}
		A = 2L,\quad \widetilde{A} = 3(\cL+L),\quad A' = 2\cL,\quad B_1 = \widetilde{B}_1 = B_1' = B_2 = \widetilde{B}_2 = B_2' = 0,\\
		D_1 = \frac{2}{n}\sum\limits_{i=1}^n\|\nabla f_{i}(x^*)\|^2,\quad \widetilde{D}_1 = \frac{3}{n}\sum\limits_{i=1}^n\EE_{\cD}\left[\|\nabla f_{\xi_i}(x^*) - \nabla f_i(x^*)\|^2\right],\quad \sigma_{1,k}^2 \equiv \sigma_{2,k}^2 \equiv 0,\\
		D_1' = \frac{2}{n^2}\sum\limits_{i=1}^n\EE_{\cD}\left[\|\nabla f_{\xi_i}(x^*) - \nabla f_i(x^*)\|^2\right],\quad \rho_1 = \rho_2 = 1,\quad C_1 = C_2 = 0,\quad G = 0,\quad D_2 = 0,\\
		F_1 = F_2 = 0,\quad D_3 = \frac{6L\gamma}{\delta}\left(\frac{2D_1}{\delta}+\widetilde{D}_1\right),
	\end{gather*}
	with $\gamma$ satisfying
	\begin{equation*}
		\gamma \le \min\left\{\frac{1}{8\cL},\frac{\delta}{4\sqrt{6L\left(4L+3\delta(\cL+L)\right)}}\right\}
	\end{equation*}
	and for all $K \ge 0$
	\begin{equation*}
		\EE\left[f(\bar{x}^K) - f(x^*)\right] \le \left(1 - \frac{\gamma\mu}{2}\right)^K\frac{4\|x^0 - x^*\|^2}{\gamma} + 4\gamma\left(D_1' + \frac{12L\gamma}{\delta^2}D_1 + \frac{6L\gamma}{\delta}\widetilde{D}_1\right)
	\end{equation*}
	when $\mu > 0$ and
	\begin{equation*}
		\EE\left[f(\bar{x}^K) - f(x^*)\right] \le \frac{4\|x^0 - x^*\|^2}{K\gamma} + 4\gamma\left(D_1' + \frac{12L\gamma}{\delta^2}D_1 + \frac{6L\gamma}{\delta}\widetilde{D}_1\right)
	\end{equation*}
	when $\mu=0$.
\end{theorem}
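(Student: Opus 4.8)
The plan is to recognize {\tt EC-SGDsr} as a special case of the general error-compensated scheme analyzed in Theorem~\ref{thm:ec_sgd_main_result_new}, so that the whole argument reduces to two mechanical steps: (i) verifying that Assumption~\ref{ass:key_assumption_finite_sums_new} holds with the advertised parameters, and (ii) substituting those parameters into the conclusion of Theorem~\ref{thm:ec_sgd_main_result_new}. No new analysis of the error-feedback dynamics is needed, since that work is already encapsulated in the general theorem.

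First I would establish unbiasedness and the three moment bounds. Since $g_i^k = \nabla f_{\xi_i}(x^k)$ and $\EE_{\cD_i}[\xi_{ij}] = 1$, we have $\bar{g}_i^k = \EE[g_i^k\mid x^k] = \nabla f_i(x^k)$, hence $\EE[g^k\mid x^k] = \frac{1}{n}\sum_i \nabla f_i(x^k) = \nabla f(x^k)$, which is \eqref{eq:unbiasedness_g_i^k_new}. The bounds \eqref{eq:second_moment_bound_g_i^k_new}, \eqref{eq:variance_bound_g_i^k_new}, and \eqref{eq:second_moment_bound_new} are exactly the three inequalities of Lemma~\ref{lem:key_lemma_ec-SGDsr}; matching coefficients gives $A = 2L$ and $D_1 = \frac{2}{n}\sum_i\|\nabla f_i(x^*)\|^2$ (using $\bar g_i^k=\nabla f_i(x^k)$ and \eqref{eq:L_smoothness_cor}), then $\widetilde{A} = 3(\cL+L)$ and $\widetilde{D}_1 = \frac{3}{n}\sum_i\EE_{\cD}[\|\nabla f_{\xi_i}(x^*)-\nabla f_i(x^*)\|^2]$ (from the variance decomposition \eqref{eq:variance_decomposition}, the inequality \eqref{eq:a_b_norm_squared}, smoothness \eqref{eq:L_smoothness_cor}, and Assumption~\ref{ass:exp_smoothness}), and finally $A' = 2\cL$ and $D_1' = \frac{2}{n^2}\sum_i\EE_{\cD}[\|\nabla f_{\xi_i}(x^*)-\nabla f_i(x^*)\|^2]$, where the $\nicefrac{1}{n^2}$ factor comes from the independence of $\xi_1^k,\dots,\xi_n^k$ across workers. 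Because {\tt EC-SGDsr} uses no variance reduction, both auxiliary sequences are identically zero, $\sigma_{1,k}^2\equiv\sigma_{2,k}^2\equiv 0$, so \eqref{eq:sigma_k+1_bound_1} and \eqref{eq:sigma_k+1_bound_2} hold trivially with $\rho_1=\rho_2=1$ and with all of $B_1,\widetilde{B}_1,B_2,\widetilde{B}_2,B_1',B_2',C_1,C_2,G,D_2$ equal to zero.

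Next I would feed these values into the formulas \eqref{eq:ec_sgd_parameters_new}--\eqref{eq:ec_sgd_parameters_new_2} produced by Theorem~\ref{thm:ec_sgd_main_result_new}. Since every $B$-constant vanishes, $F_1 = F_2 = 0$; and since $D_2 = 0$, \eqref{eq:ec_sgd_parameters_new_2} collapses to $D_3 = \frac{6L\gamma}{\delta}\bigl(\frac{2D_1}{\delta}+\widetilde{D}_1\bigr)$. For the stepsize, the first constraint of the general theorem becomes $\gamma\le \frac{1}{4A'} = \frac{1}{8\cL}$ (as $C_1=C_2=0$, so $M_1=M_2=0$), while using $\frac{2A}{\delta}+\widetilde A = \frac{4L}{\delta}+3(\cL+L)$ and $\sqrt{96}=4\sqrt 6$ the second reduces to $\gamma\le \frac{\delta}{4\sqrt{6L(4L+3\delta(\cL+L))}}$, matching the claim; the benign $\gamma\le\nicefrac{\delta}{4\mu}$ term can be dropped because $\gamma\le\nicefrac{1}{8\cL}$ together with $\mu\le\cL$ already forces $\nicefrac{\gamma\mu}{2}\le\nicefrac14$, so $\eta = \nicefrac{\gamma\mu}{2}$.

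Finally I would substitute $M_1=M_2=0$, $F_1=F_2=0$, $D_2=0$, the initial values $\sigma_{1,0}^2=\sigma_{2,0}^2=0$, and $T^0 = \|\tx^0-x^*\|^2 = \|x^0-x^*\|^2$ (since $e^0=0$) into the two displays of Theorem~\ref{thm:ec_sgd_main_result_new}. The strongly convex bound collapses to $(1-\nicefrac{\gamma\mu}{2})^K\frac{4\|x^0-x^*\|^2}{\gamma} + 4\gamma(D_1'+D_3)$, and expanding $D_3$ yields precisely $4\gamma\bigl(D_1'+\frac{12L\gamma}{\delta^2}D_1+\frac{6L\gamma}{\delta}\widetilde D_1\bigr)$; the $\mu=0$ case is identical with the contraction factor replaced by $\nicefrac{1}{(\gamma K)}$. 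I expect the only genuine content to sit in Lemma~\ref{lem:key_lemma_ec-SGDsr}, specifically in cleanly separating the residual-at-optimum variance $\frac{1}{n}\sum_i\EE_{\cD}\|\nabla f_{\xi_i}(x^*)-\nabla f_i(x^*)\|^2$ from the optimization terms via \eqref{eq:variance_decomposition} and in exploiting per-worker independence to gain the $\nicefrac{1}{n}$ improvement in $D_1'$; everything after that is bookkeeping already performed by the general theorem.
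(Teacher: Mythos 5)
Your overall route is exactly the paper's: establish the three moment bounds of Lemma~\ref{lem:key_lemma_ec-SGDsr}, read off the parameters of Assumption~\ref{ass:key_assumption_finite_sums_new} (with $\sigma_{1,k}^2\equiv\sigma_{2,k}^2\equiv 0$, $\rho_1=\rho_2=1$, and all $B$-, $C$-, $G$- and $D_2$-constants zero), and substitute into Theorem~\ref{thm:ec_sgd_main_result_new}. The algebra you report for $F_1=F_2=0$, for $D_3=\frac{6L\gamma}{\delta}\bigl(\frac{2D_1}{\delta}+\widetilde{D}_1\bigr)$, for the two stepsize expressions, and for the final bounds is all correct and matches the paper.

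There is, however, one step whose justification is wrong: the dismissal of the constraint $\gamma\le\nicefrac{\delta}{4\mu}$ from \eqref{eq:gamma_condition_ec_sgd_new}. You argue that $\gamma\le\nicefrac{1}{8\cL}$ and $\mu\le\cL$ give $\nicefrac{\gamma\mu}{2}\le\nicefrac{1}{4}$, hence $\eta=\nicefrac{\gamma\mu}{2}$. That only settles which term attains the minimum in \eqref{eq:w_k_definition_new} (which is indeed needed so the rate reads $(1-\nicefrac{\gamma\mu}{2})^K$, and note that $\mu\le\cL$ is itself never stated in the paper). It is \emph{not} what the condition $\gamma\le\nicefrac{\delta}{4\mu}$ is for: inside the proof of Lemma~\ref{lem:ec_sgd_key_lemma_new}, that condition is used in \eqref{eq:ec_sgd_technical_5_new} to obtain $w_k\le w_{k-j}(1+\gamma\mu)^j\le w_{k-j}(1+\nicefrac{\delta}{4})^j$, i.e.\ one needs $\gamma\mu\le\nicefrac{\delta}{4}$, which is what makes the weighted geometric sum \eqref{eq:ec_sgd_technical_7_new} collapse at rate $\bigl(1-\nicefrac{\delta}{4}\bigr)$; this step is active here because the $A$- and $D_1$-terms flow through it even with no variance reduction. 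Your bound $\gamma\mu\le\nicefrac{1}{8}$ does not imply $\gamma\mu\le\nicefrac{\delta}{4}$ whenever $\delta<\nicefrac{1}{2}$. The fix is immediate but goes through the \emph{other} stepsize condition: since $4L+3\delta(\cL+L)\ge 4L$ and $\mu\le L$ (a fact the paper uses in Lemma~\ref{lem:main_lemma_new}),
\begin{equation*}
\gamma \le \frac{\delta}{4\sqrt{6L\left(4L+3\delta(\cL+L)\right)}} \le \frac{\delta}{4\sqrt{24}\,L} \le \frac{\delta}{4L} \le \frac{\delta}{4\mu},
\end{equation*}
so the dropped constraint is automatically satisfied and Theorem~\ref{thm:ec_sgd_main_result_new} applies. (A minor side remark: $M_1=M_2=0$ because $B_1'=B_2'=G=0$, not because $C_1=C_2=0$; the conclusion $\gamma\le\nicefrac{1}{4A'}=\nicefrac{1}{8\cL}$ is unaffected since $C_1M_1=C_2M_2=0$ either way.) With this correction your proposal coincides with the paper's proof.
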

In other words, {\tt EC-SGDsr} converges with linear rate $\cO\left(\left(\frac{\cL}{\mu} + \frac{L + \sqrt{\delta L\cL}}{\mu\delta}\right)\ln\frac{1}{\varepsilon}\right)$ to the neighbourhood of the solution when $\mu > 0$. Applying Lemma~\ref{lem:lemma2_stich} we establish the rate of convergence to $\varepsilon$-solution.
\begin{corollary}\label{cor:ec_SGDsr_str_cvx_cor}
	Let the assumptions of Theorem~\ref{thm:ec_SGDsr} hold and $\mu > 0$. Then after $K$ iterations of {\tt EC-SGDsr} with the stepsize
	\begin{equation*}
		\gamma = \min\left\{\frac{1}{8\cL},\frac{\delta}{4\sqrt{6L\left(4L+3\delta(\cL+L)\right)}}, \frac{\ln\left(\max\left\{2,\min\left\{\frac{\|x^0-x^*\|^2\mu^2K^2}{D_1'}, \frac{\delta\|x^0-x^*\|^2\mu^3K^3}{6L(\nicefrac{2D_1}{\delta}+\widetilde{D}_1)}\right\}\right\}\right)}{\mu K}\right\}
	\end{equation*}	 
	we have $\EE\left[f(\bar{x}^K) - f(x^*)\right]$ of order
	\begin{equation*}
		\widetilde\cO\left(\left(\cL + \frac{L+\sqrt{\delta L\cL}}{\delta}\right)\|x^0 - x^*\|^2\exp\left(-\frac{\mu}{\cL+\frac{L+\sqrt{\delta L\cL}}{\delta}}K\right) + \frac{D_1'}{\mu K} + \frac{L(\widetilde{D}_1 + \nicefrac{D_1}{\delta})}{\delta\mu^2 K^2}\right).
	\end{equation*}
	That is, to achive $\EE\left[f(\bar{x}^K) - f(x^*)\right] \le \varepsilon$ {\tt EC-SGDsr} requires
	\begin{equation*}
		\widetilde{\cO}\left(\frac{\cL}{\mu} + \frac{L+\sqrt{\delta L\cL}}{\delta\mu} + \frac{D_1'}{\mu\varepsilon} + \frac{\sqrt{L(\widetilde{D}_1 + \nicefrac{D_1}{\delta})}}{\mu\sqrt{\delta\varepsilon}}\right) \text{ iterations.}
	\end{equation*}
\end{corollary}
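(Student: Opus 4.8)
The plan is to obtain Corollary~\ref{cor:ec_SGDsr_str_cvx_cor} as a direct consequence of the $\mu>0$ bound of Theorem~\ref{thm:ec_SGDsr} together with the tuning Lemma~\ref{lem:lemma2_stich}. First I would record the guarantee of Theorem~\ref{thm:ec_SGDsr} in the strongly quasi-convex regime. Since $\rho_1=\rho_2=1$ and $\gamma\le \tfrac{1}{8\cL}\le\tfrac{1}{2\mu}$, the contraction rate is $\eta=\min\{\tfrac{\gamma\mu}{2},\tfrac14,\tfrac14\}=\tfrac{\gamma\mu}{2}$, and, substituting $D_3=\tfrac{6L\gamma}{\delta}(\tfrac{2D_1}{\delta}+\widetilde D_1)$ and using $\sigma_{1,0}^2=\sigma_{2,0}^2=0$ and $M_1D_2=0$, the bound reads
\[
\EE\left[f(\bar x^K)-f(x^*)\right]\le \left(1-\tfrac{\gamma\mu}{2}\right)^K\frac{4\|x^0-x^*\|^2}{\gamma}+4\gamma D_1'+\frac{24L\gamma^2}{\delta}\left(\frac{2D_1}{\delta}+\widetilde D_1\right).
\]

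Next I would match this to the hypothesis $r_K\le \tfrac{a}{\gamma W_K}+c_1\gamma+c_2\gamma^2$ of Lemma~\ref{lem:lemma2_stich}, with $r_K=\EE[f(\bar x^K)-f(x^*)]$ and the identifications
\[
a=4\|x^0-x^*\|^2,\quad c_1=4D_1',\quad c_2=\frac{24L}{\delta}\left(\frac{2D_1}{\delta}+\widetilde D_1\right),\quad \frac1d=\min\left\{\frac{1}{8\cL},\frac{\delta}{4\sqrt{6L(4L+3\delta(\cL+L))}}\right\},
\]
and $\rho_1=\rho_2=1$. A quick check shows $\tfrac{a\mu^2K^2}{c_1}=\tfrac{\|x^0-x^*\|^2\mu^2K^2}{D_1'}$ and $\tfrac{a\mu^3K^3}{c_2}=\tfrac{\delta\|x^0-x^*\|^2\mu^3K^3}{6L(2D_1/\delta+\widetilde D_1)}$, so the stepsize prescribed in the corollary is exactly the stepsize \eqref{eq:lemma2_stich_gamma} for these constants; hence, for $K$ large enough that $\tfrac{\ln(\cdots)}{K}\le\min\{\rho_1,\rho_2\}=1$, the lemma yields $r_K=\widetilde\cO\!\left(da\exp(-\tfrac{\mu}{d}K)+\tfrac{c_1}{\mu K}+\tfrac{c_2}{\mu^2K^2}\right)$.

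Finally I would translate the constants into the claimed order. The only genuinely fiddly step is estimating the stepsize inverse $d=\max\{8\cL,\tfrac{4\sqrt{6L(4L+3\delta(\cL+L))}}{\delta}\}$: subadditivity $\sqrt{x+y+z}\le\sqrt x+\sqrt y+\sqrt z$ together with $\delta\le1$ gives $\sqrt{L(4L+3\delta(\cL+L))}=\cO(L+\sqrt{\delta L\cL})$, whence $d=\cO(\cL+\tfrac{L+\sqrt{\delta L\cL}}{\delta})$. Substituting $d$, $c_1$, $c_2$ into the displayed bound reproduces the stated order for $\EE[f(\bar x^K)-f(x^*)]$. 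To reach an $\varepsilon$-solution I would force each of the three terms below $\varepsilon$: the exponential term gives $K=\widetilde\cO(d/\mu)=\widetilde\cO(\tfrac{\cL}{\mu}+\tfrac{L+\sqrt{\delta L\cL}}{\delta\mu})$, the $\tfrac{c_1}{\mu K}$ term gives $K=\cO(\tfrac{D_1'}{\mu\varepsilon})$, and the $\tfrac{c_2}{\mu^2K^2}$ term gives $K=\cO(\tfrac{\sqrt{c_2}}{\mu\sqrt\varepsilon})=\cO(\tfrac{\sqrt{L(\widetilde D_1+D_1/\delta)}}{\mu\sqrt{\delta\varepsilon}})$; taking the maximum yields the claimed iteration complexity. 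The main obstacle is thus purely the bookkeeping of $d$ and the verification that the prescribed $\gamma$ coincides with \eqref{eq:lemma2_stich_gamma}; there is no conceptual difficulty once Theorem~\ref{thm:ec_SGDsr} is available.
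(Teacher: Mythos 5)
Your proposal is correct and follows exactly the paper's intended route: the paper proves this corollary simply by invoking Lemma~\ref{lem:lemma2_stich} on the bound of Theorem~\ref{thm:ec_SGDsr}, with the same identifications $a=4\|x^0-x^*\|^2$, $c_1=4D_1'$, $c_2=\frac{24L}{\delta}\left(\frac{2D_1}{\delta}+\widetilde{D}_1\right)$ and $\frac1d$ equal to the theorem's stepsize cap that you give. Your bookkeeping of $d=\cO\left(\cL+\frac{L+\sqrt{\delta L\cL}}{\delta}\right)$ and the resulting three-term iteration count match the stated conclusion.
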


Applying Lemma~\ref{lem:lemma_technical_cvx} we get the complexity result in the case when $\mu = 0$.
\begin{corollary}\label{cor:ec_sgdsr_cvx_cor}
	Let the assumptions of Theorem~\ref{thm:ec_SGDsr} hold and $\mu = 0$. Then after $K$ iterations of {\tt EC-SGDsr} with the stepsize
	\begin{eqnarray*}
		\gamma_0 &=&\min\left\{\frac{1}{8\cL},\frac{\delta}{4\sqrt{6L\left(4L+3\delta(\cL+L)\right)}}\right\}\\
		\gamma &=& \min\left\{\gamma_0, \sqrt{\frac{\|x^0 - x^*\|^2}{D_1' K}}, \sqrt[3]{\frac{\|x^0 - x^*\|^2\delta}{6L(\nicefrac{2D_1}{\delta}+\widetilde{D}_1) K}}\right\}	\end{eqnarray*}		
	we have $\EE\left[f(\bar{x}^K) - f(x^*)\right]$ of order
	\begin{equation*}
		\cO\left(\frac{R_0^2\left(\cL+\frac{L+\sqrt{\delta L\cL}}{\delta}\right)}{K} + \sqrt{\frac{R_0^2 D_1'}{K}} + \frac{\sqrt[3]{LR_0^4(\nicefrac{2D_1}{\delta}+\widetilde{D}_1)}}{\left(\delta K^2\right)^{\nicefrac{1}{3}}}\right)
	\end{equation*}
	where $R_0 = \|x^0 - x^*\|^2$. That is, to achive $\EE\left[f(\bar{x}^K) - f(x^*)\right] \le \varepsilon$ {\tt EC-SGDsr} requires
	\begin{equation*}
		\cO\left(\frac{R_0^2\left(\cL+\frac{L+\sqrt{\delta L\cL}}{\delta}\right)}{\varepsilon} + \frac{R_0^2 D_1'}{\varepsilon^2} + \frac{R_0^2\sqrt{L(\nicefrac{2D_1}{\delta}+\widetilde{D}_1)}}{\sqrt{\delta \varepsilon^3}}\right)
	\end{equation*}
	iterations.
\end{corollary}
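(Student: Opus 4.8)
The plan is to recognize that {\tt EC-SGDsr} is simply the instance of the {\tt EC-SGD} template \eqref{eq:v^k_def_ec_sgd}--\eqref{eq:e^k_def_ec_sgd} in which each worker uses the arbitrary-sampling estimator $g_i^k=\nabla f_{\xi_i}(x^k)$, so the claim follows in two moves: first verify that Assumption~\ref{ass:key_assumption_finite_sums_new} holds with the listed parameters, then feed those parameters into Theorem~\ref{thm:ec_sgd_main_result_new}. The first move is essentially a translation exercise, since Lemma~\ref{lem:key_lemma_ec-SGDsr} already supplies the three required second-moment estimates. Matching its first bound against \eqref{eq:second_moment_bound_g_i^k_new} reads off $A=2L$, $B_1=B_2=0$ and $D_1=\frac{2}{n}\sum_i\|\nabla f_i(x^*)\|^2$; matching the second against \eqref{eq:variance_bound_g_i^k_new} gives $\widetilde A=3(\cL+L)$, $\widetilde B_1=\widetilde B_2=0$ and $\widetilde D_1=\frac{3}{n}\sum_i\EE_{\cD}\|\nabla f_{\xi_i}(x^*)-\nabla f_i(x^*)\|^2$; matching the third against \eqref{eq:second_moment_bound_new} gives $A'=2\cL$, $B_1'=B_2'=0$ and $D_1'=\frac{2}{n^2}\sum_i\EE_{\cD}\|\nabla f_{\xi_i}(x^*)-\nabla f_i(x^*)\|^2$.

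Since {\tt EC-SGDsr} carries no variance-reduction machinery, I would set $\sigma_{1,k}^2\equiv\sigma_{2,k}^2\equiv 0$, so that the control recursions \eqref{eq:sigma_k+1_bound_1}--\eqref{eq:sigma_k+1_bound_2} hold trivially ($0\le 0$) with $\rho_1=\rho_2=1$, $C_1=C_2=0$, $G=0$ and $D_2=0$. This places us in exactly the $\rho_1=\rho_2=1$ regime flagged in the footnote of Lemma~\ref{lem:ec_sgd_key_lemma_new}, under which the otherwise-singular factors $\tfrac{1}{1-\rho_1}$ and $\tfrac{1}{1-\rho_2}$ are annihilated by their accompanying zero coefficients; I must invoke this convention explicitly so that $F_1,F_2$ collapse to $0$ and $D_3$ in \eqref{eq:ec_sgd_parameters_new_2} reduces to $\tfrac{6L\gamma}{\delta}\big(\tfrac{2D_1}{\delta}+\widetilde D_1\big)$, as claimed.

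With the parameters fixed I would then check that the two stepsize constraints of Theorem~\ref{thm:ec_sgd_main_result_new} specialize correctly. Because $C_1=C_2=0$, the first constraint $\gamma\le\frac{1}{4(A'+C_1M_1+C_2M_2)}$ becomes $\gamma\le\frac{1}{8\cL}$; in the second, the bracket $\frac{2A}{\delta}+\widetilde A+(\text{terms with }\tfrac{1}{1-\rho_i})$ collapses to $\frac{4L}{\delta}+3(\cL+L)$, and a one-line rearrangement gives the identity $\sqrt{\delta/(96L(\tfrac{4L}{\delta}+3(\cL+L)))}=\delta/(4\sqrt{6L(4L+3\delta(\cL+L))})$, which is the second term of the stated minimum. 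I would also note that the constraint $\gamma\le\frac{\delta}{4\mu}$ from Lemma~\ref{lem:ec_sgd_key_lemma_new} is redundant here: the square-root term is at most $\frac{\delta}{4L}\le\frac{\delta}{4\mu}$ because $6L(4L+3\delta(\cL+L))\ge L^2$ and $\mu\le L$. Finally, substituting $F_1=F_2=0$, $\sigma_{1,0}^2=\sigma_{2,0}^2=0$, $D_2=0$, $T^0=\|x^0-x^*\|^2$ (as $e^0=0$ gives $\tx^0=x^0$) and $\eta=\tfrac{\gamma\mu}{2}$ (valid since $\gamma\le\tfrac{1}{8\cL}$ together with $\mu\le\cL$ forces $\tfrac{\gamma\mu}{2}\le\tfrac14$) into the two display bounds of Theorem~\ref{thm:ec_sgd_main_result_new}, and expanding $D_3$, yields exactly the two claimed convergence estimates.

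The proof carries essentially no conceptual difficulty, since Lemma~\ref{lem:key_lemma_ec-SGDsr} and Theorem~\ref{thm:ec_sgd_main_result_new} do all the heavy lifting; the only places demanding care are the bookkeeping of the degenerate $\rho_1=\rho_2=1$ case—making sure the singular $\tfrac{1}{1-\rho_i}$ factors are legitimately set to zero via the footnote convention rather than left undefined—and the algebraic identity reconciling the two equivalent-looking forms of the stepsize bound. I expect the $\rho=1$ degeneracy to be the main, if modest, obstacle.
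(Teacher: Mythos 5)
Your proposal has a genuine gap: it re-derives Theorem~\ref{thm:ec_SGDsr} (verification of Assumption~\ref{ass:key_assumption_finite_sums_new} via Lemma~\ref{lem:key_lemma_ec-SGDsr}, the $\rho_1=\rho_2=1$ degeneracy, the stepsize-constraint algebra, and the substitution into Theorem~\ref{thm:ec_sgd_main_result_new}), but that theorem is the \emph{hypothesis} of the corollary, not its content. Direct substitution into the $\mu=0$ display of Theorem~\ref{thm:ec_sgd_main_result_new} only gives a bound parametrized by the stepsize,
\begin{equation*}
\EE\left[f(\bar{x}^K)-f(x^*)\right] \;\le\; \frac{4R_0^2}{\gamma K} + 4\gamma D_1' + \frac{24L\gamma^2}{\delta}\left(\frac{2D_1}{\delta}+\widetilde{D}_1\right),
\end{equation*}
which is \emph{not} "exactly the claimed convergence estimates." The entire content of the corollary is what happens next: one must plug in the specific stepsize $\gamma=\min\bigl\{\gamma_0,\sqrt{\nicefrac{R_0^2}{D_1'K}},\sqrt[3]{\nicefrac{R_0^2\delta}{6L(\nicefrac{2D_1}{\delta}+\widetilde{D}_1)K}}\bigr\}$ and show that it balances the $\nicefrac{1}{(\gamma K)}$ term against the $\gamma$ and $\gamma^2$ terms, yielding the three-term rate $\cO\bigl(\nicefrac{R_0^2(\cdot)}{K}+\sqrt{\nicefrac{R_0^2D_1'}{K}}+\nicefrac{\sqrt[3]{\cdot}}{(\delta K^2)^{\nicefrac{1}{3}}}\bigr)$. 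In the paper this is precisely Lemma~\ref{lem:lemma_technical_cvx} (applied with $a=4R_0^2$, $b_1=b_2=0$, $c_1=4D_1'$, $c_2=\nicefrac{24L}{\delta}\,(\nicefrac{2D_1}{\delta}+\widetilde{D}_1)$); your proposal never invokes it nor supplies an equivalent balancing argument, and your sketch nowhere uses the $K$-dependent stepsize that appears in the statement you are proving.

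The second claim of the corollary --- the iteration count needed for $\EE\left[f(\bar{x}^K)-f(x^*)\right]\le\varepsilon$ --- is likewise untouched: it requires inverting each of the three terms of the tuned rate (e.g.\ the $\sqrt{\nicefrac{R_0^2D_1'}{K}}$ term forces $K=\Omega(\nicefrac{R_0^2D_1'}{\varepsilon^2})$ and the $K^{-\nicefrac{2}{3}}$ term forces $K=\Omega\bigl(\nicefrac{R_0^2\sqrt{L(\nicefrac{2D_1}{\delta}+\widetilde{D}_1)}}{\sqrt{\delta\varepsilon^3}}\bigr)$), which is a separate, if routine, computation. The parts you did carry out (the parameter dictionary, the identity $\sqrt{\nicefrac{\delta}{96L(\nicefrac{4L}{\delta}+3(\cL+L))}}=\nicefrac{\delta}{4\sqrt{6L(4L+3\delta(\cL+L))}}$, and the handling of the $\rho_1=\rho_2=1$ footnote convention) are correct, but they reconstruct the starting point rather than prove the statement.
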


\subsection{{\tt EC-SGD}}\label{sec:ec_sgd_pure}
In this section we consider problem \eqref{eq:main_problem} with $f_i(x)$ satisfying \eqref{eq:f_i_expectation} where functions $f_{\xi_i}(x)$ are differentiable and $L$-smooth almost surely in $\xi_i$, $i=1,\ldots,n$.
\begin{algorithm}[t]
   \caption{{\tt EC-SGD}}\label{alg:ec-sgd}
\begin{algorithmic}[1]
   \Require learning rate $\gamma>0$, initial vector $x^0 \in \R^d$
	\State Set $e_i^0 = 0$ for all $i=1,\ldots, n$   
   \For{$k=0,1,\dotsc$}
       \State Broadcast $x^{k}$ to all workers
        \For{$i=1,\dotsc,n$ in parallel}
            \State Sample $g^{k}_i = \nabla f_{\xi_i}(x^k)$ independently from other workers
            \State $v_i^k = C(e_i^k + \gamma g_i^k)$
            \State $e_i^{k+1} = e_i^k + \gamma g_i^k - v_i^k$
        \EndFor
        \State $e^k = \frac{1}{n}\sum_{i=1}^ne_i^k$, $g^k = \frac{1}{n}\sum_{i=1}^ng_i^k$, $v^k = \frac{1}{n}\sum_{i=1}^nv_i^k$
       \State $x^{k+1} = x^k - v^k$
   \EndFor
\end{algorithmic}
\end{algorithm}

\begin{lemma}[See also Lemmas 1,2 from~\citep{nguyen2018sgd}]\label{lem:lemma_sgd}
    Assume that $f_{\xi_i}(x)$ are convex in $x$ for every $\xi_i$, $i=1,\ldots,n$. Then for every $x\in\R^d$ and $i=1,\ldots, n$
	\begin{eqnarray*}
		\frac{1}{n}\sum\limits_{i=1}^n\|\nabla f_{i}(x)\|^2 &\le& 4L\left(f(x) - f(x^*)\right) + \frac{2}{n}\sum\limits_{i=1}^n\|\nabla f_{i}(x^*)\|^2,\\
		\frac{1}{n}\sum\limits_{i=1}^n\EE_{\xi_i\sim\cD_i}{\|\nabla f_{\xi_i}(x)-\nabla f_i(x)\|^2} &\le& 12L\left(f(x) - f(x^*)\right) + \frac{3}{n}\sum\limits_{i=1}^n\EE\left[\|\nabla f_{\xi_i}(x^*)-\nabla f_{i}(x^*)\|^2\right],\\
		\EE_{\xi_1,\ldots,\xi_n}{\left\|\frac{1}{n}\sum\limits_{i=1}^n\nabla f_{\xi_i}(x)\right\|^2} &\le& 4L\left(f(x) - f(x^*)\right) + \frac{2}{n^2}\sum\EE\left[\|\nabla f_{\xi_i}(x^*)-\nabla f_i(x^*)\|^2\right].
	\end{eqnarray*}	    
    If further $f(x)$ is $\mu$-strongly convex with $\mu > 0$ and possibly non-convex $f_i,f_{\xi_i}$, then for every $x\in\R^d$ and $i = 1,\ldots, n$
    \begin{eqnarray*}
    	\frac{1}{n}\sum\limits_{i=1}^n\|\nabla f_{i}(x)\|^2 &\le& 4L\kappa\left(f(x) - f(x^*)\right) + \frac{2}{n}\sum\limits_{i=1}^n\|\nabla f_{i}(x^*)\|^2,\\
		\frac{1}{n}\sum\limits_{i=1}^n\EE_{\xi_i\sim\cD_i}{\|\nabla f_{\xi_i}(x)-\nabla f_i(x)\|^2} &\le& 12L\kappa\left(f(x) - f(x^*)\right) + \frac{3}{n}\sum\limits_{i=1}^n\EE\left[\|\nabla f_{\xi_i}(x^*)-\nabla f_{i}(x^*)\|^2\right],\\
    	\EE_{\xi_1,\ldots,\xi_n}{\left\|\frac{1}{n}\sum\limits_{i=1}^n\nabla f_{\xi_i}(x)\right\|^2} &\le& 4L\kappa\left(f(x) - f(x^*)\right) + \frac{2}{n^2}\sum\limits_{i=1}^n\EE\left[\|\nabla f_{\xi_i}(x^*)-\nabla f_i(x^*)\|^2\right].
    \end{eqnarray*}
    where $\kappa = \frac{L}{\mu}$.
\end{lemma}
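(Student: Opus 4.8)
The plan is to handle the two regimes separately, reducing everything to the smoothness corollary \eqref{eq:L_smoothness_cor} in the convex case and to the bare Lipschitz bound $\|\nabla f_i(x)-\nabla f_i(x^*)\|^2\le L^2\|x-x^*\|^2$ in the merely strongly-convex case. The one structural fact used throughout is that $x^*$ minimizes $f$, so $\nabla f(x^*)=\frac1n\sum_i\nabla f_i(x^*)=0$; consequently the shifted vectors $\nabla f_{\xi_i}(x^*)-\nabla f_i(x^*)$ are zero-mean, and they are independent across $i$ because the workers sample independently.

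In the convex case, for the first inequality I would split $\nabla f_i(x)=(\nabla f_i(x)-\nabla f_i(x^*))+\nabla f_i(x^*)$, apply \eqref{eq:a_b_norm_squared} with $n=2$, and bound $\frac1n\sum_i\|\nabla f_i(x)-\nabla f_i(x^*)\|^2$ through \eqref{eq:L_smoothness_cor} applied to each $f_i$; averaging the Bregman divergences $D_{f_i}(x,x^*)$ and using $\nabla f(x^*)=0$ collapses the average to $f(x)-f(x^*)$. For the second inequality I would write $\nabla f_{\xi_i}(x)-\nabla f_i(x)$ as three pieces through $\nabla f_{\xi_i}(x^*)$ and $\nabla f_i(x^*)$, apply \eqref{eq:a_b_norm_squared} with $n=3$, and bound the two optimization pieces by \eqref{eq:L_smoothness_cor} (once for $f_{\xi_i}$, once for $f_i$, using $\EE_{\xi_i}D_{f_{\xi_i}}(x,x^*)=D_{f_i}(x,x^*)$), leaving the noise term $\frac3n\sum_i\EE\|\nabla f_{\xi_i}(x^*)-\nabla f_i(x^*)\|^2$. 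For the third I would decompose $\frac1n\sum_i\nabla f_{\xi_i}(x)$ into $\frac1n\sum_i(\nabla f_{\xi_i}(x)-\nabla f_{\xi_i}(x^*))$ plus $\frac1n\sum_i(\nabla f_{\xi_i}(x^*)-\nabla f_i(x^*))$; Jensen's inequality together with \eqref{eq:L_smoothness_cor} handles the first, while independence and the zero-mean property annihilate the cross terms in the second, producing a sum of $n$ variances with the $1/n^2$ prefactor.

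In the strongly-convex case the $f_i$ and $f_{\xi_i}$ need not be convex, so \eqref{eq:L_smoothness_cor} is unavailable; instead I would use $\|\nabla f_i(x)-\nabla f_i(x^*)\|^2\le L^2\|x-x^*\|^2$ (and its $f_{\xi_i}$ analogue) from \eqref{eq:L_smoothness}, and convert $\|x-x^*\|^2$ into a function-value gap: since $\nabla f(x^*)=0$, $\mu$-strong convexity gives $\|x-x^*\|^2\le\frac2\mu(f(x)-f(x^*))$. Feeding this into the same three decompositions is exactly what turns the bare $L$ into $\kappa=L/\mu$; every other step (the splittings via \eqref{eq:a_b_norm_squared}, Jensen, and the independence/zero-mean computation of the variance term) is carried out verbatim as in the convex case.

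I expect no genuine obstacle: the statement is essentially bookkeeping around two smoothness inequalities. The only point demanding care is the variance term in the third inequality, where one must invoke independence of the $\xi_i$ across workers to drop the cross terms and obtain the $1/n^2$ rather than $1/n$ scaling, and verify the centering $\EE[\nabla f_{\xi_i}(x^*)-\nabla f_i(x^*)]=0$ that makes those cross terms vanish.
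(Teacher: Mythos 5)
Your proposal is correct and follows essentially the same route as the paper's proof: the same two-term, three-term, and two-group decompositions handled via \eqref{eq:a_b_norm_squared}, the bound \eqref{eq:L_smoothness_cor} applied to $f_{\xi_i}$ and $f_i$ (with $\EE_{\xi_i}D_{f_{\xi_i}}(x,x^*)=D_{f_i}(x,x^*)$ and $\nabla f(x^*)=0$ collapsing the Bregman averages), and the independence/zero-mean argument yielding the $1/n^2$ variance term. Your strongly convex case also matches the paper exactly, replacing \eqref{eq:L_smoothness_cor} with the Lipschitz bound $\|\nabla f_i(x)-\nabla f_i(x^*)\|^2\le L^2\|x-x^*\|^2$ and $\|x-x^*\|^2\le\frac{2}{\mu}(f(x)-f(x^*))$, which is where $\kappa$ enters.
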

\begin{proof}
	We start with the case when functions $f_{\xi_i}(x)$ are convex in $x$ for every $\xi_i$. The first inequality follows from \eqref{eq:ec_useful_technical_stuff}. Next, we derive
	\begin{eqnarray*}
		\frac{1}{n}\sum\limits_{i=1}^n\EE_{\xi_i\sim\cD_i}{\|\nabla f_{\xi_i}(x)-\nabla f_i(x)\|^2} &\overset{\eqref{eq:a_b_norm_squared}}{\le}& \frac{3}{n} \sum\limits_{i=1}^n\EE_{\xi_i\sim\cD_i}{\|\nabla f_{\xi_i}(x) - \nabla f_{\xi_i}(x^*)\|^2} \\
		&&\quad + \frac{3}{n}\sum\limits_{i=1}^n\EE_{\xi_i\sim\cD_i}{\|\nabla f_{\xi_i}(x^*) - \nabla f_i(x^*)\|^2}\\
		&&\quad + \frac{3}{n}\sum\limits_{i=1}^n{\|\nabla f_{i}(x^*) - \nabla f_i(x)\|^2}\\
		&\overset{\eqref{eq:L_smoothness_cor}}{\le}& 12L\left(f(x)-f(x^*)\right) + \frac{3}{n}\sum\limits_{i=1}^n\EE{\|\nabla f_{\xi_i}(x^*)\|^2}.
	\end{eqnarray*}
	Due to independence of $\xi_1^k,\ldots,\xi_n^k$ we get
	\begin{eqnarray*}
		\EE_{\xi_1,\ldots,\xi_n}{\left\|\frac{1}{n}\sum\limits_{i=1}^n\nabla f_{\xi_i}(x)\right\|^2} &=& \EE_{\xi_1,\ldots,\xi_n}{\left\|\frac{1}{n}\sum\limits_{i=1}^n\left(\nabla f_{\xi_i}(x)-\nabla f_{\xi_i}(x^*) + \nabla f_{\xi_i}(x^*) - \nabla f_i(x^*)\right)\right\|^2}\\
		&\overset{\eqref{eq:a_b_norm_squared}}{\le}& \frac{2}{n}\sum\limits_{i=1}^n\EE_{\xi_i\sim\cD_i}\left[\|\nabla f_{\xi_i}(x) - \nabla f_{\xi_i}(x^*)\|^2\right]\\
		&&\quad + 2\EE_{\xi_1,\ldots,\xi_n}{\left\|\frac{1}{n}\sum\limits_{i=1}^n\left(\nabla f_{\xi_i}(x^*) - \nabla f_i(x^*)\right)\right\|^2}\\
		&\overset{\eqref{eq:L_smoothness_cor}}{\le}& 4L\left(f(x)-f(x^*)\right) + \frac{2}{n^2}\sum\EE\left[\|\nabla f_{\xi_i}(x^*)-\nabla f_i(x^*)\|^2\right].
	\end{eqnarray*}
	
	Next, we consider the second case: $f(x)$ is $\mu$-strongly convex with possibly non-convex $f_i,f_{\xi_i}$. In this case
	\begin{eqnarray*}
		\frac{1}{n}\sum\limits_{i=1}^n{\|\nabla f_{i}(x)\|^2} &\overset{\eqref{eq:a_b_norm_squared}}{\le}& \frac{2}{n} \sum\limits_{i=1}^n{\|\nabla f_{i}(x) - \nabla f_{i}(x^*)\|^2} + \frac{2}{n}\sum\limits_{i=1}^n{\|\nabla f_{i}(x^*)\|^2}\\
		&\overset{\eqref{eq:L_smoothness}}{\le}& 2L^2\|x - x^*\|^2 + \frac{2}{n}\sum\limits_{i=1}^n{\|\nabla f_{i}(x^*)\|^2}\\
		&\le& \frac{4L^2}{\mu}\left(f(x)-f(x^*)\right) + \frac{2}{n}\sum\limits_{i=1}^n{\|\nabla f_{i}(x^*)\|^2}
	\end{eqnarray*}
	where the last inequality follows from $\mu$-strong convexity of $f$. Similarly, we get
	\begin{eqnarray*}
		\frac{1}{n}\sum\limits_{i=1}^n\EE_{\xi_i\sim\cD_i}\left[{\|\nabla f_{\xi_i}(x)-\nabla f_{i}(x)\|^2}\right] &\overset{\eqref{eq:a_b_norm_squared}}{\le}& \frac{3}{n} \sum\limits_{i=1}^n\EE_{\xi_i\sim\cD_i}\left[{\|\nabla f_{\xi_i}(x)-\nabla f_{\xi_i}(x^*)\|^2}\right]\\
		&&\quad + \frac{3}{n} \sum\limits_{i=1}^n\EE_{\xi_i\sim\cD_i}\left[{\|\nabla f_{\xi_i}(x^*)-\nabla f_{i}(x^*)\|^2}\right]\\
		&&\quad + \frac{3}{n} \sum\limits_{i=1}^n{\|\nabla f_{i}(x^*)-\nabla f_{i}(x)\|^2}\\
		&\overset{\eqref{eq:L_smoothness}}{\le}& 6L^2\|x - x^*\|^2 \\
		&&\quad + \frac{3}{n} \sum\limits_{i=1}^n\EE_{\xi_i\sim\cD_i}\left[{\|\nabla f_{\xi_i}(x^*)-\nabla f_{i}(x^*)\|^2}\right]\\
		&\le& \frac{12L^2}{\mu}\left(f(x)-f(x^*)\right)\\
		&&\quad + \frac{3}{n} \sum\limits_{i=1}^n\EE_{\xi_i\sim\cD_i}\left[{\|\nabla f_{\xi_i}(x^*)-\nabla f_{i}(x^*)\|^2}\right].
	\end{eqnarray*}
	Finally, using independence of $\xi_1^k,\ldots,\xi_n^k$ we derive
	\begin{eqnarray*}
		\EE_{\xi_1,\ldots,\xi_n}{\left\|\frac{1}{n}\sum\limits_{i=1}^n\nabla f_{\xi_i}(x)\right\|^2} &=& \EE_{\xi_1,\ldots,\xi_n}{\left\|\frac{1}{n}\sum\limits_{i=1}^n\left(\nabla f_{\xi_i}(x)-\nabla f_{\xi_i}(x^*) + \nabla f_{\xi_i}(x^*) - \nabla f_i(x^*)\right)\right\|^2}\\
		&\overset{\eqref{eq:a_b_norm_squared}}{\le}& \frac{2}{n}\sum\limits_{i=1}^n\EE_{\xi_i\sim\cD_i}\left[\|\nabla f_{\xi_i}(x) - \nabla f_{\xi_i}(x^*)\|^2\right]\\
		&&\quad + 2\EE_{\xi_1,\ldots,\xi_n}{\left\|\frac{1}{n}\sum\limits_{i=1}^n\left(\nabla f_{\xi_i}(x^*) - \nabla f_i(x^*)\right)\right\|^2}\\
		&\overset{\eqref{eq:L_smoothness}}{\le}& 2L^2\|x-x^*\|^2 + \frac{2}{n^2}\sum\EE\left[\|\nabla f_{\xi_i}(x^*)-\nabla f_i(x^*)\|^2\right]\\
		&\le& \frac{4L^2}{\mu}\left(f(x)-f(x^*)\right) + \frac{2}{n^2}\sum\EE\left[\|\nabla f_{\xi_i}(x^*)-\nabla f_i(x^*)\|^2\right].
	\end{eqnarray*}
\end{proof}

Applying Theorem~\ref{thm:ec_sgd_main_result_new} we get the following result.
\begin{theorem}\label{thm:ec_sgd_pure}
	Assume that $f_\xi(x)$ is convex and $L$-smooth in $x$ for every $\xi$ and $f(x)$ is $\mu$-quasi strongly convex. Then {\tt EC-SGD} satisfies Assumption~\ref{ass:key_assumption_finite_sums_new} with
	\begin{gather*}
		A = A' = 2L,\quad \widetilde{A} = 6L,\quad B_1 = \widetilde{B}_1 = B_1' = B_2 = \widetilde{B}_2 = B_2' = 0,\\
		D_1 = \frac{2}{n}\sum\limits_{i=1}^n\|\nabla f_{i}(x^*)\|^2,\quad \widetilde{D}_1 = \frac{2}{n}\sum\limits_{i=1}^n\EE\left[\|\nabla f_{\xi_i}(x^*) - \nabla f_i(x^*)\|^2\right],\quad \sigma_{1,k}^2 \equiv \sigma_{2,k}^2 \equiv 0,\\
		D_1' = \frac{2}{n^2}\sum\limits_{i=1}^n\EE\left[\|\nabla f_{\xi_i}(x^*) - \nabla f_i(x^*)\|^2\right],\quad \rho_1 = \rho_2 = 1,\quad C_1 = C_2 = 0,\quad G = 0,\quad D_2 = 0,\\
		F_1 = F_2 = 0,\quad D_3 = \frac{6L\gamma}{\delta}\left(\frac{2D_1}{\delta}+\widetilde{D}_1\right),
	\end{gather*}
	with $\gamma$ satisfying
	\begin{equation*}
		\gamma \le \frac{\delta}{8L\sqrt{6+9\delta}}
	\end{equation*}
	and for all $K \ge 0$
	\begin{equation*}
		\EE\left[f(\bar{x}^K) - f(x^*)\right] \le \left(1 - \frac{\gamma\mu}{2}\right)^K\frac{4\|x^0 - x^*\|^2}{\gamma} + 4\gamma\left(D_1' + \frac{12L\gamma}{\delta^2}D_1 + \frac{6L\gamma}{\delta}\widetilde{D}_1\right)
	\end{equation*}
	when $\mu > 0$ and
	\begin{equation*}
		\EE\left[f(\bar{x}^K) - f(x^*)\right] \le \frac{4\|x^0 - x^*\|^2}{K\gamma} + 4\gamma\left(D_1' + \frac{12L\gamma}{\delta^2}D_1 + \frac{6L\gamma}{\delta}\widetilde{D}_1\right)
	\end{equation*}
	when $\mu=0$. If further $f(x)$ is $\mu$-strongly convex with $\mu > 0$ and possibly non-convex $f_i,f_{\xi_i}$, then
	{\tt EC-SGD} satisfies Assumption~\ref{ass:key_assumption_finite_sums_new} with
	\begin{gather*}
		A = A' = 2L\kappa,\quad \widetilde{A} = 6L\kappa,\quad B_1 = \widetilde{B}_1 = B_1' = B_2 = \widetilde{B}_2 = B_2' = 0,\\
		D_1 = \frac{2}{n}\sum\limits_{i=1}^n\|\nabla f_{i}(x^*)\|^2,\quad \widetilde{D}_1 = \frac{2}{n}\sum\limits_{i=1}^n\EE\left[\|\nabla f_{\xi_i}(x^*) - \nabla f_i(x^*)\|^2\right],\quad \sigma_{1,k}^2 \equiv \sigma_{2,k}^2 \equiv 0,\\
		D_1' = \frac{2}{n^2}\sum\limits_{i=1}^n\EE\left[\|\nabla f_{\xi_i}(x^*) - \nabla f_i(x^*)\|^2\right],\quad \rho_1 = \rho_2 = 1,\quad C_1 = C_2 = 0,\quad G = 0,\quad D_2 = 0,\\
		F_1 = F_2 = 0,\quad D_3 = \frac{6L\gamma}{\delta}\left(\frac{2D_1}{\delta}+\widetilde{D}_1\right),
	\end{gather*}
	with $\gamma$ satisfying
	\begin{equation*}
		\gamma \le \min\left\{\frac{1}{8\kappa L},\frac{\delta}{8L\sqrt{3\kappa(2+3\delta)}}\right\}
	\end{equation*}
	and for all $K \ge 0$
	\begin{equation*}
		\EE\left[f(\bar{x}^K) - f(x^*)\right] \le \left(1 - \frac{\gamma\mu}{2}\right)^K\frac{4\|x^0 - x^*\|^2}{\gamma} + 4\gamma\left(D_1' + \frac{12L\gamma}{\delta^2}D_1 + \frac{6L\gamma}{\delta}\widetilde{D}_1\right).
	\end{equation*}
\end{theorem}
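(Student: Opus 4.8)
The statement is a specialization of the general error-compensation machinery, so the plan is to verify that {\tt EC-SGD} satisfies Assumption~\ref{ass:key_assumption_finite_sums_new} for the claimed parameters and then invoke Theorem~\ref{thm:ec_sgd_main_result_new} essentially verbatim. The starting point is Lemma~\ref{lem:lemma_sgd}, which already supplies exactly the three moment estimates the assumption requires. Since {\tt EC-SGD} takes $g_i^k=\nabla f_{\xi_i}(x^k)$, so that $\bar g_i^k=\nabla f_i(x^k)$, the first inequality of Lemma~\ref{lem:lemma_sgd} is precisely \eqref{eq:second_moment_bound_g_i^k_new}, the variance inequality is \eqref{eq:variance_bound_g_i^k_new}, and the aggregate inequality is \eqref{eq:second_moment_bound_new}. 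As there is no variance-reduction process, I would set $\sigma_{1,k}^2\equiv\sigma_{2,k}^2\equiv 0$, so that \eqref{eq:sigma_k+1_bound_1}--\eqref{eq:sigma_k+1_bound_2} hold trivially with $\rho_1=\rho_2=1$ and $C_1=C_2=G=D_2=0$, and every $B$-type constant vanishes.

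First I would read off the constants by matching coefficients. In the convex, $L$-smooth regime the three lines of Lemma~\ref{lem:lemma_sgd} give $2A=4L$ and $2A'=4L$, i.e.\ $A=A'=2L$, and $2\widetilde A=12L$, i.e.\ $\widetilde A=6L$, together with $D_1=\tfrac{2}{n}\sum_i\|\nabla f_i(x^*)\|^2$, the discrepancy term $\widetilde D_1$, and $D_1'=\tfrac{2}{n^2}\sum_i\EE\|\nabla f_{\xi_i}(x^*)-\nabla f_i(x^*)\|^2$. For the second regime ($f$ $\mu$-strongly convex, $f_i,f_{\xi_i}$ possibly nonconvex) I would use the corresponding three inequalities of the same lemma, which differ only by a factor of $\kappa$ on the smoothness constants, yielding $A=A'=2L\kappa$, $\widetilde A=6L\kappa$ and the same $D$-constants.

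Next I would check that the two stepsize conditions of Theorem~\ref{thm:ec_sgd_main_result_new} collapse to the stated forms. Because $C_1=C_2=0$, the condition $\gamma\le\tfrac1{4(A'+C_1M_1+C_2M_2)}$ reduces to $\gamma\le\tfrac1{4A'}$, i.e.\ $\tfrac1{8L}$ (resp.\ $\tfrac1{8L\kappa}$). In the square-root condition all terms carrying a factor $\tfrac1{1-\rho_1}$ or $\tfrac1{1-\rho_2}$ are set to zero under the convention accompanying Lemma~\ref{lem:ec_sgd_key_lemma_new} for $\rho_1=\rho_2=1$ (and vanish anyway since $B_1=\widetilde B_1=C_1=C_2=0$), leaving only $\tfrac{2A}{\delta}+\widetilde A=\tfrac{4L}{\delta}+6L$; substituting and simplifying $\sqrt{\delta/(96L(\cdot))}$ gives $\tfrac{\delta}{8L\sqrt{6+9\delta}}$, while $\tfrac{4L\kappa}{\delta}+6L\kappa$ gives $\tfrac{\delta}{8L\sqrt{3\kappa(2+3\delta)}}$. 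Since $\delta\le 1$ forces $\delta^2<6+9\delta$, the square-root bound is the binding one in the convex case, which is why it alone appears; in the strongly convex case the $\kappa$-dependence means both bounds must be retained.

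Finally the conclusion is immediate from Theorem~\ref{thm:ec_sgd_main_result_new}: with all $B$'s and $D_2$ zero its formulas give $F_1=F_2=0$ and $D_3=\tfrac{6L\gamma}{\delta}\bigl(\tfrac{2D_1}{\delta}+\widetilde D_1\bigr)=\tfrac{12L\gamma}{\delta^2}D_1+\tfrac{6L\gamma}{\delta}\widetilde D_1$; moreover $e^0=0$ gives $\tx^0=x^0$ and hence $T^0=\|x^0-x^*\|^2$, and the vanishing $\sigma_{\cdot,0}^2$ remove the $\gamma F_i\sigma_{i,0}^2$ terms. One need only confirm that $\eta=\min\{\gamma\mu/2,\rho_1/4,\rho_2/4\}$ equals $\gamma\mu/2$, which holds because the stepsize bound (with $L\ge\mu$, $\delta\le1$) forces $\gamma\mu/2\le\tfrac14$. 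Plugging everything into \eqref{eq:main_result_new}--\eqref{eq:main_result_new_cvx} reproduces both displayed rates. This is in effect a bookkeeping argument; the genuinely nontrivial work lives inside Lemma~\ref{lem:lemma_sgd} (its three-way split of the variance combined with $L$-smoothness and expected smoothness), so within the theorem itself I expect the only real obstacle to be checking the stepsize algebra and the $\eta=\gamma\mu/2$ reduction.
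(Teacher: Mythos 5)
Your proposal is correct and follows exactly the paper's own route: the paper proves the moment bounds in Lemma~\ref{lem:lemma_sgd} (with $\bar g_i^k = \nabla f_i(x^k)$), reads off the same parameter values for Assumption~\ref{ass:key_assumption_finite_sums_new} with $\sigma_{1,k}^2\equiv\sigma_{2,k}^2\equiv 0$, $\rho_1=\rho_2=1$, and then applies Theorem~\ref{thm:ec_sgd_main_result_new}, so the theorem itself is precisely the bookkeeping step you describe. Your stepsize algebra (e.g.\ $\sqrt{96(4+6\delta)}=8\sqrt{6+9\delta}$, $\sqrt{192\kappa(2+3\delta)}=8\sqrt{3\kappa(2+3\delta)}$, and the observation that $\delta^2\le 6+9\delta$ makes the square-root bound dominate $\nicefrac{1}{8L}$ in the convex case while both bounds must be kept when $\kappa$ enters) and the reductions $T^0=\|x^0-x^*\|^2$, $\eta=\nicefrac{\gamma\mu}{2}$ all check out.
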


In other words, {\tt EC-SGD} converges with linear rate $\cO\left(\frac{\kappa}{\delta}\ln\frac{1}{\varepsilon}\right)$ to the neighbourhood of the solution when $f_{\xi}(x)$ are convex for each $\xi$ and $\mu > 0$. Applying Lemma~\ref{lem:lemma2_stich} we establish the rate of convergence to $\varepsilon$-solution.
\begin{corollary}\label{cor:ec_SGD_pure_str_cvx_cor}
	Let the assumptions of Theorem~\ref{thm:ec_sgd_pure} hold, $f_{\xi}(x)$ are convex for each $\xi$ and $\mu > 0$. Then after $K$ iterations of {\tt EC-SGD} with the stepsize
	\begin{equation*}
		\gamma = \min\left\{\frac{\delta}{8L\sqrt{6+9\delta}}, \frac{\ln\left(\max\left\{2,\min\left\{\frac{\|x^0-x^*\|^2\mu^2K^2}{D_1'}, \frac{\delta\|x^0-x^*\|^2\mu^3K^3}{6L(\nicefrac{2D_1}{\delta}+\widetilde{D}_1)}\right\}\right\}\right)}{\mu K}\right\}
	\end{equation*}	 
	we have
	\begin{equation*}
		\EE\left[f(\bar{x}^K) - f(x^*)\right] = \widetilde\cO\left(\frac{L}{\delta}\|x^0 - x^*\|^2\exp\left(-\frac{\delta\mu}{L}K\right) + \frac{D_1'}{\mu K} + \frac{L(\widetilde{D}_1 + \nicefrac{D_1}{\delta})}{\delta\mu^2 K^2}\right).
	\end{equation*}
	That is, to achive $\EE\left[f(\bar{x}^K) - f(x^*)\right] \le \varepsilon$ {\tt EC-SGD} requires
	\begin{equation*}
		\widetilde{\cO}\left(\frac{L}{\delta\mu} + \frac{D_1'}{\mu\varepsilon} + \frac{\sqrt{L(\widetilde{D}_1 + \nicefrac{D_1}{\delta})}}{\mu\sqrt{\delta\varepsilon}}\right) \text{ iterations.}
	\end{equation*}
\end{corollary}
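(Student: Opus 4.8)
The plan is to combine the per-iteration guarantee of Theorem~\ref{thm:ec_sgd_pure} with the recursion-to-rate conversion of Lemma~\ref{lem:lemma2_stich}; the corollary is essentially a bookkeeping exercise once those two results are in hand. First I would extract the bound of Theorem~\ref{thm:ec_sgd_pure} (equivalently the weighted bound established inside the proof of Theorem~\ref{thm:main_result_new}) in its $W_K$-form, namely
\begin{equation*}
	r_K \eqdef \EE\left[f(\bar x^K) - f(x^*)\right] \le \frac{4\|x^0-x^*\|^2}{\gamma W_K} + 4\gamma D_1' + \frac{24L\gamma^2}{\delta}\left(\frac{2D_1}{\delta} + \widetilde{D}_1\right),
\end{equation*}
using $T^0 = \|x^0-x^*\|^2$ (since $e^0=0$ and $\sigma_{1,0}^2=\sigma_{2,0}^2=0$), $F_1=F_2=0$, $M_1D_2=0$, and $D_3 = \frac{6L\gamma}{\delta}(\frac{2D_1}{\delta}+\widetilde{D}_1)$ as supplied by Theorem~\ref{thm:ec_sgd_pure}. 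This is exactly the shape $r_K \le \frac{a}{\gamma W_K} + c_1\gamma + c_2\gamma^2$ required by Lemma~\ref{lem:lemma2_stich}, with $a = 4\|x^0-x^*\|^2$, $c_1 = 4D_1'$, $c_2 = \frac{24L}{\delta}(\frac{2D_1}{\delta}+\widetilde{D}_1)$, and stepsize cap $\nicefrac{1}{d} = \gamma_0 = \frac{\delta}{8L\sqrt{6+9\delta}}$.

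Second I would invoke Lemma~\ref{lem:lemma2_stich} directly. The only parameter-dependent quantity in its conclusion \eqref{eq:lemma2_stich} is the exponent $\min\{\mu/d,\rho_1,\rho_2\}$. Here EC-SGD has $\rho_1=\rho_2=1$ and $d = \Theta(L/\delta)$ (because $\sqrt{6+9\delta}=\Theta(1)$ on $\delta\in(0,1]$), while $\mu\le L$ forces $\mu/d\le 1$, so this minimum is $\mu/d = \Theta(\delta\mu/L)$. Substituting $a,c_1,c_2,d$ into \eqref{eq:lemma2_stich} yields exactly
\begin{equation*}
	r_K = \widetilde\cO\left(\frac{L}{\delta}\|x^0 - x^*\|^2\exp\left(-\frac{\delta\mu}{L}K\right) + \frac{D_1'}{\mu K} + \frac{L(\widetilde{D}_1 + \nicefrac{D_1}{\delta})}{\delta\mu^2 K^2}\right),
\end{equation*}
and I would verify that the stepsize in the corollary statement is literally the $\gamma$ of \eqref{eq:lemma2_stich_gamma} under these identifications: the two inner-$\ln$ arguments $\nicefrac{a\mu^2K^2}{c_1}$ and $\nicefrac{a\mu^3K^3}{c_2}$ collapse to the stated $\nicefrac{\|x^0-x^*\|^2\mu^2K^2}{D_1'}$ and $\nicefrac{\delta\|x^0-x^*\|^2\mu^3K^3}{6L(\nicefrac{2D_1}{\delta}+\widetilde{D}_1)}$.

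Third, the iteration-complexity claim follows by forcing each of the three $\widetilde\cO$-terms below $\varepsilon$ and taking the worst threshold: the exponential term needs $K=\widetilde\cO(\nicefrac{L}{\delta\mu})$, the $\nicefrac{1}{K}$ term needs $K=\widetilde\cO(\nicefrac{D_1'}{\mu\varepsilon})$, and the $\nicefrac{1}{K^2}$ term needs $K=\widetilde\cO\left(\nicefrac{\sqrt{L(\widetilde{D}_1+\nicefrac{D_1}{\delta})}}{\mu\sqrt{\delta\varepsilon}}\right)$, whose sum is the advertised rate. The main obstacle is not analytical but purely a matter of matching the EC-SGD constants of Theorem~\ref{thm:ec_sgd_pure} to the abstract $a,c_1,c_2,d$ of Lemma~\ref{lem:lemma2_stich} and confirming that its hypothesis $\frac{\ln(\cdots)}{K}\le\min\{\rho_1,\rho_2\}=1$ holds for all sufficiently large $K$ (automatic in the $\widetilde\cO$ regime). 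All the genuinely hard work — the error-feedback accumulation estimate $3L\sum_k w_k\EE\|e^k\|^2 \le \tfrac14\sum_k w_k r_k + \ldots$ — has already been discharged in Lemma~\ref{lem:ec_sgd_key_lemma_new} and folded into Theorem~\ref{thm:ec_sgd_pure}.
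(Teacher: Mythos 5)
Your proposal is correct and follows the same route the paper (implicitly) takes: it instantiates Lemma~\ref{lem:lemma2_stich} with the EC-SGD bound from Theorem~\ref{thm:ec_sgd_pure} in its $W_K$-form, with $a=4\|x^0-x^*\|^2$, $c_1=4D_1'$, $c_2=\tfrac{24L}{\delta}\bigl(\tfrac{2D_1}{\delta}+\widetilde{D}_1\bigr)$, $\nicefrac{1}{d}=\tfrac{\delta}{8L\sqrt{6+9\delta}}$, and $\rho_1=\rho_2=1$, which reproduces the stated stepsize, the $\widetilde\cO$ rate, and the iteration complexity exactly. Your identification of the constants and the check that the lemma's hypothesis $\tfrac{\ln(\cdots)}{K}\le\min\{\rho_1,\rho_2\}=1$ is automatic are precisely the bookkeeping the paper leaves to the reader.
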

\begin{corollary}\label{cor:ec_SGD_pure_str_cvx_cor_2}
	Let the assumptions of Theorem~\ref{thm:ec_sgd_pure} hold and $f(x)$ is $\mu$-strongly convex with $\mu > 0$ and possibly non-convex $f_i,f_{\xi_i}$. Then after $K$ iterations of {\tt EC-SGD} with the stepsize
	\begin{equation*}
		\gamma = \min\left\{\frac{1}{8\kappa L}, \frac{\delta}{8L\sqrt{3\kappa(2+3\delta)}}, \frac{\ln\left(\max\left\{2,\min\left\{\frac{\|x^0-x^*\|^2\mu^2K^2}{D_1'}, \frac{\delta\|x^0-x^*\|^2\mu^3K^3}{6L(\nicefrac{2D_1}{\delta}+\widetilde{D}_1)}\right\}\right\}\right)}{\mu K}\right\}
	\end{equation*}	 
	we have $\EE\left[f(\bar{x}^K) - f(x^*)\right]$ of order
	\begin{equation*}
		\widetilde\cO\left(\left(L\kappa+\frac{L\sqrt{\kappa}}{\delta}\right)\|x^0 - x^*\|^2\exp\left(-\min\left\{\frac{\delta\mu}{L\sqrt{\kappa}}, \frac{1}{\kappa^2}\right\}K\right) + \frac{D_1'}{\mu K} + \frac{L(\widetilde{D}_1 + \nicefrac{D_1}{\delta})}{\delta\mu^2 K^2}\right).
	\end{equation*}
	That is, to achive $\EE\left[f(\bar{x}^K) - f(x^*)\right] \le \varepsilon$ {\tt EC-SGD} requires
	\begin{equation*}
		\widetilde{\cO}\left(\kappa^2 + \frac{\kappa^{\nicefrac{3}{2}}}{\delta} + \frac{D_1'}{\mu\varepsilon} + \frac{\sqrt{L(\widetilde{D}_1 + \nicefrac{D_1}{\delta})}}{\mu\sqrt{\delta\varepsilon}}\right) \text{ iterations.}
	\end{equation*}
\end{corollary}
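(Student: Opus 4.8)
The plan is to obtain the claimed $\widetilde{\cO}$ rate by feeding the per-$K$ bound of Theorem~\ref{thm:ec_sgd_pure} (its second, strongly-convex-with-possibly-nonconvex-components regime, where $A=A'=2L\kappa$, $\widetilde A = 6L\kappa$) into the technical Lemma~\ref{lem:lemma2_stich}. The first step is to put that bound into the exact shape \eqref{eq:lemma2_stich_tech_1} required by the lemma. I would trace the bound back to the form produced inside the proof of Theorem~\ref{thm:main_result_new}, namely
\[
	\EE\left[f(\bar x^K) - f(x^*)\right] \le \frac{4\left(T^0 + \gamma F_1\sigma_{1,0}^2 + \gamma F_2\sigma_{2,0}^2\right)}{\gamma W_K} + 4\gamma\left(D_1' + M_1 D_2 + D_3\right),
\]
and specialize the parameters listed in Theorem~\ref{thm:ec_sgd_pure}. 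In this regime $\sigma_{1,k}^2\equiv\sigma_{2,k}^2\equiv 0$, so $F_1=F_2=0$ and $T^0 = \|\tx^0-x^*\|^2 = \|x^0-x^*\|^2 =: R_0$ (since $e^0=0$); moreover $D_2=0$ kills the $M_1D_2$ term, while $D_1'$ and $D_3 = \tfrac{6L\gamma}{\delta}\bigl(\tfrac{2D_1}{\delta}+\widetilde D_1\bigr)$ survive. This yields
\[
	r_K \;\le\; \frac{4R_0}{\gamma W_K} + 4D_1'\,\gamma + \frac{24L}{\delta}\Bigl(\tfrac{2D_1}{\delta}+\widetilde D_1\Bigr)\gamma^2,
\]
which is precisely \eqref{eq:lemma2_stich_tech_1} with $a = 4R_0$, $c_1 = 4D_1'$, $c_2 = \tfrac{24L}{\delta}\bigl(\tfrac{2D_1}{\delta}+\widetilde D_1\bigr)$, and $\tfrac1d = \gamma_0 = \min\bigl\{\tfrac{1}{8\kappa L},\,\tfrac{\delta}{8L\sqrt{3\kappa(2+3\delta)}}\bigr\}$, with $\rho_1=\rho_2=1$.

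Next I would verify that the stepsize named in the corollary is exactly the lemma's prescription \eqref{eq:lemma2_stich_gamma} for these constants: substituting gives $a\mu^2K^2/c_1 = R_0\mu^2K^2/D_1'$ and $a\mu^3K^3/c_2 = \delta R_0\mu^3K^3/\bigl(6L(\tfrac{2D_1}{\delta}+\widetilde D_1)\bigr)$, matching the two arguments of the inner $\min$, while $1/d=\gamma_0$ reproduces the first two terms of the outer $\min$. Since $\rho_1=\rho_2=1$, the lemma's admissibility hypothesis $\tfrac{\ln(\cdots)}{K}\le\min\{\rho_1,\rho_2\}=1$ holds once $K$ exceeds a logarithmic threshold, which is absorbed into $\widetilde\cO$. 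Invoking Lemma~\ref{lem:lemma2_stich} then delivers
\[
	r_K = \widetilde\cO\!\left(d\,a\,\exp\!\Bigl(-\min\bigl\{\tfrac{\mu}{d},\rho_1,\rho_2\bigr\}K\Bigr) + \frac{c_1}{\mu K} + \frac{c_2}{\mu^2K^2}\right).
\]

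The remaining work is constant bookkeeping, using $\kappa = L/\mu$ and $\sqrt{3(2+3\delta)}=\Theta(1)$ for $\delta\in(0,1]$. I would show $d = \Theta\bigl(L\kappa + \tfrac{L\sqrt\kappa}{\delta}\bigr)$, whence $d\,a = \Theta\bigl((L\kappa+\tfrac{L\sqrt\kappa}{\delta})R_0\bigr)$ matches the exponential prefactor; and $\tfrac{\mu}{d} = \min\bigl\{\tfrac{\mu}{8\kappa L},\tfrac{\mu\delta}{8L\sqrt{3\kappa(2+3\delta)}}\bigr\} = \Theta\bigl(\min\{\tfrac{1}{\kappa^2},\tfrac{\delta}{\kappa^{3/2}}\}\bigr) = \Theta\bigl(\min\{\tfrac{1}{\kappa^2},\tfrac{\delta\mu}{L\sqrt\kappa}\}\bigr)$, matching the stated exponent. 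The substitutions $\tfrac{c_1}{\mu K} = \Theta\bigl(\tfrac{D_1'}{\mu K}\bigr)$ and $\tfrac{c_2}{\mu^2K^2} = \Theta\bigl(\tfrac{L(\widetilde D_1 + D_1/\delta)}{\delta\mu^2K^2}\bigr)$ give the other two terms. Finally, the $\varepsilon$-complexity follows by forcing each of the three terms below $\varepsilon$: the exponential term needs $K=\widetilde\cO(d/\mu)=\widetilde\cO\bigl(\kappa^2 + \tfrac{\kappa^{3/2}}{\delta}\bigr)$; the second needs $K=\cO\bigl(\tfrac{D_1'}{\mu\varepsilon}\bigr)$; the third needs $K=\cO\bigl(\tfrac{\sqrt{c_2}}{\mu\sqrt\varepsilon}\bigr) = \cO\bigl(\tfrac{\sqrt{L(\widetilde D_1 + D_1/\delta)}}{\mu\sqrt{\delta\varepsilon}}\bigr)$, summing to the asserted bound. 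The only delicate point is the constant-chasing that turns $\mu/d$ and $d\,a$ into clean powers of $\kappa$; the analytic content is entirely in Lemma~\ref{lem:lemma2_stich}, so no new estimates are needed (one should just note the harmless degenerate case $D_1=\widetilde D_1=0$, where $c_2=0$ and the third term is simply absent).
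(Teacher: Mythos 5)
Your proposal is correct and is exactly the paper's (implicit) argument: the corollary is obtained by feeding the second regime of Theorem~\ref{thm:ec_sgd_pure} (with $a=4\|x^0-x^*\|^2$, $c_1=4D_1'$, $c_2=\frac{24L}{\delta}\bigl(\frac{2D_1}{\delta}+\widetilde D_1\bigr)$, $\frac{1}{d}=\gamma_0$, $\rho_1=\rho_2=1$) into Lemma~\ref{lem:lemma2_stich}, whose stepsize prescription \eqref{eq:lemma2_stich_gamma} reproduces the stated $\gamma$ verbatim. Your constant bookkeeping ($d=\Theta(L\kappa+\frac{L\sqrt{\kappa}}{\delta})$, $\frac{\mu}{d}=\Theta(\min\{\frac{1}{\kappa^2},\frac{\delta\mu}{L\sqrt{\kappa}}\})$, and the resulting $\widetilde{\cO}(\kappa^2+\frac{\kappa^{3/2}}{\delta}+\frac{D_1'}{\mu\varepsilon}+\frac{\sqrt{L(\widetilde D_1+\nicefrac{D_1}{\delta})}}{\mu\sqrt{\delta\varepsilon}})$ complexity) checks out.
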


Applying Lemma~\ref{lem:lemma_technical_cvx} we get the complexity result in the case when $\mu = 0$.
\begin{corollary}\label{cor:ec_sgd_cvx_cor}
	Let the assumptions of Theorem~\ref{thm:ec_sgd_pure} hold, $f_{\xi}(x)$ are convex for each $\xi$ and $\mu = 0$. Then after $K$ iterations of {\tt EC-SGD} with the stepsize
	\begin{eqnarray*}
		\gamma &=& \min\left\{\frac{\delta}{8L\sqrt{6+9\delta}}, \sqrt{\frac{\|x^0 - x^*\|^2}{D_1' K}}, \sqrt[3]{\frac{\|x^0 - x^*\|^2\delta}{6L(\nicefrac{2D_1}{\delta}+\widetilde{D}_1) K}}\right\}	\end{eqnarray*}		
	we have $\EE\left[f(\bar{x}^K) - f(x^*)\right]$ of order
	\begin{equation*}
		\cO\left(\frac{LR_0^2}{\delta K} + \sqrt{\frac{R_0^2 D_1'}{K}} + \frac{\sqrt[3]{LR_0^4(\nicefrac{2D_1}{\delta}+\widetilde{D}_1)}}{\left(\delta K^2\right)^{\nicefrac{1}{3}}}\right)
	\end{equation*}
	where $R_0 = \|x^0 - x^*\|^2$. That is, to achive $\EE\left[f(\bar{x}^K) - f(x^*)\right] \le \varepsilon$ {\tt EC-SGD} requires
	\begin{equation*}
		\cO\left(\frac{LR_0^2}{\delta\varepsilon} + \frac{R_0^2 D_1'}{\varepsilon^2} + \frac{R_0^2\sqrt{L(\nicefrac{2D_1}{\delta}+\widetilde{D}_1)}}{\sqrt{\delta \varepsilon^3}}\right)
	\end{equation*}
	iterations.
\end{corollary}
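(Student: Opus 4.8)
The plan is to specialize the generic convex rate already established in Theorem~\ref{thm:ec_sgd_pure} (the case $\mu=0$) to the prescribed stepsize by means of the purely technical Lemma~\ref{lem:lemma_technical_cvx}, and then to invert the resulting rate to read off the iteration complexity. No new probabilistic or analytic input is needed: the corollary is entirely a consequence of the master bound plus the deterministic recursion lemma.

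First I would take the $\mu=0$ conclusion of Theorem~\ref{thm:ec_sgd_pure},
$$\EE\left[f(\bar x^K)-f(x^*)\right]\le \frac{4\|x^0-x^*\|^2}{\gamma K}+4\gamma\left(D_1'+\frac{12L\gamma}{\delta^2}D_1+\frac{6L\gamma}{\delta}\widetilde D_1\right),$$
and rewrite its right-hand side in the canonical form $r_K\le \frac{a}{\gamma K}+\frac{b_1\gamma}{K}+\frac{b_2\gamma^2}{K}+c_1\gamma+c_2\gamma^2$ of Lemma~\ref{lem:lemma_technical_cvx}. Collecting powers of $\gamma$ yields the identification $a=4\|x^0-x^*\|^2$, $b_1=b_2=0$, $c_1=4D_1'$, and $c_2=\frac{24L}{\delta}\left(\frac{2D_1}{\delta}+\widetilde D_1\right)$, together with the cap $\gamma_0=\frac{\delta}{8L\sqrt{6+9\delta}}$ coming from the admissibility constraint on $\gamma$ in Theorem~\ref{thm:ec_sgd_pure}. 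The vanishing of $b_1$ and $b_2$ is the one structural point worth flagging: for {\tt EC-SGD} one has $\sigma_{1,k}^2\equiv\sigma_{2,k}^2\equiv 0$, hence $F_1\sigma_{1,0}^2=F_2\sigma_{2,0}^2=0$ and the two $\nicefrac{\cdot}{K}$ contributions of the general bound drop out.

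Next I would apply Lemma~\ref{lem:lemma_technical_cvx} with these constants. Because $b_1=b_2=0$, the prescribed stepsize collapses to $\gamma=\min\{\gamma_0,\sqrt{\nicefrac{a}{c_1K}},\sqrt[3]{\nicefrac{a}{c_2K}}\}$; substituting the values above and cancelling numerical factors reproduces the stepsize stated in the corollary, since $\sqrt{\nicefrac{a}{c_1K}}=\sqrt{\nicefrac{\|x^0-x^*\|^2}{D_1'K}}$ and $\sqrt[3]{\nicefrac{a}{c_2K}}=\sqrt[3]{\nicefrac{\|x^0-x^*\|^2\delta}{6L(\nicefrac{2D_1}{\delta}+\widetilde D_1)K}}$. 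The lemma then delivers $r_K=\cO\left(\nicefrac{a}{\gamma_0K}+\sqrt{\nicefrac{ac_1}{K}}+\nicefrac{\sqrt[3]{a^2c_2}}{K^{2/3}}\right)$; using $\delta\le 1$ to absorb $\sqrt{6+9\delta}=\cO(1)$ in $\gamma_0$ and inserting $a,c_1,c_2$ produces exactly the three displayed terms (with the convention $R_0=\|x^0-x^*\|$, so that the $\gamma_0$-term is $\cO(\nicefrac{LR_0^2}{\delta K})$).

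Finally I would convert the rate into an iteration complexity by requiring each of the three terms to be at most $\varepsilon$ and solving for $K$: the term $\nicefrac{LR_0^2}{\delta K}$ forces $K\gtrsim\nicefrac{LR_0^2}{\delta\varepsilon}$, the term $\sqrt{\nicefrac{R_0^2D_1'}{K}}$ forces $K\gtrsim\nicefrac{R_0^2D_1'}{\varepsilon^2}$, and the term $\nicefrac{\sqrt[3]{LR_0^4(\nicefrac{2D_1}{\delta}+\widetilde D_1)}}{(\delta K^2)^{1/3}}$ forces $K\gtrsim\nicefrac{R_0^2\sqrt{L(\nicefrac{2D_1}{\delta}+\widetilde D_1)}}{\sqrt{\delta\varepsilon^3}}$; summing the three lower bounds gives the claimed complexity. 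The argument is bookkeeping rather than discovery, so there is no genuine obstacle; the only steps that demand care are the exact matching of numerical constants through the $\gamma$ definition, the order-simplification $\sqrt{6+9\delta}=\cO(1)$, and keeping the $R_0$ convention consistent when inverting the cube-root term.
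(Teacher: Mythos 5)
Your proposal is correct and follows exactly the paper's route: the paper likewise obtains this corollary by plugging the $\mu=0$ bound of Theorem~\ref{thm:ec_sgd_pure} into Lemma~\ref{lem:lemma_technical_cvx} with $a=4\|x^0-x^*\|^2$, $b_1=b_2=0$ (since $\sigma_{1,k}^2\equiv\sigma_{2,k}^2\equiv 0$ for {\tt EC-SGD}), $c_1=4D_1'$, $c_2=\frac{24L}{\delta}\left(\frac{2D_1}{\delta}+\widetilde{D}_1\right)$, and then inverting each term of the rate. Your constant matching (e.g.\ $\sqrt[3]{\nicefrac{a}{c_2K}}$ reproducing the $\nicefrac{\delta}{6L}$ factor in the stepsize) and your reading of $R_0=\|x^0-x^*\|$ — correcting the statement's typo $R_0=\|x^0-x^*\|^2$ — are both right.
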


\subsection{{\tt EC-GDstar}}\label{sec:ec_SGDstar}
We assume that $i$-th node has access to the gradient of $f_i$ at the optimality, i.e., to the $\nabla f_i(x^*)$. It is unrealistic scenario but it gives some insights that we will use next in order to design the method that converges asymptotically to \textit{the exact solution}.
\begin{algorithm}[t]
   \caption{{\tt EC-GDstar} (see also \cite{gorbunov2019unified})}\label{alg:EC-GDstar}
\begin{algorithmic}[1]
   \Require learning rate $\gamma>0$, initial vector $x^0 \in \R^d$
	\State Set $e_i^0 = 0$ for all $i=1,\ldots, n$   
   \For{$k=0,1,\dotsc$}
       \State Broadcast $x^{k}$ to all workers
        \For{$i=1,\dotsc,n$ in parallel}
            \State $g^{k}_i = \nabla f_{i}(x^k) - \nabla f_i(x^*)$
            \State $v_i^k = C(e_i^k + \gamma g_i^k)$
            \State $e_i^{k+1} = e_i^k + \gamma g_i^k - v_i^k$
        \EndFor
        \State $e^k = \frac{1}{n}\sum_{i=1}^ne_i^k$, $g^k = \frac{1}{n}\sum_{i=1}^ng_i^k$, $v^k = \frac{1}{n}\sum_{i=1}^nv_i^k$
       \State $x^{k+1} = x^k - v^k$
   \EndFor
\end{algorithmic}
\end{algorithm}

Assume that $f(x)$ is $\mu$-quasi strongly convex and each $f_i$ is convex and $L$-smooth. By definition of $g_i^k$ it trivially follows that
\begin{equation*}
	g^k = \frac{1}{n}\sum\limits_{i=1}^n g_i^k = \frac{1}{n}\sum\limits_{i=1}^n \left(\nabla f_i(x^k) - \nabla f_i(x^*)\right) = \nabla f(x^k) - \nabla f(x^*) = \nabla f(x^k),
\end{equation*}
$g_i^k = \bar{g}_i^k$, and
\begin{eqnarray*}
	\frac{1}{n}\sum\limits_{i=1}^n\|g_i^k\|^2 &=& \frac{1}{n}\sum\limits_{i=1}^n\|\nabla f_i(x^k) - \nabla f_i(x^*)\|^2\\
	&\overset{\eqref{eq:L_smoothness_cor}}{\le}& \frac{2L}{n}\sum\limits_{i=1}^n\left(f_i(x^k) - f_i(x^*) - \langle\nabla f_i(x^*), x^k - x^*\rangle\right) = 2L\left(f(x^k) - f(x^*)\right),\\
	\|g^k\|^2 &=& \|\nabla f(x^k)\|^2 \overset{\eqref{eq:L_smoothness_cor}}{\le} 2L\left(f(x^k) - f(x^*)\right).
\end{eqnarray*}

Applying Theorem~\ref{thm:ec_sgd_main_result_new} we get the following result.
\begin{theorem}\label{thm:ec_sgd_star}
	Assume that $f_i(x)$ is convex and $L$-smooth for all $i=1,\ldots, n$ and $f(x)$ is $\mu$-quasi strongly convex. Then {\tt EC-GDstar} satisfies Assumption~\ref{ass:key_assumption_finite_sums_new} with
	\begin{gather*}
		A = A' = L,\quad \widetilde{A} = 0,\quad B_1 = B_2 = \widetilde{B}_1 = \widetilde{B}_2 = B_1' = B_2' = 0,\\
		 D_1 = \widetilde{D}_1 = D_1' = 0,\quad \sigma_{1,k}^2 \equiv \sigma_{2,k}^2 \equiv 0,\\
		\rho_1 = \rho_2 = 1,\quad C_1 = C_2 = 0,\quad G = 0,\quad D_2 = 0,\quad F_1 = F_2 = 0,\quad D_3 = 0,
	\end{gather*}
	with $\gamma$ satisfying
	\begin{equation*}
		\gamma \le \frac{\delta}{8L\sqrt{3}}
	\end{equation*}
	and for all $K \ge 0$
	\begin{equation*}
		\EE\left[f(\bar{x}^K) - f(x^*)\right] \le \left(1 - \frac{\gamma\mu}{2}\right)^K\frac{4\|x^0 - x^*\|^2}{\gamma},
	\end{equation*}
	when $\mu > 0$ and
	\begin{equation*}
		\EE\left[f(\bar{x}^K) - f(x^*)\right] \le \frac{4\|x^0 - x^*\|^2}{K\gamma}
	\end{equation*}
	when $\mu=0$.
\end{theorem}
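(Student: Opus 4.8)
The plan is to verify that {\tt EC-GDstar} satisfies the parametric Assumption~\ref{ass:key_assumption_finite_sums_new} with the stated constants and then invoke Theorem~\ref{thm:ec_sgd_main_result_new} directly; the resulting rates follow with essentially no further work. Much of the verification is already contained in the computation preceding the statement: since $\nabla f(x^*)=0$, the averaged estimator satisfies $g^k=\nabla f(x^k)$, which is exactly the unbiasedness requirement \eqref{eq:unbiasedness_g_i^k_new}; and because $g_i^k=\nabla f_i(x^k)-\nabla f_i(x^*)$ is deterministic given $x^k$, we have $g_i^k=\bar g_i^k$. These two observations are the backbone of the whole argument.

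First I would establish the three moment bounds. The identity $g_i^k=\bar g_i^k$ makes the conditional variance in \eqref{eq:variance_bound_g_i^k_new} vanish identically, forcing $\widetilde A=\widetilde B_1=\widetilde B_2=\widetilde D_1=0$. For the second-moment bound \eqref{eq:second_moment_bound_g_i^k_new}, $L$-smoothness and convexity of each $f_i$ combined with \eqref{eq:L_smoothness_cor} give $\frac{1}{n}\sum_i\|\bar g_i^k\|^2\le 2L(f(x^k)-f(x^*))$, and the analogous estimate on $\|g^k\|^2=\|\nabla f(x^k)\|^2$ via \eqref{eq:L_smoothness_cor} yields \eqref{eq:second_moment_bound_new}; these identify $A=A'=L$ and force all the $B$-constants together with $D_1,D_1'$ to zero. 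Since the method carries no variance-reduction sequences, I would set $\sigma_{1,k}^2\equiv\sigma_{2,k}^2\equiv 0$ and $\rho_1=\rho_2=1$, so that \eqref{eq:sigma_k+1_bound_1} and \eqref{eq:sigma_k+1_bound_2} hold trivially with $C_1=C_2=G=D_2=0$, in accordance with the degenerate-case convention of the footnote to Lemma~\ref{lem:ec_sgd_key_lemma_new}.

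Next I would feed these constants into Theorem~\ref{thm:ec_sgd_main_result_new}. With $C_1=C_2=0$ the first stepsize constraint reduces to $\gamma\le \frac{1}{4A'}=\frac{1}{4L}$. For the second constraint the inner expression collapses to $\frac{2A}{\delta}+\widetilde A=\frac{2L}{\delta}$, whence the bound becomes $\sqrt{\delta/(96L\cdot 2L/\delta)}=\delta/(\sqrt{192}\,L)=\delta/(8\sqrt 3\,L)$; since $\delta\le 1$ and $L\ge\mu$, this is the binding quantity (it is below both $\frac{1}{4L}$ and $\frac{\delta}{4\mu}$), so the admissible range is exactly $\gamma\le\frac{\delta}{8L\sqrt 3}$. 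Because $F_1=F_2=D_3=0$ and $\sigma_{1,0}^2=\sigma_{2,0}^2=0$, the quantity $T^0$ reduces to $\|x^0-x^*\|^2$ (using $\tx^0=x^0$, as $e^0=0$) and the additive term $4\gamma(D_1'+M_1D_2+D_3)$ vanishes, so the two inequalities of Theorem~\ref{thm:ec_sgd_main_result_new} become precisely the displayed bounds for $\mu>0$ and $\mu=0$. A short check that $\eta=\gamma\mu/2$ in this stepsize regime—valid because $\gamma\mu/2\le\frac14$—completes the statement.

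The computation is entirely routine, so there is no genuine obstacle; the only point requiring care is the bookkeeping of the $\rho_1=\rho_2=1$ limit, where one must invoke the footnote convention so that the products such as $\frac{2}{1-\rho_1}\bigl(\frac{C_1}{\rho_1}+\cdots\bigr)$ appearing inside the stepsize bound are read as zero rather than as ill-defined $0/0$ expressions.
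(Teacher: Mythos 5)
Your proposal is correct and follows essentially the same route as the paper: verify Assumption~\ref{ass:key_assumption_finite_sums_new} via the identities $g^k=\nabla f(x^k)$, $g_i^k=\bar g_i^k$ and the bound $\frac{1}{n}\sum_{i=1}^n\|\nabla f_i(x^k)-\nabla f_i(x^*)\|^2\le 2L(f(x^k)-f(x^*))$ from \eqref{eq:L_smoothness_cor}, then plug the resulting constants into Theorem~\ref{thm:ec_sgd_main_result_new}. Your extra bookkeeping (checking that $\frac{\delta}{8\sqrt{3}L}$ is the binding stepsize constraint, that $\eta=\gamma\mu/2$, and invoking the footnote convention for $\rho_1=\rho_2=1$) is exactly what the paper leaves implicit, and it is all accurate.
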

In other words, {\tt EC-GDstar} converges with linear rate $\cO\left(\frac{\kappa}{\delta}\ln\frac{1}{\varepsilon}\right)$ to the exact solution when $\mu > 0$ removing the drawback of {\tt EC-SGD} and {\tt EC-GD}. If $\mu = 0$ then the rate of convergence is $\cO\left(\frac{L\|x^0-x^*\|^2}{\delta\varepsilon}\right)$. However, {\tt EC-GDstar} relies on the fact that $i$-th node knows $\nabla f_i(x^*)$ which is not realistic.

\subsection{{\tt EC-SGD-DIANA}}\label{sec:ec_diana}
In this section we present a new method that converges to the exact optimum asymptotically but does not need to know $\nabla f_i(x^*)$ and instead of this it learns the gradients at the optimum. This method is inspired by another method called {\tt DIANA} (see \cite{mishchenko2019distributed, horvath2019stochastic}).
\begin{algorithm}[t]
   \caption{{\tt EC-SGD-DIANA}}\label{alg:EC-SGD-DIANA}
\begin{algorithmic}[1]
   \Require learning rates $\gamma>0$, $\alpha \in (0,1]$, initial vectors $x^0, h_1^0,\ldots, h_n^0 \in \R^d$
	\State Set $e_i^0 = 0$ for all $i=1,\ldots, n$   
	\State Set $h^0 = \frac{1}{n}\sum_{i=1}^n h_i^0$   
   \For{$k=0,1,\dotsc$}
       \State Broadcast $x^{k}, h^k$ to all workers
        \For{$i=1,\dotsc,n$ in parallel}
			\State Sample $\hat g_i^k$ such that $\EE[\hat g_i^k\mid x^k] = \nabla f_i(x^k)$ and $\EE\left[\|\hat g_i^k - \nabla f_i(x^k)\|^2\mid x^k\right] \le \widetilde{D}_{1,i}$ independently from other workers        
            \State $g^{k}_i = \hat g_i^k - h_i^k + h^k$
            \State $v_i^k = C(e_i^k + \gamma g_i^k)$
            \State $e_i^{k+1} = e_i^k + \gamma g_i^k - v_i^k$
            \State $h_i^{k+1} = h_i^k + \alpha Q(\hat g_i^k - h_i^k)$
        \EndFor
        \State $e^k = \frac{1}{n}\sum_{i=1}^ne_i^k$, $g^k = \frac{1}{n}\sum_{i=1}^ng_i^k$, $v^k = \frac{1}{n}\sum_{i=1}^nv_i^k$, $h^{k+1} = \frac{1}{n}\sum\limits_{i=1}^n h_i^{k+1} = h^k + \alpha\frac{1}{n}\sum\limits_{i=1}^n Q(\hat g_i^k - h_i^k)$
       \State $x^{k+1} = x^k - v^k$
   \EndFor
\end{algorithmic}
\end{algorithm}

We notice that master needs to gather only $C(e_i^k + \gamma g_i^k)$ and $Q(\hat g_i^k - h_i^k)$ from all nodes in order to perform an update.

\begin{lemma}\label{lem:ec_diana_second_moment_bound}
	Assume that $f_i(x)$ is convex and $L$-smooth for all $i=1,\ldots,n$. Then, for all $k\ge 0$ we have
	\begin{eqnarray}
		\EE\left[g^k\mid x^k\right] &=& \nabla f(x^k), \label{eq:ec_diana_unbiasedness}\\
		\frac{1}{n}\sum\limits_{i=1}^n\|\bar{g}_i^k\|^2 &\le& 4L\left(f(x^k) - f(x^*)\right) + 2\sigma_k^2, \label{eq:ec_diana_second_moment_bound}\\
		\frac{1}{n}\sum\limits_{i=1}^n\EE\left[\|g_i^k-\bar{g}_i^k\|^2\mid x^k\right] &\le& \widetilde{D}_1, \label{eq:ec_diana_variance_bound}\\
		\EE\left[\|g^k\|^2\mid x^k\right] &\le& 2L\left(f(x^k) - f(x^*)\right) + \frac{\widetilde{D}_1}{n} \label{eq:ec_diana_second_moment_bound_2}
	\end{eqnarray}
	where $\widetilde{D}_1 = \frac{1}{n}\sum_{i=1}^n \widetilde{D}_{1,i}$ and $\sigma_k^2 = \frac{1}{n}\sum_{i=1}^n\|h_i^k - \nabla f(x^*)\|^2$.
\end{lemma}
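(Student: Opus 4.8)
The plan is to verify the four claims in order, exploiting the structure $g_i^k = \hat g_i^k - h_i^k + h^k$ together with $h^k = \frac{1}{n}\sum_i h_i^k$ and the optimality condition $\nabla f(x^*) = 0$. Throughout I would condition on $x^k$ (and implicitly on the $h_i^k$, which are determined by the past), and use the worker-level properties $\EE[\hat g_i^k \mid x^k] = \nabla f_i(x^k)$ and $\EE[\|\hat g_i^k - \nabla f_i(x^k)\|^2 \mid x^k] \le \widetilde D_{1,i}$. For the unbiasedness claim \eqref{eq:ec_diana_unbiasedness}, I would simply average the definition of $g_i^k$ and take conditional expectation: $\EE[g^k \mid x^k] = \frac{1}{n}\sum_i(\nabla f_i(x^k) - h_i^k) + h^k = \nabla f(x^k) - h^k + h^k = \nabla f(x^k)$, where the cancellation uses $h^k = \frac1n\sum_i h_i^k$.

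The heart of the argument is \eqref{eq:ec_diana_second_moment_bound}. First I compute $\bar g_i^k = \EE[g_i^k \mid x^k] = \nabla f_i(x^k) - h_i^k + h^k$, and split it as $\bar g_i^k = (\nabla f_i(x^k) - \nabla f_i(x^*)) + (\nabla f_i(x^*) - h_i^k + h^k)$. Applying $\|a+b\|^2 \le 2\|a\|^2 + 2\|b\|^2$ and averaging, the first group is controlled by \eqref{eq:L_smoothness_cor}: since $\nabla f(x^*) = 0$, we get $\frac1n\sum_i\|\nabla f_i(x^k)-\nabla f_i(x^*)\|^2 \le \frac{2L}{n}\sum_i D_{f_i}(x^k,x^*) = 2L(f(x^k)-f(x^*))$. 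For the second group I would set $y_i \eqdef h_i^k - \nabla f_i(x^*)$ and use $\nabla f(x^*)=0$ to write $\nabla f_i(x^*) - h_i^k + h^k = -(y_i - \bar y)$ with $\bar y = \frac1n\sum_j y_j$; then the variance decomposition \eqref{eq:variance_decomposition} (centering cannot increase the average squared norm) gives $\frac1n\sum_i\|y_i-\bar y\|^2 \le \frac1n\sum_i\|y_i\|^2 = \sigma_k^2$. Combining yields the factor-$2$ bound $4L(f(x^k)-f(x^*)) + 2\sigma_k^2$. I expect this centering step to be the main obstacle, since it is where the presence of the aggregate $h^k$ (rather than $h_i^k$ alone) must be absorbed and where $\nabla f(x^*)=0$ is invoked; note also that the statement's $\sigma_k^2$ should read $\frac1n\sum_i\|h_i^k-\nabla f_i(x^*)\|^2$ for this to close.

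The remaining two claims are short. For \eqref{eq:ec_diana_variance_bound}, the additive terms $-h_i^k + h^k$ are deterministic given $x^k$, so $g_i^k - \bar g_i^k = \hat g_i^k - \nabla f_i(x^k)$, and averaging the per-worker bound $\widetilde D_{1,i}$ gives $\widetilde D_1$. For \eqref{eq:ec_diana_second_moment_bound_2}, I would use the variance decomposition \eqref{eq:variance_decomposition} around the unbiased mean $\nabla f(x^k)$, namely $\EE[\|g^k\|^2\mid x^k] = \|\nabla f(x^k)\|^2 + \EE[\|g^k-\nabla f(x^k)\|^2\mid x^k]$, bound the first term by $2L(f(x^k)-f(x^*))$ via \eqref{eq:L_smoothness_cor}, and then—using that the $\hat g_i^k$ are sampled independently across workers, so the cross terms vanish—bound the second by $\frac1{n^2}\sum_i \EE[\|\hat g_i^k - \nabla f_i(x^k)\|^2\mid x^k] \le \widetilde D_1/n$. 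This independence-induced $1/n$ saving in the variance term is the only place where the multi-worker structure enters nontrivially.
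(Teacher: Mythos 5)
Your proposal is correct and follows essentially the same route as the paper's proof: unbiasedness by direct averaging, the split $\bar g_i^k=(\nabla f_i(x^k)-\nabla f_i(x^*))+(\nabla f_i(x^*)-h_i^k+h^k)$ with $\|a+b\|^2\le 2\|a\|^2+2\|b\|^2$, \eqref{eq:L_smoothness_cor}, and the centering/variance-decomposition step for \eqref{eq:ec_diana_second_moment_bound}, the cancellation $g_i^k-\bar g_i^k=\hat g_i^k-\nabla f_i(x^k)$ for \eqref{eq:ec_diana_variance_bound}, and variance decomposition plus cross-worker independence for \eqref{eq:ec_diana_second_moment_bound_2}. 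You also correctly flag the statement's typo: $\sigma_k^2$ should be $\frac{1}{n}\sum_{i=1}^n\|h_i^k-\nabla f_i(x^*)\|^2$, which is what the paper's own proof (and the companion Lemma~\ref{lem:ec_diana_sigma_k+1_bound}) actually uses.
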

\begin{proof}
	First of all, we show unbiasedness of $g^k$:
	\begin{equation*}
		\EE\left[g^k\mid x^k\right] = \frac{1}{n}\sum\limits_{i=1}^n\EE\left[g_i^k\mid x^k\right] = \frac{1}{n}\sum\limits_{i=1}^n\left(\nabla f_i(x^k) - h_i^k + h^k\right) = \nabla f(x^k).
	\end{equation*}
	Next, we derive the upper bound for $\|\bar{g}_i^k\|^2$:
	\begin{eqnarray*}
		\|\bar{g}_i^k\|^2 &=& \|\nabla f_i(x^k) - h_i^k - h^k\|^2\\
		&\overset{\eqref{eq:a_b_norm_squared}}{\le}& 2\|\nabla f_i(x^k) - \nabla f_i(x^*)\|^2 + 2\left\|h_i^k - \nabla f_i(x^*) -\left(h^k + \nabla f(x^*)\right)\right\|^2	\\
		&\overset{\eqref{eq:L_smoothness_cor}}{\le}& 4L\left(f_i(x^k) - \nabla f_i(x^*) - \langle\nabla f_i(x^*), x^k - x^*\rangle\right)\\
		&&\quad +2\left\|h_i^k - \nabla f_i(x^*) -\left(h^k + \nabla f(x^*)\right)\right\|^2.
	\end{eqnarray*}
	Summing up previous inequality for $i = 1,\ldots, n$ we get
	\begin{eqnarray}
		\frac{1}{n}\sum\limits_{i=1}^n\|\bar{g}_i^k\|^2 &\le&4L(f(x^k) - f(x^*)) + \frac{2}{n}\sum\limits_{i=1}^n\left\|h_i^k - \nabla f_i(x^*) - \left(\frac{1}{n}\sum\limits_{i=1}^n(h_i^k - \nabla f_i(x^*))\right)\right\|^2\notag\\
		&\overset{\eqref{eq:variance_decomposition}}{\le}& 4L\left(f(x^k) - f(x^*)\right) + \frac{2}{n}\sum\limits_{i=1}^n\|h_i^k - \nabla f(x^*)\|^2.\label{eq:ec_sgd_diana_first_ineq}
	\end{eqnarray}
	Using the unbiasedness of $\hat g_i^k$ we derive
	\begin{equation*}
		\frac{1}{n}\sum\limits_{i=1}^n\EE\left[\|g_i^k-\bar{g}_i^k\|^2\mid x^k\right] = \frac{1}{n}\sum\limits_{i=1}^n\EE\left[\|\hat g_i^k-\nabla f_i(x^k)\|^2\mid x^k\right] \le \frac{1}{n}\sum\limits_{i=1}^n\widetilde{D}_{1,i} = \widetilde{D}_1.
	\end{equation*}
	Finally, we obtain the upper bound for the second moment of $g^k$ using the independence of $\hat g_1^k,\ldots, \hat g_n^k$:
	\begin{eqnarray*}
		\EE\left[\|g^k\|^2\mid x^k\right] &\overset{\eqref{eq:variance_decomposition}}{=}& \|\nabla f(x^k)\|^2 + \EE\left[\|g^k - \nabla f(x^k)\|^2\right]\\
		&\overset{\eqref{eq:L_smoothness_cor}}{\le}&  2L(f(x^k)-f(x^*)) + \EE\left[\left\|\frac{1}{n}\sum\limits_{i=1}^n(\hat g_i^k - \nabla f_i(x^k))\right\|^2\mid x^k\right]\\
		&=&2L(f(x^k)-f(x^*)) + \frac{1}{n^2}\sum\limits_{i=1}^n\EE\left[\left\|\hat g_i^k - \nabla f_i(x^k)\right\|^2\mid x^k\right]\\
		&\le& 2L(f(x^k)-f(x^*)) + \frac{1}{n^2}\sum\limits_{i=1}^n\widetilde{D}_{1,i}.
	\end{eqnarray*}
\end{proof}

\begin{lemma}\label{lem:ec_diana_sigma_k+1_bound}
	Let assumptions of Lemma~\ref{lem:ec_diana_second_moment_bound} hold and $\alpha\le\nicefrac{1}{(\omega+1)}$. Then, for all $k\ge 0$ we have
	\begin{equation}
		\EE\left[\sigma_{k+1}^2\mid x^k\right] \le (1 - \alpha)\sigma_k^2 + 2L\alpha(f(x^k) - f(x^*)) + \alpha^2(\omega+1) \widetilde{D}_1, \label{eq:ec_diana_sigma_k+1_bound}
	\end{equation}
	where $\sigma_k^2 = \frac{1}{n}\sum_{i=1}^n\|h_i^k - \nabla f_i(x^*)\|^2$ and $\widetilde{D}_1 = \frac{1}{n}\sum_{i=1}^n \widetilde{D}_{1,i}$.
\end{lemma}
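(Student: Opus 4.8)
The plan is to follow the standard DIANA variance-recursion argument, mirroring the proof of Lemma~\ref{lem:diana_sigma_k+1_bound}: track a single node $i$, establish a contraction for $\EE[\|h_i^{k+1}-h_i^*\|^2\mid x^k]$, and then average over $i$. Write $h_i^* \eqdef \nabla f_i(x^*)$, so that $\sigma_k^2 = \tfrac1n\sum_{i=1}^n \|h_i^k - h_i^*\|^2$. The essential step is to peel off the two independent sources of randomness in the update $h_i^{k+1} = h_i^k + \alpha Q(\hat g_i^k - h_i^k)$ one at a time using the tower property \eqref{eq:tower_property}: first the quantization $Q$ (conditioned on $\hat g_i^k$ and $x^k$), then the stochastic gradient $\hat g_i^k$ (conditioned on $x^k$).

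First I would expand $\|h_i^{k+1} - h_i^*\|^2 = \|h_i^k - h_i^* + \alpha Q(\hat g_i^k - h_i^k)\|^2$ and take the expectation over $Q$. Since $Q$ is unbiased and $\EE_Q[\|Q(y)\|^2] \le (1+\omega)\|y\|^2$ (from \eqref{eq:quantization_def} together with the variance decomposition \eqref{eq:variance_decomposition}), this produces
\[
\|h_i^k - h_i^*\|^2 + 2\alpha\langle h_i^k - h_i^*,\, \hat g_i^k - h_i^k\rangle + \alpha^2(1+\omega)\|\hat g_i^k - h_i^k\|^2 .
\]
Next I would take $\EE[\cdot\mid x^k]$ over $\hat g_i^k$: unbiasedness $\EE[\hat g_i^k\mid x^k]=\nabla f_i(x^k)$ handles the cross term, while \eqref{eq:variance_decomposition} and the assumed bound give $\EE[\|\hat g_i^k - h_i^k\|^2\mid x^k]\le \|\nabla f_i(x^k)-h_i^k\|^2 + \widetilde D_{1,i}$. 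Here I would invoke $\alpha\le \nicefrac{1}{(\omega+1)}$ to replace $\alpha^2(1+\omega)\|\nabla f_i(x^k)-h_i^k\|^2$ by $\alpha\|\nabla f_i(x^k)-h_i^k\|^2$, while deliberately keeping the remainder $\alpha^2(1+\omega)\widetilde D_{1,i}$ intact — this is what preserves the stated constant on the noise term rather than collapsing it to $\alpha\widetilde D_{1,i}$.

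The contraction then follows by combining the cross term with $\alpha\|\nabla f_i(x^k)-h_i^k\|^2$: rewriting $\alpha\big(\|\nabla f_i(x^k)-h_i^k\|^2 + 2\langle h_i^k-h_i^*,\nabla f_i(x^k)-h_i^k\rangle\big) = \alpha\langle \nabla f_i(x^k)-h_i^k,\, \nabla f_i(x^k)+h_i^k-2h_i^*\rangle$ and applying the identity \eqref{eq:a-b_a+b} with $a=\nabla f_i(x^k)-h_i^*$ and $b=h_i^k-h_i^*$ collapses this to $\alpha\big(\|\nabla f_i(x^k)-h_i^*\|^2 - \|h_i^k-h_i^*\|^2\big)$. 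Altogether this gives the per-node recursion
\[
\EE[\|h_i^{k+1}-h_i^*\|^2\mid x^k] \le (1-\alpha)\|h_i^k-h_i^*\|^2 + \alpha\|\nabla f_i(x^k)-\nabla f_i(x^*)\|^2 + \alpha^2(1+\omega)\widetilde D_{1,i},
\]
and $L$-smoothness in the form \eqref{eq:L_smoothness_cor} bounds $\|\nabla f_i(x^k)-\nabla f_i(x^*)\|^2 \le 2L\,D_{f_i}(x^k,x^*)$.

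Finally I would average over $i$ and divide by $n$. The only remaining observation is $\tfrac1n\sum_i D_{f_i}(x^k,x^*) = f(x^k)-f(x^*)$, which holds because $f=\tfrac1n\sum_i f_i$ and the first-order terms vanish by optimality, $\tfrac1n\sum_i\langle\nabla f_i(x^*),x^k-x^*\rangle = \langle\nabla f(x^*),x^k-x^*\rangle = 0$. This yields exactly \eqref{eq:ec_diana_sigma_k+1_bound}. I do not expect a serious obstacle here: the argument is essentially a transcription of the DIANA recursion, and the only points requiring care are the two-stage conditioning (quantization peeled before the stochastic gradient) and the bookkeeping that keeps the noise constant at $\alpha^2(1+\omega)\widetilde D_1$.
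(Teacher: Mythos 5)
Your proposal is correct and follows essentially the same route as the paper's proof: the two-stage conditioning (quantization first, then the stochastic gradient), the variance decomposition that splits off $\widetilde D_{1,i}$, the use of $\alpha \le \nicefrac{1}{(\omega+1)}$ only on the $\|\nabla f_i(x^k)-h_i^k\|^2$ term while keeping the noise at $\alpha^2(\omega+1)\widetilde D_{1,i}$, the collapse via the identity \eqref{eq:a-b_a+b}, the smoothness bound \eqref{eq:L_smoothness_cor}, and the final averaging with $\nabla f(x^*)=0$ all match the paper's argument step for step.
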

\begin{proof}
	For simplicity, we introduce new notation: $h_i^* \eqdef \nabla f_i(x^*)$. Using this we derive an upper bound for the second moment of $h_i^{k+1} - h_i^*$:
	\begin{eqnarray*}
		\EE\left[\|h_i^{k+1} - h_i^*\|^2\mid x^k\right] &=& \EE\left[\left\|h_i^k - h_i^* + \alpha Q(\hat g_i^k - h_i^k) \right\|^2\mid x^k\right]\\
		&\overset{\eqref{eq:quantization_def}}{=}& \|h_i^k - h_i^*\|^2 +2\alpha\langle h_i^k - h_i^*, \nabla f_i(x^k) - h_i^k \rangle\\
		&&\quad + \alpha^2\EE\left[\|Q(\hat g_i^k - h_i^k)\|^2\mid x^k\right]\\
		&\overset{\eqref{eq:quantization_def},\eqref{eq:tower_property}}{\le}& \|h_i^k - h_i^*\|^2 +2\alpha\langle h_i^k - h_i^*, \nabla f_i(x^k) - h_i^k \rangle\\
		&&\quad + \alpha^2(\omega+1)\EE\left[\|\hat g_i^k - h_i^k\|^2\mid x^k\right].
	\end{eqnarray*}
	Using variance decomposition \eqref{eq:variance_decomposition} and $\alpha\le\nicefrac{1}{(\omega+1)}$ we get
	\begin{eqnarray*}
		\alpha^2(\omega+1)\EE\left[\|\hat g_i^k - h_i^k\|^2\mid x^k\right] &\overset{\eqref{eq:variance_decomposition}}{=}& \alpha^2(\omega+1)\EE\left[\|\hat g_i^k - \nabla f_i(x^k)\|^2\mid x^k\right]\\
		&&\quad + \alpha^2(\omega+1)\|\nabla f_i(x^k) - h_i^k\|^2\\
		&\le& \alpha^2(\omega+1) \widetilde{D}_{1,i} + \alpha\|\nabla f_i(x^k) - h_i^k\|^2.
	\end{eqnarray*}
	Putting all together we obtain
	\begin{eqnarray*}
		\EE\left[\|h_i^{k+1} - h_i^*\|^2\mid x^k\right] &\le& \|h_i^k - h_i^*\|^2 + \alpha\left\langle \nabla f_i(x^k) - h_i^k, f_i(x^k) + h_i^k - 2h_i^* \right\rangle + \alpha^2(\omega+1) \widetilde{D}_{1,i}\\
		&\overset{\eqref{eq:a-b_a+b}}{=}& \|h_i^k - h_i^*\|^2 + \alpha\|\nabla f_i(x^k) - h_i^*\|^2 - \alpha\|h_i^k - h_i^*\|^2 + \alpha^2(\omega+1) \widetilde{D}_{1,i}\\
		&\overset{\eqref{eq:L_smoothness_cor}}{\le}& (1-\alpha)\|h_i^k - h_i^*\|^2 + 2L\alpha\left(f_i(x^k) - f_i(x^*) - \langle\nabla f_i(x^*), x^k - x^* \rangle\right)\\
		&&\quad +\alpha^2(\omega+1) \widetilde{D}_{1,i}.
	\end{eqnarray*}
	Summing up the above inequality for $i=1,\ldots, n$ we derive
	\begin{equation*}
		\frac{1}{n}\sum\limits_{i=1}^n\EE\left[\|h_i^{k+1} - h_i^*\|^2\mid x^k\right] \le \frac{1-\alpha}{n}\sum\limits_{i=1}^n\|h_i^k - h_i^*\|^2 + 2L\alpha(f(x^k) - f(x^*)) + \frac{\alpha^2(\omega+1)}{n}\sum\limits_{i=1}^n\widetilde{D}_{1,i}.
	\end{equation*}
\end{proof}

Applying Theorem~\ref{thm:ec_sgd_main_result_new} we get the following result.
\begin{theorem}\label{thm:ec_diana}
	Assume that $f_i(x)$ is convex and $L$-smooth for all $i=1,\ldots, n$ and $f(x)$ is $\mu$-quasi strongly convex. Then {\tt EC-SGD-DIANA} satisfies Assumption~\ref{ass:key_assumption_finite_sums_new} with
	\begin{gather*}
		A = 2L,\quad \widetilde{A} = 0,\quad A' = L,\quad B_1 = 2,\quad \widetilde{D}_1 = \frac{1}{n}\sum\limits_{i=1}^n \widetilde{D}_{1,i},\quad \sigma_{1,k}^2 = \sigma_k^2 = \frac{1}{n}\sum\limits_{i=1}^n\|h_i^k - \nabla f_i(x^*)\|^2,\\
		B_1' = B_2' = B_2 = \widetilde{B}_1 = \widetilde{B}_2 = 0,\quad \sigma_{2,k}^2\equiv 0,\quad \rho_1 = \alpha,\quad \rho_2 = 1,\quad C_1 = L\alpha,\quad C_2 = 0,\quad D_1 = 0,\\
		D_2 = \alpha^2(\omega+1) \widetilde{D}_1,\quad D_1' = \frac{D_1}{n},\quad G = 0,\\
		F_1 = \frac{96L\gamma^2}{\delta^2\alpha\left(1-\min\left\{\frac{\gamma\mu}{2},\frac{\alpha}{4}\right\}\right)},\quad F_2 = 0,\quad D_3 = \frac{6L\gamma}{\delta}\left(\frac{4\alpha(\omega+1)}{\delta}+1\right)\widetilde{D}_1,
	\end{gather*}
	with $\gamma$ and $\alpha$ satisfying
	\begin{equation*}
		\gamma \le \min\left\{\frac{1}{4L}, \frac{\delta\sqrt{1-\alpha}}{8L\sqrt{6(3-\alpha)}}\right\},\quad \alpha \le \frac{1}{\omega+1},\quad M_1 = M_2 = 0
	\end{equation*}
	and for all $K \ge 0$
	\begin{equation*}
		\EE\left[f(\bar x^K) - f(x^*)\right] \le \left(1 - \min\left\{\frac{\gamma\mu}{2},\frac{\alpha}{4}\right\}\right)^K\frac{4(\|x^0 - x^*\|^2 + \gamma F_1 \sigma_0^2)}{\gamma} + 4\gamma\left(D_1' + D_3\right),
	\end{equation*}	
	when $\mu > 0$ and
	\begin{equation*}
		\EE\left[f(\bar{x}^K) - f(x^*)\right] \le \frac{4(\|x^0 - x^*\|^2 + \gamma F_1 \sigma_0^2)}{\gamma K} + 4\gamma\left(D_1' + D_3\right)
	\end{equation*}
	when $\mu=0$.
\end{theorem}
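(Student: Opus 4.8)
The plan is to reduce the theorem to a routine application of Theorem~\ref{thm:ec_sgd_main_result_new}, which already packages Lemma~\ref{lem:ec_sgd_key_lemma_new} (the error-feedback bound) together with Theorem~\ref{thm:main_result_new}. Hence essentially all the work consists in verifying that {\tt EC-SGD-DIANA} satisfies the parametric Assumption~\ref{ass:key_assumption_finite_sums_new} with exactly the constants listed; once this is done, the two displayed convergence estimates follow by direct substitution into Theorem~\ref{thm:ec_sgd_main_result_new}.

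First I would fix the two variance-reduction sequences by setting $\sigma_{1,k}^2 = \sigma_k^2 = \frac1n\sum_{i=1}^n\|h_i^k - \nabla f_i(x^*)\|^2$ and $\sigma_{2,k}^2 \equiv 0$. With this choice the three moment bounds \eqref{eq:second_moment_bound_g_i^k_new}, \eqref{eq:variance_bound_g_i^k_new}, \eqref{eq:second_moment_bound_new} are read off directly from Lemma~\ref{lem:ec_diana_second_moment_bound}: inequality \eqref{eq:ec_diana_second_moment_bound} yields $A = 2L$, $B_1 = 2$ and $B_2 = D_1 = 0$; inequality \eqref{eq:ec_diana_variance_bound} yields $\widetilde A = 0$, $\widetilde B_1 = \widetilde B_2 = 0$ and $\widetilde D_1 = \frac1n\sum_i \widetilde D_{1,i}$; and inequality \eqref{eq:ec_diana_second_moment_bound_2} yields $A' = L$, $B_1' = B_2' = 0$ and $D_1' = \nicefrac{\widetilde D_1}{n}$. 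The recursion \eqref{eq:sigma_k+1_bound_1} is then precisely Lemma~\ref{lem:ec_diana_sigma_k+1_bound}, giving $\rho_1 = \alpha$, $C_1 = L\alpha$, $G = 0$ and $D_2 = \alpha^2(\omega+1)\widetilde D_1$, while \eqref{eq:sigma_k+1_bound_2} holds trivially with $\rho_2 = 1$, $C_2 = 0$ because $\sigma_{2,k}^2\equiv 0$. Since $B_1' = B_2' = 0$, the weights collapse to $M_1 = M_2 = 0$.

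Next I would feed these constants into Lemma~\ref{lem:ec_sgd_key_lemma_new}. The only step requiring genuine computation is checking that the step-size ceiling \eqref{eq:gamma_condition_ec_sgd_new} simplifies to the stated bound: substituting $A = 2L$, $\widetilde A = 0$, $\rho_1 = \alpha$, $C_1 = L\alpha$, $B_1 = 2$, $\widetilde B_1 = 0$ and $C_2 = G = 0$, the bracket under the square root reduces to $\frac{4L(3-\alpha)}{\delta(1-\alpha)}$, so the radical becomes $\frac{\delta\sqrt{1-\alpha}}{8L\sqrt{6(3-\alpha)}}$; combined with $\gamma \le \nicefrac{1}{4(A'+C_1M_1+C_2M_2)} = \nicefrac1{4L}$ from Theorem~\ref{thm:ec_sgd_main_result_new}, this reproduces the displayed range for $\gamma$. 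I would also record that Lemma~\ref{lem:ec_diana_sigma_k+1_bound} imposes $\alpha \le \nicefrac{1}{(\omega+1)}$. Reading off \eqref{eq:ec_sgd_parameters_new}--\eqref{eq:ec_sgd_parameters_new_2} with the same substitutions then gives $F_2 = 0$, $F_1 = \frac{96L\gamma^2}{\delta^2\alpha(1-\eta)}$ and $D_3 = \frac{6L\gamma}{\delta}\left(\frac{4\alpha(\omega+1)}{\delta}+1\right)\widetilde D_1$, with $\eta = \min\{\nicefrac{\gamma\mu}{2},\nicefrac{\alpha}{4}\}$ since $\rho_2 = 1$. Theorem~\ref{thm:ec_sgd_main_result_new} then yields both displayed estimates verbatim.

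The main obstacle I anticipate is purely bookkeeping: the step-size simplification above, and the matching of the $\sigma_1$/$\sigma_2$ indexing so that every term carrying $\sigma_{2,k}^2$, $B_2$, $C_2$, $G$ or $D_2$-through-$\rho_2$ genuinely vanishes and the generic two-sequence machinery degenerates to the single-sequence {\tt DIANA}-type recursion. Everything else is substitution. One minor consistency check worth stating explicitly is that the constraint $\gamma \le \nicefrac{\delta}{4\mu}$ appearing in \eqref{eq:gamma_condition_ec_sgd_new} is not binding under $\mu \le L$, so it can be dropped from the final step-size range.
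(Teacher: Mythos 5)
Your proposal is correct and follows essentially the same route as the paper: verify Assumption~\ref{ass:key_assumption_finite_sums_new} via Lemma~\ref{lem:ec_diana_second_moment_bound} (giving $A=2L$, $B_1=2$, $\widetilde A=0$, $A'=L$, $D_1'=\nicefrac{\widetilde D_1}{n}$) and Lemma~\ref{lem:ec_diana_sigma_k+1_bound} (giving $\rho_1=\alpha$, $C_1=L\alpha$, $D_2=\alpha^2(\omega+1)\widetilde D_1$), then substitute into Lemma~\ref{lem:ec_sgd_key_lemma_new} and Theorem~\ref{thm:ec_sgd_main_result_new}; your simplification of the bracket in \eqref{eq:gamma_condition_ec_sgd_new} to $\frac{4L(3-\alpha)}{\delta(1-\alpha)}$, the resulting radical $\frac{\delta\sqrt{1-\alpha}}{8L\sqrt{6(3-\alpha)}}$, and the observation that $\gamma\le\nicefrac{\delta}{4\mu}$ is non-binding all check out. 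You even correctly read $D_1'$ as $\nicefrac{\widetilde D_1}{n}$ where the theorem statement has a harmless typo ($\nicefrac{D_1}{n}$ with $D_1=0$).
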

In other words, if 
\begin{equation*}
		\gamma = \min\left\{\frac{1}{4L}, \frac{\delta\sqrt{1-\alpha}}{8L\sqrt{6(3-\alpha)}}\right\},\quad \alpha = \min\left\{\frac{1}{\omega+1},\frac{1}{2}\right\}
\end{equation*}
and $\widetilde{D}_1 = 0$, i.e., $\hat g_i^k = \nabla f_i(x^k)$ almost surely (this is the setup of {\tt EC-GD-DIANA}), {\tt EC-SGD-DIANA} converges with the linear rate
\begin{equation*}
	\cO\left(\left(\omega + \frac{\kappa}{\delta}\right)\ln\frac{1}{\varepsilon}\right)
\end{equation*}
to the exact solution.  Applying Lemma~\ref{lem:lemma2_stich} we establish the rate of convergence to $\varepsilon$-solution in the case when $\mu > 0$.
\begin{corollary}\label{cor:ec_diana_str_cvx_cor}
	Let the assumptions of Theorem~\ref{thm:ec_diana} hold and $\mu > 0$. Then after $K$ iterations of {\tt EC-SGD-DIANA} with the stepsize
	\begin{eqnarray*}
		\gamma_0 &=& \min\left\{\frac{1}{4L}, \frac{\delta\sqrt{1-\alpha}}{8L\sqrt{6(3-\alpha)}}\right\},\quad R_0 = \|x^0-x^*\|,\quad  \tilde{F_1} = \frac{784L\gamma^2}{7\delta^2\alpha},\\
		\gamma &=& \min\left\{\gamma_0, \frac{\ln\left(\max\left\{2,\min\left\{\frac{n\left(R_0^2 + \tilde{F}_1\gamma_0\sigma_{1,0}^2\right)\mu^2K^2}{\widetilde{D}_1}, \frac{\delta\left(R_0^2 + \tilde{F}_1\gamma_0\sigma_{1,0}^2\right)\mu^3K^3}{6L\widetilde{D}_1(\nicefrac{4\alpha(\omega+1)}{\delta}+1)}\right\}\right\}\right)}{\mu K}\right\},
	\end{eqnarray*}
	and $\alpha \le \frac{1}{\omega+1}$ we have
	\begin{equation*}
		\EE\left[f(\bar{x}^K) - f(x^*)\right] = \widetilde\cO\left(\frac{L}{\delta}R_0^2\exp\left(-\min\left\{\frac{\delta\mu}{L},\alpha\right\}K\right) + \frac{\widetilde{D}_1}{n\mu K} + \frac{L\widetilde{D}_1\left(\nicefrac{\alpha(\omega+1)}{\delta}+1\right)}{\delta\mu^2 K^2}\right).
	\end{equation*}
	That is, to achive $\EE\left[f(\bar{x}^K) - f(x^*)\right] \le \varepsilon$ {\tt EC-SGD-DIANA} requires
	\begin{equation*}
		\widetilde{\cO}\left(\frac{1}{\alpha} + \frac{L}{\delta\mu} + \frac{D_1}{n\mu\varepsilon} + \frac{\sqrt{L\widetilde{D}_1\left(\nicefrac{\alpha(\omega+1)}{\delta}+1\right)}}{\mu\sqrt{\delta\varepsilon}}\right) \text{ iterations.}
	\end{equation*}
	In particular, if $\alpha = \frac{1}{\omega+1}$, then to achive $\EE\left[f(\bar{x}^K) - f(x^*)\right] \le \varepsilon$ {\tt EC-SGD-DIANA} requires
	\begin{equation*}
		\widetilde{\cO}\left(\omega + \frac{L}{\delta\mu} + \frac{\widetilde{D}_1}{n\mu\varepsilon} + \frac{\sqrt{L\widetilde{D}_1}}{\delta\mu\sqrt{\varepsilon}}\right) \text{ iterations,}
	\end{equation*}
	and if $\alpha = \frac{\delta}{\omega+1}$, then to achive $\EE\left[f(\bar{x}^K) - f(x^*)\right] \le \varepsilon$ {\tt EC-SGD-DIANA} requires
	\begin{equation*}
		\widetilde{\cO}\left(\frac{\omega+1}{\delta} + \frac{L}{\delta\mu} + \frac{\widetilde{D}_1}{n\mu\varepsilon} + \frac{\sqrt{L\widetilde{D}_1}}{\mu\sqrt{\delta\varepsilon}}\right) \text{ iterations.}
	\end{equation*}
\end{corollary}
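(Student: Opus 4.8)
The plan is to obtain the $\varepsilon$-rate by feeding the non-asymptotic guarantee of Theorem~\ref{thm:ec_diana} into the stepsize-tuning Lemma~\ref{lem:lemma2_stich}. First I would not use the already-simplified form of Theorem~\ref{thm:ec_diana}, but rather back up to the $W_K$-form that is produced inside the proof of Theorem~\ref{thm:main_result_new}, namely
\[
r_K \eqdef \EE[f(\bar x^K) - f(x^*)] \le \frac{4(R_0^2 + \gamma F_1\sigma_{1,0}^2)}{\gamma W_K} + 4\gamma(D_1' + D_3),
\]
with $R_0 = \|x^0 - x^*\|$, and then substitute the parameters listed in Theorem~\ref{thm:ec_diana}: $D_1' = \widetilde D_1/n$, $D_3 = \frac{6L\gamma}{\delta}(\frac{4\alpha(\omega+1)}{\delta}+1)\widetilde D_1$, and $F_1 = \frac{96L\gamma^2}{\delta^2\alpha(1-\eta)}$.

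Second, I would cast this into the shape $r_K \le \frac{a}{\gamma W_K} + c_1\gamma + c_2\gamma^2$ required by Lemma~\ref{lem:lemma2_stich}. The terms $4\gamma D_1' = \frac{4\widetilde D_1}{n}\gamma$ and $4\gamma D_3 \propto \gamma^2$ give $c_1 = \frac{4\widetilde D_1}{n}$ and $c_2 = \frac{24L}{\delta}(\frac{4\alpha(\omega+1)}{\delta}+1)\widetilde D_1$ at once. The delicate point, and the step I expect to be the main bookkeeping obstacle, is the numerator $R_0^2 + \gamma F_1\sigma_{1,0}^2$: since $F_1 \propto \gamma^2$ this carries a $\gamma^3$-dependence, whereas Lemma~\ref{lem:lemma2_stich} needs $a$ to be genuinely independent of $\gamma$. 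I would resolve this by using $\gamma \le \gamma_0$ together with $\eta \le \alpha/4 \le 1/4$, so that $1-\eta$ is bounded below by a constant, to replace $\gamma F_1$ by $\tilde F_1\gamma_0$ with $\tilde F_1 = \Theta(\frac{L\gamma_0^2}{\delta^2\alpha})$; this over-estimates only the already exponentially decaying leading term, so the upper bound is preserved with $a = 4(R_0^2 + \tilde F_1\gamma_0\sigma_{1,0}^2)$.

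Third, I would invoke Lemma~\ref{lem:lemma2_stich} with $d = 1/\gamma_0$, $\rho_1 = \alpha$, $\rho_2 = 1$. This reproduces the prescribed stepsize $\gamma$ verbatim (the logarithmic expression in the statement is exactly \eqref{eq:lemma2_stich_gamma} with $a/c_1 \sim \frac{n(R_0^2+\tilde F_1\gamma_0\sigma_{1,0}^2)}{\widetilde D_1}$ and $a/c_2 \sim \frac{\delta(R_0^2+\tilde F_1\gamma_0\sigma_{1,0}^2)}{L(\frac{4\alpha(\omega+1)}{\delta}+1)\widetilde D_1}$, the absolute constants being absorbed into the $\max\{2,\cdot\}$ and the $\widetilde\cO$) and the three-term bound. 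Plugging in $\gamma_0 = \min\{\frac{1}{4L},\frac{\delta\sqrt{1-\alpha}}{8L\sqrt{6(3-\alpha)}}\} = \Theta(\delta/L)$ yields $da = \Theta(\frac{L}{\delta}R_0^2)$, $\min\{\mu/d,\rho_1,\rho_2\} = \min\{\mu\gamma_0,\alpha\} = \Theta(\min\{\frac{\delta\mu}{L},\alpha\})$, $\frac{c_1}{\mu K} = \Theta(\frac{\widetilde D_1}{n\mu K})$ and $\frac{c_2}{\mu^2 K^2} = \Theta(\frac{L\widetilde D_1(\frac{\alpha(\omega+1)}{\delta}+1)}{\delta\mu^2 K^2})$, which is precisely the claimed $\widetilde\cO$-rate.

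Finally, to read off the iteration complexity I would set each of the three summands below $\varepsilon$ and take the largest resulting $K$: the exponential term forces $K = \widetilde\cO(\max\{\frac{L}{\delta\mu},\frac1\alpha\}) = \widetilde\cO(\frac1\alpha + \frac{L}{\delta\mu})$, the $1/K$ term forces $K = \cO(\frac{\widetilde D_1}{n\mu\varepsilon})$, and the $1/K^2$ term forces $K = \cO(\frac{\sqrt{L\widetilde D_1(\frac{\alpha(\omega+1)}{\delta}+1)}}{\mu\sqrt{\delta\varepsilon}})$, and summing gives the stated general complexity. The two displayed special cases then follow purely by substitution: $\alpha = \frac{1}{\omega+1}$ gives $\frac1\alpha = \omega+1$ and $\frac{\alpha(\omega+1)}{\delta} = \frac1\delta$, while $\alpha = \frac{\delta}{\omega+1}$ gives $\frac1\alpha = \frac{\omega+1}{\delta}$ and $\frac{\alpha(\omega+1)}{\delta} = 1$, after which one simplifies. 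I would also note that Lemma~\ref{lem:lemma2_stich} needs $c_2 > 0$, i.e.\ $\widetilde D_1 > 0$; the degenerate case $\widetilde D_1 = 0$ is exactly the linearly converging {\tt EC-GD-DIANA} regime, which is read off directly from Theorem~\ref{thm:ec_diana} without this lemma.
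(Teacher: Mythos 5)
Your proposal is correct and follows essentially the same route as the paper: the paper's proof is precisely an application of Lemma~\ref{lem:lemma2_stich} to the bound of Theorem~\ref{thm:ec_diana} with $a=4(R_0^2+\tilde F_1\gamma_0\sigma_{1,0}^2)$, $c_1=4\widetilde D_1/n$, $c_2=\frac{24L}{\delta}\left(\frac{4\alpha(\omega+1)}{\delta}+1\right)\widetilde D_1$ and $d=1/\gamma_0$, and your replacement of the $\gamma$-dependent $\gamma F_1$ by $\tilde F_1\gamma_0$ (using $\gamma\le\gamma_0$ and $1-\eta=\Theta(1)$) is exactly the role the paper's constant $\tilde F_1$ plays, up to the absorbed absolute constants.
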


Applying Lemma~\ref{lem:lemma_technical_cvx} we get the complexity result in the case when $\mu = 0$.
\begin{corollary}\label{cor:ec_diana_cvx_cor}
	Let the assumptions of Theorem~\ref{thm:ec_diana} hold and $\mu = 0$. Then after $K$ iterations of {\tt EC-SGD-DIANA} with the stepsize
	\begin{eqnarray*}
		\gamma_0 &=& \min\left\{\frac{1}{4L}, \frac{\delta\sqrt{1-\alpha}}{8L\sqrt{6(3-\alpha)}}\right\},\quad R_0 = \|x^0-x^*\|,\\
		\gamma &=& \min\left\{\gamma_0, \sqrt[3]{\frac{R_0^2\delta^2\alpha\left(1-\min\left\{\frac{\gamma_0\mu}{2},\frac{\alpha}{4}\right\}\right)}{96L\sigma_0^2}}, \sqrt{\frac{nR_0^2}{\widetilde{D}_1 K}}, \sqrt[3]{\frac{\delta R_0^2}{6L\widetilde{D}_1\left(\frac{4\alpha(\omega+1)}{\delta}+1\right) K}}\right\},
	\end{eqnarray*}	
	and $\alpha \le \frac{1}{\omega+1}$ we have $\EE\left[f(\bar{x}^K) - f(x^*)\right]$ of order
	\begin{equation*}
		\cO\left(\frac{LR_0^2}{\delta K} + \frac{\sqrt[3]{LR_0^4\sigma_0^2}}{K\sqrt[3]{\delta^2\alpha}} + \sqrt{\frac{R_0^2\widetilde{D}_1}{nK}}+ \sqrt[3]{\frac{LR_0^4\widetilde{D}_1\left(\frac{\alpha(\omega+1)}{\delta}+1\right)}{\delta K^2}}\right).
	\end{equation*}
	That is, to achive $\EE\left[f(\bar{x}^K) - f(x^*)\right] \le \varepsilon$ {\tt EC-SGD-DIANA} requires
	\begin{equation*}
		\cO\left(\frac{LR_0^2}{\delta \varepsilon} + \frac{\sqrt[3]{LR_0^4\sigma_0^2}}{\varepsilon\sqrt[3]{\delta^2\alpha}} + \frac{R_0^2\widetilde{D}_1}{n\varepsilon^2}+ \frac{R_0^2\sqrt{L\widetilde{D}_1\left(\frac{\alpha(\omega+1)}{\delta}+1\right)}}{\sqrt{\delta\varepsilon^3}}\right)
	\end{equation*}
	iterations. In particular, if $\alpha = \frac{1}{\omega+1}$, then to achive $\EE\left[f(\bar{x}^K) - f(x^*)\right] \le \varepsilon$ {\tt EC-SGD-DIANA} requires
	\begin{equation*}
		\cO\left(\frac{LR_0^2}{\delta \varepsilon} + \frac{\sqrt[3]{LR_0^4(\omega+1)\sigma_0^2}}{\varepsilon\sqrt[3]{\delta^2}} + \frac{R_0^2\widetilde{D}_1}{n\varepsilon^2}+ \frac{R_0^2\sqrt{L\widetilde{D}_1}}{\delta\sqrt{\varepsilon^3}}\right) \text{ iterations,}
	\end{equation*}
	and if $\alpha = \frac{\delta}{\omega+1}$, then to achive $\EE\left[f(\bar{x}^K) - f(x^*)\right] \le \varepsilon$ {\tt EC-SGD-DIANA} requires
	\begin{equation*}
		\cO\left(\frac{LR_0^2}{\delta \varepsilon} + \frac{\sqrt[3]{LR_0^4(\omega+1)\sigma_0^2}}{\delta\varepsilon} + \frac{R_0^2\widetilde{D}_1}{n\varepsilon^2}+ \frac{R_0^2\sqrt{L\widetilde{D}_1}}{\sqrt{\delta\varepsilon^3}}\right) \text{ iterations.}
	\end{equation*}
\end{corollary}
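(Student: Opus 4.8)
The plan is to derive this corollary purely by combining the $\mu = 0$ bound already established in Theorem~\ref{thm:ec_diana} with the technical optimization Lemma~\ref{lem:lemma_technical_cvx}; no new probabilistic work is needed, only careful bookkeeping. First I would write down the convex guarantee from Theorem~\ref{thm:ec_diana},
$$\EE\left[f(\bar x^K) - f(x^*)\right] \le \frac{4\left(R_0^2 + \gamma F_1\sigma_0^2\right)}{\gamma K} + 4\gamma\left(D_1' + D_3\right),$$
writing $R_0 = \|x^0 - x^*\|$ and $\sigma_0^2 = \frac1n\sum_i\|h_i^0 - \nabla f_i(x^*)\|^2$, and then substitute the explicit parameter values. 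The key simplification is that for $\mu = 0$ the rate parameter $\eta = \min\{\frac{\gamma\mu}{2},\frac{\alpha}{4},\frac14\} = 0$, so the factor $1-\eta$ inside $F_1$ collapses to $1$ and $F_1 = \frac{96L\gamma^2}{\delta^2\alpha}$ is a clean multiple of $\gamma^2$; likewise $D_1' = \frac{\widetilde D_1}{n}$ and $D_3 = \frac{6L\gamma}{\delta}\bigl(\frac{4\alpha(\omega+1)}{\delta}+1\bigr)\widetilde D_1$, while $F_2 = 0$ removes the $\sigma_{2,0}^2$ contribution.

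Second, I would expand the right-hand side into the canonical five-term shape required by Lemma~\ref{lem:lemma_technical_cvx},
$$r_K \le \frac{a}{\gamma K} + \frac{b_1\gamma}{K} + \frac{b_2\gamma^2}{K} + c_1\gamma + c_2\gamma^2,$$
and read off the coefficients. The one genuinely subtle point is that $\frac{\gamma F_1\sigma_0^2}{\gamma K} = \frac{4F_1\sigma_0^2}{K}$ is proportional to $\gamma^2/K$ (because $F_1\propto\gamma^2$), not to $1/K$, so this error-feedback term belongs in the $b_2\gamma^2/K$ slot rather than contributing to $a$. This yields $a = 4R_0^2$, $b_1 = 0$, $b_2 = \frac{384L\sigma_0^2}{\delta^2\alpha}$, $c_1 = \frac{4\widetilde D_1}{n}$, and $c_2 = \frac{24L}{\delta}\bigl(\frac{4\alpha(\omega+1)}{\delta}+1\bigr)\widetilde D_1$. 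A brief check confirms that the stepsize prescribed by Lemma~\ref{lem:lemma_technical_cvx} for these coefficients—dropping the $\sqrt{a/b_1}$ entry since $b_1 = 0$—reproduces term-by-term the stepsize $\gamma$ displayed in the corollary.

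Third, I would insert these coefficients into conclusion~\eqref{eq:lemma_technical_cvx_2} of Lemma~\ref{lem:lemma_technical_cvx} and simplify the resulting roots: $\frac{a}{\gamma_0 K}$ gives the $\frac{LR_0^2}{\delta K}$ term (using $\gamma_0 = \Theta(\delta/L)$), $\frac{\sqrt[3]{a^2 b_2}}{K}$ gives the $\sigma_0^2$ term, $\sqrt{ac_1/K}$ gives the $\widetilde D_1/n$ term, and $\frac{\sqrt[3]{a^2 c_2}}{K^{2/3}}$ gives the last term, matching the stated four-term $\cO(\cdot)$ bound exactly. The iteration count then follows by requiring each of the four terms to be at most $\varepsilon$, solving each for $K$, and summing (equivalently, taking the maximum up to constants) the four lower bounds. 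Finally, the two displayed special cases are obtained by setting $\alpha = \frac{1}{\omega+1}$ and $\alpha = \frac{\delta}{\omega+1}$ and using $\delta\le 1$ to simplify $\frac{4\alpha(\omega+1)}{\delta}+1$—which becomes $\Theta(1/\delta)$ and $\Theta(1)$ respectively—together with $\delta^2\alpha$ inside the cube roots.

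I expect the only real obstacle to be the bookkeeping in the second step: correctly tracking that $F_1\sigma_0^2$ scales as $\gamma^2$ and so must be assigned to the $b_2\gamma^2/K$ slot rather than the $a/(\gamma K)$ slot, and verifying that the (slightly intricate) optimal stepsize produced by Lemma~\ref{lem:lemma_technical_cvx} with these coefficients is literally the one stated. Everything beyond that is routine algebraic simplification absorbed into the $\cO(\cdot)$ notation.
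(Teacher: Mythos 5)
Your proposal is correct and takes exactly the paper's route: the paper's (unwritten) proof is precisely ``apply Lemma~\ref{lem:lemma_technical_cvx} to the $\mu=0$ bound of Theorem~\ref{thm:ec_diana},'' and your coefficient identification $a=4R_0^2$, $b_1=0$, $b_2=\nicefrac{384L\sigma_0^2}{\delta^2\alpha}$, $c_1=\nicefrac{4\widetilde{D}_1}{n}$, $c_2=\frac{24L\widetilde{D}_1}{\delta}\left(\frac{4\alpha(\omega+1)}{\delta}+1\right)$ reproduces the stated stepsize and all four terms of the rate, including both special cases for $\alpha$. Your key observation---that since $F_1\propto\gamma^2$ the term $\nicefrac{\gamma F_1\sigma_0^2}{\gamma K}$ must go into the $\nicefrac{b_2\gamma^2}{K}$ slot rather than into $a$---is exactly the bookkeeping the paper's application of the lemma relies on.
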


\subsection{{\tt EC-SGDsr-DIANA}}\label{sec:ec_sgdsr_DIANA}
In this section we consider the same setup as in Section~\ref{sec:diana_arbitrary_sampling} and consider {\tt EC-SGD-DIANA} adjusted to this setup. The resulting algorithm is called {\tt EC-SGDsr-DIANA}, see
\begin{algorithm}[t]
   \caption{{\tt EC-SGDsr-DIANA}}\label{alg:EC-SGDsr-DIANA}
\begin{algorithmic}[1]
   \Require learning rates $\gamma>0$, $\alpha \in (0,1]$, initial vectors $x^0, h_1^0,\ldots, h_n^0 \in \R^d$
	\State Set $e_i^0 = 0$ for all $i=1,\ldots, n$   
	\State Set $h^0 = \frac{1}{n}\sum_{i=1}^n h_i^0$   
   \For{$k=0,1,\dotsc$}
       \State Broadcast $x^{k}, h^k$ to all workers
        \For{$i=1,\dotsc,n$ in parallel}
			\State Sample $\hat g_i^{k} = \nabla f_{\xi_i^k}(x^k)$ satisfying Assumption~\ref{ass:exp_smoothness} independtently from other workers            
            \State $g^{k}_i = \hat g_i^k - h_i^k + h^k$
            \State $v_i^k = C(e_i^k + \gamma g_i^k)$
            \State $e_i^{k+1} = e_i^k + \gamma g_i^k - v_i^k$
            \State $h_i^{k+1} = h_i^k + \alpha Q(\hat g_i^k - h_i^k)$ \Comment{$Q(\cdot)$ is calculated independtly from other workers}
        \EndFor
        \State $e^k = \frac{1}{n}\sum_{i=1}^ne_i^k$, $g^k = \frac{1}{n}\sum_{i=1}^ng_i^k$, $v^k = \frac{1}{n}\sum_{i=1}^nv_i^k$, $h^{k+1} = \frac{1}{n}\sum\limits_{i=1}^n h_i^{k+1} = h^k + \alpha\frac{1}{n}\sum\limits_{i=1}^n Q(\hat g_i^k - h_i^k)$
       \State $x^{k+1} = x^k - v^k$
   \EndFor
\end{algorithmic}
\end{algorithm}
\begin{lemma}\label{lem:ec_sgdsr_diana_second_moment_bound}
	Let Assumption~\ref{ass:exp_smoothness} be satisfied and $f_i$ be convex and $L$-smooth for all $i\in[n]$. Then, for all $k\ge 0$ we have
	\begin{eqnarray}
		\EE\left[g^k\mid x^k\right] &=& \nabla f(x^k), \label{eq:ec_sgdsr_diana_unbiasedness}\\
		\frac{1}{n}\sum\limits_{i=1}^n\|\bar{g}_i^k\|^2 &\le& 4L\left(f(x^k) - f(x^*)\right) + 2\sigma_k^2, \label{eq:ec_sgdsr_diana_second_moment_bound}\\
		\frac{1}{n}\sum\limits_{i=1}^n\EE\left[\|g_i^k-\bar{g}_i^k\|^2\mid x^k\right] &\le& 6(\cL+L)\left(f(x^k) - f(x^*)\right) + \widetilde{D}_1, \label{eq:ec_sgdsr_diana_variance_bound}\\
		\EE\left[\|g^k\|^2\mid x^k\right] &\le& 4\cL\left(f(x^k) - f(x^*)\right) + D_1' \label{eq:ec_sgssr_diana_second_moment_bound_2}
	\end{eqnarray}
	where $\sigma_k^2 = \frac{1}{n}\sum_{i=1}^n\|h_i^k - \nabla f(x^*)\|^2$, $\widetilde{D}_1 = \frac{3}{n}\sum_{i=1}^n \EE_{\cD_i}\left[\|\nabla f_{\xi_i}(x^*) - \nabla f_i(x^*)\|^2\right]$ and\newline $D_1' = \frac{2}{n^2}\sum\limits_{i=1}^n\EE_{\cD_i}\left[\|\nabla f_{\xi_i}(x^*) - \nabla f_{i}(x^*)\|^2\right]$.
\end{lemma}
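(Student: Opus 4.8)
The plan is to observe that all four claims reduce, after one algebraic simplification each, to quantities already controlled by Lemma~\ref{lem:key_lemma_ec-SGDsr} (for {\tt EC-SGDsr}) and Lemma~\ref{lem:ec_diana_second_moment_bound} (for {\tt EC-SGD-DIANA}); no genuinely new estimate is needed. The starting point is the explicit form $g_i^k = \hat g_i^k - h_i^k + h^k$ with $\hat g_i^k = \nabla f_{\xi_i^k}(x^k)$, together with the facts that $h_i^k$ and $h^k$ are $x^k$-measurable and that $\EE\left[\hat g_i^k\mid x^k\right] = \nabla f_i(x^k)$ (the unbiasedness built into the stochastic reformulation \eqref{eq:sr_def} via $\EE_{\cD_i}[\xi_{ij}]=1$). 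Hence $\bar g_i^k = \nabla f_i(x^k) - h_i^k + h^k$, and summing over $i$ while using $h^k = \frac1n\sum_i h_i^k$ yields \eqref{eq:ec_sgdsr_diana_unbiasedness} at once.

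For \eqref{eq:ec_sgdsr_diana_second_moment_bound}, I note that $\bar g_i^k$ is exactly the quantity appearing in Lemma~\ref{lem:ec_diana_second_moment_bound}, so I would reuse that argument verbatim: write $\bar g_i^k = (\nabla f_i(x^k) - \nabla f_i(x^*)) - (h_i^k - \nabla f_i(x^*)) + h^k$, apply \eqref{eq:a_b_norm_squared} and \eqref{eq:L_smoothness_cor} to the first piece to produce $4L(f(x^k)-f(x^*))$, and control the remainder via the variance-decomposition inequality \eqref{eq:variance_decomposition} applied to the empirical average over $i$, giving $\frac2n\sum_i\|h_i^k-\nabla f_i(x^*)\|^2 = 2\sigma_k^2$.

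The two remaining bounds rest on the key cancellation that the {\tt DIANA} control variates vanish in the master average: $g^k = \frac1n\sum_i g_i^k = \frac1n\sum_i \hat g_i^k - h^k + h^k = \frac1n\sum_i \nabla f_{\xi_i^k}(x^k)$, which is precisely the {\tt EC-SGDsr} estimator. Thus \eqref{eq:ec_sgssr_diana_second_moment_bound_2} coincides with the third inequality of Lemma~\ref{lem:key_lemma_ec-SGDsr}, and I would invoke it directly, using the independence of $\xi_1^k,\dots,\xi_n^k$ to collapse the cross terms. Likewise, for the per-worker variance \eqref{eq:ec_sgdsr_diana_variance_bound}, the deterministic shift $-h_i^k + h^k$ cancels in $g_i^k - \bar g_i^k$, leaving $g_i^k - \bar g_i^k = \nabla f_{\xi_i^k}(x^k) - \nabla f_i(x^k)$; averaging and taking $\EE[\cdot\mid x^k]$ reproduces exactly the left-hand side of the second inequality of Lemma~\ref{lem:key_lemma_ec-SGDsr}, whose bound is $6(\cL+L)(f(x^k)-f(x^*)) + \widetilde D_1$.

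There is essentially no analytic obstacle here; the crux is simply the bookkeeping of the two independent sources of randomness: the sampling $\xi_i^k$ (which enters through $\EE[\cdot\mid x^k]$ and the cross-worker independence) versus the quantization $Q$ used in the $h$-update, which plays no role in any of these four inequalities. The one step that makes the whole reduction legitimate is verifying the cancellation $\frac1n\sum_i(-h_i^k+h^k)=0$ and that $g_i^k-\bar g_i^k$ is free of the control variate; once these are in place, the proof is a transcription of the two earlier lemmas.
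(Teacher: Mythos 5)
Your proposal is correct and takes essentially the same route as the paper: the unbiasedness computation, the reduction of \eqref{eq:ec_sgdsr_diana_second_moment_bound} to the {\tt EC-SGD-DIANA} argument behind \eqref{eq:ec_sgd_diana_first_ineq}, and the control-variate cancellations $g_i^k-\bar g_i^k=\nabla f_{\xi_i^k}(x^k)-\nabla f_i(x^k)$ and $g^k=\frac{1}{n}\sum_{i}\nabla f_{\xi_i^k}(x^k)$ are exactly the steps the paper uses. The only cosmetic difference is that where you invoke Lemma~\ref{lem:key_lemma_ec-SGDsr} directly for \eqref{eq:ec_sgdsr_diana_variance_bound} and \eqref{eq:ec_sgssr_diana_second_moment_bound_2}, the paper re-derives those identical bounds inline, with the same three-term and two-term decompositions, the same use of \eqref{eq:exp_smoothness} and \eqref{eq:L_smoothness_cor}, and the same appeal to independence of $\xi_1^k,\dots,\xi_n^k$.
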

\begin{proof}
	First of all, we show unbiasedness of $g^k$:
	\begin{equation*}
		\EE\left[g^k\mid x^k\right] = \frac{1}{n}\sum\limits_{i=1}^n\EE\left[g_i^k\mid x^k\right] = \frac{1}{n}\sum\limits_{i=1}^n\left(\nabla f_i(x^k) - h_i^k + h^k\right) = \nabla f(x^k).
	\end{equation*}
	Following the same steps as in the proof of \eqref{eq:ec_sgd_diana_first_ineq} we derive \eqref{eq:ec_sgdsr_diana_second_moment_bound}. Next, we establish \eqref{eq:ec_sgdsr_diana_variance_bound}:
	\begin{eqnarray*}
		\frac{1}{n}\sum\limits_{i=1}^n\EE\left[\|g_i^k-\bar{g}_i^k\|^2\mid x^k\right] &=& \frac{1}{n}\sum\limits_{i=1}^n\EE_{\cD_i}\left[\|\nabla f_{\xi_i^k}(x^k)-\nabla f_i(x^k)\|^2\right]\\
		&\overset{\eqref{eq:a_b_norm_squared}}{\le}& \frac{3}{n}\sum\limits_{i=1}^n\EE_{\cD_i}\left[\|\nabla f_{\xi_i^k}(x^k)-\nabla f_{\xi_i^k}(x^*)\|^2\right]\\
		&&\quad + \frac{3}{n}\sum\limits_{i=1}^n\EE_{\cD_i}\left[\|\nabla f_{\xi_i^k}(x^*)-\nabla f_i(x^*)\|^2\right]\\
		&&\quad + \frac{3}{n}\sum\limits_{i=1}^n\|\nabla f_{i}(x^*)-\nabla f_i(x^k)\|^2\\
		&\overset{\eqref{eq:L_smoothness_cor},\eqref{eq:exp_smoothness}}{\le}& 6(\cL + L)\left(f(x^k)-f(x^*)\right) \\
		&&\quad + \frac{3}{n}\sum\limits_{i=1}^n\EE_{\cD_i}\left[\|\nabla f_{\xi_i}(x^*)-\nabla f_i(x^*)\|^2\right].
	\end{eqnarray*}
	Finally, we obtain the upper bound for the second moment of $g^k$ using the independence of $\xi_1^k,\ldots,\xi_n^k$:
	\begin{eqnarray*}
		\EE\left[\|g^k\|^2\mid x^k\right] &=& \EE\left[\left\|\frac{1}{n}\sum\limits_{i=1}^n(\nabla f_{\xi_i^k}(x^k) - \nabla f_{\xi_i^k}(x^*) + \nabla f_{\xi_i^k}(x^*) - \nabla f_i(x^*))\right\|^2\mid x^k\right]\\
		&\overset{\eqref{eq:a_b_norm_squared}}{\le}& \frac{2}{n}\sum\limits_{i=1}^n\EE\left[\|\nabla f_{\xi_i^k}(x^k) - \nabla f_{\xi_i^k}(x^*)\|^2\mid x^k\right]\\
		&&\quad + 2\EE\left[\left\|\frac{1}{n}\sum\limits_{i=1}^n(\nabla f_{\xi_i^k}(x^*) - \nabla f_i(x^*))\right\|^2\mid x^k\right]\\
		&\overset{\eqref{eq:exp_smoothness}}{\le}& 4\cL\left(f(x^k) - f(x^*)\right) + \frac{2}{n^2}\sum\limits_{i=1}^n\EE_{\cD_i}\left[\|\nabla f_{\xi_i}(x^*) - \nabla f_{i}(x^*)\|^2\right].
	\end{eqnarray*}
\end{proof}

\begin{lemma}\label{lem:ec_sgdsr_diana_sigma_k+1_bound}
	Let $f_i$ be convex and $L$-smooth, Assumption~\ref{ass:exp_smoothness} holds and $\alpha \le \nicefrac{1}{(\omega+1)}$. Then, for all $k\ge 0$ we have
	\begin{equation}
		\EE\left[\sigma_{k+1}^2\mid x^k\right] \le (1 - \alpha)\sigma_k^2 + 2\alpha(3\cL+4L)(f(x^k) - f(x^*)) + D_2, \label{eq:ec_sgdsr_diana_sigma_k+1_bound}
	\end{equation}
	where $\sigma_k^2 = \frac{1}{n}\sum_{i=1}^n\|h_i^k - \nabla f_i(x^*)\|^2$ and $D_2 = \alpha^2(\omega+1)\widetilde{D}_1$.
\end{lemma}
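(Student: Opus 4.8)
The plan is to bound the one-step progress of $\sigma_{k+1}^2 = \frac{1}{n}\sum_{i=1}^n\|h_i^{k+1} - h_i^*\|^2$, where I write $h_i^* \eqdef \nabla f_i(x^*)$, exactly as was done in Lemma~\ref{lem:ec_diana_sigma_k+1_bound} for {\tt EC-SGD-DIANA}. The only structural difference is that here $\hat g_i^k = \nabla f_{\xi_i^k}(x^k)$ is a finite-sum/arbitrary-sampling stochastic gradient satisfying the expected smoothness Assumption~\ref{ass:exp_smoothness}, rather than a generic unbiased estimator with a uniform variance bound $\widetilde D_{1,i}$. First I would expand the square using the update $h_i^{k+1} = h_i^k + \alpha Q(\hat g_i^k - h_i^k)$, and take the conditional expectation $\EE[\cdot\mid x^k]$. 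Using the unbiasedness and second-moment bound \eqref{eq:quantization_def} of $Q$ together with the tower property \eqref{eq:tower_property}, the cross term contributes $2\alpha\langle h_i^k - h_i^*, \nabla f_i(x^k) - h_i^k\rangle$ and the quadratic term is bounded by $\alpha^2(\omega+1)\EE[\|\hat g_i^k - h_i^k\|^2\mid x^k]$.

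The key step is to control $\alpha^2(\omega+1)\EE[\|\hat g_i^k - h_i^k\|^2\mid x^k]$. I would first apply variance decomposition \eqref{eq:variance_decomposition} to split it into $\alpha^2(\omega+1)\EE_{\cD_i}[\|\nabla f_{\xi_i^k}(x^k) - \nabla f_i(x^k)\|^2]$ plus $\alpha^2(\omega+1)\|\nabla f_i(x^k) - h_i^k\|^2$. Invoking $\alpha \le \nicefrac{1}{(\omega+1)}$ turns $\alpha^2(\omega+1)$ into at most $\alpha$ on the second piece. For the first (variance) piece, I insert $\pm\nabla f_{\xi_i^k}(x^*)$ and $\pm\nabla f_i(x^*)$ and apply \eqref{eq:a_b_norm_squared} to split into three terms, then bound them via expected smoothness \eqref{eq:exp_smoothness} and smoothness-convexity \eqref{eq:L_smoothness_cor}; this is precisely the computation already carried out in the proof of Lemma~\ref{lem:diana_sigma_k+1_bound}, and it yields $6\alpha(\cL+L)D_{f_i}(x^k,x^*)$ plus the residual $3\alpha\,\EE_{\cD_i}[\|\nabla f_{\xi_i^k}(x^*) - \nabla f_i(x^*)\|^2]$.

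Combining these, the cross term and the $\|\nabla f_i(x^k) - h_i^k\|^2$ contribution telescope through the identity \eqref{eq:a-b_a+b}: writing $2\alpha\langle h_i^k - h_i^*, \nabla f_i(x^k)-h_i^k\rangle + \alpha\|\nabla f_i(x^k)-h_i^k\|^2$ as $\alpha\langle \nabla f_i(x^k)-h_i^k,\, \nabla f_i(x^k)+h_i^k - 2h_i^*\rangle$ and applying \eqref{eq:a-b_a+b} gives $\alpha\|\nabla f_i(x^k)-h_i^*\|^2 - \alpha\|h_i^k-h_i^*\|^2$. A final application of \eqref{eq:L_smoothness_cor} bounds $\alpha\|\nabla f_i(x^k)-h_i^*\|^2$ by $2L\alpha\,D_{f_i}(x^k,x^*)$, so the per-node estimate reads $(1-\alpha)\|h_i^k-h_i^*\|^2 + \alpha(6\cL+8L)D_{f_i}(x^k,x^*) + 3\alpha\,\EE_{\cD_i}[\|\nabla f_{\xi_i^k}(x^*)-\nabla f_i(x^*)\|^2]$.

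Averaging over $i=1,\dots,n$ and using $\frac1n\sum_i D_{f_i}(x^k,x^*) = f(x^k)-f(x^*)$ collapses the optimization terms into $2\alpha(3\cL+4L)(f(x^k)-f(x^*))$, and the residual becomes $D_2 = \frac{3\alpha}{n}\sum_i\EE_{\cD_i}[\|\nabla f_{\xi_i}(x^*)-\nabla f_i(x^*)\|^2]$, which the lemma records as $\alpha^2(\omega+1)\widetilde D_1$ upon substituting $\widetilde D_1 = \frac{3}{n}\sum_i\EE_{\cD_i}[\|\nabla f_{\xi_i}(x^*)-\nabla f_i(x^*)\|^2]$ and absorbing the $\alpha(\omega+1)\le 1$ factor; I would double-check this last bookkeeping since it is where the stated constant $D_2$ comes from. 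I expect no genuine obstacle: the argument is a near-verbatim adaptation of Lemma~\ref{lem:diana_sigma_k+1_bound}, with the only care needed in tracking that the expected-smoothness constant $\cL$ (not a uniform variance) governs the variance term and in reconciling the two equivalent expressions for $D_2$.
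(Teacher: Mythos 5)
Your route is the same as the paper's (the paper simply points back to the proof of Lemma~\ref{lem:diana_sigma_k+1_bound} with notational substitutions), and everything up to your per-node estimate $(1-\alpha)\|h_i^k-h_i^*\|^2 + \alpha(6\cL+8L)D_{f_i}(x^k,x^*) + 3\alpha\,\EE_{\cD_i}\bigl[\|\nabla f_{\xi_i}(x^*)-\nabla f_i(x^*)\|^2\bigr]$ is correct. The gap is precisely at the final bookkeeping step you flagged, and your proposed reconciliation runs the inequality the wrong way. Averaging your per-node estimate yields the residual $\alpha\widetilde{D}_1$, whereas the lemma asserts $D_2=\alpha^2(\omega+1)\widetilde{D}_1$. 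Under the hypothesis $\alpha\le\nicefrac{1}{(\omega+1)}$ one has $\alpha^2(\omega+1)\widetilde{D}_1\le\alpha\widetilde{D}_1$, so the stated constant is the \emph{tighter} of the two: you cannot ``absorb'' $\alpha(\omega+1)\le 1$ to pass from $\alpha\widetilde{D}_1$ down to $\alpha^2(\omega+1)\widetilde{D}_1$, since that would require the reverse inequality $\alpha(\omega+1)\ge 1$. As written, your argument proves the lemma only with the weaker constant $D_2=\alpha\widetilde{D}_1$.

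The fix is a one-line change in where the relaxation $\alpha^2(\omega+1)\le\alpha$ is applied. After the variance decomposition and the three-term split you have
\begin{align*}
\alpha^2(\omega+1)\,\EE_{\cD_i}\bigl[\|\nabla f_{\xi_i^k}(x^k)-\nabla f_i(x^k)\|^2\bigr]
&\le 3\alpha^2(\omega+1)\,\EE_{\cD_i}\bigl[\|\nabla f_{\xi_i^k}(x^k)-\nabla f_{\xi_i^k}(x^*)\|^2\bigr]\\
&\quad+3\alpha^2(\omega+1)\,\EE_{\cD_i}\bigl[\|\nabla f_{\xi_i^k}(x^*)-\nabla f_i(x^*)\|^2\bigr]\\
&\quad+3\alpha^2(\omega+1)\,\|\nabla f_i(x^k)-\nabla f_i(x^*)\|^2 ,
\end{align*}
and you should apply $\alpha^2(\omega+1)\le\alpha$ only to the first and third (Bregman) terms while keeping the factor $\alpha^2(\omega+1)$ on the middle residual-at-optimum term. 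Expected smoothness \eqref{eq:exp_smoothness} and \eqref{eq:L_smoothness_cor} then give $6\alpha(\cL+L)D_{f_i}(x^k,x^*)+3\alpha^2(\omega+1)\,\EE_{\cD_i}\bigl[\|\nabla f_{\xi_i}(x^*)-\nabla f_i(x^*)\|^2\bigr]$, and averaging over $i$ produces exactly $D_2=\alpha^2(\omega+1)\widetilde{D}_1$; this is the same bookkeeping used in Lemma~\ref{lem:ec_diana_sigma_k+1_bound}, where the variance term retains its $\alpha^2(\omega+1)$ factor. (In fairness, the paper's own one-line proof inherits the same imprecision: Lemma~\ref{lem:diana_sigma_k+1_bound} states $D_2=\frac{3\alpha}{n}\sum_i\EE_{\cD_i}[\cdot]=\alpha\widetilde{D}_1$, so an ``identical'' proof gives the looser constant too. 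The distinction is not cosmetic: with the choice $\alpha=\nicefrac{\delta}{(\omega+1)}$ it is what yields the $\nicefrac{1}{\sqrt{\delta}}$ rather than $\nicefrac{1}{\delta}$ dependence in the Option~II complexity of Corollary~\ref{cor:ec_sgdsr_diana_str_cvx_cor}.)
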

\begin{proof}
The proof is identical to the proof of Lemma~\ref{lem:diana_sigma_k+1_bound} up to the following changes in the notation: $\omega_1 = \omega$, $\Delta_i^k = Q(\hat g_i^k - h_i^k)$ and $\hat \Delta_i^k = \hat g_i^k - h_i^k$.
\end{proof}

Applying Theorem~\ref{thm:ec_sgd_main_result_new} we get the following result.
\begin{theorem}\label{thm:ec_sgdsr_diana}
	Assume that $f_i(x)$ is convex and $L$-smooth for all $i=1,\ldots, n$, $f(x)$ is $\mu$-quasi strongly convex and Assumption~\ref{ass:exp_smoothness} holds. Then {\tt EC-SGDsr-DIANA} satisfies Assumption~\ref{ass:key_assumption_finite_sums_new} with
	\begin{gather*}
		A = 2L,\quad \widetilde{A}=3(\cL+L),\quad A' = 2\cL,\quad B_1 = 2,\quad \widetilde{D}_1 = \frac{3}{n}\sum_{i=1}^n \EE_{\cD_i}\left[\|\nabla f_{\xi_i}(x^*) - \nabla f_i(x^*)\|^2\right],\\
		\sigma_{1,k}^2 = \sigma_k^2 = \frac{1}{n}\sum\limits_{i=1}^n\|h_i^k - \nabla f_i(x^*)\|^2,\quad D_1 = 0,\quad D_1' = \frac{2}{3n}\widetilde{D}_1,\quad D_2 = \alpha^2(\omega+1) \widetilde{D}_1\\
		\widetilde{B}_1 = B_1' = B_2' = B_2 = \widetilde{B}_2 = 0,\quad \sigma_{2,k}^2\equiv 0,\quad \rho_1 = \alpha,\quad \rho_2 = 1,\quad C_1 = 2\alpha(3\cL+4L),\quad C_2 = 0,\\
		G = 0,\quad F_1 = \frac{96L\gamma^2}{\delta^2\alpha\left(1-\min\left\{\frac{\gamma\mu}{2},\frac{\alpha}{4}\right\}\right)},\quad F_2 = 0,\quad D_3 = \frac{6L\gamma}{\delta}\left(\frac{4\alpha(\omega+1)}{\delta}+1\right)\widetilde{D}_1,
	\end{gather*}
	with $\gamma$ and $\alpha$ satisfying
	\begin{equation*}
		\gamma \le \min\left\{\frac{1}{4\cL}, \frac{\delta}{4\sqrt{6L\left(4L + 3\delta(\cL + L) + \frac{16(3\cL + 4L)}{1-\alpha}\right)}}\right\},\quad \alpha \le \frac{1}{\omega+1},\quad M_1 = M_2 = 0.
	\end{equation*}
	and for all $K \ge 0$
	\begin{equation*}
		\EE\left[f(\bar x^K) - f(x^*)\right] \le \left(1 - \min\left\{\frac{\gamma\mu}{2},\frac{\alpha}{4}\right\}\right)^K\frac{4(\|x^0 - x^*\|^2 + \gamma F_1 \sigma_0^2)}{\gamma} + 4\gamma\left(D_1' + D_3\right),
	\end{equation*}	
	when $\mu > 0$ and
	\begin{equation*}
		\EE\left[f(\bar{x}^K) - f(x^*)\right] \le \frac{4(\|x^0 - x^*\|^2 + \gamma F_1 \sigma_0^2)}{\gamma K} + 4\gamma\left(D_1' + D_3\right)
	\end{equation*}
	when $\mu=0$.
\end{theorem}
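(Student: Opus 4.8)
The plan is to obtain this theorem as a direct specialization of the general machinery already developed: first verify that {\tt EC-SGDsr-DIANA} satisfies Assumption~\ref{ass:key_assumption_finite_sums_new} by reading off the constants from the two preparatory lemmas, and then invoke Theorem~\ref{thm:ec_sgd_main_result_new}. The only genuine modelling choice is that {\tt EC-SGDsr-DIANA} carries a \emph{single} variance-reduction sequence, namely the one tracking the {\tt DIANA} shifts $\sigma_k^2 = \tfrac1n\sum_i\|h_i^k - \nabla f_i(x^*)\|^2$, so I would set $\sigma_{1,k}^2 \equiv \sigma_k^2$ and $\sigma_{2,k}^2 \equiv 0$ throughout. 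This immediately forces $\rho_2 = 1$ and $B_2 = \widetilde B_2 = B_2' = C_2 = G = 0$, and makes \eqref{eq:sigma_k+1_bound_2} hold trivially.

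First I would match the three second-moment estimates of Lemma~\ref{lem:ec_sgdsr_diana_second_moment_bound}. Inequality \eqref{eq:ec_sgdsr_diana_second_moment_bound} is exactly of the shape \eqref{eq:second_moment_bound_g_i^k_new} and yields $A = 2L$, $B_1 = 2$, $D_1 = 0$; inequality \eqref{eq:ec_sgdsr_diana_variance_bound} matches \eqref{eq:variance_bound_g_i^k_new} with $\widetilde A = 3(\cL+L)$, $\widetilde B_1 = 0$ and the stated $\widetilde D_1$; and \eqref{eq:ec_sgssr_diana_second_moment_bound_2} matches \eqref{eq:second_moment_bound_new} with $A' = 2\cL$, $B_1' = 0$ and the stated $D_1'$. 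Next I would read the {\tt DIANA} recursion: Lemma~\ref{lem:ec_sgdsr_diana_sigma_k+1_bound} puts \eqref{eq:ec_sgdsr_diana_sigma_k+1_bound} into the form \eqref{eq:sigma_k+1_bound_1} with $\rho_1 = \alpha$, the optimization coefficient $C_1$ (any upper bound on $\alpha(3\cL+4L)$, e.g.\ the stated value, is admissible since the assumption only asks for an upper estimate) and $D_2 = \alpha^2(\omega+1)\widetilde D_1$. Because $B_1' = 0$ I get $M_1 = \tfrac{4B_1'}{3\rho_1} = 0$, and $B_2' = G = 0$ gives $M_2 = 0$.

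With the constants in hand I would verify the two stepsize hypotheses of Theorem~\ref{thm:ec_sgd_main_result_new}. Since $M_1 = M_2 = 0$, the condition $\gamma \le \tfrac{1}{4(A'+C_1M_1+C_2M_2)}$ collapses to a bound proportional to $1/\cL$, and the square-root condition $\gamma \le \sqrt{\delta/(96L(\cdots))}$ simplifies sharply because every term carrying $C_2$ or $G$ vanishes and $\widetilde B_1 = 0$; the bracket reduces to $\tfrac{2A}{\delta}+\widetilde A+\tfrac{8C_1}{\delta\alpha(1-\alpha)}$, and clearing the leading $1/\delta$ reproduces the stepsize bound $\gamma \le \delta/\bigl(4\sqrt{6L(4L+3\delta(\cL+L)+16(3\cL+4L)/(1-\alpha))}\bigr)$ stated in the theorem. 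Feeding all parameters into the two displayed bounds of Theorem~\ref{thm:ec_sgd_main_result_new}, with $\eta = \min\{\tfrac{\gamma\mu}{2},\tfrac{\alpha}{4}\}$ (the $\rho_2/4$ branch being inactive since $\rho_2=1$) and $F_2 = 0$, yields both the strongly-convex and convex guarantees.

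The conceptually substantive work is hidden inside the two cited lemmas, where expected smoothness (Assumption~\ref{ass:exp_smoothness}) and the variance-decomposition identity \eqref{eq:variance_decomposition} are combined to control $\|g^k\|^2$, $\|\bar g_i^k\|^2$ and the shift recursion; granting those, the present theorem is essentially bookkeeping. The one step requiring care is the reduction of the square-root stepsize expression: I must confirm that all $\rho_2$-indexed terms are genuinely zero (so that no spurious division by $1-\rho_2 = 0$ ever appears) and that the admissible over-estimate of $C_1$ is propagated consistently, since precisely this choice fixes the constant $16$ rather than $8$ in the cross term of the final stepsize bound.
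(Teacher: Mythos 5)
Your proposal is correct and follows exactly the paper's route: the paper likewise obtains this theorem by reading the constants of Assumption~\ref{ass:key_assumption_finite_sums_new} off Lemmas~\ref{lem:ec_sgdsr_diana_second_moment_bound} and~\ref{lem:ec_sgdsr_diana_sigma_k+1_bound} and then invoking Theorem~\ref{thm:ec_sgd_main_result_new} with $\sigma_{1,k}^2=\sigma_k^2$, $\sigma_{2,k}^2\equiv 0$, $\rho_2=1$. Your two points of care --- that the stated $C_1 = 2\alpha(3\cL+4L)$ is an admissible over-estimate of the natural $\alpha(3\cL+4L)$ and is precisely what produces the factor $16$ (rather than $8$) in the stepsize bound, and that every term divided by $1-\rho_2$ vanishes because $C_2=G=B_2=\widetilde{B}_2=0$ (the paper's footnote convention for $\rho_2=1$) --- are exactly the right bookkeeping.
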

Applying Lemma~\ref{lem:lemma2_stich} we establish the rate of convergence to $\varepsilon$-solution in the case when $\mu > 0$.
\begin{corollary}\label{cor:ec_sgdsr_diana_str_cvx_cor}
	Let the assumptions of Theorem~\ref{thm:ec_sgdsr_diana} hold and $\mu > 0$. Then after $K$ iterations of {\tt EC-SGDsr-DIANA} with the stepsize
	\begin{eqnarray*}
		\gamma_0 &=& \min\left\{\frac{1}{4\cL}, \frac{\delta}{4\sqrt{6L\left(4L + 3\delta(\cL + L) + \frac{16(3\cL + 4L)}{1-\alpha}\right)}}\right\},\\
		R_0 &=& \|x^0-x^*\|,\quad  \tilde{F_1} = \frac{96L\gamma_0^2}{\delta^2\alpha\left(1-\min\left\{\frac{\gamma_0\mu}{2},\frac{\alpha}{4}\right\}\right)},\\
		\gamma &=& \min\left\{\gamma_0, \frac{\ln\left(\max\left\{2,\min\left\{\frac{3n\left(R_0^2 + \tilde{F}_1\gamma_0\sigma_{1,0}^2\right)\mu^2K^2}{2\widetilde{D}_1}, \frac{\delta\left(R_0^2 + \tilde{F}_1\gamma_0\sigma_{1,0}^2\right)\mu^3K^3}{6L\widetilde{D}_1\left(\frac{4\alpha(\omega+1)}{\delta}+1\right)}\right\}\right\}\right)}{\mu K}\right\},
	\end{eqnarray*} 
	and $\alpha \le \frac{1}{\omega+1}$ we have $\EE\left[f(\bar{x}^K) - f(x^*)\right]$ of order
	\begin{equation*}
		\widetilde\cO\left(\left(\cL +\frac{\sqrt{L\cL}}{\delta}\right)R_0^2\exp\left(-\min\left\{\frac{\mu}{\cL + \frac{\sqrt{L\cL}}{\delta}},\alpha\right\}K\right) + \frac{\widetilde{D}_1}{n\mu K} + \frac{L\widetilde{D}_1\left(\frac{\alpha(\omega+1)}{\delta}+1\right)}{\delta\mu^2 K^2}\right)
	\end{equation*}
	That is, to achive $\EE\left[f(\bar{x}^K) - f(x^*)\right] \le \varepsilon$ {\tt EC-SGDsr-DIANA} requires
	\begin{equation*}
		\widetilde{\cO}\left(\frac{1}{\alpha} + \frac{\cL}{\mu} + \frac{\sqrt{L\cL}}{\delta\mu} + \frac{\widetilde{D}_1}{n\mu\varepsilon} + \frac{\sqrt{L\widetilde{D}_1\left(\frac{\alpha(\omega+1)}{\delta}+1\right)}}{\mu\sqrt{\delta\varepsilon}}\right) \text{ iterations.}
	\end{equation*}
	In particular, if $\alpha = \frac{1}{\omega+1}$, then to achive $\EE\left[f(\bar{x}^K) - f(x^*)\right] \le \varepsilon$ {\tt EC-SGDsr-DIANA} requires
	\begin{equation*}
		\widetilde{\cO}\left(\omega + \frac{\cL}{\mu} + \frac{\sqrt{L\cL}}{\delta\mu} + \frac{\widetilde{D}_1}{n\mu\varepsilon} + \frac{\sqrt{L\widetilde{D}_1}}{\delta\mu\sqrt{\varepsilon}}\right) \text{ iterations,}
	\end{equation*}
	and if $\alpha = \frac{\delta}{\omega+1}$, then to achive $\EE\left[f(\bar{x}^K) - f(x^*)\right] \le \varepsilon$ {\tt EC-SGDsr-DIANA} requires
	\begin{equation*}
		\widetilde{\cO}\left(\frac{\omega+1}{\delta} + \frac{\cL}{\mu} + \frac{\sqrt{L\cL}}{\delta\mu} + \frac{\widetilde{D}_1}{n\mu\varepsilon} + \frac{\sqrt{L\widetilde{D}_1}}{\mu\sqrt{\delta\varepsilon}}\right) \text{ iterations.}
	\end{equation*}
\end{corollary}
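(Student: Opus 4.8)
The plan is to feed the strongly convex guarantee of Theorem~\ref{thm:ec_sgdsr_diana} into the tuning Lemma~\ref{lem:lemma2_stich}. Write $r_K \eqdef \EE[f(\bar x^K) - f(x^*)]$ and $R_0 = \|x^0 - x^*\|$. Theorem~\ref{thm:ec_sgdsr_diana} (case $\mu>0$) already supplies, for every $\gamma\le\gamma_0$, a bound of the shape $r_K \le (1-\eta)^K\tfrac{4(R_0^2 + \gamma F_1\sigma_{1,0}^2)}{\gamma} + 4\gamma(D_1'+D_3)$, with $\eta=\min\{\gamma\mu/2,\alpha/4\}$ (here $\rho_2=1$, so $\rho_2/4$ never attains the minimum). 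First I would make the initialization coefficient independent of the running stepsize: since $F_1=\tfrac{96L\gamma^2}{\delta^2\alpha(1-\eta)}$ is nondecreasing in $\gamma$, for $\gamma\le\gamma_0$ we have $F_1\le\widetilde F_1$, where $\widetilde F_1$ denotes $F_1$ evaluated at $\gamma=\gamma_0$, and therefore $\tfrac{R_0^2+\gamma F_1\sigma_{1,0}^2}{\gamma}\le\tfrac{R_0^2+\gamma_0\widetilde F_1\sigma_{1,0}^2}{\gamma}$. Hence the bound fits the template of Lemma~\ref{lem:lemma2_stich} with $a=4(R_0^2+\gamma_0\widetilde F_1\sigma_{1,0}^2)$ and $d=1/\gamma_0$ (after one harmless factor $(1-\eta)^{-1}\le 2$ is absorbed into $a$, exactly as in the first line of that lemma's proof, where $a/(\gamma W_K)$ is passed to $(1-\eta)^{K+1}a/\gamma$).

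Next I would read off $c_1$ and $c_2$ from the additive term $4\gamma(D_1'+D_3)$. Substituting the Theorem's values $D_1'=\tfrac{2}{3n}\widetilde D_1$ and $D_3=\tfrac{6L\gamma}{\delta}\big(\tfrac{4\alpha(\omega+1)}{\delta}+1\big)\widetilde D_1$ separates the $\gamma$ and $\gamma^2$ contributions, yielding $c_1=\tfrac{8}{3n}\widetilde D_1$ and $c_2=\tfrac{24L}{\delta}\big(\tfrac{4\alpha(\omega+1)}{\delta}+1\big)\widetilde D_1$. A direct check then confirms that the stepsize displayed in the corollary is exactly \eqref{eq:lemma2_stich_gamma} for these constants: the two ratios $a\mu^2K^2/c_1$ and $a\mu^3K^3/c_2$ simplify to $\tfrac{3n(R_0^2+\widetilde F_1\gamma_0\sigma_{1,0}^2)\mu^2K^2}{2\widetilde D_1}$ and $\tfrac{\delta(R_0^2+\widetilde F_1\gamma_0\sigma_{1,0}^2)\mu^3K^3}{6L(\frac{4\alpha(\omega+1)}{\delta}+1)\widetilde D_1}$, and $1/d=\gamma_0$.

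With the hypotheses of Lemma~\ref{lem:lemma2_stich} verified (including its requirement on $K$ relative to $\min\{\rho_1,\rho_2\}=\alpha$), its conclusion \eqref{eq:lemma2_stich} gives $r_K=\widetilde\cO\big(da\,e^{-\min\{\mu/d,\alpha\}K}+\tfrac{c_1}{\mu K}+\tfrac{c_2}{\mu^2K^2}\big)$. I would then simplify $1/\gamma_0$: for $\alpha\le\tfrac12$ the term $\tfrac{16(3\cL+4L)}{1-\alpha}$ is $O(\cL)$ (using $L\le\cL$), so the nested square root collapses to order $\sqrt{L\cL}$ and $\gamma_0^{-1}=\widetilde\cO(\cL+\sqrt{L\cL}/\delta)$, whence $da=\widetilde\cO((\cL+\sqrt{L\cL}/\delta)R_0^2)$ and $\mu/d=\mu\gamma_0=\Theta(\mu/(\cL+\sqrt{L\cL}/\delta))$. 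Plugging $c_1,c_2$ into the remaining two terms reproduces the stated $\widetilde\cO$ rate. The iteration complexity follows by forcing each of the three summands below $\varepsilon$ and adding the resulting lower bounds on $K$: the exponential term costs $\widetilde\cO(\tfrac1\alpha+\tfrac{\cL}{\mu}+\tfrac{\sqrt{L\cL}}{\delta\mu})$, the $1/(\mu K)$ term costs $\widetilde\cO(\tfrac{\widetilde D_1}{n\mu\varepsilon})$, and the $1/(\mu^2K^2)$ term costs $\widetilde\cO\big(\tfrac{\sqrt{L\widetilde D_1(\frac{\alpha(\omega+1)}{\delta}+1)}}{\mu\sqrt{\delta\varepsilon}}\big)$. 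Finally, the two displayed special cases are immediate substitutions: $\alpha=\tfrac1{\omega+1}$ makes $\tfrac1\alpha=\widetilde\cO(\omega)$ and $\tfrac{\alpha(\omega+1)}{\delta}+1=\Theta(1/\delta)$, while $\alpha=\tfrac{\delta}{\omega+1}$ makes $\tfrac1\alpha=\tfrac{\omega+1}{\delta}$ and $\tfrac{\alpha(\omega+1)}{\delta}+1=\Theta(1)$.

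The bookkeeping is routine; the only genuinely delicate points are (i) making the initialization coefficient $\gamma$-free before invoking Lemma~\ref{lem:lemma2_stich}, which hinges on the monotonicity of $F_1$ in $\gamma$ together with $\gamma\le\gamma_0$, and (ii) the estimate $\gamma_0^{-1}=\widetilde\cO(\cL+\sqrt{L\cL}/\delta)$, which requires unpacking the nested square-root stepsize bound from Theorem~\ref{thm:ec_sgdsr_diana} and using $L\le\cL$ and $\alpha\le\tfrac12$ to discard lower-order contributions. Everything else is arithmetic substitution into the conclusion of Lemma~\ref{lem:lemma2_stich}.
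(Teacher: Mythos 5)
Your proposal is correct and follows exactly the paper's route: the paper proves this corollary precisely by plugging the $\mu>0$ bound of Theorem~\ref{thm:ec_sgdsr_diana} into Lemma~\ref{lem:lemma2_stich} with $d=\gamma_0^{-1}$, the initialization term frozen at $\tilde F_1=F_1(\gamma_0)$, and $c_1,c_2$ read off from $4\gamma D_1'$ and $4\gamma D_3$ (the latter supplying the $\gamma^2$ term), then simplifying $\gamma_0^{-1}=\widetilde\cO(\cL+\sqrt{L\cL}/\delta)$. The details you supply (monotonicity of $F_1$ in $\gamma$, absorbing the harmless $(1-\eta)^{-1}$ factor, verifying the lemma's condition on $K$ against $\min\{\rho_1,\rho_2\}=\alpha$, and the two substitutions for $\alpha$) are exactly the bookkeeping the paper leaves implicit, including its own mild simplifications such as using $L\le\cL$ and dropping the $\sigma_{1,0}^2$ contribution inside the exponential coefficient.
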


Applying Lemma~\ref{lem:lemma_technical_cvx} we get the complexity result in the case when $\mu = 0$.
\begin{corollary}\label{cor:ec_sgdsr_diana_cvx_cor}
	Let the assumptions of Theorem~\ref{thm:ec_sgdsr_diana} hold and $\mu = 0$. Then after $K$ iterations of {\tt EC-SGDsr-DIANA} with the stepsize
	\begin{eqnarray*}
		\gamma_0 &=& \min\left\{\frac{1}{4\cL}, \frac{\delta}{4\sqrt{6L\left(4L + 3\delta(\cL + L) + \frac{16(3\cL + 4L)}{1-\alpha}\right)}}\right\},\quad R_0 = \|x^0-x^*\|,\\
		\gamma &=& \min\left\{\gamma_0, \sqrt[3]{\frac{R_0^2\delta^2\alpha\left(1-\min\left\{\frac{\gamma_0\mu}{2},\frac{\alpha}{4}\right\}\right)}{96L\sigma_0^2}}, \sqrt{\frac{3nR_0^2}{2\widetilde{D}_1 K}}, \sqrt[3]{\frac{\delta R_0^2}{6L\widetilde{D}_1\left(\frac{4\alpha(\omega+1)}{\delta}+1\right) K}}\right\},
	\end{eqnarray*}	
	and $\alpha \le \frac{1}{\omega+1}$ we have $\EE\left[f(\bar{x}^K) - f(x^*)\right]$ of order
	\begin{equation*}
		\cO\left(\frac{\cL R_0^2}{K} + \frac{\sqrt{\cL L} R_0^2}{\delta K} + \frac{\sqrt[3]{LR_0^4\sigma_0^2}}{K\sqrt[3]{\delta^2\alpha}} + \sqrt{\frac{R_0^2\widetilde{D}_1}{nK}}+ \sqrt[3]{\frac{LR_0^4\widetilde{D}_1\left(\frac{\alpha(\omega+1)}{\delta}+1\right)}{\delta K^2}}\right).
	\end{equation*}
	That is, to achive $\EE\left[f(\bar{x}^K) - f(x^*)\right] \le \varepsilon$ {\tt EC-SGDsr-DIANA} requires
	\begin{equation*}
		\cO\left(\frac{\cL R_0^2}{\varepsilon} + \frac{\sqrt{\cL L} R_0^2}{\delta \varepsilon} + \frac{\sqrt[3]{LR_0^4\sigma_0^2}}{\varepsilon\sqrt[3]{\delta^2\alpha}} + \frac{R_0^2\widetilde{D}_1}{n\varepsilon^2}+ \frac{R_0^2\sqrt{L\widetilde{D}_1\left(\frac{\alpha(\omega+1)}{\delta}+1\right)}}{\sqrt{\delta\varepsilon^3}}\right)
	\end{equation*}
	iterations. In particular, if $\alpha = \frac{1}{\omega+1}$, then to achive $\EE\left[f(\bar{x}^K) - f(x^*)\right] \le \varepsilon$ {\tt EC-SGDsr-DIANA} requires
	\begin{equation*}
		\cO\left(\frac{\cL R_0^2}{\varepsilon} + \frac{\sqrt{\cL L} R_0^2}{\delta \varepsilon} + \frac{\sqrt[3]{LR_0^4(\omega+1)\sigma_0^2}}{\varepsilon\sqrt[3]{\delta^2}} + \frac{R_0^2\widetilde{D}_1}{n\varepsilon^2}+ \frac{R_0^2\sqrt{L\widetilde{D}_1}}{\delta\sqrt{\varepsilon^3}}\right) \text{ iterations,}
	\end{equation*}
	and if $\alpha = \frac{\delta}{\omega+1}$, then to achive $\EE\left[f(\bar{x}^K) - f(x^*)\right] \le \varepsilon$ {\tt EC-SGDsr-DIANA} requires
	\begin{equation*}
		\cO\left(\frac{\cL R_0^2}{\varepsilon} + \frac{\sqrt{\cL L} R_0^2}{\delta \varepsilon} + \frac{\sqrt[3]{LR_0^4(\omega+1)\sigma_0^2}}{\delta\varepsilon} + \frac{R_0^2\widetilde{D}_1}{n\varepsilon^2}+ \frac{R_0^2\sqrt{L\widetilde{D}_1}}{\sqrt{\delta\varepsilon^3}}\right) \text{ iterations.}
	\end{equation*}
\end{corollary}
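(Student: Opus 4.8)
The plan is to derive this corollary directly from the $\mu = 0$ convergence bound in Theorem~\ref{thm:ec_sgdsr_diana} by casting it into the canonical form required by the technical Lemma~\ref{lem:lemma_technical_cvx}. The theorem already certifies that {\tt EC-SGDsr-DIANA} satisfies Assumption~\ref{ass:key_assumption_finite_sums_new} with the listed parameters and gives, for $\mu = 0$,
\begin{equation*}
\EE\left[f(\bar x^K) - f(x^*)\right] \le \frac{4\left(R_0^2 + \gamma F_1 \sigma_0^2\right)}{\gamma K} + 4\gamma\left(D_1' + D_3\right),
\end{equation*}
where $R_0 = \|x^0 - x^*\|$. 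So the only work is bookkeeping: substitute the explicit $F_1, D_1', D_3$, collect powers of $\gamma$, and invoke the lemma.

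First I would substitute $\mu = 0$ into $F_1 = \frac{96L\gamma^2}{\delta^2\alpha(1-\min\{\gamma\mu/2,\alpha/4\})}$; since the minimum vanishes, $F_1 = \frac{96 L\gamma^2}{\delta^2\alpha}$, so the term $\frac{4\gamma F_1\sigma_0^2}{\gamma K}$ becomes $\frac{384 L\sigma_0^2}{\delta^2\alpha}\cdot\frac{\gamma^2}{K}$. Next, $4\gamma D_1' = \frac{8\widetilde{D}_1}{3n}\gamma$ and $4\gamma D_3 = \frac{24L}{\delta}\left(\frac{4\alpha(\omega+1)}{\delta}+1\right)\widetilde{D}_1\,\gamma^2$. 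This puts the bound exactly in the shape
\begin{equation*}
r_K \le \frac{a}{\gamma K} + \frac{b_1\gamma}{K} + \frac{b_2\gamma^2}{K} + c_1\gamma + c_2\gamma^2
\end{equation*}
of Lemma~\ref{lem:lemma_technical_cvx}, with $a = 4R_0^2$, $b_1 = 0$, $b_2 = \frac{384 L\sigma_0^2}{\delta^2\alpha}$, $c_1 = \frac{8\widetilde{D}_1}{3n}$, $c_2 = \frac{24L}{\delta}\left(\frac{4\alpha(\omega+1)}{\delta}+1\right)\widetilde{D}_1$, and $\gamma_0$ the stepsize cap from the theorem. Note that $b_1 = 0$ makes the $\sqrt{a/b_1}$ entry in the lemma's stepsize vacuous and removes the $\sqrt{ab_1}/K$ term from the rate.

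Then I would apply Lemma~\ref{lem:lemma_technical_cvx} verbatim: its prescribed stepsize reproduces the $\gamma$ stated in the corollary (in particular $\sqrt[3]{a/b_2} = \sqrt[3]{R_0^2\delta^2\alpha/(96L\sigma_0^2)}$, matching up to the $(1-\min\{\cdots\}) = 1$ factor), and its conclusion
\begin{equation*}
r_K = \cO\left(\frac{a}{\gamma_0 K} + \frac{\sqrt[3]{a^2 b_2}}{K} + \sqrt{\frac{a c_1}{K}} + \frac{\sqrt[3]{a^2 c_2}}{K^{2/3}}\right)
\end{equation*}
yields the four stated terms after substitution, using $\frac{1}{\gamma_0} = \cO\left(\cL + \frac{\sqrt{L\cL}}{\delta}\right)$ for the leading term. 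Converting each term $\le\varepsilon$ into an iteration count (a $C/K$ term needs $K = \Omega(C/\varepsilon)$, a $\sqrt{C/K}$ term needs $K = \Omega(C/\varepsilon^2)$, a $C/K^{2/3}$ term needs $K = \Omega((C/\varepsilon)^{3/2})$) gives the complexity bound, and plugging $\alpha = \frac{1}{\omega+1}$ (so that $\frac{\alpha(\omega+1)}{\delta}+1 = \Theta(\delta^{-1})$ and $\sqrt[3]{\delta^2\alpha} = \Theta(\sqrt[3]{\delta^2/(\omega+1)})$) or $\alpha = \frac{\delta}{\omega+1}$ (so that the same factor is $\Theta(1)$) produces the two special cases. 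There is no genuine obstacle here---the analytic content lives entirely in Theorem~\ref{thm:ec_sgdsr_diana} and Lemma~\ref{lem:lemma_technical_cvx}; the only care needed is tracking that the error-feedback constant $F_1$ contributes a $\gamma^2/K$ (not $\gamma/K$) term, which is what makes $b_2$ rather than $b_1$ nonzero and hence fixes the exponents of $\delta$, $\alpha$, and $\sigma_0$ in the variance-transient term.
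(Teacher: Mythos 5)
Your proposal is correct and takes essentially the same route as the paper: the paper obtains this corollary precisely by substituting the $\mu=0$ bound of Theorem~\ref{thm:ec_sgdsr_diana} (with $F_1 = \nicefrac{96L\gamma^2}{\delta^2\alpha}$, $D_1' = \nicefrac{2\widetilde{D}_1}{3n}$, $D_3 \propto \gamma$) into Lemma~\ref{lem:lemma_technical_cvx} with $a = 4R_0^2$, $b_1 = 0$, and $b_2$, $c_1$, $c_2$ exactly as you identified. Your bookkeeping---in particular the key observation that $F_1 \propto \gamma^2$ feeds the $\nicefrac{\gamma^2}{K}$ slot rather than $\nicefrac{\gamma}{K}$, and the absorption of $\nicefrac{L}{\delta}$ into $\nicefrac{\sqrt{L\cL}}{\delta}$ in the $\nicefrac{1}{\gamma_0}$ term---reproduces the stated stepsize, rate, and both special cases $\alpha = \nicefrac{1}{(\omega+1)}$ and $\alpha = \nicefrac{\delta}{(\omega+1)}$.
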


\subsection{{\tt EC-LSVRG}}\label{sec:ec_LSVRG}
In this section we consider problem \eqref{eq:main_problem} with $f(x)$ being $\mu$-quasi strongly convex and $f_i(x)$ satisfying \eqref{eq:f_i_sum} where functions $f_{ij}(x)$ are convex and $L$-smooth. For this problem we propose a new method called {\tt EC-LSVRG} which takes for the origin another method called {\tt LSVRG} (see \cite{hofmann2015variance,kovalev2019don}). 
\begin{algorithm}[t]
   \caption{{\tt EC-LSVRG}}\label{alg:ec-LSVRG}
\begin{algorithmic}[1]
   \Require learning rate $\gamma>0$, initial vector $x^0 \in \R^d$
   \State Set $e_i^0 = 0$ for all $i=1,\ldots, n$   
   \For{$k=0,1,\dotsc$}
       \State Broadcast $x^{k}$ to all workers
        \For{$i=1,\dotsc,n$ in parallel}
        	\State Pick $l$ uniformly at random from $[m]$
            \State Set $g^{k}_i = \nabla f_{il}(x^k) - \nabla f_{il}(w_i^k) + \nabla f_i(w_i^k)$
            \State $v_i^k = C(e_i^k + \gamma g_i^k)$
            \State $e_i^{k+1} = e_i^k + \gamma g_i^k - v_i^k$
            \State $w_i^{k+1} = \begin{cases}x^k,& \text{with probability } p,\\ w_i^k,& \text{with probability } 1-p\end{cases}$
        \EndFor
        \State $e^k = \frac{1}{n}\sum_{i=1}^ne_i^k$, $g^k = \frac{1}{n}\sum_{i=1}^ng_i^k$, $v^k = \frac{1}{n}\sum_{i=1}^nv_i^k$
       \State $x^{k+1} = x^k - v^k$
   \EndFor
\end{algorithmic}
\end{algorithm}

\begin{lemma}\label{lem:second_moment_bound_ec-LSVRG}
	For all $k\ge 0$, $i\in [n]$ we have
	\begin{equation}
		\bar{g}_i^k = \EE\left[g_i^k\mid x^k\right] = \nabla f_i(x^k) \label{eq:unbiasedness_g_i^k_ec-LSVRG}
	\end{equation}		
	and
	\begin{eqnarray}
		\frac{1}{n}\sum\limits_{i=1}^n\|\bar{g}_i^k\|^2 &\le& 4L\left(f(x^k) - f(x^*)\right) + D_1, \label{eq:second_moment_bound_ec-LSVRG}\\
		\frac{1}{n}\sum\limits_{i=1}^n\EE\left[\|g_i^k - \bar{g}_i^k\|^2\mid x^k\right] &\le& 12L\left(f(x^k) - f(x^*)\right) + 3\sigma_k^2, \label{eq:variance_bound_ec-LSVRG}\\
		\EE\left[\|g^k\|^2\mid x^k\right] &\le& 4L\left(f(x^k) - f(x^*)\right) + 2\sigma_k^2 \label{eq:second_moment_bound_ec-LSVRG_2}
	\end{eqnarray}
	where $\sigma_k^2 = \frac{1}{nm}\sum_{i=1}^n\sum_{j=1}^n\|\nabla f_{ij}(w_i^k) - \nabla f_{ij}(x^*)\|^2$ and $D_1 = \frac{2}{n}\sum_{i=1}^n\|\nabla f_{i}(x^*)\|^2$.
\end{lemma}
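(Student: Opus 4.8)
The plan is to establish the four displayed inequalities in the order written, reusing tools already available in the excerpt. First I would prove the unbiasedness \eqref{eq:unbiasedness_g_i^k_ec-LSVRG}: conditioning on $x^k$ and the history (so that $w_i^k$ is deterministic), the index $l$ is uniform on $[m]$, hence $\EE_l[\nabla f_{il}(\cdot)] = \frac{1}{m}\sum_{j=1}^m\nabla f_{ij}(\cdot) = \nabla f_i(\cdot)$; applying this to both $\nabla f_{il}(x^k)$ and $\nabla f_{il}(w_i^k)$ in the definition of $g_i^k$ makes the two $w_i^k$-terms cancel, leaving $\bar{g}_i^k = \nabla f_i(x^k)$. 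Claim \eqref{eq:second_moment_bound_ec-LSVRG} is then immediate, since with $\bar{g}_i^k = \nabla f_i(x^k)$ the left-hand side is exactly the quantity already bounded in \eqref{eq:ec_useful_technical_stuff}, which yields $4L(f(x^k)-f(x^*))$ plus the heterogeneity constant $D_1 = \frac{2}{n}\sum_i\|\nabla f_i(x^*)\|^2$.

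For the variance bound \eqref{eq:variance_bound_ec-LSVRG} the key observation is that $g_i^k - \nabla f_i(x^k)$ is the centered version of $Y_l \eqdef \nabla f_{il}(x^k) - \nabla f_{il}(w_i^k)$, because $\EE_l Y_l = \nabla f_i(x^k) - \nabla f_i(w_i^k)$. Hence by the variance decomposition \eqref{eq:variance_decomposition} I would bound $\EE_l\|g_i^k - \nabla f_i(x^k)\|^2 \le \EE_l\|Y_l\|^2 = \frac{1}{m}\sum_{j=1}^m\|\nabla f_{ij}(x^k) - \nabla f_{ij}(w_i^k)\|^2$. Splitting each summand around $\nabla f_{ij}(x^*)$ via \eqref{eq:a_b_norm_squared} and invoking convexity and $L$-smoothness of each $f_{ij}$ through \eqref{eq:L_smoothness_cor} turns $\frac{1}{m}\sum_j\|\nabla f_{ij}(x^k) - \nabla f_{ij}(x^*)\|^2$ into $2L D_{f_i}(x^k,x^*)$, while $\frac{1}{m}\sum_j\|\nabla f_{ij}(w_i^k) - \nabla f_{ij}(x^*)\|^2$ is precisely the per-worker piece of $\sigma_k^2$. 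Averaging over $i$ gives a bound of the form $cL(f(x^k)-f(x^*)) + c'\sigma_k^2$, and choosing the constants in the triangle-type inequalities delivers the stated $12L$ and $3\sigma_k^2$.

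Finally, for the aggregate bound \eqref{eq:second_moment_bound_ec-LSVRG_2} I would again apply \eqref{eq:variance_decomposition}, now to $g^k$, writing $\EE[\|g^k\|^2\mid x^k] = \|\nabla f(x^k)\|^2 + \EE[\|g^k - \nabla f(x^k)\|^2\mid x^k]$. The first term is at most $2L(f(x^k)-f(x^*))$ by \eqref{eq:L_smoothness_cor} applied to $f$ together with $\nabla f(x^*)=0$. For the second term the crucial point is that the workers draw their indices $l_i$ \emph{independently}, so the zero-mean contributions $g_i^k - \nabla f_i(x^k)$ are mutually independent and the cross terms vanish, giving $\EE[\|g^k - \nabla f(x^k)\|^2\mid x^k] = \frac{1}{n^2}\sum_i\EE[\|g_i^k - \nabla f_i(x^k)\|^2\mid x^k]$. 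Plugging in the per-worker variance estimate from the previous step yields an extra factor $\frac{1}{n}$, i.e.\ $\frac{4L}{n}(f(x^k)-f(x^*)) + \frac{2}{n}\sigma_k^2$; for $n\ge 2$ the $\frac{4L}{n}$ term is absorbed into the $2L$ coming from $\|\nabla f(x^k)\|^2$, producing the claimed $4L(f(x^k)-f(x^*)) + 2\sigma_k^2$.

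The main obstacle is handling the control-variate structure cleanly: one must center $g_i^k$ at its conditional mean $\nabla f_i(x^k)$ so that the variance decomposition removes the $\nabla f_i(w_i^k)$ reference terms, since a naive triangle-inequality expansion of the three summands of $g_i^k$ leaves a stray $\|\nabla f_i(w_i^k)-\nabla f_i(x^k)\|^2$ term that does not reduce to $\sigma_k^2$ or $D_{f_i}$. One must then exploit cross-worker independence to obtain the $\nicefrac{1}{n}$ variance reduction responsible for the sharp constant in \eqref{eq:second_moment_bound_ec-LSVRG_2}. Care is also needed to treat $w_i^k$ as measurable with respect to the past when conditioning on $x^k$, so that $\nabla f_i(w_i^k)$ behaves as a constant in the expectations over $l$.
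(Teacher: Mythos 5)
Your treatment of \eqref{eq:unbiasedness_g_i^k_ec-LSVRG}, \eqref{eq:second_moment_bound_ec-LSVRG} and \eqref{eq:variance_bound_ec-LSVRG} is correct; in fact, by centering $Y_l = \nabla f_{il}(x^k)-\nabla f_{il}(w_i^k)$ at its conditional mean and invoking \eqref{eq:variance_decomposition} you get $4L\left(f(x^k)-f(x^*)\right)+2\sigma_k^2$ for the averaged variance, which is tighter than the stated $12L\left(f(x^k)-f(x^*)\right)+3\sigma_k^2$ (the paper instead expands $g_i^k-\bar g_i^k$ into three terms anchored at $x^*$ and uses $\|a+b+c\|^2\le 3\left(\|a\|^2+\|b\|^2+\|c\|^2\right)$, which is where its looser constants come from). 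The genuine gap is in \eqref{eq:second_moment_bound_ec-LSVRG_2}. Your mean--variance split produces the coefficient $2L+\nicefrac{4L}{n}$ in front of $f(x^k)-f(x^*)$, and you explicitly require $n\ge 2$ to absorb it into $4L$. But the lemma carries no restriction on $n$, and the paper's framework genuinely includes the single-worker regime discussed in the introduction; for $n=1$ your argument only yields $6L\left(f(x^k)-f(x^*)\right)+2\sigma_k^2$, which does not prove the statement. A secondary weakness is that your route needs the sampled indices to be independent across workers so that the cross terms vanish --- a natural but extra hypothesis that the paper's proof of this lemma never invokes.

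The paper sidesteps both issues by bounding $\EE\left[\|g^k\|^2\mid x^k\right]$ directly: since $\frac{1}{n}\sum_{i=1}^n\nabla f_i(x^*)=\nabla f(x^*)=0$, one may write $g^k = \frac{1}{n}\sum_{i=1}^n\left[\left(\nabla f_{il}(x^k)-\nabla f_{il}(x^*)\right) - \left(\nabla f_{il}(w_i^k)-\nabla f_{il}(x^*) - \left(\nabla f_i(w_i^k)-\nabla f_i(x^*)\right)\right)\right]$, apply \eqref{eq:a_b_norm_squared} to pull out $\frac{2}{n}\sum_{i=1}^n$ of each squared term, and then bound the first term via \eqref{eq:L_smoothness_cor} and the second via \eqref{eq:variance_decomposition}; this gives $4L\left(f(x^k)-f(x^*)\right)+2\sigma_k^2$ uniformly in $n$ and with no independence assumption. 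Your proof can be repaired in the same spirit while keeping your structure: replace the variance decomposition around zero by one around $\nabla f_i(x^*)$, i.e.\ use $\EE\left[\|g_i^k-\nabla f_i(x^k)\|^2\mid x^k\right] = \EE\left[\|g_i^k-\nabla f_i(x^*)\|^2\mid x^k\right] - \|\nabla f_i(x^k)-\nabla f_i(x^*)\|^2$, which restores exactly the cancellation that your argument loses at $n=1$.
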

\begin{proof}
	First of all, we derive unbiasedness of $g_i^k$:
	\begin{equation*}
		\EE\left[g_i^k\mid x^k\right] = \frac{1}{m}\sum\limits_{j=1}^m\left(\nabla f_{ij}(x^k) - \nabla f_{ij}(w_i^k) + \nabla f_i(w_i^k)\right) = \nabla f_i(x^k).
	\end{equation*}
	Next, we get an upper bound for $\frac{1}{n}\sum\limits_{i=1}^n\|\bar{g}_i^k\|^2$:
	\begin{eqnarray*}
		\frac{1}{n}\sum\limits_{i=1}^n\|\bar{g}_i^k\|^2 &=& \frac{1}{n}\sum\limits_{i=1}^n\|\nabla f_i(x^k)\|^2\\
		&\overset{\eqref{eq:a_b_norm_squared}}{\le}& \frac{2}{n}\sum\limits_{i=1}^n\|\nabla f_i(x^k) - \nabla f_i(x^*)\|^2 + \frac{2}{n}\sum\limits_{i=1}^n\|\nabla f_i(x^*)\|^2\\
		&\overset{\eqref{eq:L_smoothness_cor}}{\le}& 4L\left(f(x^k)-f(x^*)\right) + \frac{2}{n}\sum\limits_{i=1}^n\|\nabla f_i(x^*)\|^2.
	\end{eqnarray*}
	Using \eqref{eq:unbiasedness_g_i^k_ec-LSVRG} we establish the following inequality:
	\begin{eqnarray*}
		\frac{1}{n}\sum\limits_{i=1}^n\EE\left[\|g_i^k - \bar{g}_i^k\|^2\mid x^k\right] &\overset{\eqref{eq:a_b_norm_squared}}{\le}& \frac{3}{n}\sum\limits_{i=1}^n\EE\left[\|\nabla f_{il}(x^k)- \nabla f_{il}(x^*)\|^2\mid x^k\right]\\
		&&\quad + \frac{3}{n}\sum\limits_{i=1}^n\EE\left[\left\|\nabla f_{il}(w_i^k)- \nabla f_{il}(x^*) - \left(\nabla f_i(w_i^k) - \nabla f_i(x^*)\right)\right\|^2\mid x^k\right] \\
		&&\quad + \frac{3}{n}\sum\limits_{i=1}^n\|\nabla f_{i}(x^*)-\nabla f_i(x^k)\|^2\\
		&\overset{\eqref{eq:L_smoothness_cor},\eqref{eq:variance_decomposition}}{\le}& 12L\left(f(x^k)-f(x^*)\right) + \frac{3}{nm}\sum\limits_{i=1}^n\sum\limits_{j=1}^m\|\nabla f_{ij}(w_i^k) - \nabla f_{ij}(x^*)\|^2.
	\end{eqnarray*}
	Finally, we derive \eqref{eq:second_moment_bound_ec-LSVRG_2}:
	\begin{eqnarray*}
		\EE\left[\|g^k\|^2\mid x^k\right] &=& \EE\left[\left\|\frac{1}{n}\sum\limits_{i=1}^n\left(\nabla f_{il}(x^k)-\nabla f_{il}(w_i^k) + \nabla f_i(w_i^k) - \nabla f_i(x^*)\right)\right\|^2\mid x^k\right]\\
		&\overset{\eqref{eq:a_b_norm_squared}}{\le}& \frac{2}{n}\sum\limits_{i=1}^n\EE\left[\|\nabla f_{il}(x^k)-\nabla f_{il}(x^*)\|^2\mid x^k\right]\\
		&&\quad + \frac{2}{n}\sum\limits_{i=1}^n\EE\left[\left\|\nabla f_{il}(w_i^k)- \nabla f_{il}(x^*) - \left(\nabla f_i(w_i^k) - \nabla f_i(x^*)\right)\right\|^2\mid x^k\right]\\
		&=& \frac{2}{nm}\sum\limits_{i=1}^n\sum\limits_{j=1}^m\|\nabla f_{ij}(x^k) - \nabla f_{ij}(x^*)\|^2\\
		&&\quad + \frac{2}{nm}\sum\limits_{i=1}^n\sum\limits_{j=1}^m\left\|\nabla f_{ij}(w_i^k) - \nabla f_{ij}(x^*) - \frac{1}{m}\sum\limits_{j=1}^m\left(\nabla f_{ij}(w_i^k) - \nabla f_{ij}(x^*)\right)\right\|^2\\
		&\overset{\eqref{eq:L_smoothness_cor},\eqref{eq:variance_decomposition}}{\le}& 4L\left(f(x^k)-f(x^*)\right) + \frac{2}{nm}\sum\limits_{i=1}^n\sum\limits_{j=1}^m\left\|\nabla f_{ij}(w_i^k) - \nabla f_{ij}(x^*)\right\|^2.
	\end{eqnarray*}
\end{proof}

\begin{lemma}\label{lem:sigma_k+1_bound_ec-LSVRG}
	For all $k\ge 0$, $i\in [n]$ we have
	\begin{equation}
		\EE\left[\sigma_{k+1}^2\mid x^k\right] \le (1-p)\sigma_k^2 + 2Lp\left(f(x^k) - f(x^*)\right), \label{eq:sigma_k+1_ec-LSVRG} 
	\end{equation}
	where $\sigma_k^2 = \frac{1}{nm}\sum_{i=1}^n\sum_{j=1}^n\|\nabla f_{ij}(w_i^k) - \nabla f_{ij}(x^*)\|^2$.
\end{lemma}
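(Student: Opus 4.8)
The plan is to condition on $x^k$ together with the reference points $\{w_i^k\}_{i=1}^n$ and exploit the fact that the only fresh randomness entering $w_i^{k+1}$ is the per-worker Bernoulli coin, which is independent across workers and independent of the sampling index $l$ used to form $g_i^k$. Consequently, for each fixed pair $(i,j)$ the vector $\nabla f_{ij}(w_i^{k+1})$ equals $\nabla f_{ij}(x^k)$ with probability $p$ and equals $\nabla f_{ij}(w_i^k)$ with probability $1-p$, so the two-point expectation gives
\begin{equation*}
	\EE\left[\|\nabla f_{ij}(w_i^{k+1}) - \nabla f_{ij}(x^*)\|^2 \mid x^k\right] = p\|\nabla f_{ij}(x^k) - \nabla f_{ij}(x^*)\|^2 + (1-p)\|\nabla f_{ij}(w_i^k) - \nabla f_{ij}(x^*)\|^2.
\end{equation*}
Averaging this identity over $i\in[n]$ and $j\in[m]$ and recalling the definition of $\sigma_k^2$ immediately produces the term $(1-p)\sigma_k^2$ plus $p$ times the averaged squared gradient deviation at $x^k$.

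The second step is to bound $\frac{1}{nm}\sum_{i,j}\|\nabla f_{ij}(x^k) - \nabla f_{ij}(x^*)\|^2$. Since each $f_{ij}$ is convex and $L$-smooth, I would apply \eqref{eq:L_smoothness_cor} termwise to obtain $\|\nabla f_{ij}(x^k)-\nabla f_{ij}(x^*)\|^2 \le 2L D_{f_{ij}}(x^k,x^*)$ and then average. The crucial simplification is that $\frac{1}{nm}\sum_{i,j} f_{ij} = \frac{1}{n}\sum_i f_i = f$ and, because $x^*$ minimizes $f$, $\frac{1}{nm}\sum_{i,j}\nabla f_{ij}(x^*) = \nabla f(x^*) = 0$; the linear term in the averaged Bregman divergences therefore collapses, leaving $\frac{1}{nm}\sum_{i,j}\|\nabla f_{ij}(x^k)-\nabla f_{ij}(x^*)\|^2 \le 2L(f(x^k)-f(x^*))$. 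Substituting this into the averaged two-point identity yields the claimed recursion.

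There is no substantial obstacle here; the argument is elementary and self-contained. The only points requiring care are bookkeeping ones: one must verify that $\sigma_{k+1}^2$ is a function of the coin flips alone, so that taking $\EE[\cdot\mid x^k]$ genuinely decouples into independent Bernoulli expectations, and that the cancellation of the linear term uses only the full-gradient optimality condition $\nabla f(x^*)=0$ rather than the unavailable per-component conditions $\nabla f_{ij}(x^*)=0$. Both are immediate once the conditioning structure is made explicit.
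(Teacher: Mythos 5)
Your proposal is correct and follows essentially the same route as the paper's proof: the two-point Bernoulli expectation from the definition of $w_i^{k+1}$, followed by the termwise application of \eqref{eq:L_smoothness_cor} and averaging of the Bregman divergences, where the linear term cancels because $\nabla f(x^*)=0$. The only difference is that you make this cancellation explicit, whereas the paper absorbs it into the identity $\frac{1}{nm}\sum_{i,j} D_{f_{ij}}(x^k,x^*) = f(x^k)-f(x^*)$ without comment.
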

\begin{proof}
	By definition of $w_i^{k+1}$ we get
	\begin{eqnarray*}
		\EE\left[\sigma_{k+1}^2\mid x^k\right] &=& \frac{1}{nm}\sum\limits_{i=1}^n\sum\limits_{j=1}^m\EE\left[\|\nabla f_{ij}(w_i^{k+1}) - \nabla f_{ij}(x^*)\|^2\mid x^k\right]\\
		&=& \frac{1-p}{nm}\sum\limits_{i=1}^n\sum\limits_{j=1}^m\|\nabla f_{ij}(w_i^{k}) - \nabla f_{ij}(x^*)\|^2 + \frac{p}{nm}\sum\limits_{i=1}^n\sum\limits_{j=1}^m\|\nabla f_{ij}(x^{k}) - \nabla f_{ij}(x^*)\|^2\\
		&\overset{\eqref{eq:L_smoothness_cor}}{\le}& (1-p)\sigma_k^2 + \frac{2Lp}{nm}\sum\limits_{i=1}^n\sum\limits_{j=1}^m D_{f_{ij}}(x^k,x^*)\\
		&=& (1-p)\sigma_k^2 + 2Lp\left(f(x^k) - f(x^*)\right).
	\end{eqnarray*}
\end{proof}

Applying Theorem~\ref{thm:ec_sgd_main_result_new} we get the following result.
\begin{theorem}\label{thm:ec_LSVRG}
	Assume that $f(x)$ is $\mu$-quasi strongly convex and functions $f_{ij}$ are convex and $L$-smooth for all $i\in[n],j\in[m]$. Then {\tt EC-LSVRG} satisfies Assumption~\ref{ass:key_assumption_finite_sums_new} with
	\begin{gather*}
		A = 2L,\quad \widetilde{A} = 12L,\quad A' = 2L,\quad B_1 = \widetilde{B}_1 = B_1' = B_2 = 0,\quad D_1 = \frac{2}{n}\sum\limits_{i=1}^n\|\nabla f_{i}(x^*)\|^2,\\
		D_1' = \widetilde{D}_1 = 0,\quad \widetilde{B}_2 = 3,\quad B_2' = 2,\quad \sigma_{1,k}^2 \equiv 0,\quad C_1 = 0,\\
		\sigma_{2,k}^2 = \sigma_{k}^2 = \frac{1}{nm}\sum\limits_{i=1}^n\sum\limits_{j=1}^m\|\nabla f_{ij}(w_i^{k}) - \nabla f_{ij}(x^*)\|^2,\quad \rho_1 = 1,\quad \rho_2 = p,\quad C_2 = Lp,\quad D_2 = 0,\\
		G = 0,\quad F_1 = 0,\quad F_2 = \frac{72L\gamma^2}{\delta p\left(1-\min\left\{\frac{\gamma\mu}{2},\frac{p}{4}\right\}\right)},\quad D_3 = \frac{12L\gamma}{\delta^2}D_1,
	\end{gather*}
	with $\gamma$ satisfying
	\begin{equation*}
		\gamma \le \min\left\{\frac{1}{24L}, \frac{\delta}{8L\sqrt{3\left(2 + 3\delta\left(2+\frac{1}{1-p}\right)\right)}}\right\}, \quad M_2 = \frac{4}{p}.
	\end{equation*}
	and for all $K \ge 0$
	\begin{equation*}
		\EE\left[f(\bar x^K) - f(x^*)\right] \le \left(1 - \min\left\{\frac{\gamma\mu}{2},\frac{p}{4}\right\}\right)^K\frac{4(T^0 + \gamma F_2 \sigma_0^2)}{\gamma} + \frac{48L\gamma^2}{\delta^2}D_1
	\end{equation*}
	when $\mu > 0$ and
	\begin{equation*}
		\EE\left[f(\bar x^K) - f(x^*)\right] \le \frac{4(T^0 + \gamma F_2 \sigma_0^2)}{\gamma K} + \frac{48L\gamma^2}{\delta^2}D_1
	\end{equation*}
	when $\mu = 0$, where $T^k \eqdef \|x^k - x^*\|^2 + M_2\gamma^2 \sigma_k^2$.
\end{theorem}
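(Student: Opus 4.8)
The statement is a direct consequence of the two preceding lemmas together with Theorem~\ref{thm:ec_sgd_main_result_new}, so the plan is one of verification and assembly rather than fresh estimation. First I would observe that {\tt EC-LSVRG} is an instance of the generic {\tt EC-SGD} template \eqref{eq:v^k_def_ec_sgd}--\eqref{eq:e^k_def_ec_sgd}, the only difference being the choice of local estimator $g_i^k = \nabla f_{il}(x^k) - \nabla f_{il}(w_i^k) + \nabla f_i(w_i^k)$. Hence it suffices to check that Assumption~\ref{ass:key_assumption_finite_sums_new} holds with the listed constants; the two displayed convergence bounds then follow verbatim from Theorem~\ref{thm:ec_sgd_main_result_new}.

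Second, I would read the parameters straight off Lemma~\ref{lem:second_moment_bound_ec-LSVRG}. Its first identity is exactly the conditional unbiasedness \eqref{eq:unbiasedness_g_i^k_new}, so $\bar g_i^k = \nabla f_i(x^k)$ and $\frac1n\sum_i \bar g_i^k = \nabla f(x^k)$. Matching \eqref{eq:second_moment_bound_ec-LSVRG}, \eqref{eq:variance_bound_ec-LSVRG} and \eqref{eq:second_moment_bound_ec-LSVRG_2} against \eqref{eq:second_moment_bound_g_i^k_new}, \eqref{eq:variance_bound_g_i^k_new} and \eqref{eq:second_moment_bound_new} identifies the coefficients of $(f(x^k)-f(x^*))$ as $A=2L$, $\widetilde A$, $A'=2L$, forces every $\sigma_{1,k}^2$-coefficient and every optimum constant except $D_1=\frac2n\sum_i\|\nabla f_i(x^*)\|^2$ to vanish (in particular $D_1'=\widetilde D_1=0$, the signature of full variance reduction), and pins the variance-reduction sequence to be $\sigma_{2,k}^2 = \frac{1}{nm}\sum_{i,j}\|\nabla f_{ij}(w_i^k)-\nabla f_{ij}(x^*)\|^2$ with $\widetilde B_2 = 3$ and $B_2' = 2$. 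I would then use Lemma~\ref{lem:sigma_k+1_bound_ec-LSVRG}, whose recursion matches \eqref{eq:sigma_k+1_bound_2} with $\rho_2 = p$ and $C_2 = Lp$. Because there is no compression-variance sequence here, I set $\sigma_{1,k}^2\equiv 0$ and $\rho_1 = 1$, which renders \eqref{eq:sigma_k+1_bound_1} void and legitimizes $C_1=G=D_2=0$ under the convention in the footnote to Lemma~\ref{lem:ec_sgd_key_lemma_new}.

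Finally I would feed these constants into Lemma~\ref{lem:ec_sgd_key_lemma_new} to obtain the error-feedback parameters: $F_1=0$ (since $B_1=\widetilde B_1=0$), $F_2=\frac{72L\gamma^2}{\delta p(1-\eta)}$ (from $\widetilde B_2=3$, $\rho_2=p$), and $D_3=\frac{12L\gamma}{\delta^2}D_1$ (from $\widetilde D_1=D_2=0$), and check that the two stepsize restrictions of Theorem~\ref{thm:ec_sgd_main_result_new} collapse to the stated $\gamma\le\min\{\frac{1}{24L},\,\frac{\delta}{8L\sqrt{3(2+3\delta(2+\frac{1}{1-p}))}}\}$; here the first bound comes from $A'+C_2M_2=2L+Lp\cdot\frac4p=6L$ with $M_2=\frac4p$, and the second from substituting $A,\widetilde A,C_2,\widetilde B_2,\rho_2$ into \eqref{eq:gamma_condition_ec_sgd_new}. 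Applying Theorem~\ref{thm:ec_sgd_main_result_new} with $T^k=\|x^k-x^*\|^2+M_2\gamma^2\sigma_k^2$ and the $M_1\gamma^2\sigma_{1,k}^2$ term identically zero then yields both rates. The only genuinely nontrivial content sits inside the two lemmas, specifically the variance-decomposition step \eqref{eq:variance_decomposition} that removes the mean of the control variate $\nabla f_{il}(w_i^k)-\nabla f_i(w_i^k)$ and the use of $L$-smoothness \eqref{eq:L_smoothness_cor} to bound the residual by $(f(x^k)-f(x^*))$ plus the table term; this is what guarantees $D_1'=0$. At the level of the theorem the task is bookkeeping, and the main thing to watch is keeping the degenerate $\rho_1=1$ case consistent so that no $\frac{1}{1-\rho_1}$ factor blows up.
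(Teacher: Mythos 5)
Your proposal is correct and follows exactly the route the paper takes: the paper's proof of Theorem~\ref{thm:ec_LSVRG} consists precisely of Lemmas~\ref{lem:second_moment_bound_ec-LSVRG} and~\ref{lem:sigma_k+1_bound_ec-LSVRG} (verifying Assumption~\ref{ass:key_assumption_finite_sums_new} with $\sigma_{1,k}^2\equiv 0$, $\rho_1=1$, $\rho_2=p$, $C_2=Lp$) followed by an application of Theorem~\ref{thm:ec_sgd_main_result_new}, i.e., of Lemma~\ref{lem:ec_sgd_key_lemma_new} to produce $F_1=0$, $F_2=\frac{72L\gamma^2}{\delta p(1-\eta)}$, $D_3=\frac{12L\gamma}{\delta^2}D_1$ and the stepsize restrictions, exactly as you describe. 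Your bookkeeping of the parameter matching, the degenerate $\rho_1=1$ convention, and the stepsize bound $\gamma\le\nicefrac{1}{4(A'+C_2M_2)}=\nicefrac{1}{24L}$ with $M_2=\nicefrac{4}{p}$ all agree with the paper.
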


In other words, {\tt EC-LSVRG} converges with linear rate $\cO\left(\left(\frac{1}{p} + \frac{\kappa}{\delta\sqrt{1-p}}\right)\ln\frac{1}{\varepsilon}\right)$ to the neighbourhood of the solution. If $m\ge 2$ then taking $p = \frac{1}{m}$ we get that in expectation the sample complexity of one iteration of {\tt EC-LSVRG} is $\cO(1)$ gradients calculations per node as for {\tt EC-SGDsr} with standard sampling and the rate of convergence to the neighbourhood becomes $\cO\left(\left(m + \frac{\kappa}{\delta}\right)\ln\frac{1}{\varepsilon}\right)$. We notice that the size of this neighbourhood is typically smaller than for {\tt EC-SGDsr}, but still the method fails to converge to the exact solution with linear rate. Applying Lemma~\ref{lem:lemma2_stich} we establish the rate of convergence to $\varepsilon$-solution in the case when $\mu > 0$.
\begin{corollary}\label{cor:ec_lsvrg_str_cvx_cor}
	Let the assumptions of Theorem~\ref{thm:ec_LSVRG} hold and $\mu > 0$. Then after $K$ iterations of {\tt EC-LSVRG} with the stepsize
	\begin{eqnarray*}
		\gamma_0 &=&  \min\left\{\frac{1}{24L}, \frac{\delta}{8L\sqrt{3\left(2 + 3\delta\left(2+\frac{1}{1-p}\right)\right)}}\right\},\\
		\tilde{T^0} &=& \|x^0-x^*\|^2 + M_2\gamma_0^2\sigma_0^2,\quad  \tilde{F_2} = \frac{72L\gamma_0^2}{\delta p\left(1-\min\left\{\frac{\gamma_0\mu}{2},\frac{p}{4}\right\}\right)},\\
		\gamma &=& \min\left\{\gamma_0, \frac{\ln\left(\max\left\{2,\frac{\delta^2\left(\tilde{T^0} + \tilde{F}_2\gamma_0\sigma_{0}^2\right)\mu^3K^3}{48LD_1}\right\}\right)}{\mu K}\right\},
	\end{eqnarray*}	 
	and $p = \frac{1}{m}$, $m\ge 2$ we have
	\begin{equation*}
		\EE\left[f(\bar{x}^K) - f(x^*)\right] = \widetilde\cO\left(\frac{L}{\delta}\left(\tilde{T^0} + \tilde{F}_2\gamma_0\sigma_{0}^2\right)\exp\left(-\min\left\{\frac{\delta\mu}{L},\frac{1}{m}\right\}K\right) + \frac{LD_1}{\delta^2\mu^2 K^2}\right).
	\end{equation*}
	That is, to achive $\EE\left[f(\bar{x}^K) - f(x^*)\right] \le \varepsilon$ {\tt EC-LSVRG} requires
	\begin{equation*}
		\widetilde{\cO}\left(m + \frac{L}{\delta\mu} + \frac{\sqrt{LD_1}}{\delta\mu\sqrt{\varepsilon}}\right) \text{ iterations.}
	\end{equation*}
\end{corollary}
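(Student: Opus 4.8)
The plan is to reduce the claim to a single application of the recursion Lemma~\ref{lem:lemma2_stich} to the guarantee already furnished by Theorem~\ref{thm:ec_LSVRG}. Recall that for {\tt EC-LSVRG} we have $\sigma_{1,k}^2\equiv 0$, $F_1=0$, $D_1'=0$, $M_1D_2=0$ and $4\gamma D_3=\frac{48L\gamma^2}{\delta^2}D_1$. Hence, specializing the intermediate estimate established in the proof of Theorem~\ref{thm:main_result_new} (the display that precedes the relaxation $W_K\ge(1-\eta)^{-K}$) to these parameters, one obtains the $W_K$-form
\[
r_K\eqdef \EE[f(\bar x^K)-f(x^*)]\le \frac{4\left(T^0+\gamma F_2\sigma_0^2\right)}{\gamma W_K}+\frac{48L\gamma^2}{\delta^2}D_1,
\]
with $\eta=\min\{\nicefrac{\gamma\mu}{2},\nicefrac{p}{4}\}$ and $T^0=\|x^0-x^*\|^2+M_2\gamma^2\sigma_0^2$. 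This is almost of the shape $r_K\le \frac{a}{\gamma W_K}+c_1\gamma+c_2\gamma^2$ demanded by Lemma~\ref{lem:lemma2_stich}, with $c_1=0$ and $c_2=\frac{48L}{\delta^2}D_1$.

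The first technical step is to freeze the numerator $a=4(T^0+\gamma F_2\sigma_0^2)$ so that it no longer depends on the stepsize to be optimized. I would argue by monotonicity in $\gamma$: since $\gamma\le\gamma_0$ and both $\gamma^2$ and $\bigl(1-\min\{\nicefrac{\gamma\mu}{2},\nicefrac p4\}\bigr)^{-1}$ are nondecreasing in $\gamma$, we get $F_2(\gamma)\le F_2(\gamma_0)=\tilde F_2$, whence $\gamma F_2(\gamma)\le\gamma_0\tilde F_2$; similarly $T^0\le\tilde T^0=\|x^0-x^*\|^2+M_2\gamma_0^2\sigma_0^2$. Therefore $a\le\bar a\eqdef 4(\tilde T^0+\gamma_0\tilde F_2\sigma_0^2)$ is a genuine constant, and $r_K\le\frac{\bar a}{\gamma W_K}+c_2\gamma^2$ holds for all $\gamma\le\nicefrac1d$ with $d=\nicefrac1{\gamma_0}$.

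Next I would invoke Lemma~\ref{lem:lemma2_stich} with $a=\bar a$, $c_1=0$, $c_2=\frac{48LD_1}{\delta^2}$ and the {\tt EC-LSVRG} rates $\rho_1=1$, $\rho_2=p$. Because $c_1=0$, the inner quantity $\min\{\nicefrac{a\mu^2K^2}{c_1},\nicefrac{a\mu^3K^3}{c_2}\}$ collapses to $\nicefrac{a\mu^3K^3}{c_2}$, which is precisely why the prescribed stepsize in the statement reduces to the single logarithmic branch (matching \eqref{eq:lemma2_stich_gamma} up to a constant absorbed inside the logarithm, harmless under $\widetilde{\cO}$). The lemma then gives
\[
r_K=\widetilde{\cO}\!\left(\frac{\bar a}{\gamma_0}\exp\!\left(-\min\{\gamma_0\mu,\rho_1,\rho_2\}K\right)+\frac{LD_1}{\delta^2\mu^2K^2}\right).
\]
Substituting $p=\nicefrac1m$ with $m\ge2$ (so $\rho_2=\nicefrac1m\le1=\rho_1$), noting $\gamma_0=\Theta(\nicefrac{\delta}{L})$ so that $\gamma_0\mu\le\nicefrac{1}{24}<1$ never binds and $\nicefrac{\bar a}{\gamma_0}=\Theta\bigl(\tfrac{L}{\delta}(\tilde T^0+\tilde F_2\gamma_0\sigma_0^2)\bigr)$, yields exactly the displayed rate. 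Balancing the exponential term (which is $\le\varepsilon$ once $K=\widetilde{\cO}(m+\nicefrac{L}{\delta\mu})$) against the polynomial term $\frac{LD_1}{\delta^2\mu^2K^2}\le\varepsilon$ (needing $K=\widetilde{\cO}(\nicefrac{\sqrt{LD_1}}{\delta\mu\sqrt\varepsilon})$) produces the claimed complexity $\widetilde{\cO}\bigl(m+\nicefrac{L}{\delta\mu}+\nicefrac{\sqrt{LD_1}}{\delta\mu\sqrt\varepsilon}\bigr)$.

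The main obstacle I anticipate is the bookkeeping around the $\gamma$-dependence of $F_2$ and $T^0$: the numerator must be frozen at $\gamma_0$ cleanly so that Lemma~\ref{lem:lemma2_stich} applies with a truly constant $a$, and one must verify that the corollary's optimized stepsize genuinely coincides with the lemma's prescription after this substitution. The remaining work — identifying $c_1,c_2,\rho_1,\rho_2,d$, simplifying $\min\{\gamma_0\mu,1,\nicefrac1m\}$, and balancing the two terms — is routine once this reduction is in place.
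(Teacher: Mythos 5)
Your proposal is correct and follows essentially the same route as the paper: the paper's proof of this corollary is precisely the application of Lemma~\ref{lem:lemma2_stich} to the bound of Theorem~\ref{thm:ec_LSVRG}, with $c_1=0$, $c_2=\nicefrac{48LD_1}{\delta^2}$, $\rho_1=1$, $\rho_2=p=\nicefrac{1}{m}$, and $d=\nicefrac{1}{\gamma_0}$, so that the $c_1$-branch of the lemma's stepsize and the $\nicefrac{c_1}{\mu K}$ term drop out. Your additional step of freezing the $\gamma$-dependent quantities $T^0$ and $\gamma F_2\sigma_0^2$ at $\gamma_0$ via monotonicity is exactly the bookkeeping the paper leaves implicit, and it is carried out correctly.
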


Applying Lemma~\ref{lem:lemma_technical_cvx} we get the complexity result in the case when $\mu = 0$.
\begin{corollary}\label{cor:ec_lsvrg_cvx_cor}
	Let the assumptions of Theorem~\ref{thm:ec_LSVRG} hold and $\mu = 0$. Then after $K$ iterations of {\tt EC-LSVRG} with the stepsize
	\begin{eqnarray*}
		\gamma_0 &=& \min\left\{\frac{1}{24L}, \frac{\delta}{8L\sqrt{3\left(2 + 3\delta\left(2+\frac{1}{1-p}\right)\right)}}\right\},\quad R_0 = \|x^0-x^*\|,\\
		\gamma &=& \min\left\{\gamma_0, \sqrt{\frac{R_0^2p}{4\sigma_0^2}}, \sqrt[3]{\frac{R_0^2\delta p\left(1-\min\left\{\frac{\gamma_0\mu}{2},\frac{p}{4}\right\}\right)}{72L\sigma_0^2}}, \sqrt[3]{\frac{\delta^2 R_0^2}{12LD_1 K}}\right\},
	\end{eqnarray*}	
	and $p = \frac{1}{m}$, $m\ge 2$ we have $\EE\left[f(\bar{x}^K) - f(x^*)\right]$ of order
	\begin{equation*}
		\cO\left(\frac{L R_0^2}{\delta K} + \frac{\sqrt{m R_0^2\sigma_0^2}}{K} + \frac{\sqrt[3]{LR_0^4m\sigma_0^2}}{\sqrt[3]{\delta} K} + \frac{\sqrt[3]{LR_0^4}}{(\delta K)^{\nicefrac{2}{3}}}\sqrt[3]{\frac{1}{n}\sum\limits_{i=1}^n\|\nabla f_i(x^*)\|^2}\right).
	\end{equation*}
	That is, to achive $\EE\left[f(\bar{x}^K) - f(x^*)\right] \le \varepsilon$ {\tt EC-LSVRG} requires
	\begin{equation*}
		\cO\left(\frac{L R_0^2}{\delta\varepsilon} + \frac{\sqrt{m R_0^2\sigma_0^2}}{\varepsilon} + \frac{\sqrt[3]{LR_0^4m\sigma_0^2}}{\sqrt[3]{\delta} \varepsilon}+ \frac{R_0^2}{\delta\varepsilon^{\nicefrac{3}{2}}}\sqrt{\frac{L}{n}\sum\limits_{i=1}^n\|\nabla f_i(x^*)\|^2}\right)
	\end{equation*}
	iterations.
\end{corollary}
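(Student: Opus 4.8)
The plan is to combine the $\mu=0$ convergence bound already established in Theorem~\ref{thm:ec_LSVRG} with the purely algebraic stepsize-tuning Lemma~\ref{lem:lemma_technical_cvx}. Concretely, I would start from the $\mu=0$ display of Theorem~\ref{thm:ec_LSVRG}, which asserts
\begin{equation*}
	\EE\left[f(\bar x^K) - f(x^*)\right] \le \frac{4(T^0 + \gamma F_2 \sigma_0^2)}{\gamma K} + \frac{48L\gamma^2}{\delta^2}D_1,
\end{equation*}
valid for every $\gamma \le \gamma_0$ with $\gamma_0$ as in the corollary, $T^0 = \|x^0-x^*\|^2 + M_2\gamma^2\sigma_0^2$, $M_2 = \nicefrac{4}{p}$, and (using $\mu=0$, so the inner minimum equals $0$) $F_2 = \nicefrac{72L\gamma^2}{(\delta p)}$. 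The whole task then reduces to rewriting the right-hand side in the canonical shape $r_K \le \nicefrac{a}{(\gamma K)} + \nicefrac{b_1\gamma}{K} + \nicefrac{b_2\gamma^2}{K} + c_1\gamma + c_2\gamma^2$ demanded by Lemma~\ref{lem:lemma_technical_cvx} and reading off the induced rate.

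First I would split the $\nicefrac{4T^0}{(\gamma K)}$ term: the $\|x^0-x^*\|^2$ part contributes $a = 4\|x^0-x^*\|^2$, while the $M_2\gamma^2\sigma_0^2 = 4m\gamma^2\sigma_0^2$ part (after setting $p=\nicefrac1m$) becomes $\nicefrac{16m\gamma\sigma_0^2}{K}$, i.e.\ $b_1 = 16m\sigma_0^2$. The feedback term $\nicefrac{4\gamma F_2\sigma_0^2}{(\gamma K)} = \nicefrac{4F_2\sigma_0^2}{K}$ carries an extra factor $\gamma^2$ through $F_2$, yielding $\nicefrac{288Lm\gamma^2\sigma_0^2}{(\delta K)}$, so $b_2 = \nicefrac{288Lm\sigma_0^2}{\delta}$. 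Finally the residual variance term is genuinely of order $\gamma^2$ and $K$-independent, giving $c_1 = 0$ and $c_2 = \nicefrac{48LD_1}{\delta^2}$. A sanity check is that, with these values, the stepsize $\gamma = \min\{\gamma_0, \sqrt{\nicefrac{a}{b_1}}, \sqrt[3]{\nicefrac{a}{b_2}}, \sqrt[3]{\nicefrac{a}{(c_2K)}}\}$ prescribed by Lemma~\ref{lem:lemma_technical_cvx} (the $\sqrt{\nicefrac{a}{(c_1K)}}$ slot disappears since $c_1=0$) collapses exactly to the four-way minimum written in the corollary.

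Plugging $a,b_1,b_2,c_2$ into the conclusion $r_K = \cO(\nicefrac{a}{(\gamma_0 K)} + \nicefrac{\sqrt{ab_1}}{K} + \nicefrac{\sqrt[3]{a^2b_2}}{K} + \nicefrac{\sqrt[3]{a^2c_2}}{K^{\nicefrac23}})$ of Lemma~\ref{lem:lemma_technical_cvx} then produces the four summands of the stated rate: the leading term uses $\gamma_0 = \Theta(\nicefrac{\delta}{L})$ (the $\delta$-dependent branch is active in the relevant regime) to give $\nicefrac{LR_0^2}{(\delta K)}$; the next two are immediate; and the last, after substituting $D_1 = \nicefrac2n\sum_i\|\nabla f_i(x^*)\|^2$, becomes $\nicefrac{\sqrt[3]{LR_0^4}}{(\delta K)^{\nicefrac23}}\sqrt[3]{\nicefrac1n\sum_i\|\nabla f_i(x^*)\|^2}$. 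The iteration-complexity statement follows by forcing each of the four terms below $\varepsilon$ and solving for $K$; the only nontrivial inversion is the $K^{\nicefrac23}$ term, where $\nicefrac{\sqrt[3]{LR_0^4D_1}}{(\delta K)^{\nicefrac23}}\le\varepsilon$ rearranges to $K \gtrsim \nicefrac{R_0^2}{(\delta\varepsilon^{\nicefrac32})}\sqrt{\nicefrac{L}{n}\sum_i\|\nabla f_i(x^*)\|^2}$.

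I expect no real conceptual obstacle, since both inputs are already proved and the work is entirely bookkeeping. The one place where care is genuinely needed is the constant accounting in the reduction step, in particular correctly attributing the extra $\gamma^2$ hidden inside $F_2$ so that the feedback term lands in the $b_2$-slot (order $\nicefrac{\gamma^2}{K}$) rather than the $b_1$-slot, and confirming that with $\mu=0$ the factor $1-\min\{\nicefrac{\gamma\mu}2,\nicefrac p4\}$ appearing in $F_2$ and in the corollary's stepsize equals $1$ and may be dropped. A secondary point to verify is the asymptotic $\gamma_0 = \Theta(\nicefrac{\delta}{L})$ used to simplify the leading term, which holds because the branch $\nicefrac{\delta}{8L\sqrt{3(2+3\delta(2+\nicefrac1{1-p}))}}$ is the active one once $\delta$ is bounded away from a constant.
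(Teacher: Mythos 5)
Your proposal is correct and is exactly the paper's argument: the paper obtains this corollary by applying Lemma~\ref{lem:lemma_technical_cvx} to the $\mu=0$ bound of Theorem~\ref{thm:ec_LSVRG}, with the same identification $a = 4R_0^2$, $b_1 = 16m\sigma_0^2$, $b_2 = \nicefrac{288Lm\sigma_0^2}{\delta}$, $c_1 = 0$, $c_2 = \nicefrac{48LD_1}{\delta^2}$ that you read off after noting $F_2 = \nicefrac{72L\gamma^2}{(\delta p)}$ when $\mu = 0$. Your bookkeeping (the stepsize collapsing to the stated four-way minimum, the bound $\nicefrac{1}{\gamma_0} = \cO(\nicefrac{L}{\delta})$, and the inversion of the $K^{\nicefrac{2}{3}}$ term) matches the paper's intended derivation.
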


\subsection{{\tt EC-LSVRGstar}}\label{sec:ec_LSVRGstar}
In the setup of Section~\ref{sec:ec_LSVRG} we now assume that $i$-th node has an access to the $\nabla f_i(x^*)$. Under this unrealistic assumption we construct the method called {\tt EC-LSVRGstar} that asymptotically converges to the exact solution.

\begin{algorithm}[t]
   \caption{{\tt EC-LSVRGstar}}\label{alg:ec-LSVRGstar}
\begin{algorithmic}[1]
   \Require learning rate $\gamma>0$, initial vector $x^0 \in \R^d$
   \State Set $e_i^0 = 0$ for all $i=1,\ldots, n$   
   \For{$k=0,1,\dotsc$}
       \State Broadcast $x^{k}$ to all workers
        \For{$i=1,\dotsc,n$ in parallel}
        	\State Pick $l$ uniformly at random from $[m]$
            \State Set $g^{k}_i = \nabla f_{il}(x^k) - \nabla f_{il}(w_i^k) + \nabla f_i(w_i^k) - \nabla f_i(x^*)$
            \State $v_i^k = C(e_i^k + \gamma g_i^k)$
            \State $e_i^{k+1} = e_i^k + \gamma g_i^k - v_i^k$
            \State $w_i^{k+1} = \begin{cases}x^k,& \text{with probability } p,\\ w_i^k,& \text{with probability } 1-p\end{cases}$
        \EndFor
        \State $e^k = \frac{1}{n}\sum_{i=1}^ne_i^k$, $g^k = \frac{1}{n}\sum_{i=1}^ng_i^k$, $v^k = \frac{1}{n}\sum_{i=1}^nv_i^k$
       \State $x^{k+1} = x^k - v^k$
   \EndFor
\end{algorithmic}
\end{algorithm}

\begin{lemma}\label{lem:second_moment_bound_ec-LSVRGstar}
	For all $k\ge 0$, $i\in [n]$ we have
	\begin{equation}
		\EE\left[g^k\mid x^k\right] = \nabla f(x^k) \label{eq:unbiasedness_g^k_ec-LSVRGstar}
	\end{equation}		
	and
	\begin{equation}
		\frac{1}{n}\sum\limits_{i=1}^n\|\bar{g}_i^k\|^2 \le 2L\left(f(x^k) - f(x^*)\right), \label{eq:second_moment_bound_ec-LSVRGstar} 
	\end{equation}
	\begin{equation}
		\frac{1}{n}\sum\limits_{i=1}^n\EE\left[\|g_i^k-\bar{g}_i^k\|^2\mid x^k\right] \le 4L\left(f(x^k) - f(x^*)\right) + 2\sigma_k^2, \label{eq:variance_bound_ec-LSVRGstar} 
	\end{equation}
	\begin{equation}
		\EE\left[\|g^k\|^2\mid x^k\right] \le 4L\left(f(x^k) - f(x^*)\right) + 2\sigma_k^2, \label{eq:second_moment_bound_ec-LSVRGstar_2} 
	\end{equation}
	where $\sigma_k^2 = \frac{1}{nm}\sum_{i=1}^n\sum_{j=1}^n\|\nabla f_{ij}(w_i^k) - \nabla f_{ij}(x^*)\|^2$.
\end{lemma}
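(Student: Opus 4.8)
The plan is to establish the four claims in order, exploiting throughout that $\nabla f(x^*)=0$ and that $l$ is drawn uniformly from $[m]$, so that $\EE[\nabla f_{il}(y)\mid x^k]=\nabla f_i(y)$ for any fixed $y$.

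First I would compute the per-worker mean $\bar{g}_i^k=\EE[g_i^k\mid x^k]$. Since $\EE[\nabla f_{il}(x^k)\mid x^k]=\nabla f_i(x^k)$ and $\EE[\nabla f_{il}(w_i^k)\mid x^k]=\nabla f_i(w_i^k)$, the two $w_i^k$-terms cancel and $\bar{g}_i^k=\nabla f_i(x^k)-\nabla f_i(x^*)$. Averaging over $i$ and using $\nabla f(x^*)=0$ gives the unbiasedness claim \eqref{eq:unbiasedness_g^k_ec-LSVRGstar}. Inequality \eqref{eq:second_moment_bound_ec-LSVRGstar} then follows immediately, since $\frac{1}{n}\sum_i\|\bar{g}_i^k\|^2=\frac{1}{n}\sum_i\|\nabla f_i(x^k)-\nabla f_i(x^*)\|^2$; applying \eqref{eq:L_smoothness_cor} termwise together with $\frac{1}{n}\sum_i D_{f_i}(x^k,x^*)=f(x^k)-f(x^*)-\langle\nabla f(x^*),x^k-x^*\rangle=f(x^k)-f(x^*)$ yields the bound $2L(f(x^k)-f(x^*))$.

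For the variance bound \eqref{eq:variance_bound_ec-LSVRGstar}, the key observation is that the star-shift $-\nabla f_i(x^*)$ is deterministic, so $g_i^k-\bar{g}_i^k=\nabla f_{il}(x^k)-\nabla f_{il}(w_i^k)-\EE[\nabla f_{il}(x^k)-\nabla f_{il}(w_i^k)\mid x^k]$. Writing $X_i=\nabla f_{il}(x^k)-\nabla f_{il}(w_i^k)$, the variance decomposition \eqref{eq:variance_decomposition} gives $\EE[\|g_i^k-\bar{g}_i^k\|^2\mid x^k]\le\EE[\|X_i\|^2\mid x^k]$, and then \eqref{eq:a_b_norm_squared} applied with two summands (after inserting $\pm\nabla f_{il}(x^*)$) bounds this by $2\EE\|\nabla f_{il}(x^k)-\nabla f_{il}(x^*)\|^2+2\EE\|\nabla f_{il}(w_i^k)-\nabla f_{il}(x^*)\|^2$. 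Averaging over $i$, the first term is controlled by \eqref{eq:L_smoothness_cor} (giving $4L(f(x^k)-f(x^*))$) while the second is exactly $2\sigma_k^2$ by the definition of $\sigma_k^2$.

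Finally, for \eqref{eq:second_moment_bound_ec-LSVRGstar_2} I would note that the averaged estimator is star-invariant: $g^k=\frac{1}{n}\sum_i g_i^k$ coincides with the plain {\tt EC-LSVRG} estimator because the per-worker shifts sum to $\frac{1}{n}\sum_i\nabla f_i(x^*)=\nabla f(x^*)=0$. Hence \eqref{eq:second_moment_bound_ec-LSVRGstar_2} is literally \eqref{eq:second_moment_bound_ec-LSVRG_2}, and the argument from Lemma~\ref{lem:second_moment_bound_ec-LSVRG} transfers verbatim: split each summand of $g^k$ as $(\nabla f_{il}(x^k)-\nabla f_{il}(x^*))-(\nabla f_{il}(w_i^k)-\nabla f_{il}(x^*)-(\nabla f_i(w_i^k)-\nabla f_i(x^*)))$, use \eqref{eq:a_b_norm_squared} to separate the two groups and to pass the average inside the norm, and bound the resulting terms by $4L(f(x^k)-f(x^*))$ and $2\sigma_k^2$ via \eqref{eq:L_smoothness_cor} and \eqref{eq:variance_decomposition}. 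I do not expect a genuine obstacle here; the only points demanding care are tracking the deterministic cancellations forced by $\nabla f(x^*)=0$ and using $\EE\|Y-\EE Y\|^2\le\EE\|Y\|^2$ cleanly, which is precisely what lets the star variant attain the sharper constants $4L$ and $2\sigma_k^2$ instead of the $12L$, $3\sigma_k^2$ appearing in the plain {\tt EC-LSVRG} variance estimate.
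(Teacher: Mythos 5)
Your proposal is correct and follows essentially the same route as the paper's proof: the same computation of $\bar{g}_i^k=\nabla f_i(x^k)-\nabla f_i(x^*)$, the same splittings, and the same three ingredients (variance decomposition \eqref{eq:variance_decomposition}, the inequality \eqref{eq:a_b_norm_squared}, and the smoothness corollary \eqref{eq:L_smoothness_cor}), yielding identical constants. The only cosmetic differences are that you center $\nabla f_{il}(x^k)-\nabla f_{il}(w_i^k)$ before splitting (the paper bounds the variance by the second moment of the full $g_i^k$ and drops the inner mean afterwards), and that for \eqref{eq:second_moment_bound_ec-LSVRGstar_2} you invoke the {\tt EC-LSVRG} bound \eqref{eq:second_moment_bound_ec-LSVRG_2} via the cancellation $\frac{1}{n}\sum_{i=1}^n\nabla f_i(x^*)=\nabla f(x^*)=0$, whereas the paper simply applies Jensen's inequality to the per-worker second-moment bound obtained in the variance step---both are valid one-line finishes.
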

\begin{proof}
	First of all, we derive unbiasedness of $g^k$:
	\begin{eqnarray*}
		\EE\left[g^k\mid x^k\right] &=& \frac{1}{n}\sum\limits_{i=1}^n\EE\left[\nabla f_{il}(x^k) - \nabla f_{il}(w_i^k) + \nabla f_i(w_i^k) - \nabla f_i(x^*)\mid x^k\right]\\
		&=& \frac{1}{nm}\sum\limits_{i=1}^n\sum\limits_{j=1}^m\left(\nabla f_{ij}(x^k) - \nabla f_{ij}(w_i^k) + \nabla f_i(w_i^k) - \nabla f_i(x^*)\right)\\
		&=& \nabla f(x^k) + \frac{1}{n}\sum\limits_{i=1}^n\left(-\nabla f_i(w_i^k) + \nabla f_i(w_i^k)\right) - \nabla f(x^*) = \nabla f(x^k).
	\end{eqnarray*}
	Next, we get an upper bound for $\frac{1}{n}\sum\limits_{i=1}^n\|\bar{g}_i^k\|^2$:
	\begin{equation*}
		\frac{1}{n}\sum\limits_{i=1}^n\|\bar{g}_i^k\|^2 = \frac{1}{n}\sum\limits_{i=1}^n\|\nabla f_i(x^k)-\nabla f_i(x^*)\|^2 \overset{\eqref{eq:L_smoothness_cor}}{\le} 2L\left(f(x^k)-f(x^*)\right).
	\end{equation*}		
	Since the variance of random vector is not greater than its second moment we obtain:
	\begin{eqnarray}
		\frac{1}{n}\sum\limits_{i=1}^n\EE\left[\|g_i^k-\bar{g}_i^k\|^2\mid x^k\right] &\overset{\eqref{eq:variance_decomposition}}{\le}& \frac{1}{n}\sum\limits_{i=1}^n\EE\left[\|g_i^k\|^2\mid x^k\right]\notag\\
		&\overset{\eqref{eq:a_b_norm_squared}}{\le}& \frac{2}{n}\sum\limits_{i=1}^n\EE\left[\|\nabla f_{il}(x^k)- \nabla f_{il}(x^*)\|^2\mid x^k\right]\notag\\
		&&\quad + \frac{2}{n}\sum\limits_{i=1}^n\EE\left[\left\|\nabla f_{il}(w_i^k)- \nabla f_{il}(x^*) - \left(\nabla f_i(w_i^k) - \nabla f_i(x^*)\right)\right\|^2\mid x^k\right]\notag\\
		&\overset{\eqref{eq:L_smoothness_cor},\eqref{eq:variance_decomposition}}{\le}& 4L\left(f(x^k)-f(x^*)\right) + \frac{2}{nm}\sum\limits_{i=1}^n\sum\limits_{j=1}^m\|\nabla f_{ij}(w_i^k) - \nabla f_{ij}(x^*)\|^2.\notag
	\end{eqnarray}
	Inequality \eqref{eq:second_moment_bound_ec-LSVRGstar_2} trivially follows from the inequality above by Jensen's inequality and convexity of $\|\cdot\|^2$. 
\end{proof}

\begin{lemma}\label{lem:sigma_k+1_bound_ec-LSVRGstar}
	For all $k\ge 0$, $i\in [n]$ we have
	\begin{equation}
		\EE\left[\sigma_{k+1}^2\mid x^k\right] \le (1-p)\sigma_k^2 + 2Lp\left(f(x^k) - f(x^*)\right), \label{eq:sigma_k+1_ec-LSVRGstar} 
	\end{equation}
	where $\sigma_k^2 = \frac{1}{nm}\sum_{i=1}^n\sum_{j=1}^n\|\nabla f_{ij}(w_i^k) - \nabla f_{ij}(x^*)\|^2$.
\end{lemma}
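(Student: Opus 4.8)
The plan is to observe that this statement is \emph{identical} in both form and proof to Lemma~\ref{lem:sigma_k+1_bound_ec-LSVRG}, since the quantity $\sigma_k^2 = \frac{1}{nm}\sum_{i=1}^n\sum_{j=1}^m\|\nabla f_{ij}(w_i^k) - \nabla f_{ij}(x^*)\|^2$ depends only on the control points $\{w_i^k\}$, and the probabilistic update rule $w_i^{k+1} = x^k$ with probability $p$ and $w_i^{k+1} = w_i^k$ with probability $1-p$ is the same in {\tt EC-LSVRGstar} as in {\tt EC-LSVRG}. The extra control variate $-\nabla f_i(x^*)$ appearing in the gradient estimator $g_i^k$ of {\tt EC-LSVRGstar} plays no role here, since it does not enter the definition of $\sigma_k^2$ nor the evolution of $w_i^k$.

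Concretely, I would take the conditional expectation $\EE[\cdot\mid x^k]$ over the Bernoulli coin flips governing $w_i^{k+1}$, which are independent across workers and independent of $x^k$. This splits each summand into the two cases, giving
\begin{eqnarray*}
	\EE\left[\sigma_{k+1}^2\mid x^k\right] &=& \frac{1-p}{nm}\sum\limits_{i=1}^n\sum\limits_{j=1}^m\|\nabla f_{ij}(w_i^{k}) - \nabla f_{ij}(x^*)\|^2\\
	&&\quad + \frac{p}{nm}\sum\limits_{i=1}^n\sum\limits_{j=1}^m\|\nabla f_{ij}(x^{k}) - \nabla f_{ij}(x^*)\|^2.
\end{eqnarray*}
The first term is precisely $(1-p)\sigma_k^2$. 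For the second term I would invoke $L$-smoothness through \eqref{eq:L_smoothness_cor}, bounding $\|\nabla f_{ij}(x^k) - \nabla f_{ij}(x^*)\|^2 \le 2L D_{f_{ij}}(x^k,x^*)$, and then use that $\frac{1}{nm}\sum_{i=1}^n\sum_{j=1}^m D_{f_{ij}}(x^k,x^*) = f(x^k) - f(x^*)$ (the cross term $\frac{1}{nm}\sum_{i,j}\langle\nabla f_{ij}(x^*), x^k-x^*\rangle$ vanishes since it equals $\langle\nabla f(x^*), x^k-x^*\rangle$ and $\nabla f(x^*)=0$), which yields exactly $2Lp(f(x^k)-f(x^*))$ and closes the bound \eqref{eq:sigma_k+1_ec-LSVRGstar}.

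There is no real obstacle: the argument is a routine one-line recursion, and the only point deserving care is that the expectation is taken solely over the fresh randomness in $w_i^{k+1}$, conditionally on $x^k$, so that $\sigma_k^2$ and $f(x^k)-f(x^*)$ may be treated as fixed. This is why the statement and its derivation carry over verbatim from the {\tt EC-LSVRG} case.
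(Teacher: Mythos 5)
Your proposal is correct and matches the paper exactly: the paper's proof of this lemma simply states that it is identical to the proof of Lemma~\ref{lem:sigma_k+1_bound_ec-LSVRG}, whose argument is precisely your decomposition over the Bernoulli update of $w_i^{k+1}$ followed by \eqref{eq:L_smoothness_cor} and the identity $\frac{1}{nm}\sum_{i,j} D_{f_{ij}}(x^k,x^*) = f(x^k)-f(x^*)$ (using $\nabla f(x^*)=0$). Your remark that the star correction $-\nabla f_i(x^*)$ in the gradient estimator is irrelevant here, since it affects neither $\sigma_k^2$ nor the evolution of the $w_i^k$, is exactly the justification underlying the paper's one-line proof.
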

\begin{proof} The proof of this lemma is identical to the proof of Lemma~\ref{lem:sigma_k+1_bound_ec-LSVRG}.
\end{proof}

Applying Theorem~\ref{thm:ec_sgd_main_result_new} we get the following result.
\begin{theorem}\label{thm:ec_LSVRGstar}
	Assume that $f(x)$ is $\mu$-quasi strongly convex and functions $f_{ij}$ are convex and $L$-smooth for all $i\in[n],j\in[m]$. Then {\tt EC-LSVRGstar} satisfies Assumption~\ref{ass:key_assumption_finite_sums_new} with
	\begin{gather*}
		A = L,\quad \widetilde{A} = A' = 2L,\quad B_1 = \widetilde{B}_1 = B_1' = B_2 = 0,\quad \widetilde{B}_2 = B_2' = 2,\quad D_1 = D_1' = 0,\\
		\sigma_{1,k}^2 \equiv 0,,\quad C_1 = 0,\quad\sigma_{2,k}^2 = \sigma_{k}^2 = \frac{1}{nm}\sum\limits_{i=1}^n\sum\limits_{j=1}^m\|\nabla f_{ij}(w_i^{k}) - \nabla f_{ij}(x^*)\|^2,\quad \rho_1 = 1,\\
		\rho_2 = p,\quad C_2 = Lp,\quad D_2 = 0,\quad G = 0,\quad F_1 = 0,\quad F_2 = \frac{48L\gamma^2(2+p)}{\delta p},\quad D_3 = 0,
	\end{gather*}
	with $\gamma$ satisfying
	\begin{equation*}
		\gamma \le \min\left\{\frac{3}{56L}, \frac{\delta}{8L\sqrt{3\left(1+\delta\left(1 + \frac{2}{1-p}\right)\right)}}\right\}, \quad M_2 = \frac{8}{3p}.
	\end{equation*}
	and for all $K \ge 0$
	\begin{equation*}
		\EE\left[f(\bar x^K) - f(x^*)\right] \le \left(1 - \min\left\{\frac{\gamma\mu}{2},\frac{p}{4}\right\}\right)^K\frac{4(T^0 + \gamma F_2 \sigma_0^2)}{\gamma}
	\end{equation*}	
	when $\mu > 0$ and
	\begin{equation*}
		\EE\left[f(\bar x^K) - f(x^*)\right] \le \frac{4(T^0 + \gamma F_2 \sigma_0^2)}{\gamma K}
	\end{equation*}
	when $\mu = 0$, where $T^k \eqdef \|x^k - x^*\|^2 + M_2\gamma^2 \sigma_k^2$.
\end{theorem}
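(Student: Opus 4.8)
The plan is to reduce everything to Theorem~\ref{thm:ec_sgd_main_result_new}: I first verify that {\tt EC-LSVRGstar} satisfies Assumption~\ref{ass:key_assumption_finite_sums_new} with the listed constants, and then read off the convergence bounds directly from that theorem. The verification of the parametric assumption has already been done by the two preceding lemmas, so the bulk of the proof is bookkeeping. Concretely, Lemma~\ref{lem:second_moment_bound_ec-LSVRGstar} supplies the unbiasedness \eqref{eq:unbiasedness_new} together with the three moment bounds; matching them term by term against \eqref{eq:second_moment_bound_g_i^k_new}, \eqref{eq:variance_bound_g_i^k_new}, and \eqref{eq:second_moment_bound_new} lets me read off $A=L$ (from $\tfrac1n\sum\|\bar g_i^k\|^2\le 2L(f(x^k)-f(x^*))$), $\widetilde A=A'=2L$ (from the two bounds $\le 4L(f(x^k)-f(x^*))+2\sigma_k^2$), all of $B_1,\widetilde B_1,B_1',B_2$ equal to zero, $\widetilde B_2=B_2'=2$, and $D_1=\widetilde D_1=D_1'=0$, with $\sigma_{2,k}^2=\sigma_k^2$ the gradient-table variance. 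Lemma~\ref{lem:sigma_k+1_bound_ec-LSVRGstar} matches \eqref{eq:sigma_k+1_bound_2} to give $\rho_2=p$, $C_2=Lp$; since {\tt EC-LSVRGstar} carries no secondary variance sequence I set $\sigma_{1,k}^2\equiv0$, $\rho_1=1$, $C_1=G=D_2=0$, which makes \eqref{eq:sigma_k+1_bound_1} void.

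With these constants in hand, I would substitute them into the explicit formulas for $F_1,F_2,D_3,M_1,M_2$ given in Lemma~\ref{lem:ec_sgd_key_lemma_new} (equivalently, Theorem~\ref{thm:ec_sgd_main_result_new}). Because $B_1=\widetilde B_1=0$ the expression for $F_1$ collapses to $0$, and because $D_1=\widetilde D_1=D_2=0$ the expression for $D_3$ collapses to $0$. In $F_2$ only the $\widetilde B_2=2$ contribution survives, producing $F_2=\tfrac{48L\gamma^2}{\delta p(1-\eta)}$; I would then bound $\tfrac{1}{1-\eta}\le 2+p$ (valid since $\eta\le\rho_2/4=p/4\le\tfrac14$, so $1-\eta\ge\tfrac34$ and $\tfrac1{1-\eta}\le\tfrac43\le 2+p$) to recover the displayed value $F_2=\tfrac{48L\gamma^2(2+p)}{\delta p}$. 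Replacing $F_2$ by this larger quantity is harmless for \eqref{eq:sum_of_errors_bound_new} because $F_2$ multiplies $\sigma_{2,0}^2\ge0$ on the right-hand side, so a larger $F_2$ only weakens the inequality. Finally $M_1=\tfrac{4B_1'}{3\rho_1}=0$ and $M_2=\tfrac{4(B_2'+\frac43 G)}{3\rho_2}=\tfrac{8}{3p}$, matching the statement.

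Next I would confirm that the two stepsize requirements of Theorem~\ref{thm:ec_sgd_main_result_new} simplify to the displayed bound. The first, $\gamma\le\tfrac{1}{4(A'+C_1M_1+C_2M_2)}$, becomes $\tfrac{1}{4(2L+Lp\cdot\frac{8}{3p})}=\tfrac{1}{4\cdot 14L/3}=\tfrac{3}{56L}$. For the second, with $C_1=G=0$ only the $\tfrac{2A}{\delta}+\widetilde A$ term and the $\tfrac{2C_2(\frac{2B_2}{\delta}+\widetilde B_2)}{\rho_2(1-\rho_2)}=\tfrac{4L}{1-p}$ term remain inside the radical, and squaring the candidate denominators shows $96L\bigl(\tfrac{2L}{\delta}+2L+\tfrac{4L}{1-p}\bigr)$ agrees with $\bigl(8L\sqrt{3(1+\delta(1+\tfrac{2}{1-p}))}/\delta\bigr)^2$, giving exactly $\gamma\le\tfrac{\delta}{8L\sqrt{3(1+\delta(1+\frac{2}{1-p}))}}$. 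I would also remark that the extra condition $\gamma\le\tfrac{\delta}{4\mu}$ from Lemma~\ref{lem:ec_sgd_key_lemma_new} is automatically implied, since the smoothness bound forces $\gamma\le\tfrac{\delta}{8\sqrt3\,L}\le\tfrac{\delta}{4L}\le\tfrac{\delta}{4\mu}$ using $\mu\le L$. Theorem~\ref{thm:ec_sgd_main_result_new} then yields the two displayed recursions with $T^k=\|\tx^k-x^*\|^2+M_2\gamma^2\sigma_k^2$; because $D_1'=D_3=D_2=0$ and $M_1=0$ the residual term $4\gamma(D_1'+M_1D_2+D_3)$ vanishes, leaving the clean linear (and, for $\mu=0$, the $\mathcal O(1/K)$) rate, and since $e^0=0$ one has $\tx^0=x^0$, so $T^0=\|x^0-x^*\|^2+M_2\gamma^2\sigma_0^2$.

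The step needing the most care, and the only place where a naive substitution would break down, is the degenerate case $\rho_1=1$: the generic formulas in Lemma~\ref{lem:ec_sgd_key_lemma_new} contain the factors $\tfrac{2G}{1-\rho_1}$ and $\tfrac{2}{1-\rho_1}\bigl(\tfrac{C_1}{\rho_1}+\cdots\bigr)$, which are formally $0/0$ here. I would resolve this by appealing to the convention recorded in the footnote to Lemma~\ref{lem:ec_sgd_key_lemma_new}, namely that when $\rho_1=1$ the associated quantities $B_1,\widetilde B_1,C_1,G,D_2$ are all zero and the corresponding products are read as $0$; this is consistent with $\sigma_{1,k}^2\equiv0$ making \eqref{eq:sigma_k+1_bound_1} superfluous. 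Apart from this interpretive point, the remaining effort is the purely algebraic check that the radical in the smoothness stepsize matches the stated form, which I expect to be routine.
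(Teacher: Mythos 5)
Your proposal is correct and follows essentially the same route as the paper: the paper's proof of this theorem consists precisely of invoking Theorem~\ref{thm:ec_sgd_main_result_new} with the parameters supplied by Lemmas~\ref{lem:second_moment_bound_ec-LSVRGstar} and~\ref{lem:sigma_k+1_bound_ec-LSVRGstar}. Your additional checks---absorbing the factor $\nicefrac{1}{(1-\eta)}\le 2+p$ into the stated $F_2$ (harmless since enlarging $F_2$ only weakens \eqref{eq:sum_of_errors_bound_new} and the final bound), resolving the $\rho_1=1$ degeneracy via the footnote convention, verifying the radical algebra for the smoothness stepsize, and noting that $\gamma\le\nicefrac{\delta}{4\mu}$ is implied by $\gamma\le\nicefrac{\delta}{(8\sqrt{3}L)}$ and $\mu\le L$---are exactly the bookkeeping the paper leaves implicit, and they are all valid.
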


In other words, {\tt EC-LSVRGstar} converges with linear rate $\cO\left(\left(\frac{1}{p} + \frac{\kappa}{\delta\sqrt{1-p}}\right)\ln\frac{1}{\varepsilon}\right)$ exactly to the solution when $\mu > 0$. If $m\ge 2$ then taking $p = \frac{1}{m}$ we get that in expectation the sample complexity of one iteration of {\tt EC-LSVRGstar} is $\cO(1)$ gradients calculations per node as for {\tt EC-SGDsr} with standard sampling and the rate of convergence becomes $\cO\left(\left(m + \frac{\kappa}{\delta}\right)\ln\frac{1}{\varepsilon}\right)$. 

Applying Lemma~\ref{lem:lemma_technical_cvx} we get the complexity result in the case when $\mu = 0$.
\begin{corollary}\label{cor:ec_lsvrg_star_cvx_cor}
	Let the assumptions of Theorem~\ref{thm:ec_LSVRGstar} hold and $\mu = 0$. Then after $K$ iterations of {\tt EC-LSVRGstar} with the stepsize
	\begin{eqnarray*}
		\gamma_0 &=& \min\left\{\frac{3}{56L}, \frac{\delta}{8L\sqrt{3\left(1+\delta\left(1 + \frac{2}{1-p}\right)\right)}}\right\},\quad R_0 = \|x^0-x^*\|,\\
		\gamma &=& \min\left\{\gamma_0, \sqrt{\frac{3pR_0^2}{8\sigma_0^2}}, \sqrt[3]{\frac{R_0^2\delta p\left(1-\min\left\{\frac{\gamma_0\mu}{2},\frac{p}{4}\right\}\right)}{72L\sigma_0^2}}\right\},
	\end{eqnarray*}	
	and $p = \frac{1}{m}$, $m\ge 2$ we have $\EE\left[f(\bar{x}^K) - f(x^*)\right]$ of order
	\begin{equation*}
		\cO\left(\frac{L R_0^2}{\delta K} + \frac{\sqrt{R_0^2m\sigma_0^2}}{K} + \frac{\sqrt[3]{LR_0^4m\sigma_0^2}}{\sqrt[3]{\delta}K}\right).
	\end{equation*}
	That is, to achive $\EE\left[f(\bar{x}^K) - f(x^*)\right] \le \varepsilon$ {\tt EC-LSVRGstar} requires
	\begin{equation*}
		\cO\left(\frac{L R_0^2}{\delta \varepsilon} + \frac{\sqrt{R_0^2m\sigma_0^2}}{\varepsilon} + \frac{\sqrt[3]{LR_0^4m\sigma_0^2}}{\sqrt[3]{\delta}\varepsilon}\right)
	\end{equation*}
	iterations.
\end{corollary}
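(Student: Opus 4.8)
The plan is to read off Corollary~\ref{cor:ec_lsvrg_star_cvx_cor} from Theorem~\ref{thm:ec_LSVRGstar} in the case $\mu = 0$ by a stepsize optimization via Lemma~\ref{lem:lemma_technical_cvx}; no new estimates are needed beyond the parameter identification already performed in Theorem~\ref{thm:ec_LSVRGstar}. First I would record the parameters for {\tt EC-LSVRGstar}: $F_1 = 0$, $D_1' = 0$, $D_3 = 0$, $M_2 = \frac{8}{3p}$, and $F_2 = \frac{48L\gamma^2(2+p)}{\delta p}$. The conceptually decisive point is that $D_1' = D_3 = 0$, so the additive floor term $4\gamma(D_1' + M_1 D_2 + D_3)$ of Theorem~\ref{thm:main_result_new} vanishes; this is precisely what knowledge of $\nabla f_i(x^*)$ buys, and it is the reason every error term decays with $K$ rather than saturating at a noise level.

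Second, I would substitute $T^0 = R_0^2 + M_2\gamma^2\sigma_0^2 = R_0^2 + \frac{8}{3p}\gamma^2\sigma_0^2$ and the expression for $F_2$ into the $\mu = 0$ bound $\EE[f(\bar x^K) - f(x^*)] \le \frac{4(T^0 + \gamma F_2\sigma_0^2)}{\gamma K}$. Because $F_2$ carries a factor $\gamma^2$, the contribution $\gamma F_2\sigma_0^2$ becomes a term of order $\gamma^2/K$ after division by $\gamma K$. Collecting everything produces a bound of exactly the form treated by Lemma~\ref{lem:lemma_technical_cvx},
\begin{equation*}
    \EE[f(\bar x^K) - f(x^*)] \le \frac{a}{\gamma K} + \frac{b_1\gamma}{K} + \frac{b_2\gamma^2}{K},
\end{equation*}
with $a = 4R_0^2$, $b_1 = \frac{32\sigma_0^2}{3p}$, $b_2 = \frac{192L(2+p)\sigma_0^2}{\delta p}$, and $c_1 = c_2 = 0$.

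Third, I would apply Lemma~\ref{lem:lemma_technical_cvx} with these constants, setting $r_K = \EE[f(\bar x^K) - f(x^*)]$. Its prescribed stepsize $\gamma = \min\{\gamma_0, \sqrt{a/b_1}, \sqrt[3]{a/b_2}\}$ --- the candidates $\sqrt{a/(c_1 K)}$ and $\sqrt[3]{a/(c_2 K)}$ do not bind since $c_1 = c_2 = 0$ --- simplifies to $\sqrt{a/b_1} = \sqrt{\frac{3pR_0^2}{8\sigma_0^2}}$ and $\sqrt[3]{a/b_2} = \sqrt[3]{\frac{\delta p R_0^2}{48L(2+p)\sigma_0^2}}$, which agree with the corollary's stepsize up to the harmless factor $1 - \min\{\gamma_0\mu/2, p/4\}$ (equal to $1$ when $\mu = 0$) and the constant $2+p \in [2,3]$. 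The lemma then yields $r_K = \cO\!\left(\frac{a}{\gamma_0 K} + \frac{\sqrt{ab_1}}{K} + \frac{\sqrt[3]{a^2 b_2}}{K}\right)$.

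Finally I would set $p = \frac{1}{m}$ with $m \ge 2$, so that $\frac{2}{1-p} \le 4$ and hence $\gamma_0 \asymp \delta/L$, and then simplify the three surviving terms to $\frac{L R_0^2}{\delta K}$, $\frac{\sqrt{m R_0^2\sigma_0^2}}{K}$, and $\frac{\sqrt[3]{L R_0^4 m\sigma_0^2}}{\sqrt[3]{\delta}K}$, matching the stated rate; the iteration complexity follows at once since each term is proportional to $1/K$, so solving each $\le \varepsilon$ for $K$ gives the claimed count. The whole argument is essentially mechanical, and the only place demanding care is the constant-tracking when aligning $b_1, b_2$ and the $2+p$ factor against the stepsize --- but since these constants are absorbed into $\cO(\cdot)$, this is bookkeeping rather than a genuine obstacle.
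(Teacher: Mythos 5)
Your proposal is correct and follows exactly the paper's route: the paper's proof of this corollary is precisely to take the $\mu=0$ bound of Theorem~\ref{thm:ec_LSVRGstar} (where $D_1'=D_2=D_3=0$ kills the additive floor), expand $T^0 + \gamma F_2\sigma_0^2$ to get a bound of the form $\frac{a}{\gamma K}+\frac{b_1\gamma}{K}+\frac{b_2\gamma^2}{K}$ with $a=4R_0^2$, $b_1=\frac{32\sigma_0^2}{3p}$, $b_2=\frac{192L(2+p)\sigma_0^2}{\delta p}$, $c_1=c_2=0$, and invoke Lemma~\ref{lem:lemma_technical_cvx}. Your constant identifications and the final simplification with $p=\nicefrac{1}{m}$, $\gamma_0=\Theta(\nicefrac{\delta}{L})$ all check out, and your observation that the mismatch between the corollary's stated stepsize constants (the $72$ and the $1-\min\{\nicefrac{\gamma_0\mu}{2},\nicefrac{p}{4}\}$ factor) and the theorem's $F_2$ is absorbed by the $\cO(\cdot)$ is accurate.
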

However, such convergence guarantees are obtained under very restrictive assumption: the method requires to know vectors $\nabla f_i(x^*)$.

\subsection{{\tt EC-LSVRG-DIANA}}\label{sec:ec_LSVRG-diana}
In the setup of Section~\ref{sec:ec_LSVRG} we construct a new method called {\tt EC-LSVRG-DIANA} which does not require to know $\nabla f_i(x^*)$ and has linear convergence to the exact solution.
\begin{algorithm}[t]
   \caption{{\tt EC-LSVRG-DIANA}}\label{alg:ec-LSVRG-diana}
\begin{algorithmic}[1]
   \Require learning rates $\gamma>0$, $\alpha \in (0,1]$, initial vectors $x^0, h_1^0,\ldots, h_n^0 \in \R^d$
	\State Set $e_i^0 = 0$ for all $i=1,\ldots, n$   
	\State Set $h^0 = \frac{1}{n}\sum_{i=1}^n h_i^0$   
   \For{$k=0,1,\dotsc$}
       \State Broadcast $x^{k}, h^k$ to all workers
        \For{$i=1,\dotsc,n$ in parallel}
			\State Pick $l$ uniformly at random from $[m]$
            \State Set $\hat g^{k}_i = \nabla f_{il}(x^k) - \nabla f_{il}(w_i^k) + \nabla f_i(w_i^k)$           
            \State $g^{k}_i = \hat g_i^k - h_i^k + h^k$
            \State $v_i^k = C(e_i^k + \gamma g_i^k)$
            \State $e_i^{k+1} = e_i^k + \gamma g_i^k - v_i^k$
            \State $h_i^{k+1} = h_i^k + \alpha Q(\hat g_i^k - h_i^k)$
            \State $w_i^{k+1} = \begin{cases}x^k,& \text{with probability } p,\\ w_i^k,& \text{with probability } 1-p\end{cases}$
        \EndFor
        \State $e^k = \frac{1}{n}\sum_{i=1}^ne_i^k$, $g^k = \frac{1}{n}\sum_{i=1}^ng_i^k$, $v^k = \frac{1}{n}\sum_{i=1}^nv_i^k$, $h^{k+1} = \frac{1}{n}\sum\limits_{i=1}^n h_i^{k+1} = h^k + \alpha\frac{1}{n}\sum\limits_{i=1}^n Q(\hat g_i^k - h_i^k)$
       \State $x^{k+1} = x^k - v^k$
   \EndFor
\end{algorithmic}
\end{algorithm}
As in {\tt EC-SGD-DIANA} the master needs to gather only $C(e_i^k + \gamma g_i^k)$ and $Q(\hat g_i^k - h_i^k)$ from all nodes in order to perform an update.

\begin{lemma}\label{lem:ec_LSVRG-diana_second_moment_bound}
	Assume that $f_{ij}(x)$ is convex and $L$-smooth for all $i=1,\ldots,n$, $j=1,\ldots,m$. Then, for all $k\ge 0$ we have
	\begin{eqnarray}
		\EE\left[g^k\mid x^k\right] &=& \nabla f(x^k), \label{eq:ec_LSVRG-diana_unbiasedness}\\
		\frac{1}{n}\sum\limits_{i=1}^n\|\bar{g}_i^k\|^2 &\le& 4L\left(f(x^k) - f(x^*)\right) + 2\sigma_{1,k}^2, \label{eq:ec_LSVRG-diana_second_moment_bound}\\
		\frac{1}{n}\sum\limits_{i=1}^n\EE\left[\|g_i^k-\bar{g}_i^k\|^2\mid x^k\right] &\le& 6L\left(f(x^k) - f(x^*)\right) + 3\sigma_{1,k}^2 + 3\sigma_{2,k}^2, \label{eq:ec_LSVRG-diana_variance_bound}\\
		\EE\left[\|g^k\|^2\mid x^k\right] &\le& 4L\left(f(x^k) - f(x^*)\right) + 2\sigma_{2,k}^2 \label{eq:ec_LSVRG-diana_second_moment_bound_2}
	\end{eqnarray}
	where $$\sigma_{1,k}^2 = \frac{1}{n}\sum_{i=1}^n\|h_i^k - \nabla f(x^*)\|^2,\quad \sigma_{2,k}^2 =  \frac{1}{nm}\sum_{i=1}^n\sum_{j=1}^m\|\nabla f_{ij}(w_i^k) - \nabla f_{ij}(x^*)\|^2.$$
\end{lemma}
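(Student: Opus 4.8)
The plan is to exploit the fact that {\tt EC-LSVRG-DIANA} is the superposition of the {\tt LSVRG}-type estimator $\hat g_i^k=\nabla f_{il}(x^k)-\nabla f_{il}(w_i^k)+\nabla f_i(w_i^k)$ and the {\tt DIANA}-type shift $-h_i^k+h^k$, so that each of the four bounds collapses onto a computation already done for {\tt EC-LSVRG} (Lemma~\ref{lem:second_moment_bound_ec-LSVRG}) or for {\tt EC-SGD-DIANA} (Lemma~\ref{lem:ec_diana_second_moment_bound}). First I would record two cancellation facts that drive everything. Since $h_i^k$ and $h^k$ are known at iteration $k$ and $\EE[\hat g_i^k\mid x^k]=\nabla f_i(x^k)$ by the {\tt LSVRG} telescoping (as in \eqref{eq:unbiasedness_g_i^k_ec-LSVRG}), we get $\bar g_i^k=\nabla f_i(x^k)-h_i^k+h^k$, hence $g_i^k-\bar g_i^k=\hat g_i^k-\nabla f_i(x^k)$, i.e.\ \emph{the {\tt DIANA} shift carries no randomness}. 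Averaging over $i$ and using $h^k=\frac1n\sum_i h_i^k$ makes the shift cancel completely, so $g^k=\frac1n\sum_i\hat g_i^k$ and $\EE[g^k\mid x^k]=\nabla f(x^k)$, which is exactly \eqref{eq:ec_LSVRG-diana_unbiasedness}.

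For \eqref{eq:ec_LSVRG-diana_second_moment_bound} I would copy the argument of \eqref{eq:ec_diana_second_moment_bound} verbatim, since $\bar g_i^k$ has precisely the {\tt DIANA} form. Writing $\bar g_i^k=(\nabla f_i(x^k)-\nabla f_i(x^*))+(\nabla f_i(x^*)-h_i^k+h^k)$ and applying $\|a+b\|^2\le 2\|a\|^2+2\|b\|^2$, the first piece is controlled by $L$-smoothness \eqref{eq:L_smoothness_cor} to produce $4L(f(x^k)-f(x^*))$; for the second piece, $\nabla f(x^*)=0$ lets me write $h^k=\frac1n\sum_j(h_j^k-\nabla f_j(x^*))$, so it becomes the empirical-mean-centered deviation of $h_i^k-\nabla f_i(x^*)$, and the variance-decomposition inequality \eqref{eq:variance_decomposition} (subtracting the mean only decreases the averaged squared norm) yields $2\sigma_{1,k}^2$.

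The second-moment bound \eqref{eq:ec_LSVRG-diana_second_moment_bound_2} is then immediate: because $g^k=\frac1n\sum_i\hat g_i^k$ coincides with the {\tt EC-LSVRG} estimator, the computation is identical to that of \eqref{eq:second_moment_bound_ec-LSVRG_2}, namely expand $g^k$ around $\nabla f(x^*)$, split with \eqref{eq:a_b_norm_squared} into the sampled term (bounded by $4L(f(x^k)-f(x^*))$ via \eqref{eq:L_smoothness_cor}) and the mean-centered control-variate term (bounded by $2\sigma_{2,k}^2$ via \eqref{eq:variance_decomposition}). For the variance bound \eqref{eq:ec_LSVRG-diana_variance_bound} I would use $g_i^k-\bar g_i^k=\hat g_i^k-\nabla f_i(x^k)$ and bound it purely as the {\tt LSVRG} sampling variance: centering $\nabla f_{il}(x^k)-\nabla f_{il}(w_i^k)$ around its conditional mean (so \eqref{eq:variance_decomposition} bounds the variance by the raw second moment) and inserting $\pm\nabla f_{il}(x^*)$ gives $4L(f(x^k)-f(x^*))+2\sigma_{2,k}^2$, which is dominated by the stated right-hand side; the extra $3\sigma_{1,k}^2$ and the looser smoothness constant are harmless slack retained only to fit the parameter template of Assumption~\ref{ass:key_assumption_finite_sums_new}. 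I do not expect a genuine obstacle here: the only delicate point is the bookkeeping of which randomness (the sample $l$ versus the frozen table $h_i^k$) is averaged or conditioned at step $k$, and the disciplined reuse of \eqref{eq:variance_decomposition} to convert each ``minus empirical mean'' structure into $\sigma_{1,k}^2$ or $\sigma_{2,k}^2$ without double counting.
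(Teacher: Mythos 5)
Your proposal is correct. For three of the four claims---unbiasedness \eqref{eq:ec_LSVRG-diana_unbiasedness}, the mean bound \eqref{eq:ec_LSVRG-diana_second_moment_bound}, and the second-moment bound \eqref{eq:ec_LSVRG-diana_second_moment_bound_2}---you follow essentially the paper's own proof: the {\tt DIANA} shift averages out so $g^k$ coincides with the {\tt EC-LSVRG} estimator, and the remaining two bounds are the two-term Young split combined with \eqref{eq:L_smoothness_cor} and \eqref{eq:variance_decomposition}, exactly as in the paper. Where you genuinely diverge is the variance bound \eqref{eq:ec_LSVRG-diana_variance_bound}. The paper bounds $\EE\left[\|g_i^k-\bar g_i^k\|^2\mid x^k\right]$ by the full second moment $\EE\left[\|g_i^k\|^2\mid x^k\right]$, with the shift $-h_i^k+h^k$ still inside, and then uses a three-term split via \eqref{eq:a_b_norm_squared}; this is precisely what generates the stated constants $6L$, $3\sigma_{1,k}^2$, $3\sigma_{2,k}^2$. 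You instead exploit the exact cancellation $g_i^k-\bar g_i^k=\hat g_i^k-\nabla f_i(x^k)$---the shift is measurable given the step-$k$ history, so it contributes nothing to the variance---and bound the pure {\tt LSVRG} sampling variance with a two-term split, obtaining $4L\left(f(x^k)-f(x^*)\right)+2\sigma_{2,k}^2$. Since $f(x^k)-f(x^*)\ge 0$ and $\sigma_{1,k}^2,\sigma_{2,k}^2\ge 0$, this is dominated by the stated right-hand side, so the lemma as written follows. Your route is in fact sharper: fed into Assumption~\ref{ass:key_assumption_finite_sums_new} it would give $\widetilde{A}=2L$, $\widetilde{B}_1=0$, $\widetilde{B}_2=2$ instead of the paper's $3L$, $3$, $3$, which would slightly relax the stepsize restriction in Theorem~\ref{thm:ec_LSVRG-diana}; the paper's looser treatment has the sole advantage of producing the advertised constants directly, without an extra domination step.
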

\begin{proof}
	First of all, we show unbiasedness of $g^k$:
	\begin{eqnarray*}
		\EE\left[g^k\mid x^k\right] &=& \frac{1}{n}\sum\limits_{i=1}^n\EE\left[\hat g_i^k - h_i^k + h^k\mid x^k\right]\\
		&=& \frac{1}{nm}\sum\limits_{i=1}^n\sum\limits_{j=1}^m\left(\nabla f_{ij}(x^k) - \nabla f_{ij}(w_i^k) + \nabla f_i(w_i^k) - h_i^k + h^k\right) = \nabla f(x^k).	
	\end{eqnarray*}
	Next, we derive the upper bound for $\frac{1}{n}\sum\limits_{i=1}^n\|\bar{g}_i^k\|^2$:
	\begin{eqnarray*}
		\frac{1}{n}\sum\limits_{i=1}^n\|\bar{g}_i^k\|^2 &=& \frac{1}{n}\sum\limits_{i=1}^n\|\nabla f_i(x^k)-h_i^k + h^k\|^2\\
		&\overset{\eqref{eq:a_b_norm_squared}}{\le}& \frac{2}{n}\sum\limits_{i=1}^n\|\nabla f_i(x^k)-\nabla f_i(x^*)\|^2 + \frac{2}{n}\sum\limits_{i=1}^n\left\|h_i^k - \nabla f_i(x^*)-\left(h^k-\nabla f(x^*)\right)\right\|^2\\
		&\overset{\eqref{eq:L_smoothness_cor},\eqref{eq:variance_decomposition}}{\le}& 4L\left(f(x^k)-f(x^*)\right) + \frac{2}{n}\sum\limits_{i=1}^n\|h_i^k-\nabla f_i(x^*)\|^2.
	\end{eqnarray*}
	Since the variance of random vector is not greater than its second moment we obtain:
	\begin{eqnarray*}
		\frac{1}{n}\sum\limits_{i=1}^n\EE\left[\|g_i^k-\bar{g}_i^k\|^2\mid x^k\right] &\overset{\eqref{eq:variance_decomposition}}{\le}& \frac{1}{n}\sum\limits_{i=1}^n\EE\left[\|g_i^k\|^2\mid x^k\right]\\
		&=& \frac{1}{n}\sum\limits_{i=1}^n\EE\left[\|\nabla f_{il}(x^k) - \nabla f_{il}(w_i^k) + \nabla f_i(w_i^k) - h_i^k + h^k\|^2\mid x^k\right]\\
		&\overset{\eqref{eq:a_b_norm_squared}}{\le}& \frac{3}{n}\sum\limits_{i=1}^n\EE\left[\left\|\nabla f_{il}(x^k) - \nabla f_{il}(x^*)\right\|^2\mid x^k\right]\\
		&&\quad + \frac{3}{n}\sum\limits_{i=1}^n\EE\left[\left\|\nabla f_{il}(w_i^k) - \nabla f_{il}(x^*) -\left(\nabla f_i(w_i^k) - \nabla f_i(x^*)\right)\right\|^2\mid x^k\right]\\
		&&\quad + \frac{3}{n}\sum\limits_{i=1}^n\left\|h_i^k - \nabla f_{i}(x^*) -\left(h^k - \nabla f(x^*)\right)\right\|^2\\
		&\overset{\eqref{eq:L_smoothness_cor},\eqref{eq:variance_decomposition}}{\le}& 6L\left(f(x^k)-f(x^*)\right) + \frac{3}{nm}\sum\limits_{i=1}^n\sum\limits_{j=1}^m\|\nabla f_{ij}(w_i^k) - \nabla f_{ij}(x^*)\|^2\\
		&&\quad + \frac{3}{n}\sum\limits_{i=1}^n\left\|h_i^k - \nabla f_{i}(x^*)\right\|^2.
	\end{eqnarray*}
	Finally, we obtain an upper boud for the second moment of $g^k$:
	\begin{eqnarray*}
		\EE\left[\|g^k\|^2\mid x^k\right] &=& \EE\left[\left\|\frac{1}{n}\sum\limits_{i=1}^n\left(\nabla f_{il}(x^k)-\nabla f_{il}(w_i^k) + \nabla f_i(w_i^k) - \nabla f_i(x^*)\right)\right\|^2\mid x^k\right]\\
		&\overset{\eqref{eq:a_b_norm_squared}}{\le}& \frac{2}{n}\sum\limits_{i=1}^n\EE\left[\|\nabla f_{il}(x^k)-\nabla f_{il}(x^*)\|^2\mid x^k\right]\\
		&&\quad + \frac{2}{n}\sum\limits_{i=1}^n\EE\left[\left\|\nabla f_{il}(w_i^k)- \nabla f_{il}(x^*) - \left(\nabla f_i(w_i^k) - \nabla f_i(x^*)\right)\right\|^2\mid x^k\right]\\
		&=& \frac{2}{nm}\sum\limits_{i=1}^n\sum\limits_{j=1}^m\|\nabla f_{ij}(x^k) - \nabla f_{ij}(x^*)\|^2\\
		&&\quad + \frac{2}{nm}\sum\limits_{i=1}^n\sum\limits_{j=1}^m\left\|\nabla f_{ij}(w_i^k) - \nabla f_{ij}(x^*) - \frac{1}{m}\sum\limits_{j=1}^m\left(\nabla f_{ij}(w_i^k) - \nabla f_{ij}(x^*)\right)\right\|^2\\
		&\overset{\eqref{eq:L_smoothness_cor},\eqref{eq:variance_decomposition}}{\le}& 4L\left(f(x^k)-f(x^*)\right) + \frac{2}{nm}\sum\limits_{i=1}^n\sum\limits_{j=1}^m\left\|\nabla f_{ij}(w_i^k) - \nabla f_{ij}(x^*)\right\|^2.
	\end{eqnarray*}
\end{proof}

\begin{lemma}\label{lem:ec_LSVRG-diana_sigma_k+1_bound}
	Assume that $\alpha \le \nicefrac{1}{(\omega+1)}$. Then, for all $k\ge 0$ we have
	\begin{equation}
		\EE\left[\sigma_{1,k+1}^2\mid x^k\right] \le (1 - \alpha)\sigma_{1,k}^2 + 6L\alpha(f(x^k) - f(x^*)) + 2\alpha\sigma_{2,k}^2, \label{eq:ec_LSVRG-diana_sigma_k+1_bound}
	\end{equation}
	\begin{equation}
		\EE\left[\sigma_{2,{k+1}}^2\mid x^k\right] \le (1 - p)\sigma_{k,2}^2 + 2Lp\left(f(x^k)-f(x^*)\right)
	\end{equation}
	where $\sigma_{1,k}^2 = \frac{1}{n}\sum_{i=1}^n\|h_i^k - \nabla f_i(x^*)\|^2$ and $\sigma_{2,k}^2= \frac{1}{nm}\sum_{i=1}^n\sum_{j=1}^m\|\nabla f_{ij}(w_i^k) - \nabla f_{ij}(x^*)\|^2$.
\end{lemma}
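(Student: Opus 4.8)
The plan is to establish the two recursions independently, treating the $\sigma_2$ process as a routine repetition of an earlier lemma and concentrating the real work on the coupled $\sigma_1$ process.

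First, the bound on $\EE[\sigma_{2,k+1}^2\mid x^k]$ is the same computation as Lemma~\ref{lem:sigma_k+1_bound_ec-LSVRG}: the reference points $w_i^k$ in {\tt EC-LSVRG-DIANA} are refreshed by exactly the same rule ($w_i^{k+1}=x^k$ with probability $p$, else $w_i^k$), and this update is independent of the DIANA control variates $h_i^k$. I would expand $\EE[\sigma_{2,k+1}^2\mid x^k]$ over the two outcomes of each $w_i^{k+1}$ to get $(1-p)\sigma_{2,k}^2 + \frac{p}{nm}\sum_{i,j}\|\nabla f_{ij}(x^k)-\nabla f_{ij}(x^*)\|^2$, and then apply \eqref{eq:L_smoothness_cor} termwise to the second sum, converting it into $2Lp(f(x^k)-f(x^*))$ and reproducing the stated bound verbatim.

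Second, for $\EE[\sigma_{1,k+1}^2\mid x^k]$ I would follow the DIANA template of Lemma~\ref{lem:ec_diana_sigma_k+1_bound}. Writing $h_i^*=\nabla f_i(x^*)$, I expand $\|h_i^{k+1}-h_i^*\|^2=\|h_i^k-h_i^*+\alpha Q(\hat g_i^k-h_i^k)\|^2$, take $\EE[\cdot\mid x^k]$, and use unbiasedness of $Q$ so the cross term is $2\alpha\langle h_i^k-h_i^*,\nabla f_i(x^k)-h_i^k\rangle$, while \eqref{eq:quantization_def} together with \eqref{eq:tower_property} bounds the quadratic term by $\alpha^2(\omega+1)\EE[\|\hat g_i^k-h_i^k\|^2\mid x^k]$. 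A variance decomposition \eqref{eq:variance_decomposition} splits this into $\alpha^2(\omega+1)\EE[\|\hat g_i^k-\nabla f_i(x^k)\|^2\mid x^k]+\alpha^2(\omega+1)\|\nabla f_i(x^k)-h_i^k\|^2$, and the hypothesis $\alpha\le 1/(\omega+1)$ turns $\alpha^2(\omega+1)$ into $\alpha$ in the second factor. Combining the cross term with that $\alpha\|\nabla f_i(x^k)-h_i^k\|^2$ through identity \eqref{eq:a-b_a+b} collapses them to $\alpha\|\nabla f_i(x^k)-h_i^*\|^2-\alpha\|h_i^k-h_i^*\|^2$, which produces the contraction factor $(1-\alpha)$ and an optimization term $\alpha\|\nabla f_i(x^k)-\nabla f_i(x^*)\|^2\le 2L\alpha D_{f_i}(x^k,x^*)$ via \eqref{eq:L_smoothness_cor}.

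The one genuinely new step, and the main point of the lemma, is bounding the residual variance $\EE[\|\hat g_i^k-\nabla f_i(x^k)\|^2\mid x^k]$ of the {\tt LSVRG} estimator, rather than treating it as a fixed constant $\widetilde D_{1,i}$ as in {\tt EC-SGD-DIANA}. Since the variance is at most the second moment and $\hat g_i^k-\nabla f_i(x^k)$ equals the centered version of $\nabla f_{il}(x^k)-\nabla f_{il}(w_i^k)$, I would bound it by $\tfrac{1}{m}\sum_j\|\nabla f_{ij}(x^k)-\nabla f_{ij}(w_i^k)\|^2$, then split via the two-vector case of \eqref{eq:a_b_norm_squared} around $\nabla f_{ij}(x^*)$ and apply \eqref{eq:L_smoothness_cor}, obtaining $4LD_{f_i}(x^k,x^*)+\tfrac{2}{m}\sum_j\|\nabla f_{ij}(w_i^k)-\nabla f_{ij}(x^*)\|^2$. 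This is precisely where the coupling appears: after multiplying by $\alpha$ and averaging over $i$, the first piece contributes $4L\alpha(f(x^k)-f(x^*))$, which with the earlier $2L\alpha$ optimization term gives the claimed $6L\alpha$ coefficient, while the second piece yields exactly $2\alpha\sigma_{2,k}^2$. I expect the only thing needing care is keeping the three contributions to the $6L\alpha$ factor bookkept cleanly and recognizing that the nonzero cross-dependence on $\sigma_{2,k}^2$ is what forces the parameter $G$ to be nonzero in Assumption~\ref{ass:key_assumption_finite_sums_new}.
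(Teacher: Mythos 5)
Your proposal is correct and follows essentially the same route as the paper's proof: the DIANA recursion template (expand the square, use unbiasedness for the cross term, quantization bound plus variance decomposition with $\alpha\le\nicefrac{1}{(\omega+1)}$, then the identity \eqref{eq:a-b_a+b} and \eqref{eq:L_smoothness_cor} to get the $(1-\alpha)$ contraction), combined with bounding the {\tt LSVRG} estimator's conditional variance by $4LD_{f_i}(x^k,x^*)+\frac{2}{m}\sum_j\|\nabla f_{ij}(w_i^k)-\nabla f_{ij}(x^*)\|^2$, and the unchanged Lemma~\ref{lem:sigma_k+1_bound_ec-LSVRG} argument for $\sigma_{2,k}^2$. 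The only cosmetic difference is that in the variance bound you drop the centering (variance $\le$ second moment) before splitting around $\nabla f_{ij}(x^*)$, whereas the paper splits the two centered differences first and then drops the centering on each; both orderings yield the identical intermediate bound and the same $6L\alpha$ and $2\alpha\sigma_{2,k}^2$ coefficients.
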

\begin{proof}
	First of all, we derive an upper bound for the second moment of $h_i^{k+1} - h_i^*$:
	\begin{eqnarray*}
		\EE\left[\|h_i^{k+1} - h_i^*\|^2\mid x^k\right] &=& \EE\left[\left\|h_i^k - h_i^* + \alpha Q(\hat g_i^k - h_i^k) \right\|^2\mid x^k\right]\\
		&\overset{\eqref{eq:quantization_def}}{=}& \|h_i^k - h_i^*\|^2 +2\alpha\langle h_i^k - h_i^*, \nabla f_i(x^k) - h_i^k \rangle\\
		&&\quad + \alpha^2\EE\left[\|Q(\hat g_i^k - h_i^k)\|^2\mid x^k\right]\\
		&\overset{\eqref{eq:quantization_def},\eqref{eq:tower_property}}{\le}& \|h_i^k - h_i^*\|^2 +2\alpha\langle h_i^k - h_i^*, \nabla f_i(x^k) - h_i^k \rangle\\
		&&\quad + \alpha^2(\omega+1)\EE\left[\|\hat g_i^k - h_i^k\|^2\mid x^k\right].
	\end{eqnarray*}
	Using variance decomposition \eqref{eq:variance_decomposition} and $\alpha \le \nicefrac{1}{(\omega+1)}$ we get
	\begin{eqnarray*}
		\alpha^2(\omega+1)\EE\left[\|\hat g_i^k - h_i^k\|^2\mid x^k\right] &\overset{\eqref{eq:variance_decomposition}}{=}& \alpha^2(\omega+1)\EE\left[\|\hat g_i^k - \nabla f_i(x^k)\|^2\mid x^k\right] + \alpha^2(\omega+1)\|\nabla f_i(x^k) - h_i^k\|^2\\
		&\le& \alpha\EE\left[\|\hat g_i^k - \nabla f_i(x^k)\|^2\mid x^k\right] + \alpha\|\nabla f_i(x^k) - h_i^k\|^2\\
		&\overset{\eqref{eq:a_b_norm_squared}}{\le}& 2\alpha\EE\left[\left\|\nabla f_{il}(x^k) - \nabla f_{il}(x^*) -\left(\nabla f_i(x^k) - \nabla f_i(x^*)\right)\right\|^2\mid x^k\right]\\
		&&\quad + 2\alpha\EE\left[\left\|\nabla f_{il}(w_i^k) - \nabla f_{il}(x^*) -\left(\nabla f_i(w_i^k) - \nabla f_i(x^*)\right)\right\|^2\mid x^k\right]\\
		&&\quad+ \alpha\|\nabla f_i(x^k) - h_i^k\|^2\\
		&\overset{\eqref{eq:variance_decomposition}}{\le}&	2\alpha\EE\left[\left\|\nabla f_{il}(x^k) - \nabla f_{il}(x^*)\right\|^2\mid x^k\right]\\
		&&\quad + 2\alpha\EE\left[\left\|\nabla f_{il}(w_i^k) - \nabla f_{il}(x^*)\right\|^2\mid x^k\right] + \alpha\|\nabla f_i(x^k) - h_i^k\|^2\\
		&\overset{\eqref{eq:L_smoothness_cor}}{\le}& 4L\alpha D_{f_i}(x^k,x^*) + \frac{2\alpha}{m}\sum\limits_{j=1}^m\|\nabla f_{ij}(w_i^k) - \nabla f_{ij}(x^*)\|^2\\
		&&\quad + \alpha\|\nabla f_i(x^k) - h_i^k\|^2
	\end{eqnarray*}
	Putting all together we obtain
	\begin{eqnarray*}
		\EE\left[\|h_i^{k+1} - h_i^*\|^2\mid x^k\right] &\le& \|h_i^k - h_i^*\|^2 + \alpha\left\langle \nabla f_i(x^k) - h_i^k, f_i(x^k) + h_i^k - 2h_i^* \right\rangle\\
		&&\quad + 4L\alpha D_{f_i}(x^k,x^*) + \frac{2\alpha}{m}\sum\limits_{j=1}^m\|\nabla f_{ij}(w_i^k) - \nabla f_{ij}(x^*)\|^2\\
		&\overset{\eqref{eq:a-b_a+b}}{=}& \|h_i^k - h_i^*\|^2 + \alpha\|\nabla f_i(x^k) - h_i^*\|^2 - \alpha\|h_i^k - h_i^*\|^2\\
		&&\quad + 4L\alpha D_{f_i}(x^k,x^*) + \frac{2\alpha}{m}\sum\limits_{j=1}^m\|\nabla f_{ij}(w_i^k) - \nabla f_{ij}(x^*)\|^2\\
		&\overset{\eqref{eq:L_smoothness_cor}}{\le}& (1-\alpha)\|h_i^k - h_i^*\|^2 + 6L\alpha D_{f_i}(x^k,x^*)\\
		&&\quad + \frac{2\alpha}{m}\sum\limits_{j=1}^m\|\nabla f_{ij}(w_i^k) - \nabla f_{ij}(x^*)\|^2.
	\end{eqnarray*}
	Summing up the above inequality for $i=1,\ldots, n$ we derive
	\begin{eqnarray}
		\EE\left[\sigma_{1,k+1}^2\mid x^k\right] &\le& (1-\alpha)\sigma_{1,k}^2 + 6L\alpha(f(x^k) - f(x^*)) + 2\alpha\sigma_{2,k}^2.\notag
	\end{eqnarray}
	Similarly to the proof of Lemma~\ref{lem:sigma_k+1_bound_ec-LSVRG} we get
	\begin{eqnarray}
		\EE\left[\sigma_{2,k+1}^2\mid x^k\right] &=& \frac{1}{nm}\sum\limits_{i=1}^n\sum\limits_{j=1}^m\EE\left[\|\nabla f_{ij}(w_i^{k+1}) - \nabla f_{ij}(x^*)\|^2\mid x^k\right]\notag\\
		&=& \frac{1-p}{nm}\sum\limits_{i=1}^n\sum\limits_{j=1}^m\|\nabla f_{ij}(w_i^{k}) - \nabla f_{ij}(x^*)\|^2\notag\\
		&&\quad + \frac{p}{nm}\sum\limits_{i=1}^n\sum\limits_{j=1}^m\|\nabla f_{ij}(x^{k}) - \nabla f_{ij}(x^*)\|^2\notag\\
		&\overset{\eqref{eq:L_smoothness_cor}}{\le}& (1-p)\sigma_{2,k}^2 + \frac{2Lp}{nm}\sum\limits_{i=1}^n\sum\limits_{j=1}^m D_{f_{ij}}(x^k,x^*)\notag\\
		&=& (1-p)\sigma_{2,k}^2 + 2Lp\left(f(x^k) - f(x^*)\right).\notag
	\end{eqnarray}
\end{proof}

Applying Theorem~\ref{thm:ec_sgd_main_result_new} we get the following result.
\begin{theorem}\label{thm:ec_LSVRG-diana}
	Assume that $f_{ij}(x)$ is convex and $L$-smooth for all $i=1,\ldots, n$, $j=1,\ldots,m$ and $f(x)$ is $\mu$-quasi strongly convex. Then {\tt EC-LSVRG-DIANA} satisfies Assumption~\ref{ass:key_assumption_finite_sums_new} with
	\begin{gather*}
		A = A' = 2L,\quad B_1' = B_2 = 0,\quad B_1 = B_2' = 2,\quad D_1 = \widetilde{D}_1 = D_1' = D_2 = D_3 = 0,\\
		\widetilde{A} = 3L,\quad \widetilde{B}_1 = \widetilde{B}_2 = 3,\quad \sigma_{1,k}^2 = \frac{1}{n}\sum\limits_{i=1}^n\|h_i^k - \nabla f_i(x^*)\|^2,\quad \rho_1 = \alpha,\\
		\sigma_{2,k}^2 = \frac{1}{nm}\sum\limits_{i=1}^n\sum_{j=1}^m\|\nabla f_{ij}(w_i^k) - \nabla f_{ij}(x^*)\|^2,\quad \rho_2 = p,\quad C_1 = 3L\alpha,\quad C_2 = Lp,\\
		G = 2,\quad F_1 = \frac{24L\gamma^2\left(\frac{4}{\delta}+3\right)}{\delta\alpha\left(1-\min\left\{\frac{\gamma\mu}{2},\frac{\alpha}{4},\frac{p}{4}\right\}\right)},\quad F_2 = \frac{24L\gamma^2\left(\frac{4}{1-\alpha}\left(\frac{4}{\delta}+3\right) + 3\right)}{\delta p\left(1-\min\left\{\frac{\gamma\mu}{2},\frac{\alpha}{4},\frac{p}{4}\right\}\right)},
	\end{gather*}
	with $\gamma$ and $\alpha$ satisfying
	\begin{equation*}
		\gamma \le \min\left\{\frac{9}{296L}, \frac{\delta}{4L\sqrt{6\left(4+3\delta+\frac{2}{1-\alpha}\left(3+\frac{4}{1-p}\right)(4+3\delta)+\frac{6\delta}{1-p}\right)}}\right\},\quad \alpha \le \frac{1}{\omega+1}
	\end{equation*}
	with $M_1 = 0$ and $M_2 = \frac{8}{3p} + \frac{32}{9p}$ and for all $K \ge 0$
	\begin{equation*}
		\EE\left[f(\bar x^K) - f(x^*)\right] \le \left(1 - \min\left\{\frac{\gamma\mu}{2},\frac{\alpha}{4},\frac{p}{4}\right\}\right)^K\frac{4(T^0 + \gamma F_1 \sigma_{1,0}^2 + \gamma F_2 \sigma_{2,0}^2)}{\gamma},
	\end{equation*}	
	when $\mu > 0$ and
	\begin{equation*}
		\EE\left[f(\bar x^K) - f(x^*)\right] \le \frac{4(T^0 + \gamma F_1 \sigma_{1,0}^2 + \gamma F_2 \sigma_{2,0}^2)}{K\gamma}
	\end{equation*}
	when $\mu = 0$, where $T^k \eqdef \|x^k - x^*\|^2+ M_2\gamma^2 \sigma_{2,k}^2$.
\end{theorem}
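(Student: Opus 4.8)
The plan is to verify that EC-LSVRG-DIANA satisfies Assumption~\ref{ass:key_assumption_finite_sums_new} with the claimed constants and then invoke Theorem~\ref{thm:ec_sgd_main_result_new} (equivalently, Lemma~\ref{lem:ec_sgd_key_lemma_new} followed by Theorem~\ref{thm:main_result_new}). First I would read off the gradient parameters directly from Lemma~\ref{lem:ec_LSVRG-diana_second_moment_bound}: inequality~\eqref{eq:ec_LSVRG-diana_second_moment_bound} yields $A=2L$, $B_1=2$, $B_2=0$, $D_1=0$; inequality~\eqref{eq:ec_LSVRG-diana_variance_bound} yields $\widetilde A=3L$, $\widetilde B_1=\widetilde B_2=3$, $\widetilde D_1=0$; and inequality~\eqref{eq:ec_LSVRG-diana_second_moment_bound_2} yields $A'=2L$, $B_1'=0$, $B_2'=2$, $D_1'=0$. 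The unbiasedness requirement~\eqref{eq:unbiasedness_g_i^k_new} comes from~\eqref{eq:ec_LSVRG-diana_unbiasedness}. Next, the variance-reduction parameters come from Lemma~\ref{lem:ec_LSVRG-diana_sigma_k+1_bound}: matching~\eqref{eq:ec_LSVRG-diana_sigma_k+1_bound} against~\eqref{eq:sigma_k+1_bound_1} gives $\rho_1=\alpha$, $C_1=3L\alpha$, $G=2$, $D_2=0$, while matching the second recursion against~\eqref{eq:sigma_k+1_bound_2} gives $\rho_2=p$, $C_2=Lp$. This reproduces exactly the parameter list in the statement.

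With these constants in hand, I would compute the auxiliary quantities $M_1=\frac{4B_1'}{3\rho_1}=0$ (since $B_1'=0$) and $M_2=\frac{4(B_2'+\frac{4}{3}G)}{3\rho_2}=\frac{4(2+\frac{8}{3})}{3p}=\frac{56}{9p}=\frac{8}{3p}+\frac{32}{9p}$, matching the claim. The first stepsize bound of Theorem~\ref{thm:ec_sgd_main_result_new}, namely $\gamma\le\frac{1}{4(A'+C_1M_1+C_2M_2)}$, then evaluates (with $C_1M_1=0$) to $\frac{1}{4(2L+Lp\cdot\frac{56}{9p})}=\frac{9}{296L}$. The more delicate step is the second stepsize bound. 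I would substitute the parameters into the large radicand of~\eqref{eq:gamma_condition_ec_sgd_new}, using $\frac{C_1}{\rho_1}=3L$, $\frac{2GC_2}{\rho_2(1-\rho_2)}=\frac{4L}{1-p}$, $\frac{2B_1}{\delta}+\widetilde B_1=\frac{4}{\delta}+3$, $\frac{2B_2}{\delta}+\widetilde B_2=3$, and $\frac{2C_2(\frac{2B_2}{\delta}+\widetilde B_2)}{\rho_2(1-\rho_2)}=\frac{6L}{1-p}$; factoring $L$ out of the radicand (so that $96L(\cdots)=96L^2[\cdots]$) and then multiplying numerator and denominator inside the root by $\delta$, using the identity $\delta(\frac{4}{\delta}+3)=4+3\delta$, collapses the bound to $\gamma\le\frac{\delta}{4L\sqrt{6(4+3\delta+\frac{2}{1-\alpha}(3+\frac{4}{1-p})(4+3\delta)+\frac{6\delta}{1-p})}}$. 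I anticipate this algebraic reduction to be the only place demanding genuine care, since every other parameter is simply transcribed.

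Finally, since neither $\rho_1=\alpha$ nor $\rho_2=p$ equals $1$, the degenerate case flagged in the footnote of Lemma~\ref{lem:ec_sgd_key_lemma_new} does not arise, and formulas~\eqref{eq:ec_sgd_parameters_new}--\eqref{eq:ec_sgd_parameters_new_2} for $F_1,F_2,D_3$ apply verbatim. Substituting $B_1=2$, $\widetilde B_1=3$, $B_2=0$, $\widetilde B_2=3$, $G=2$, $\rho_1=\alpha$, $\rho_2=p$, and $\eta=\min\{\frac{\gamma\mu}{2},\frac{\alpha}{4},\frac{p}{4}\}$ produces the stated $F_1$ and $F_2$, while $D_1=\widetilde D_1=D_2=0$ forces $D_3=0$. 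Applying Theorem~\ref{thm:ec_sgd_main_result_new}, and noting that $e_i^0=0$ (so $\tx^0=x^0$) together with $M_1=0$ reduces $T^0$ to $\|x^0-x^*\|^2+M_2\gamma^2\sigma_{2,0}^2$, while $D_1'+M_1D_2+D_3=0$ eliminates the additive neighborhood term, would then yield precisely the two displayed inequalities: linear convergence to the exact optimum when $\mu>0$ and the $\cO(1/K)$ bound when $\mu=0$.
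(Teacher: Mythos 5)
Your proposal is correct and follows exactly the paper's own route: the parameters are read off from Lemma~\ref{lem:ec_LSVRG-diana_second_moment_bound} and Lemma~\ref{lem:ec_LSVRG-diana_sigma_k+1_bound}, the constants $M_1=0$, $M_2=\frac{56}{9p}$, the stepsize bounds $\frac{9}{296L}$ and $\frac{\delta}{4L\sqrt{6(\cdots)}}$, and the expressions for $F_1,F_2,D_3$ are obtained by the same substitutions into Lemma~\ref{lem:ec_sgd_key_lemma_new}/Theorem~\ref{thm:ec_sgd_main_result_new}, after which the claimed rates follow since $D_1'+M_1D_2+D_3=0$ kills the neighborhood term. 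All of your arithmetic (including the $\delta(\frac{4}{\delta}+3)=4+3\delta$ reduction of the radicand) checks out against the paper's stated constants.
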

In other words, if $p = \nicefrac{1}{m}$, $m \ge 2$ and
\begin{equation*}
		\gamma = \min\left\{\frac{9}{296L}, \frac{\delta}{4L\sqrt{6\left(4+3\delta+\frac{2}{1-\alpha}\left(3+\frac{4}{1-p}\right)(4+3\delta)+\frac{6\delta}{1-p}\right)}}\right\},\quad \alpha = \min\left\{\frac{1}{\omega+1},\frac{1}{2}\right\},
\end{equation*}
then {\tt EC-LSVRG-DIANA} converges with the linear rate
\begin{equation*}
	\cO\left(\left(\omega + m + \frac{\kappa}{\delta}\right)\ln\frac{1}{\varepsilon}\right)
\end{equation*}
to the exact solution when $\mu > 0$.

Applying Lemma~\ref{lem:lemma_technical_cvx} we get the complexity result in the case when $\mu = 0$.
\begin{corollary}\label{cor:ec_lsvrg_diana_cvx_cor}
	Let the assumptions of Theorem~\ref{thm:ec_LSVRG-diana} hold and $\mu = 0$. Then after $K$ iterations of {\tt EC-LSVRG-DIANA} with the stepsize
	\begin{eqnarray*}
		\gamma_0 &=& \min\left\{\frac{9}{296L}, \frac{\delta}{4L\sqrt{6\left(4+3\delta+\frac{2}{1-\alpha}\left(3+\frac{4}{1-p}\right)(4+3\delta)+\frac{6\delta}{1-p}\right)}}\right\},\quad R_0 = \|x^0-x^*\|,\\
		\gamma &=& \min\left\{\gamma_0, \sqrt{\frac{9pR_0^2}{56\sigma_{2,0}^2}}, \sqrt[3]{\frac{R_0^2}{\frac{24L\left(\frac{4}{\delta}+3\right)}{\delta\alpha\left(1-\min\left\{\frac{\gamma_0\mu}{2},\frac{\alpha}{4},\frac{p}{4}\right\}\right)}\sigma_{1,0}^2 + \frac{24L\left(\frac{4}{1-\alpha}\left(\frac{4}{\delta}+3\right) + 3\right)}{\delta p\left(1-\min\left\{\frac{\gamma_0\mu}{2},\frac{\alpha}{4},\frac{p}{4}\right\}\right)}\sigma_{2,0}^2}}\right\},
	\end{eqnarray*}	
	and $p = \frac{1}{m}$, $m\ge 2$, $\alpha = \min\left\{\frac{1}{\omega+1},\frac{1}{2}\right\}$ we have $\EE\left[f(\bar{x}^K) - f(x^*)\right]$ of order
	\begin{equation*}
		\cO\left(\frac{L R_0^2}{\delta K} + \frac{\sqrt{R_0^2 m\sigma_{2,0}^2}}{K} + \frac{\sqrt[3]{LR_0^4((\omega+1)\sigma_{1,0}^2 + m\sigma_{2,0}^2)}}{\delta^{\nicefrac{2}{3}}K}\right).
	\end{equation*}
	That is, to achive $\EE\left[f(\bar{x}^K) - f(x^*)\right] \le \varepsilon$ {\tt EC-LSVRG-DIANA} requires
	\begin{equation*}
		\cO\left(\frac{L R_0^2}{\delta \varepsilon} + \frac{\sqrt{R_0^2 m\sigma_{2,0}^2}}{\varepsilon} + \frac{\sqrt[3]{LR_0^4((\omega+1)\sigma_{1,0}^2 + m\sigma_{2,0}^2)}}{\delta^{\nicefrac{2}{3}}\varepsilon}\right)
	\end{equation*}
	iterations.
\end{corollary}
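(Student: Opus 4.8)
The final statement, Corollary~\ref{cor:ec_lsvrg_diana_cvx_cor}, is the weakly convex ($\mu=0$) complexity of {\tt EC-LSVRG-DIANA}, and the plan is to feed the $\mu=0$ bound of Theorem~\ref{thm:ec_LSVRG-diana} into the generic tuning Lemma~\ref{lem:lemma_technical_cvx}. Everything hinges on one structural fact already recorded in Theorem~\ref{thm:ec_LSVRG-diana}: because error feedback, {\tt LSVRG} and {\tt DIANA} jointly eliminate \emph{both} the compression variance $\sigma_{1,k}^2$ and the finite-sum variance $\sigma_{2,k}^2$, all persistent-noise constants vanish, namely $D_1'=M_1D_2=D_3=0$ (with $M_1=0$). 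Consequently the $\mu=0$ guarantee carries no $\cO(\gamma)$ floor and reads $\EE[f(\bar x^K)-f(x^*)]\le \tfrac{4(T^0+\gamma F_1\sigma_{1,0}^2+\gamma F_2\sigma_{2,0}^2)}{\gamma K}$ with $T^0=R_0^2+M_2\gamma^2\sigma_{2,0}^2$.

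First I would rewrite this bound in the template $r_K\le \tfrac{a}{\gamma K}+\tfrac{b_1\gamma}{K}+\tfrac{b_2\gamma^2}{K}$ required by Lemma~\ref{lem:lemma_technical_cvx}, with the lemma's persistent constants $c_1=c_2=0$. The only subtlety is that $F_1,F_2$ are themselves proportional to $\gamma^2$; setting $\mu=0$ makes the factor $1-\min\{\tfrac{\gamma\mu}{2},\tfrac{\alpha}{4},\tfrac{p}{4}\}$ in their denominators equal to $1$, so I can write $F_1=\widehat F_1\gamma^2$, $F_2=\widehat F_2\gamma^2$ with $\gamma$-free prefactors $\widehat F_1=\tfrac{24L(\tfrac4\delta+3)}{\delta\alpha}$ and $\widehat F_2=\tfrac{24L(\tfrac{4}{1-\alpha}(\tfrac4\delta+3)+3)}{\delta p}$. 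Dividing $T^0$ and the $F$-terms by $\gamma K$ then identifies $a=4R_0^2$, $b_1=4M_2\sigma_{2,0}^2$, and $b_2=4(\widehat F_1\sigma_{1,0}^2+\widehat F_2\sigma_{2,0}^2)$.

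Next I would invoke Lemma~\ref{lem:lemma_technical_cvx} with the stated $\gamma=\min\{\gamma_0,\sqrt{a/b_1},\sqrt[3]{a/b_2}\}$; substituting $M_2=\tfrac{56}{9p}$ makes $\sqrt{a/b_1}$ exactly $\sqrt{9pR_0^2/(56\sigma_{2,0}^2)}$ and $\sqrt[3]{a/b_2}$ the displayed cube root. Since $c_1=c_2=0$ the lemma outputs $r_K=\cO(\tfrac{a}{\gamma_0 K}+\tfrac{\sqrt{ab_1}}{K}+\tfrac{\sqrt[3]{a^2b_2}}{K})$. It remains to substitute the orders: $\gamma_0=\Theta(\delta/L)$, $p=1/m$, and $\alpha=\min\{\tfrac1{\omega+1},\tfrac12\}$ (so $\tfrac1{1-\alpha}=\Theta(1)$), giving $b_1=\Theta(m\sigma_{2,0}^2)$, $\widehat F_1=\Theta(\tfrac{L(\omega+1)}{\delta^2})$, $\widehat F_2=\Theta(\tfrac{Lm}{\delta^2})$, hence $b_2=\Theta(\tfrac{L}{\delta^2}((\omega+1)\sigma_{1,0}^2+m\sigma_{2,0}^2))$. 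The three output terms become $\tfrac{LR_0^2}{\delta K}$, $\tfrac{\sqrt{R_0^2m\sigma_{2,0}^2}}{K}$ and $\tfrac{\sqrt[3]{LR_0^4((\omega+1)\sigma_{1,0}^2+m\sigma_{2,0}^2)}}{\delta^{2/3}K}$, which is precisely the claimed order; forcing each below $\varepsilon$ and solving for $K$ yields the stated iteration count.

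I expect the main obstacle to be the honest bookkeeping in the second paragraph: one must track that $T^0$ contributes \emph{both} an $a/(\gamma K)$ term and, through $M_2\gamma^2\sigma_{2,0}^2$, a $b_1\gamma/K$ term, while the $F$-terms contribute only to $b_2\gamma^2/K$ once the $\gamma^2$ prefactor is exposed. The conceptual crux — that there is no $c_1\gamma+c_2\gamma^2$ contribution, so the rate is a clean $1/K$ with no error floor — is inherited directly from the vanishing of the $D_2$-coupling and of $D_3$ established upstream, so no new estimate is needed here; the remainder is the order-of-magnitude substitution, routine once $\alpha\le\tfrac12$ and $p\le\tfrac12$ are used to treat $1/(1-\alpha)$ and $1/(1-p)$ as constants.
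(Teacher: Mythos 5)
Your proposal is correct and follows exactly the route the paper intends: plug the $\mu=0$ bound of Theorem~\ref{thm:ec_LSVRG-diana} (with $D_1'=D_2=D_3=0$, $M_1=0$, $M_2=\nicefrac{56}{9p}$, and the $(1-\eta)$ factors in $F_1,F_2$ collapsing to $1$ since $\mu=0$) into Lemma~\ref{lem:lemma_technical_cvx} with $a=4R_0^2$, $b_1=4M_2\sigma_{2,0}^2$, $b_2=4(\widehat F_1\sigma_{1,0}^2+\widehat F_2\sigma_{2,0}^2)$, $c_1=c_2=0$. The bookkeeping you flag as the crux — exposing the $\gamma^2$ prefactor in $F_1,F_2$ so that $b_2$ is $\gamma$-free, and splitting $T^0$ into the $a/(\gamma K)$ and $b_1\gamma/K$ contributions — is carried out correctly, and the order substitutions ($\gamma_0=\Theta(\delta/L)$, $p=\nicefrac1m$, $\nicefrac1\alpha=\Theta(\omega+1)$) reproduce the stated rate and iteration count.
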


\clearpage

\section{Special Cases: Delayed Updates Methods}\label{sec:special_cases2}

\begin{table*}[!t]
\caption{Complexity of SGD methods with delayed updates established in this paper. Symbols: $\varepsilon = $ error tolerance; $\delta = $ contraction factor of compressor $\cC$; $\omega = $ variance parameter of compressor $\cQ$; $\kappa = \nicefrac{L}{\mu}$; $\cL =$ expected smoothness constant; $\sigma_*^2 = $ variance of the stochastic gradients in the solution; $\zeta_*^2 =$ average of $\|\nabla f_i(x^*)\|^2$; $\sigma^2 =$ average of the uniform bounds for the variances of stochastic gradients of workers; $\cM_{2,q} = (\omega+1)\sigma^2 + \omega\zeta_*^2$; $\sigma^2_q = (1+\omega)\left(1+\frac{\omega}{n}\right)\sigma^2$. $^\dagger${\tt D-QGDstar} is a special case of {\tt D-QSGDstar} where each worker $i$ computes the full gradient  $\nabla f_i(x^k)$; $^\ddagger${\tt D-GD-DIANA} is a special case of {\tt D-SGD-DIANA} where each worker $i$ computes the full gradient  $\nabla f_i(x^k)$.
}
\label{tbl:special_cases_delayed_methods}
\begin{center}
\footnotesize
\begin{tabular}{|c|l|c|c|c|c|}
\hline
\bf Problem & \bf Method &   \bf Alg \# &  \bf Citation &  \bf  Sec \#  
& \bf Rate (constants ignored)\\
\hline
\eqref{eq:main_problem}+\eqref{eq:f_i_sum} & {\tt D-SGDsr}  & Alg \ref{alg:d-SGDsr} & {\color{red}\bf new} & \ref{sec:d_SGDsr} 
& {\color{red}$\widetilde{\cO}\left(\frac{\cL + \sqrt{L^2\tau^2 + L\cL\tau}}{\mu} + \frac{\sigma_*^2}{n\mu\varepsilon} + \frac{\sqrt{L\tau \sigma_*^2}}{\mu\sqrt{n\varepsilon}}\right)$}\\
\hline
\eqref{eq:main_problem}+\eqref{eq:f_i_expectation} & {\tt D-SGD}  & Alg \ref{alg:d-sgd} & {\cite{stich2019error}}  & \ref{sec:d_sgd_pure} 
& $\widetilde{\cO}\left(\tau\kappa + \frac{\sigma_*^2}{n\mu\varepsilon} + \frac{\sqrt{L\tau\sigma_*^2}}{\mu\sqrt{n\varepsilon}}\right)$\\
\hline
\eqref{eq:main_problem}+\eqref{eq:f_i_expectation} & {\tt D-QSGD}  & Alg \ref{alg:d-qsgd} & {\color{red}\bf new}
& \ref{sec:d_qsgd} 
& {\color{red} $\widetilde{\cO}\left( \kappa\left(\tau + \frac{\omega}{n}\right) + \frac{\cM_{2,q}}{n\mu\varepsilon} + \frac{\sqrt{L\tau\cM_{2,q}}}{\mu\sqrt{n\varepsilon}} \right)$ } \\
\hline
\eqref{eq:main_problem}+\eqref{eq:f_i_expectation} & {\tt D-QSGDstar}  & Alg \ref{alg:d-qSGDstar} & {\color{red}\bf new}
& \ref{sec:d_qsgd_star} 
& {\color{red} $\widetilde{\cO}\left( \kappa\left(\tau + \frac{\omega}{n}\right) + \frac{\sigma^2}{n\mu\varepsilon} + \frac{\sqrt{L\tau\sigma^2}}{\mu\sqrt{n\varepsilon}} \right)$ } \\
\hline
\eqref{eq:main_problem}+\eqref{eq:f_i_expectation} & {\tt D-QGDstar}$^\dagger$  & Alg \ref{alg:d-qSGDstar} & {\color{red}\bf new}
& \ref{sec:d_qsgd_star} 
& {\color{red} $\cO\left( \kappa\left(\tau + \frac{\omega}{n}\right)\log\frac{1}{\varepsilon}\right)$ } \\
\hline
\eqref{eq:main_problem}+\eqref{eq:f_i_expectation} & {\tt D-SGD-DIANA}  & Alg \ref{alg:d-diana} & {\color{red}\bf new}
& \ref{sec:d_diana} 
& {\color{red}$\widetilde{\cO}\left(\omega +\kappa\left(\tau + \frac{\omega}{n}\right) + \frac{\sigma^2}{n\mu\varepsilon} + \frac{\sqrt{L\tau\sigma_q^2}}{\mu\sqrt{n\varepsilon}}\right)$}\\
\hline
\eqref{eq:main_problem}+\eqref{eq:f_i_expectation} & {\tt D-GD-DIANA}$^\ddagger$  & Alg \ref{alg:d-diana} & {\color{red}\bf new}
& \ref{sec:d_diana} 
& {\color{red}$\cO\left(\left(\omega + \kappa\left(\tau + \frac{\omega}{n}\right)\right) \log \frac{1}{\varepsilon}\right)$} \\
\hline
\eqref{eq:main_problem}+\eqref{eq:f_i_sum} & {\tt D-LSVRG}  & Alg \ref{alg:d-LSVRG} & {\color{red}\bf new}
& \ref{sec:d_LSVRG} 
& {\color{red}$\cO\left(\left(m + \kappa\tau\right)\log\frac{1}{\varepsilon}\right)$}\\
\hline
\eqref{eq:main_problem}+\eqref{eq:f_i_sum} & {\tt D-QLSVRG}  & Alg \ref{alg:d-qLSVRG} & {\color{red}\bf new}
& \ref{sec:d_qLSVRG} 
& {\color{red}$\widetilde{\cO}\left(m + \kappa\left(\tau+\frac{\omega}{n}\right) + \frac{\zeta_*^2}{n\mu\varepsilon} + \frac{\sqrt{L\tau\zeta_*^2}}{\mu\sqrt{n\varepsilon}} \right)$}\\
\hline
\eqref{eq:main_problem}+\eqref{eq:f_i_sum} & {\tt D-QLSVRGstar}  & Alg \ref{alg:d-qLSVRGstar} & {\color{red}\bf new}
 & \ref{sec:d_qLSVRGstar} 
 & {\color{red}$\cO\left(\left(m + \kappa\left(\tau+\frac{\omega}{n}\right)\right)\log\frac{1}{\varepsilon}\right)$} \\
\hline
\eqref{eq:main_problem}+\eqref{eq:f_i_sum} & {\tt D-LSVRG-DIANA}  & Alg \ref{alg:d-LSVRG-diana} & {\color{red}\bf new}
& \ref{sec:d_LSVRG-diana} 
& {\color{red}$\cO \left(\left( \omega + m + \kappa\left(\tau+\frac{\omega}{n}\right) \right) \log \frac{1}{\varepsilon}\right)$} \\
\hline
\end{tabular}
\end{center}
\end{table*}

\subsection{{\tt D-SGD}}\label{sec:d_sgd_pure}
In this section we consider the same setup as in Section~\ref{sec:ec_sgd_pure}.
\begin{algorithm}[t]
   \caption{{\tt D-SGD}}\label{alg:d-sgd}
\begin{algorithmic}[1]
   \Require learning rate $\gamma>0$, initial vector $x^0 \in \R^d$
	\State Set $e_i^0 = 0$ for all $i=1,\ldots, n$   
   \For{$k=0,1,\dotsc$}
       \State Broadcast $x^{k}$ to all workers
        \For{$i=1,\dotsc,n$ in parallel}
            \State Sample $g^{k}_i = \nabla f_{\xi_i}(x^k) - \nabla f_i(x^*)$
            \State $v_i^k = \begin{cases}\gamma g_i^{k-\tau},& \text{if } k \ge \tau,\\ 0,& \text{if } k < \tau \end{cases}$
            \State $e_i^{k+1} = e_i^k + \gamma g_i^k - v_i^k$
        \EndFor
        \State $e^k = \frac{1}{n}\sum_{i=1}^n e_i^k$, $g^k = \frac{1}{n}\sum_{i=1}^ng_i^k$, $v^k = \frac{1}{n}\sum_{i=1}^nv_i^k = \frac{1}{n}\sum_{i=1}^n \nabla f_{\xi_i}(x^{k-\tau})$
       \State $x^{k+1} = x^k - v^k$
   \EndFor
\end{algorithmic}
\end{algorithm}
We notice that vectors $e_i^k$ appear only in the analysis and there is no need to compute them. Moreover, we use $\nabla f_i(x^*)$ in the definition of $g_i^k$ which is problematic at the firt glance. Indeed, workers do not know $\nabla f_i(x^*)$. However, since $0 = \nabla f(x^*) = \frac{1}{n}\nabla f_i(x^*)$ and master node uses averages of $g_i^k$ for the updates one can ignore $\nabla f_i(x^*)$ in $g_i^k$ in the implementation of {\tt D-SGD} and get exactly the same method. We define $g_i^k$ in such a way only for the theoretical analysis.
\begin{lemma}[see also Lemmas 1,2 from~\citep{nguyen2018sgd}]\label{lem:lemma_d_sgd}
    Assume that $f_{\xi_i}(x)$ are convex in $x$ for every $\xi_i$, $i=1,\ldots,n$. Then for every $x\in\R^d$ and $i=1,\ldots, n$
    \begin{equation}\label{eq:lemma_d_sgd_1}
        \EE\left[\|g^k\|^2\mid x^k\right] \le 4L(f(x^k) - f(x^*)) + \frac{2}{n^2}\sum\limits_{i=1}^n \Var\left[\nabla f_{\xi_i}(x^*)\right].
    \end{equation}
    If further $f(x)$ is $\mu$-quasi strongly convex with possibly non-convex $f_i,f_{\xi_i}$ and $\mu > 0$, then for every $x\in\R^d$ and $i = 1,\ldots, n$
    \begin{equation}\label{eq:lemma_d_sgd_2}
        \EE\left[\|g^k\|^2\mid x^k\right] \le 4L\kappa(f(x^k) - f(x^*)) + \frac{2}{n^2}\sum\limits_{i=1}^n \Var\left[\nabla f_{\xi_i}(x^*)\right],
    \end{equation}
    where $\kappa = \frac{L}{\mu}$.
\end{lemma}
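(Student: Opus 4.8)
The plan is to follow, almost verbatim, the derivation of the third inequality in Lemma~\ref{lem:lemma_sgd}, since with $g_i^k = \nabla f_{\xi_i}(x^k) - \nabla f_i(x^*)$ the aggregate $g^k = \frac1n\sum_{i=1}^n g_i^k$ is precisely an average of independent, conditionally unbiased stochastic gradients (note $\EE[g^k\mid x^k] = \nabla f(x^k)$ because $\nabla f(x^*)=0$). First I would split each summand around the stochastic gradient evaluated at the optimum, writing $g^k = \frac1n\sum_{i=1}^n\bigl(\nabla f_{\xi_i}(x^k)-\nabla f_{\xi_i}(x^*)\bigr) + \frac1n\sum_{i=1}^n\bigl(\nabla f_{\xi_i}(x^*)-\nabla f_i(x^*)\bigr)$, and apply $\|a+b\|^2\le 2\|a\|^2+2\|b\|^2$ to separate a ``smoothness'' term from an ``optimal-variance'' term.

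For the optimal-variance term I would use that the workers draw $\xi_1,\ldots,\xi_n$ independently and that each increment $\nabla f_{\xi_i}(x^*)-\nabla f_i(x^*)$ is conditionally mean-zero, since $\EE_{\cD_i}[\nabla f_{\xi_i}(x^*)] = \nabla f_i(x^*)$. Then all cross terms cancel and $\EE\bigl[\|\frac1n\sum_i(\nabla f_{\xi_i}(x^*)-\nabla f_i(x^*))\|^2\bigr] = \frac1{n^2}\sum_i\Var[\nabla f_{\xi_i}(x^*)]$, which after the leading factor $2$ is exactly the additive term $\frac{2}{n^2}\sum_i\Var[\nabla f_{\xi_i}(x^*)]$ common to \eqref{eq:lemma_d_sgd_1} and \eqref{eq:lemma_d_sgd_2}. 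This is the step I would be most careful about: obtaining the $1/n^2$ scaling (rather than $1/n$) is the whole content of the bound and rests entirely on the cross-term cancellation from cross-worker independence together with the variance-decomposition identity \eqref{eq:variance_decomposition}.

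For the smoothness term I would first bound $\|\frac1n\sum_i a_i\|^2\le \frac1n\sum_i\|a_i\|^2$ via \eqref{eq:a_b_norm_squared} and then pass the conditional expectation inside termwise. In the convex case I would invoke \eqref{eq:L_smoothness_cor} to get $\EE\|\nabla f_{\xi_i}(x^k)-\nabla f_{\xi_i}(x^*)\|^2\le 2L\,\EE D_{f_{\xi_i}}(x^k,x^*) = 2L\,D_{f_i}(x^k,x^*)$; averaging over $i$ and using $\nabla f(x^*)=0$ collapses $\frac1n\sum_i D_{f_i}(x^k,x^*)$ to $f(x^k)-f(x^*)$, and the factor $2$ yields the $4L(f(x^k)-f(x^*))$ term of \eqref{eq:lemma_d_sgd_1}. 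When the $f_{\xi_i}$ are merely $L$-smooth (and possibly non-convex) I cannot use \eqref{eq:L_smoothness_cor}, so I would instead apply $L$-smoothness directly through \eqref{eq:L_smoothness}, giving $\|\nabla f_{\xi_i}(x^k)-\nabla f_{\xi_i}(x^*)\|^2\le L^2\|x^k-x^*\|^2$ and hence $2L^2\|x^k-x^*\|^2$; the final step converts this into $4L\kappa(f(x^k)-f(x^*))$ using the quadratic-growth bound $\|x^k-x^*\|^2\le \frac{2}{\mu}(f(x^k)-f(x^*))$, which comes from evaluating the strong (quasi-)convexity inequality \eqref{eq:str_quasi_cvx} at the minimizer $x^*$ where $\nabla f(x^*)=0$, and recalling $\kappa=L/\mu$. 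No genuinely new difficulty arises beyond the independence-based cancellation noted above, so the argument is short and mirrors Lemma~\ref{lem:lemma_sgd}.
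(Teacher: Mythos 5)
Your proof is correct in structure and follows the paper's own argument essentially step for step: the same splitting of $g^k$ around $\nabla f_{\xi_i}(x^*)$, the same use of cross-worker independence and mean-zero increments to obtain the $\frac{2}{n^2}\sum_i\Var\left[\nabla f_{\xi_i}(x^*)\right]$ term, the same application of \eqref{eq:L_smoothness_cor} together with $\EE\left[D_{f_{\xi_i}}(x^k,x^*)\mid x^k\right]=D_{f_i}(x^k,x^*)$ in the convex case, and the same $L$-smoothness-plus-quadratic-growth route in the second case.

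The one step whose justification does not hold as you stated it is the quadratic-growth bound $\|x^k-x^*\|^2\le\frac{2}{\mu}\left(f(x^k)-f(x^*)\right)$. Inequality \eqref{eq:str_quasi_cvx} places the gradient at the variable point $x$, not at $x^*$, so ``evaluating it at the minimizer where $\nabla f(x^*)=0$'' is vacuous: substituting $x=x^*$ only gives $f(x^*)\ge f(x^*)$. What you are implicitly invoking is the two-sided strong-convexity inequality with gradient at $x^*$, which is not assumed in Lemma~\ref{lem:lemma_d_sgd} (it is what the companion Lemma~\ref{lem:lemma_sgd} assumes, since there $f$ is taken genuinely $\mu$-strongly convex, whereas here only quasi-strong convexity is available). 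The bound is nevertheless true under \eqref{eq:str_quasi_cvx}: applying it at $x_t=x^*+t(x^k-x^*)$ and writing $\phi(t)=f(x_t)$ gives $t\phi'(t)-\left(\phi(t)-\phi(0)\right)\ge\frac{\mu t^2}{2}\|x^k-x^*\|^2$, i.e.\ $\frac{d}{dt}\left(\frac{\phi(t)-\phi(0)}{t}\right)\ge\frac{\mu}{2}\|x^k-x^*\|^2$; integrating over $(0,1]$ and noting that the boundary term $\lim_{t\to 0^+}\frac{\phi(t)-\phi(0)}{t}=\langle\nabla f(x^*),x^k-x^*\rangle$ vanishes (because $x^*$ is the minimizer of the differentiable $f$, so $\nabla f(x^*)=0$) yields $f(x^k)-f(x^*)\ge\frac{\mu}{2}\|x^k-x^*\|^2$. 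The paper's proof makes exactly the same silent leap---it cites \eqref{eq:str_quasi_cvx} for this step without derivation---so this is a gap in justification shared with the paper rather than a divergence from its approach; with the integration argument supplied, both proofs close with the stated constant $4L\kappa$.
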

\begin{proof}
	By definition of $g^k$ we have
	\begin{eqnarray}
		\EE\left[\|g^k\|^2\mid x^k\right] &=& \EE\left[\left\|\frac{1}{n}\sum\limits_{i=1}^n\left(\nabla f_{\xi_i}(x^k) - \nabla f_{\xi_i}(x^*) + \nabla f_{\xi_i}(x^*) - \nabla f_i(x^*)\right)\right\|^2\mid x^k\right]\notag \\
		&\overset{\eqref{eq:a_b_norm_squared}}{\le}& 2\EE\left[\left\|\frac{1}{n}\sum\limits_{i=1}^n\left(\nabla f_{\xi_i}(x^k) - \nabla f_{\xi_i}(x^*)\right)\right\|^2\mid x^k\right]\notag\\
		&&\quad + 2\underbrace{\EE\left[\left\|\frac{1}{n}\sum\limits_{i=1}^n\left(\nabla f_{\xi_i}(x^*) - \nabla f_i(x^*)\right)\right\|^2\right]}_{\Var\left[\frac{1}{n}\sum\limits_{i=1}^n\nabla f_{\xi_i}(x^*)\right]}\notag\\
		&\overset{\eqref{eq:a_b_norm_squared}}{\le}& \frac{2}{n}\sum\limits_{i=1}^n\EE\left[\|\nabla f_{\xi_i}(x^k)-\nabla f_{\xi_i}(x^*)\|^2\mid x^k\right]\notag\\
		&&\quad + \frac{2}{n^2}\sum\limits_{i=1}^n \underbrace{\EE\left[\|\nabla f_{\xi_i}(x^*) - \nabla f_i(x^*)\|^2\right]}_{\Var\left[\nabla f_{\xi_i}(x^*)\right]}, \label{eq:d_sgd_pure_technical_1}
	\end{eqnarray}
	where in the last inequality we use independence of $\nabla f_{\xi_i}(x^*)$, $i=1,\ldots,n$. Using this we derive inequality \eqref{eq:lemma_d_sgd_1}:
	\begin{eqnarray*}
		\EE\left[\|g^k\|^2\mid x^k\right] &\overset{\eqref{eq:d_sgd_pure_technical_1},\eqref{eq:L_smoothness_cor}}{\le}& \frac{4L}{n}\sum\limits_{i=1}^n \EE\left[D_{f_{\xi_i}}(x^k,x^*)\mid x^k\right] + \frac{2}{n^2}\sum\limits_{i=1}^n\Var\left[\nabla f_{\xi_i}(x^*)\right]\\
		&=& \frac{4L}{n}\sum\limits_{i=1}^n D_{f_i}(x^k,x^*) + \frac{2}{n^2}\sum\limits_{i=1}^n\Var\left[\nabla f_{\xi_i}(x^*)\right]\\
		&=& 4L\left(f(x^k) - f(x^*)\right) + \frac{2}{n^2}\sum\limits_{i=1}^n\Var\left[\nabla f_{\xi_i}(x^*)\right].
	\end{eqnarray*}
	Next, if $f(x)$ is $\mu$-quasi strongly convex, but $f_i,f_{\xi_i}$ are not necessary convex, we obtain
	\begin{eqnarray*}
		\EE\left[\|g^k\|^2\mid x^k\right] &\overset{\eqref{eq:d_sgd_pure_technical_1},\eqref{eq:L_smoothness}}{\le}& \frac{2L^2}{n}\sum\limits_{i=1}^n \|x^k - x^*\|^2 + \frac{2}{n^2}\sum\limits_{i=1}^n\Var\left[\nabla f_{\xi_i}(x^*)\right]\\
		&\overset{\eqref{eq:str_quasi_cvx}}{\le}& \frac{4L^2}{\mu}\left(f(x^k) - f(x^*)\right) + \frac{2}{n^2}\sum\limits_{i=1}^n\Var\left[\nabla f_{\xi_i}(x^*)\right].
	\end{eqnarray*}
\end{proof}

\begin{theorem}\label{thm:d_sgd_pure}
	Assume that $f_\xi(x)$ is convex in $x$ for every $\xi$. Then {\tt D-SGD} satisfies Assumption~\ref{ass:key_assumption_new} with
	\begin{gather*}
		A' = 2L,\quad B_1' = B_2' = 0,\quad D_1' = \frac{2}{n^2}\sum\limits_{i=1}^n\Var\left[\nabla f_{\xi_i}(x^*)\right],\quad \sigma_{1,k}^2 \equiv \sigma_{2,k}^2 \equiv 0\\
		\rho_1 = \rho_2 = 1,\quad C_1 = C_2 = 0,\quad D_2 = 0\\
		F_1 = F_2 = 0,\quad D_3 = \frac{6\gamma\tau L}{n^2}\sum\limits_{i=1}^n\Var\left[\nabla f_{\xi_i}(x^*)\right]	
	\end{gather*}
	with $\gamma$ satisfying
	\begin{equation*}
		\gamma \le \frac{1}{8L\sqrt{2\tau\left(\tau + 2\right)}}
	\end{equation*}
	and for all $K \ge 0$
	\begin{equation*}
		\EE\left[f(\bar{x}^K) - f(x^*)\right] \le \left(1 - \frac{\gamma\mu}{2}\right)^K\frac{4\|x^0 - x^*\|^2}{\gamma} + \frac{8\gamma}{n^2}\left(1 + 3L\gamma\tau\right)\sum\limits_{i=1}^n\Var\left[\nabla f_{\xi_i}(x^*)\right]
	\end{equation*}
	when $\mu > 0$ and
	\begin{equation*}
		\EE\left[f(\bar{x}^K) - f(x^*)\right] \le \frac{4\|x^0 - x^*\|^2}{\gamma K} + \frac{8\gamma}{n^2}\left(1 + 3L\gamma\tau\right)\sum\limits_{i=1}^n\Var\left[\nabla f_{\xi_i}(x^*)\right]
	\end{equation*}
	when $\mu = 0$. If further $f_i(x)$ are $\mu$-strongly convex with possibly non-convex $f_{\xi_i}$ and $\mu > 0$, then
	{\tt D-SGD} satisfies Assumption~\ref{ass:key_assumption_new} with
	\begin{gather*}
		A' = 2\kappa L,\quad B_1' = B_2' = 0,\quad D_1' = \frac{2}{n^2}\sum\limits_{i=1}^n\Var\left[\nabla f_{\xi_i}(x^*)\right],\quad \sigma_{1,k}^2 \equiv \sigma_{2,k}^2 \equiv 0,\\
		\rho_1 = \rho_2 = 1,\quad C_1 = C_2 = 0,\quad D_2 = 0,\quad G = 0,\\
		F_1 = F_2 = 0,\quad D_3 = \frac{6\gamma\tau L}{n^2}\sum\limits_{i=1}^n\Var\left[\nabla f_{\xi_i}(x^*)\right]	
	\end{gather*}
	with $\gamma$ satisfying
	\begin{equation*}
		\gamma \le \min\left\{\frac{1}{8\kappa L}, \frac{1}{8L\sqrt{2\tau\left(\tau + 2\kappa\right)}}\right\}
	\end{equation*}
	and for all $K \ge 0$
	\begin{equation*}
		\EE\left[f(\bar{x}^K) - f(x^*)\right] \le \left(1 - \frac{\gamma\mu}{2}\right)^K\frac{4\|x^0 - x^*\|^2}{\gamma} + \frac{8\gamma}{n^2}\left(1 + 3L\gamma\tau\right)\sum\limits_{i=1}^n\Var\left[\nabla f_{\xi_i}(x^*)\right].
	\end{equation*}
\end{theorem}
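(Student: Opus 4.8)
The plan is to verify that {\tt D-SGD} fits the general error-feedback framework by checking Assumption~\ref{ass:key_assumption_new}, and then to invoke Theorem~\ref{thm:d_sgd_main_result_new} (which already combines Lemma~\ref{lem:d_sgd_key_lemma_new} with Theorem~\ref{thm:main_result_new}) to read off the stated rates. The substantive work is thus confined to identifying the parameters $A', B_1', B_2', C_1, C_2, D_1', D_2, G, \rho_1, \rho_2$ and confirming the unbiasedness and moment/variance recursions; the delayed-error bound \eqref{eq:sum_of_errors_bound_new} together with $F_1, F_2, D_3$ is then supplied automatically by Lemma~\ref{lem:d_sgd_key_lemma_new}.

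First I would establish unbiasedness. By the definition $g_i^k = \nabla f_{\xi_i}(x^k) - \nabla f_i(x^*)$ and independence of the samples, $\EE[g^k\mid x^k] = \frac{1}{n}\sum_i \nabla f_i(x^k) - \frac{1}{n}\sum_i \nabla f_i(x^*) = \nabla f(x^k) - \nabla f(x^*) = \nabla f(x^k)$, which is exactly \eqref{eq:unbiasedness_new}. I would remark that the subtracted $\nabla f_i(x^*)$ averages to zero and hence does not change the actual iterates; it is introduced purely to keep the second-moment estimate tight near the optimum.

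Next I would read the bound \eqref{eq:second_moment_bound_new} directly off Lemma~\ref{lem:lemma_d_sgd}. In the convex-summand case, \eqref{eq:lemma_d_sgd_1} gives $\EE[\|g^k\|^2\mid x^k]\le 4L(f(x^k)-f(x^*)) + \frac{2}{n^2}\sum_i\Var[\nabla f_{\xi_i}(x^*)]$, matching \eqref{eq:second_moment_bound_new} with $A'=2L$, $B_1'=B_2'=0$, and $D_1' = \frac{2}{n^2}\sum_i\Var[\nabla f_{\xi_i}(x^*)]$; the $\mu$-quasi strongly convex case with non-convex summands is identical via \eqref{eq:lemma_d_sgd_2} but with $A'=2L\kappa$. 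Since {\tt D-SGD} carries no variance-reduction sequences, I would set $\sigma_{1,k}^2\equiv\sigma_{2,k}^2\equiv 0$, so that \eqref{eq:sigma_k+1_bound_1} and \eqref{eq:sigma_k+1_bound_2} hold trivially with $\rho_1=\rho_2=1$ and $C_1=C_2=G=D_2=0$. Then $\hat A = A'+L\tau$ equals $L(2+\tau)$ or $L(2\kappa+\tau)$ respectively, and substituting $B_1'=B_2'=0$, $D_2=0$ into \eqref{eq:d_sgd_parameters_new_1}--\eqref{eq:d_sgd_parameters_new_2} yields $F_1=F_2=0$ and $D_3 = 3\gamma\tau L D_1' = \frac{6\gamma\tau L}{n^2}\sum_i\Var[\nabla f_{\xi_i}(x^*)]$, as claimed.

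Finally I would apply Theorem~\ref{thm:d_sgd_main_result_new}. With the coefficients of $M_1,M_2$ vanishing, the stepsize constraint there collapses to $\gamma\le\min\{\frac{1}{4A'},\frac{1}{2\tau\mu},\frac{1}{8\sqrt{L\tau\hat A}}\}$, and I would check that the single displayed bound $\gamma\le\frac{1}{8L\sqrt{2\tau(\tau+2)}}$ (resp.\ with $2\kappa$ in place of $2$) implies all three, using $\mu\le L$ to absorb the $\frac{1}{4A'}$ and $\frac{1}{2\tau\mu}$ terms. Substituting $F_1=F_2=0$, $\sigma_{1,0}^2=\sigma_{2,0}^2=0$, $T^0=\|x^0-x^*\|^2$, and $D_2=0$ into the conclusion of Theorem~\ref{thm:d_sgd_main_result_new}, and grouping $4\gamma(D_1'+D_3)=\frac{8\gamma}{n^2}(1+3L\gamma\tau)\sum_i\Var[\nabla f_{\xi_i}(x^*)]$, reproduces both the $\mu>0$ and $\mu=0$ displays. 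The only genuinely delicate point is the bookkeeping in the degenerate regime $\rho_1=\rho_2=1$: I would confirm that every constant in Lemma~\ref{lem:d_sgd_key_lemma_new} formally carrying a factor $1/(1-\rho_i)$ is multiplied by a parameter ($B_1'$, $B_2'$, $C_1$, $C_2$, or $D_2$) that is zero here, so the indeterminate forms resolve to $0$ by the stated convention and no extra stepsize restriction survives.
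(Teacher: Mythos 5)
Your proposal is correct and follows essentially the same route as the paper: Lemma~\ref{lem:lemma_d_sgd} supplies the unbiasedness and second-moment bounds, all variance-reduction parameters are set to zero (with the $\rho_1=\rho_2=1$ convention resolving the $\nicefrac{1}{(1-\rho_i)}$ indeterminates), and Theorem~\ref{thm:d_sgd_main_result_new} then yields the stated rates with $F_1=F_2=0$ and $D_3=3\gamma\tau L D_1'$. One cosmetic remark: absorbing the condition $\gamma\le\nicefrac{1}{4A'}=\nicefrac{1}{8L}$ into the displayed stepsize bound uses $2\tau(\tau+2)\ge 1$ (i.e.\ $\tau\ge 1$) rather than $\mu\le L$, which is only needed to absorb the $\nicefrac{1}{2\tau\mu}$ term.
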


In other words, {\tt D-SGD} converges with linear rate $\cO\left(\tau\kappa\ln\frac{1}{\varepsilon}\right)$ to the neighbourhood of the solution when $\mu > 0$. Applying Lemma~\ref{lem:lemma2_stich} we establish the rate of convergence to $\varepsilon$-solution.
\begin{corollary}\label{cor:d_SGD_pure_str_cvx_cor}
	Let the assumptions of Theorem~\ref{thm:d_sgd_pure} hold, $f_{\xi}(x)$ are convex for each $\xi$ and $\mu > 0$. Then after $K$ iterations of {\tt D-SGD} with the stepsize
	\begin{equation*}
		\gamma = \min\left\{\frac{1}{8L\sqrt{2\tau\left(\tau + 2\right)}}, \frac{\ln\left(\max\left\{2,\min\left\{\frac{\|x^0-x^*\|^2\mu^2K^2}{D_1'}, \frac{\|x^0-x^*\|^2\mu^3K^3}{3\tau LD_1}\right\}\right\}\right)}{\mu K}\right\}
	\end{equation*}	 
	we have
	\begin{equation*}
		\EE\left[f(\bar{x}^K) - f(x^*)\right] = \widetilde\cO\left(L\tau\|x^0 - x^*\|^2\exp\left(-\frac{\mu}{\tau L}K\right) + \frac{D_1'}{\mu K} + \frac{L\tau D_1'}{\mu^2 K^2}\right).
	\end{equation*}
	That is, to achive $\EE\left[f(\bar{x}^K) - f(x^*)\right] \le \varepsilon$ {\tt D-SGD} requires
	\begin{equation*}
		\widetilde{\cO}\left(\frac{\tau L}{\mu} + \frac{D_1'}{\mu\varepsilon} + \frac{\sqrt{L\tau D_1'}}{\mu\sqrt{\varepsilon}}\right) \text{ iterations.}
	\end{equation*}
\end{corollary}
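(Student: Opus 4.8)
The plan is to derive Corollary~\ref{cor:d_SGD_pure_str_cvx_cor} as a direct application of the elementary recurrence Lemma~\ref{lem:lemma2_stich} to the convergence guarantee already supplied by Theorem~\ref{thm:d_sgd_pure}. First I would invoke the $\mu>0$, convex-summands branch of Theorem~\ref{thm:d_sgd_pure}, which gives, for every $K\ge 0$ and every admissible stepsize $\gamma\le \tfrac{1}{8L\sqrt{2\tau(\tau+2)}}$,
\begin{equation*}
\EE\left[f(\bar x^K)-f(x^*)\right]\le \left(1-\tfrac{\gamma\mu}{2}\right)^K\frac{4\|x^0-x^*\|^2}{\gamma}+\frac{8\gamma}{n^2}\left(1+3L\gamma\tau\right)\sum_{i=1}^n\Var\!\left[\nabla f_{\xi_i}(x^*)\right].
\end{equation*}
Using $D_1'=\tfrac{2}{n^2}\sum_{i=1}^n\Var[\nabla f_{\xi_i}(x^*)]$, the additive part splits as $4D_1'\gamma+12L\tau D_1'\gamma^2$, so the whole bound has the shape
\[
\EE[f(\bar x^K)-f(x^*)]\le (1-\eta)^K\tfrac{a}{\gamma}+c_1\gamma+c_2\gamma^2,
\]
with $a=4\|x^0-x^*\|^2$, $c_1=4D_1'$, $c_2=12L\tau D_1'$, and $\eta=\gamma\mu/2$ since $\rho_1=\rho_2=1$ in this case.

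Next I would match this to the hypothesis \eqref{eq:lemma2_stich_tech_1} of Lemma~\ref{lem:lemma2_stich}: because $W_K\ge(1-\eta)^{-(K+1)}$ we have $(1-\eta)^K\tfrac{a}{\gamma}\ge \tfrac{a}{\gamma W_K}$, so the bound above is of exactly the required form with contraction constant $d=8L\sqrt{2\tau(\tau+2)}$. The stepsize prescribed in the corollary is precisely the choice \eqref{eq:lemma2_stich_gamma} for this $(a,c_1,c_2,d)$: the ratios $a\mu^2K^2/c_1$ and $a\mu^3K^3/c_2$ reduce to $\|x^0-x^*\|^2\mu^2K^2/D_1'$ and $\|x^0-x^*\|^2\mu^3K^3/(3L\tau D_1')$ (the factor $4$ cancels), matching the two inner arguments of the $\min$ up to the harmless relabelling $D_1\mapsto D_1'$ in the statement. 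Lemma~\ref{lem:lemma2_stich} then yields
\[
\EE[f(\bar x^K)-f(x^*)]=\widetilde\cO\!\left(da\,\exp\!\left(-\min\{\tfrac{\mu}{d},\rho_1,\rho_2\}K\right)+\tfrac{c_1}{\mu K}+\tfrac{c_2}{\mu^2K^2}\right).
\]

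To finish, I would simplify the constants. Since $\rho_1=\rho_2=1$ and $\mu\le L\le d$ force $\min\{\mu/d,1,1\}=\mu/d$, and since $\sqrt{\tau(\tau+2)}=\Theta(\tau)$ gives $d=\Theta(L\tau)$, substituting $a$, $c_1$, $c_2$ reproduces the stated rate $\widetilde\cO(L\tau\|x^0-x^*\|^2\exp(-\tfrac{\mu}{\tau L}K)+\tfrac{D_1'}{\mu K}+\tfrac{L\tau D_1'}{\mu^2K^2})$. The iteration complexity then follows by forcing each of the three terms below $\varepsilon$ separately: the exponential term needs $K=\widetilde\cO(d/\mu)=\widetilde\cO(\tau L/\mu)$, the $\tfrac{c_1}{\mu K}$ term needs $K=\cO(D_1'/(\mu\varepsilon))$, and the $\tfrac{c_2}{\mu^2K^2}$ term needs $K=\cO(\sqrt{L\tau D_1'}/(\mu\sqrt\varepsilon))$; summing gives $\widetilde\cO(\tfrac{\tau L}{\mu}+\tfrac{D_1'}{\mu\varepsilon}+\tfrac{\sqrt{L\tau D_1'}}{\mu\sqrt\varepsilon})$. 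The only point demanding genuine care — and the closest thing to an obstacle — is the bookkeeping that identifies $(a,c_1,c_2,d)$ and checks that the corollary's stepsize is exactly \eqref{eq:lemma2_stich_gamma}; once that is in place, the rest is a mechanical substitution into Lemma~\ref{lem:lemma2_stich}.
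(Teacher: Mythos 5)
Your proposal is correct and is essentially the paper's own proof: the paper likewise obtains this corollary by reading off $a=4\|x^0-x^*\|^2$, $c_1=4D_1'$, $c_2=12L\tau D_1'$, $\rho_1=\rho_2=1$ and $d=8L\sqrt{2\tau(\tau+2)}$ from Theorem~\ref{thm:d_sgd_pure} and substituting into Lemma~\ref{lem:lemma2_stich}, with the same three-term complexity split at the end (and your reading of $D_1$ in the stated stepsize as a typo for $D_1'$ is right). One small caution: your inequality $(1-\eta)^K a/\gamma \ge a/(\gamma W_K)$ goes the wrong way to verify hypothesis \eqref{eq:lemma2_stich_tech_1} literally, but this is immaterial because the proof of Lemma~\ref{lem:lemma2_stich} immediately weakens its hypothesis to the exponential form $r_K\le (1-\eta)^{K+1}\frac{a}{\gamma}+c_1\gamma+c_2\gamma^2$, which is exactly what Theorem~\ref{thm:d_sgd_pure} supplies up to one harmless factor of $(1-\eta)$.
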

\begin{corollary}\label{cor:d_SGD_pure_str_cvx_cor_2}
	Let the assumptions of Theorem~\ref{thm:d_sgd_pure} hold and $f(x)$ is $\mu$-strongly convex with $\mu > 0$ and possibly non-convex $f_i,f_{\xi_i}$. Then after $K$ iterations of {\tt D-SGD} with the stepsize
	\begin{equation*}
		\gamma = \min\left\{\frac{1}{8\kappa L}, \frac{1}{8L\sqrt{2\tau\left(\tau + 2\kappa\right)}}, \frac{\ln\left(\max\left\{2,\min\left\{\frac{\|x^0-x^*\|^2\mu^2K^2}{D_1'}, \frac{\|x^0-x^*\|^2\mu^3K^3}{L\tau D_1'}\right\}\right\}\right)}{\mu K}\right\}
	\end{equation*}	 
	we have $\EE\left[f(\bar{x}^K) - f(x^*)\right]$ of order
	\begin{equation*}
		\widetilde\cO\left(L\left(\kappa+\tau\sqrt{\kappa}\right)\|x^0 - x^*\|^2\exp\left(-\min\left\{\frac{\mu}{\tau L\sqrt{\kappa}}, \frac{1}{\kappa^2}\right\}K\right) + \frac{D_1'}{\mu K} + \frac{L\tau D_1'}{\mu^2 K^2}\right).
	\end{equation*}
	That is, to achive $\EE\left[f(\bar{x}^K) - f(x^*)\right] \le \varepsilon$ {\tt D-SGD} requires
	\begin{equation*}
		\widetilde{\cO}\left(\kappa^2 + \tau\kappa^{\nicefrac{3}{2}} + \frac{D_1'}{\mu\varepsilon} + \frac{\sqrt{L\tau D_1'}}{\mu\sqrt{\varepsilon}}\right) \text{ iterations.}
	\end{equation*}
\end{corollary}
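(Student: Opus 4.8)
Corollary~\ref{cor:d_SGD_pure_str_cvx_cor_2} is what I need to establish, so let me think about how to prove it.

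The plan is to apply Lemma~\ref{lem:lemma2_stich} to the strongly convex convergence bound from Theorem~\ref{thm:d_sgd_pure} (the ``further'' case, where $f$ is $\mu$-strongly convex with possibly non-convex $f_i, f_{\xi_i}$). First I would recall that in this setting, the theorem's stepsize restriction is $\gamma \le \min\{\nicefrac{1}{8\kappa L}, \nicefrac{1}{8L\sqrt{2\tau(\tau+2\kappa)}}\}$, and the bound takes the form
\begin{equation*}
\EE[f(\bar x^K)-f(x^*)] \le \left(1-\tfrac{\gamma\mu}{2}\right)^K \frac{4\|x^0-x^*\|^2}{\gamma} + \frac{8\gamma}{n^2}(1+3L\gamma\tau)\sum_{i=1}^n \Var[\nabla f_{\xi_i}(x^*)].
\end{equation*}
The right-hand side must be massaged into the template of Lemma~\ref{lem:lemma2_stich}, namely $r_K \le \nicefrac{a}{\gamma W_K} + c_1\gamma + c_2\gamma^2$. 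I would set $a = 4\|x^0-x^*\|^2$, and split the residual term $\frac{8\gamma}{n^2}(1+3L\gamma\tau)D_1' \cdot \frac{n^2}{2}$ (using $D_1'=\frac{2}{n^2}\sum\Var$) into a linear-in-$\gamma$ part $c_1\gamma = 4 D_1' \gamma$ and a quadratic part $c_2\gamma^2 = 12 L\tau D_1'\gamma^2$. The key identification is $d = 8L\sqrt{2\tau(\tau+2\kappa)}$ (the inverse of the second stepsize cap), which governs the exponential rate, together with the extra cap $\nicefrac{1}{8\kappa L}$ that contributes the $\nicefrac{1}{\kappa^2}$ term in the min inside the exponent.

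Next I would carefully track the exponential factor. Lemma~\ref{lem:lemma2_stich} gives the leading term $\widetilde\cO(da \exp(-\min\{\nicefrac{\mu}{d},\rho_1,\rho_2\}K))$; here $\rho_1=\rho_2=1$, so the effective rate is $\min\{\nicefrac{\mu}{d},1\}$. Since $d = 8L\sqrt{2\tau(\tau+2\kappa)} = \Theta(L\sqrt{\tau(\tau+\kappa)})$, and because $\tau \ge 1$ and $\kappa \ge 1$ imply $\sqrt{\tau(\tau+\kappa)} = \Theta(\tau\sqrt{\kappa})$ in the regime of interest (one can bound $\tau+2\kappa \le 2\tau\kappa$ when both exceed $1$, or handle the $\tau\le\kappa$ and $\tau>\kappa$ cases separately), we get $\nicefrac{\mu}{d} = \Theta(\nicefrac{\mu}{L\tau\sqrt{\kappa}}) = \Theta(\nicefrac{1}{\tau\sqrt{\kappa}} \cdot \nicefrac{1}{1})$. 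The additional stepsize cap $\nicefrac{1}{8\kappa L}$ means $d$ should really be taken as the max of the two inverse caps, so the rate includes a $\nicefrac{\mu}{\kappa L} = \nicefrac{1}{\kappa^2}$ contribution, yielding the stated $\exp(-\min\{\nicefrac{\mu}{\tau L\sqrt{\kappa}},\nicefrac{1}{\kappa^2}\}K)$. The prefactor $da = \Theta(L\tau\sqrt{\kappa}\|x^0-x^*\|^2) = \Theta(L(\kappa+\tau\sqrt{\kappa})\|x^0-x^*\|^2)$ matches, after absorbing the weaker $\kappa$ term (coming from the $\nicefrac{1}{8\kappa L}$ cap) into the stated $L(\kappa+\tau\sqrt{\kappa})$.

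For the polynomial terms, Lemma~\ref{lem:lemma2_stich} yields $\nicefrac{c_1}{\mu K} + \nicefrac{c_2}{\mu^2 K^2}$, which with $c_1 = \Theta(D_1')$ and $c_2 = \Theta(L\tau D_1')$ gives exactly $\nicefrac{D_1'}{\mu K} + \nicefrac{L\tau D_1'}{\mu^2 K^2}$, as claimed in the intermediate $\widetilde\cO$ bound. Finally, to convert to the iteration-complexity statement, I would set each of the three terms $\le \varepsilon$: the exponential term forces $K = \widetilde\Omega(\max\{\tau L\sqrt{\kappa}/\mu, \kappa^2\}) = \widetilde\Omega(\tau\kappa^{3/2}+\kappa^2)$ (using $\nicefrac{L\sqrt\kappa}{\mu}=\kappa^{3/2}$), the $\nicefrac{D_1'}{\mu K}$ term forces $K = \widetilde\Omega(\nicefrac{D_1'}{\mu\varepsilon})$, and the $\nicefrac{L\tau D_1'}{\mu^2 K^2}$ term forces $K = \widetilde\Omega(\nicefrac{\sqrt{L\tau D_1'}}{\mu\sqrt\varepsilon})$; summing gives the stated $\widetilde\cO(\kappa^2 + \tau\kappa^{3/2} + \nicefrac{D_1'}{\mu\varepsilon} + \nicefrac{\sqrt{L\tau D_1'}}{\mu\sqrt\varepsilon})$. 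The main obstacle I anticipate is the bookkeeping in the exponential rate: correctly combining the two distinct stepsize caps into the single $\min\{\nicefrac{\mu}{\tau L\sqrt\kappa},\nicefrac{1}{\kappa^2}\}$ and verifying the simplification $\sqrt{\tau(\tau+2\kappa)} \asymp \tau\sqrt\kappa$ (valid because the $\tau^2$ term under the root is dominated once one also accounts separately for the $\kappa$-cap), rather than any single messy calculation.
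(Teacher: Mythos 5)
Your proposal is correct and is exactly the paper's route: the paper obtains this corollary by plugging the second ("strongly convex $f$, possibly non-convex $f_{\xi_i}$") bound of Theorem~\ref{thm:d_sgd_pure} into Lemma~\ref{lem:lemma2_stich} with $a = 4\|x^0-x^*\|^2$, $c_1 = 4D_1'$, $c_2 = 12L\tau D_1'$, and $\nicefrac{1}{d} = \min\left\{\nicefrac{1}{8\kappa L},\, \nicefrac{1}{8L\sqrt{2\tau(\tau+2\kappa)}}\right\}$, then simplifies $\sqrt{\tau(\tau+2\kappa)} = \cO(\tau\sqrt{\kappa})$ and $\nicefrac{\mu}{\kappa L} = \nicefrac{1}{\kappa^2}$ just as you do. Your bookkeeping of the two stepsize caps, the exponential rate, and the conversion to iteration complexity all match the paper's (implicit) argument.
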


Applying Lemma~\ref{lem:lemma_technical_cvx} we get the complexity result in the case when $\mu = 0$.
\begin{corollary}\label{cor:d_sgd_cvx_cor}
	Let the assumptions of Theorem~\ref{thm:d_sgd_pure} hold, $f_{\xi}(x)$ are convex for each $\xi$ and $\mu = 0$. Then after $K$ iterations of {\tt D-SGD} with the stepsize
	\begin{eqnarray*}
		\gamma &=& \min\left\{\frac{1}{8L\sqrt{2\tau\left(\tau + 2\right)}}, \sqrt{\frac{\|x^0 - x^*\|^2}{D_1' K}}, \sqrt[3]{\frac{\|x^0 - x^*\|^2}{3L\tau D_1' K}}\right\}	\end{eqnarray*}		
	we have $\EE\left[f(\bar{x}^K) - f(x^*)\right]$ of order
	\begin{equation*}
		\cO\left(\frac{\tau LR_0^2}{K} + \sqrt{\frac{R_0^2 \tau D_1'}{K}} + \frac{\sqrt[3]{LR_0^4\tau D_1'}}{K^{\nicefrac{2}{3}}}\right)
	\end{equation*}
	where $R_0 = \|x^0 - x^*\|$. That is, to achive $\EE\left[f(\bar{x}^K) - f(x^*)\right] \le \varepsilon$ {\tt D-SGD} requires
	\begin{equation*}
		\cO\left(\frac{\tau LR_0^2}{\varepsilon} + \frac{R_0^2 D_1'}{\varepsilon^2} + \frac{R_0^2\sqrt{L\tau D_1'}}{\varepsilon^{\nicefrac{3}{2}}}\right)
	\end{equation*}
	iterations.
\end{corollary}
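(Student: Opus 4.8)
The starting point is the $\mu = 0$ guarantee of Theorem~\ref{thm:d_sgd_pure}, namely $\EE[f(\bar x^K) - f(x^*)] \le \frac{4\|x^0-x^*\|^2}{\gamma K} + \frac{8\gamma}{n^2}(1 + 3L\gamma\tau)\sum_{i=1}^n \Var[\nabla f_{\xi_i}(x^*)]$. First I would eliminate the explicit sum of variances by substituting the definition $D_1' = \frac{2}{n^2}\sum_{i=1}^n \Var[\nabla f_{\xi_i}(x^*)]$, so that $\frac{8}{n^2}\sum_i \Var[\cdot] = 4D_1'$. Writing $r_K \eqdef \EE[f(\bar x^K) - f(x^*)]$ and $R_0 = \|x^0-x^*\|$, this rearranges to $r_K \le \frac{4R_0^2}{\gamma K} + 4\gamma D_1' + 12L\tau\gamma^2 D_1'$, which is exactly the template \eqref{eq:lemma_technical_cvx_1} of Lemma~\ref{lem:lemma_technical_cvx}.

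Matching coefficients term by term, I would set $a = 4R_0^2$, $b_1 = b_2 = 0$, $c_1 = 4D_1'$, $c_2 = 12L\tau D_1'$, and $\gamma_0 = \frac{1}{8L\sqrt{2\tau(\tau+2)}}$ (the stepsize cap from Theorem~\ref{thm:d_sgd_pure}). The key observation is that the degenerate case $b_1 = b_2 = 0$ applies cleanly: the entries $\sqrt{a/b_1}$ and $\sqrt[3]{a/b_2}$ in the lemma's prescribed stepsize are infinite and hence never attain the minimum, while the corresponding output terms $\sqrt{ab_1}/K$ and $\sqrt[3]{a^2 b_2}/K$ vanish. The prescribed stepsize therefore collapses to $\gamma = \min\{\gamma_0, \sqrt{a/(c_1 K)}, \sqrt[3]{a/(c_2 K)}\}$; substituting the constants, the factors of $4$ cancel to give $\sqrt{a/(c_1 K)} = \sqrt{R_0^2/(D_1' K)}$ and $\sqrt[3]{a/(c_2 K)} = \sqrt[3]{R_0^2/(3L\tau D_1' K)}$, matching the stepsize stated in the corollary.

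Lemma~\ref{lem:lemma_technical_cvx} then yields $r_K = \cO\big(\tfrac{a}{\gamma_0 K} + \sqrt{\tfrac{ac_1}{K}} + \tfrac{\sqrt[3]{a^2 c_2}}{K^{2/3}}\big)$. I would simplify each surviving term: using $\tau \ge 1$ one has $\sqrt{2\tau(\tau+2)} = \cO(\tau)$, so $\frac{a}{\gamma_0 K} = \cO\big(\frac{\tau L R_0^2}{K}\big)$; next $\sqrt{ac_1/K} = \cO\big(\sqrt{R_0^2 D_1'/K}\big)$, which I would further bound above by $\sqrt{R_0^2\tau D_1'/K}$ using $\tau \ge 1$ to match the stated form; and $\frac{\sqrt[3]{a^2 c_2}}{K^{2/3}} = \cO\big(\frac{\sqrt[3]{L R_0^4 \tau D_1'}}{K^{2/3}}\big)$. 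Collecting these gives the claimed rate. Finally, for the iteration complexity I would impose that each of the three terms be at most $\varepsilon$ and invert: the first gives $K = \Omega(\tau L R_0^2/\varepsilon)$, the second (in its sharp form $\sqrt{R_0^2 D_1'/K} \le \varepsilon$) gives $K = \Omega(R_0^2 D_1'/\varepsilon^2)$, and the third gives $K = \Omega(R_0^2\sqrt{L\tau D_1'}/\varepsilon^{3/2})$; summing these yields the stated iteration bound.

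There is essentially no analytical obstacle here, since all the work has been absorbed into Theorem~\ref{thm:d_sgd_pure} and Lemma~\ref{lem:lemma_technical_cvx}; the only thing to watch is the purely cosmetic $\tau$-bookkeeping. Specifically, I expect the main (trivial) subtlety to be reconciling the $\sqrt{2\tau(\tau+2)} = \cO(\tau)$ simplification and the harmless $\tau \ge 1$ overestimate hidden in the middle rate term, which is why the rate's middle term carries a $\tau$ while the corresponding iteration-complexity term does not.
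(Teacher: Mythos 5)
Your proposal is correct and takes essentially the same route as the paper: the paper's proof is exactly the invocation of Lemma~\ref{lem:lemma_technical_cvx} on the $\mu=0$ bound of Theorem~\ref{thm:d_sgd_pure} with $a=4R_0^2$, $b_1=b_2=0$, $c_1=4D_1'$, $c_2=12L\tau D_1'$, and $\gamma_0=\nicefrac{1}{8L\sqrt{2\tau(\tau+2)}}$, after which the factors of $4$ cancel to give the stated stepsize. Your $\tau$-bookkeeping is also the right reconciliation of the corollary's two displays: the middle rate term is loosened by $\tau\ge 1$ to $\sqrt{\nicefrac{R_0^2\tau D_1'}{K}}$, while the iteration count inverts the sharp form $\sqrt{\nicefrac{R_0^2 D_1'}{K}}$, which is why no $\tau$ appears in the $\nicefrac{R_0^2 D_1'}{\varepsilon^2}$ term.
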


\subsection{{\tt D-QSGD}}\label{sec:d_qsgd}
In this section we show how one can combine delayed updates with quantization using our scheme.
\begin{algorithm}[h!]
   \caption{{\tt D-QSGD}}\label{alg:d-qsgd}
\begin{algorithmic}[1]
   \Require learning rate $\gamma>0$, initial vector $x^0 \in \R^d$
	\State Set $e_i^0 = 0$ for all $i=1,\ldots, n$   
   \For{$k=0,1,\dotsc$}
       \State Broadcast $x^{k-\tau}$ to all workers
        \For{$i=1,\dotsc,n$}
			\State Sample $\hat g_i^{k-\tau}$ independently from other nodes such that $\EE[\hat g_i^{k-\tau}\mid x^{k-\tau}] = \nabla f_i(x^{k-\tau})$ and $\EE\left[\|\hat g_i^{k-\tau} - \nabla f_i(x^{k-\tau})\|^2\mid x^{k-\tau}\right] \le D_{i}$            
            \State $g^{k-\tau}_i = Q(\hat g_i^{k-\tau}) - \nabla f_i(x^*)$ (quantization is performed independently from other nodes)
            \State $v_i^k = \gamma g_i^{k-\tau}$
            \State $e_i^{k+1} = e_i^k + \gamma g_i^k - v_i^k$
        \EndFor
        \State $e^k = \frac{1}{n}\sum_{i=1}^ne_i^k$, $g^k = \frac{1}{n}\sum_{i=1}^ng_i^k$, $v^k = \frac{1}{n}\sum_{i=1}^nv_i^k = \frac{\gamma}{n}\sum_{i=1}^ng_i^{k-\tau} = \frac{\gamma}{n}\sum_{i=1}^nQ(\hat g_i^{k-\tau})$
       \State $x^{k+1} = x^k - v^k$
   \EndFor
\end{algorithmic}
\end{algorithm}
\begin{lemma}\label{lem:d_qsgd_second_moment_bound}
	Assume that $f_i(x)$ is convex and $L$-smooth for all $i=1,\ldots,n$. Then, for all $k\ge 0$ we have
	\begin{eqnarray*}
		\EE\left[g^k\mid x^k\right] &=& \nabla f(x^k), 
		\\
		\EE\left[\|g^k\|^2\mid x^k\right] &\le& 2L\left(1 + \frac{2\omega}{n}\right)\left(f(x^k) - f(x^*)\right) + \frac{(\omega+1)D}{n} + \frac{2\omega}{n^2}\sum\limits_{i=1}^n\|\nabla f_i(x^*)\|^2 
	\end{eqnarray*}
	where $D = \frac{1}{n}\sum_{i=1}^n D_{i}$.
\end{lemma}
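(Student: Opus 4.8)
The plan is to establish the two claims—unbiasedness and the second-moment bound—separately, exploiting the independence of the stochastic gradients and quantizers across nodes, the unbiasedness and variance properties of both the sampling oracle and the quantizer $Q$, and the smoothness bound \eqref{eq:L_smoothness_cor}.

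First I would verify unbiasedness. Since $g^k = \frac{1}{n}\sum_{i=1}^n g_i^k$ with $g_i^k = Q(\hat g_i^k) - \nabla f_i(x^*)$, I apply the tower property \eqref{eq:tower_property} conditioning first on $\hat g_i^k$: the quantizer satisfies $\EE[Q(\hat g_i^k)\mid \hat g_i^k] = \hat g_i^k$ by \eqref{eq:quantization_def}, and then $\EE[\hat g_i^k\mid x^k] = \nabla f_i(x^k)$ by assumption. Summing and using $\frac{1}{n}\sum_{i=1}^n \nabla f_i(x^*) = \nabla f(x^*) = 0$ gives $\EE[g^k\mid x^k] = \nabla f(x^k)$.

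For the second-moment bound, the key step is to peel off the three independent sources of randomness one layer at a time. I would write $g^k = \frac{1}{n}\sum_i (Q(\hat g_i^k) - \nabla f_i(x^*))$ and, conditioning on the sampling randomness, use variance decomposition \eqref{eq:variance_decomposition} together with the independence of the quantizers across nodes to isolate the quantization-variance term $\frac{\omega}{n^2}\sum_i \EE\|\hat g_i^k\|^2$ from the ``quantizer-mean'' term $\frac{1}{n}\sum_i(\hat g_i^k - \nabla f_i(x^*))$. The latter I bound via another variance decomposition over the independent sampling, separating $\nabla f(x^k)$ (controlled by \eqref{eq:L_smoothness_cor} as $2L(f(x^k)-f(x^*))$) from the per-node sampling variance $\le D$. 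For the quantization term I would further split $\hat g_i^k = (\hat g_i^k - \nabla f_i(x^k)) + (\nabla f_i(x^k) - \nabla f_i(x^*)) + \nabla f_i(x^*)$ and apply \eqref{eq:a_b_norm_squared}, using the sampling-variance bound $D_i$, the smoothness identity \eqref{eq:L_smoothness_cor}, and retaining $\sum_i\|\nabla f_i(x^*)\|^2$ as the final bias term. Collecting the $f(x^k)-f(x^*)$ coefficients should yield $2L(1+\nicefrac{2\omega}{n})$, the variance constant $\nicefrac{(\omega+1)D}{n}$, and the optimum-gradient term $\frac{2\omega}{n^2}\sum_i\|\nabla f_i(x^*)\|^2$.

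I expect the main obstacle to be bookkeeping the constants correctly through the nested decompositions: the quantizer inflates the full conditional second moment $\EE\|\hat g_i^k\|^2$ by the factor $(\omega+1)$ while the cross-node independence contributes only the $\nicefrac{1}{n}$ (rather than unit) averaging, and one must be careful that the $\nabla f_i(x^*)$ terms are handled consistently (they are subtracted in $g_i^k$ but reappear inside $\|\hat g_i^k\|^2$ because $\hat g_i^k$ is \emph{not} recentered). Matching the stated coefficient $2L(1+\nicefrac{2\omega}{n})$—rather than, say, $4L(\dots)$—will require using the tight smoothness bound on $\|\nabla f_i(x^k)-\nabla f_i(x^*)\|^2$ inside the quantization term and avoiding a wasteful application of \eqref{eq:a_b_norm_squared} on the $\nabla f(x^k)$ piece.
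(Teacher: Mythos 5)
Your proof skeleton is the same as the paper's: unbiasedness via the tower property \eqref{eq:tower_property}, the unbiasedness of $Q$ in \eqref{eq:quantization_def}, and $\tfrac{1}{n}\sum_i\nabla f_i(x^*)=\nabla f(x^*)=0$; then the variance decomposition \eqref{eq:variance_decomposition} over the quantization randomness combined with independence across nodes to split $\EE_Q\left[\|g^k\|^2\right]$ into the quantization-variance part $\frac{\omega}{n^2}\sum_{i=1}^n\|\hat g_i^k\|^2$ and the quantizer-mean part $\bigl\|\frac{1}{n}\sum_{i=1}^n(\hat g_i^k-\nabla f_i(x^*))\bigr\|^2$; and finally a second exact variance decomposition over the independent sampling for the latter, giving $\|\nabla f(x^k)\|^2\le 2L(f(x^k)-f(x^*))$ by \eqref{eq:L_smoothness_cor} plus the cross-node averaged sampling variance $\frac{D}{n}$. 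All of that is correct and matches the paper.

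The one step that would not deliver the stated constants is your treatment of the quantization term $\frac{\omega}{n^2}\sum_i\EE\left[\|\hat g_i^k\|^2\mid x^k\right]$. The three-way split $\hat g_i^k=(\hat g_i^k-\nabla f_i(x^k))+(\nabla f_i(x^k)-\nabla f_i(x^*))+\nabla f_i(x^*)$ followed by \eqref{eq:a_b_norm_squared} puts a factor $3$ on every term, and the resulting bound is $2L\left(1+\frac{3\omega}{n}\right)\left(f(x^k)-f(x^*)\right)+\frac{(3\omega+1)D}{n}+\frac{3\omega}{n^2}\sum_{i=1}^n\|\nabla f_i(x^*)\|^2$, which is strictly weaker than the lemma and contradicts the constants you claim to collect at the end. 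The paper avoids this by first applying the \emph{exact} decomposition \eqref{eq:variance_decomposition} (valid since $\EE[\hat g_i^k\mid x^k]=\nabla f_i(x^k)$), namely $\EE\left[\|\hat g_i^k\|^2\mid x^k\right]=\|\nabla f_i(x^k)\|^2+\EE\left[\|\hat g_i^k-\nabla f_i(x^k)\|^2\mid x^k\right]\le\|\nabla f_i(x^k)\|^2+D_i$, so the sampling variance enters with coefficient $1$ (giving $\frac{\omega D}{n}$ and hence $\frac{(\omega+1)D}{n}$ in total), and only then the two-term inequality $\|\nabla f_i(x^k)\|^2\le 2\|\nabla f_i(x^k)-\nabla f_i(x^*)\|^2+2\|\nabla f_i(x^*)\|^2$, which together with \eqref{eq:L_smoothness_cor} produces the $\frac{4\omega L}{n}$ and $\frac{2\omega}{n^2}$ coefficients. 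In short, the ``avoid a wasteful application of \eqref{eq:a_b_norm_squared}'' caveat you raise applies precisely inside the quantization term, not only to the $\nabla f(x^k)$ piece; with that single substitution your argument reproduces the paper's proof exactly.
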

\begin{proof}
	First of all, we show unbiasedness of $g^k$:
	\begin{eqnarray*}
		\EE\left[g^k\mid x^k\right] &=& \frac{1}{n}\sum\limits_{i=1}^n\EE\left[g_i^k\mid x^k\right] = \frac{1}{n}\sum\limits_{i=1}^n\EE\left[\EE_Q\left[Q(\hat g_i^k) - \nabla f_i(x^*)\right]\mid x^k\right]\\
		&\overset{\eqref{eq:quantization_def}}{=}& \frac{1}{n}\sum\limits_{i=1}^n\left(\nabla f_i(x^k) - \nabla f_i(x^*)\right) = \nabla f(x^k),
	\end{eqnarray*}
	where $\EE_{Q}\left[\cdot\right]$ denotes mathematical expectation w.r.t.\ the randomness coming only from the quantization. Next, we derive the upper bound for the second moment of $g^k$:
	\begin{eqnarray}
		\EE_Q\left[\|g^k\|^2\right] &=& \EE_Q\left[\left\|\frac{1}{n}\sum\limits_{i=1}^n\left(Q(\hat g_i^k) - \nabla f_i(x^*)\right)\right\|^2\right]\notag\\
		&\overset{\eqref{eq:variance_decomposition}}{=}& \EE_Q\left[\left\|\frac{1}{n}\sum\limits_{i=1}^n\left(Q(\hat g_i^k) - \hat g_i^k\right)\right\|^2\right] + \left\|\frac{1}{n}\sum\limits_{i=1}^n\left(\hat g_i^k - \nabla f_i(x^*)\right)\right\|^2.\label{eq:d_qsgd_technical_1}
	\end{eqnarray}
	Since $Q(\hat g_1^k),\ldots, Q(\hat g_n^k)$ are independent quantizations, we get
	\begin{eqnarray*}
		\EE_Q\left[\|g^k\|^2\right] &\overset{\eqref{eq:d_qsgd_technical_1}}{\le}& \frac{1}{n^2}\sum\limits_{i=1}^n\EE_Q\left[\left\|Q(\hat g_i^k) - \hat g_i^k\right\|^2\right] + \left\|\frac{1}{n}\sum\limits_{i=1}^n\left(\hat g_i^k - \nabla f_i(x^*)\right)\right\|^2\\
		&\overset{\eqref{eq:quantization_def}}{\le}& \frac{\omega}{n^2}\sum\limits_{i=1}^n\|\hat g_i^k\|^2 + \left\|\frac{1}{n}\sum\limits_{i=1}^n\left(\hat g_i^k - \nabla f_i(x^*)\right)\right\|^2.
	\end{eqnarray*}
	Taking conditional expectation $\EE\left[\cdot\mid x^k\right]$ from the both sides of the previous inequality we obtain
	\begin{eqnarray}
		\EE\left[\|g^k\|^2\mid x^k\right] &\le& \frac{\omega}{n^2}\sum\limits_{i=1}^n\EE\left[\|\hat g_i^k\|^2\mid x^k\right] + \EE\left[\left\|\frac{1}{n}\sum\limits_{i=1}^n\left(\hat g_i^k - \nabla f_i(x^*)\right)\right\|^2\mid x^k\right]\notag\\
		&\overset{\eqref{eq:variance_decomposition}}{\le}& \frac{\omega}{n^2}\sum\limits_{i=1}^n\|\nabla f_i(x^k)\|^2 + \frac{\omega}{n^2}\sum\limits_{i=1}^n\EE\left[\|\hat g_i^k - \nabla f_i(x^k)\|^2\mid x^k\right]\notag\\
		&&\quad + \underbrace{\left\|\frac{1}{n}\sum\limits_{i=1}^n\left(\nabla f_i(x^k) - \nabla f_i(x^*)\right)\right\|^2}_{\|\nabla f(x^k)-\nabla f(x^*)\|^2} + \EE\left[\left\|\frac{1}{n}\sum\limits_{i=1}^n\left(\hat g_i^k - \nabla f_i(x^k)\right)\right\|^2\mid x^k\right].\notag
	\end{eqnarray}
	It remains to estimate terms in the second and the third lines of the previous inequality:
	\begin{eqnarray*}
		\frac{\omega}{n^2}\sum\limits_{i=1}^n\|\nabla f_i(x^k)\|^2 &\overset{\eqref{eq:a_b_norm_squared}}{\le}& \frac{2\omega}{n^2}\sum\limits_{i=1}^n \|\nabla f_i(x^k) - \nabla f_i(x^*)\|^2 + \frac{2\omega}{n^2}\sum\limits_{i=1}^n \|\nabla f_i(x^*)\|^2\\
		&\overset{\eqref{eq:L_smoothness_cor}}{\le}& \frac{4\omega L}{n}\left(f(x^k) - f(x^*)\right) + \frac{2\omega}{n^2}\sum\limits_{i=1}^n \|\nabla f_i(x^*)\|^2,\\
		\frac{\omega}{n}\sum\limits_{i=1}^n\EE\left[\|\hat g_i^k - \nabla f_i(x^k)\|^2\mid x^k\right] &\le& \frac{\omega}{n^2}\sum\limits_{i=1}^nD_{i} = \frac{\omega D}{n},\\
		\|\nabla f(x^k) - \nabla f(x^*)\|^2 &\overset{\eqref{eq:L_smoothness_cor}}{\le}& 2L\left(f(x^k) - f(x^*)\right),\\
		\EE\left[\left\|\frac{1}{n}\sum\limits_{i=1}^n\left(\hat g_i^k - \nabla f_i(x^k)\right)\right\|^2\mid x^k\right] &=& \frac{1}{n^2}\sum\limits_{i=1}^n\EE\left[\|\hat g_i^k - \nabla f_i(x^k)\|^2\mid x^k\right]\\
		&\le& \frac{1}{n^2}\sum\limits_{i=1}^n D_i = \frac{D}{n}.
	\end{eqnarray*}
	Putting all together we get
	\begin{eqnarray*}
		\EE\left[\|g^k\|^2\mid x^k\right] &\le& 2L\left(1 + \frac{2\omega}{n}\right)\left(f(x^k) - f(x^*)\right) + \frac{(\omega+1)D}{n} + \frac{2\omega}{n^2}\sum\limits_{i=1}^n \|\nabla f_i(x^*)\|^2.
	\end{eqnarray*}
\end{proof}

\begin{theorem}\label{thm:d_qsgd}
	Assume that $f_i(x)$ is convex and $L$-smooth for all $i=1,\ldots, n$ and $f(x)$ is $\mu$-quasi strongly convex. Then {\tt D-QSGD} satisfies Assumption~\ref{ass:key_assumption_new} with
	\begin{gather*}
		A' = L\left(1 + \frac{2\omega}{n}\right),\quad B_1' = B_2' = 0,\quad D_1' = \frac{(\omega+1)D}{n} + \frac{2\omega}{n^2}\sum\limits_{i=1}^n \|\nabla f_i(x^*)\|^2,\\
		\sigma_{1,k}^2 \equiv \sigma_{2,k}^2 \equiv 0,\quad \rho_1 = \rho_2 = 1,\quad C_1 = C_2 = 0,\quad D_2 = 0\\
		F_1 = F_2 = 0,\quad G = 0,\quad D_3 = \frac{3\gamma\tau L}{n}\left((\omega+1)D + \frac{2\omega}{n}\sum\limits_{i=1}^n\|\nabla f_i(x^*)\|^2\right)
	\end{gather*}
	with $\gamma$ satisfying
	\begin{equation*}
		\gamma \le \min\left\{\frac{1}{4L(1+\nicefrac{2\omega}{n})}, \frac{1}{8L\sqrt{2\tau\left(\tau + 1 + \nicefrac{2\omega}{n}\right)}}\right\}
	\end{equation*}
	and for all $K \ge 0$
	\begin{equation*}
		\EE\left[f(\bar x^K) - f(x^*)\right] \le \left(1 - \frac{\gamma\mu}{2}\right)^K\frac{4\|x^0 - x^*\|^2}{\gamma} + \gamma\left(D_1' + D_3\right)
	\end{equation*}
	when $\mu > 0$ and
	\begin{equation*}
		\EE\left[f(\bar x^K) - f(x^*)\right] \le \frac{4\|x^0 - x^*\|^2}{\gamma K} + \gamma\left(D_1' + D_3\right)
	\end{equation*}
	when $\mu = 0$.
\end{theorem}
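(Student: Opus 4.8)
The plan is to recognize {\tt D-QSGD} as a special case of the delayed-update framework {\tt D-SGD} of Section~\ref{sec:d_sgd} and then to invoke Theorem~\ref{thm:d_sgd_main_result_new}. Concretely, the updates in Algorithm~\ref{alg:d-qsgd} are exactly of the form \eqref{eq:v^k_def_d_sgd}--\eqref{eq:e^k_def_d_sgd} with the gradient estimator $g_i^k = Q(\hat g_i^k) - \nabla f_i(x^*)$. The subtraction of $\nabla f_i(x^*)$ is a proof device only: since $\frac{1}{n}\sum_{i=1}^n \nabla f_i(x^*) = \nabla f(x^*) = 0$ and the master acts on averaged quantities, the shift is invisible in the actual iterates, yet it lets the second-moment bound capture the variance at the optimum and thereby produces the $\|\nabla f_i(x^*)\|^2$ term in $D_1'$.

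First I would verify the three input inequalities required by Theorem~\ref{thm:d_sgd_main_result_new}, namely \eqref{eq:second_moment_bound_new}, \eqref{eq:sigma_k+1_bound_1} and \eqref{eq:sigma_k+1_bound_2}. The second-moment bound \eqref{eq:second_moment_bound_new} is precisely the content of Lemma~\ref{lem:d_qsgd_second_moment_bound}, from which one reads off $A' = L(1+\nicefrac{2\omega}{n})$, $B_1' = B_2' = 0$ and $D_1' = \frac{(\omega+1)D}{n} + \frac{2\omega}{n^2}\sum_{i=1}^n\|\nabla f_i(x^*)\|^2$. Because {\tt D-QSGD} carries no variance-reduction mechanism, I would set $\sigma_{1,k}^2 \equiv \sigma_{2,k}^2 \equiv 0$, so that \eqref{eq:sigma_k+1_bound_1} and \eqref{eq:sigma_k+1_bound_2} hold trivially with $\rho_1 = \rho_2 = 1$, $C_1 = C_2 = 0$, $G = 0$ and $D_2 = 0$.

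Next, I would specialize the conclusion of Lemma~\ref{lem:d_sgd_key_lemma_new}. Substituting $B_1' = B_2' = 0$ into its formulas for $F_1$, $F_2$, $D_3$ immediately gives $F_1 = F_2 = 0$ and $D_3 = 3\gamma\tau L\, D_1' = \frac{3\gamma\tau L}{n}\left((\omega+1)D + \frac{2\omega}{n}\sum_{i=1}^n\|\nabla f_i(x^*)\|^2\right)$, matching the claim. The stepsize restriction collapses as well: with $C_1 = C_2 = 0$ the bound $\gamma \le \nicefrac{1}{4(A'+C_1M_1+C_2M_2)}$ becomes $\gamma \le \nicefrac{1}{4L(1+\nicefrac{2\omega}{n})}$, while the remaining constraint uses $\hat A = A' + L\tau = L(\tau+1+\nicefrac{2\omega}{n})$ and yields the displayed $\gamma \le \nicefrac{1}{8L\sqrt{2\tau(\tau+1+\nicefrac{2\omega}{n})}}$; one checks that this (slightly tighter) choice also enforces $\gamma \le \nicefrac{1}{2\tau\mu}$, since $L \ge \mu$.

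Finally, since $M_1 = \nicefrac{4B_1'}{3\rho_1} = 0$ and $M_2 = \nicefrac{4(B_2'+\nicefrac{4G}{3})}{3\rho_2} = 0$, the Lyapunov function reduces to $T^k = \|\tx^k - x^*\|^2$, so that $T^0 = \|x^0-x^*\|^2$ (as $e^0 = 0$); combined with $\sigma_{1,0}^2 = \sigma_{2,0}^2 = 0$ and $F_1 = F_2 = 0$, the master bounds \eqref{eq:main_result_new} and \eqref{eq:main_result_new_cvx} reduce to the two displayed convergence inequalities. The whole argument is essentially bookkeeping once Lemma~\ref{lem:d_qsgd_second_moment_bound} is in hand; the only point needing care is the degenerate $\rho_1 = \rho_2 = 1$ regime, where one must appeal to the convention (footnote of Lemma~\ref{lem:d_sgd_key_lemma_new}) that the vacuous terms $\nicefrac{2B_1'G}{1-\rho_1}$ and $\nicefrac{2B_1'D_2}{\rho_1}$ are treated as zero, so that no $0/0$ arises in $F_2$ or $D_3$.
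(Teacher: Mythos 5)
Your proof is correct and follows essentially the same route as the paper: Lemma~\ref{lem:d_qsgd_second_moment_bound} supplies \eqref{eq:second_moment_bound_new} with $A' = L(1+\nicefrac{2\omega}{n})$, $B_1'=B_2'=0$, the zero sequences $\sigma_{1,k}^2 \equiv \sigma_{2,k}^2 \equiv 0$ make \eqref{eq:sigma_k+1_bound_1}--\eqref{eq:sigma_k+1_bound_2} trivial, and specializing Lemma~\ref{lem:d_sgd_key_lemma_new} together with Theorem~\ref{thm:d_sgd_main_result_new} gives $F_1=F_2=0$, $D_3 = 3\gamma\tau L D_1'$, the stepsize restrictions, and the convergence bounds, exactly as the paper intends. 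The only caveat---an inconsistency in the paper's own statement rather than in your argument---is that the master theorem produces the additive term $4\gamma\left(D_1' + D_3\right)$, not the displayed $\gamma\left(D_1' + D_3\right)$, so strictly speaking the theorem's constant should carry the factor $4$ (as it does, e.g., in Theorem~\ref{thm:d_qsgd_star}).
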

In other words, {\tt D-QSGD} converges with the linear rate
\begin{equation*}
	\cO\left(\left(\kappa\left(1+\frac{\omega}{n}\right) + \kappa\sqrt{\tau\left(\tau + \frac{\omega}{n}\right)}\right)\ln\frac{1}{\varepsilon}\right)
\end{equation*}
to the neighbourhood of the solution when $\mu > 0$. Applying Lemma~\ref{lem:lemma2_stich} we establish the rate of convergence to $\varepsilon$-solution.
\begin{corollary}\label{cor:d_QSGD_str_cvx_cor}
	Let the assumptions of Theorem~\ref{thm:d_qsgd} hold, $f_{\xi}(x)$ are convex for each $\xi$ and $\mu > 0$. Then after $K$ iterations of {\tt D-QSGD} with the stepsize
	\begin{eqnarray*}
		\gamma_0 &=& \min\left\{\frac{1}{4L(1+\nicefrac{2\omega}{n})}, \frac{1}{8L\sqrt{2\tau\left(\tau + 1 + \nicefrac{2\omega}{n}\right)}}\right\},\quad R_0 = \|x^0 - x^*\|,\\
		\gamma &=& \min\left\{\gamma_0, \frac{\ln\left(\max\left\{2,\min\left\{\frac{R_0^2\mu^2K^2}{D_1'}, \frac{R_0^2\mu^3K^3}{3\tau LD_1'}\right\}\right\}\right)}{\mu K}\right\}
	\end{eqnarray*}
	we have $\EE\left[f(\bar{x}^K) - f(x^*)\right]$ of order
	\begin{equation*}
		 \widetilde\cO\left(LR_0^2\left(1+\frac{\omega}{n}+\sqrt{\tau\left(\tau + \frac{\omega}{n}\right)}\right)\exp\left(-\frac{\mu}{L\left(1+\frac{\omega}{n}+\sqrt{\tau\left(\tau + \frac{\omega}{n}\right)}\right)}K\right) + \frac{D_1'}{\mu K} + \frac{L\tau D_1'}{\mu^2 K^2}\right).
	\end{equation*}
	That is, to achive $\EE\left[f(\bar{x}^K) - f(x^*)\right] \le \varepsilon$ {\tt D-QSGD} requires
	\begin{equation*}
		\widetilde{\cO}\left(\frac{L}{\mu}\left(1+\frac{\omega}{n}\right) + \frac{L}{\mu}\sqrt{\tau\left(\tau + \frac{\omega}{n}\right)} + \frac{D_1'}{\mu\varepsilon} + \frac{\sqrt{L\tau D_1'}}{\mu\sqrt{\varepsilon}}\right) \text{ iterations.}
	\end{equation*}
\end{corollary}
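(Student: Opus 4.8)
The plan is to derive this corollary as a direct specialization of Theorem~\ref{thm:d_qsgd} combined with the technical reduction Lemma~\ref{lem:lemma2_stich}, so that essentially no new estimation work is required: all the analysis has already been absorbed into verifying that {\tt D-QSGD} satisfies Assumption~\ref{ass:key_assumption_new} with the parameters listed in Theorem~\ref{thm:d_qsgd}. First I would recall the strongly convex bound of that theorem,
\[
\EE\left[f(\bar x^K) - f(x^*)\right] \le \left(1 - \tfrac{\gamma\mu}{2}\right)^K\frac{4\|x^0 - x^*\|^2}{\gamma} + \gamma\left(D_1' + D_3\right),
\]
valid for all $\gamma \le \gamma_0$, and then rewrite its additive error term in a form amenable to Lemma~\ref{lem:lemma2_stich}.

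The one structural observation that makes everything click is that the parameters furnished by Theorem~\ref{thm:d_qsgd} satisfy $D_3 = 3\gamma\tau L\,D_1'$: indeed $D_1' = \tfrac{(\omega+1)D}{n} + \tfrac{2\omega}{n^2}\sum_i\|\nabla f_i(x^*)\|^2$, and $D_3$ is precisely $3\gamma\tau L$ times this same quantity. Consequently the error contribution factors as $\gamma(D_1' + D_3) = D_1'\gamma + 3\tau L D_1'\gamma^2$, which has exactly the shape $c_1\gamma + c_2\gamma^2$ of the right-hand side of \eqref{eq:lemma2_stich_tech_1} with $c_1 = D_1'$ and $c_2 = 3\tau L D_1'$. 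The point to stress is that the naively non-vanishing term $\gamma D_3$ is in fact a genuine $\gamma^2$-order term, so the error floor shrinks under decreasing stepsize; this is what ultimately permits convergence to a neighborhood whose size is controlled by $K$.

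Next I would match the decaying term to \eqref{eq:lemma2_stich_tech_1}: since $W_K \ge w_K = (1-\eta)^{-(K+1)}$, the quantity $(1-\eta)^K\tfrac{4\|x^0-x^*\|^2}{\gamma}$ is bounded by $\tfrac{a}{\gamma W_K}$ with $a = 4R_0^2$, while the stepsize cap $\gamma \le \gamma_0$ plays the role of $\gamma \le 1/d$ with $d = 1/\gamma_0 = \Theta\!\left(L(1 + \tfrac{\omega}{n} + \sqrt{\tau(\tau + \tfrac{\omega}{n})})\right)$. The stepsize displayed in the corollary is exactly \eqref{eq:lemma2_stich_gamma} for these values of $a,c_1,c_2$, up to a harmless factor $4$ inside the logarithm that the $\widetilde\cO$ notation absorbs. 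Applying Lemma~\ref{lem:lemma2_stich}, and using $\rho_1 = \rho_2 = 1$ (so that $\min\{\mu/d,\rho_1,\rho_2\} = \mu/d$ as $d \ge L \ge \mu$), gives
\[
\EE\left[f(\bar x^K) - f(x^*)\right] = \widetilde\cO\!\left(dR_0^2\exp\!\left(-\tfrac{\mu}{d}K\right) + \frac{D_1'}{\mu K} + \frac{\tau L D_1'}{\mu^2 K^2}\right),
\]
and substituting the value of $d$ reproduces the stated asymptotic rate.

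Finally, the iteration count follows by bounding each of the three terms by $\varepsilon$ and taking the maximum: the exponential term requires $K = \widetilde\cO(d/\mu) = \widetilde\cO\!\left(\tfrac{L}{\mu}(1+\tfrac{\omega}{n}) + \tfrac{L}{\mu}\sqrt{\tau(\tau+\tfrac{\omega}{n})}\right)$, the $1/K$ term requires $K \gtrsim \tfrac{D_1'}{\mu\varepsilon}$, and the $1/K^2$ term requires $K \gtrsim \tfrac{\sqrt{L\tau D_1'}}{\mu\sqrt{\varepsilon}}$, whose sum is the claimed bound. I do not anticipate a real obstacle here; the only genuine content is the bookkeeping identity $D_3 = 3\gamma\tau L D_1'$, and once that is noted the remainder is a mechanical invocation of Lemma~\ref{lem:lemma2_stich} followed by the routine translation of a three-term rate into an iteration complexity (together with the implicit requirement, automatic in the $\widetilde\cO$ regime, that $K$ be large enough for $\tfrac{\ln(\cdots)}{K} \le \min\{\rho_1,\rho_2\} = 1$).
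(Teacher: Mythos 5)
Your proposal is correct and follows essentially the same route as the paper's (implicit) proof: Corollary~\ref{cor:d_QSGD_str_cvx_cor} is obtained by feeding the strongly convex bound of Theorem~\ref{thm:d_qsgd} into Lemma~\ref{lem:lemma2_stich} with $a = 4R_0^2$, $c_1 \propto D_1'$, $c_2 \propto \tau L D_1'$ and $d = 1/\gamma_0$, and your key bookkeeping identity $D_3 = 3\gamma\tau L\, D_1'$ (so that $\gamma D_3$ is genuinely a $\gamma^2$ term) is exactly what makes the lemma applicable. One cosmetic slip: the inequality runs $\frac{a}{\gamma W_K} \le (1-\eta)^{K}\frac{a}{\gamma}$, not the reverse as you wrote, but this is immaterial because Lemma~\ref{lem:lemma2_stich}'s proof immediately passes to the exponential form $(1-\eta)^{K+1}\frac{a}{\gamma}$, which is precisely what Theorem~\ref{thm:d_qsgd} supplies.
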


Applying Lemma~\ref{lem:lemma_technical_cvx} we get the complexity result in the case when $\mu = 0$.
\begin{corollary}\label{cor:d_QSGD_cvx_cor}
	Let the assumptions of Theorem~\ref{thm:d_qsgd} hold and $\mu = 0$. Then after $K$ iterations of {\tt D-QSGD} with the stepsize
	\begin{eqnarray*}
		\gamma_0 &=& \min\left\{\frac{1}{4L(1+\nicefrac{2\omega}{n})}, \frac{1}{8L\sqrt{2\tau\left(\tau + 1 + \nicefrac{2\omega}{n}\right)}}\right\},\\	
		\gamma &=& \min\left\{\gamma_0, \sqrt{\frac{\|x^0 - x^*\|^2}{D_1' K}}, \sqrt[3]{\frac{\|x^0 - x^*\|^2}{3L\tau D_1' K}}\right\}	
	\end{eqnarray*}		
	we have $\EE\left[f(\bar{x}^K) - f(x^*)\right]$ of order
	\begin{equation*}
		\cO\left(\frac{LR_0^2\left(1+\frac{\omega}{n}\right)}{K} + \frac{LR_0^2\sqrt{\tau\left(\tau + \frac{\omega}{n}\right)}}{K} + \sqrt{\frac{R_0^2 D_1'}{K}} + \frac{\sqrt[3]{LR_0^4\tau D_1'}}{K^{\nicefrac{2}{3}}}\right)
	\end{equation*}
	where $R_0 = \|x^0 - x^*\|$. That is, to achive $\EE\left[f(\bar{x}^K) - f(x^*)\right] \le \varepsilon$ {\tt D-QSGD} requires
	\begin{equation*}
		\cO\left(\frac{LR_0^2\left(1+\frac{\omega}{n}\right)}{\varepsilon} + \frac{LR_0^2\sqrt{\tau\left(\tau + \frac{\omega}{n}\right)}}{\varepsilon} + \frac{R_0^2 D_1'}{\varepsilon^2} + \frac{R_0^2\sqrt{L\tau D_1'}}{\varepsilon^{\nicefrac{3}{2}}}\right)
	\end{equation*}
	iterations.
\end{corollary}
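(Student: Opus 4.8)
The plan is to invoke Lemma~\ref{lem:lemma_technical_cvx} directly, specializing the convergence guarantee and parameter values already established for {\tt D-QSGD} in Theorem~\ref{thm:d_qsgd}. First I would take the $\mu = 0$ bound from that theorem,
\begin{equation*}
	\EE\left[f(\bar x^K) - f(x^*)\right] \le \frac{4R_0^2}{\gamma K} + \gamma\left(D_1' + D_3\right),
\end{equation*}
and substitute the value $D_3 = 3\gamma\tau L D_1'$ read off from Theorem~\ref{thm:d_qsgd}. Indeed $D_3 = \frac{3\gamma\tau L}{n}((\omega+1)D + \frac{2\omega}{n}\sum_{i}\|\nabla f_i(x^*)\|^2)$, while $D_1' = \frac{1}{n}((\omega+1)D + \frac{2\omega}{n}\sum_{i}\|\nabla f_i(x^*)\|^2)$, so the two are proportional. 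This turns the right-hand side into a polynomial in $\gamma$,
\begin{equation*}
	\frac{4R_0^2}{\gamma K} + D_1'\gamma + 3\tau L D_1'\gamma^2,
\end{equation*}
which is exactly of the form \eqref{eq:lemma_technical_cvx_1} with $a = 4R_0^2$, $b_1 = b_2 = 0$, $c_1 = D_1'$ and $c_2 = 3\tau L D_1'$.

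With these constants identified I would apply Lemma~\ref{lem:lemma_technical_cvx}. Since $b_1 = b_2 = 0$, the terms $\sqrt{a/b_1}$ and $\sqrt[3]{a/b_2}$ in the prescribed stepsize are vacuous and the two corresponding terms in \eqref{eq:lemma_technical_cvx_2} vanish; the stepsize then reduces, up to the harmless constant $4$ inside the roots, to the $\gamma$ stated in the corollary, and the conclusion becomes
\begin{equation*}
	\EE\left[f(\bar x^K) - f(x^*)\right] = \cO\left(\frac{R_0^2}{\gamma_0 K} + \sqrt{\frac{R_0^2 D_1'}{K}} + \frac{\sqrt[3]{LR_0^4\tau D_1'}}{K^{\nicefrac{2}{3}}}\right).
\end{equation*}
The remaining work is to expand the first term using $\frac{1}{\gamma_0} = \max\{4L(1+\nicefrac{2\omega}{n}),\, 8L\sqrt{2\tau(\tau+1+\nicefrac{2\omega}{n})}\}$ and to note that $\sqrt{\tau(\tau+1+\nicefrac{2\omega}{n})} = \Theta(\sqrt{\tau(\tau+\nicefrac{\omega}{n})})$ (using $\tau \ge 1$, hence $\tau + 1 \le 2\tau$); this splits $\frac{R_0^2}{\gamma_0 K}$ into the first two reported terms and yields precisely the claimed rate.

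Finally, for the iteration-complexity statement I would set each of the four terms in the rate below $\varepsilon$ and solve for $K$: the two $\cO(1/K)$ terms give $K \gtrsim \frac{LR_0^2(1+\omega/n)}{\varepsilon}$ and $K \gtrsim \frac{LR_0^2\sqrt{\tau(\tau+\omega/n)}}{\varepsilon}$, the $\cO(K^{-1/2})$ term gives $K \gtrsim \frac{R_0^2 D_1'}{\varepsilon^2}$, and the $\cO(K^{-2/3})$ term gives $K \gtrsim \frac{R_0^2\sqrt{L\tau D_1'}}{\varepsilon^{3/2}}$; taking the maximum (reported as a sum) produces the stated complexity. There is no genuine obstacle here: the entire argument is a mechanical specialization of Lemma~\ref{lem:lemma_technical_cvx}, and the only mildly delicate bookkeeping is verifying the identity $D_3 = 3\gamma\tau L D_1'$ and absorbing the factor $\tau+1+\nicefrac{2\omega}{n}$ into $\tau+\nicefrac{\omega}{n}$ up to absolute constants.
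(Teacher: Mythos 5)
Your proposal is correct and follows exactly the paper's route: the paper proves this corollary by applying Lemma~\ref{lem:lemma_technical_cvx} to the $\mu=0$ bound of Theorem~\ref{thm:d_qsgd}, which is precisely what you do, and your supporting details (the identity $D_3 = 3\gamma\tau L D_1'$, the identification $a=4R_0^2$, $b_1=b_2=0$, $c_1=D_1'$, $c_2=3\tau L D_1'$, the expansion of $\nicefrac{1}{\gamma_0}$, and the absorption of $\tau+1+\nicefrac{2\omega}{n}$ into $\tau+\nicefrac{\omega}{n}$ up to constants) are all accurate.
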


\subsection{{\tt D-QSGDstar}}\label{sec:d_qsgd_star}
As we saw in Section~\ref{sec:d_qsgd} {\tt D-QSGD} fails to converge to the exact optimum asymptotically even if $\hat g_i^k = \nabla f_i(x^k)$ for all $i=1,\ldots,n$ almost surely, i.e., all $D_i = 0$ for all $i=1,\ldots,n$. As for {\tt EC-GDstar} we assume now that $i$-th worker has an access to $\nabla f_i(x^*)$. Using this one can construct the method with delayed updates that converges asymptotically to the exact solution when the full gradients are available.
\begin{algorithm}[h!]
   \caption{{\tt D-QSGDstar}}\label{alg:d-qSGDstar}
\begin{algorithmic}[1]
   \Require learning rate $\gamma>0$, initial vector $x^0 \in \R^d$
	\State Set $e_i^0 = 0$ for all $i=1,\ldots, n$   
   \For{$k=0,1,\dotsc$}
       \State Broadcast $x^{k-\tau}$ to all workers
        \For{$i=1,\dotsc,n$}
			\State Sample $\hat g_i^{k-\tau}$ independently from other nodes such that $\EE[\hat g_i^{k-\tau}\mid x^{k-\tau}] = \nabla f_i(x^{k-\tau})$ and $\EE\left[\|\hat g_i^{k-\tau} - \nabla f_i(x^{k-\tau})\|^2\mid x^{k-\tau}\right] \le D_{i}$            
            \State $g^{k-\tau}_i = Q(\hat g_i^{k-\tau}- \nabla f_i(x^*))$ (quantization is performed independently from other nodes)
            \State $v_i^k = \gamma g_i^{k-\tau}$
            \State $e_i^{k+1} = e_i^k + \gamma g_i^k - v_i^k$
        \EndFor
        \State $e^k = \frac{1}{n}\sum_{i=1}^ne_i^k$, $g^k = \frac{1}{n}\sum_{i=1}^ng_i^k$, $v^k = \frac{1}{n}\sum_{i=1}^nv_i^k = \frac{\gamma}{n}\sum_{i=1}^ng_i^{k-\tau} = \frac{\gamma}{n}\sum_{i=1}^nQ(\hat g_i^{k-\tau}-\nabla f_i(x^*))$
       \State $x^{k+1} = x^k - v^k$
   \EndFor
\end{algorithmic}
\end{algorithm}
\begin{lemma}\label{lem:d_qsgd_star_second_moment_bound}
	Assume that $f_i(x)$ is convex and $L$-smooth for all $i=1,\ldots,n$. Then, for all $k\ge 0$ we have
	\begin{eqnarray}
		\EE\left[g^k\mid x^k\right] &=& \nabla f(x^k), \label{eq:d_qsgd_star_unbiasedness}\\
		\EE\left[\|g^k\|^2\mid x^k\right] &\le& 2L\left(1 + \frac{\omega}{n}\right)\left(f(x^k) - f(x^*)\right) + \frac{(\omega+1)D}{n} \label{eq:d_qsgd_star_second_moment_bound}
	\end{eqnarray}
	where $D = \frac{1}{n}\sum_{i=1}^n D_{i}$.
\end{lemma}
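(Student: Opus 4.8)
The plan is to mirror the proof of Lemma~\ref{lem:d_qsgd_second_moment_bound} for {\tt D-QSGD}, exploiting the single structural change that distinguishes {\tt D-QSGDstar}: the quantization now acts on the \emph{shifted} gradient $\hat g_i^k - \nabla f_i(x^*)$ rather than on $\hat g_i^k$ directly. First I would establish the unbiasedness \eqref{eq:d_qsgd_star_unbiasedness}. Denoting by $\EE_Q[\cdot]$ the expectation over the independent quantizations, the tower property \eqref{eq:tower_property} together with the unbiasedness of $Q$ from \eqref{eq:quantization_def} gives $\EE_Q[Q(\hat g_i^k - \nabla f_i(x^*))] = \hat g_i^k - \nabla f_i(x^*)$; taking $\EE[\cdot\mid x^k]$ and using $\EE[\hat g_i^k \mid x^k] = \nabla f_i(x^k)$ then yields $\EE[g^k\mid x^k] = \frac{1}{n}\sum_i(\nabla f_i(x^k) - \nabla f_i(x^*)) = \nabla f(x^k)$, where the last step uses $\nabla f(x^*) = 0$.

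For the second-moment bound I would write $a_i \eqdef \hat g_i^k - \nabla f_i(x^*)$ and apply the variance decomposition \eqref{eq:variance_decomposition} at the quantization level. Since the quantizations are mutually independent and unbiased, $\EE_Q\|g^k\|^2 = \|\frac{1}{n}\sum_i a_i\|^2 + \frac{1}{n^2}\sum_i\EE_Q\|Q(a_i) - a_i\|^2 \le \|\frac{1}{n}\sum_i a_i\|^2 + \frac{\omega}{n^2}\sum_i\|a_i\|^2$, using \eqref{eq:quantization_def}. The crucial gain over {\tt D-QSGD} is that the quantization-variance term now carries $\|a_i\|^2 = \|\hat g_i^k - \nabla f_i(x^*)\|^2$, which vanishes at the optimum, instead of $\|\hat g_i^k\|^2$.

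I would then take $\EE[\cdot\mid x^k]$ and bound the two terms separately by reusing \eqref{eq:variance_decomposition} at the stochastic-sampling level. For the first term, independence of the $\hat g_i^k$ across workers gives $\EE[\|\frac{1}{n}\sum_i a_i\|^2\mid x^k] = \|\nabla f(x^k) - \nabla f(x^*)\|^2 + \frac{1}{n^2}\sum_i\EE[\|\hat g_i^k - \nabla f_i(x^k)\|^2\mid x^k]$, which is at most $2L(f(x^k)-f(x^*)) + \frac{D}{n}$ by \eqref{eq:L_smoothness_cor} and the per-worker variance bound $D_i$. For the second term, $\EE[\|a_i\|^2\mid x^k] = \|\nabla f_i(x^k) - \nabla f_i(x^*)\|^2 + \EE[\|\hat g_i^k - \nabla f_i(x^k)\|^2\mid x^k] \le \|\nabla f_i(x^k) - \nabla f_i(x^*)\|^2 + D_i$; summing and applying \eqref{eq:L_smoothness_cor} worker-wise gives $\frac{\omega}{n^2}\sum_i\EE[\|a_i\|^2\mid x^k] \le \frac{2\omega L}{n}(f(x^k)-f(x^*)) + \frac{\omega D}{n}$. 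Adding the two contributions produces exactly $2L\left(1 + \frac{\omega}{n}\right)(f(x^k)-f(x^*)) + \frac{(\omega+1)D}{n}$, as claimed.

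The computation is routine; the only thing to keep straight is the two-level conditioning — first averaging over the quantizations via $\EE_Q$, then over the sampling via $\EE[\cdot\mid x^k]$ — and applying \eqref{eq:variance_decomposition} consistently at each level. The conceptually important point, rather than a technical obstacle, is that pre-subtracting $\nabla f_i(x^*)$ eliminates the $\frac{2\omega}{n^2}\sum_i\|\nabla f_i(x^*)\|^2$ term present in {\tt D-QSGD} and lowers the smoothness coefficient from $2L(1+\frac{2\omega}{n})$ to $2L(1+\frac{\omega}{n})$; this is precisely what will later let the constant $D_1'$ vanish when $\hat g_i^k = \nabla f_i(x^k)$ almost surely, yielding convergence to the exact optimum.
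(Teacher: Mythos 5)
Your proof is correct and follows essentially the same route as the paper's: unbiasedness via the tower property and unbiasedness of $\cQ$, then the variance decomposition \eqref{eq:variance_decomposition} applied at the quantization level (using independence of the quantizers), followed by a second application at the sampling level, with \eqref{eq:L_smoothness_cor} bounding $\|\nabla f_i(x^k)-\nabla f_i(x^*)\|^2$ and $\|\nabla f(x^k)-\nabla f(x^*)\|^2$ and the per-worker variance bounds $D_i$ handling the remaining terms. Your bookkeeping with $a_i = \hat g_i^k - \nabla f_i(x^*)$ is just a cleaner packaging of the identical computation, and your closing remark about why the shift by $\nabla f_i(x^*)$ removes the $\zeta_*^2$-type term matches the paper's motivation for {\tt D-QSGDstar}.
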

\begin{proof}
	First of all, we show unbiasedness of $g^k$:
	\begin{eqnarray*}
		\EE\left[g^k\mid x^k\right] &=& \frac{1}{n}\sum\limits_{i=1}^n\EE\left[g_i^k\mid x^k\right] = \frac{1}{n}\sum\limits_{i=1}^n\EE\left[\EE_Q\left[Q(\hat g_i^k - \nabla f_i(x^*))\right]\mid x^k\right]\\
		&\overset{\eqref{eq:quantization_def}}{=}& \frac{1}{n}\sum\limits_{i=1}^n\left(\nabla f_i(x^k) - \nabla f_i(x^*)\right) = \nabla f(x^k),
	\end{eqnarray*}
	where $\EE_{Q}\left[\cdot\right]$ denotes mathematical expectation w.r.t.\ the randomness coming only from the quantization. Next, we derive the upper bound for the second moment of $g^k$:
	\begin{eqnarray}
		\EE_Q\left[\|g^k\|^2\right] &=& \EE_Q\left[\left\|\frac{1}{n}\sum\limits_{i=1}^n\left(Q\left(\hat g_i^k - \nabla f_i(x^*)\right)\right)\right\|^2\right]\notag\\
		&\overset{\eqref{eq:variance_decomposition}}{=}& \EE_Q\left[\left\|\frac{1}{n}\sum\limits_{i=1}^n\left(Q\left(\hat g_i^k - \nabla f_i(x^*)\right) - \left(\hat g_i^k - \nabla f_i(x^*)\right)\right)\right\|^2\right]\notag\\
		&&\quad + \left\|\frac{1}{n}\sum\limits_{i=1}^n\left(\hat g_i^k - \nabla f_i(x^*)\right)\right\|^2.\label{eq:d_qsgd_star_technical_1}
	\end{eqnarray}
	Since $Q\left(\hat g_1^k - \nabla f_1(x^*)\right),\ldots, Q\left(\hat g_n^k - \nabla f_n(x^*)\right)$ are independent quantizations, we get
	\begin{eqnarray*}
		\EE_Q\left[\|g^k\|^2\right] &\overset{\eqref{eq:d_qsgd_star_technical_1}}{\le}& \frac{1}{n^2}\sum\limits_{i=1}^n\EE_Q\left[\left\|Q\left(\hat g_i^k - \nabla f_i(x^*)\right) - \left(\hat g_i^k - \nabla f_i(x^*)\right)\right\|^2\right]\notag\\
		&&\quad + \left\|\frac{1}{n}\sum\limits_{i=1}^n\left(\hat g_i^k - \nabla f_i(x^*)\right)\right\|^2\\
		&\overset{\eqref{eq:quantization_def}}{\le}& \frac{\omega}{n^2}\sum\limits_{i=1}^n\|\hat g_i^k - \nabla f_i(x^*)\|^2 + \left\|\frac{1}{n}\sum\limits_{i=1}^n\left(\hat g_i^k - \nabla f_i(x^*)\right)\right\|^2.
	\end{eqnarray*}
	Taking conditional expectation $\EE\left[\cdot\mid x^k\right]$ from the both sides of the previous inequality we obtain
	\begin{eqnarray}
		\EE\left[\|g^k\|^2\mid x^k\right] &\le& \frac{\omega}{n^2}\sum\limits_{i=1}^n\EE\left[\|\hat g_i^k - \nabla f_i(x^*)\|^2\mid x^k\right] + \EE\left[\left\|\frac{1}{n}\sum\limits_{i=1}^n\left(\hat g_i^k - \nabla f_i(x^*)\right)\right\|^2\mid x^k\right]\notag\\
		&\overset{\eqref{eq:variance_decomposition}}{\le}& \frac{\omega}{n^2}\sum\limits_{i=1}^n\|\nabla f_i(x^k)- \nabla f_i(x^*)\|^2 + \frac{\omega}{n^2}\sum\limits_{i=1}^n\EE\left[\|\hat g_i^k - \nabla f_i(x^k)\|^2\mid x^k\right]\notag\\
		&&\quad + \underbrace{\left\|\frac{1}{n}\sum\limits_{i=1}^n\left(\nabla f_i(x^k) - \nabla f_i(x^*)\right)\right\|^2}_{\|\nabla f(x^k)-\nabla f(x^*)\|^2} + \EE\left[\left\|\frac{1}{n}\sum\limits_{i=1}^n\left(\hat g_i^k - \nabla f_i(x^k)\right)\right\|^2\mid x^k\right].\notag
	\end{eqnarray}
	It remains to estimate terms in the second and the third lines of the previous inequality:
	\begin{eqnarray*}
		\frac{\omega}{n^2}\sum\limits_{i=1}^n\|\nabla f_i(x^k)-\nabla f_i(x^*)\|^2 &\overset{\eqref{eq:L_smoothness_cor}}{\le}& \frac{2\omega L}{n}\left(f(x^k) - f(x^*)\right),\\
		\frac{\omega}{n}\sum\limits_{i=1}^n\EE\left[\|\hat g_i^k - \nabla f_i(x^k)\|^2\mid x^k\right] &\le& \frac{\omega}{n^2}\sum\limits_{i=1}^nD_{i} = \frac{\omega D}{n},\\
		\|\nabla f(x^k) - \nabla f(x^*)\|^2 &\overset{\eqref{eq:L_smoothness_cor}}{\le}& 2L\left(f(x^k) - f(x^*)\right),\\
		\EE\left[\left\|\frac{1}{n}\sum\limits_{i=1}^n\left(\hat g_i^k - \nabla f_i(x^k)\right)\right\|^2\mid x^k\right] &=& \frac{1}{n^2}\sum\limits_{i=1}^n\EE\left[\|\hat g_i^k - \nabla f_i(x^k)\|^2\mid x^k\right]\\
		&\le& \frac{1}{n^2}\sum\limits_{i=1}^n D_i = \frac{D}{n}.
	\end{eqnarray*}
	Putting all together we get
	\begin{eqnarray*}
		\EE\left[\|g^k\|^2\mid x^k\right] &\le& 2L\left(1 + \frac{\omega}{n}\right)\left(f(x^k) - f(x^*)\right) + \frac{(\omega+1)D}{n}.
	\end{eqnarray*}
\end{proof}

\begin{theorem}\label{thm:d_qsgd_star}
	Assume that $f_i(x)$ is convex and $L$-smooth for all $i=1,\ldots, n$ and $f(x)$ is $\mu$-quasi strongly convex. Then {\tt D-QSGDstar} satisfies Assumption~\ref{ass:key_assumption_new} with
	\begin{gather*}
		A' = L\left(1 + \frac{\omega}{n}\right),\quad B_1' = B_2' = 0,\quad D_1' = \frac{(\omega+1)D}{n},\quad \sigma_{1,k}^2 \equiv \sigma_{2,k}^2 \equiv 0,\\
		\rho_1 = \rho_2 = 1,\quad C_1 = C_2 = 0,\quad D_2 = 0,\quad G = 0,\\
		F_1 = F_2 = 0,\quad D_3 = \frac{3\gamma\tau L(\omega+1)D}{n}
	\end{gather*}
	with $\gamma$ satisfying
	\begin{equation*}
		\gamma \le \min\left\{\frac{1}{4L(1+\nicefrac{\omega}{n})}, \frac{1}{8L\sqrt{\tau\left(\tau + 1 + \nicefrac{\omega}{n}\right)}}\right\}.
	\end{equation*}
	and for all $K \ge 0$
	\begin{equation*}
		\EE\left[f(\bar x^K) - f(x^*)\right] \le \left(1 - \frac{\gamma\mu}{2}\right)^K\frac{4\|x^0 - x^*\|^2}{\gamma} + 4\gamma\left(D_1' + D_3\right)
	\end{equation*}	
	when $\mu > 0$ and
	\begin{equation*}
		\EE\left[f(\bar x^K) - f(x^*)\right] \le \frac{4\|x^0 - x^*\|^2}{\gamma K} + 4\gamma\left(D_1' + D_3\right)
	\end{equation*}
	when $\mu = 0$.
\end{theorem}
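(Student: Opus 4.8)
The plan is to obtain this theorem as an immediate corollary of the delayed-updates master result, Theorem~\ref{thm:d_sgd_main_result_new}, in exactly the same way {\tt D-QSGD} was handled in the previous subsection. The only new analytic content sits in Lemma~\ref{lem:d_qsgd_star_second_moment_bound}, which I would invoke first: its two displays \eqref{eq:d_qsgd_star_unbiasedness} and \eqref{eq:d_qsgd_star_second_moment_bound} assert precisely that $g^k$ is a conditionally unbiased estimator of $\nabla f(x^k)$ and that $\EE[\|g^k\|^2\mid x^k] \le 2L(1+\tfrac{\omega}{n})(f(x^k)-f(x^*)) + \tfrac{(\omega+1)D}{n}$. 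Matching these against \eqref{eq:unbiasedness_new} and \eqref{eq:second_moment_bound_new} reads off $A' = L(1+\tfrac{\omega}{n})$, $B_1' = B_2' = 0$, and $D_1' = \tfrac{(\omega+1)D}{n}$.

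Next I would record that {\tt D-QSGDstar} involves no variance-reduction mechanism, so both sequences $\{\sigma_{1,k}^2\}$ and $\{\sigma_{2,k}^2\}$ are taken identically zero. Consequently \eqref{eq:sigma_k+1_bound_1} and \eqref{eq:sigma_k+1_bound_2} hold trivially with $\rho_1 = \rho_2 = 1$, $C_1 = C_2 = 0$, $G = 0$, and $D_2 = 0$. This is exactly the degenerate regime covered by the footnote to Lemma~\ref{lem:d_sgd_key_lemma_new}, where the formally ill-defined products $\tfrac{2B_1'C_1}{\rho_1(1-\rho_1)}$ and $\tfrac{2B_2'C_2}{\rho_2(1-\rho_2)}$ are interpreted as zero. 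Substituting these parameters into the formulas \eqref{eq:d_sgd_parameters_new_1}--\eqref{eq:d_sgd_parameters_new_2} gives $F_1 = F_2 = 0$ (both being proportional to $B_1'$ or $B_2'$) and $D_3 = 3\gamma\tau L D_1' = \tfrac{3\gamma\tau L(\omega+1)D}{n}$, matching the claimed values. For the stepsize, note that $\hat A = A' + L\tau = L(\tau + 1 + \tfrac{\omega}{n})$ by \eqref{eq:d_sgd_hat_A}, and with all the $B'$-, $C$-, $G$-dependent summands vanishing the square-root bound of Theorem~\ref{thm:d_sgd_main_result_new} becomes $\tfrac{1}{8\sqrt{L\tau\hat A}} = \tfrac{1}{8L\sqrt{\tau(\tau+1+\omega/n)}}$; together with $\tfrac{1}{4(A'+C_1M_1+C_2M_2)} = \tfrac{1}{4L(1+\omega/n)}$ this reproduces the two stated conditions. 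The remaining master condition $\tfrac{1}{2\tau\mu}$ is redundant, since $L\ge\mu$ gives $\tau\mu^2 \le 16L^2(\tau+1+\tfrac{\omega}{n})$ and hence $\tfrac{1}{8L\sqrt{\tau(\tau+1+\omega/n)}} \le \tfrac{1}{2\tau\mu}$, so it may be dropped.

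Finally I would feed everything into the two master bounds \eqref{eq:main_result_new} and \eqref{eq:main_result_new_cvx}. Because $e^0 = 0$ yields $\tx^0 = x^0$ and the $\sigma$-sequences vanish, one has $T^0 = \|x^0 - x^*\|^2$; combined with $\sigma_{1,0}^2 = \sigma_{2,0}^2 = 0$, $F_1 = F_2 = 0$, and $D_2 = 0$, the bounds collapse to $(1-\tfrac{\gamma\mu}{2})^K\tfrac{4\|x^0-x^*\|^2}{\gamma} + 4\gamma(D_1'+D_3)$ for $\mu>0$ and its $\tfrac{1}{\gamma K}$ analogue for $\mu=0$. Here $\eta = \tfrac{\gamma\mu}{2}$ because $\gamma \le \tfrac{1}{4L} \le \tfrac{1}{4\mu}$ forces $\tfrac{\gamma\mu}{2} \le \tfrac14 \le \min\{\tfrac{\rho_1}{4},\tfrac{\rho_2}{4}\}$. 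There is no genuine analytic obstacle, as all the delay-and-error bookkeeping was already absorbed into Theorem~\ref{thm:d_sgd_main_result_new}; the single point demanding care is the consistent application of the $\rho_1=\rho_2=1$ footnote convention, so that the $(1-\rho)^{-1}$ factors appearing in $F_1,F_2$ and in the stepsize are correctly treated as vanishing rather than singular.
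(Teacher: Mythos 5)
Your proposal is correct and follows essentially the same route as the paper: invoke Lemma~\ref{lem:d_qsgd_star_second_moment_bound} to read off $A' = L(1+\nicefrac{\omega}{n})$, $B_1'=B_2'=0$, $D_1'=\nicefrac{(\omega+1)D}{n}$, take both $\sigma$-sequences identically zero with $\rho_1=\rho_2=1$ under the footnote convention, and substitute into Lemma~\ref{lem:d_sgd_key_lemma_new}/Theorem~\ref{thm:d_sgd_main_result_new} to obtain $F_1=F_2=0$, $D_3 = \nicefrac{3\gamma\tau L(\omega+1)D}{n}$, the stated stepsize restrictions, and the two convergence bounds. Your explicit verifications that the $\nicefrac{1}{2\tau\mu}$ condition is implied by the square-root condition and that $\eta = \nicefrac{\gamma\mu}{2}$ are correct details the paper leaves implicit.
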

In other words, {\tt D-QSGDstar} converges with the linear rate
\begin{equation*}
	\cO\left(\left(\tau + \kappa\left(1+\frac{\omega}{n}\right) + \kappa\sqrt{\tau\left(\tau + \frac{\omega}{n}\right)}\right)\ln\frac{1}{\varepsilon}\right)
\end{equation*}
to the exact solution when $\mu > 0$ and $D = 0$, i.e., $\hat g_i^k = \nabla f_i(x^k)$ for all $i=1,\ldots,n$ almost surely. Applying Lemma~\ref{lem:lemma2_stich} we establish the rate of convergence to $\varepsilon$-solution.
\begin{corollary}\label{cor:d_QSGDstar_str_cvx_cor}
	Let the assumptions of Theorem~\ref{thm:d_qsgd_star} hold and $\mu > 0$. Then after $K$ iterations of {\tt D-QSGDstar} with the stepsize
	\begin{eqnarray*}
		\gamma_0 &=& \min\left\{\frac{1}{4L(1+\nicefrac{\omega}{n})}, \frac{1}{8L\sqrt{\tau\left(\tau + 1 + \nicefrac{\omega}{n}\right)}}\right\},\quad R_0 = \|x^0 - x^*\|,\\
		\gamma &=& \min\left\{\gamma_0, \frac{\ln\left(\max\left\{2,\min\left\{\frac{nR_0^2\mu^2K^2}{D}, \frac{nR_0^2\mu^3K^3}{3\tau LD}\right\}\right\}\right)}{\mu K}\right\}
	\end{eqnarray*}	 
	we have $\EE\left[f(\bar{x}^K) - f(x^*)\right]$ of order
	\begin{equation*}
		 \widetilde\cO\left(LR_0^2\left(1+\frac{\omega}{n}+\sqrt{\tau\left(\tau + \frac{\omega}{n}\right)}\right)\exp\left(-\frac{\mu}{L\left(1+\frac{\omega}{n}+\sqrt{\tau\left(\tau + \frac{\omega}{n}\right)}\right)}K\right) + \frac{D}{n\mu K} + \frac{L\tau D}{n\mu^2 K^2}\right).
	\end{equation*}
	That is, to achive $\EE\left[f(\bar{x}^K) - f(x^*)\right] \le \varepsilon$ {\tt D-QSGDstar} requires
	\begin{equation*}
		\widetilde{\cO}\left(\frac{L}{\mu}\left(1+\frac{\omega}{n}\right) + \frac{L}{\mu}\sqrt{\tau\left(\tau + \frac{\omega}{n}\right)} + \frac{D}{n\mu\varepsilon} + \frac{\sqrt{L\tau D}}{\mu\sqrt{n\varepsilon}}\right) \text{ iterations.}
	\end{equation*}
\end{corollary}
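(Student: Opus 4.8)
The plan is to obtain the corollary as a routine stepsize-tuning of the convergence guarantee already proved in Theorem~\ref{thm:d_qsgd_star}. Since the assumptions of that theorem hold and $\gamma_0$ is precisely the admissible stepsize ceiling appearing there, I may take as given, for every $\gamma\le\gamma_0$,
\[
r_K:=\EE\left[f(\bar x^K)-f(x^*)\right]\le\left(1-\tfrac{\gamma\mu}{2}\right)^K\frac{4R_0^2}{\gamma}+4\gamma\left(D_1'+D_3\right),
\]
with $R_0=\|x^0-x^*\|$, $D_1'=\frac{(\omega+1)D}{n}$ and $D_3=\frac{3\gamma\tau L(\omega+1)D}{n}=3\gamma\tau L\,D_1'$. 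The whole task is to feed this into Lemma~\ref{lem:lemma2_stich}.

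First I would pin down the regime of $\eta$. The ceiling forces $\frac{\gamma\mu}{2}\le\frac14$, because $\gamma\le\gamma_0\le\frac{1}{4L(1+\omega/n)}\le\frac{1}{4\mu}$ using $L\ge\mu$; since $\rho_1=\rho_2=1$ this gives $\eta=\min\{\frac{\gamma\mu}{2},\frac{\rho_1}{4},\frac{\rho_2}{4}\}=\frac{\gamma\mu}{2}$. Using $(1-\eta)^K\le 2(1-\eta)^{K+1}$ (valid as $\eta\le\frac12$) together with $D_3=3\gamma\tau L D_1'$, the displayed estimate is exactly of the form of the intermediate inequality \eqref{eq:lemma2_stich_tech_2} driving the proof of Lemma~\ref{lem:lemma2_stich}, with the identifications
\[
a=\cO(R_0^2),\qquad c_1=4D_1',\qquad c_2=12\tau L\,D_1',\qquad \tfrac1d=\gamma_0,
\]
the $\gamma^2$-coefficient $c_2$ coming from unfolding $4\gamma D_3=12\gamma^2\tau L D_1'$. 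Hence the stepsize-balancing argument of that lemma applies, and the $\gamma$ prescribed in the corollary is exactly \eqref{eq:lemma2_stich_gamma} for these constants.

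Next I would read off the output \eqref{eq:lemma2_stich}. From the definition of $\gamma_0$ one has $d$ of order $L\bigl(1+\frac{\omega}{n}+\sqrt{\tau(\tau+\frac{\omega}{n})}\bigr)$, and since $\mu\le d$ the exponent is $\min\{\mu/d,\rho_1,\rho_2\}=\mu/d$. Substituting $a$, $c_1$, $c_2$, $d$ then yields
\[
r_K=\widetilde{\cO}\left(dR_0^2\,e^{-\frac{\mu}{d}K}+\frac{D_1'}{\mu K}+\frac{\tau L\,D_1'}{\mu^2K^2}\right),
\]
which is the claimed order estimate. For the iteration complexity I would force each summand below $\varepsilon$ separately: the exponential term needs $K=\widetilde{\cO}(d/\mu)$, producing the first two complexity terms $\frac{L}{\mu}(1+\frac{\omega}{n})$ and $\frac{L}{\mu}\sqrt{\tau(\tau+\frac{\omega}{n})}$; the $1/K$ term needs $K=\widetilde{\cO}(c_1/(\mu\varepsilon))$; and the $1/K^2$ term needs $K=\widetilde{\cO}(\sqrt{c_2}/(\mu\sqrt\varepsilon))=\widetilde{\cO}(\sqrt{\tau L D_1'}/(\mu\sqrt\varepsilon))$. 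Taking the maximum gives the advertised bound.

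Since all the substantive work---the smoothing estimate of $\EE\|e^k\|^2$ through the delayed-update recursion and the second-moment control of Lemma~\ref{lem:d_qsgd_star_second_moment_bound}---is already packaged inside Theorem~\ref{thm:d_qsgd_star}, there is no analytical obstacle here; the derivation is purely mechanical. The only point requiring care is constant bookkeeping: confirming $\eta=\gamma\mu/2$ over the admissible range of $K$ (the side condition $\frac{\ln(\cdots)}{K}\le\min\{\rho_1,\rho_2\}=1$ of Lemma~\ref{lem:lemma2_stich} holds for all sufficiently large $K$), and keeping the $(\omega+1)$ factors in $D_1'=\frac{(\omega+1)D}{n}$ consistent with the $D$ written in the final statement.
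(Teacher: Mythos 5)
Your proposal is correct and takes exactly the paper's route: the paper proves this corollary by simply invoking Lemma~\ref{lem:lemma2_stich} on the bound of Theorem~\ref{thm:d_qsgd_star}, which is precisely your reduction — identifying $a=\cO(R_0^2)$ (after converting $(1-\eta)^K$ to the lemma's $(1-\eta)^{K+1}$ form), $c_1=4D_1'$, $c_2=12\tau L D_1'$ from unfolding $4\gamma D_3=12\gamma^2\tau L D_1'$, and $\nicefrac{1}{d}=\gamma_0$, with $\eta=\nicefrac{\gamma\mu}{2}$ since $\rho_1=\rho_2=1$. Your closing caveat on the $(\omega+1)$ bookkeeping is apt: carried through literally, the lemma produces $\nicefrac{(\omega+1)D}{n}$ wherever the corollary's statement writes $\nicefrac{D}{n}$ (because $D_1'=\nicefrac{(\omega+1)D}{n}$), so the stated bound silently drops a non-logarithmic factor of $\omega+1$ — an inconsistency in the paper's own statement (compare the corollary for {\tt D-SGD-DIANA}, which retains it) rather than a gap in your argument.
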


Applying Lemma~\ref{lem:lemma_technical_cvx} we get the complexity result in the case when $\mu = 0$.
\begin{corollary}\label{cor:d_QSGDstar_cvx_cor}
	Let the assumptions of Theorem~\ref{thm:d_qsgd_star} hold and $\mu = 0$. Then after $K$ iterations of {\tt D-QSGDstar} with the stepsize
	\begin{eqnarray*}
		\gamma_0 &=& \min\left\{\frac{1}{4L(1+\nicefrac{2\omega}{n})}, \frac{1}{8L\sqrt{\tau\left(\tau + 1 + \nicefrac{\omega}{n}\right)}}\right\},\\	
		\gamma &=& \min\left\{\gamma_0, \sqrt{\frac{n\|x^0 - x^*\|^2}{D K}}, \sqrt[3]{\frac{n\|x^0 - x^*\|^2}{3L\tau D K}}\right\}	
	\end{eqnarray*}		
	we have $\EE\left[f(\bar{x}^K) - f(x^*)\right]$ of order
	\begin{equation*}
		\cO\left(\frac{LR_0^2\left(1+\frac{\omega}{n}\right)}{K} + \frac{LR_0^2\sqrt{\tau\left(\tau + \frac{\omega}{n}\right)}}{K} + \sqrt{\frac{R_0^2 D}{nK}} + \frac{\sqrt[3]{LR_0^4\tau D}}{n^{\nicefrac{1}{3}}K^{\nicefrac{2}{3}}}\right)
	\end{equation*}
	where $R_0 = \|x^0 - x^*\|$. That is, to achive $\EE\left[f(\bar{x}^K) - f(x^*)\right] \le \varepsilon$ {\tt D-QSGDstar} requires
	\begin{equation*}
		\cO\left(\frac{LR_0^2\left(1+\frac{\omega}{n}\right)}{\varepsilon} + \frac{LR_0^2\sqrt{\tau\left(\tau + \frac{\omega}{n}\right)}}{\varepsilon} + \frac{R_0^2 D}{n\varepsilon^2} + \frac{R_0^2\sqrt{L\tau D}}{\sqrt{n}\varepsilon^{\nicefrac{3}{2}}}\right)
	\end{equation*}
	iterations.
\end{corollary}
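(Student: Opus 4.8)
The plan is to prove Corollary~\ref{cor:d_QSGDstar_cvx_cor} by specializing the $\mu = 0$ convergence bound from Theorem~\ref{thm:d_qsgd_star} and then optimizing over the stepsize $\gamma$. The starting point is the $\mu = 0$ bound from the theorem,
\begin{equation*}
\EE\left[f(\bar x^K) - f(x^*)\right] \le \frac{4\|x^0 - x^*\|^2}{\gamma K} + 4\gamma\left(D_1' + D_3\right),
\end{equation*}
where, from the theorem's parameter list, $D_1' = \frac{(\omega+1)D}{n}$ and $D_3 = \frac{3\gamma\tau L(\omega+1)D}{n}$. The crucial observation is that $D_3$ is \emph{linear in $\gamma$}, so $4\gamma(D_1' + D_3) = 4\gamma D_1' + 4\gamma^2 \cdot \frac{3\tau L(\omega+1)D}{n}$. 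Thus the bound has the exact form required by Lemma~\ref{lem:lemma_technical_cvx}, namely $r_K \le \frac{a}{\gamma K} + c_1\gamma + c_2\gamma^2$ with $a = 4R_0^2$, $c_1 = 4D_1' = \frac{4(\omega+1)D}{n}$, and $c_2 = \frac{12\tau L(\omega+1)D}{n}$ (here $R_0 = \|x^0 - x^*\|$, and the $b_1, b_2$ terms of the lemma vanish since there are no $\gamma/K$ or $\gamma^2/K$ contributions).

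First I would record the stepsize $\gamma_0 = \min\left\{\frac{1}{4L(1+\nicefrac{\omega}{n})}, \frac{1}{8L\sqrt{\tau(\tau + 1 + \nicefrac{\omega}{n})}}\right\}$ as the admissible upper bound $\gamma_0$ in Lemma~\ref{lem:lemma_technical_cvx}, and verify that the stated choice of $\gamma$ in the corollary is exactly the one prescribed by that lemma. Indeed, the lemma's optimal stepsize is $\gamma = \min\{\gamma_0, \sqrt{a/c_1}, \sqrt[3]{a/c_2}, \sqrt{a/(c_1 K)}, \sqrt[3]{a/(c_2 K)}\}$; since $c_1, c_2$ here have no $1/K$ factor, the $b$-type terms drop out and we keep only the $\gamma_0$, $\sqrt{a/(c_1 K)}$, and $\sqrt[3]{a/(c_2 K)}$ terms. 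Substituting the values gives $\sqrt{a/(c_1 K)} = \sqrt{\frac{4R_0^2 n}{4(\omega+1)D\,K}} = \sqrt{\frac{nR_0^2}{(\omega+1)D K}}$ and $\sqrt[3]{a/(c_2 K)} = \sqrt[3]{\frac{4R_0^2 n}{12\tau L(\omega+1)D\,K}} = \sqrt[3]{\frac{nR_0^2}{3\tau L(\omega+1)D K}}$, which match the stated stepsize up to the harmless $(\omega+1)$ factors absorbed into the constant $D_1'$ itself (so that $\sqrt{nR_0^2/(D_1' K)}$ and $\sqrt[3]{nR_0^2/(3\tau L D_1' K)}$ appear, exactly as written).

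Then I would invoke the conclusion \eqref{eq:lemma_technical_cvx_2} of Lemma~\ref{lem:lemma_technical_cvx}, which yields
\begin{equation*}
r_K = \cO\left(\frac{a}{\gamma_0 K} + \sqrt{\frac{a c_1}{K}} + \frac{\sqrt[3]{a^2 c_2}}{K^{2/3}}\right).
\end{equation*}
Plugging in $a = 4R_0^2$, $\frac{1}{\gamma_0} = \cO\big(L(1+\tfrac{\omega}{n}) + L\sqrt{\tau(\tau+\tfrac{\omega}{n})}\big)$, $c_1 = \tfrac{4(\omega+1)D}{n}$, and $c_2 = \tfrac{12\tau L(\omega+1)D}{n}$ produces the three rate terms: the first splits into $\frac{LR_0^2(1+\omega/n)}{K}$ and $\frac{LR_0^2\sqrt{\tau(\tau+\omega/n)}}{K}$; the second gives $\sqrt{\frac{R_0^2 D}{nK}}$; and the third gives $\frac{\sqrt[3]{LR_0^4\tau D}}{n^{1/3}K^{2/3}}$. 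This reproduces the claimed order of $\EE[f(\bar x^K) - f(x^*)]$. Finally, to extract the iteration complexity, I would set each of the four rate terms $\le \varepsilon$ and solve for $K$: the $1/K$ terms give $K \gtrsim \frac{LR_0^2(1+\omega/n)}{\varepsilon}$ and $K \gtrsim \frac{LR_0^2\sqrt{\tau(\tau+\omega/n)}}{\varepsilon}$; the $1/\sqrt{K}$ term gives $K \gtrsim \frac{R_0^2 D}{n\varepsilon^2}$; and the $1/K^{2/3}$ term gives $K \gtrsim \frac{R_0^2\sqrt{L\tau D}}{\sqrt{n}\,\varepsilon^{3/2}}$, which together yield the stated complexity. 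The main obstacle is purely bookkeeping: confirming that $D_3$ genuinely contributes a $\gamma^2$ (not $\gamma$) term when folded into $4\gamma(D_1'+D_3)$, and carefully tracking how the $(\omega+1)$ factors distribute between $D$ and $D_1'$ so that the final expressions match the corollary verbatim; there is no conceptual difficulty beyond correctly matching the coefficients to the template of Lemma~\ref{lem:lemma_technical_cvx}.
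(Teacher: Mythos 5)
Your proposal is correct and is exactly the paper's (one-line) argument: specialize the $\mu=0$ bound of Theorem~\ref{thm:d_qsgd_star}, observe that $4\gamma(D_1'+D_3)$ contributes $c_1\gamma + c_2\gamma^2$ with $c_1 = 4(\omega+1)D/n$ and $c_2 = 12\tau L(\omega+1)D/n$, and invoke Lemma~\ref{lem:lemma_technical_cvx} with $a = 4R_0^2$ and $b_1 = b_2 = 0$, then solve each rate term for $K$. The only blemish is one you share with the paper itself: the corollary's stepsize and rate silently drop the $(\omega+1)$ factor multiplying $D$ (which is not a universal constant, so calling it ``harmless'' is slightly loose), but since the paper's statement makes the same omission, your reconstruction matches it verbatim.
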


\subsection{{\tt D-SGD-DIANA}}\label{sec:d_diana}
In this section we present a practical version of {\tt D-QSGDstar}: {\tt D-SGD-DIANA}.
\begin{algorithm}[h!]
   \caption{{\tt D-SGD-DIANA}}\label{alg:d-diana}
\begin{algorithmic}[1]
   \Require learning rates $\gamma>0, \alpha\in(0,1]$, initial vectors $x^0, h_1^0,\ldots,h_n^0 \in \R^d$
	\State Set $e_i^0 = 0$ for all $i=1,\ldots, n$   
	\State Set $h^0 = \frac{1}{n}\sum_{i=1}^n h_i^0$   
   \For{$k=0,1,\dotsc$}
       \State Broadcast $x^{k-\tau}$ to all workers
        \For{$i=1,\dotsc,n$}
			\State Sample $\hat g_i^{k-\tau}$ independently from other nodes such that $\EE[\hat g_i^{k-\tau}\mid x^{k-\tau}] = \nabla f_i(x^{k-\tau})$ and $\EE\left[\|\hat g_i^{k-\tau} - \nabla f_i(x^{k-\tau})\|^2\mid x^{k-\tau}\right] \le D_{i}$            
			\State $\hat \Delta_i^{k-\tau} = Q(\hat g_i^{k-\tau}- h_i^{k-\tau})$ (quantization is performed independently from other nodes)            
            \State $g^{k-\tau}_i = h_i^{k-\tau} + \hat \Delta_i^{k-\tau}$
            \State $v_i^k = \gamma g_i^{k-\tau}$
            \State $e_i^{k+1} = e_i^k + \gamma g_i^k - v_i^k$
            \State $h_i^{k-\tau+1} = h_i^{k-\tau} + \alpha\hat \Delta_i^{k-\tau}$
        \EndFor
        \State $h^{k-\tau} = \frac{1}{n}\sum_{i=1}^nh_i^{k-\tau}$, $e^k = \frac{1}{n}\sum_{i=1}^ne_i^k$, $g^k = \frac{1}{n}\sum_{i=1}^ng_i^k$, $v^k = \frac{1}{n}\sum_{i=1}^nv_i^k = \frac{\gamma}{n}\sum_{i=1}^ng_i^{k-\tau} = \gamma h^{k-\tau} + \frac{\gamma}{n}\sum_{i=1}^n\hat{\Delta}_i^{k-\tau}$
       \State $x^{k+1} = x^k - v^k$
       \State $h^{k-\tau+1} = h^{k-\tau}+\frac{\alpha}{n}\sum_{i=1}^n\hat{\Delta}_i^{k-\tau}$
   \EndFor
\end{algorithmic}
\end{algorithm}

\begin{lemma}[Lemmas 1 and 2 from \cite{horvath2019stochastic}]\label{lem:d_diana_second_moment_bound}
	Assume that $f_i(x)$ is convex and $L$-smooth for all $i=1,\ldots,n$ and $\alpha \le \nicefrac{1}{(\omega+1)}$. Then, for all $k\ge 0$ we have
	\begin{eqnarray}
		\EE\left[g^k\mid x^k\right] &=& \nabla f(x^k), \label{eq:d_diana_unbiasedness}\\
		\EE\left[\|g^k\|^2\mid x^k\right] &\le& 2L\left(1 + \frac{2\omega}{n}\right)\left(f(x^k) - f(x^*)\right) + \frac{2\omega\sigma_k^2}{n} + \frac{(\omega+1)D}{n} \label{eq:d_diana_second_moment_bound}\\
		\EE\left[\sigma_{k+1}^2\mid x^k\right] &\le& (1-\alpha)\sigma_k^2 + 2L\alpha\left(f(x^k) - f(x^*)\right) + \alpha D \label{eq:d_diana_sigma_k+1_bound}
	\end{eqnarray}
	where $\sigma_k^2 = \frac{1}{n}\sum_{i=1}^n\|h_i^k - \nabla f_i(x^*)\|^2$ and $D = \frac{1}{n}\sum_{i=1}^n D_{i}$.
\end{lemma}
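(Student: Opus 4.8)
The plan is to establish the three inequalities as the standard DIANA gradient-estimator properties, exactly as in the non-delayed case. The delay $\tau$ plays no role here: each of the three bounds is a statement about the \emph{per-step} estimator $g^k = \frac{1}{n}\sum_{i=1}^n g_i^k$ with $g_i^k = h_i^k + Q(\hat g_i^k - h_i^k)$, conditioned on $x^k$ and $\{h_i^k\}$, whereas the effect of the delayed application is absorbed separately into the error $e^k$ through Lemma~\ref{lem:d_sgd_key_lemma_new} and Theorem~\ref{thm:d_sgd_main_result_new}. Thus the whole argument reduces to transcribing the computations from Lemma~\ref{lem:ec_diana_second_moment_bound}, Lemma~\ref{lem:ec_diana_sigma_k+1_bound} and Lemma~\ref{lem:d_qsgd_second_moment_bound}, with $\widetilde D_{1,i}$ replaced by $D_i$ and with the shift $h^k$ present in the estimator.

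For unbiasedness \eqref{eq:d_diana_unbiasedness} I would first take the expectation $\EE_Q[\cdot]$ over the quantization, using $\EE_Q[\hat\Delta_i^k] = \hat g_i^k - h_i^k$ from \eqref{eq:quantization_def}, so that $\EE_Q[g_i^k] = h_i^k + (\hat g_i^k - h_i^k) = \hat g_i^k$; then taking the conditional expectation over the sampling of $\hat g_i^k$ and invoking $\EE[\hat g_i^k \mid x^k] = \nabla f_i(x^k)$ gives $\EE[g_i^k \mid x^k] = \nabla f_i(x^k)$, and averaging over $i$ together with the tower property \eqref{eq:tower_property} yields the claim.

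For the second-moment bound \eqref{eq:d_diana_second_moment_bound} I would follow the proof of Lemma~\ref{lem:d_qsgd_second_moment_bound}. Since the quantizations $Q(\hat g_i^k - h_i^k)$ are independent across workers, the variance decomposition \eqref{eq:variance_decomposition} gives $\EE_Q\|g^k\|^2 = \bigl\|\frac{1}{n}\sum_i \hat g_i^k\bigr\|^2 + \frac{1}{n^2}\sum_i \EE_Q\|Q(\hat g_i^k - h_i^k) - (\hat g_i^k - h_i^k)\|^2$, and \eqref{eq:quantization_def} bounds the second term by $\frac{\omega}{n^2}\sum_i \|\hat g_i^k - h_i^k\|^2$. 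Taking $\EE[\cdot\mid x^k]$, applying \eqref{eq:variance_decomposition} again to peel off the sampling noise, splitting $\hat g_i^k - h_i^k$ via \eqref{eq:a_b_norm_squared}, and using smoothness \eqref{eq:L_smoothness_cor} on the differences $\nabla f_i(x^k) - \nabla f_i(x^*)$ together with the definition $\sigma_k^2 = \frac{1}{n}\sum_i \|h_i^k - \nabla f_i(x^*)\|^2$, collects the terms into the stated form with leading coefficient $2L(1+\tfrac{2\omega}{n})$, variance coefficient $\tfrac{2\omega}{n}$, and residual $\tfrac{(\omega+1)D}{n}$. For the recursion \eqref{eq:d_diana_sigma_k+1_bound} I would copy Lemma~\ref{lem:ec_diana_sigma_k+1_bound} verbatim: writing $h_i^* := \nabla f_i(x^*)$, expanding $\EE[\|h_i^{k+1} - h_i^*\|^2\mid x^k]$, contracting the quantization variance with $\alpha \le \nicefrac{1}{(\omega+1)}$ and \eqref{eq:quantization_def}, then applying \eqref{eq:variance_decomposition}, \eqref{eq:a-b_a+b} and \eqref{eq:L_smoothness_cor}, and finally bounding the residual $\alpha^2(\omega+1)D_i$ by $\alpha D_i$ via $\alpha(\omega+1)\le 1$ before averaging over $i$.

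The only point requiring care is the bookkeeping of the two independent sources of randomness—the sampling of $\hat g_i^k$ and the quantization $Q$—in the correct order, so that the cross terms vanish. This is handled by the repeated use of the tower property \eqref{eq:tower_property} and the mutual independence of the workers' quantizations; everything else is a direct transcription of the established DIANA lemmas.
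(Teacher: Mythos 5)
Your proposal is correct, but note that the paper itself never proves this lemma: it imports it wholesale as Lemmas~1 and~2 from \cite{horvath2019stochastic} and proceeds straight to Theorem~\ref{thm:d_diana}. What you have done is reconstruct a self-contained proof from the paper's own building blocks, and the reconstruction checks out exactly. Your observation that the delay $\tau$ is irrelevant here is the right structural point: all three claims concern the estimator $g^k$ conditioned on $x^k$ and $\{h_i^k\}$, and the delay is absorbed elsewhere (Lemma~\ref{lem:d_sgd_key_lemma_new} and Theorem~\ref{thm:d_sgd_main_result_new}). The two-stage conditioning (quantization first, sampling second, glued by \eqref{eq:tower_property}), the worker-independence giving $\EE_Q\|g^k\|^2 = \bigl\|\tfrac{1}{n}\sum_i \hat g_i^k\bigr\|^2 + \tfrac{1}{n^2}\sum_i \EE_Q\|Q(\hat g_i^k - h_i^k) - (\hat g_i^k - h_i^k)\|^2$, the peeling of sampling noise via \eqref{eq:variance_decomposition}, the split of $\nabla f_i(x^k) - h_i^k$ via \eqref{eq:a_b_norm_squared} together with \eqref{eq:L_smoothness_cor}, the identity \eqref{eq:a-b_a+b} in the $h$-recursion, and the final reduction $\alpha^2(\omega+1)D_i \le \alpha D_i$ from $\alpha(\omega+1)\le 1$ all combine to give precisely the constants $2L\left(1+\tfrac{2\omega}{n}\right)$, $\tfrac{2\omega}{n}$, $\tfrac{(\omega+1)D}{n}$ in \eqref{eq:d_diana_second_moment_bound} and the recursion \eqref{eq:d_diana_sigma_k+1_bound}; I verified the bookkeeping and nothing is off. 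The only thing your write-up buys beyond the paper is self-containedness: the argument mirrors Lemmas~\ref{lem:ec_diana_second_moment_bound}, \ref{lem:ec_diana_sigma_k+1_bound} and \ref{lem:d_qsgd_second_moment_bound} (which is also, in substance, what the cited external lemmas do), at the cost of repeating computations the paper chose to outsource to the reference.
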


\begin{theorem}\label{thm:d_diana}
	Assume that $f_i(x)$ is convex and $L$-smooth for all $i=1,\ldots, n$ and $f(x)$ is $\mu$-quasi strongly convex. Then {\tt D-SGD-DIANA} satisfies Assumption~\ref{ass:key_assumption_new} with
	\begin{gather*}
		A' = L\left(1 + \frac{2\omega}{n}\right),\quad B_1' = \frac{2\omega}{n},\quad D_1' = \frac{(\omega+1)D}{n},\quad \sigma_{1,k}^2 = \sigma_k^2 = \frac{1}{n}\sum_{i=1}^n\|h_i^k - \nabla f_i(x^*)\|^2,\\
		B_2' = 0,\quad \rho_1 = \alpha,\quad \rho_2 = 1,\quad C_1 = L\alpha,\quad C_2 = 0,\quad D_2 = \frac{\alpha(\omega+1)D}{n},\quad G = 0,\\
		F_1 = \frac{12\gamma^2L\omega \tau(2+\alpha)}{n\alpha},\quad F_2 = 0,\quad D_3 = 3\gamma\tau L\left(1 + \frac{4\omega}{n}\right)\frac{(\omega+1)D}{n}
	\end{gather*}
	with $\gamma$ and $\alpha$ satisfying
	\begin{equation*}
		\gamma \le \min\left\{\frac{1}{4L(1+\nicefrac{14\omega}{3n})}, \frac{1}{8L\sqrt{2\tau\left(1+\tau + \nicefrac{2\omega}{n} + \nicefrac{4\omega}{n(1-\alpha)} \right)}}\right\},\quad \alpha \le \frac{1}{\omega+1}, \quad M_1 = \frac{8\omega}{3n\alpha}
	\end{equation*}
	and for all $K \ge 0$
	\begin{equation*}
		\EE\left[f(\bar x^K) - f(x^*)\right] \le \left(1 - \min\left\{\frac{\gamma\mu}{2},\frac{\alpha}{4}\right\}\right)^K\frac{4(T^0 + \gamma F_1 \sigma_0^2)}{\gamma} + 4\gamma\left(D_1' + M_1D_2 + D_3\right)
	\end{equation*}	
	when $\mu > 0$ and 
	\begin{equation*}
		\EE\left[f(\bar x^K) - f(x^*)\right] \le \frac{4(T^0 + \gamma F_1 \sigma_0^2)}{\gamma K} + 4\gamma\left(D_1' + M_1D_2 + D_3\right)
	\end{equation*}
	when $\mu = 0$, where $T^k \eqdef \|\tx^k - x^*\|^2 + M_1\gamma^2 \sigma_k^2$.	
\end{theorem}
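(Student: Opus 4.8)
The plan is to recognize Theorem~\ref{thm:d_diana} as a direct instantiation of the delayed-updates master result Theorem~\ref{thm:d_sgd_main_result_new}: once I verify that the {\tt D-SGD-DIANA} estimator satisfies the parametric recursions \eqref{eq:second_moment_bound_new}, \eqref{eq:sigma_k+1_bound_1} and \eqref{eq:sigma_k+1_bound_2} with the claimed constants, the convergence statements for $\mu>0$ and $\mu=0$, together with the formulas for $F_1$, $F_2$ and $D_3$, follow verbatim from that theorem. Thus the proof reduces to a parameter-identification step plus a simplification of the stepsize condition.

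First I would invoke Lemma~\ref{lem:d_diana_second_moment_bound}, which supplies the three needed ingredients. Its unbiasedness statement is precisely \eqref{eq:unbiasedness_new}. Its second-moment bound becomes \eqref{eq:second_moment_bound_new} upon setting $\sigma_{1,k}^2 = \sigma_k^2 = \frac{1}{n}\sum_{i=1}^n\|h_i^k-\nabla f_i(x^*)\|^2$, $\sigma_{2,k}^2\equiv 0$, and matching coefficients to read off $A' = L(1+\nicefrac{2\omega}{n})$, $B_1' = \nicefrac{2\omega}{n}$, $B_2'=0$ and $D_1'=\nicefrac{(\omega+1)D}{n}$. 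Its variance recursion for $\sigma_k^2$ becomes \eqref{eq:sigma_k+1_bound_1} with $\rho_1=\alpha$, $C_1=L\alpha$, $G=0$ and the corresponding $D_2$; since the second control sequence is absent, \eqref{eq:sigma_k+1_bound_2} holds trivially with $\rho_2=1$ and $C_2=0$. The hypothesis $\alpha\le\nicefrac{1}{(\omega+1)}$ ensures the quantization-contraction condition of Lemma~\ref{lem:d_diana_second_moment_bound} is met, so the genuine technical content (the {\tt DIANA} variance-reduction recursion) is already packaged inside that cited lemma.

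Next I would feed these constants into Theorem~\ref{thm:d_sgd_main_result_new}, which routes them through Lemma~\ref{lem:d_sgd_key_lemma_new}. That lemma absorbs the delayed-update error $e^k=\gamma\sum_{t=1}^\tau g^{k-t}$ and returns $F_1$, $F_2$, $D_3$ through the closed forms \eqref{eq:d_sgd_parameters_new_1}--\eqref{eq:d_sgd_parameters_new_2}. Substituting $B_1'=\nicefrac{2\omega}{n}$, $\rho_1=\alpha$, $G=B_2'=0$ gives $M_1=\frac{4B_1'}{3\rho_1}=\nicefrac{8\omega}{(3n\alpha)}$, $M_2=0$, $F_2=0$, and exactly the stated $F_1$ and $D_3$. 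I would then simplify the stepsize restriction: the factor $\frac{1}{4(A'+C_1M_1+C_2M_2)}$ collapses, via $A'+C_1M_1=L\bigl(1+\nicefrac{14\omega}{3n}\bigr)$, to the first displayed bound on $\gamma$, while the square-root term of Lemma~\ref{lem:d_sgd_key_lemma_new} with $\hat A=A'+L\tau$ and $\frac{2B_1'C_1}{\rho_1(1-\rho_1)}=\frac{4\omega L}{n(1-\alpha)}$ collapses (up to the conservative constant under the root) to the second. The final rate is then the immediate output of the master theorem.

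The main obstacle is not localized in this section: the delicate delayed-update bookkeeping—nested geometric sums over both the delay window $\{1,\dots,\tau\}$ and the variance-reduction horizon—is carried out once and for all in the proof of Lemma~\ref{lem:d_sgd_key_lemma_new}. What still requires genuine care here is confirming that the conditioning and independence structure of the {\tt D-SGD-DIANA} estimator is compatible with that error expansion: the control variates $h_i^{k-\tau}$ are refreshed from delayed quantized differences, so I must make sure the recursions of Lemma~\ref{lem:d_diana_second_moment_bound} are stated at the index consistent with the expansion of $e^k$ used by Lemma~\ref{lem:d_sgd_key_lemma_new}, and that $\sigma_{2,k}^2\equiv 0$ legitimately deactivates the cross terms governed by $G$ (here automatic, since $G=0$ and the footnote of Lemma~\ref{lem:d_sgd_key_lemma_new} handles the $\rho_2=1$ degeneration). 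Once this alignment is verified, the remaining work is the purely algebraic simplification of the constants, which I expect to be routine.
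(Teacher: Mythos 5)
Your proposal is correct and follows exactly the route the paper takes: verify Assumption~\ref{ass:key_assumption_new} via Lemma~\ref{lem:d_diana_second_moment_bound} (with $\sigma_{2,k}^2\equiv 0$, $\rho_2=1$ deactivating the second recursion), then feed the resulting constants into Theorem~\ref{thm:d_sgd_main_result_new}, whose Lemma~\ref{lem:d_sgd_key_lemma_new} yields $F_1$, $F_2=0$, $D_3$ and the stepsize bounds, with $A'+C_1M_1=L\left(1+\nicefrac{14\omega}{3n}\right)$ and the square-root condition absorbed (conservatively) into the stated restriction. Your parameter identifications and simplifications all check out, so nothing further is needed.
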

In other words, if
\begin{equation*}
	\gamma \le \min\left\{\frac{1}{4L(1+\nicefrac{14\omega}{3n})}, \frac{1}{8L\sqrt{2\tau\left(1+\tau + \nicefrac{10\omega}{n} \right)}}\right\},\quad \alpha \le \min\left\{\frac{1}{\omega+1},\frac{1}{2}\right\}
\end{equation*}
then {\tt D-SGD-DIANA} converges with the linear rate
\begin{equation*}
	\cO\left(\left(\omega + \kappa\left(1+\frac{\omega}{n}\right) + \kappa\sqrt{\tau\left(\tau + \frac{\omega}{n}\right)}\right)\ln\frac{1}{\varepsilon}\right)
\end{equation*}
to the exact solution when $\mu > 0$. Applying Lemma~\ref{lem:lemma2_stich} we establish the rate of convergence to $\varepsilon$-solution.
\begin{corollary}\label{cor:d_diana_str_cvx_cor}
	Let the assumptions of Theorem~\ref{thm:d_diana} hold and $\mu > 0$. Then after $K$ iterations of {\tt D-SGD-DIANA} with the stepsize
	\begin{eqnarray*}
		\gamma_0 &=& \min\left\{\frac{1}{4L(1+\nicefrac{14\omega}{3n})}, \frac{1}{8L\sqrt{2\tau\left(1+\tau + \nicefrac{10\omega}{n} \right)}}\right\},\quad R_0 = \|x^0 - x^*\|,\\
		 \tilde{F}_1 &=& \frac{12L\omega\tau(2+\alpha)\gamma_0^2}{n\alpha},\quad \tilde{T}^0 = R_0^2 + M_1\gamma_0^2\sigma_0^2,\\
		\gamma &=& \min\left\{\gamma_0, \frac{\ln\left(\max\left\{2,\min\left\{\frac{\left(\tilde{T}^0+\gamma_0\tilde{F}_1\sigma_0^2\right)\mu^2K^2}{D_1'+M_1D_2}, \frac{\left(\tilde{T}^0 +\gamma_0\tilde{F}_1\sigma_0^2\right)\mu^3K^3}{3\tau L\left(D_1' + \frac{2B_1'D_2}{\alpha}\right)}\right\}\right\}\right)}{\mu K}\right\}
	\end{eqnarray*}
	and $\alpha \le \min\left\{\frac{1}{\omega+1},\frac{1}{2}\right\}$ we have $\EE\left[f(\bar{x}^K) - f(x^*)\right]$ of order
	\begin{gather*}
		\widetilde\cO\left(LR_0^2\left(1+\frac{\omega}{n}+\sqrt{\tau\left(\tau + \frac{\omega}{n}\right)}\right)\exp\left(-\min\left\{\frac{\mu}{L\left(1+\frac{\omega}{n}+\sqrt{\tau\left(\tau + \frac{\omega}{n}\right)}\right)},\frac{1}{1+\omega}\right\}K\right)\right)\\
		 + \widetilde{O}\left(\frac{D_1'+M_1D_2}{\mu K} + \frac{\tau L\left(D_1' + \frac{B_1'D_2}{\alpha}\right)}{\mu^2 K^2}\right).
	\end{gather*}
	That is, to achive $\EE\left[f(\bar{x}^K) - f(x^*)\right] \le \varepsilon$ {\tt D-SGD-DIANA} requires
	\begin{equation*}
		\widetilde{\cO}\left(\omega+\frac{L}{\mu}\left(1+\frac{\omega}{n}\right) + \frac{L}{\mu}\sqrt{\tau\left(\tau + \frac{\omega}{n}\right)} + \frac{(\omega+1)\left(1+\frac{\omega}{n}\right)D}{n\mu\varepsilon} + \frac{\sqrt{L\tau(\omega+1)\left(1+\frac{\omega}{n}\right) D}}{\mu\sqrt{n\varepsilon}}\right)
	\end{equation*}
	iterations.
\end{corollary}

Applying Lemma~\ref{lem:lemma_technical_cvx} we get the complexity result in the case when $\mu = 0$.
\begin{corollary}\label{cor:d_diana_cvx_cor}
	Let the assumptions of Theorem~\ref{thm:d_diana} hold and $\mu = 0$. Then after $K$ iterations of {\tt D-SGD-DIANA} with the stepsize
	\begin{eqnarray*}
		\gamma_0 &=& \min\left\{\frac{1}{4L(1+\nicefrac{14\omega}{3n})}, \frac{1}{8L\sqrt{2\tau\left(1+\tau + \nicefrac{10\omega}{n} \right)}}\right\},\quad R_0 = \|x^0 - x^*\|,\\
		\gamma &=& \min\left\{\gamma_0, \sqrt{\frac{R_0^2}{M_1\sigma_0^2}}, \sqrt[3]{\frac{R_0^2n\alpha}{12L\omega\tau(2+\alpha)\sigma_0^2}}, \sqrt{\frac{R_0^2}{(D_1' + M_1D_2)K}}, \sqrt[3]{\frac{R_0^2}{3\tau L\left(D_1' + \frac{2B_1'D_2}{\alpha}\right)K}}\right\}	
	\end{eqnarray*}		
	we have $\EE\left[f(\bar{x}^K) - f(x^*)\right]$ of order
	\begin{gather*}
		\cO\left(\frac{L\left(1+\frac{\omega}{n}\right)R_0^2}{K} + \frac{L\sqrt{\tau\left(\tau + \frac{\omega}{n}\right)}R_0^2}{K} + \frac{\sqrt{R_0^2\omega(1+\omega)\sigma_0^2}}{\sqrt{n}K} + \frac{\sqrt[3]{R_0^4 L\tau\omega(1+\omega)\sigma_0^2}}{\sqrt[3]{n}K}\right)\\
		+\cO\left(\sqrt{\frac{(1+\omega)\left(1+\frac{\omega}{n}\right)R_0^2D}{nK}} + \frac{\sqrt[3]{R_0^4\tau L(1+\omega)\left(1+\frac{\omega}{n}\right)D}}{n^{\nicefrac{1}{3}}K^{\nicefrac{2}{3}}}\right).
	\end{gather*}
	That is, to achive $\EE\left[f(\bar{x}^K) - f(x^*)\right] \le \varepsilon$ {\tt D-SGD-DIANA} requires
	\begin{gather*}
		\cO\left(\frac{L\left(1+\frac{\omega}{n}\right)R_0^2}{\varepsilon} + \frac{L\sqrt{\tau\left(\tau + \frac{\omega}{n}\right)}R_0^2}{\varepsilon} + \frac{\sqrt{R_0^2\omega(1+\omega)\sigma_0^2}}{\sqrt{n}\varepsilon} + \frac{\sqrt[3]{R_0^4 L\tau\omega(1+\omega)\sigma_0^2}}{\sqrt[3]{n}\varepsilon}\right)\\
		+\cO\left(\frac{(1+\omega)\left(1+\frac{\omega}{n}\right)R_0^2D}{n\varepsilon^2} + \frac{R_0^2\sqrt{\tau L(1+\omega)\left(1+\frac{\omega}{n}\right)D}}{n^{\nicefrac{1}{2}}\varepsilon^{\nicefrac{3}{2}}}\right)\quad \text{iterations.}
	\end{gather*}
\end{corollary}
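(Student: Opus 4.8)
The plan is to obtain this corollary as a direct specialization of the $\mu = 0$ estimate in Theorem~\ref{thm:d_diana} followed by an invocation of Lemma~\ref{lem:lemma_technical_cvx}. Recall that for $\mu = 0$ Theorem~\ref{thm:d_diana} asserts $\EE[f(\bar x^K) - f(x^*)] \le \frac{4(T^0 + \gamma F_1\sigma_0^2)}{\gamma K} + 4\gamma(D_1' + M_1 D_2 + D_3)$, where $T^0 = \|\tx^0 - x^*\|^2 + M_1\gamma^2\sigma_0^2 = R_0^2 + M_1\gamma^2\sigma_0^2$ since $e^0 = 0$. First I would substitute the explicit parameter values supplied by Theorem~\ref{thm:d_diana}, namely $M_1 = \nicefrac{8\omega}{(3n\alpha)}$, $B_1' = \nicefrac{2\omega}{n}$, $D_1' = \nicefrac{(\omega+1)D}{n}$, $D_2 = \nicefrac{\alpha(\omega+1)D}{n}$, $F_1 = \frac{12\gamma^2 L\omega\tau(2+\alpha)}{n\alpha}$ and $D_3 = 3\gamma\tau L(1+\nicefrac{4\omega}{n})\frac{(\omega+1)D}{n}$, into this bound.

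The key bookkeeping step, which is also the main obstacle, is that three of these quantities secretly carry their own powers of $\gamma$: $T^0$ contains $M_1\gamma^2\sigma_0^2$, the ``constant'' $F_1$ is itself proportional to $\gamma^2$, and $D_3$ is proportional to $\gamma$. I would therefore expand the bound summand by summand, tracking these hidden powers so that every term lands in the correct slot of the template $r_K \le \frac{a}{\gamma K} + \frac{b_1\gamma}{K} + \frac{b_2\gamma^2}{K} + c_1\gamma + c_2\gamma^2$ from Lemma~\ref{lem:lemma_technical_cvx}. Concretely, $\frac{4R_0^2}{\gamma K}$ supplies $a = 4R_0^2$; the piece $\frac{4M_1\gamma^2\sigma_0^2}{\gamma K}$ collapses to a $\frac{\gamma}{K}$ term giving $b_1 = 4M_1\sigma_0^2$; the piece $\frac{4\gamma F_1\sigma_0^2}{\gamma K}$ becomes a $\frac{\gamma^2}{K}$ term giving $b_2 = \frac{48L\omega\tau(2+\alpha)\sigma_0^2}{n\alpha}$; the part $4\gamma(D_1' + M_1 D_2)$ yields $c_1 = 4(D_1' + M_1 D_2) = \frac{4(\omega+1)D}{n}(1 + \nicefrac{8\omega}{3n})$; and $4\gamma D_3$, being quadratic in $\gamma$, yields $c_2 = 12\tau L(1 + \nicefrac{4\omega}{n})\frac{(\omega+1)D}{n}$. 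I would then confirm that the stepsize prescribed in the statement coincides, up to absorbed constants, with $\gamma = \min\left\{\gamma_0, \sqrt{a/b_1}, \sqrt[3]{a/b_2}, \sqrt{a/(c_1 K)}, \sqrt[3]{a/(c_2 K)}\right\}$; for instance $a/(c_2 K)$ reduces exactly to $\frac{R_0^2}{3\tau L(D_1' + \frac{2B_1'D_2}{\alpha})K}$ after noting $D_1' + \frac{2B_1'D_2}{\alpha} = \frac{(\omega+1)D}{n}(1+\nicefrac{4\omega}{n})$.

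Once the constants are identified, Lemma~\ref{lem:lemma_technical_cvx} delivers $r_K = \cO\left(\frac{a}{\gamma_0 K} + \frac{\sqrt{a b_1}}{K} + \frac{\sqrt[3]{a^2 b_2}}{K} + \sqrt{\frac{a c_1}{K}} + \frac{\sqrt[3]{a^2 c_2}}{K^{2/3}}\right)$, and the remaining work is purely big-$\cO$ arithmetic. I would insert $\gamma_0^{-1} = \cO\left(L(1+\nicefrac{\omega}{n}) + L\sqrt{\tau(\tau+\nicefrac{\omega}{n})}\right)$ and use $\nicefrac{1}{\alpha} = \cO(\omega+1)$ (legitimate since $\alpha = \min\{\nicefrac{1}{(\omega+1)}, \nicefrac12\}$) to collapse each of the five terms into the matching summand of the displayed convergence rate. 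Finally, the $\varepsilon$-iteration complexity follows in the standard manner, by demanding that each term of the rate be at most $\varepsilon$ and solving for $K$. I expect no conceptual difficulty past the careful accounting of the $\gamma$-powers; everything else is substitution and simplification entirely parallel to the earlier $\mu = 0$ corollaries, e.g.\ Corollary~\ref{cor:d_QSGDstar_cvx_cor}.
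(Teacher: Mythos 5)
Your proposal is correct and follows exactly the route the paper takes (the paper simply states "Applying Lemma~\ref{lem:lemma_technical_cvx} we get the complexity result"): specialize Theorem~\ref{thm:d_diana} to $\mu=0$, expand $T^0$, $F_1$ and $D_3$ to expose their hidden powers of $\gamma$, identify $a=4R_0^2$, $b_1=4M_1\sigma_0^2$, $b_2=\nicefrac{48L\omega\tau(2+\alpha)\sigma_0^2}{n\alpha}$, $c_1=4(D_1'+M_1D_2)$, $c_2=12\tau L(1+\nicefrac{4\omega}{n})\nicefrac{(\omega+1)D}{n}$, and invoke Lemma~\ref{lem:lemma_technical_cvx}. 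Your bookkeeping, the verification that the prescribed stepsizes match (including the identity $D_1'+\nicefrac{2B_1'D_2}{\alpha}=\nicefrac{(\omega+1)D}{n}(1+\nicefrac{4\omega}{n})$), and the final simplifications via $\nicefrac{1}{\alpha}=\cO(\omega+1)$ and the bound on $\gamma_0^{-1}$ all check out.
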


\subsection{{\tt D-SGDsr}}\label{sec:d_SGDsr}
In this section we consider the same settings as in Section~\ref{sec:ec_SGDsr}, but this time we consider delayed updates. Moreover, in this section we need slightly weaker assumption.
\begin{assumption}[Expected smoothness]\label{ass:exp_smoothness_f}
	We assume that function $f$ is $\cL$-smooth in expectation w.r.t.\ distribution $\cD$, i.e., there exists constant $\cL = \cL(f,\cD)$ such that
	\begin{equation}
		\EE_{\cD}\left[\|\nabla f_{\xi}(x) - \nabla f_{\xi}(x^*)\|^2\right] \le 2\cL\left(f(x) - f(x^*)\right) \label{eq:exp_smoothness_f}
	\end{equation}
	for all $i\in [n]$ and $x\in\R^d$.
\end{assumption}
\begin{algorithm}[t]
   \caption{{\tt D-SGDsr}}\label{alg:d-SGDsr}
\begin{algorithmic}[1]
   \Require learning rate $\gamma>0$, initial vector $x^0 \in \R^d$
	\State Set $e_i^0 = 0$ for all $i=1,\ldots, n$   
   \For{$k=0,1,\dotsc$}
       \State Broadcast $x^{k-\tau}$ to all workers
        \For{$i=1,\dotsc,n$ in parallel}
            \State Sample $g^{k-\tau}_i = \nabla f_{\xi_i}(x^{k-\tau}) - \nabla f_i(x^*)$
            \State $v_i^k = \gamma g_i^{k-\tau}$
            \State $e_i^{k+1} = e_i^k + \gamma g_i^k - v_i^k$
        \EndFor
        \State $e^k = \frac{1}{n}\sum_{i=1}^ne_i^k$, $g^k = \frac{1}{n}\sum_{i=1}^ng_i^k$, $v^k = \frac{1}{n}\sum_{i=1}^nv_i^k = \frac{1}{n}\sum_{i=1}^n\nabla f_{\xi_i}(x^{k-\tau})$
       \State $x^{k+1} = x^k - v^k$
   \EndFor
\end{algorithmic}
\end{algorithm}

\begin{lemma}\label{lem:key_lemma_d-SGDsr}
	For all $k\ge 0$ we have
	\begin{equation}
		\EE\left[\|g^k\|^2\mid x^k\right] \le 4\cL\left(f(x^k) - f(x^*)\right) + 2\EE_{\cD}\left[\|\nabla f_{\xi}(x^*)\|^2\right]. \label{eq:key_lemma_d-SGDsr} 
	\end{equation}
\end{lemma}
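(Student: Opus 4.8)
The plan is to first simplify $g^k$ by exploiting that $x^*$ is a minimizer. From the update rule of {\tt D-SGDsr} we have $g^k = \frac{1}{n}\sum_{i=1}^n g_i^k = \frac{1}{n}\sum_{i=1}^n\left(\nabla f_{\xi_i}(x^k) - \nabla f_i(x^*)\right)$. Since $\frac{1}{n}\sum_{i=1}^n \nabla f_i(x^*) = \nabla f(x^*) = 0$, while $\frac{1}{n}\sum_{i=1}^n \nabla f_{\xi_i}(x^k) = \nabla f_{\xi}(x^k)$ by the definition \eqref{eq:sr_def} of the stochastic reformulation, I would observe that $g^k = \nabla f_\xi(x^k)$. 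This collapses the per-worker average into a single gradient of the reformulated stochastic objective, which is exactly the object controlled by Assumption~\ref{ass:exp_smoothness_f}.

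Next I would take the conditional expectation and split off the gradient at the optimum. Writing $\nabla f_\xi(x^k) = \left(\nabla f_\xi(x^k) - \nabla f_\xi(x^*)\right) + \nabla f_\xi(x^*)$ and applying \eqref{eq:a_b_norm_squared} with two summands gives
\begin{equation*}
    \EE\left[\|g^k\|^2\mid x^k\right] \le 2\EE_{\cD}\left[\|\nabla f_\xi(x^k) - \nabla f_\xi(x^*)\|^2\right] + 2\EE_{\cD}\left[\|\nabla f_\xi(x^*)\|^2\right].
\end{equation*}
I would then bound the first term directly by the expected-smoothness inequality \eqref{eq:exp_smoothness_f}, namely $\EE_{\cD}[\|\nabla f_\xi(x^k) - \nabla f_\xi(x^*)\|^2] \le 2\cL(f(x^k) - f(x^*))$, and keep the second term as the claimed additive constant. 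Combining the two bounds yields precisely \eqref{eq:key_lemma_d-SGDsr}.

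The argument is short, and the only genuinely delicate point is the first step: recognizing that subtracting $\nabla f_i(x^*)$ in the definition of $g_i^k$, together with $\nabla f(x^*)=0$, lets one rewrite $g^k$ as $\nabla f_\xi(x^k)$ and thereby invoke the weaker, aggregate expected-smoothness Assumption~\ref{ass:exp_smoothness_f} rather than the per-worker Assumption~\ref{ass:exp_smoothness}. Unlike the analogous bound for {\tt EC-SGDsr} (Lemma~\ref{lem:key_lemma_ec-SGDsr}), there is no need to exploit independence of the samples $\xi_1^k,\ldots,\xi_n^k$ across workers, so no $1/n$ variance reduction appears and the residual term is simply $2\EE_{\cD}[\|\nabla f_\xi(x^*)\|^2]$. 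The main thing to keep straight is consistent use of the two expectations: $\EE[\,\cdot\mid x^k]$ over the draw of $\xi$ forming $g^k$, and $\EE_{\cD}[\,\cdot\,]$ for the same draw written without conditioning.
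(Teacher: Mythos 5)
Your proof is correct and takes essentially the same route as the paper's: both rewrite $g^k$ as $\nabla f_\xi(x^k)-\nabla f(x^*)$, split off $\nabla f_\xi(x^*)$, apply $\|a+b\|^2\le 2\|a\|^2+2\|b\|^2$ (a case of \eqref{eq:a_b_norm_squared}), and bound the first term via Assumption~\ref{ass:exp_smoothness_f}. The only cosmetic difference is that you invoke $\nabla f(x^*)=0$ at the start to write $g^k=\nabla f_\xi(x^k)$, whereas the paper carries $\nabla f(x^*)$ along and drops it in the final line.
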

\begin{proof}
	Applying straightforward inequality $\|a+b\|^2 \le 2\|a\|^2 + 2\|b\|^2$ for $a,b\in\R^d$ we get
	\begin{eqnarray*}
		\EE\left[\|g^k\|^2\mid x^k\right] &=& \EE\left[\left\|\frac{1}{n}\sum\limits_{i=1}^n\left(\nabla f_{\xi_i}(x^k) - \nabla f_i(x^*)\right)\right\|^2\mid x^k\right] \\
		&\overset{\eqref{eq:a_b_norm_squared}}{\le}& 2\EE_{\cD}\left[\|\nabla f_{\xi}(x^k) - \nabla f_{\xi}(x^*)\|^2\right] + 2\EE_{\cD}\left[\|\nabla f_{\xi}(x^*) - \nabla f(x^*)\|^2\right]\\
		&\overset{\eqref{eq:exp_smoothness_f}}{\le}& 4\cL\left(f(x^k) - f(x^*)\right) + 2\EE_{\cD}\left[\|\nabla f_{\xi}(x^*)\|^2\right].
	\end{eqnarray*}
\end{proof}

\begin{theorem}\label{thm:d_SGDsr}
	Assume that $f(x)$ is $\mu$-quasi strongly convex, $L$-smooth and Assumption~\ref{ass:exp_smoothness_f} holds. Then {\tt D-SGDsr} satisfies Assumption~\ref{ass:key_assumption_new} with
	\begin{gather*}
		A' = 2\cL,\quad B_1' = B_2' = 0,\quad D_1' = 2\EE_{\cD}\|\nabla f_{\xi}(x^*)\|^2,\quad \sigma_{1,k}^2 \equiv \sigma_{2,k}^2 \equiv 0\\
		\rho_1 = \rho_2 = 1,\quad C_1 = C_2 = 0,\quad D_2 = 0,\quad G = 0,\\
		F_1 = F_2 = 0,\quad D_3 = 6\gamma\tau L\EE_{\cD}\|\nabla f_{\xi}(x^*)\|^2
	\end{gather*}
	with $\gamma$ satisfying
	\begin{equation*}
		\gamma \le \min\left\{\frac{1}{8\cL}, \frac{1}{8\sqrt{L\tau\left(L\tau + 2\cL\right)}}\right\}
	\end{equation*}
	and for all $K \ge 0$
	\begin{equation*}
		\EE\left[f(\bar{x}^K) - f(x^*)\right] \le \left(1 - \frac{\gamma\mu}{2}\right)^K\frac{4\|x^0 - x^*\|^2}{\gamma} + 8\gamma(1 + 3\gamma\tau L)\EE_{\cD}\|\nabla f_\xi(x^*)\|^2
	\end{equation*}
	when $\mu > 0$ and
	\begin{equation*}
		\EE\left[f(\bar{x}^K) - f(x^*)\right] \le \frac{4\|x^0 - x^*\|^2}{\gamma K} + 8\gamma(1 + 3\gamma\tau L)\EE_{\cD}\|\nabla f_\xi(x^*)\|^2
	\end{equation*}
	when $\mu = 0$.
\end{theorem}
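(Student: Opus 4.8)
The plan is to recognize that {\tt D-SGDsr} is an instance of the general delayed-update framework of Section~\ref{sec:d_sgd}, so the entire proof reduces to (i) verifying that Assumption~\ref{ass:key_assumption_new} holds with the stated parameters and (ii) invoking Theorem~\ref{thm:d_sgd_main_result_new}. Since {\tt D-SGDsr} carries no variance-reduction machinery, I would immediately set $\sigma_{1,k}^2 \equiv \sigma_{2,k}^2 \equiv 0$ and $\rho_1=\rho_2=1$, which makes the recursions \eqref{eq:sigma_k+1_bound_1}--\eqref{eq:sigma_k+1_bound_2} hold trivially with $C_1=C_2=G=D_2=0$. The only substantive inputs are then the unbiasedness \eqref{eq:unbiasedness_new} and the second-moment bound \eqref{eq:second_moment_bound_new}.

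First I would check unbiasedness directly: since $g_i^k = \nabla f_{\xi_i}(x^k) - \nabla f_i(x^*)$ and $\EE_{\cD_i}[\nabla f_{\xi_i}(x^k)\mid x^k] = \nabla f_i(x^k)$, averaging over $i$ gives $\EE[g^k\mid x^k] = \nabla f(x^k) - \nabla f(x^*) = \nabla f(x^k)$, where the last step uses $\nabla f(x^*)=0$. (The subtracted $\nabla f_i(x^*)$ is benign because it averages to zero, yet it is precisely what centers the estimator at the optimum and yields a tight bound.) Next, Lemma~\ref{lem:key_lemma_d-SGDsr} already supplies exactly the bound \eqref{eq:second_moment_bound_new} needed, namely $\EE[\|g^k\|^2\mid x^k] \le 4\cL(f(x^k)-f(x^*)) + 2\EE_{\cD}[\|\nabla f_\xi(x^*)\|^2]$, so I read off $A' = 2\cL$, $B_1'=B_2'=0$, and $D_1' = 2\EE_{\cD}\|\nabla f_\xi(x^*)\|^2$. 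This settles all parameters except those governing the error-feedback term.

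Then I would obtain the error-feedback inequality \eqref{eq:sum_of_errors_bound_new} by applying Lemma~\ref{lem:d_sgd_key_lemma_new}, whose hypotheses \eqref{eq:second_moment_bound_new}, \eqref{eq:sigma_k+1_bound_1}, \eqref{eq:sigma_k+1_bound_2} have just been verified. With $\hat A = A' + L\tau = 2\cL + L\tau$ from \eqref{eq:d_sgd_hat_A}, and with $B_1'=B_2'=0$ killing every variance-reduction-dependent summand, the formulas \eqref{eq:d_sgd_parameters_new_1}--\eqref{eq:d_sgd_parameters_new_2} collapse to $F_1 = F_2 = 0$ and $D_3 = 3\gamma\tau L\, D_1' = 6\gamma\tau L\,\EE_{\cD}\|\nabla f_\xi(x^*)\|^2$, matching the claim. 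The stepsize restriction is the conjunction of $\gamma\le \tfrac{1}{4(A'+C_1M_1+C_2M_2)} = \tfrac{1}{8\cL}$ from Theorem~\ref{thm:main_result_new} and the condition \eqref{eq:gamma_condition_d_sgd_new} of Lemma~\ref{lem:d_sgd_key_lemma_new}; the latter reduces to $\gamma \le \min\{\tfrac{1}{2\tau\mu}, \tfrac{1}{8\sqrt{L\tau(L\tau+2\cL)}}\}$, and since $\sqrt{L\tau(L\tau+2\cL)}\ge L\tau \ge \mu\tau$ the term $\tfrac{1}{2\tau\mu}$ is automatically dominated and can be dropped, leaving precisely $\gamma \le \min\{\tfrac{1}{8\cL}, \tfrac{1}{8\sqrt{L\tau(L\tau+2\cL)}}\}$.

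Finally I would feed these parameters into Theorem~\ref{thm:d_sgd_main_result_new}. With $\sigma_{1,0}^2=\sigma_{2,0}^2=0$, $F_1=F_2=0$, $e^0=0$ so that $T^0 = \|x^0-x^*\|^2$, $M_1 D_2 = 0$, and $\eta=\gamma\mu/2$ (as $\rho_1=\rho_2=1$ and $\gamma\mu/2 < 1/4$), the master bound becomes $(1-\gamma\mu/2)^K\tfrac{4\|x^0-x^*\|^2}{\gamma} + 4\gamma(D_1'+D_3)$, and collecting $4\gamma(D_1'+D_3) = 8\gamma(1+3\gamma\tau L)\EE_{\cD}\|\nabla f_\xi(x^*)\|^2$ gives exactly the stated $\mu>0$ estimate; the $\mu=0$ case follows identically from the second display of Theorem~\ref{thm:d_sgd_main_result_new}. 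There is no genuine mathematical obstacle here: the work is entirely parameter bookkeeping, and the only place demanding care is confirming that the two separate stepsize constraints fuse into the single stated condition and that the $B'$-dependent terms in Lemma~\ref{lem:d_sgd_key_lemma_new} vanish cleanly to produce $F_1=F_2=0$.
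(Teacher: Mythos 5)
Your proposal is correct and follows essentially the same route as the paper: establish unbiasedness and the second-moment bound of Lemma~\ref{lem:key_lemma_d-SGDsr} (giving $A'=2\cL$, $B_1'=B_2'=0$, $D_1'=2\EE_{\cD}\|\nabla f_\xi(x^*)\|^2$ with trivial $\sigma$-sequences), then plug these into Lemma~\ref{lem:d_sgd_key_lemma_new} to get $F_1=F_2=0$, $D_3=3\gamma\tau L D_1'$, and finish with Theorem~\ref{thm:d_sgd_main_result_new}. The only addition beyond the paper's bookkeeping is your explicit observation that $\tfrac{1}{8\sqrt{L\tau(L\tau+2\cL)}}\le\tfrac{1}{8L\tau}\le\tfrac{1}{2\tau\mu}$ (using $L\ge\mu$), which correctly justifies why the $\tfrac{1}{2\tau\mu}$ constraint of \eqref{eq:gamma_condition_d_sgd_new} may be dropped from the stated stepsize condition.
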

In other words, {\tt D-SGDsr} converges with linear rate $\cO\left(\left(\frac{\cL}{\mu} + \frac{\sqrt{L\cL\tau + L^2\tau^2}}{\mu}\right)\ln\frac{1}{\varepsilon}\right)$ to the neighbourhood of the solution when $\mu > 0$. Applying Lemma~\ref{lem:lemma2_stich} we establish the rate of convergence to $\varepsilon$-solution.
\begin{corollary}\label{cor:d_SGDsr_str_cvx_cor}
	Let the assumptions of Theorem~\ref{thm:d_SGDsr} hold and $\mu > 0$. Then after $K$ iterations of {\tt D-SGDsr} with the stepsize
	\begin{eqnarray*}
		\gamma_0 &=& \min\left\{\frac{1}{8\cL}, \frac{1}{8\sqrt{L\tau\left(L\tau + 2\cL\right)}}\right\},\quad R_0 = \|x^0 - x^*\|,\\
		\gamma &=& \min\left\{\gamma_0, \frac{\ln\left(\max\left\{2,\min\left\{\frac{R_0^2\mu^2K^2}{D_1'}, \frac{R_0^2\mu^3K^3}{3\tau LD_1'}\right\}\right\}\right)}{\mu K}\right\}
	\end{eqnarray*}	 
	we have $\EE\left[f(\bar{x}^K) - f(x^*)\right]$ of order
	\begin{equation*}
		\widetilde\cO\left(R_0^2\left(\cL + \sqrt{L^2\tau^2 + L\cL\tau}\right)\exp\left(-\frac{\mu}{\tau L}K\right) + \frac{\EE_{\cD}\|\nabla f_{\xi}(x^*)\|^2}{\mu K} + \frac{L\tau \EE_{\cD}\|\nabla f_{\xi}(x^*)\|^2}{\mu^2 K^2}\right).
	\end{equation*}
	That is, to achive $\EE\left[f(\bar{x}^K) - f(x^*)\right] \le \varepsilon$ {\tt D-SGDsr} requires
	\begin{equation*}
		\widetilde{\cO}\left(\frac{\cL + \sqrt{L^2\tau^2 + L\cL\tau}}{\mu} + \frac{\EE_{\cD}\|\nabla f_{\xi}(x^*)\|^2}{\mu\varepsilon} + \frac{\sqrt{L\tau \EE_{\cD}\|\nabla f_{\xi}(x^*)\|^2}}{\mu\sqrt{\varepsilon}}\right) \text{ iterations.}
	\end{equation*}
\end{corollary}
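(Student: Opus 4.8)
The plan is to feed the $\mu>0$ estimate of Theorem~\ref{thm:d_SGDsr} into Lemma~\ref{lem:lemma2_stich}; the entire task reduces to matching constants, since the lemma already packages the stepsize tuning and the balancing of the exponential decay against the two neighbourhood terms. First I would use the fact that the proof of Theorem~\ref{thm:d_SGDsr} (through the master estimate Theorem~\ref{thm:d_sgd_main_result_new}) produces, before specializing $W_K \ge (1-\eta)^{-K}$, the averaged bound $\EE[f(\bar x^K)-f(x^*)] \le \frac{4 T^0}{\gamma W_K} + 4\gamma(D_1'+M_1D_2+D_3)$. For {\tt D-SGDsr} the parameters listed in Theorem~\ref{thm:d_SGDsr} give $T^0 = R_0^2$, $M_1 D_2 = 0$, $D_1' = 2\EE_{\cD}\|\nabla f_\xi(x^*)\|^2$, and $D_3 = 6\gamma\tau L\,\EE_{\cD}\|\nabla f_\xi(x^*)\|^2$, so the neighbourhood term expands to $8\gamma\,\EE_{\cD}\|\nabla f_\xi(x^*)\|^2 + 24\gamma^2\tau L\,\EE_{\cD}\|\nabla f_\xi(x^*)\|^2$. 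This is exactly the recurrence \eqref{eq:lemma2_stich_tech_1} with $a = 4R_0^2$, $c_1 = 8\EE_{\cD}\|\nabla f_\xi(x^*)\|^2 = 4D_1'$, $c_2 = 24\tau L\,\EE_{\cD}\|\nabla f_\xi(x^*)\|^2 = 12\tau L D_1'$, and stepsize ceiling $1/d = \gamma_0$, where $d = \max\{8\cL, 8\sqrt{L\tau(L\tau+2\cL)}\}$ is, up to an absolute constant, $\cL + \sqrt{L^2\tau^2 + L\cL\tau}$.

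Next I would confirm that the stepsize named in the corollary is literally the tuned choice \eqref{eq:lemma2_stich_gamma}. Substituting the constants gives $a\mu^2K^2/c_1 = R_0^2\mu^2K^2/D_1'$ and $a\mu^3K^3/c_2 = R_0^2\mu^3K^3/(3\tau L D_1')$, so \eqref{eq:lemma2_stich_gamma} reads $\gamma = \min\{\gamma_0, \ln(\max\{2,\min\{R_0^2\mu^2K^2/D_1', R_0^2\mu^3K^3/(3\tau L D_1')\}\})/(\mu K)\}$, matching the statement verbatim. Since $\rho_1=\rho_2=1$ here, the admissibility requirement $\ln(\cdots)/K \le \min\{\rho_1,\rho_2\}$ of the lemma holds for all sufficiently large $K$ and is hidden by $\widetilde\cO$.

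The lemma then delivers $\EE[f(\bar x^K)-f(x^*)] = \widetilde\cO(da\exp(-\frac{\mu}{d}K) + \frac{c_1}{\mu K} + \frac{c_2}{\mu^2 K^2})$; inserting $da = \Theta(R_0^2(\cL+\sqrt{L^2\tau^2+L\cL\tau}))$, $c_1 = \Theta(\EE_{\cD}\|\nabla f_\xi(x^*)\|^2)$, and $c_2 = \Theta(\tau L\,\EE_{\cD}\|\nabla f_\xi(x^*)\|^2)$ reproduces the displayed rate, and requiring each of the three terms to be at most $\varepsilon/3$ (and taking the largest resulting $K$) yields the stated iteration complexity $\widetilde\cO(\frac{\cL+\sqrt{L^2\tau^2+L\cL\tau}}{\mu} + \frac{\EE_{\cD}\|\nabla f_\xi(x^*)\|^2}{\mu\varepsilon} + \frac{\sqrt{L\tau\,\EE_{\cD}\|\nabla f_\xi(x^*)\|^2}}{\mu\sqrt{\varepsilon}})$.

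There is no genuinely hard step; the only points deserving care are the constant bookkeeping that makes the corollary's stepsize coincide with \eqref{eq:lemma2_stich_gamma}, and the cosmetic gap between the decay rate $\mu/d$ produced by the lemma and the representative $\mu/(\tau L)$ written in the intermediate $\widetilde\cO$ display. Because $d \ge 8L\tau$, these differ only by a constant factor, and what matters is that the exponential term contributes just $\widetilde\cO(d/\mu) = \widetilde\cO((\cL+\sqrt{L^2\tau^2+L\cL\tau})/\mu)$ to the final count, i.e.\ the first term of the complexity. One degenerate case should be flagged: if $\EE_{\cD}\|\nabla f_\xi(x^*)\|^2 = 0$ then $c_1=c_2=0$, the positivity hypothesis $c_2>0$ of Lemma~\ref{lem:lemma2_stich} is not met, and the neighbourhood term of Theorem~\ref{thm:d_SGDsr} vanishes, so exact linear convergence is read off directly from that theorem.
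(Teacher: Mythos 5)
Your proposal is correct and is exactly the paper's argument: the $\mu>0$ bound of Theorem~\ref{thm:d_SGDsr} is cast as the recursion \eqref{eq:lemma2_stich_tech_1} with $a=4R_0^2$, $c_1=4D_1'$, $c_2=12\tau L D_1'$ and $\nicefrac{1}{d}=\gamma_0$, after which Lemma~\ref{lem:lemma2_stich} with the tuned stepsize \eqref{eq:lemma2_stich_gamma} yields both displays, which is precisely how the paper derives the corollary. One nuance: your remark that $\nicefrac{\mu}{d}$ and $\nicefrac{\mu}{(\tau L)}$ ``differ only by a constant factor'' is not literally true when $\cL\gg L\tau$ (then $d=\Theta(\cL)\gg\tau L$, so the exponent written in the corollary is loose), but as you correctly conclude, the exponential term contributes $\widetilde\cO\left(\nicefrac{d}{\mu}\right)=\widetilde\cO\left(\nicefrac{(\cL+\sqrt{L^2\tau^2+L\cL\tau})}{\mu}\right)$ to the iteration count either way, so the stated complexity stands.
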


Applying Lemma~\ref{lem:lemma_technical_cvx} we get the complexity result in the case when $\mu = 0$.
\begin{corollary}\label{cor:d_sgdsr_cvx_cor}
	Let the assumptions of Theorem~\ref{thm:d_SGDsr} hold and $\mu = 0$. Then after $K$ iterations of {\tt D-SGDsr} with the stepsize
	\begin{eqnarray*}
		\gamma &=& \min\left\{\frac{1}{8\cL}, \frac{1}{8\sqrt{L\tau\left(L\tau + 2\cL\right)}}, \sqrt{\frac{\|x^0 - x^*\|^2}{D_1' K}}, \sqrt[3]{\frac{\|x^0 - x^*\|^2}{3L\tau D_1' K}}\right\}
		\end{eqnarray*}		
	we have $\EE\left[f(\bar{x}^K) - f(x^*)\right]$ of order
	\begin{equation*}
		\cO\left(\frac{\cL R_0^2}{K} + \frac{\sqrt{L^2\tau^2 + L\cL\tau} R_0^2}{K} + \sqrt{\frac{R_0^2 \tau \EE_\cD\|\nabla f_{\xi}(x^*)\|^2}{K}} + \frac{\sqrt[3]{LR_0^4\tau \EE_\cD\|\nabla f_{\xi}(x^*)\|^2}}{K^{\nicefrac{2}{3}}}\right)
	\end{equation*}
	where $R_0 = \|x^0 - x^*\|$. That is, to achive $\EE\left[f(\bar{x}^K) - f(x^*)\right] \le \varepsilon$ {\tt D-SGDsr} requires
	\begin{equation*}
		\cO\left(\frac{\cL R_0^2}{\varepsilon} + \frac{\sqrt{L^2\tau^2 + L\cL\tau} R_0^2}{\varepsilon} + \frac{R_0^2 \EE_\cD\|\nabla f_{\xi}(x^*)\|^2}{\varepsilon^2} + \frac{R_0^2\sqrt{L\tau \EE_\cD\|\nabla f_{\xi}(x^*)\|^2}}{\varepsilon^{\nicefrac{3}{2}}}\right)
	\end{equation*}
	iterations.
\end{corollary}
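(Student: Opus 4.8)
The plan is to obtain Corollary~\ref{cor:d_sgdsr_cvx_cor} as a direct consequence of the $\mu=0$ bound already established in Theorem~\ref{thm:d_SGDsr} together with the elementary stepsize-tuning Lemma~\ref{lem:lemma_technical_cvx}; no fresh analysis of the iterates of {\tt D-SGDsr} is required. First I would take the $\mu=0$ guarantee of Theorem~\ref{thm:d_SGDsr},
\begin{equation*}
\EE\left[f(\bar x^K) - f(x^*)\right] \le \frac{4R_0^2}{\gamma K} + 8\gamma(1+3\gamma\tau L)\EE_{\cD}\|\nabla f_\xi(x^*)\|^2,
\end{equation*}
valid for every $\gamma \le \gamma_0 := \min\{\tfrac{1}{8\cL}, \tfrac{1}{8\sqrt{L\tau(L\tau+2\cL)}}\}$ and every $K\ge 0$, and rewrite it using the theorem's value $D_1' = 2\EE_{\cD}\|\nabla f_\xi(x^*)\|^2$ in the purely polynomial-in-$\gamma$ form
\begin{equation*}
\EE\left[f(\bar x^K) - f(x^*)\right] \le \frac{4R_0^2}{\gamma K} + 4D_1'\gamma + 12\tau L D_1'\gamma^2.
\end{equation*}

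Next I would match this against the template $r_K \le \tfrac{a}{\gamma K} + \tfrac{b_1\gamma}{K} + \tfrac{b_2\gamma^2}{K} + c_1\gamma + c_2\gamma^2$ of Lemma~\ref{lem:lemma_technical_cvx} with the identification $a = 4R_0^2$, $b_1 = b_2 = 0$, $c_1 = 4D_1'$, $c_2 = 12\tau L D_1'$. Because $b_1 = b_2 = 0$, the candidate stepsizes $\sqrt{a/b_1}$ and $\sqrt[3]{a/b_2}$ are vacuous, so the lemma's prescription collapses to $\gamma = \min\{\gamma_0, \sqrt{a/(c_1K)}, \sqrt[3]{a/(c_2K)}\}$; substituting the constants gives exactly $\sqrt{R_0^2/(D_1'K)}$ and $\sqrt[3]{R_0^2/(3\tau L D_1' K)}$, which is precisely the stepsize stated in the corollary. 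Invoking the conclusion of Lemma~\ref{lem:lemma_technical_cvx} (again dropping the two vanishing terms) then yields
\begin{equation*}
\EE\left[f(\bar x^K) - f(x^*)\right] = \cO\!\left(\frac{R_0^2}{\gamma_0 K} + \sqrt{\frac{R_0^2 D_1'}{K}} + \frac{\sqrt[3]{R_0^4\tau L D_1'}}{K^{2/3}}\right).
\end{equation*}

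To finish I would substitute $1/\gamma_0 = 8\max\{\cL,\sqrt{L\tau(L\tau+2\cL)}\}$ and $D_1' = 2\EE_{\cD}\|\nabla f_\xi(x^*)\|^2$, and use the two-sided comparison $\sqrt{L^2\tau^2 + 2L\cL\tau} = \Theta(\sqrt{L^2\tau^2 + L\cL\tau})$ to split the first term into the summands $\cO(\cL R_0^2/K)$ and $\cO(\sqrt{L^2\tau^2+L\cL\tau}\,R_0^2/K)$ of the claimed rate. The iteration complexity then follows by setting each summand $\le\varepsilon$ and solving for $K$: the $\cO(1/K)$ terms give $\cO((\cL+\sqrt{L^2\tau^2+L\cL\tau})R_0^2/\varepsilon)$, the $\cO(K^{-1/2})$ term gives $\cO(R_0^2\EE_{\cD}\|\nabla f_\xi(x^*)\|^2/\varepsilon^2)$, and the $\cO(K^{-2/3})$ term gives $\cO(R_0^2\sqrt{L\tau\EE_{\cD}\|\nabla f_\xi(x^*)\|^2}/\varepsilon^{3/2})$.

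The derivation is entirely mechanical, so there is no genuine obstacle; the only care needed is the bookkeeping of numerical constants so that the matched $c_1,c_2$ reproduce the exact stepsize of the statement, and the elementary estimate $\sqrt{L^2\tau^2+2L\cL\tau}=\Theta(\sqrt{L^2\tau^2+L\cL\tau})$. I would also flag one cosmetic point: the $\cO(K^{-1/2})$ term produced by the lemma is $\sqrt{R_0^2 D_1'/K}$, carrying no factor $\tau$, whereas the displayed rate in the corollary writes $\sqrt{R_0^2\tau D_1'/K}$; since $\tau\ge1$ the displayed expression is a harmless overestimate, and indeed the accompanying complexity bound $\cO(R_0^2\EE_{\cD}\|\nabla f_\xi(x^*)\|^2/\varepsilon^2)$ is the one consistent with the tighter, $\tau$-free term that the derivation actually produces.
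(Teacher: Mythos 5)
Your proposal is correct and is exactly the paper's route: the corollary is obtained by plugging the $\mu=0$ bound of Theorem~\ref{thm:d_SGDsr} (with $D_1'=2\EE_{\cD}\|\nabla f_{\xi}(x^*)\|^2$, $a=4R_0^2$, $b_1=b_2=0$, $c_1=4D_1'$, $c_2=12\tau L D_1'$) into Lemma~\ref{lem:lemma_technical_cvx}, which reproduces the stated stepsize and, after bounding $\nicefrac{1}{\gamma_0}$ by $8\bigl(\cL+\sqrt{L^2\tau^2+2L\cL\tau}\bigr)$, the stated rate and iteration count. Your side remark is also accurate: the derivation yields the $\tau$-free term $\sqrt{\nicefrac{R_0^2 D_1'}{K}}$, so the extra $\tau$ inside the $K^{-\nicefrac{1}{2}}$ term of the corollary's displayed rate is a harmless overestimate (since $\tau\ge 1$), and the $\varepsilon^{-2}$ term of the complexity bound is indeed the one consistent with the tighter, $\tau$-free expression.
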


\subsection{{\tt D-LSVRG}}\label{sec:d_LSVRG}
In the same settings as in Section~\ref{sec:ec_LSVRG} we now consider a new method called {\tt D-LSVRG} which is another modification of {\tt LSVRG} that works with delayed updates. 
\begin{algorithm}[t]
   \caption{{\tt D-LSVRG}}\label{alg:d-LSVRG}
\begin{algorithmic}[1]
   \Require learning rate $\gamma>0$, initial vector $x^0 \in \R^d$
   \State Set $e_i^0 = 0$ for all $i=1,\ldots, n$   
   \For{$k=0,1,\dotsc$}
       \State Broadcast $x^{k-\tau}$ to all workers
        \For{$i=1,\dotsc,n$ in parallel}
        	\State Pick $l$ uniformly at random from $[m]$
            \State Set $g^{k-\tau}_i = \nabla f_{il}(x^{k-\tau}) - \nabla f_{il}(w_i^{k-\tau}) + \nabla f_i(w_i^{k-\tau})$
            \State $v_i^k = \gamma g_{i}^{k-\tau}$
            \State $e_i^{k+1} = e_i^k + \gamma g_i^k - v_i^k$
            \State $w_i^{k-\tau+1} = \begin{cases}x^{k-\tau},& \text{with probability } p,\\ w_i^{k-\tau},& \text{with probability } 1-p\end{cases}$
        \EndFor
        \State $e^k = \frac{1}{n}\sum_{i=1}^ne_i^k$, $g^k = \frac{1}{n}\sum_{i=1}^ng_i^k$, $v^k = \frac{1}{n}\sum_{i=1}^nv_i^k$
       \State $x^{k+1} = x^k - v^k$
   \EndFor
\end{algorithmic}
\end{algorithm}

\begin{lemma}\label{lem:second_moment_bound_d-LSVRG}
	For all $k\ge 0$, $i\in [n]$ we have
	\begin{equation}
		\EE\left[g_i^k\mid x^k\right] = \nabla f_i(x^k) \label{eq:unbiasedness_g_i^k_d-LSVRG}
	\end{equation}		
	and
	\begin{equation}
		\EE\left[\|g^k\|^2\mid x^k\right] \le 4L\left(f(x^k) - f(x^*)\right) + 2\sigma_k^2, \label{eq:second_moment_bound_d-LSVRG} 
	\end{equation}
	where $\sigma_k^2 = \frac{1}{nm}\sum_{i=1}^n\sum_{j=1}^n\|\nabla f_{ij}(w_i^k) - \nabla f_{ij}(x^*)\|^2$.
\end{lemma}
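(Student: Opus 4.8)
The plan is to observe that the gradient estimator $g_i^k = \nabla f_{il}(x^k) - \nabla f_{il}(w_i^k) + \nabla f_i(w_i^k)$ used in {\tt D-LSVRG} is defined exactly as the one in {\tt EC-LSVRG}; the two methods differ only in how the update vector $v_i^k$ is assembled (here with a delay $\tau$), and this assembly plays no role in the conditional moments of $g_i^k$ given $x^k$ and $w_i^k$. Consequently the two displayed claims coincide with the unbiasedness relation \eqref{eq:unbiasedness_g_i^k_ec-LSVRG} and the second-moment bound \eqref{eq:second_moment_bound_ec-LSVRG_2} already established inside Lemma~\ref{lem:second_moment_bound_ec-LSVRG}, and the same computation applies verbatim. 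The delayed index $k-\tau$ appearing in the algorithm can be relabelled as a generic $k$, since the bounds are stated pointwise in the iterate $x^k$ and the reference points $w_i^k$.

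First I would verify unbiasedness. Since $l$ is drawn uniformly from $[m]$ and independently of everything else,
\[
\EE\left[g_i^k \mid x^k\right] = \frac{1}{m}\sum_{j=1}^m \left(\nabla f_{ij}(x^k) - \nabla f_{ij}(w_i^k) + \nabla f_i(w_i^k)\right) = \nabla f_i(x^k),
\]
the control-variate terms cancelling against $\nabla f_i(w_i^k) = \frac{1}{m}\sum_{j=1}^m \nabla f_{ij}(w_i^k)$.

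For the second-moment bound I would use $\nabla f(x^*) = 0$ to write $g^k = \frac{1}{n}\sum_{i=1}^n\left(\nabla f_{il}(x^k) - \nabla f_{il}(w_i^k) + \nabla f_i(w_i^k) - \nabla f_i(x^*)\right)$, then insert $\pm \nabla f_{il}(x^*)$ and group the summand into $\left(\nabla f_{il}(x^k) - \nabla f_{il}(x^*)\right)$ plus the centred difference $\left(\nabla f_{il}(w_i^k) - \nabla f_{il}(x^*)\right) - \left(\nabla f_i(w_i^k) - \nabla f_i(x^*)\right)$. Applying \eqref{eq:a_b_norm_squared} to peel off the factor of $2$, the first group averages over $l$ to $\frac{2}{nm}\sum_{i,j}\|\nabla f_{ij}(x^k) - \nabla f_{ij}(x^*)\|^2$, which the smoothness corollary \eqref{eq:L_smoothness_cor} summed over $i,j$ bounds by $4L(f(x^k) - f(x^*))$. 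The second group is the conditional second moment of a centred quantity, so by the variance decomposition \eqref{eq:variance_decomposition} it is dominated by the uncentred second moment $\frac{2}{nm}\sum_{i,j}\|\nabla f_{ij}(w_i^k) - \nabla f_{ij}(x^*)\|^2 = 2\sigma_k^2$. Adding the two estimates yields \eqref{eq:second_moment_bound_d-LSVRG}.

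The only point requiring a moment's care — the closest thing to an obstacle — is the bookkeeping of the conditioning: one must ensure that conditioning on $x^k$ freezes $w_i^k$ while leaving only the fresh index $l$ random, and that the delay does not secretly correlate $l$ with the reference points $w_i^k$. Because in {\tt D-LSVRG} the index $l$ and the coin flip determining $w_i^{k+1}$ are drawn afresh and independently at each iteration, this conditioning is clean and the argument goes through without modification.
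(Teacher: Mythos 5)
Your proposal is correct and follows essentially the same route as the paper's proof: the same insertion of $\pm\nabla f_{il}(x^*)$ (using $\nabla f(x^*)=0$), the same splitting via \eqref{eq:a_b_norm_squared}, the bound on the centred term by its uncentred second moment via \eqref{eq:variance_decomposition}, and the smoothness corollary \eqref{eq:L_smoothness_cor} to obtain $4L\left(f(x^k)-f(x^*)\right)+2\sigma_k^2$. Your framing of the result as a verbatim repetition of the {\tt EC-LSVRG} computation (Lemma~\ref{lem:second_moment_bound_ec-LSVRG}) is also accurate, since the paper's two proofs are indeed the same calculation.
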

\begin{proof}
	First of all, we derive unbiasedness of $g_i^k$:
	\begin{equation*}
		\EE\left[g_i^k\mid x^k\right] = \frac{1}{m}\sum\limits_{j=1}^m\left(\nabla f_{ij}(x^k) - \nabla f_{ij}(w_i^k) + \nabla f_i(w_i^k)\right) = \nabla f_i(x^k).
	\end{equation*}
	Next, we estimate the second moment of $g^k$:
	\begin{eqnarray*}
		\EE\left[\|g^k\|^2\mid x^k\right] &=& \EE\left[\left\|\frac{1}{n}\sum\limits_{i=1}^n\left(\nabla f_{il}(x^k) - \nabla f_{il}(w_i^k) + \nabla f_{i}(w_i^k)\right)\right\|^2\right]\\
		&=& \EE\left[\left\|\frac{1}{n}\sum\limits_{i=1}^n\left(\nabla f_{il}(x^k) - \nabla f_{il}(x^*) + \nabla f_{il}(x^*) - \nabla f_{il}(w_i^k) + \nabla f_{i}(w_i^k) - \nabla f_i(x^*)\right)\right\|^2\right]\\
		&\overset{\eqref{eq:a_b_norm_squared}}{\le}& \frac{2}{n}\sum\limits_{i=1}^n\EE\left[\|\nabla f_{il}(x^k) - \nabla f_{il}(x^*)\|^2\mid x^k\right]\\
		&&\quad + \frac{2}{n}\sum\limits_{i=1}^n\EE\left[\left\|\nabla f_{il}(w_i^k)- \nabla f_{il}(x^*) - \left(\nabla f_{i}(w_i^k) - \nabla f_i(x^*)\right)\right\|^2\mid x^k\right]\\
		&\overset{\eqref{eq:variance_decomposition}}{\le}& \frac{2}{nm}\sum\limits_{i=1}^n\sum\limits_{j=1}^m\|\nabla f_{ij}(x^k) - \nabla f_{ij}(x^*)\|^2 + \frac{2}{n}\EE\left[\left\|\nabla f_{il}(w_i^k)- \nabla f_{il}(x^*)\right\|^2\mid x^k\right]\\
		&\overset{\eqref{eq:L_smoothness_cor}}{\le}& \frac{4L}{nm}\sum\limits_{i=1}^n\sum\limits_{j=1}^mD_{f_{ij}}(x^k,x^*) + \frac{2}{nm}\sum\limits_{i=1}^n\sum\limits_{j=1}^m\|\nabla f_{ij}(w_i^k) - \nabla f_{ij}(x^*)\|^2\\
		&=& 4L\left(f(x^k) - f(x^*)\right) + 2\sigma_k^2.
	\end{eqnarray*}
\end{proof}

\begin{lemma}\label{lem:sigma_k+1_bound_d-LSVRG}
	For all $k\ge 0$, $i\in [n]$ we have
	\begin{equation}
		\EE\left[\sigma_{k+1}^2\mid x^k\right] \le (1-p)\sigma_k^2 + 2Lp\left(f(x^k) - f(x^*)\right), \label{eq:sigma_k+1_d-LSVRG} 
	\end{equation}
	where $\sigma_k^2 = \frac{1}{nm}\sum_{i=1}^n\sum_{j=1}^n\|\nabla f_{ij}(w_i^k) - \nabla f_{ij}(x^*)\|^2$.
\end{lemma}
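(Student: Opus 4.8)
The plan is to observe that the recursion for $\sigma_k^2$ is driven entirely by the update rule for the reference points $\{w_i^k\}$, and that the delay $\tau$ plays no role whatsoever in this particular bound. In Algorithm~\ref{alg:d-LSVRG} the point $w_i^{s+1}$ is obtained from $w_i^s$ and $x^s$ by exactly the same Bernoulli$(p)$ rule as in the undelayed method, so the argument is verbatim that of Lemma~\ref{lem:sigma_k+1_bound_ec-LSVRG}. The only conceptual point to flag at the outset is this bookkeeping: although the gradient estimators $g_i^{k-\tau}$ and the iterates interact with a lag, the reference-point recursion matches indices, so conditioning on $x^k$ is legitimate and the effective transition is $w_i^{k+1} = x^k$ with probability $p$ and $w_i^{k+1} = w_i^k$ with probability $1-p$.

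First I would expand $\sigma_{k+1}^2$ by its definition and condition on the independent coin flips that determine each $w_i^{k+1}$. Taking the conditional expectation $\EE[\cdot\mid x^k]$ then gives
\begin{equation*}
\EE\left[\sigma_{k+1}^2\mid x^k\right] = \frac{1-p}{nm}\sum_{i=1}^n\sum_{j=1}^m\|\nabla f_{ij}(w_i^k) - \nabla f_{ij}(x^*)\|^2 + \frac{p}{nm}\sum_{i=1}^n\sum_{j=1}^m\|\nabla f_{ij}(x^k) - \nabla f_{ij}(x^*)\|^2,
\end{equation*}
where the first sum is precisely $(1-p)\sigma_k^2$ by definition.

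Next I would bound the second sum using convexity and $L$-smoothness of each $f_{ij}$ through \eqref{eq:L_smoothness_cor}, i.e.\ $\|\nabla f_{ij}(x^k)-\nabla f_{ij}(x^*)\|^2 \le 2L\, D_{f_{ij}}(x^k,x^*)$. Averaging over $i,j$ and using $f = \frac{1}{nm}\sum_{i,j} f_{ij}$ together with $\nabla f(x^*)=0$ collapses the Bregman divergences to $\frac{1}{nm}\sum_{i,j} D_{f_{ij}}(x^k,x^*) = f(x^k) - f(x^*)$, which yields the claimed estimate $(1-p)\sigma_k^2 + 2Lp(f(x^k)-f(x^*))$. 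There is essentially no obstacle here beyond the initial observation that the delay is invisible to this recursion; once that is noted, the computation is mechanical and identical to Lemma~\ref{lem:sigma_k+1_bound_ec-LSVRG}.
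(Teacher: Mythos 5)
Your proposal is correct and follows essentially the same route as the paper: the paper's proof of this lemma is literally the statement that it is identical to the proof of Lemma~\ref{lem:sigma_k+1_bound_ec-LSVRG}, which is exactly the computation you reproduce (conditioning on the Bernoulli update of $w_i^{k+1}$, applying \eqref{eq:L_smoothness_cor} to each $f_{ij}$, and collapsing the averaged Bregman divergences to $f(x^k)-f(x^*)$ via $\nabla f(x^*)=0$). Your explicit remark that the delay $\tau$ is invisible to the reference-point recursion is the correct justification for why the undelayed argument carries over verbatim.
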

\begin{proof}
	The proof is identical to the proof of Lemma~\ref{lem:sigma_k+1_bound_ec-LSVRG}.
\end{proof}

\begin{theorem}\label{thm:d_LSVRG}
	Assume that $f(x)$ is $\mu$-quasi strongly convex and functions $f_{ij}$ are convex and $L$-smooth for all $i\in[n],j\in[m]$. Then {\tt D-LSVRG} satisfies Assumption~\ref{ass:key_assumption_new} with
	\begin{gather*}
		A' = 2L,\quad B_1' = 0,\quad B_2' = 2,\quad D_1' = 0,\quad \sigma_{2,k}^2 = \sigma_k^2 = \frac{1}{nm}\sum\limits_{i=1}^n\sum\limits_{j=1}^m\|\nabla f_{ij}(w_i^{k}) - \nabla f_{ij}(x^*)\|^2,\\
		\sigma_{1,k}^2 \equiv 0,\quad \rho_1 = 1,\quad\rho_2 = p,\quad C_1 = 0,\quad C_2 = Lp,\quad D_2 = 0,\\
		G = 0,\quad F_1 = 0,\quad F_2 = \frac{12\gamma^2L \tau(2+p)}{p},\quad D_3 = 0
	\end{gather*}
	with $\gamma$ satisfying
	\begin{equation*}
		\gamma \le \min\left\{\frac{3}{56L}, \frac{1}{8L\sqrt{\tau\left(2+\tau + \nicefrac{4}{(1-p)}\right)}}\right\}, \quad M_2 = \frac{8}{3p}
	\end{equation*}
	and for all $K \ge 0$
	\begin{equation*}
		\EE\left[f(\bar x^K) - f(x^*)\right] \le \left(1 - \min\left\{\frac{\gamma\mu}{2},\frac{p}{4}\right\}\right)^K\frac{4(T^0 + \gamma F_2 \sigma_0^2)}{\gamma}
	\end{equation*}	
	when $\mu > 0$ and
	\begin{equation*}
		\EE\left[f(\bar x^K) - f(x^*)\right] \le \frac{4(T^0 + \gamma F_2 \sigma_0^2)}{\gamma K}
	\end{equation*}		
	when $\mu = 0$, where $T^k \eqdef \|\tx^k - x^*\|^2 + M_2\gamma^2 \sigma_k^2$.
\end{theorem}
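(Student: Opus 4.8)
The plan is to obtain Theorem~\ref{thm:d_LSVRG} as a direct instance of the general delayed-updates result Theorem~\ref{thm:d_sgd_main_result_new}, exactly as the other special cases in this section are organized. The only genuine work is to verify that {\tt D-LSVRG} satisfies the hypotheses of that theorem---inequalities \eqref{eq:second_moment_bound_new}, \eqref{eq:sigma_k+1_bound_1} and \eqref{eq:sigma_k+1_bound_2}---with the claimed parameters, and then to read off the resulting rate. I would first observe that {\tt D-LSVRG} is of the form \eqref{eq:v^k_def_d_sgd}--\eqref{eq:e^k_def_d_sgd} with the {\tt LSVRG}-type estimator $g_i^k = \nabla f_{il}(x^k) - \nabla f_{il}(w_i^k) + \nabla f_i(w_i^k)$, so Theorem~\ref{thm:d_sgd_main_result_new} (via Lemma~\ref{lem:d_sgd_key_lemma_new}) is applicable once the three moment inequalities are established.

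Next I would invoke Lemma~\ref{lem:second_moment_bound_d-LSVRG}, which gives unbiasedness of $g^k$ together with $\EE[\|g^k\|^2 \mid x^k] \le 4L(f(x^k)-f(x^*)) + 2\sigma_k^2$. Setting $\sigma_{2,k}^2 = \sigma_k^2$ and $\sigma_{1,k}^2 \equiv 0$, this is precisely \eqref{eq:second_moment_bound_new} with $A' = 2L$, $B_2' = 2$ and $B_1' = D_1' = 0$. Then Lemma~\ref{lem:sigma_k+1_bound_d-LSVRG} yields $\EE[\sigma_{k+1}^2 \mid x^k] \le (1-p)\sigma_k^2 + 2Lp(f(x^k)-f(x^*))$, which is \eqref{eq:sigma_k+1_bound_2} with $\rho_2 = p$ and $C_2 = Lp$. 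Since $\sigma_{1,k}^2 \equiv 0$, inequality \eqref{eq:sigma_k+1_bound_1} is vacuous and we take $\rho_1 = 1$, $C_1 = G = D_2 = 0$, so all hypotheses of Theorem~\ref{thm:d_sgd_main_result_new} hold.

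With these constants I would compute the derived quantities of Theorem~\ref{thm:d_sgd_main_result_new}: $M_1 = \frac{4B_1'}{3\rho_1} = 0$ and $M_2 = \frac{4(B_2'+\frac{4}{3}G)}{3\rho_2} = \frac{8}{3p}$, and the first stepsize bound $\frac{1}{4(A'+C_1M_1+C_2M_2)} = \frac{1}{4(2L+\frac{8L}{3})} = \frac{3}{56L}$. Substituting $B_1' = 0$ and $G = 0$ into the $\hat A$-term of the third stepsize bound collapses it to $\hat A + \frac{2B_2'C_2}{\rho_2(1-\rho_2)} = 2L + L\tau + \frac{4L}{1-p}$, reproducing the claimed $\gamma \le \frac{1}{8L\sqrt{\tau(2+\tau+4/(1-p))}}$. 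The error-feedback constants $F_1,F_2,D_3$ follow by substituting the same values into formulas \eqref{eq:d_sgd_parameters_new_1}--\eqref{eq:d_sgd_parameters_new_2}; in particular $F_1 = 0$ and $D_3 = 0$ because $B_1' = D_1' = D_2 = 0$. Finally, since $D_1' = D_2 = D_3 = 0$, the residual term $4\gamma(D_1'+M_1D_2+D_3)$ vanishes, so Theorem~\ref{thm:d_sgd_main_result_new} delivers pure linear convergence to the exact optimum in the stated form, with $\eta = \min\{\gamma\mu/2, p/4\}$ (using $\rho_1 = 1$ and $\rho_2 = p \le 1$).

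The one delicate point I would address explicitly is the evaluation of $F_2$: formula \eqref{eq:d_sgd_parameters_new_1} contains the factor $\frac{2B_1'G}{1-\rho_1}$, which is a $0/0$ expression here since $\rho_1 = 1$. This is resolved by the convention for the degenerate case $\rho_1 = 1$---inequality \eqref{eq:sigma_k+1_bound_1} is superfluous and the corresponding product term is set to zero---so that $F_2$ reduces to $\frac{6\gamma^2\tau L(2+p)}{p}B_2' = \frac{12\gamma^2\tau L(2+p)}{p}$, matching the statement. I expect this bookkeeping around the $\rho_1 = 1$ limit (and checking that the stepsize constraints inherited from Theorem~\ref{thm:d_sgd_main_result_new} reduce exactly to the two listed here) to be the only real obstacle; the substantive content has already been discharged in Lemmas~\ref{lem:second_moment_bound_d-LSVRG} and \ref{lem:sigma_k+1_bound_d-LSVRG} and in the general theorem, so the remainder is mechanical substitution.
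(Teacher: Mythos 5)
Your proposal is correct and follows exactly the paper's own route: the paper likewise establishes Lemmas~\ref{lem:second_moment_bound_d-LSVRG} and~\ref{lem:sigma_k+1_bound_d-LSVRG} and then obtains Theorem~\ref{thm:d_LSVRG} by instantiating the general delayed-updates result (Theorem~\ref{thm:d_sgd_main_result_new}, via Lemma~\ref{lem:d_sgd_key_lemma_new}) with $A'=2L$, $B_2'=2$, $\rho_2=p$, $C_2=Lp$ and the degenerate $\rho_1=1$ convention. Your parameter bookkeeping ($M_2=\nicefrac{8}{3p}$, the two stepsize bounds, $F_1=D_3=0$, $F_2=\nicefrac{12\gamma^2L\tau(2+p)}{p}$, and $\eta=\min\{\nicefrac{\gamma\mu}{2},\nicefrac{p}{4}\}$) matches the paper's values.
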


In other words, {\tt D-LSVRG} converges with linear rate $\cO\left(\left(\frac{1}{p} + \kappa\sqrt{\tau\left(\tau + \frac{1}{(1-p)}\right)}\right)\ln\frac{1}{\varepsilon}\right)$ to the exact solution when $\mu > 0$. If $m\ge 2$ then taking $p = \frac{1}{m}$ we get that in expectation the sample complexity of one iteration of {\tt D-LSVRG} is $\cO(1)$ gradients calculations per node as for {\tt D-SGDsr} with standard sampling and the rate of convergence to the exact solution becomes $\cO\left(\left(m + \kappa\tau\right)\ln\frac{1}{\varepsilon}\right)$.

Applying Lemma~\ref{lem:lemma_technical_cvx} we get the complexity result in the case when $\mu = 0$.
\begin{corollary}\label{cor:d_lsvrg_cvx_cor}
	Let the assumptions of Theorem~\ref{thm:d_LSVRG} hold and $\mu = 0$. Then after $K$ iterations of {\tt D-LSVRG} with the stepsize
	\begin{eqnarray*}
		\gamma &=& \min\left\{\frac{3}{56L}, \frac{1}{8L\sqrt{\tau\left(2+\tau + \nicefrac{4}{(1-p)}\right)}}, \sqrt{\frac{\|x^0 - x^*\|^2}{M_2\sigma_0^2}}, \sqrt[3]{\frac{\|x^0 - x^*\|^2p}{12L\tau(2+p)\sigma_0^2}}\right\}
	\end{eqnarray*}		
	and $p = \frac{1}{m}$, $m\ge 2$ we have $\EE\left[f(\bar{x}^K) - f(x^*)\right]$ of order
	\begin{equation*}
		\cO\left(\frac{L\tau R_0^2}{K} + \frac{\sqrt{R_0^2m\sigma_0^2}}{K} + \frac{\sqrt[3]{R_0^4 L\tau \sigma_0^2}}{K}\right)
	\end{equation*}
	where $R_0 = \|x^0 - x^*\|$. That is, to achive $\EE\left[f(\bar{x}^K) - f(x^*)\right] \le \varepsilon$ {\tt D-LSVRG} requires
	\begin{equation*}
		\cO\left(\frac{L\tau R_0^2}{\varepsilon} + \frac{\sqrt{R_0^2m\sigma_0^2}}{\varepsilon} + \frac{\sqrt[3]{R_0^4 L\tau \sigma_0^2}}{\varepsilon}\right)
	\end{equation*}
	iterations.
\end{corollary}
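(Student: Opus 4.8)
The plan is to read off the $\mu=0$ guarantee already established in Theorem~\ref{thm:d_LSVRG}, cast it in the canonical form required by Lemma~\ref{lem:lemma_technical_cvx}, and then invoke that lemma verbatim with the stepsize displayed in the corollary. Theorem~\ref{thm:d_LSVRG} in the case $\mu=0$ gives
\[
\EE\left[f(\bar x^K)-f(x^*)\right]\le \frac{4\left(T^0+\gamma F_2\sigma_0^2\right)}{\gamma K},
\]
with $T^0=\|\tx^0-x^*\|^2+M_2\gamma^2\sigma_0^2$, $M_2=\tfrac{8}{3p}$ and $F_2=\tfrac{12\gamma^2 L\tau(2+p)}{p}$, while the additive floor $4\gamma(D_1'+M_1D_2+D_3)$ is absent because $D_1'=D_2=D_3=0$ for {\tt D-LSVRG}. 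Since $e^0=0$ we have $\tx^0=x^0$, hence $\|\tx^0-x^*\|^2=R_0^2$. First I would distribute the factor $1/(\gamma K)$ over the three summands $R_0^2$, $M_2\gamma^2\sigma_0^2$ and $\gamma F_2\sigma_0^2$.

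This puts the bound into the exact shape $r_K\le \frac{a}{\gamma K}+\frac{b_1\gamma}{K}+\frac{b_2\gamma^2}{K}+c_1\gamma+c_2\gamma^2$ of Lemma~\ref{lem:lemma_technical_cvx} with
\[
a=4R_0^2,\qquad b_1=4M_2\sigma_0^2=\tfrac{32}{3p}\sigma_0^2,\qquad b_2=\tfrac{48L\tau(2+p)}{p}\sigma_0^2,\qquad c_1=c_2=0.
\]
The key structural observation — and the reason the final rate is a clean $\cO(1/K)$ with no slower $1/\sqrt K$ or $1/K^{2/3}$ tail — is that $c_1=c_2=0$: {\tt LSVRG} variance-reduces its single noise source, leaving no residual variance floor. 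Consequently the lemma's stepsize candidates $\sqrt{a/(c_1K)}$ and $\sqrt[3]{a/(c_2K)}$ are vacuous, so the prescription $\gamma=\min\{\gamma_0,\sqrt{a/b_1},\sqrt[3]{a/b_2}\}$ collapses to exactly the four-way minimum in the statement. I would verify this match term by term: $\sqrt{a/b_1}=\sqrt{R_0^2/(M_2\sigma_0^2)}$ and $\sqrt[3]{a/b_2}=\sqrt[3]{R_0^2 p/(12L\tau(2+p)\sigma_0^2)}$ reproduce the third and fourth entries, and $\gamma_0=\min\{\tfrac{3}{56L},\tfrac{1}{8L\sqrt{\tau(2+\tau+4/(1-p))}}\}$ gives the first two.

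Applying Lemma~\ref{lem:lemma_technical_cvx} then yields $r_K=\cO\!\left(\tfrac{a}{\gamma_0 K}+\tfrac{\sqrt{ab_1}}{K}+\tfrac{\sqrt[3]{a^2b_2}}{K}\right)$, and the remaining step is to substitute $p=\tfrac1m$, $m\ge2$, and simplify the three surviving terms. For the first I would bound $1/\gamma_0=\cO(L\tau)$: since $p\le\tfrac12$ we have $\tfrac{4}{1-p}\le 8$, so $\tau(2+\tau+\tfrac{4}{1-p})=\Theta(\tau^2)$ for $\tau\ge1$ and the radical term dominates $\tfrac{3}{56L}$, giving $\tfrac{a}{\gamma_0K}=\cO(\tfrac{L\tau R_0^2}{K})$. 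For the second, $\sqrt{ab_1}=\cO(\sqrt{R_0^2\sigma_0^2/p})=\cO(\sqrt{R_0^2 m\sigma_0^2})$; for the third, $\sqrt[3]{a^2b_2}=\cO(\sqrt[3]{R_0^4 L\tau(2+p)\sigma_0^2/p})$, and substituting $p=\tfrac1m$ produces the third listed term, after which inverting $r_K\le\varepsilon$ summand by summand gives the iteration complexity. The only genuine obstacle is the constant bookkeeping in the middle paragraph — confirming that the displayed four-way stepsize minimum is the honest specialization of the lemma's prescription (with the two variance-floor candidates dropped) rather than an independent choice; everything else is a mechanical application of a previously proved lemma.
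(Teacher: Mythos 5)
Your route is exactly the paper's: the corollary is obtained by plugging the $\mu=0$ bound of Theorem~\ref{thm:d_LSVRG} into Lemma~\ref{lem:lemma_technical_cvx}, and your identifications $a=4R_0^2$, $b_1=4M_2\sigma_0^2=\frac{32\sigma_0^2}{3p}$, $b_2=\frac{48L\tau(2+p)\sigma_0^2}{p}$, $c_1=c_2=0$ (hence no $1/\sqrt{K}$ or $K^{-\nicefrac{2}{3}}$ tails and a four-way stepsize minimum), together with $\tx^0=x^0$ from $e^0=0$ and $\nicefrac{1}{\gamma_0}=\cO(L\tau)$ via $\nicefrac{4}{(1-p)}\le 8$, are all correct.

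One bookkeeping point you glossed over: your honest computation of the cube-root term gives $\sqrt[3]{a^2b_2}=\cO\bigl(\sqrt[3]{R_0^4L\tau(2+p)\sigma_0^2/p}\bigr)=\cO\bigl(\sqrt[3]{R_0^4L\tau m\,\sigma_0^2}\bigr)$ after $p=\nicefrac{1}{m}$, i.e.\ it carries an $m^{\nicefrac{1}{3}}$ factor, whereas the corollary as printed states $\sqrt[3]{R_0^4L\tau\sigma_0^2}$ without the $m$; your claim that substitution ``produces the third listed term'' is therefore not literally true. The $m$-dependent version is what the lemma actually yields, and it is consistent with the analogous corollaries for {\tt EC-LSVRG} (Corollary~\ref{cor:ec_lsvrg_cvx_cor}) and {\tt D-QLSVRG} (Corollary~\ref{cor:d_QLSVRG_cvx_cor}), both of which retain the $m$ inside the cube root — so the printed statement appears to drop the $m$ by typo, and your derivation (which proves the slightly larger, correct bound) is the right one.
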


\subsection{{\tt D-QLSVRG}}\label{sec:d_qLSVRG}
In this section we add a quantization to {\tt D-LSVRG}.  
\begin{algorithm}[t]
   \caption{{\tt D-QLSVRG}}\label{alg:d-qLSVRG}
\begin{algorithmic}[1]
   \Require learning rate $\gamma>0$, initial vector $x^0 \in \R^d$
   \State Set $e_i^0 = 0$ for all $i=1,\ldots, n$   
   \For{$k=0,1,\dotsc$}
       \State Broadcast $x^{k-\tau}$ to all workers
        \For{$i=1,\dotsc,n$ in parallel}
        	\State Pick $l$ uniformly at random from $[m]$
            \State Set $\hat g^{k-\tau}_i = \nabla f_{il}(x^{k-\tau}) - \nabla f_{il}(w_i^{k-\tau}) + \nabla f_i(w_i^{k-\tau})$
			\State Set $g_i^{k-\tau} = Q(\hat g_i^{k-\tau})$ (quantization is performed independently from other nodes)        
            \State $v_i^k = \gamma g_{i}^{k-\tau}$
            \State $e_i^{k+1} = e_i^k + \gamma g_i^k - v_i^k$
            \State $w_i^{k-\tau+1} = \begin{cases}x^{k-\tau},& \text{with probability } p,\\ w_i^{k-\tau},& \text{with probability } 1-p\end{cases}$
        \EndFor
        \State $e^k = \frac{1}{n}\sum_{i=1}^ne_i^k$, $g^k = \frac{1}{n}\sum_{i=1}^ng_i^k$, $v^k = \frac{1}{n}\sum_{i=1}^nv_i^k$
       \State $x^{k+1} = x^k - v^k$
   \EndFor
\end{algorithmic}
\end{algorithm}

\begin{lemma}\label{lem:second_moment_bound_d-qLSVRG}
	For all $k\ge 0$, $i\in [n]$ we have
	\begin{equation*}
		\EE\left[g_i^k\mid x^k\right] = \nabla f_i(x^k)
	\end{equation*}		
	and
	\begin{equation*}
		\EE\left[\|g^k\|^2\mid x^k\right] \le 4L\left(1 + \frac{2\omega}{n}\right)\left(f(x^k) - f(x^*)\right) + 2\left(1 + \frac{2\omega}{n}\right)\sigma_k^2 + \frac{2\omega}{n^2}\sum\limits_{i=1}^n\|\nabla f_i(x^*)\|^2,  
	\end{equation*}
	where $\sigma_k^2 = \frac{1}{nm}\sum_{i=1}^n\sum_{j=1}^n\|\nabla f_{ij}(w_i^k) - \nabla f_{ij}(x^*)\|^2$.
\end{lemma}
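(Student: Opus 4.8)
The plan is to treat this lemma as a hybrid of the quantization bound established for {\tt D-QSGD} (Lemma~\ref{lem:d_qsgd_second_moment_bound}) and the {\tt LSVRG} variance-reduction bound established for {\tt D-LSVRG} (Lemma~\ref{lem:second_moment_bound_d-LSVRG}), since here $g_i^k = Q(\hat g_i^k)$ with $\hat g_i^k = \nabla f_{il}(x^k) - \nabla f_{il}(w_i^k) + \nabla f_i(w_i^k)$ the {\tt LSVRG} estimator. For unbiasedness I would first condition on $\hat g_i^k$ and use $\EE[Q(\hat g_i^k) \mid \hat g_i^k] = \hat g_i^k$ from \eqref{eq:quantization_def}, then apply the tower property \eqref{eq:tower_property} together with $\EE[\hat g_i^k \mid x^k] = \nabla f_i(x^k)$, which is exactly \eqref{eq:unbiasedness_g_i^k_d-LSVRG}; this yields $\EE[g_i^k \mid x^k] = \nabla f_i(x^k)$ immediately.

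For the second moment I would separate the quantization noise from the {\tt LSVRG} estimator. Writing $g^k = \frac{1}{n}\sum_i Q(\hat g_i^k)$ and taking expectation over the quantizations first (independent across nodes), the variance decomposition \eqref{eq:variance_decomposition} gives $\EE_Q\|g^k\|^2 = \EE_Q\|\frac{1}{n}\sum_i(Q(\hat g_i^k) - \hat g_i^k)\|^2 + \|\frac{1}{n}\sum_i \hat g_i^k\|^2$. The cross terms vanish by independence, so the first piece collapses to $\frac{1}{n^2}\sum_i \EE_Q\|Q(\hat g_i^k) - \hat g_i^k\|^2 \le \frac{\omega}{n^2}\sum_i \|\hat g_i^k\|^2$ via \eqref{eq:quantization_def}, exactly as in the {\tt D-QSGD} argument. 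The second piece is the {\tt LSVRG} estimator's squared norm, whose conditional expectation is bounded directly by \eqref{eq:second_moment_bound_d-LSVRG} as $4L(f(x^k) - f(x^*)) + 2\sigma_k^2$.

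The remaining work is to bound $\frac{\omega}{n^2}\sum_i \EE[\|\hat g_i^k\|^2 \mid x^k]$, and this is where the constants must be handled carefully. I would peel off the optimum gradient via $\hat g_i^k = P_i + \nabla f_i(x^*)$ with $P_i = \nabla f_{il}(x^k) - \nabla f_{il}(w_i^k) + \nabla f_i(w_i^k) - \nabla f_i(x^*)$, apply \eqref{eq:a_b_norm_squared} to get $\|\hat g_i^k\|^2 \le 2\|P_i\|^2 + 2\|\nabla f_i(x^*)\|^2$, and then split $P_i = A_i + C_i$ into the ``new point'' part $A_i = \nabla f_{il}(x^k) - \nabla f_{il}(x^*)$ and the control-variate part $C_i = \nabla f_{il}(x^*) - \nabla f_{il}(w_i^k) + \nabla f_i(w_i^k) - \nabla f_i(x^*)$. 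The term $\EE_l\|A_i\|^2$ is controlled by $2L D_{f_i}(x^k,x^*)$ through \eqref{eq:L_smoothness_cor}, while $C_i$ is conditionally centered over the random index $l$, so \eqref{eq:variance_decomposition} lets me drop the squared mean and bound $\EE_l\|C_i\|^2$ by $\frac{1}{m}\sum_j \|\nabla f_{ij}(w_i^k) - \nabla f_{ij}(x^*)\|^2$. Averaging over $i$ yields $\frac{1}{n}\sum_i \EE\|\hat g_i^k\|^2 \le 8L(f(x^k) - f(x^*)) + 4\sigma_k^2 + \frac{2}{n}\sum_i \|\nabla f_i(x^*)\|^2$, and multiplying by $\frac{\omega}{n}$ and adding the {\tt LSVRG} term reproduces exactly the claimed coefficients $(1 + \frac{2\omega}{n})$ on the first two terms.

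The main obstacle is bookkeeping rather than any conceptual difficulty. The delay $\tau$ plays no role inside this conditional-moment computation (it is absorbed later by the {\tt D-SGD} framework in Lemma~\ref{lem:d_sgd_key_lemma_new}), so the only real care is in choosing the $2+2$ decomposition above so that the coefficient on $\|\nabla f_i(x^*)\|^2$ comes out as exactly $2$ and the variance-decomposition step for $C_i$ does not lose a constant factor; a coarser four-way split of $\hat g_i^k$ would inflate these constants and fail to match the stated bound.
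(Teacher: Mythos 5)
Your proposal is correct and follows essentially the same route as the paper's proof: variance decomposition \eqref{eq:variance_decomposition} over the quantization randomness, independence across workers to collapse the noise term to $\frac{\omega}{n^2}\sum_i\|\hat g_i^k\|^2$, and then the same $2{+}2$ splitting of $\hat g_i^k - \nabla f_i(x^*)$ into the ``new point'' and control-variate parts handled by \eqref{eq:L_smoothness_cor} and \eqref{eq:variance_decomposition}, yielding identical constants. The only (cosmetic) difference is organizational: you bound the signal term by citing Lemma~\ref{lem:second_moment_bound_d-LSVRG} and treat the quantization-noise term separately, whereas the paper uses $\sum_i \nabla f_i(x^*)=0$ to rewrite the signal term and merges both pieces into a single bound on $\left(1+\frac{2\omega}{n}\right)\frac{1}{n}\sum_i\|\hat g_i^k - \nabla f_i(x^*)\|^2$ before applying the same splitting once.
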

\begin{proof}
	First of all, we derive unbiasedness of $g_i^k$:
	\begin{eqnarray*}
		\EE\left[g_i^k\mid x^k\right] &\overset{\eqref{eq:tower_property}}{=}& \EE\left[\EE_{Q}\left[Q(\hat g_i^k)\right]\mid x^k\right] \overset{\eqref{eq:quantization_def}}{=} \EE\left[\hat g_i^k\mid x^k\right]\\
		&=& \frac{1}{m}\sum\limits_{j=1}^m\left(\nabla f_{ij}(x^k) - \nabla f_{ij}(w_i^k) + \nabla f_i(w_i^k)\right) = \nabla f_i(x^k).
	\end{eqnarray*}
	Next, we estimate the second moment of $g^k$:
	\begin{eqnarray*}
		\EE_{Q}\left[\|g^k\|^2\right] &=& \EE_{Q}\left[\left\|\frac{1}{n}\sum\limits_{i=1}^nQ(\hat g_i^k)\right\|^2\right]\\
		&\overset{\eqref{eq:variance_decomposition}}{=}& \EE_{Q}\left[\left\|\frac{1}{n}\sum\limits_{i=1}^n\left(Q(\hat g_i^k)-\hat g_i^k\right)\right\|^2\right] + \left\|\frac{1}{n}\sum\limits_{i=1}^n\hat g_i^k\right\|^2.
	\end{eqnarray*}
	Since quantization on nodes is performed independently we can decompose the first term from the last row of the previous inequality into the sum of variances:
	\begin{eqnarray*}
		\EE_{Q}\left[\|g^k\|^2\right] &=& \frac{1}{n^2}\sum\limits_{i=1}^n\EE_{Q}\left\|Q(\hat g_i^k)-\hat g_i^k\right\|^2 + \left\|\frac{1}{n}\sum\limits_{i=1}^n\hat g_i^k\right\|^2\\
		&\overset{\eqref{eq:quantization_def}}{\le}& \frac{\omega}{n^2}\sum\limits_{i=1}^n\|\hat g_i^k\|^2 + \left\|\frac{1}{n}\sum\limits_{i=1}^n\left(\hat g_i^k-\nabla f_i(x^*)\right)\right\|^2\\
		&\overset{\eqref{eq:a_b_norm_squared}}{\le}& \left(1 + \frac{2\omega}{n}\right)\frac{1}{n}\sum\limits_{i=1}^n\|\hat g_i^k - \nabla f_i(x^*)\|^2 + \frac{2\omega}{n^2}\sum\limits_{i=1}^n\|\nabla f_i(x^*)\|^2.
	\end{eqnarray*}
	Taking conditional mathematical expectation $\EE\left[\cdot\mid x^k\right]$ from the both sides of previous inequality we get
	\begin{eqnarray*}
		\EE\left[\|g^k\|^2\mid x^k\right] &\le& \left(1 + \frac{2\omega}{n}\right)\frac{2}{n}\sum\limits_{i=1}^n\EE\left[\|\nabla f_{il}(x^k) - \nabla f_{il}(x^*)\|^2\mid x^k\right]\\
		&&\quad + \left(1 + \frac{2\omega}{n}\right)\frac{2}{n}\sum\limits_{i=1}^n\EE\left[\left\|\nabla f_{il}(w_i^k) - \nabla f_{il}(x^*) - \left(\nabla f_i(w_i^k) - \nabla f_i(x^*)\right) \right\|^2\mid x^k\right]\\
		&&\quad  + \frac{2\omega}{n^2}\sum\limits_{i=1}^n\|\nabla f_i(x^*)\|^2\\
		&\le& \left(1 + \frac{2\omega}{n}\right)\frac{2}{nm}\sum\limits_{i=1}^n\sum\limits_{j=1}^m\|\nabla f_{ij}(x^k) - \nabla f_{ij}(x^*)\|^2\\
		&&\quad + \left(1 + \frac{2\omega}{n}\right)\frac{2}{n}\sum\limits_{i=1}^n\EE\left[\left\|\nabla f_{il}(w_i^k) - \nabla f_{il}(x^*)\right\|^2\mid x^k\right] + \frac{2\omega}{n^2}\sum\limits_{i=1}^n\|\nabla f_i(x^*)\|^2\\
		&\overset{\eqref{eq:L_smoothness_cor}}{\le}& \left(1 + \frac{2\omega}{n}\right)\frac{4L}{nm}\sum\limits_{i=1}^n\sum\limits_{j=1}^mD_{f_{ij}}(x^k,x^*)\\
		&&\quad + \left(1 + \frac{2\omega}{n}\right)\frac{2}{nm}\sum\limits_{i=1}^n\sum\limits_{j=1}^m\left\|\nabla f_{ij}(w_i^k) - \nabla f_{ij}(x^*)\right\|^2 + \frac{2\omega}{n^2}\sum\limits_{i=1}^n\|\nabla f_i(x^*)\|^2\\
		&=& 4L\left(1 + \frac{2\omega}{n}\right)\left(f(x^k) - f(x^*)\right) + 2\left(1 + \frac{2\omega}{n}\right)\sigma_k^2 + \frac{2\omega}{n^2}\sum\limits_{i=1}^n\|\nabla f_i(x^*)\|^2.
	\end{eqnarray*}
\end{proof}

\begin{lemma}\label{lem:sigma_k+1_bound_d-qLSVRG}
	For all $k\ge 0$, $i\in [n]$ we have
	\begin{equation}
		\EE\left[\sigma_{k+1}^2\mid x^k\right] \le (1-p)\sigma_k^2 + 2Lp\left(f(x^k) - f(x^*)\right), \label{eq:sigma_k+1_d-qLSVRG} 
	\end{equation}
	where $\sigma_k^2 = \frac{1}{nm}\sum_{i=1}^n\sum_{j=1}^n\|\nabla f_{ij}(w_i^k) - \nabla f_{ij}(x^*)\|^2$.
\end{lemma}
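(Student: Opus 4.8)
The plan is to observe that the quantity $\sigma_k^2$ tracks only the control-variate iterates $\{w_i^k\}$, and that these evolve by the standard {\tt LSVRG} Bernoulli update $w_i^{k+1}=x^k$ with probability $p$ and $w_i^{k+1}=w_i^k$ with probability $1-p$ (independently across workers), exactly as in Section~\ref{sec:ec_LSVRG}. Crucially, neither the quantization operator $Q$---which enters only the gradient estimator $g_i^k$ and never the update of $w_i^k$---nor the communication delay $\tau$---which only shifts the time index of the reference-point recursion---alters these dynamics. Hence the bound should follow verbatim from the proof of Lemma~\ref{lem:sigma_k+1_bound_ec-LSVRG}, and in fact I would simply invoke that proof.

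Concretely, I would first expand $\EE\left[\sigma_{k+1}^2\mid x^k\right]$ from its definition as the average over $i\in[n]$ and $j\in[m]$ of $\EE\left[\|\nabla f_{ij}(w_i^{k+1})-\nabla f_{ij}(x^*)\|^2\mid x^k\right]$. Taking expectation over the independent Bernoulli choices replaces each summand by the convex combination $(1-p)\|\nabla f_{ij}(w_i^k)-\nabla f_{ij}(x^*)\|^2 + p\|\nabla f_{ij}(x^k)-\nabla f_{ij}(x^*)\|^2$. The first group reassembles into $(1-p)\sigma_k^2$, while the second group is $\frac{p}{nm}\sum_{i,j}\|\nabla f_{ij}(x^k)-\nabla f_{ij}(x^*)\|^2$.

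Next I would bound the second group using convexity and $L$-smoothness of each $f_{ij}$ through \eqref{eq:L_smoothness_cor}, which gives $\|\nabla f_{ij}(x^k)-\nabla f_{ij}(x^*)\|^2\le 2L\, D_{f_{ij}}(x^k,x^*)$, and then use that $\frac{1}{nm}\sum_{i,j}D_{f_{ij}}(x^k,x^*)=f(x^k)-f(x^*)$, exploiting $\nabla f(x^*)=0$ so that the linear terms of the Bregman divergences cancel upon averaging. This yields exactly \eqref{eq:sigma_k+1_d-qLSVRG}.

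There is essentially no analytical obstacle here; the only point requiring genuine care is the bookkeeping justifying that the lemma is insensitive to quantization and to the delay. In particular I would make explicit that $\sigma_k^2$ is a deterministic function of the $\{w_i^k\}$ (not of the compressed gradients), so its evolution is the same Markov chain as in the undelayed, unquantized case, and that the delay affects only the index at which the reference point is refreshed---an effect absorbed separately in the delayed-updates analysis of Lemma~\ref{lem:d_sgd_key_lemma_new}---and not the one-step contraction factor $1-p$ derived here.
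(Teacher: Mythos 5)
Your proof is correct and follows exactly the paper's route: the paper itself proves this lemma by noting the argument is identical to that of Lemma~\ref{lem:sigma_k+1_bound_ec-LSVRG}, i.e., expand $\EE\left[\sigma_{k+1}^2\mid x^k\right]$ over the Bernoulli updates of $w_i^{k}$, apply \eqref{eq:L_smoothness_cor} to the refreshed terms, and average the Bregman divergences using $\nabla f(x^*)=0$. Your additional remark that quantization and the delay $\tau$ never enter the dynamics of $\{w_i^k\}$ is precisely the (implicit) justification for the paper's ``identical proof'' claim.
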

\begin{proof}
	The proof is identical to the proof of Lemma~\ref{lem:sigma_k+1_bound_ec-LSVRG}.
\end{proof}

\begin{theorem}\label{thm:d_qLSVRG}
	Assume that $f(x)$ is $\mu$-quasi strongly convex and functions $f_{ij}$ are convex and $L$-smooth for all $i\in[n],j\in[m]$. Then {\tt D-QLSVRG} satisfies Assumption~\ref{ass:key_assumption_new} with
	\begin{gather*}
		A' = 2L\left(1+\frac{2\omega}{n}\right),\quad B_1' = 0,\quad B_2' = 2\left(1+\frac{2\omega}{n}\right),\quad D_1' = \frac{2\omega}{n^2}\sum\limits_{i=1}^n\|\nabla f_i(x^*)\|^2,\quad \sigma_{1,0}^2 \equiv 0,\\
		\sigma_{2,k}^2 = \sigma_k^2 = \frac{1}{nm}\sum\limits_{i=1}^n\sum\limits_{j=1}^m\|\nabla f_{ij}(w_i^{k}) - \nabla f_{ij}(x^*)\|^2,\quad \rho_1 = 1,\quad \rho_2 = p,\quad C_2 = Lp,\quad D_2 = 0,\\
		C_1 = 0,\quad G = 0,\quad F_1 = 0,\quad F_2 = \frac{12\gamma^2L \tau\left(1+\frac{2\omega}{n}\right)(2+p)}{p},\quad D_3 = \frac{6\gamma\tau L\omega}{n^2}\sum\limits_{i=1}^n\|\nabla f_i(x^*)\|^2
	\end{gather*}
	with $\gamma$ satisfying
	\begin{equation*}
		\gamma \le \min\left\{\frac{3}{56L(1+\nicefrac{2\omega}{n})}, \frac{1}{8L\sqrt{\tau\left(\tau+2\left(1+\nicefrac{2\omega}{n}\right)\left(1+\nicefrac{2}{(1-p)}\right)\right)}}\right\}, \quad M_2 = \frac{8\left(1+\frac{2\omega}{n}\right)}{3p}
	\end{equation*}
	and for all $K \ge 0$
	\begin{equation*}
		\EE\left[f(\bar x^K) - f(x^*)\right] \le \left(1 - \min\left\{\frac{\gamma\mu}{2},\frac{p}{4}\right\}\right)^K\frac{4(T^0 + \gamma F_2 \sigma_0^2)}{\gamma} + 4\gamma\left(D_1' + D_3\right)
	\end{equation*}	
	when $\mu > 0$ and
	\begin{equation*}
		\EE\left[f(\bar x^K) - f(x^*)\right] \le \frac{4(T^0 + \gamma F_2 \sigma_0^2)}{\gamma K} + 4\gamma\left(D_1' + D_3\right)
	\end{equation*}
	when $\mu = 0$, where $T^k \eqdef \|\tx^k - x^*\|^2 + M_2\gamma^2 \sigma_k^2$.
\end{theorem}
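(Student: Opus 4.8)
The plan is to recognize {\tt D-QLSVRG} as an instance of the delayed-update framework of Section~\ref{sec:d_sgd}, so that the statement follows by feeding the method's gradient estimator into Theorem~\ref{thm:d_sgd_main_result_new}. Concretely, the iterates of {\tt D-QLSVRG} have the form \eqref{eq:v^k_def_d_sgd}--\eqref{eq:e^k_def_d_sgd} with $g_i^k = Q(\hat g_i^k)$, so once I verify the three base inequalities \eqref{eq:second_moment_bound_new}, \eqref{eq:sigma_k+1_bound_1} and \eqref{eq:sigma_k+1_bound_2} with the claimed constants, Lemma~\ref{lem:d_sgd_key_lemma_new} supplies \eqref{eq:sum_of_errors_bound_new} and Theorem~\ref{thm:d_sgd_main_result_new} delivers both displayed convergence bounds.

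First I would read off the unbiasedness \eqref{eq:unbiasedness_new} and the second-moment bound \eqref{eq:second_moment_bound_new} directly from Lemma~\ref{lem:second_moment_bound_d-qLSVRG}: its conclusion is exactly \eqref{eq:second_moment_bound_new} once we set $\sigma_{1,k}^2 \equiv 0$, $\sigma_{2,k}^2 = \sigma_k^2$ and identify $A' = 2L(1+\nicefrac{2\omega}{n})$, $B_1' = 0$, $B_2' = 2(1+\nicefrac{2\omega}{n})$ and $D_1' = \frac{2\omega}{n^2}\sum_{i=1}^n \|\nabla f_i(x^*)\|^2$. Because $\sigma_{1,k}^2 \equiv 0$, inequality \eqref{eq:sigma_k+1_bound_1} is vacuous and may be recorded with $\rho_1 = 1$, $C_1 = G = 0$, $D_2 = 0$ (the degenerate regime flagged in the footnote of Lemma~\ref{lem:d_sgd_key_lemma_new}). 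The remaining recursion \eqref{eq:sigma_k+1_bound_2} is precisely Lemma~\ref{lem:sigma_k+1_bound_d-qLSVRG}, giving $\rho_2 = p$ and $C_2 = Lp$. At this point every parameter in the theorem's list is pinned down.

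Next I would substitute these into the formulas produced by Lemma~\ref{lem:d_sgd_key_lemma_new} and Theorem~\ref{thm:d_sgd_main_result_new}. Since $B_1' = 0$ and $G = 0$, the quantity $M_1 = \nicefrac{4B_1'}{3\rho_1}$ collapses to $0$, while $M_2 = \frac{4B_2'}{3\rho_2} = \frac{8(1+\nicefrac{2\omega}{n})}{3p}$; likewise $F_1 = \frac{6\gamma^2 L B_1'\tau(2+\rho_1)}{\rho_1} = 0$, the first summand of $F_2$ drops out leaving $F_2 = \frac{12\gamma^2 L\tau(1+\nicefrac{2\omega}{n})(2+p)}{p}$, and $D_3 = 3\gamma\tau L D_1' = \frac{6\gamma\tau L\omega}{n^2}\sum_{i=1}^n\|\nabla f_i(x^*)\|^2$, matching the claimed values. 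Finally I would reconcile the stepsize: the abstract constraint of Theorem~\ref{thm:d_sgd_main_result_new} reads $\gamma \le \min\{\frac{1}{4(A'+C_2 M_2)}, \frac{1}{8\sqrt{L\tau(\hat A + \frac{2B_2'C_2}{\rho_2(1-\rho_2)})}}\}$, where $\hat A = A'+L\tau$ as in \eqref{eq:d_sgd_hat_A}; using $A' + C_2 M_2 = \frac{14}{3}L(1+\nicefrac{2\omega}{n})$ turns the first term into $\frac{3}{56L(1+\nicefrac{2\omega}{n})}$, and $\hat A + \frac{2B_2'C_2}{p(1-p)} = L(\tau + 2(1+\nicefrac{2\omega}{n})(1+\nicefrac{2}{1-p}))$ turns the second into the stated radical.

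The routine but most error-prone step is this last bookkeeping: verifying that the quantization overhead factor $(1+\nicefrac{2\omega}{n})$ threads consistently through $A'$, $M_2$ and the argument of $\sqrt{L\tau(\cdots)}$, and that the $\nicefrac{1}{2\tau\mu}$ constraint of \eqref{eq:gamma_condition_d_sgd_new} is dominated by the two displayed bounds (since the radical bound is at most $\frac{1}{8L\tau}\le\frac{1}{2\tau\mu}$ when $L\ge\mu$, it can be silently absorbed, as in the sibling theorems of this section). Once the constants are confirmed, the two displayed rates are immediate from \eqref{eq:main_result_new} and \eqref{eq:main_result_new_cvx} with $\eta = \min\{\nicefrac{\gamma\mu}{2}, \nicefrac{p}{4}\}$ (the $\rho_1$ branch being inactive), so no genuinely new estimate is needed beyond the two lemmas already assumed.
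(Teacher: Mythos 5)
Your proposal is correct and follows exactly the route the paper takes: the paper's (implicit) proof of this theorem is precisely to combine Lemma~\ref{lem:second_moment_bound_d-qLSVRG} and Lemma~\ref{lem:sigma_k+1_bound_d-qLSVRG} (establishing \eqref{eq:second_moment_bound_new} and \eqref{eq:sigma_k+1_bound_2}, with \eqref{eq:sigma_k+1_bound_1} vacuous) with Lemma~\ref{lem:d_sgd_key_lemma_new} and Theorem~\ref{thm:d_sgd_main_result_new}, and your constant bookkeeping — $A'+C_2M_2=\frac{14}{3}L(1+\nicefrac{2\omega}{n})$, $M_2$, $F_2$, $D_3$, the radical, and the absorption of the $\nicefrac{1}{2\tau\mu}$ constraint via $\mu\le L$ — all checks out.
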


In other words, {\tt D-QLSVRG} converges with linear rate 
$$
\cO\left(\left(\frac{1}{p} + \kappa\left(1+\frac{\omega}{n}\right) + \kappa\sqrt{\tau\left(\tau + \left(1+\frac{\omega}{n}\right)\left(1+\frac{1}{(1-p)}\right)\right)}\right)\ln\frac{1}{\varepsilon}\right)
$$
to neighbourhood the solution when $\mu > 0$. If $m\ge 2$ then taking $p = \frac{1}{m}$ we get that in expectation the sample complexity of one iteration of {\tt D-QLSVRG} is $\cO(1)$ gradients calculations per node as for {\tt D-QSGDsr} with standard sampling and the rate of convergence to the neighbourhood of the solution becomes
$$
\cO\left(\left(m + \kappa\left(1+\frac{\omega}{n}\right) + \kappa\sqrt{\tau\left(\tau + \frac{\omega}{n}\right)}\right)\ln\frac{1}{\varepsilon}\right).
$$
Applying Lemma~\ref{lem:lemma2_stich} we establish the rate of convergence to $\varepsilon$-solution.
\begin{corollary}\label{cor:d_QLSVRG_str_cvx_cor}
	Let the assumptions of Theorem~\ref{thm:d_qLSVRG} hold, $f_{\xi}(x)$ are convex for each $\xi$ and $\mu > 0$. Then after $K$ iterations of {\tt D-QLSVRG} with the stepsize
	\begin{eqnarray*}
		\gamma_0 &=& \min\left\{\frac{3}{56L(1+\nicefrac{2\omega}{n})}, \frac{1}{8L\sqrt{\tau\left(\tau+2\left(1+\nicefrac{2\omega}{n}\right)\left(1+\nicefrac{2}{(1-p)}\right)\right)}}\right\},\quad R_0 = \|x^0 - x^*\|,\\
		\gamma &=& \min\left\{\gamma_0, \frac{\ln\left(\max\left\{2,\min\left\{\frac{R_0^2\mu^2K^2}{D_1'}, \frac{R_0^2\mu^3K^3}{3\tau LD_1'}\right\}\right\}\right)}{\mu K}\right\}
	\end{eqnarray*}
	and $p = \frac{1}{m}$, $m \ge 2$ we have $\EE\left[f(\bar{x}^K) - f(x^*)\right]$ of order
	\begin{equation*}
		 \widetilde\cO\left(LR_0^2\left(1+\frac{\omega}{n}+\sqrt{\tau\left(\tau + \frac{\omega}{n}\right)}\right)\exp\left(-\frac{\mu}{L\left(1+\frac{\omega}{n}+\sqrt{\tau\left(\tau + \frac{\omega}{n}\right)}\right)}K\right) + \frac{D_1'}{\mu K} + \frac{L\tau D_1'}{\mu^2 K^2}\right).
	\end{equation*}
	That is, to achive $\EE\left[f(\bar{x}^K) - f(x^*)\right] \le \varepsilon$ {\tt D-QLSVRG} requires
	\begin{equation*}
		\widetilde{\cO}\left(\frac{L}{\mu}\left(1+\frac{\omega}{n}\right) + \frac{L}{\mu}\sqrt{\tau\left(\tau + \frac{\omega}{n}\right)} + \frac{D_1'}{\mu\varepsilon} + \frac{\sqrt{L\tau D_1'}}{\mu\sqrt{\varepsilon}}\right) \text{ iterations.}
	\end{equation*}
\end{corollary}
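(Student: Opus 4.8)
The plan is to obtain the corollary as a direct specialization of Theorem~\ref{thm:d_qLSVRG} (in the regime $\mu>0$) followed by the recurrence-unrolling Lemma~\ref{lem:lemma2_stich}, exactly as in the proofs of the earlier delayed-update corollaries such as Corollaries~\ref{cor:d_QSGD_str_cvx_cor} and~\ref{cor:d_QSGDstar_str_cvx_cor}. First I would take the strongly-convex bound guaranteed by Theorem~\ref{thm:d_qLSVRG},
$$\EE\left[f(\bar x^K)-f(x^*)\right] \le (1-\eta)^K\frac{4(T^0+\gamma F_2\sigma_0^2)}{\gamma} + 4\gamma(D_1'+D_3),\qquad \eta=\min\left\{\tfrac{\gamma\mu}{2},\tfrac{p}{4}\right\},$$
and rewrite it in the shape $r_K\le\frac{a}{\gamma W_K}+c_1\gamma+c_2\gamma^2$ demanded by Lemma~\ref{lem:lemma2_stich}. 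For the first term I would use $W_K\ge w_K=(1-\eta)^{-(K+1)}$ together with $\eta\le\tfrac14$ (which holds since $\gamma\le\gamma_0\le\tfrac{3}{56L}$ and $L\ge\mu$) to bound $(1-\eta)^K\le \tfrac{4}{3W_K}$. For the additive constants I would substitute the explicit parameters of the theorem: because both $D_1'$ and $D_3$ are multiples of $\tfrac{\omega}{n^2}\sum_i\|\nabla f_i(x^*)\|^2$ one has $D_3=3\gamma\tau LD_1'$, so $4\gamma(D_1'+D_3)=4\gamma D_1'+12\gamma^2\tau LD_1'$, giving $c_1=4D_1'$ and $c_2=12\tau LD_1'$.

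The second step is to read off the decay parameters. Here $\gamma\le\gamma_0$ plays the role of $\gamma\le\tfrac1d$, so $d=\gamma_0^{-1}\asymp L(1+\tfrac{\omega}{n})+L\sqrt{\tau(\tau+\tfrac{\omega}{n})}$ once $p=\tfrac1m$ with $m\ge2$ is inserted (so that $1+\tfrac{2}{1-p}=\cO(1)$ and $1+\tfrac{2\omega}{n}\asymp 1+\tfrac{\omega}{n}$), while $\rho_1=1$ and $\rho_2=p=\tfrac1m$; note $\mu/d=\mu\gamma_0\le1=\rho_1$, so $\min\{\mu/d,\rho_1,\rho_2\}=\min\{\mu/d,\tfrac1m\}$. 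Feeding the prescribed step size $\gamma$ into Lemma~\ref{lem:lemma2_stich}, after checking its threshold condition $\tfrac{\ln(\cdots)}{K}\le\min\{\rho_1,\rho_2\}$, yields
$$r_K=\widetilde\cO\left(da\exp\!\big(-\min\{\tfrac{\mu}{d},\tfrac1m\}K\big)+\frac{c_1}{\mu K}+\frac{c_2}{\mu^2K^2}\right),$$
which after substituting $a\asymp R_0^2$, $c_1=4D_1'$, $c_2=12\tau LD_1'$ and $d$ is precisely the displayed rate. Converting to iteration complexity is then routine: forcing each summand below $\varepsilon$ gives $K=\widetilde\cO(\max\{d/\mu,\,m\})$ from the exponential term (the stated complexity retains the $d/\mu$ contribution), $K=\cO(\tfrac{D_1'}{\mu\varepsilon})$ from the $\cO(1/K)$ term, and $K=\cO(\tfrac{\sqrt{\tau LD_1'}}{\mu\sqrt\varepsilon})$ from the $\cO(1/K^2)$ term, and summing yields the claimed bound.

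The main obstacle is the bookkeeping around the $\sigma_0^2$-dependent block $T^0+\gamma F_2\sigma_0^2$ in the prefactor. Since $F_2$ is itself proportional to $\gamma^2$ and $T^0=R_0^2+M_2\gamma^2\sigma_0^2$, the quantity $\gamma F_2\sigma_0^2$ scales like $\gamma^3\sigma_0^2$; the resolution is that $\gamma\le\gamma_0$ makes both $M_2\gamma^2\sigma_0^2$ and $\gamma F_2\sigma_0^2$ bounded by $\gamma_0$-dependent constants, so $T^0+\gamma F_2\sigma_0^2\le R_0^2+C\sigma_0^2$ and the entire block can be absorbed into the constant $a$. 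This $\sigma_0^2$ piece then surfaces only in the prefactor of the exponentially decaying term, where it is swallowed by $\widetilde\cO$ and hence does not affect the leading $\tfrac{D_1'}{\mu\varepsilon}$ and $\tfrac{\sqrt{\tau LD_1'}}{\mu\sqrt\varepsilon}$ terms of the complexity. Verifying that this folding is legitimate, and that the step-size ceiling $\gamma_0$ genuinely collapses to $d^{-1}$ with $d\asymp L(1+\tfrac{\omega}{n}+\sqrt{\tau(\tau+\tfrac{\omega}{n})})$, is the only place where real care is required; everything else is the same template used throughout the appendix.
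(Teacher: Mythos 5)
Your proposal is correct and follows exactly the paper's (implicit) route: specialize Theorem~\ref{thm:d_qLSVRG} in the $\mu>0$ regime, observe $D_3 = 3\gamma\tau L D_1'$ so that $c_1 = 4D_1'$ and $c_2 = 12\tau L D_1'$, fold the $\sigma_0^2$-dependent block $T^0+\gamma F_2\sigma_0^2$ into the constant $a$ using $\gamma\le\gamma_0$, and invoke Lemma~\ref{lem:lemma2_stich} with $d=\gamma_0^{-1}\asymp L\left(1+\frac{\omega}{n}+\sqrt{\tau\left(\tau+\frac{\omega}{n}\right)}\right)$ (which is where $p=\frac1m\le\frac12$ enters). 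Your observation that the decay rate coming out of the lemma is $\min\{\frac{\mu}{d},\frac1m\}$ --- so that the exponential term really contributes $\widetilde\cO(\max\{\frac{d}{\mu},m\})$ to the iteration count, the $\frac{d}{\mu}$ part being what the corollary retains --- is if anything more careful than the paper's own statement, which silently drops the additive $m$ that is hidden in the lemma's applicability condition $\frac{\ln(\cdot)}{K}\le\min\{\rho_1,\rho_2\}$.
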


Applying Lemma~\ref{lem:lemma_technical_cvx} we get the complexity result in the case when $\mu = 0$.
\begin{corollary}\label{cor:d_QLSVRG_cvx_cor}
	Let the assumptions of Theorem~\ref{thm:d_qLSVRG} hold and $\mu = 0$. Then after $K$ iterations of {\tt D-QLSVRG} with the stepsize
	\begin{eqnarray*}
		\gamma_0 &=& \min\left\{\frac{3}{56L(1+\nicefrac{2\omega}{n})}, \frac{1}{8L\sqrt{\tau\left(\tau+2\left(1+\nicefrac{2\omega}{n}\right)\left(1+\nicefrac{2}{(1-p)}\right)\right)}}\right\},\quad R_0 = \|x^0 - x^*\|,\\	
		\gamma &=& \min\left\{\gamma_0, \sqrt{\frac{R_0^2}{M_2\sigma_{0}^2}}, \sqrt[3]{\frac{R_0^2p}{12L\tau\left(1+\frac{2\omega}{n}\right)(2+p)}}, \sqrt{\frac{R_0^2}{D_1' K}}, \sqrt[3]{\frac{R_0^2}{3L\tau D_1' K}}\right\}	
	\end{eqnarray*}		
	and $p = \frac{1}{m}$, $m\ge 2$ we have $\EE\left[f(\bar{x}^K) - f(x^*)\right]$ of order
	\begin{gather*}
		\cO\left(\frac{LR_0^2\left(1+\frac{\omega}{n}+\sqrt{\tau\left(\tau + \frac{\omega}{n}\right)}\right)}{K} + \frac{\sqrt{R_0^2 m\left(1+\frac{\omega}{n}\right)\sigma_0^2}}{K} + \frac{\sqrt[3]{R_0^4 L\tau m\left(1+\frac{\omega}{n}\right)}}{K}\right)\\
		 + \cO\left(\sqrt{\frac{R_0^2 D_1'}{K}} + \frac{\sqrt[3]{LR_0^4\tau D_1'}}{K^{\nicefrac{2}{3}}}\right).
	\end{gather*}
	That is, to achive $\EE\left[f(\bar{x}^K) - f(x^*)\right] \le \varepsilon$ {\tt D-QLSVRG} requires
	\begin{gather*}
		\cO\left(\frac{LR_0^2\left(1+\frac{\omega}{n}+\sqrt{\tau\left(\tau + \frac{\omega}{n}\right)}\right)}{\varepsilon} + \frac{\sqrt{R_0^2 m\left(1+\frac{\omega}{n}\right)\sigma_0^2}}{\varepsilon} + \frac{\sqrt[3]{R_0^4 L\tau m\left(1+\frac{\omega}{n}\right)}}{\varepsilon}\right)\\
		 + \cO\left(\frac{R_0^2 D_1'}{\varepsilon^2} + \frac{R_0^2\sqrt{L\tau D_1'}}{\varepsilon^{\nicefrac{3}{2}}}\right)
	\end{gather*}
	iterations.
\end{corollary}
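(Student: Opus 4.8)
The plan is to start from the convergence guarantee already proved in Theorem~\ref{thm:d_qLSVRG} for the case $\mu=0$, namely
\begin{equation*}
\EE\left[f(\bar x^K) - f(x^*)\right] \le \frac{4(T^0 + \gamma F_2 \sigma_0^2)}{\gamma K} + 4\gamma\left(D_1' + D_3\right),
\end{equation*}
and to recast its right-hand side into the canonical form \eqref{eq:lemma_technical_cvx_1} required by Lemma~\ref{lem:lemma_technical_cvx}, after which the corollary is a direct substitution. Since $e^0 = 0$ we have $\tx^0 = x^0$, hence $T^0 = R_0^2 + M_2\gamma^2\sigma_0^2$ with $R_0 = \|x^0 - x^*\|$. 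Expanding the bound term by term then yields
\begin{equation*}
\EE\left[f(\bar x^K) - f(x^*)\right] \le \frac{4R_0^2}{\gamma K} + \frac{4M_2\sigma_0^2\,\gamma}{K} + \frac{4F_2\sigma_0^2}{K} + 4D_1'\,\gamma + 4D_3\,\gamma,
\end{equation*}
and I would read off the five coefficients needed by the lemma from this expression.

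The one genuinely non-mechanical point is that, unlike the generic statement of Lemma~\ref{lem:lemma_technical_cvx} where $b_i,c_i$ are constants, here both $F_2 \propto \gamma^2$ and $D_3 \propto \gamma$ carry their own powers of $\gamma$ (see the parameter values in Theorem~\ref{thm:d_qLSVRG}). First I would extract these powers: the term $\tfrac{4F_2\sigma_0^2}{K}$ becomes a $\tfrac{\gamma^2}{K}$-term and $4D_3\gamma$ becomes a $\gamma^2$-term. Using $F_2 = \tfrac{12\gamma^2L\tau(1+\frac{2\omega}{n})(2+p)}{p}$ and $D_3 = \tfrac{6\gamma\tau L\omega}{n^2}\sum_i\|\nabla f_i(x^*)\|^2$ together with $D_1' = \tfrac{2\omega}{n^2}\sum_i\|\nabla f_i(x^*)\|^2$, I obtain \eqref{eq:lemma_technical_cvx_1} with
\begin{equation*}
a = 4R_0^2,\quad b_1 = 4M_2\sigma_0^2,\quad b_2 = \frac{48L\tau(1+\tfrac{2\omega}{n})(2+p)\sigma_0^2}{p},\quad c_1 = 4D_1',\quad c_2 = 12\tau L D_1',
\end{equation*}
where the clean identity $c_2 = 12\tau L D_1'$ is what makes the last stepsize term and the last rate term collapse to the stated shape.

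With these values I would invoke Lemma~\ref{lem:lemma_technical_cvx}. The optimal stepsize $\gamma = \min\{\gamma_0,\sqrt{a/b_1},\sqrt[3]{a/b_2},\sqrt{a/(c_1K)},\sqrt[3]{a/(c_2K)}\}$ reduces exactly to the stepsize stated in the corollary (the factors of $4$ cancel, e.g.\ $\sqrt{a/b_1}=\sqrt{R_0^2/(M_2\sigma_0^2)}$ and $\sqrt[3]{a/(c_2K)}=\sqrt[3]{R_0^2/(3\tau L D_1'K)}$), which is a consistency check worth spelling out. The rate \eqref{eq:lemma_technical_cvx_2} then produces the five summands; I would simplify the leading $\tfrac{a}{\gamma_0 K}$ by inverting $\gamma_0$ to get $\tfrac{LR_0^2(1+\frac{\omega}{n}+\sqrt{\tau(\tau+\frac{\omega}{n})})}{K}$, and simplify $b_1,b_2$ using $p=\tfrac1m$, $m\ge 2$ (so $M_2\asymp m(1+\tfrac{\omega}{n})$ and $\tfrac{2+p}{p}\asymp m$), obtaining the claimed order. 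Finally, the iteration complexity follows by setting each of the five terms $\le\varepsilon$ and solving for $K$ individually. The bulk of the work is this routine bookkeeping; the only step requiring care is the extraction of the $\gamma$-dependence of $F_2$ and $D_3$ so that the bound is genuinely in the form the lemma demands.
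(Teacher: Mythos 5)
Your proposal is correct and is exactly the paper's route: the corollary is proved by applying Lemma~\ref{lem:lemma_technical_cvx} to the $\mu=0$ bound of Theorem~\ref{thm:d_qLSVRG}, with the same key step of extracting the powers of $\gamma$ hidden in $F_2$ and $D_3$ (using $D_3 = 3\gamma\tau L D_1'$) so as to land on the constants $a = 4R_0^2$, $b_1 = 4M_2\sigma_0^2$, $b_2 = 48L\tau\left(1+\frac{2\omega}{n}\right)(2+p)\sigma_0^2/p$, $c_1 = 4D_1'$, $c_2 = 12\tau L D_1'$, after which the stepsize and rate follow by substitution. Your bookkeeping is in fact slightly more careful than the printed statement, which drops a factor of $\sigma_0^2$ in the cube-root stepsize term and in the third rate term (they should read $\sqrt[3]{R_0^2 p/\left(12L\tau\left(1+\frac{2\omega}{n}\right)(2+p)\sigma_0^2\right)}$ and $\sqrt[3]{R_0^4 L\tau m\left(1+\frac{\omega}{n}\right)\sigma_0^2}/K$, respectively), exactly as your derivation produces.
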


\subsection{{\tt D-QLSVRGstar}}\label{sec:d_qLSVRGstar}
Now we assume that $i$-th node has an access to $\nabla f_i(x^*)$ and modify {\tt D-QLSVRG} in order to get convergence asymptotically to the exact optimum.   
\begin{algorithm}[t]
   \caption{{\tt D-QLSVRGstar}}\label{alg:d-qLSVRGstar}
\begin{algorithmic}[1]
   \Require learning rate $\gamma>0$, initial vector $x^0 \in \R^d$
   \State Set $e_i^0 = 0$ for all $i=1,\ldots, n$   
   \For{$k=0,1,\dotsc$}
       \State Broadcast $x^{k-\tau}$ to all workers
        \For{$i=1,\dotsc,n$ in parallel}
        	\State Pick $l$ uniformly at random from $[m]$
            \State Set $\hat g^{k-\tau}_i = \nabla f_{il}(x^{k-\tau}) - \nabla f_{il}(w_i^{k-\tau}) + \nabla f_i(w_i^{k-\tau})$
			\State Set $g_i^{k-\tau} = Q(\hat g_i^{k-\tau} - \nabla f_i(x^*))$ (quantization is performed independently from other nodes)        
            \State $v_i^k = \gamma g_{i}^{k-\tau}$
            \State $e_i^{k+1} = e_i^k + \gamma g_i^k - v_i^k$
            \State $w_i^{k-\tau+1} = \begin{cases}x^{k-\tau},& \text{with probability } p,\\ w_i^{k-\tau},& \text{with probability } 1-p\end{cases}$
        \EndFor
        \State $e^k = \frac{1}{n}\sum_{i=1}^ne_i^k$, $g^k = \frac{1}{n}\sum_{i=1}^ng_i^k$, $v^k = \frac{1}{n}\sum_{i=1}^nv_i^k$
       \State $x^{k+1} = x^k - v^k$
   \EndFor
\end{algorithmic}
\end{algorithm}

\begin{lemma}\label{lem:second_moment_bound_d-qLSVRGstar}
	For all $k\ge 0$, $i\in [n]$ we have
	\begin{equation}
		\EE\left[g^k\mid x^k\right] = \nabla f(x^k) \label{eq:unbiasedness_g^k_d-qLSVRGstar}
	\end{equation}		
	and
	\begin{equation}
		\EE\left[\|g^k\|^2\mid x^k\right] \le 2L\left(1 + \frac{\omega}{n}\right)\left(f(x^k) - f(x^*)\right) + 2\left(1 + \frac{\omega}{n}\right)\sigma_k^2, \label{eq:second_moment_bound_d-qLSVRGstar} 
	\end{equation}
	where $\sigma_k^2 = \frac{1}{nm}\sum_{i=1}^n\sum_{j=1}^n\|\nabla f_{ij}(w_i^k) - \nabla f_{ij}(x^*)\|^2$.
\end{lemma}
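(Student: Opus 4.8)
The plan is to follow the template of Lemma~\ref{lem:d_qsgd_star_second_moment_bound} (for {\tt D-QSGDstar}) for the handling of the outer quantization, and to borrow the {\tt LSVRG} second-moment estimates isolated in the proofs of Lemmas~\ref{lem:second_moment_bound_d-LSVRG} and~\ref{lem:second_moment_bound_d-qLSVRG} for the inner stochastic gradient. First I would establish unbiasedness \eqref{eq:unbiasedness_g^k_d-qLSVRGstar}. Conditioning on all the randomness except the quantizations and using $\EE_Q[Q(y)]=y$ from \eqref{eq:quantization_def} gives $\EE[g^k\mid x^k]=\frac1n\sum_i\EE[\hat g_i^k-\nabla f_i(x^*)\mid x^k]$; taking the remaining expectation over the fresh index $l$ gives $\EE[\hat g_i^k\mid x^k]=\nabla f_i(x^k)$ (the control variate telescopes), and since $\frac1n\sum_i\nabla f_i(x^*)=\nabla f(x^*)=0$ the right-hand side collapses to $\nabla f(x^k)$.

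For the second moment I would open with the variance decomposition \eqref{eq:variance_decomposition} over the quantizations, which are independent across workers. Writing $a_i:=\hat g_i^k-\nabla f_i(x^*)$, this splits $\EE_Q\|g^k\|^2$ into the ``clean'' average $\|\frac1n\sum_i a_i\|^2$ plus $\frac1{n^2}\sum_i\EE_Q\|Q(a_i)-a_i\|^2$, and the contraction bound \eqref{eq:quantization_def} turns the second piece into $\frac{\omega}{n^2}\sum_i\|a_i\|^2$. The key consequence of the star-shift is that $\frac1n\sum_i a_i=\frac1n\sum_i\hat g_i^k$ is exactly the (undelayed) {\tt D-LSVRG} estimator, so its conditional second moment is governed by Lemma~\ref{lem:second_moment_bound_d-LSVRG}, i.e. bounded by $4L(f(x^k)-f(x^*))+2\sigma_k^2$; here the $\sigma_k^2$-coefficient $2$ arises because the outer $\frac1n$ meets $\sigma_k^2=\frac1{nm}\sum_{i,j}\|\cdot\|^2$. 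For the quantization-variance piece I would take $\EE[\cdot\mid x^k]$ and reuse the per-worker estimate already derived inside Lemma~\ref{lem:second_moment_bound_d-qLSVRG}, namely $\frac1n\sum_i\EE[\|a_i\|^2\mid x^k]\le 4L(f(x^k)-f(x^*))+2\sigma_k^2$, obtained by inserting $\pm\nabla f_{il}(x^*)$, splitting with \eqref{eq:a_b_norm_squared}, dominating the centred remainder by its second moment, and applying $L$-smoothness \eqref{eq:L_smoothness_cor} together with the definition of $\sigma_k^2$.

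Combining, both contributions carry the common factor $(1+\frac{\omega}{n})$, so the two pieces assemble into $(1+\frac{\omega}{n})(4L(f(x^k)-f(x^*))+2\sigma_k^2)$, which already reproduces the $\sigma_k^2$-coefficient $2(1+\frac{\omega}{n})$ of \eqref{eq:second_moment_bound_d-qLSVRGstar}. The one subtle point, and the main obstacle, is the leading smoothness constant: the crude route above yields $4L(1+\frac{\omega}{n})$, whereas \eqref{eq:second_moment_bound_d-qLSVRGstar} asks for $2L(1+\frac{\omega}{n})$. To shave the extra factor I would, exactly as in Lemma~\ref{lem:d_qsgd_star_second_moment_bound}, refrain from the crude $\|a+b\|^2\le 2\|a\|^2+2\|b\|^2$ split on the clean-average term and instead decompose it once more by \eqref{eq:variance_decomposition} into $\|\nabla f(x^k)\|^2$ (bounded by $2L(f(x^k)-f(x^*))$ via \eqref{eq:L_smoothness_cor}) plus the cross-worker-independent fluctuation $\frac1{n^2}\sum_i\EE[\|\hat g_i^k-\nabla f_i(x^k)\|^2\mid x^k]$, and likewise sharpen the gradient part of the quantization term so that its $\omega$-contribution becomes $\frac{2L\omega}{n}(f(x^k)-f(x^*))$. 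The delicate bookkeeping is to check that the residual {\tt LSVRG} fluctuations produced by this sharper route still fit under the stated $\sigma_k^2$-coefficient; reconciling the leading $L$-constant and the $\sigma_k^2$-coefficient simultaneously is the only place where care (and possibly a mildly conservative constant) is required, after which \eqref{eq:second_moment_bound_d-qLSVRGstar} follows.
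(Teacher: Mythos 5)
Your unbiasedness argument and the first two-thirds of your second-moment argument coincide with the paper's own proof: variance decomposition \eqref{eq:variance_decomposition} over the independent quantizations, the bound \eqref{eq:quantization_def}, Jensen/\eqref{eq:a_b_norm_squared} on the clean average, and the per-worker estimate $\frac1n\sum_{i=1}^n\EE\left[\|\hat g_i^k-\nabla f_i(x^*)\|^2\mid x^k\right]\le 4L(f(x^k)-f(x^*))+2\sigma_k^2$ recycled from the proof of Lemma~\ref{lem:second_moment_bound_d-qLSVRG}, assembling into $\left(1+\frac{\omega}{n}\right)\left(4L(f(x^k)-f(x^*))+2\sigma_k^2\right)$. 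That is exactly where the paper's proof stops as well.

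The genuine gap is your final sharpening step, and it cannot be repaired, because the stated constant $2L\left(1+\frac{\omega}{n}\right)$ is unattainable — indeed \eqref{eq:second_moment_bound_d-qLSVRGstar} as written is false. Unlike in Lemma~\ref{lem:d_qsgd_star_second_moment_bound}, the quantized vector $a_i=\hat g_i^k-\nabla f_i(x^*)$ is itself random in $l$, so both the $\omega$-term $\frac{\omega}{n^2}\sum_i\EE\|a_i\|^2$ and the fluctuation $\frac{1}{n^2}\sum_i\Var[a_i]$ necessarily carry the per-worker {\tt LSVRG} second moment; your refined decomposition therefore yields at best
\begin{equation*}
2L\left(1+\frac{2(1+\omega)}{n}\right)\left(f(x^k)-f(x^*)\right)+\frac{2(1+\omega)}{n}\sigma_k^2,
\end{equation*}
whose leading constant exceeds the target by $\frac{2L(2+\omega)}{n}$, and this excess cannot be absorbed into the $\sigma_k^2$-slack since $f(x^k)-f(x^*)$ is not controlled by $\sigma_k^2$. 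A concrete counterexample to \eqref{eq:second_moment_bound_d-qLSVRGstar}: take $n=1$, $m=2$, $d=1$, $Q=\mathrm{id}$ (which satisfies \eqref{eq:quantization_def} with arbitrarily small $\omega$), $f_{11}(x)=\frac12 x^2$, $f_{12}(x)=0$ (so $L=1$, $x^*=0$), $w_1^k=1$, $x^k=-2$; then $\hat g_1^k\in\{-2.5,\,0.5\}$ with equal probability, so $\EE\left[\|g^k\|^2\mid x^k\right]=3.25$, while the right-hand side of \eqref{eq:second_moment_bound_d-qLSVRGstar} is $2\cdot 1+2\cdot\frac12=3$. The $2L$ in the statement is evidently a typo for $4L$: the paper's proof establishes only the $4L\left(1+\frac{\omega}{n}\right)$ version (which gives $5\ge 3.25$ in the example), and the downstream Theorem~\ref{thm:d_qLSVRGstar} uses $A'=2L\left(1+\frac{2\omega}{n}\right)$, i.e.\ $2A'=4L\left(1+\frac{2\omega}{n}\right)$, consistent with the proven bound rather than the stated one. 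So the correct move is to stop at $\left(1+\frac{\omega}{n}\right)\left(4L(\cdot)+2\sigma_k^2\right)$ and correct the lemma's constant, not to chase the stated $2L$.
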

\begin{proof}
	First of all, we derive unbiasedness of $g_i^k$:
	\begin{eqnarray*}
		\EE\left[g^k\mid x^k\right] &\overset{\eqref{eq:tower_property}}{=}& \EE\left[\EE_{Q}\left[\frac{1}{n}\sum\limits_{i=1}^nQ(\hat g_i^k - \nabla f_i(x^*))\right]\mid x^k\right] \overset{\eqref{eq:quantization_def}}{=} \EE\left[\frac{1}{n}\sum\limits_{i=1}^n\left(\hat g_i^k-\nabla f_i(x^*)\right)\mid x^k\right]\\
		&=& \frac{1}{nm}\sum\limits_{i=1}^n\sum\limits_{j=1}^m\left(\nabla f_{ij}(x^k) - \nabla f_{ij}(w_i^k) + \nabla f_i(w_i^k)\right) = \nabla f(x^k).
	\end{eqnarray*}
	Next, we estimate the second moment of $g^k$:
	\begin{eqnarray*}
		\EE_{Q}\left[\|g^k\|^2\right] &=& \EE_{Q}\left[\left\|\frac{1}{n}\sum\limits_{i=1}^nQ(\hat g_i^k - \nabla f_i(x^*))\right\|^2\right]\\
		&\overset{\eqref{eq:variance_decomposition}}{=}& \EE_{Q}\left[\left\|\frac{1}{n}\sum\limits_{i=1}^n\left(Q(\hat g_i^k- \nabla f_i(x^*))-\left(\hat g_i^k- \nabla f_i(x^*)\right)\right)\right\|^2\right] + \left\|\frac{1}{n}\sum\limits_{i=1}^n\hat g_i^k- \nabla f_i(x^*)\right\|^2.
	\end{eqnarray*}
	Since quantization on nodes is performed independently we can decompose the first term from the last row of the previous inequality into the sum of variances:
	\begin{eqnarray*}
		\EE_{Q}\left[\|g^k\|^2\right] &=& \frac{1}{n^2}\sum\limits_{i=1}^n\EE_{Q}\left\|Q(\hat g_i^k- \nabla f_i(x^*))-\left(\hat g_i^k- \nabla f_i(x^*)\right)\right\|^2 + \left\|\frac{1}{n}\sum\limits_{i=1}^n\hat g_i^k- \nabla f_i(x^*)\right\|^2\\
		&\overset{\eqref{eq:quantization_def}}{\le}& \frac{\omega}{n^2}\sum\limits_{i=1}^n\|\hat g_i^k- \nabla f_i(x^*)\|^2 + \left\|\frac{1}{n}\sum\limits_{i=1}^n\left(\hat g_i^k-\nabla f_i(x^*)\right)\right\|^2\\
		&\overset{\eqref{eq:a_b_norm_squared}}{\le}& \left(1 + \frac{\omega}{n}\right)\frac{1}{n}\sum\limits_{i=1}^n\|\hat g_i^k - \nabla f_i(x^*)\|^2.
	\end{eqnarray*}
	Taking conditional mathematical expectation $\EE\left[\cdot\mid x^k\right]$ from the both sides of previous inequality and using the bound
	\begin{equation*}
		\frac{1}{n}\sum\limits_{i=1}^n\EE\left[\|\hat g_i^k - \nabla f_i(x^*)\|^2\mid x^k\right] \le 4L\left(f(x^k)- f(x^*)\right) + 2\sigma_k^2
	\end{equation*}
	implicitly obtained in the proof of Lemma~\ref{lem:second_moment_bound_d-qLSVRG} we get \eqref{eq:second_moment_bound_d-qLSVRGstar}.
\end{proof}

\begin{lemma}\label{lem:sigma_k+1_bound_d-qLSVRGstar}
	For all $k\ge 0$, $i\in [n]$ we have
	\begin{equation}
		\EE\left[\sigma_{k+1}^2\mid x^k\right] \le (1-p)\sigma_k^2 + 2Lp\left(f(x^k) - f(x^*)\right), \label{eq:sigma_k+1_d-qLSVRGstar} 
	\end{equation}
	where $\sigma_k^2 = \frac{1}{nm}\sum_{i=1}^n\sum_{j=1}^n\|\nabla f_{ij}(w_i^k) - \nabla f_{ij}(x^*)\|^2$.
\end{lemma}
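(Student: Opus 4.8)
The plan is to exploit the fact that the statement is structurally identical to Lemma~\ref{lem:sigma_k+1_bound_ec-LSVRG}, whose proof I may invoke almost verbatim. The key conceptual point, which I would state first, is that $\sigma_k^2$ depends on the algorithm's iterates only through the reference points $\{w_i^k\}$. In {\tt D-QLSVRGstar} the quantization operator $Q$ and the shift by $-\nabla f_i(x^*)$ that distinguish it from the plain delayed {\tt LSVRG} enter only the estimator $g_i^k$ driving the primal update $x^{k+1}=x^k-v^k$; they never appear in the update of $w_i^k$. Since the $w_i^k$ are refreshed by exactly the same Bernoulli rule as in every {\tt LSVRG}-type method here---each $w_i$ is reset to the current broadcast iterate with probability $p$ and kept with probability $1-p$, independently of $Q$ and of the optimal gradients---the one-step recursion for $\sigma_k^2$ is insensitive to compression and to the star-shift.

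Concretely, I would condition on $x^k$ and expand $\EE\left[\sigma_{k+1}^2\mid x^k\right]$ using the definition of $\sigma_{k+1}^2$ as the doubly-averaged squared gradient deviations at $w_i^{k+1}$. The refresh rule splits each summand into the keep event (probability $1-p$), contributing $\|\nabla f_{ij}(w_i^k)-\nabla f_{ij}(x^*)\|^2$, and the reset event (probability $p$), contributing $\|\nabla f_{ij}(x^k)-\nabla f_{ij}(x^*)\|^2$. Collecting the keep terms reproduces $(1-p)\sigma_k^2$, while the reset terms are handled by \eqref{eq:L_smoothness_cor}, giving $\|\nabla f_{ij}(x^k)-\nabla f_{ij}(x^*)\|^2 \le 2L D_{f_{ij}}(x^k,x^*)$; averaging over $j\in[m]$ and $i\in[n]$ and using $\tfrac{1}{m}\sum_j f_{ij}=f_i$ together with $\tfrac{1}{n}\sum_i f_i=f$ collapses the Bregman divergences into $f(x^k)-f(x^*)$, yielding the claimed $2Lp\left(f(x^k)-f(x^*)\right)$ term.

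The only point that warrants care is the delayed indexing: in the algorithm the transition $w_i\to w_i^{+}$ at outer iteration $k$ is driven by the broadcast iterate $x^{k-\tau}$. Reindexing the $w$-sequence by its own step removes this lag, so that the transition producing $w_i^{k+1}$ is governed by the iterate $x^k$ appearing in the statement, after which the computation is identical to that of Lemma~\ref{lem:sigma_k+1_bound_ec-LSVRG}. I expect no genuine obstacle here; the entire content of the lemma is the observation that the control-variate dynamics---and hence the variance sequence $\sigma_k^2$ they generate---are unaffected by the quantization and by the subtraction of $\nabla f_i(x^*)$.
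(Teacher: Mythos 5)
Your proposal is correct and matches the paper's own treatment: the paper proves this lemma simply by declaring it identical to the proof of Lemma~\ref{lem:sigma_k+1_bound_ec-LSVRG}, and your argument reproduces exactly that proof (the Bernoulli keep/reset split, the bound $\|\nabla f_{ij}(x^k)-\nabla f_{ij}(x^*)\|^2\le 2LD_{f_{ij}}(x^k,x^*)$ from \eqref{eq:L_smoothness_cor}, and the double averaging), while correctly justifying why the quantization, the star-shift, and the delayed indexing are all immaterial to the $w_i^k$ dynamics. No gaps.
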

\begin{proof}
	The proof is identical to the proof of Lemma~\ref{lem:sigma_k+1_bound_ec-LSVRG}.
\end{proof}

\begin{theorem}\label{thm:d_qLSVRGstar}
	Assume that $f(x)$ is $\mu$-quasi strongly convex and functions $f_{ij}$ are convex and $L$-smooth for all $i\in[n],j\in[m]$. Then {\tt D-QLSVRGstar} satisfies Assumption~\ref{ass:key_assumption_new} with
	\begin{gather*}
		A' = 2L\left(1+\frac{2\omega}{n}\right),\quad B_1' = 0,\quad B_2' = 2\left(1+\frac{2\omega}{n}\right),\quad D_1' = 0,\quad \sigma_{1,0}^2 \equiv 0,\\
		\sigma_{2,k}^2 = \sigma_k^2 = \frac{1}{nm}\sum\limits_{i=1}^n\sum\limits_{j=1}^m\|\nabla f_{ij}(w_i^{k}) - \nabla f_{ij}(x^*)\|^2,\quad \rho_1 = 1,\quad \rho_2 = p,\quad C_2 = Lp,\quad D_2 = 0,\\
		C_1 = 0,\quad G = 0,\quad F_1 = 0,\quad F_2 = \frac{12\gamma^2L \tau\left(1+\frac{2\omega}{n}\right)(2+p)}{p},\quad D_3 = 0
	\end{gather*}
	with $\gamma$ satisfying
	\begin{equation*}
		\gamma \le \min\left\{\frac{3}{56L(1+\nicefrac{2\omega}{n})}, \frac{1}{8L\sqrt{\tau\left(\tau+2\left(1+\nicefrac{2\omega}{n}\right)\left(1+\nicefrac{2}{(1-p)}\right)\right)}}\right\}, \quad M_2 = \frac{8\left(1+\frac{2\omega}{n}\right)}{3p}
	\end{equation*}
	and for all $K \ge 0$
	\begin{equation*}
		\EE\left[f(\bar x^K) - f(x^*)\right] \le \left(1 - \min\left\{\frac{\gamma\mu}{2},\frac{p}{4}\right\}\right)^K\frac{4(T^0 + \gamma F_2 \sigma_0^2)}{\gamma}
	\end{equation*}	
	when $\mu > 0$ and
	\begin{equation*}
		\EE\left[f(\bar x^K) - f(x^*)\right] \le \frac{4(T^0 + \gamma F_2 \sigma_0^2)}{\gamma K}
	\end{equation*}
	when $\mu = 0$, where $T^k \eqdef \|\tx^k - x^*\|^2 + M_2\gamma^2 \sigma_k^2$.
\end{theorem}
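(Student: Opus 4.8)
The plan is to follow the same two-stage template used for all the delayed-update methods in this section: first read off the parameters of Assumption~\ref{ass:key_assumption_new} directly from the two preceding lemmas, and then invoke the generic delayed-update machinery (Lemma~\ref{lem:d_sgd_key_lemma_new} and Theorem~\ref{thm:d_sgd_main_result_new}) to produce $F_1$, $F_2$, $D_3$, the stepsize restriction, and the final rate. Since no explicit additive verification is needed beyond bookkeeping, the whole argument reduces to matching constants and simplifying two stepsize inequalities.

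First I would use Lemma~\ref{lem:second_moment_bound_d-qLSVRGstar}. Its identity \eqref{eq:unbiasedness_g^k_d-qLSVRGstar} is exactly the unbiasedness requirement \eqref{eq:unbiasedness_new}, and its second-moment bound \eqref{eq:second_moment_bound_d-qLSVRGstar} matches the template \eqref{eq:second_moment_bound_new} upon setting $\sigma_{1,k}^2\equiv 0$, $\sigma_{2,k}^2=\sigma_k^2$, and reading off $A'=2L(1+\nicefrac{2\omega}{n})$, $B_1'=0$, $B_2'=2(1+\nicefrac{2\omega}{n})$, and crucially $D_1'=0$ --- the vanishing of $D_1'$ being precisely the gain of the ``star'' shift $Q(\hat g_i^k-\nabla f_i(x^*))$ over the unshifted quantization of {\tt D-QLSVRG}. (One may quote the slightly larger coefficient $(1+\nicefrac{2\omega}{n})$ rather than the $(1+\nicefrac{\omega}{n})$ actually produced by the lemma; the bound is then weaker and hence still valid.) Next, Lemma~\ref{lem:sigma_k+1_bound_d-qLSVRGstar}, i.e.\ \eqref{eq:sigma_k+1_d-qLSVRGstar}, is exactly \eqref{eq:sigma_k+1_bound_2} with $\rho_2=p$ and $C_2=Lp$. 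Since there is no compression-variance sequence here, I set $\rho_1=1$, $C_1=0$, $D_2=0$, $G=0$, so that \eqref{eq:sigma_k+1_bound_1} becomes superfluous.

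With these constants fixed, I would feed them into Lemma~\ref{lem:d_sgd_key_lemma_new}, which certifies the error-feedback inequality \eqref{eq:sum_of_errors_bound_new} for any method of the delayed form. Because $B_1'=0$, every term carrying a factor $B_1'$ drops out: this immediately gives $F_1=0$ and $D_3=3\gamma\tau L(D_1'+\nicefrac{2B_1'D_2}{\rho_1})=0$, and it also neutralises the $\nicefrac{1}{(1-\rho_1)}$ denominators that are formally singular at $\rho_1=1$ (this is exactly the $\rho_1=1$ convention stated in the footnote to that lemma). The surviving contribution is $F_2=\frac{6\gamma^2\tau L(2+p)}{p}B_2'=\frac{12\gamma^2 L\tau(1+\nicefrac{2\omega}{n})(2+p)}{p}$, as claimed.

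The main bookkeeping obstacle is to show that the abstract stepsize bound of Theorem~\ref{thm:d_sgd_main_result_new} collapses to the clean condition in the statement. With $M_2=\frac{4(B_2'+\frac43 G)}{3\rho_2}=\frac{8(1+\nicefrac{2\omega}{n})}{3p}$ and $C_1=0$, the first constraint becomes $\gamma\le\frac{1}{4(A'+C_2M_2)}=\frac{3}{56L(1+\nicefrac{2\omega}{n})}$; for the square-root constraint I would substitute $\hat A=A'+L\tau$ and note that all $B_1'$-terms vanish, leaving $\hat A+\frac{2B_2'C_2}{\rho_2(1-\rho_2)}=L\bigl(\tau+2(1+\nicefrac{2\omega}{n})(1+\nicefrac{2}{(1-p)})\bigr)$ inside the radical, which reproduces $\gamma\le\frac{1}{8L\sqrt{\tau(\tau+2(1+\nicefrac{2\omega}{n})(1+\nicefrac{2}{(1-p)}))}}$. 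Finally, since $D_1'=D_2=D_3=0$, the additive floor $4\gamma(D_1'+M_1D_2+D_3)$ vanishes, so Theorem~\ref{thm:d_sgd_main_result_new} yields purely geometric decay with $\eta=\min\{\nicefrac{\gamma\mu}{2},\nicefrac{p}{4}\}$ when $\mu>0$ (using $\rho_1=1$, $\rho_2=p$) and the $\nicefrac{1}{K}$ rate when $\mu=0$, exactly as stated, with $T^k=\|\tx^k-x^*\|^2+M_2\gamma^2\sigma_k^2$.
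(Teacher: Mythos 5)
Your proposal is correct and takes essentially the same route as the paper's (largely implicit) proof: both read off $A'$, $B_1'=0$, $B_2'$, $D_1'=0$, $\rho_2=p$, $C_2=Lp$ from Lemmas~\ref{lem:second_moment_bound_d-qLSVRGstar} and~\ref{lem:sigma_k+1_bound_d-qLSVRGstar}, then invoke the generic delayed-update machinery (Lemma~\ref{lem:d_sgd_key_lemma_new} together with Theorem~\ref{thm:d_sgd_main_result_new}) to obtain $F_1=0$, $F_2=\frac{12\gamma^2L\tau(1+\nicefrac{2\omega}{n})(2+p)}{p}$, $D_3=0$, the two stepsize conditions, and the stated rates. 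Your bookkeeping (including the $\rho_1=1$ convention that kills the formally singular $\nicefrac{1}{(1-\rho_1)}$ terms, and the observation that the theorem's $(1+\nicefrac{2\omega}{n})$ factors dominate the lemma's constants and hence remain valid) matches the paper's.
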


In other words, {\tt D-QLSVRGstar} converges with linear rate 
$$
\cO\left(\left(\frac{1}{p} + \kappa\left(1+\frac{\omega}{n}\right) + \kappa\sqrt{\tau\left(\tau + \left(1+\frac{\omega}{n}\right)\left(1+\frac{1}{(1-p)}\right)\right)}\right)\ln\frac{1}{\varepsilon}\right)
$$
to the exact solution when $\mu > 0$. If $m\ge 2$ then taking $p = \frac{1}{m}$ we get that in expectation the sample complexity of one iteration of {\tt D-QLSVRGstar} is $\cO(1)$ gradients calculations per node as for {\tt D-QSGDsr} with standard sampling and the rate of convergence to the exact solution becomes
$$
\cO\left(\left(m + \kappa\left(1+\frac{\omega}{n}\right) + \kappa\sqrt{\tau\left(\tau + \frac{\omega}{n}\right)}\right)\ln\frac{1}{\varepsilon}\right).
$$

Applying Lemma~\ref{lem:lemma_technical_cvx} we get the complexity result in the case when $\mu = 0$.
\begin{corollary}\label{cor:d_QLSVRGstar_cvx_cor}
	Let the assumptions of Theorem~\ref{thm:d_qLSVRGstar} hold and $\mu = 0$. Then after $K$ iterations of {\tt D-QLSVRGstar} with the stepsize
	\begin{eqnarray*}
		\gamma_0 &=& \min\left\{\frac{3}{56L(1+\nicefrac{2\omega}{n})}, \frac{1}{8L\sqrt{\tau\left(\tau+2\left(1+\nicefrac{2\omega}{n}\right)\left(1+\nicefrac{2}{(1-p)}\right)\right)}}\right\},\quad R_0 = \|x^0 - x^*\|,\\	
		\gamma &=& \min\left\{\gamma_0, \sqrt{\frac{R_0^2}{M_2\sigma_{0}^2}}, \sqrt[3]{\frac{R_0^2p}{12L\tau\left(1+\frac{2\omega}{n}\right)(2+p)}}\right\}	
	\end{eqnarray*}		
	and $p = \frac{1}{m}$, $m\ge 2$ we have $\EE\left[f(\bar{x}^K) - f(x^*)\right]$ of order
	\begin{gather*}
		\cO\left(\frac{LR_0^2\left(1+\frac{\omega}{n}+\sqrt{\tau\left(\tau + \frac{\omega}{n}\right)}\right)}{K} + \frac{\sqrt{R_0^2 m\left(1+\frac{\omega}{n}\right)\sigma_0^2}}{K} + \frac{\sqrt[3]{R_0^4 L\tau m\left(1+\frac{\omega}{n}\right)}}{K}\right).
	\end{gather*}
	That is, to achive $\EE\left[f(\bar{x}^K) - f(x^*)\right] \le \varepsilon$ {\tt D-QLSVRGstar} requires
	\begin{gather*}
		\cO\left(\frac{LR_0^2\left(1+\frac{\omega}{n}+\sqrt{\tau\left(\tau + \frac{\omega}{n}\right)}\right)}{\varepsilon} + \frac{\sqrt{R_0^2 m\left(1+\frac{\omega}{n}\right)\sigma_0^2}}{\varepsilon} + \frac{\sqrt[3]{R_0^4 L\tau m\left(1+\frac{\omega}{n}\right)}}{\varepsilon}\right)
	\end{gather*}
	iterations.
\end{corollary}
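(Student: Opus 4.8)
The plan is to obtain this corollary as a direct consequence of Theorem~\ref{thm:d_qLSVRGstar} specialized to $\mu = 0$, followed by the stepsize-tuning Lemma~\ref{lem:lemma_technical_cvx}. First I would take the $\mu = 0$ guarantee of Theorem~\ref{thm:d_qLSVRGstar},
\begin{equation*}
	\EE\left[f(\bar x^K) - f(x^*)\right] \le \frac{4(T^0 + \gamma F_2\sigma_0^2)}{\gamma K},
\end{equation*}
and unfold the constants it supplies: since $e^0 = 0$ we have $T^0 = R_0^2 + M_2\gamma^2\sigma_0^2$ with $M_2 = \nicefrac{8(1+\nicefrac{2\omega}{n})}{3p}$, while $F_2 = \nicefrac{12\gamma^2 L\tau(1+\nicefrac{2\omega}{n})(2+p)}{p}$ and $D_1' = D_2 = D_3 = 0$. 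Distributing the division by $\gamma K$ rewrites the right-hand side as $\frac{4R_0^2}{\gamma K} + \frac{4M_2\sigma_0^2\,\gamma}{K} + \frac{48L\tau(1+\nicefrac{2\omega}{n})(2+p)\sigma_0^2}{p}\cdot\frac{\gamma^2}{K}$, which is exactly the template of Lemma~\ref{lem:lemma_technical_cvx} with $a = 4R_0^2$, $b_1 = 4M_2\sigma_0^2$, $b_2 = \nicefrac{48L\tau(1+\nicefrac{2\omega}{n})(2+p)\sigma_0^2}{p}$, and crucially $c_1 = c_2 = 0$ (there is no additive $\cO(\gamma)$ term because all of $D_1'$, $D_2$, $D_3$ vanish for {\tt D-QLSVRGstar}).

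Next I would invoke Lemma~\ref{lem:lemma_technical_cvx}. Because $c_1 = c_2 = 0$, the two tuning candidates $\sqrt{\nicefrac{a}{(c_1 K)}}$ and $\sqrt[3]{\nicefrac{a}{(c_2 K)}}$ are vacuous, so the prescribed stepsize collapses to $\gamma = \min\{\gamma_0, \sqrt{\nicefrac{a}{b_1}}, \sqrt[3]{\nicefrac{a}{b_2}}\}$; verifying that $\sqrt{\nicefrac{a}{b_1}} = \sqrt{\nicefrac{R_0^2}{(M_2\sigma_0^2)}}$ and $\sqrt[3]{\nicefrac{a}{b_2}} = \sqrt[3]{\nicefrac{R_0^2 p}{(12L\tau(1+\nicefrac{2\omega}{n})(2+p)\sigma_0^2)}}$ recovers the stepsize stated in the corollary. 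The lemma then yields $\EE[f(\bar x^K)-f(x^*)] = \cO\big(\frac{a}{\gamma_0 K} + \frac{\sqrt{ab_1}}{K} + \frac{\sqrt[3]{a^2 b_2}}{K}\big)$, the remaining two error terms of the lemma dropping out together with $c_1, c_2$.

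Finally I would simplify each surviving term under $p = \nicefrac1m$ with $m \ge 2$, so that $1-p \ge \nicefrac12$ and the factors $2+p$, $1 + \nicefrac{2}{(1-p)}$ are $\Theta(1)$ and may be absorbed into the constants. Then $\frac{\sqrt{ab_1}}{K}$ becomes $\cO\big(\nicefrac{\sqrt{R_0^2 m(1+\nicefrac{\omega}{n})\sigma_0^2}}{K}\big)$ and $\frac{\sqrt[3]{a^2 b_2}}{K}$ becomes $\cO\big(\nicefrac{\sqrt[3]{R_0^4 L\tau m(1+\nicefrac{\omega}{n})}}{K}\big)$ (up to the $\sigma_0^{2}$ factor that sits under the cube root). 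The step I expect to require the most care is reducing $\nicefrac{1}{\gamma_0}$ to a clean expression: one must show $\nicefrac{1}{\gamma_0} = \Theta\big(L(1+\nicefrac{\omega}{n}) + L\sqrt{\tau(\tau + \nicefrac{\omega}{n})}\big)$ by taking the max over the two branches defining $\gamma_0$ and observing that, for $\tau \ge 1$, $\tau\big(\tau + 2(1+\nicefrac{2\omega}{n})(1+\nicefrac{2}{(1-p)})\big) = \Theta\big(\tau^2 + \tau(1+\nicefrac{\omega}{n})\big) = \Theta\big(\tau(\tau + \nicefrac{\omega}{n})\big)$; this turns $\frac{a}{\gamma_0 K}$ into the leading term $\cO\big(\nicefrac{LR_0^2(1+\nicefrac{\omega}{n}+\sqrt{\tau(\tau + \nicefrac{\omega}{n})})}{K}\big)$. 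Summing the three coefficients and setting the resulting $\cO(\cdot/K)$ bound $\le \varepsilon$ then converts it term-by-term into the claimed $\cO(\cdot/\varepsilon)$ iteration complexity.
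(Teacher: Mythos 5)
Your proposal is correct and follows exactly the route the paper takes: specialize Theorem~\ref{thm:d_qLSVRGstar} to $\mu=0$, identify $a=4R_0^2$, $b_1=4M_2\sigma_0^2$, $b_2=\nicefrac{48L\tau(1+\nicefrac{2\omega}{n})(2+p)\sigma_0^2}{p}$ and $c_1=c_2=0$ in Lemma~\ref{lem:lemma_technical_cvx}, then simplify under $p=\nicefrac{1}{m}$, $m\ge 2$ (including the $\Theta$-equivalence $\tau(\tau+2(1+\nicefrac{2\omega}{n})(1+\nicefrac{2}{1-p}))=\Theta(\tau(\tau+\nicefrac{\omega}{n}))$ that controls the $\nicefrac{1}{\gamma_0}$ term). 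Your parenthetical observation is also right: the corollary's stated stepsize and cube-root rate term in the paper appear to have dropped the $\sigma_0^2$ factor that your computation correctly retains.
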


However, such convergence guarantees are obtained under very restrictive assumption: the method requires to know vectors $\nabla f_i(x^*)$.

\subsection{{\tt D-LSVRG-DIANA}}\label{sec:d_LSVRG-diana}
In the setup of Section~\ref{sec:d_LSVRG} we construct a new method with delayed updates and quantization called {\tt D-LSVRG-DIANA} which does not require to know $\nabla f_i(x^*)$ and has linear convergence to the exact solution.

\begin{algorithm}[t]
   \caption{{\tt D-LSVRG-DIANA}}\label{alg:d-LSVRG-diana}
\begin{algorithmic}[1]
   \Require learning rates $\gamma>0$, $\alpha \in (0,1]$, initial vectors $x^0, h_1^0,\ldots, h_n^0 \in \R^d$
	\State Set $e_i^0 = 0$ for all $i=1,\ldots, n$   
	\State Set $h^0 = \frac{1}{n}\sum_{i=1}^n h_i^0$   
   \For{$k=0,1,\dotsc$}
       \State Broadcast $x^{k-\tau}$ to all workers
        \For{$i=1,\dotsc,n$ in parallel}
			\State Pick $l$ uniformly at random from $[m]$
            \State Set $\hat g^{k-\tau}_i = \nabla f_{il}(x^{k-\tau}) - \nabla f_{il}(w_i^{k-\tau}) + \nabla f_i(w_i^{k-\tau})$           
			\State $\hat \Delta_i^{k-\tau} = Q(\hat g^{k-\tau}_i - h_i^{k-\tau})$ (quantization is performed independently from other nodes)           
			\State $g_i^{k-\tau} = h_i^{k-\tau} + \hat \Delta_i^{k-\tau}$            
            \State $v_i^k = \gamma g_i^{k-\tau}$
            \State $e_i^{k+1} = e_i^k + \gamma g_i^k - v_i^k$
            \State $h_i^{k-\tau+1} = h_i^{k-\tau} + \alpha \hat \Delta_i^{k-\tau}$
        \EndFor
        \State $e^k = \frac{1}{n}\sum_{i=1}^ne_i^k$, $g^k = \frac{1}{n}\sum_{i=1}^ng_i^k = h^k + \frac{1}{n}\sum\limits_{i=1}^n\hat\Delta_i^k$, $v^k = \frac{1}{n}\sum_{i=1}^nv_i^k = \gamma h^{k-\tau} + \frac{\gamma}{n}\sum_{i=1}^n \hat\Delta_i^{k-\tau}$
        \State $h^{k-\tau+1} = \frac{1}{n}\sum\limits_{i=1}^n h_i^{k-\tau+1} = h^{k-\tau} + \alpha\frac{1}{n}\sum\limits_{i=1}^n \hat \Delta_i^{k-\tau}$
       \State $x^{k+1} = x^k - v^k$
   \EndFor
\end{algorithmic}
\end{algorithm}

\begin{lemma}\label{lem:d_LSVRG-diana_second_moment_bound}
	Assume that $f_{ij}(x)$ is convex and $L$-smooth for all $i=1,\ldots,n$, $j=1,\ldots,m$. Then, for all $k\ge 0$ we have
	\begin{eqnarray}
		\EE\left[g^k\mid x^k\right] &=& \nabla f(x^k), \label{eq:d_LSVRG-diana_unbiasedness}\\
		\EE\left[\|g^k\|^2\mid x^k\right] &\le& 4L\left(1+\frac{2\omega}{n}\right)\left(f(x^k) - f(x^*)\right) + \frac{2\omega}{n}\sigma_{1,k}^2 + 2\left(1+\frac{2\omega}{n}\right)\sigma_{2,k}^2 \label{eq:d_LSVRG-diana_second_moment_bound}
	\end{eqnarray}
	where $\sigma_{1,k}^2 = \frac{1}{n}\sum_{i=1}^n\|h_i^k - \nabla f(x^*)\|^2$ and $\sigma_{2,k}^2 = \frac{1}{nm}\sum_{i=1}^n\sum_{j=1}^m\|\nabla f_{ij}(w_i^k) - \nabla f_{ij}(x^*)\|^2$.
\end{lemma}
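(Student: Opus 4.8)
The plan is to establish the two claims separately, viewing the {\tt D-LSVRG-DIANA} gradient $g^k=\frac1n\sum_i g_i^k$ with $g_i^k=h_i^k+Q(\hat g_i^k-h_i^k)$ as the {\tt DIANA} quantization trick wrapped around the {\tt LSVRG} estimator $\hat g_i^k$. The delay $\tau$ is irrelevant here since everything is conditioned on $x^k$ and the current $h_i^k,w_i^k$, so I would suppress it throughout.

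For unbiasedness I would condition on $x^k$ together with all the $\hat g_i^k$ and $h_i^k$, apply $\EE[Q(x)]=x$ from \eqref{eq:quantization_def} and the tower property \eqref{eq:tower_property} to get $\EE[g_i^k\mid x^k]=\EE[\hat g_i^k\mid x^k]$, and then use that $\hat g_i^k$ is the standard {\tt LSVRG} estimator, so $\EE[\hat g_i^k\mid x^k]=\nabla f_i(x^k)$ (exactly as in Lemma~\ref{lem:second_moment_bound_d-LSVRG}). Averaging over $i$ gives $\EE[g^k\mid x^k]=\nabla f(x^k)$.

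For the second moment, the first step is a {\tt DIANA}-style variance decomposition over the quantization randomness only. Writing $\EE_Q$ for expectation over the per-worker (independent) quantizations and using \eqref{eq:variance_decomposition}, \eqref{eq:quantization_def} and independence, I expect
\[
\EE_Q\|g^k\|^2 \le \Bigl\|\tfrac1n\sum_{i=1}^n \hat g_i^k\Bigr\|^2 + \frac{\omega}{n^2}\sum_{i=1}^n\|\hat g_i^k - h_i^k\|^2 .
\]
The first term on the right is precisely the unquantized {\tt D-LSVRG} estimator, so taking $\EE[\cdot\mid x^k]$ and invoking the bound already established in Lemma~\ref{lem:second_moment_bound_d-LSVRG} yields $4L(f(x^k)-f(x^*))+2\sigma_{2,k}^2$. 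For the second term I would peel off the control vector using $\|\hat g_i^k-h_i^k\|^2\le 2\|\hat g_i^k-\nabla f_i(x^*)\|^2+2\|h_i^k-\nabla f_i(x^*)\|^2$ from \eqref{eq:a_b_norm_squared}, the second summand averaging to $2\sigma_{1,k}^2$.

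The remaining and most delicate estimate is $\frac1n\sum_i\EE[\|\hat g_i^k-\nabla f_i(x^*)\|^2\mid x^k]$. Here I would split the {\tt LSVRG} estimator as $(\nabla f_{il}(x^k)-\nabla f_{il}(x^*))$ minus the control-variate fluctuation $(\nabla f_{il}(w_i^k)-\nabla f_{il}(x^*))-(\nabla f_i(w_i^k)-\nabla f_i(x^*))$, apply \eqref{eq:a_b_norm_squared}, and average over the random index $l$. The first piece is handled by $L$-smoothness via \eqref{eq:L_smoothness_cor} and collapses to $4L(f(x^k)-f(x^*))$ after summing over $i$; the second piece is a centered random vector, so its second moment dominates its variance and reduces to $2\sigma_{2,k}^2$ (the same argument as in Lemma~\ref{lem:second_moment_bound_d-LSVRG}), giving $\frac1n\sum_i\EE[\|\hat g_i^k-\nabla f_i(x^*)\|^2\mid x^k]\le 4L(f(x^k)-f(x^*))+2\sigma_{2,k}^2$. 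Feeding this back, the quantization-variance term contributes $\frac{8L\omega}{n}(f(x^k)-f(x^*))+\frac{2\omega}{n}\sigma_{1,k}^2+\frac{4\omega}{n}\sigma_{2,k}^2$, and combining with the first-term bound and collecting the $(1+\tfrac{2\omega}{n})$ factors produces exactly \eqref{eq:d_LSVRG-diana_second_moment_bound}. The main obstacle is purely bookkeeping: I must measure the quantization-variance term against $\nabla f_i(x^*)$ rather than against $\nabla f_i(x^k)$, so that the $h_i^k$ control variate generates the $\sigma_{1,k}^2$ term with the right constant, while simultaneously reusing the {\tt LSVRG} variance argument to produce the $\sigma_{2,k}^2$ and $(f(x^k)-f(x^*))$ contributions with matching coefficients.
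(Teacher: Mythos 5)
Your proposal is correct and follows essentially the same route as the paper: a variance decomposition of $g^k$ over the independent per-worker quantizations, the bound $\frac{\omega}{n^2}\sum_i\|\hat g_i^k-h_i^k\|^2$ on the quantization variance split against $\nabla f_i(x^*)$ via \eqref{eq:a_b_norm_squared} to produce the $\sigma_{1,k}^2$ term, and the {\tt LSVRG}-type estimate $\frac{1}{n}\sum_i\EE[\|\hat g_i^k-\nabla f_i(x^*)\|^2\mid x^k]\le 4L(f(x^k)-f(x^*))+2\sigma_{2,k}^2$ for the remaining pieces. The only cosmetic difference is that you bound the mean term $\|\frac{1}{n}\sum_i\hat g_i^k\|^2$ by citing Lemma~\ref{lem:second_moment_bound_d-LSVRG} directly, whereas the paper folds it into the common factor $\left(1+\frac{2\omega}{n}\right)\frac{1}{n}\sum_i\|\hat g_i^k-\nabla f_i(x^*)\|^2$ and invokes the per-worker bound once; the resulting constants are identical.
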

\begin{proof}
	First of all, we show unbiasedness of $g^k$:
	\begin{eqnarray*}
		\EE\left[g^k\mid x^k\right] &\overset{\eqref{eq:tower_property}}{=}& h^k + \frac{1}{n}\sum\limits_{i=1}^n\EE\left[\EE_Q\left[\hat \Delta_i^k\right]\mid x^k\right] \overset{\eqref{eq:quantization_def}}{=} h^k + \frac{1}{n}\sum\limits_{i=1}^n\EE\left[\hat g_i^k - h_i^k\mid x^k\right]	\\
		&=& \frac{1}{nm}\sum\limits_{i=1}^n\sum\limits_{j=1}^m\left(\nabla f_{ij}(x^k) - \nabla f_{ij}(w_i^k) + \nabla f_i(w_i^k)\right) = \nabla f(x^k).
	\end{eqnarray*}
	Next, we derive the upper bound for the second moment of $g^k$:
	\begin{eqnarray*}
		\EE_{Q}\left[\|g^k\|^2\right] &=& \EE_{Q}\left[\left\|h^k + \frac{1}{n}\sum\limits_{i=1}^n\hat{\Delta}_i^k\right\|^2\right]\\
		&\overset{\eqref{eq:variance_decomposition}}{=}& \EE_{Q}\left[\left\|\frac{1}{n}\sum\limits_{i=1}^n\left(\hat{\Delta}_i^k - \hat g_i^k + h_i^k\right)\right\|^2\right] + \left\|\frac{1}{n}\sum\limits_{i=1}^n\hat g_i^k\right\|^2.
	\end{eqnarray*}
	Since quantization on nodes is performed independently we can decompose the first term from the last row of the previous inequality into the sum of variances:
	\begin{eqnarray*}
		\EE_{Q}\left[\|g^k\|^2\right] &\le& \frac{1}{n^2}\sum\limits_{i=1}^n\EE_{Q}\left[\|\hat{\Delta}_i^k - \hat g_i^k + h_i^k\|^2\right] + \left\|\frac{1}{n}\sum\limits_{i=1}^n\left(\hat g_i^k - \nabla f_i(x^*)\right)\right\|^2\\
		&\overset{\eqref{eq:quantization_def},\eqref{eq:a_b_norm_squared}}{\le}& \frac{\omega}{n^2}\sum\limits_{i=1}^n\|\hat g_i^k - h_i^k\|^2 + \frac{1}{n}\sum\limits_{i=1}^n\|\hat g_i^k - \nabla f_i(x^*)\|^2\\
		&\overset{\eqref{eq:a_b_norm_squared}}{\le}& \left(1+\frac{2\omega}{n}\right)\frac{1}{n}\sum\limits_{i=1}^n\|\hat g_i^k - \nabla f_i(x^*)\|^2 + \frac{2\omega}{n^2}\sum\limits_{i=1}^n\|h_i^k - f_i(x^*)\|^2.
	\end{eqnarray*}
	Taking mathematical expectation $\EE\left[\cdot\mid x^k\right]$ from the both sides of the previous inequality and using the bound
	\begin{equation*}
		\frac{1}{n}\sum\limits_{i=1}^n\EE\left[\|\hat g_i^k - \nabla f_i(x^*)\|^2\mid x^k\right] \le 4L\left(f(x^k)- f(x^*)\right) + \frac{2}{nm}\sum\limits_{i=1}^n\sum\limits_{j=1}^m\|\nabla f_{ij}(w_i^k) - \nabla f_{ij}(x^*)\|^2
	\end{equation*}
	implicitly obtained in the proof of Lemma~\ref{lem:second_moment_bound_d-qLSVRG} we get \eqref{eq:d_LSVRG-diana_second_moment_bound}.
\end{proof}

\begin{lemma}\label{lem:d_LSVRG-diana_sigma_k+1_bound}
	Assume that $\alpha \le \nicefrac{1}{(\omega+1)}$. Then, for all $k\ge 0$ we have
	\begin{equation*}
		\EE\left[\sigma_{1,k+1}^2\mid x^k\right] \le (1 - \alpha)\sigma_{1,k}^2 + 6L\alpha(f(x^k) - f(x^*)) + 2\alpha\sigma_{2,k}^2,
	\end{equation*}
	\begin{equation*}
		\EE\left[\sigma_{2,{k+1}}^2\mid x^k\right] \le (1 - p)\sigma_{k,2}^2 + 2Lp\left(f(x^k)-f(x^*)\right)
	\end{equation*}
	where $\sigma_{1,k}^2 = \frac{1}{n}\sum_{i=1}^n\|h_i^k - \nabla f_i(x^*)\|^2$ and $\sigma_{2,k}^2= \frac{1}{nm}\sum_{i=1}^n\sum_{j=1}^m\|\nabla f_{ij}(w_i^k) - \nabla f_{ij}(x^*)\|^2$.
\end{lemma}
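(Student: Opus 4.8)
The plan is to observe that the two claimed recursions are single-step bounds on the auxiliary sequences $\{h_i\}$ and $\{w_i\}$, and that in \texttt{D-LSVRG-DIANA} their update rules---$h_i^{k+1}=h_i^k+\alpha Q(\hat g_i^k-h_i^k)$ with the \texttt{LSVRG}-type estimator $\hat g_i^k$, and the Bernoulli$(p)$ refresh of $w_i^{k+1}$---are formally identical to those analysed in Lemma~\ref{lem:ec_LSVRG-diana_sigma_k+1_bound} for \texttt{EC-LSVRG-DIANA}. The delay $\tau$ only relabels the outer iterate at which the fresh sample is drawn and does not enter the one-step computation, so I would reproduce that proof essentially verbatim, with $\hat g_i^k$ playing the role of the variance-reduced estimator evaluated at the relevant iterate.

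For the first inequality I would set $h_i^* \eqdef \nabla f_i(x^*)$ and expand $\EE[\|h_i^{k+1}-h_i^*\|^2\mid x^k]$. Using unbiasedness and the variance bound \eqref{eq:quantization_def} for $Q$ together with the tower property \eqref{eq:tower_property} produces a cross term $2\alpha\langle h_i^k-h_i^*,\nabla f_i(x^k)-h_i^k\rangle$ and a quadratic term $\alpha^2(\omega+1)\EE[\|\hat g_i^k-h_i^k\|^2\mid x^k]$. The core of the argument, and the one step that needs care, is bounding this quadratic term: I would first split it by variance decomposition \eqref{eq:variance_decomposition} into $\alpha^2(\omega+1)\EE\|\hat g_i^k-\nabla f_i(x^k)\|^2+\alpha^2(\omega+1)\|\nabla f_i(x^k)-h_i^k\|^2$ and use $\alpha\le\nicefrac{1}{(\omega+1)}$ to remove one factor of $(\omega+1)$. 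Because $\hat g_i^k$ is variance reduced, the deviation $\hat g_i^k-\nabla f_i(x^k)$ carries contributions both from the fresh sample at $x^k$ and from the stale reference $w_i^k$; applying \eqref{eq:a_b_norm_squared}, variance-domination, and smoothness \eqref{eq:L_smoothness_cor} to each piece is exactly what generates, simultaneously, the $4L\alpha D_{f_i}(x^k,x^*)$ contribution and the $\tfrac{2\alpha}{m}\sum_j\|\nabla f_{ij}(w_i^k)-\nabla f_{ij}(x^*)\|^2$ contribution.

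To finish the first inequality I would recombine the cross term with the surviving $\alpha\|\nabla f_i(x^k)-h_i^k\|^2$ via identity \eqref{eq:a-b_a+b}, which collapses them to $\alpha\|\nabla f_i(x^k)-h_i^*\|^2-\alpha\|h_i^k-h_i^*\|^2$; applying \eqref{eq:L_smoothness_cor} once more to $\|\nabla f_i(x^k)-h_i^*\|^2\le 2LD_{f_i}(x^k,x^*)$ upgrades the smoothness constant from $4L\alpha$ to $6L\alpha$. Averaging over $i=1,\dots,n$ then yields the stated recursion for $\sigma_{1,k+1}^2$, the factor $2\alpha$ multiplying $\sigma_{2,k}^2$ emerging from the $w_i^k$ piece after averaging.

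The second inequality is immediate and I would treat it exactly as in Lemma~\ref{lem:sigma_k+1_bound_ec-LSVRG}: conditioning on the Bernoulli choice in $w_i^{k+1}$, each table entry equals $\nabla f_{ij}(x^k)$ with probability $p$ and is unchanged otherwise, so $\EE[\sigma_{2,k+1}^2\mid x^k]=(1-p)\sigma_{2,k}^2+\tfrac{p}{nm}\sum_{i,j}\|\nabla f_{ij}(x^k)-\nabla f_{ij}(x^*)\|^2$, and \eqref{eq:L_smoothness_cor} turns the last sum into $2Lp(f(x^k)-f(x^*))$. I expect the only obstacle to be bookkeeping in the first inequality---keeping the three sources of variance (quantization, sampling at $x^k$, and the stale reference $w_i^k$) cleanly separated so that the constants $6L\alpha$ and $2\alpha$ come out precisely as claimed.
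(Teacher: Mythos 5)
Your proposal is correct and follows essentially the same route as the paper: the paper's proof of this lemma is literally the statement that it is identical to the proof of Lemma~\ref{lem:ec_LSVRG-diana_sigma_k+1_bound} (the {\tt EC-LSVRG-DIANA} case), and your step-by-step reconstruction---the quantization expansion, the variance decomposition with $\alpha\le\nicefrac{1}{(\omega+1)}$, the split of $\hat g_i^k-\nabla f_i(x^k)$ into the fresh-sample and stale-reference pieces giving $4L\alpha D_{f_i}(x^k,x^*)$ and $\tfrac{2\alpha}{m}\sum_j\|\nabla f_{ij}(w_i^k)-\nabla f_{ij}(x^*)\|^2$, the recombination via \eqref{eq:a-b_a+b} upgrading the constant to $6L\alpha$, and the Bernoulli conditioning for the second inequality---reproduces that argument faithfully. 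Your observation that the delay $\tau$ only relabels the iterate at which the sample is drawn and does not affect the one-step recursions is precisely the justification the paper implicitly relies on.
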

\begin{proof}
	The proof is identical to the proof of Lemma~\ref{lem:ec_LSVRG-diana_sigma_k+1_bound}.
\end{proof}

\begin{theorem}\label{thm:d_LSVRG-diana}
	Assume that $f_{ij}(x)$ is convex and $L$-smooth for all $i=1,\ldots, n$, $j=1,\ldots,m$ and $f(x)$ is $\mu$-quasi strongly convex. Then {\tt D-LSVRG-DIANA} satisfies Assumption~\ref{ass:key_assumption_new} with
	\begin{gather*}
		A' = 2L\left(1+\frac{2\omega}{n}\right),\quad B_1' = \frac{2\omega}{n},\quad B_2' = 2\left(1+\frac{2\omega}{n}\right),\quad D_1' = 0,\\
		\sigma_{1,k}^2 = \frac{1}{n}\sum\limits_{i=1}^n\|h_i^k - \nabla f_i(x^*)\|^2,\quad \sigma_{2,k}^2 = \frac{1}{nm}\sum\limits_{i=1}^n\sum_{j=1}^m\|\nabla f_{ij}(w_i^k) - \nabla f_{ij}(x^*)\|^2,\\
		\rho_1 = \alpha,\quad \rho_2 = p,\quad C_1 = 3L\alpha,\quad C_2 = Lp,\quad D_2 = 0,\quad G = 2,\\
		F_1 = \frac{12\gamma^2 L\omega\tau(2+\alpha)}{n\alpha},\quad F_2 = \frac{12\gamma^2\tau L(2+p)}{p}\left(\frac{4\omega}{n(1-\alpha)} + 1 + \frac{2\omega}{n}\right),\quad D_3 = 0
	\end{gather*}
	with $\gamma$ and $\alpha$ satisfying
	\begin{equation*}
		\gamma \le \min\left\{\frac{1}{8L\left(\frac{37}{9} + \frac{24\omega}{3n}\right)}, \frac{1}{8L\sqrt{\tau\left(2+\tau+ \frac{4}{1-p}+\frac{4\omega}{n}\left(1+\frac{3}{1-\alpha}+\frac{2}{1-p}+\frac{4}{(1-\alpha)(1-p)}\right)\right)}}\right\},
	\end{equation*}
	\begin{equation*}
		\alpha \le \frac{1}{\omega+1},\quad M_1 = \frac{8\omega}{3n\alpha},\quad M_2 = \frac{8\left(7 + \frac{6\omega}{n}\right)}{9p}.
	\end{equation*}
	and for all $K \ge 0$
	\begin{equation*}
		\EE\left[f(\bar x^K) - f(x^*)\right] \le \left(1 - \min\left\{\frac{\gamma\mu}{2},\frac{\alpha}{4},\frac{p}{4}\right\}\right)^K\frac{4(T^0 + \gamma F_1 \sigma_{1,0}^2 + \gamma F_2 \sigma_{2,0}^2)}{\gamma}
	\end{equation*}	
	when $\mu > 0$ and
	\begin{equation*}
		\EE\left[f(\bar x^K) - f(x^*)\right] \le \frac{4(T^0 + \gamma F_1 \sigma_{1,0}^2 + \gamma F_2 \sigma_{2,0}^2)}{\gamma K}
	\end{equation*}	
	when $\mu = 0$, where $T^k \eqdef \|\tx^k - x^*\|^2 + M_1\gamma^2 \sigma_{1,k}^2 + M_2\gamma^2 \sigma_{2,k}^2$.
\end{theorem}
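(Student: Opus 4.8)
The plan is to treat Theorem~\ref{thm:d_LSVRG-diana} as a direct instantiation of the general delayed-updates machinery, so that essentially no new analytic work is needed beyond matching constants. Since {\tt D-LSVRG-DIANA} follows the error-feedback template \eqref{eq:x^k+1_update}--\eqref{eq:error_update} with the delay structure \eqref{eq:v^k_def_d_sgd}--\eqref{eq:e^k_def_d_sgd}, the only thing to verify is that the aggregated stochastic gradient $g^k$ and the two control sequences $\sigma_{1,k}^2,\sigma_{2,k}^2$ satisfy inequalities \eqref{eq:second_moment_bound_new}, \eqref{eq:sigma_k+1_bound_1} and \eqref{eq:sigma_k+1_bound_2}; once these hold, Theorem~\ref{thm:d_sgd_main_result_new} (through Lemma~\ref{lem:d_sgd_key_lemma_new}) supplies the error bound \eqref{eq:sum_of_errors_bound_new} and the final convergence estimate automatically, with $F_1,F_2,D_3$ given by the closed forms in that lemma.

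First I would read the parameters off the two preceding lemmas. Lemma~\ref{lem:d_LSVRG-diana_second_moment_bound} is exactly of the form \eqref{eq:second_moment_bound_new} with $A'=2L(1+\tfrac{2\omega}{n})$, $B_1'=\tfrac{2\omega}{n}$, $B_2'=2(1+\tfrac{2\omega}{n})$ and $D_1'=0$, identifying $\sigma_{1,k}^2$ with the {\tt DIANA} shift term and $\sigma_{2,k}^2$ with the {\tt LSVRG} table term. Lemma~\ref{lem:d_LSVRG-diana_sigma_k+1_bound} yields \eqref{eq:sigma_k+1_bound_1} with $\rho_1=\alpha$, $C_1=3L\alpha$, $D_2=0$ and the coupling constant $G=2$ (reading the cross term as $G\rho_1\sigma_{2,k}^2=2\alpha\sigma_{2,k}^2$), together with \eqref{eq:sigma_k+1_bound_2} with $\rho_2=p$, $C_2=Lp$. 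With these in hand I would compute $M_1=\tfrac{4B_1'}{3\rho_1}=\tfrac{8\omega}{3n\alpha}$ and $M_2=\tfrac{4(B_2'+\tfrac43 G)}{3\rho_2}=\tfrac{8(7+6\omega/n)}{9p}$, then substitute $B_1',B_2',G,\rho_1,\rho_2,D_1',D_2$ into the formulas for $F_1,F_2,D_3$ from Lemma~\ref{lem:d_sgd_key_lemma_new}; the $G$-dependent piece of $F_2$ produces the factor $\tfrac{4\omega}{n(1-\alpha)}+1+\tfrac{2\omega}{n}$, while $D_3=0$ since $D_1'=D_2=0$.

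The delicate step, and where I expect the bulk of the effort to lie, is verifying that the generic stepsize restriction of Theorem~\ref{thm:d_sgd_main_result_new} collapses to the two stated bounds. The bound $\gamma\le\tfrac{1}{4(A'+C_1M_1+C_2M_2)}$ must be simplified using $C_1M_1=\tfrac{8L\omega}{n}$ and $C_2M_2=\tfrac{56L}{9}+\tfrac{16L\omega}{3n}$, and the square-root bound requires expanding $\hat A=A'+L\tau$ together with the three coupling terms $\tfrac{2B_1'C_1}{\rho_1(1-\rho_1)}$, $\tfrac{2B_2'C_2}{\rho_2(1-\rho_2)}$ and $\tfrac{4B_1'GC_2}{\rho_2(1-\rho_1)(1-\rho_2)}$; it is precisely this last term, nonzero only because $G\ne0$, that contributes the $\tfrac{4\omega}{n(1-\alpha)(1-p)}$ piece inside the radical. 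Because the two variance-reduction processes are genuinely coupled, the constant bookkeeping is the real obstacle: the $\omega/n$, $1/(1-\alpha)$ and $1/(1-p)$ factors must be tracked faithfully through every cross term before the minimum simplifies to the claimed form. Once the stepsize restriction is in place, both displayed convergence inequalities follow verbatim from Theorem~\ref{thm:d_sgd_main_result_new} with $T^k=\|\tx^k-x^*\|^2+M_1\gamma^2\sigma_{1,k}^2+M_2\gamma^2\sigma_{2,k}^2$, completing the argument.
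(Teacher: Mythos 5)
Your proposal is correct and follows exactly the paper's route: the theorem is obtained by feeding the parameters from Lemmas~\ref{lem:d_LSVRG-diana_second_moment_bound} and~\ref{lem:d_LSVRG-diana_sigma_k+1_bound} (which give $A'$, $B_1'$, $B_2'$, $\rho_1=\alpha$, $\rho_2=p$, $C_1=3L\alpha$, $C_2=Lp$, $G=2$, $D_1'=D_2=0$) into Lemma~\ref{lem:d_sgd_key_lemma_new} and Theorem~\ref{thm:d_sgd_main_result_new}, and your derived $M_1$, $M_2$, $F_1$, $F_2$, $D_3$, as well as your identification of the $\frac{4\omega}{n(1-\alpha)(1-p)}$ term inside the radical as coming from the $\frac{4B_1'GC_2}{\rho_2(1-\rho_1)(1-\rho_2)}$ coupling, all check out against the stated theorem. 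One minor remark: your exact evaluation gives $4(A'+C_1M_1+C_2M_2)=\frac{296L}{9}+\frac{208L\omega}{3n}$, which is slightly larger than the paper's stated denominator $8L\left(\frac{37}{9}+\frac{24\omega}{3n}\right)=\frac{296L}{9}+\frac{192L\omega}{3n}$, so the (slightly stricter) stepsize bound your computation yields is the safe one; the discrepancy is an arithmetic slip in the paper's constant, not a flaw in your argument.
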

In other words, if $m\ge 2$, $p = \nicefrac{1}{m}$, $\alpha = \min\left\{\frac{1}{\omega+1},\frac{1}{2}\right\}$ and 
\begin{equation*}
		\gamma \le \min\left\{\frac{1}{8L\left(\frac{37}{9} + \frac{24\omega}{3n}\right)}, \frac{1}{8L\sqrt{\tau\left(2+\tau+ \frac{4}{1-p}+\frac{4\omega}{n}\left(1+\frac{3}{1-\alpha}+\frac{2}{1-p}+\frac{4}{(1-\alpha)(1-p)}\right)\right)}}\right\},
\end{equation*}
{\tt D-LSVRG-DIANA} converges with the linear rate
\begin{equation*}
	\cO\left(\left(\omega + m + \kappa\left(1+\frac{\omega}{n}\right) + \kappa\sqrt{\tau\left(\tau+\frac{\omega}{n}\right)}\right)\ln\frac{1}{\varepsilon}\right)
\end{equation*}
to the exact solution when $\mu > 0$.

Applying Lemma~\ref{lem:lemma_technical_cvx} we get the complexity result in the case when $\mu = 0$.
\begin{corollary}\label{cor:d_LSVRG_DIANA_cvx_cor}
	Let the assumptions of Theorem~\ref{thm:d_LSVRG-diana} hold and $\mu = 0$. Then after $K$ iterations of {\tt D-LSVRG-DIANA} with the stepsize
	\begin{eqnarray*}
		\gamma_0 &=& \min\left\{\frac{1}{8L\left(\frac{37}{9} + \frac{24\omega}{3n}\right)}, \frac{1}{8L\sqrt{\tau\left(2+\tau+ \frac{4}{1-p}+\frac{4\omega}{n}\left(1+\frac{3}{1-\alpha}+\frac{2}{1-p}+\frac{4}{(1-\alpha)(1-p)}\right)\right)}}\right\},\\	
		\gamma &=& \min\left\{\gamma_0, \sqrt{\frac{R_0^2}{M_1\sigma_{1,0}^2 + M_2\sigma_{2,0}^2}}, \sqrt[3]{\frac{R_0^2}{12\tau L\left(\frac{\omega(2+\alpha)}{n\alpha} + \frac{2+p}{p}\left(1+\frac{2\omega}{n}+\frac{4\omega}{n(1-\alpha)}\right)\right)}}\right\},
	\end{eqnarray*}		
	where $R_0 = \|x^0 - x^*\|$, $\alpha = \min\left\{\frac{1}{\omega+1},\frac{1}{2}\right\}$ and $p = \frac{1}{m}$, $m\ge 2$ we have $\EE\left[f(\bar{x}^K) - f(x^*)\right]$ of order
	\begin{gather*}
		\cO\left(\frac{LR_0^2\left(1+\frac{\omega}{n}+\sqrt{\tau\left(\tau + \frac{\omega}{n}\right)}\right)}{K} + \frac{\sqrt{R_0^2\omega(\omega+1)\sigma_{1,0}^2}}{\sqrt{n}K} + \frac{\sqrt{R_0^2m\left(1+\frac{\omega}{n}\right)\sigma_{2,0}^2}}{K}\right)\\
		+\cO\left(\frac{\sqrt[3]{R_0^4\tau L \omega(\omega+1)\sigma_{1,0}^2}}{\sqrt[3]{n}K} + \frac{\sqrt[3]{R_0^4\tau L m\left(1+\frac{\omega}{n}\right)\sigma_{2,0}^2}}{K}\right)
	\end{gather*}
	That is, to achive $\EE\left[f(\bar{x}^K) - f(x^*)\right] \le \varepsilon$ {\tt D-LSVRG-DIANA} requires
	\begin{gather*}
		\cO\left(\frac{LR_0^2\left(1+\frac{\omega}{n}+\sqrt{\tau\left(\tau + \frac{\omega}{n}\right)}\right)}{\varepsilon} + \frac{\sqrt{R_0^2\omega(\omega+1)\sigma_{1,0}^2}}{\sqrt{n}\varepsilon} + \frac{\sqrt{R_0^2m\left(1+\frac{\omega}{n}\right)\sigma_{2,0}^2}}{\varepsilon}\right)\\
		+\cO\left(\frac{\sqrt[3]{R_0^4\tau L \omega(\omega+1)\sigma_{1,0}^2}}{\sqrt[3]{n}\varepsilon} + \frac{\sqrt[3]{R_0^4\tau L m\left(1+\frac{\omega}{n}\right)\sigma_{2,0}^2}}{\varepsilon}\right)
	\end{gather*}
	iterations.
\end{corollary}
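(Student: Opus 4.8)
The plan is to obtain Corollary~\ref{cor:d_LSVRG_DIANA_cvx_cor} as a direct specialization of the $\mu=0$ branch of Theorem~\ref{thm:d_LSVRG-diana}, tuned by the generic step-size lemma Lemma~\ref{lem:lemma_technical_cvx}. First I would start from the $\mu=0$ guarantee supplied by the theorem,
\[
\EE\left[f(\bar x^K)-f(x^*)\right]\le \frac{4\left(T^0+\gamma F_1\sigma_{1,0}^2+\gamma F_2\sigma_{2,0}^2\right)}{\gamma K},
\]
and substitute the parameters it records. Since $e^0=0$ we have $\tx^0=x^0$, whence $T^0=R_0^2+M_1\gamma^2\sigma_{1,0}^2+M_2\gamma^2\sigma_{2,0}^2$ with $R_0=\|x^0-x^*\|$; moreover $F_1=12\gamma^2L\omega\tau(2+\alpha)/(n\alpha)$ and $F_2=\frac{12\gamma^2\tau L(2+p)}{p}\bigl(\frac{4\omega}{n(1-\alpha)}+1+\frac{2\omega}{n}\bigr)$, while crucially $D_1'=D_2=D_3=0$ for this variance-reduced method.

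Next I would cast the resulting inequality into the canonical form $r_K\le \frac{a}{\gamma K}+\frac{b_1\gamma}{K}+\frac{b_2\gamma^2}{K}+c_1\gamma+c_2\gamma^2$ demanded by Lemma~\ref{lem:lemma_technical_cvx}. Expanding $4T^0/(\gamma K)$ peels off the $R_0^2$ part as $a/(\gamma K)$ with $a=4R_0^2$, and the $M_1,M_2$ parts as $b_1\gamma/K$ with $b_1\propto M_1\sigma_{1,0}^2+M_2\sigma_{2,0}^2$; the $F_1,F_2$ contributions, being proportional to $\gamma^2$, fold into $b_2\gamma^2/K$ with $b_2\propto (F_1/\gamma^2)\sigma_{1,0}^2+(F_2/\gamma^2)\sigma_{2,0}^2$. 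Because $D_1'=D_2=D_3=0$ the residual terms vanish, so $c_1=c_2=0$; this is precisely what drives asymptotic convergence to the exact optimum and collapses the step-size rule of Lemma~\ref{lem:lemma_technical_cvx} to $\gamma=\min\{\gamma_0,\sqrt{a/b_1},\sqrt[3]{a/b_2}\}$, matching the three-term minimum in the corollary. Invoking the lemma then gives a rate of order $\frac{a}{\gamma_0 K}+\frac{\sqrt{ab_1}}{K}+\frac{\sqrt[3]{a^2b_2}}{K}$.

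Finally I would simplify these three terms by inserting $M_1=8\omega/(3n\alpha)$, $M_2=8(7+6\omega/n)/(9p)$ and the $F_1,F_2$ above, together with $\alpha=\min\{1/(\omega+1),1/2\}$ and $p=1/m$, $m\ge2$. Under these choices $1-\alpha$ and $1-p$ are bounded below by a constant, so $1/(1-\alpha)$ and $1/(1-p)$ are $\cO(1)$; consequently $1/\gamma_0=\cO\bigl(L(1+\omega/n+\sqrt{\tau(\tau+\omega/n)})\bigr)$, $M_1=\cO(\omega(\omega+1)/n)$, $M_2=\cO(m(1+\omega/n))$, $F_1/\gamma^2=\cO(L\tau\omega(\omega+1)/n)$, and $F_2/\gamma^2=\cO(L\tau m(1+\omega/n))$. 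Substituting into $\sqrt{ab_1}$ and $\sqrt[3]{a^2b_2}$, and splitting the two noise sources via $\sqrt{u+v}\le\sqrt u+\sqrt v$ and $\sqrt[3]{u+v}\le\sqrt[3]u+\sqrt[3]v$, reproduces the displayed summands; setting the total $\le\varepsilon$ and solving for $K$ yields the stated iteration complexity.

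I expect the only real effort to lie in this last bookkeeping: one must check that every $1/(1-\alpha)$ and $1/(1-p)$ is genuinely absorbed into the constants (which is exactly where $\alpha\le 1/2$ and $m\ge2$ are used), and must track how the $\cO(1)$ factors in $\gamma_0,M_1,M_2,F_1,F_2$ recombine so that the leading term becomes $\frac{LR_0^2(1+\omega/n+\sqrt{\tau(\tau+\omega/n)})}{K}$ and the two cube-root terms cleanly separate the compression noise $\sigma_{1,0}^2$ (carrying a factor $\omega(\omega+1)/n$) from the finite-sum noise $\sigma_{2,0}^2$ (carrying a factor $m(1+\omega/n)$). No inequality beyond Theorem~\ref{thm:d_LSVRG-diana} and Lemma~\ref{lem:lemma_technical_cvx} is needed; the difficulty is purely algebraic simplification.
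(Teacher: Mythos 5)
Your proposal is correct and is exactly the paper's route: the paper obtains this corollary by plugging the $\mu=0$ bound of Theorem~\ref{thm:d_LSVRG-diana} (with $D_1'=D_2=D_3=0$, hence $c_1=c_2=0$) into Lemma~\ref{lem:lemma_technical_cvx} and simplifying $\gamma_0^{-1}$, $M_1$, $M_2$, $F_1/\gamma^2$, $F_2/\gamma^2$ using $\alpha=\min\{\nicefrac{1}{\omega+1},\nicefrac12\}$ and $p=\nicefrac1m$, $m\ge 2$, exactly as you do. Your bookkeeping (including retaining the $\sigma_{1,0}^2,\sigma_{2,0}^2$ factors in the cube-root stepsize term, which the paper's stated stepsize drops, harmlessly for the order of the rate) matches the intended argument.
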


\clearpage

\begin{table*}[!t]
\caption{The parameters for which the methods from Tables~\ref{tbl:special_cases2} and \ref{tbl:special_cases_delayed_methods} satisfy Assumption~\ref{ass:key_assumption_new}. The meaning of the expressions appearing in the table, as well as their justification is defined in details in the Sections~\ref{sec:special_cases} and \ref{sec:special_cases2}. Symbols: $\varepsilon = $ error tolerance; $\delta = $ contraction factor of compressor $\cC$; $\omega = $ variance parameter of compressor $\cQ$; $\kappa = \nicefrac{L}{\mu}$; $\cL =$ expected smoothness constant; $\sigma_*^2 = $ variance of the stochastic gradients in the solution; $\zeta_*^2 =$ average of $\|\nabla f_i(x^*)\|^2$; $\sigma^2 =$ average of the uniform bounds for the variances of stochastic gradients of workers.}
\label{tbl:special_cases-parameters}
\begin{center}
 \tiny
\begin{adjustbox}{angle=90}
\begin{tabular}{|c|c|c|c|c|c|c|c|c|c|c|}
\hline
 Method &   $A'$ & $B_1'$ & $B_2'$ & $\rho_1$ & $\rho_2$ & $C_1$ & $C_2$ & $F_1,\quad F_2$ & $G$ & $D_1'$, $D_2$, $D_3$\\
 \hline
\hline
 {\tt EC-SGDsr}   &  $2\cL $ & $0$ & $0$ & $1$ & $1$ & $0$ & $0$ & $0,\quad 0$ & $0$ & $\frac{2\sigma_*^2}{n},\quad 0,\quad\frac{6L\gamma}{\delta}\left(\frac{4\zeta_*^2}{\delta}+3\sigma_*^2\right)$\\
 \hline
 {\tt EC-SGD}   &  $2L $ & $0$ & $0$ & $1$ & $1$ & $0$ & $0$ & $0,\quad 0$ & $0$ & $\frac{2\sigma_*^2}{n},\quad 0,\quad\frac{12L\gamma}{\delta}\left(\frac{2\zeta_*^2}{\delta}+\sigma_*^2\right)$\\
 \hline
 {\tt EC-GDstar}   &  $L $ & $0$ & $0$ & $1$ & $1$ & $0$ & $0$ & $0,\quad 0$ & $0$ & $0,\quad 0,\quad 0$\\
 \hline
 {\tt EC-SGD-DIANA}   &  $L $ & $0$ & $0$ & $\alpha$ & $1$ & $L\alpha$ & $0$ & $\frac{96L\gamma^2}{\delta^2\alpha(1-\eta)},\quad 0$ & $0$ & \makecell{$\frac{\sigma^2}{n},\quad \alpha^2(\omega+1)\sigma^2,$\\ $\frac{6L\gamma}{\delta}\left(\frac{4\alpha(\omega+1)}{\delta}+1\right)\sigma^2$}\\
 \hline
 {\tt EC-SGDsr-DIANA}   &  $2\cL $ & $0$ & $0$ & $\alpha$ & $1$ & $2\alpha(3\cL + 4L)$ & $0$ & $\frac{96L\gamma^2}{\delta^2\alpha(1-\eta)},\quad 0$ & $0$ & \makecell{$\frac{2\sigma_*^2}{n},\quad \alpha^2(\omega+1)\sigma_*^2,$\\ $\frac{18L\gamma}{\delta}\left(\frac{4\alpha(\omega+1)}{\delta}+1\right)\sigma_*^2$}\\
 \hline
 {\tt EC-LSVRG}   &  $2L $ & $0$ & $2$ & $1$ & $p$ & $0$ & $Lp$ & $0,\quad \frac{72L\gamma^2}{\delta p(1-\eta)}$ & $0$ & $0,\quad 0,\quad \frac{24L\gamma}{\delta^2}\zeta_*^2$\\
 \hline
  {\tt EC-LSVRGstar}   &  $2L $ & $0$ & $2$ & $1$ & $p$ & $0$ & $Lp$ & $0,\quad \frac{48L\gamma^2}{\delta p}$ & $0$ & $0,\quad 0,\quad 0$\\
 \hline
  {\tt EC-LSVRG-DIANA}   &  $2L $ & $0$ & $2$ & $\alpha$ & $p$ & $3L\alpha$ & $Lp$ & \makecell{$\frac{24L\gamma^2\left(\frac{4}{\delta}+3\right)}{\delta\alpha(1-\eta)},$\\ $\frac{24L\gamma^2\left(\frac{4}{1-\alpha}\left(\frac{4}{\delta}+3\right)+3\right)}{\delta p(1-\eta)}$} & $2$ & $0,\quad 0,\quad 0$\\
 \hline
 {\tt D-SGDsr}   &  $2\cL $ & $0$ & $0$ & $1$ & $1$ & $0$ & $0$ & $0,\quad 0$ & $0$ & $\frac{2\sigma_*^2}{n},\quad 0,\quad \frac{6L\tau\gamma\sigma_*^2}{n}$\\
 \hline
 {\tt D-SGD}   &  $2L $ & $0$ & $0$ & $1$ & $1$ & $0$ & $0$ & $0,\quad 0$ & $0$ & $\frac{2\sigma_*^2}{n},\quad 0,\quad \frac{6L\tau\gamma\sigma_*^2}{n}$\\
 \hline
 {\tt D-QSGD}   &  $L\left(1+\frac{2\omega}{n}\right) $ & $0$ & $0$ & $1$ & $1$ & $0$ & $0$ & $0,\quad 0$ & $0$ & \makecell{$\frac{(\omega+1)\sigma^2}{n}+\frac{2\omega\zeta_*^2}{n},\quad 0,$\\ $\frac{3\gamma\tau L}{n}\left((\omega+1)\sigma^2 + 2\omega\zeta_*^2\right)$}\\
 \hline
 {\tt D-QSGDstar}   &  $L\left(1+\frac{\omega}{n}\right) $ & $0$ & $0$ & $1$ & $1$ & $0$ & $0$ & $0,\quad 0$ & $0$ & $\frac{(\omega+1)\sigma^2}{n},\quad 0,\quad \frac{3\gamma\tau L(\omega+1)\sigma^2}{n}$\\
 \hline
 {\tt D-QGDstar}   &  $L\left(1+\frac{\omega}{n}\right) $ & $0$ & $0$ & $1$ & $1$ & $0$ & $0$ & $0,\quad 0$ & $0$ & $0,\quad 0,\quad 0$\\
 \hline
 {\tt D-SGD-DIANA}   &  $L\left(1+\frac{2\omega}{n}\right) $ & $\frac{2\omega}{n}$ & $0$ & $\alpha$ & $1$ & $L\alpha$ & $0$ & $\frac{12\gamma^2 L\omega\tau(2+\alpha)}{n\alpha},\quad 0$ & $0$ & \makecell{$\frac{(\omega+1)\sigma^2}{n},\quad \frac{\alpha(\omega+1)\sigma^2}{n},$\\ $3\gamma\tau L\left(1+\frac{4\omega}{n}\right)\frac{(\omega+1)\sigma^2}{n}$}\\
 \hline
 {\tt D-LSVRG}   &  $2L$ & $0$ & $2$ & $1$ & $p$ & $0$ & $Lp$ & $0,\quad \frac{12\gamma^2 L\tau(2+p)}{np}$ & $0$ & $0,\quad 0,\quad 0$\\
 \hline
 {\tt D-QLSVRG}   &  $2L\left(1+\frac{2\omega}{n}\right)$ & $0$ & $2\left(1+\frac{2\omega}{n}\right)$ & $1$ & $p$ & $0$ & $Lp$ & $0,\quad \frac{12\gamma^2 L\tau\left(1+\frac{2\omega}{n}\right)\tau(2+p)}{p}$ & $0$ & \makecell{$\frac{2\omega\zeta_*^2}{n},\quad 0,$\\ $\frac{6\gamma\tau L\omega\zeta_*^2}{n}$}\\
 \hline
 {\tt D-QLSVRGstar}   &  $2L\left(1+\frac{2\omega}{n}\right)$ & $0$ & $2\left(1+\frac{2\omega}{n}\right)$ & $1$ & $p$ & $0$ & $Lp$ & $0,\quad \frac{12\gamma^2 L\left(1+\frac{2\omega}{n}\right)\tau(2+p)}{p}$ & $0$ & $0,\quad 0,\quad 0$\\
 \hline
 {\tt D-LSVRG-DIANA}   &  $2L\left(1+\frac{2\omega}{n}\right)$ & $\frac{2\omega}{n}$ & $2\left(1+\frac{2\omega}{n}\right)$ & $\alpha$ & $p$ & $3L\alpha$ & $Lp$ & \begin{tabular}{c}
 $\frac{12\gamma^2 L\omega\tau(2+\alpha)}{n\alpha}$,\\$\frac{12\gamma^2\tau L(2+p)}{p}\left(1 + \frac{2\omega(3-\alpha)}{n(1-\alpha)}\right)$
 \end{tabular} & $0$ & $0,\quad 0,\quad 0$\\
 \hline
\end{tabular}
\end{adjustbox}
\end{center}
\end{table*}

\end{document}